\newtheorem{thm}{Theorem}
\newtheorem{lem}[thm]{Lemma}
\newtheorem{definition}[thm]{Definition}
\newtheorem{assumption}[thm]{Assumption}
\newtheorem{rem}[thm]{Remark}
\newenvironment{AMS}%
{{\upshape\bfseries AMS subject classifications. }\ignorespaces}{}
\newenvironment{acks}[1][Acknowledgements]%
{%
  \par%
}
\newcommand{\arxivyesno}[2]{#1}
\newcommand{\bRplus}{{\mathbb R}_{>0}}
\newcommand{\bRgeq}{{\mathbb R}_{\geq 0}}
\newcommand{\RZ}{{\mathbb R} \slash {\mathbb Z}}
\newcommand{\bR}{{\mathbb R}}
\newcommand{\bS}{{\mathbb S}}
\newcommand{\bN}{{\mathbb N}}
\newcommand{\bZ}{\mathbb{Z}}
\newcommand{\bI}{\mathbb{I}}
\newcommand{\II}{{\rm I\!I}}
\newcommand{\tr}{\operatorname{tr}}
\newcommand{\spa}{\operatorname{span}}
\newcommand{\diag}{\operatorname{diag}}
\newcommand{\diam}{\operatorname{diam}}
\newcommand{\supp}{\operatorname{supp}}
\DeclareMathOperator*{\argmin}{arg\,min}
\newcommand{\Gauss}{{\mathcal{K}}}
\newcommand{\dH}[1]{\;{\rm d}{\mathcal{H}}^{#1}} 
\newcommand{\dL}[1]{\;{\rm d}{\mathcal{L}}^{#1}} 
\newcommand{\spont}{{\overline\varkappa}}
\newcommand{\bigchi}{\ensuremath{\mathrm{\mathcal{X}}}}
\newcommand{\charfcn}[1]{\bigchi_{#1}} 
\newcommand{\Whh}{V(\Gamma^h)}
\newcommand{\Whm}{V(\Gamma^m)}
\newcommand{\Whmp}{V(\Gamma^{m+1})}
\newcommand{\Vhh}{\underline{V}(\Gamma^h)}
\newcommand{\Vhm}{\underline{V}(\Gamma^m)}
\newcommand{\Vhmp}{\underline{V}(\Gamma^{m+1})}
\newcommand{\Vhz}{\underline{V}(\Gamma^0)}
\newcommand{\WhmD}{V_{\rm D}(\Gamma^m)}
\newcommand{\VhmD}{\underline{V}_{\rm D}(\Gamma^m)}
\newcommand{\matVhh}{\mat{V}(\Gamma^h)}
\newcommand{\matVhm}{\mat{V}(\Gamma^m)}
\newcommand{\VhI}{\underline{V}^h(\bI)}
\newcommand{\WhI}{V^h(\bI)}
\newcommand{\WhGhT}{V(\GhT)}
\newcommand{\VhGhT}{\underline{V}(\GhT)}
\newcommand{\WhTGhT}{V_T(\GhT)}
\newcommand{\VhTGhT}{\underline{V}_T(\GhT)}
\newcommand{\Wchh}{V_{\rm c}(\Gamma^h)}
\newcommand{\Wcht}{V_{\rm c}(\Gamma^h(t))}
\newcommand{\Vchh}{\underline V_{\rm c}(\Gamma^h)}
\newcommand{\matVchh}{\mat V_{\rm c}(\Gamma^h)}
\newcommand{\Vchm}{\underline V_{\rm c}(\Gamma^m)}
\newcommand{\matVchm}{\mat V_{\rm c}(\Gamma^m)}
\newcommand{\Vt}{[H^1(\Gamma(t))]^d}
\newcommand{\Wt}{H^1(\Gamma(t))}
\newcommand{\Vht}{\underline{V}(\Gamma^h(t))}
\newcommand{\Wht}{V(\Gamma^h(t))}
\newcommand{\sigmaO}{o}
\newcommand{\nabs}{\nabla_{\!s}}
\newcommand{\Id}{{\rm Id}}
\newcommand{\id}{{\rm id}}
\newcommand{\deldel}[1]{\frac{\delta}{{\delta}#1}}
\newcommand{\dd}[1]{\frac{\rm d}{{\rm d}#1}}
\newcommand{\ddt}{\dd{t}}
\newcommand{\ddeps}{\frac{\rm d}{{\rm d}\epsilon}}
\newcommand{\matpartu}{\partial_t^\bullet}
\newcommand{\matpartx}{\partial_t^\circ}
\newcommand{\matpartxh}{\partial_t^{\circ,h}}
\newcommand{\matpartxpar}[1]{\partial_{#1}^\circ}
\newcommand{\matpartxhpar}[1]{\partial_{#1}^{\circ,h}}
\newcommand{\matpartnpar}[1]{\partial_{#1}^{\text{\tiny$\square$}}}
\newcommand{\matpartn}{\matpartnpar{t}}
\newcommand{\ttau}{\Delta t}
\def\epsilon{\varepsilon} 
\def\conduct{\mathcal{K}}
\renewcommand*{\uuline}{%
  \bgroup
  \UL@setULdepth
  \markoverwith{%
    \lower\ULdepth\hbox{%
      \kern-.03em%
      \vtop{%
        \hrule width.2em%
        \kern 0.6pt 
        \hrule
      }%
      \kern-.03em%
    }%
  }%
  \ULon
}
\newcommand{\mat}[1]{\smash{\uuline{#1}}}
\newcommand*{\mint}[1]{%
  \mint@l{#1}{}%
}
\newcommand*{\mint@l}[2]{%
  \@ifnextchar\limits{%
    \mint@l{#1}%
  }{%
    \@ifnextchar\nolimits{%
      \mint@l{#1}%
    }{%
      \@ifnextchar\displaylimits{%
        \mint@l{#1}%
      }{%
        \mint@s{#2}{#1}%
      }%
    }%
  }%
}
\newcommand*{\mint@s}[2]{%
  \@ifnextchar_{%
    \mint@sub{#1}{#2}%
  }{%
    \@ifnextchar^{%
      \mint@sup{#1}{#2}%
    }{%
      \mint@{#1}{#2}{}{}%
    }%
  }%
}
\def\mint@sub#1#2_#3{%
  \@ifnextchar^{%
    \mint@sub@sup{#1}{#2}{#3}%
  }{%
    \mint@{#1}{#2}{#3}{}%
  }%
}
\def\mint@sup#1#2^#3{%
  \@ifnextchar_{%
    \mint@sup@sub{#1}{#2}{#3}%
  }{%
    \mint@{#1}{#2}{}{#3}%
  }%
}
\def\mint@sub@sup#1#2#3^#4{%
  \mint@{#1}{#2}{#3}{#4}%
}
\def\mint@sup@sub#1#2#3_#4{%
  \mint@{#1}{#2}{#4}{#3}%
}
\newcommand*{\mint@}[4]{%
  \mathop{}%
  \mkern-\thinmuskip
  \mathchoice{%
    \mint@@{#1}{#2}{#3}{#4}%
        \displaystyle\textstyle\scriptstyle
  }{%
    \mint@@{#1}{#2}{#3}{#4}%
        \textstyle\scriptstyle\scriptstyle
  }{%
    \mint@@{#1}{#2}{#3}{#4}%
        \scriptstyle\scriptscriptstyle\scriptscriptstyle
  }{%
    \mint@@{#1}{#2}{#3}{#4}%
        \scriptscriptstyle\scriptscriptstyle\scriptscriptstyle
  }%
  \mkern-\thinmuskip
  \int#1%
  \ifx\\#3\\\else_{#3}\fi
  \ifx\\#4\\\else^{#4}\fi  
}
\newcommand*{\mint@@}[7]{%
  \begingroup
    \sbox0{$#5\int\m@th$}%
    \sbox2{$#5\int_{}\m@th$}%
    \dimen2=\wd0 %
    \let\mint@limits=#1\relax
    \ifx\mint@limits\relax
      \sbox4{$#5\int_{\kern1sp}^{\kern1sp}\m@th$}%
      \ifdim\wd4>\wd2 %
        \let\mint@limits=\nolimits
      \else
        \let\mint@limits=\limits
      \fi
    \fi
    \ifx\mint@limits\displaylimits
      \ifx#5\displaystyle
        \let\mint@limits=\limits
      \fi
    \fi
    \ifx\mint@limits\limits
      \sbox0{$#7#3\m@th$}%
      \sbox2{$#7#4\m@th$}%
      \ifdim\wd0>\dimen2 %
        \dimen2=\wd0 %
      \fi
      \ifdim\wd2>\dimen2 %
        \dimen2=\wd2 %
      \fi
    \fi
    \rlap{%
      $#5%
        \vcenter{%
          \hbox to\dimen2{%
            \hss
            $#6{#2}\m@th$%
            \hss
          }%
        }%
      $%
    }%
  \endgroup
}
\newcommand{\Gmint}[1]{{\textstyle\mint{-}_{#1}}}
\newcommand{\vvv}{{\rm v}}
\newcommand{\uspace}{\mathbb{U}}
\newcommand{\pspace}{\mathbb{P}}
\newcommand{\XFEMGamma}{{\rm XFEM$_\Gamma$}}
\newcommand{\Psing}{P_{\rm sing}}
\newcommand{\NbulkT}{\vec N_{\Gamma^m,\Omega}^\transT}
\newcommand{\Nbulk}{\vec N_{\Gamma^m,\Omega}}
\newcommand{\SmD}{S^m_{\rm D}}
\newcommand{\Sml}{S^m_+}
\newcommand{\Smhoml}{S^m_{0,+}}
\newcommand{\Smhom}{S^m_0}
\newcommand{\SmDl}{S^m_{{\rm D},+}}
\newcommand{\MSrho}{\rho}                   
\newcommand{\Mbulk}{M_{\Gamma^m,\Omega}}
\newcommand{\NbulkTMS}{\vec N_{\Omega,\Gamma^m}^\transT}
\newcommand{\phasec}{\mathfrak{c}}
\newcommand{\chempot}{\mathfrak{m}}
\newcommand{\GT}{{\mathcal{G}_T}}           
\newcommand{\GhT}{{\mathcal{G}^h_T}}
\newcommand{\HT}{{\mathfrak{O}_T}}          
\newcommand{\HTi}[1]{{\mathfrak{O}_{#1,T}}} 
\newcommand{\subHT}{{\mathfrak{O}}}         
\newcommand{\SigmahTi}[1]{{\mathcal{S}^h_{#1,T}}}
\newcommand{\HhT}{{\mathfrak{O}^h_T}}      
\newcommand{\localpara}{\vec{\mathfrak X}} 
\newcommand{\conormal}{\vec\mu}        
\newcommand{\ek}{e}                    
\newcommand{\Wein}{{\rm W}}        
\newcommand{\normalspace}{{\rm N}}     
\newcommand{\tanspace}{{\rm T}}        
\newcommand{\subT}{{\rm T}}            
\newcommand*{\transT}{{\mathpalette\@transT{}}} 
\newcommand*{\@transT}[2]{
  \raisebox{0.75\height}{$\m@th#1\intercal$}%
}
\newcommand{\Lapg}{{\Delta_s^{\!\tG}}}
\newcommand{\LapGh}{{\Delta_s^{\!\Gamma^h}}}
\newcommand{\LapGm}{{\Delta_s^{\!\Gamma^m}}}
\newcommand{\tG}{{\widetilde{G}}}
\newcommand{\curvegamma}{\mathfrak{g}} 
\begin{document}

\arxivyesno{
\title{Parametric finite element approximations of curvature driven 
interface evolutions}
\author{John W. Barrett\footnotemark[2] \and 
        Harald Garcke\footnotemark[3]\ \and 
        Robert N\"urnberg\footnotemark[2]}

\renewcommand{\thefootnote}{\fnsymbol{footnote}}
\footnotetext[2]{Department of Mathematics, 
Imperial College London, London, SW7 2AZ, UK}
\footnotetext[3]{Fakult{\"a}t f{\"u}r Mathematik, Universit{\"a}t Regensburg, 
93040 Regensburg, Germany}

\date{}

\maketitle

\begin{abstract}
Parametric finite elements lead to very efficient numerical methods for surface
evolution equations. We introduce several computational techniques for 
curvature driven evolution equations based on a weak formulation for the 
mean curvature. The approaches discussed, in contrast to many other methods, 
have good mesh properties that avoid mesh coalescence and 
very non-uniform meshes. Mean curvature flow, surface diffusion, anisotropic
geometric flows, solidification, two-phase flow, Willmore and Helfrich flow 
as well as biomembranes are treated. 
We show stability results as well as 
results explaining the good mesh properties.
\end{abstract}

\begin{center}
\setlength\cftparskip{0pt}
\setlength\cftbeforesecskip{2pt}
\setlength\cftaftertoctitleskip{12pt}
\begin{minipage}{0.9\textwidth}
\begin{AMS} 65M60, 53C44, 35K55, 65M12, 74E10, 74N05, 76D05, 92C05
\end{AMS}

\setcounter{tocdepth}{1}
\begin{spacing}{0.9}
\renewcommand{\cftsecleader}{\cftdotfill{\cftdotsep}}
\tableofcontents
\end{spacing}
\end{minipage}
\end{center}

\pagestyle{fancyplain}
\lhead[\fancyplain{}{\textsf{\thepage}}]
{\fancyplain{}{\textsf{PFEA of curvature driven interface evolutions}}}
\rhead[\fancyplain{}{\textsf{J.~W.~Barrett, H.~Garcke, R.~N\"urnberg}}]
{\fancyplain{}{\textsf{\thepage}}}
\cfoot{}

}
{
\begin{frontmatter}
\chapter[PFEA of curvature driven interface evolutions]%
{Parametric finite element approximations of curvature driven 
interface evolutions
}

\begin{aug}
\author[addressrefs={ad1}]%
{\fnm{John W.} \snm{Barrett}} 
\author[addressrefs={ad2}]%
 {\fnm{Harald} \snm{Garcke}}%
\author[addressrefs={ad1}]%
{\fnm{Robert} \snm{N\"urnberg}} 
\address[id=ad1]%
{Department of Mathematics, 
Imperial College London, London, SW7 2AZ, UK}%
\address[id=ad2]%
{Fakult{\"a}t f{\"u}r Mathematik, Universit{\"a}t Regensburg, 
93040 Regensburg, Germany}%
\end{aug}

\setcounter{minitocdepth}{1}
\faketableofcontents
\minitoc
\end{frontmatter}
}

\section{Introduction} \label{sec:1}

Interfaces separating different regions in space frequently appear in
the natural and imaging sciences as well as in mathematics. 
In many situations these interfaces evolve in time and the
evolution laws contain curvature quantities.
Often an interface carries a surface energy and in the simplest
case the surface energy is given by the total surface area of the
interface. Frequently the surface energy decreases in time
and as the negative mean curvature is the first variation of the
surface area, the mean curvature naturally appears in the context of
evolving interfaces. In the simplest such situation the evolution of
the interface is the $L^2$--gradient flow of the surface area.
This leads to mean curvature flow, which will play a
prominent role in this \arxivyesno{work}{chapter}. 
Many recent applications involve
surface energies, which contain surface integrals of curvature
quantities. An example is the Willmore functional, which is the
integrated squared mean curvature. Evolution problems for surfaces
involving the first variation of the Willmore functional are
particularly challenging and will also be discussed in this work.

Often however, the evolution of the interface is given by laws which
couple quantities on the interface to quantities which are solutions
of partial differential equations in the bulk, i.e., in the surrounding 
regions. This happens for example in solidification phenomena in which
the temperature, which solves a diffusion equation in the liquid and
solid phases, influences the evolution of the interface. Other examples
are interfaces driven by a flow field, which is for example the
solution of a Stokes or Navier--Stokes system. These situations are
much more involved and lead to challenging problems in analysis and
computation.

This work deals with parametric finite element methods for evolving
interfaces. We first present the main ideas for simple geometric
evolution equations like the mean curvature flow. We will then couple
equations on the interface to bulk equations using an unfitted finite
element method, which uses a surface mesh that is independent from the
bulk mesh. As examples, solidification phenomena and problems arising in 
two-phase fluid flow will be discussed.
Upon introducing ideas on the approximation of the Willmore energy and the
associated Willmore flow, we end this \arxivyesno{work}{chapter}
with a discussion of the numerical approximation of evolution problems
for biomembranes. For other numerical approaches for geometric
partial differential equations, like level set methods, phase field methods
and other front tracking methods, we refer to the contributions
\arxivyesno{%
\cite{BanschS19handbook,Bartels19handbook,BonitoDN19handbook,DuF19handbook,%
SayeS19handbook,TurekO19handbook} in 
{\em Handbook of Numerical Analysis, Vol.~XXI \& XXII},}
{%
by B\"ansch and Schmidt; Bartels; Bonito, Demlow and Nochetto; Du and Feng; 
Saye and Sethian; Turek and Mierka in this handbook,}
and also to the review article \cite{DeckelnickDE05}.

\section{Geometry of surfaces} \label{sec:geometry}
In this section we review basic facts about surfaces, which will be 
necessary later on, when we develop numerical algorithms for curvature 
driven flows of hypersurfaces. We will define surfaces, differential 
operators on surfaces, important curvature quantities as well as a divergence 
theorem on hypersurfaces.
For a more detailed discussion of these themes we refer to
\cite{doCarmo76,DeckelnickDE05,Kuhnel15,Walker15}.
Readers familiar with these concepts may skip this section.

\subsection{Surfaces in $\bR^d$} \label{subsec:surfaces}

We will first define the term surface. Here, and throughout, let $d\geq2$.

\begin{definition} \label{def:hypersurf}
\rule{0pt}{0pt}
\begin{enumerate}
\item 
A subset $\Gamma \subset \bR^d$ is called an $n$-dimensional
$C^k$--surface, for $0\leq n \leq d$ and $k\geq 1$, 
if for every point $\vec p\in \Gamma$ there exists an open
neighbourhood $V\subset\bR^d$ of $\vec p$ and a
bijective $C^k$--map $\localpara:U \to V\cap\Gamma$, 
with $U\subset \bR^n$ open and connected, such that 
the Jacobian 
$\nabla\,\localpara = (\partial_1\,\localpara,\ldots,\partial_n\,\localpara)
: U\to\bR^{d\times n}$ of $\localpara$ has full rank in $U$.
\item The map $\localpara$ is called a local parameterization of $\Gamma$.
\item If $n=d-1$, then we call $\Gamma$ a hypersurface.
\end{enumerate}
\end{definition}

We now define what we mean by a mapping defined on a hypersurface being 
differentiable.

\begin{definition} \label{def:Ckf}
Let $\Gamma\subset \bR^d$ be an $n$-dimensional $C^k$--surface. 
A function $f:\Gamma \to \bR$ is a $C^k$--function, denoted by
$f\in C^k(\Gamma)$, 
if $f\circ \localpara :U \to \bR$ is of class $C^k$ for all 
$C^k$--parameterizations $\localpara : U\to \Gamma$.
\end{definition}

With the help of a local parameterization we can define a basis of the tangent space.

\begin{definition} \label{def:2.3}
Let $\Gamma$ be an $n$-dimensional $C^1$--surface with a local parameterization 
$\localpara:U \to \bR^d$. 
For $\vec p \in \Gamma$ let $\vec u\in U$ be such that 
$\vec p=\localpara (\vec u)$. The vectors
\[
\vec\partial_1=( \partial_1\,\localpara)(\vec u),\ldots, 
\vec\partial_n=(\partial_n\,\localpara) (\vec u)
\]
are linearly independent and span an $n$-dimensional space, which is called 
the tangent space and will be denoted by
\[ 
\tanspace_{\vec p}\,\Gamma = \spa \{\vec\partial_1, \ldots, \vec\partial_n\}\,.
\]
Its orthogonal complement 
$\normalspace_{\vec p}\,\Gamma =(\tanspace_{\vec p}\,\Gamma)^\perp$
in $\bR^d$ is called the normal space of $\Gamma$ at $\vec p$.
\end{definition}
Mainly oriented hypersurfaces will be of interest, which we define next.
\begin{definition}
A hypersurface $\Gamma$ is called orientable, if a continuous normal vector
field $\vec\nu :\Gamma \to \bS^{d-1}$, 
with $\vec\nu(\vec p) \in \normalspace_{\vec p}\,\Gamma$ for all 
$\vec p \in \Gamma$,
exists, where $\bS^{d-1}$ is the $(d-1)$-dimensional sphere in $\bR^d$.
\end{definition}

We will frequently use the following differential operators on a
surface $\Gamma$.

\begin{definition} \label{def:2.5}
Let $\Gamma\subset\bR^d$ be an $n$-dimensional $C^1$--surface, and let
$f:\Gamma\to\bR$, $\vec f:\Gamma\to\bR^d$ be $C^1$--functions.
\begin{enumerate}
\item \label{item:def2.5i}
For $\vec p\in\Gamma$ and $\vec\tau\in
\tanspace_{\vec p}\,\Gamma$ we define the directional derivative as
\begin{equation*}
(\partial_{\vec\tau}\, f)(\vec p) =
\lim_{\epsilon\to0} \frac{f(\vec y(\epsilon)) - f(\vec y(0))}{\epsilon}
= (f \circ \vec y)'(0)\,,
\end{equation*}
where $\vec y : (-1,1) \to \Gamma$ parameterizes a curve on $\Gamma$ 
with $\vec y(0)=\vec p$
and $\vec y\,'(0)=\vec\tau$.
\item \label{item:def2.5ii}
The surface gradient of $f$ on $\Gamma$ at the point
$\vec p\in\Gamma$ is defined as
\begin{equation*}
(\nabs\, f)(\vec p) = \sum_{i=1}^n (\partial_{\vec\tau_i}\,f)(\vec p)\, 
\vec\tau_i \,,
\end{equation*}
where $\{\vec\tau_1,\ldots,\vec\tau_n\}$ is an
orthonormal basis of $\tanspace_{\vec p}\,\Gamma$.
\item \label{item:def2.5iii}
The surface divergence of $\vec f$ at the point
$\vec p\in\Gamma$ is defined as
\begin{equation*}
(\nabs\,.\,\vec f)(\vec p) = \sum_{i=1}^n \vec\tau_i\,
.\, (\partial_{\vec\tau_i}\,\vec f)(\vec p)\,,
\end{equation*}
where $\{\vec\tau_1,\ldots,\vec\tau_n\}$ is again an
orthonormal basis of $\tanspace_{\vec p}\,\Gamma$.
\item \label{item:def2.5v}
The surface Jacobian of $\vec f$ at the point
$\vec p\in\Gamma$ is defined as
\[
(\nabs\,\vec f)(\vec p) = \sum_{i=1}^n
(\partial_{\vec\tau_i}\,\vec f)(\vec p) \otimes \vec\tau_i\,,
\]
where $\{\vec\tau_1,\ldots,\vec\tau_n\}$ is again an
orthonormal basis of $\tanspace_{\vec p}\,\Gamma$.
\item \label{item:def2.5LB}
If $\Gamma$ is a $C^2$--surface, and $f \in C^2(\Gamma)$, then
we define the Laplace--Beltrami operator and the surface Hessian
of $f$ on $\Gamma$ as
\begin{equation*}
\Delta_s\, f = \nabs\,.\,(\nabs\, f)\qquad\text{and}\qquad
\nabs^2\, f = \nabs\,(\nabs\, f)\,,
\end{equation*}
respectively.
\item \label{item:def2.5LBd}
The Laplace--Beltrami operator of $\vec f\in [C^2(\Gamma)]^d$,
for a $C^2$--surface $\Gamma$, is defined component-wise,
i.e.\ $(\Delta_s\,\vec f)\,.\,\vec\ek_i = \Delta_s\,(\vec f\,.\,\vec\ek_i)$,
$i = 1,\ldots,d$. 
Here, and throughout, we let $\vec\ek_i$ denote the $i$-th standard basis
vector in $\bR^d$.
\item \label{item:def2.5vi}
We define the generalized symmetrized surface Jacobian
\[
\mat D_s(\vec f) = \tfrac12\,\mat P_\Gamma
\,(\nabs\,\vec f + (\nabs\,\vec f)^\transT)\,\mat P_\Gamma 
\qquad\text{on }\Gamma\,,
\]
where $\mat P_\Gamma$ is the orthogonal projection onto the tangent space of 
$\Gamma$. I.e.\ 
\begin{equation} \label{eq:PGamma}
\mat P_\Gamma(\vec p) = \sum_{i=1}^n \vec\tau_i \otimes \vec\tau_i
\end{equation}
for an orthonormal basis $\{\vec\tau_1,\ldots,\vec\tau_n\}$ 
of $\tanspace_{\vec p}\,\Gamma$.
\item \label{item:def2.5tensor}
The surface divergence of a tensor $\mat f \in [C^1(\Gamma)]^{d\times d}$
is defined via \arxivyesno{}{\linebreak}%
$(\nabs\,.\,\mat f)\,.\,\vec\ek_i = \nabs\,.\,(\mat f^\transT\,\vec\ek_i)$
on $\Gamma$, for $i=1,\ldots,d$. 
\end{enumerate}
\end{definition}

\begin{rem} \label{rem:2.6}
\rule{0pt}{0pt}
\begin{enumerate}
\item \label{item:rem2.6i}
It is easy to check that {\rm Definition~\ref{def:2.5}\ref{item:def2.5i}}
does not depend on the choice of the parameterization $\vec y$.
Parameterizations $\vec y$ of curves, as $1$-dimensional surfaces,
are always assumed to be regular
in the sense that $\vec y$ is continuously differentiable and such
that $\vec y\,'\ne \vec 0$ holds everywhere, 
recall {\rm Definition~\ref{def:hypersurf}}. 
A curve is an equivalence class of regular parameterizations, 
see e.g.\ {\rm \citet[p.~8]{Kuhnel15}}, 
where the equivalence relation is given by parameter transformations. 
For simplicity, from now on, we will often also call a
parameterization $\vec y$ a curve. 
\item 
It is easy to check that {\rm Definitions~\ref{def:2.5}\ref{item:def2.5ii}},
{\rm \ref{def:2.5}\ref{item:def2.5iii}} and
{\rm \ref{def:2.5}\ref{item:def2.5v}} 
do not depend on the choice of the orthonormal basis 
of $\tanspace_{\vec p}\,\Gamma$.
\item \label{item:2.6vi} 
For $\vec p\in\Gamma$ it holds that 
\begin{alignat*}{2}
(\nabs\,\vec f)(\vec p)\,\vec\tau & = (\partial_{\vec\tau}\,\vec f)(\vec p) 
&& \quad\forall\ \vec\tau\in \tanspace_{\vec p}\,\Gamma \\
\quad\text{and}\quad
(\nabs\,\vec f)(\vec p)\,\vec\vvv & = \vec 0
&&\quad\forall\ \vec\vvv \in \normalspace_{\vec p}\,\Gamma\,.
\end{alignat*}
Moreover, it holds that the $i$-th row of $\nabs\,\vec f$ is
the surface gradient of the $i$-th component of $\vec f$,
i.e.\ $(\nabs\,\vec f)^\transT\,\vec\ek_i = \nabs\,(\vec f\,.\,\vec\ek_i)$,
$i = 1,\ldots,d$.
\item \label{item:Dsf}
We remark that the second projection $\mat P_\Gamma$ in 
{\rm Definition~\ref{def:2.5}\ref{item:def2.5vi}} ensures that
\[
(\mat D_s(\vec f))(\vec p)\,\vec\vvv = \vec 0 
\quad\forall\ \vec\vvv \in \normalspace_{\vec p}\,\Gamma\,,\ 
\vec p \in \Gamma\,, 
\]
similarly to \ref{item:2.6vi}.
The first projection then ensures that $(\mat D_s(\vec f))(\vec p)$ 
is symmetric.
\item \label{item:Pnabs}
It follows directly from {\rm Definitions~\ref{def:2.5}\ref{item:def2.5ii}},
{\rm \ref{def:2.5}\ref{item:def2.5v}} and \eqref{eq:PGamma} that
\[
\nabs\,f = \mat P_\Gamma\,\nabs\,f \quad\text{and}\quad
\nabs\,\vec f = (\nabs\, \vec f)\,\mat P_\Gamma \qquad\text{on }\Gamma\,.
\]
\item \label{item:PGamma}
We observe that if $\Gamma$ is a hypersurface, then
\[
\mat P_\Gamma = \mat\Id - \vec\nu\otimes\vec\nu \qquad\text{on }\Gamma\,,
\]
with $\vec\nu(p)$ denoting a unit vector in $\normalspace_{\vec p}\,\Gamma$
for $\vec p \in \Gamma$.
\end{enumerate}
\end{rem}

The following product rules and identities hold.
\begin{lem} \label{lem:productrules}
Let $\Gamma\subset\bR^d$ be an $n$-dimensional $C^1$--surface, and let
$f:\Gamma\to\bR$, $\vec f, \vec g:\Gamma\to\bR^d$
and $\mat f:\Gamma \to \bR^{d\times d}$ be $C^1$--functions.
Then it holds:
\rule{0pt}{0pt}
\begin{enumerate}
\item \label{item:pri}
\[
\nabs\,.\,(f\,\vec g) = \vec g\,.\,\nabs\,f + f\,\nabs\,.\,\vec g
\qquad\text{on } \Gamma\,,
\]
\item \label{item:prii}
\[
\nabs\,(\vec f\,.\,\vec g) = (\nabs\,\vec f)^\transT\,\vec g + (\nabs\,\vec
g)^\transT\,\vec f
\qquad\text{on } \Gamma\,,
\]
\item \label{item:priii}
\[
\nabs\,(f\,\vec g) = \vec g \otimes \nabs\,f+ f\,\nabs\,\vec g
\qquad\text{on } \Gamma\,,
\]
\item \label{item:priv}
\[
\nabs\,.\,(\mat f^\transT\,\vec g) = 
\vec g\,.\left(\nabs\,.\,\mat f\right) + \mat f : \nabs\,\vec g
\qquad\text{on } \Gamma\,,
\]
\item \label{item:trnabsf}
and
 \[
 \tr (\nabs\,\vec f) = \nabs\,.\,\vec f \qquad\text{on } \Gamma\,.
\]
\end{enumerate}
\begin{proof}
\ref{item:pri}--\ref{item:priv} are direct analogues of their flat
counterparts, and follow easily from Definition~\ref{def:2.5}.
\\
\ref{item:trnabsf}
Using the fact that the trace is invariant under basis changes, 
we obtain from {\rm Remark~\ref{rem:2.6}\ref{item:2.6vi}} and
Definition~\ref{def:2.5}\ref{item:def2.5iii} that for $\vec p \in \Gamma$
\[
\tr\left((\nabs\,\vec f)(\vec p)\right)
= \sum_{i=1}^n \vec\tau_i\,.\,(\nabs\,\vec f)(\vec p)\,\vec\tau_i
= \sum_{i=1}^n \vec\tau_i\,.\,(\partial_{\vec\tau_i}\,\vec f)(\vec p)
= (\nabs\,.\,\vec f)(\vec p)\,,
\]
where $\{\vec\tau_1,\ldots,\vec\tau_n\}$ is an orthonormal basis of 
$\tanspace_{\vec p}\,\Gamma$.
\end{proof}
\end{lem}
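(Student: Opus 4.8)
The plan is to prove each of the five identities by reducing them to their familiar Euclidean analogues, exploiting the fact that every surface differential operator in Definition~\ref{def:2.5} is built from directional derivatives $\partial_{\vec\tau_i}$ along an orthonormal basis $\{\vec\tau_1,\ldots,\vec\tau_n\}$ of $\tanspace_{\vec p}\,\Gamma$. The key observation is that $\partial_{\vec\tau_i}$ is a genuine derivation: it is linear and satisfies the ordinary Leibniz rule, since by Definition~\ref{def:2.5}\ref{item:def2.5i} it is the derivative along a curve $\vec y$ on $\Gamma$, so $\partial_{\vec\tau}(fg) = f\,\partial_{\vec\tau}g + g\,\partial_{\vec\tau}f$ follows from the one-variable product rule applied to $(fg)\circ\vec y = (f\circ\vec y)(g\circ\vec y)$. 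Once this is in hand, each identity is a bookkeeping exercise.

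For \ref{item:pri}, I would write $\nabs\,.\,(f\,\vec g) = \sum_i \vec\tau_i\,.\,\partial_{\vec\tau_i}(f\,\vec g)$ and expand via the Leibniz rule into $\sum_i \vec\tau_i\,.\,(\vec g\,\partial_{\vec\tau_i}f) + \sum_i f\,\vec\tau_i\,.\,\partial_{\vec\tau_i}\vec g$; the first sum is $\vec g\,.\,\sum_i(\partial_{\vec\tau_i}f)\,\vec\tau_i = \vec g\,.\,\nabs\,f$ by Definition~\ref{def:2.5}\ref{item:def2.5ii}, and the second is $f\,\nabs\,.\,\vec g$. Identity \ref{item:prii} follows analogously by differentiating the scalar product $\vec f\,.\,\vec g$ along each $\vec\tau_i$ and recognizing, via Remark~\ref{rem:2.6}\ref{item:2.6vi}, that $\partial_{\vec\tau_i}\vec f = (\nabs\,\vec f)\,\vec\tau_i$, so that $\sum_i[(\partial_{\vec\tau_i}\vec f)\,.\,\vec g]\,\vec\tau_i = (\nabs\,\vec f)^\transT\vec g$ after summing. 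Identity \ref{item:priii} is the tensor-valued counterpart, expanding $\nabs(f\,\vec g) = \sum_i \partial_{\vec\tau_i}(f\,\vec g)\otimes\vec\tau_i$ and splitting the Leibniz terms into $\vec g\otimes\nabs f$ and $f\,\nabs\vec g$. For \ref{item:priv} I would apply the definition of the tensor divergence (Definition~\ref{def:2.5}\ref{item:def2.5tensor}) componentwise, reduce to \ref{item:pri} applied to each row, and reassemble using the contraction $\mat f : \nabs\,\vec g = \sum_{i} (\mat f^\transT\vec\ek_i)\,.\,\nabs(\vec g\,.\,\vec\ek_i)$; some care with which index the transpose acts on will be needed here.

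Identity \ref{item:trnabsf} is the one already demonstrated in the excerpt, and it serves as the template: the trace is basis-independent, so evaluating $\tr(\nabs\,\vec f)$ against the orthonormal tangent frame turns it into $\sum_i \vec\tau_i\,.\,(\nabs\,\vec f)\,\vec\tau_i = \sum_i \vec\tau_i\,.\,\partial_{\vec\tau_i}\vec f = \nabs\,.\,\vec f$. The main obstacle I anticipate is purely notational rather than conceptual: keeping the transpose placements, the tensor-product conventions (row versus column of $\nabs\,\vec f$), and the contraction ``$:$'' consistent across \ref{item:prii}--\ref{item:priv}, especially distinguishing $(\nabs\,\vec f)^\transT\vec g$ from $(\nabs\,\vec f)\,\vec g$ using Remark~\ref{rem:2.6}\ref{item:2.6vi}. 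Once the derivation property of $\partial_{\vec\tau_i}$ is established, no genuine analytical difficulty remains, which is presumably why the authors simply remark that these are direct analogues of their flat counterparts.
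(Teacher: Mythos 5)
Your proposal is correct and takes essentially the same route as the paper: parts \ref{item:pri}--\ref{item:priv} are exactly the ``direct analogues of the flat counterparts'' the paper has in mind, obtained by unwinding Definition~\ref{def:2.5} together with the Leibniz rule for the directional derivatives $\partial_{\vec\tau_i}$, and your argument for \ref{item:trnabsf} reproduces the paper's trace computation verbatim. The only point worth making explicit in \ref{item:prii} is that $(\nabs\,\vec f)^\transT\,\vec g$ is tangential (by Remark~\ref{rem:2.6}\ref{item:Pnabs} its columns are surface gradients), so the tangential projection $\sum_i [\,\cdot\,.\,\vec\tau_i]\,\vec\tau_i$ built into the definition of $\nabs\,(\vec f\,.\,\vec g)$ discards nothing.
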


It is often convenient to consider alternative representations of the
differential operators introduced in Definition~\ref{def:2.5}.

\begin{rem} \label{rem:altforms}
\rule{0pt}{0pt}
\begin{enumerate}
\item \label{item:altformg}
For a local parameterization $\localpara : U\to\Gamma$ one can
define the first fundamental form, or metric tensor, 
$(g_{ij})_{i,j=1,\ldots,n}$ as
\begin{equation*}
g_{ij} = \vec\partial_i\,.\,\vec\partial_j =
 \partial_i\,\localpara\,.\,\partial_j\,\localpara\,,
\end{equation*}
and a little linear algebra shows that
\begin{align*}
(\nabs\, f) \circ \localpara
&= \sum^n_{i,j=1} g^{ij}\,\partial_i\,(f\circ\localpara)\,\vec\partial_j
\qquad\text{in } U\,,\\
(\nabs\,.\,\vec f)\circ\localpara
&= \sum^n_{i,j=1}
g^{ij}\,\partial_i\,(\vec f\circ\localpara)\,.\,\vec\partial_j
\qquad\text{in } U\,,
\end{align*}
where $(g^{ij})_{i,j=1,\ldots,n}$ is the inverse of
$(g_{ij})_{i,j=1,\ldots,n}$. Moreover, it holds that
\begin{equation*}
(\Delta_s\, f) \circ \localpara = \frac{1}{\sqrt{g}} \sum^n_{i,j=1} \partial_i\,
(\sqrt{g}\, g^{ij}\,\partial_j\,(f \circ \localpara) )
\qquad\text{in } U\,,
\end{equation*}
where
\begin{equation} \label{eq:detg}
g=\det\left((g_{ij})_{i,j=1,\ldots,n}\right)
\end{equation}
is the square of the local area element, see {\rm\citet[\S{}XI.6]{AmannE09}}.
\item \label{item:partialsd}
Denoting by 
$\nabs = (\partial_{s_1}, \ldots, \partial_{s_d})^\transT$ the surface 
gradient on $\Gamma$, it holds that 
$\nabs\,.\,\vec f = \sum_{i=1}^d \partial_{s_i}\,(\vec f\,.\,\vec\ek_i)$,
$\nabs\,\vec f = ( \partial_{s_j}\,\vec f\,.\,\vec\ek_i )_{i,j=1}^d$ and 
$\nabs^2\,f = ( \partial_{s_j}\partial_{s_i}\,f$ $)_{i,j=1}^d$.
Moreover, it follows from {\rm Lemma~\ref{lem:productrules}\ref{item:trnabsf}}
that the Laplace--Bel\-trami operator satisfies 
$\Delta_s = \sum_{i=1}^d \partial_{s_i}^2$.
\item \label{item:eq:grad}
If $\Gamma$ is an orientable hypersurface with normal vector field
$\vec\nu$, we obtain
for an extension of the functions $f$ and $\vec f$ to an open
neighbourhood of $\Gamma$ the formulas
\begin{equation*} 
\nabs\,f= \nabla\, f - (\nabla\,f\,.\,\vec\nu)\,\vec\nu
\quad\text{and}\quad
\nabs\, .\,\vec f =
\nabla\, .\,\vec f - ((\nabla\,\vec f)\,\vec\nu)\,.\,\vec\nu
\qquad\text{on }\Gamma\,,
\end{equation*}
where $\nabla$ and $\nabla\,.$ are the gradient and the divergence in
$\bR^d$. These identities follow directly from
{\rm Definition~\ref{def:2.5}\ref{item:def2.5ii}},\ref{item:def2.5iii} 
and the representation of $\nabla\, f$ and
$\nabla\,.\,\vec f$ with the help of an orthonormal basis.
\end{enumerate}
\end{rem}

\begin{lem} \label{lem:nabsid}
Let $\Gamma$ be a $C^1$--hypersurface.
Then the identity map $\vec\id$ satisfies the following.
\begin{enumerate}
\item \label{item:nabsid}
\begin{equation*}
\nabs\,\vec\id = \mat P_\Gamma 
\qquad\text{on } \Gamma\,.
\end{equation*}
\item \label{item:nabsidnabsf}
For $\vec f \in [C^1(\Gamma)]^d$ it holds that
\[
\nabs\,\vec\id : \nabs\,\vec f = \nabs\,.\,\vec f 
\qquad\text{on } \Gamma\,,
\]
where $\mat A:\mat B=\tr (\mat A^\transT\,\mat B)$ 
denotes the Hilbert--Schmidt inner product for matrices $\mat A$ and $\mat B$.
\item \label{item:Deltasid}
If $\Gamma$ is an orientable $C^2$--surface
with normal vector field $\vec\nu$, then
\[
\Delta_s\,\vec\id = - \,(\nabs\,.\,\vec\nu)\, \vec\nu 
\qquad\text{on } \Gamma\,.
\]
\end{enumerate}
\end{lem}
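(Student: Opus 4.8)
The plan is to treat the three parts in turn, in each case reducing to the definitions in Definition~\ref{def:2.5} together with the identities already collected in Remark~\ref{rem:2.6} and Lemma~\ref{lem:productrules}. For \ref{item:nabsid}, I would compute the directional derivative of $\vec\id$ directly from Definition~\ref{def:2.5}\ref{item:def2.5i}: if $\vec y$ parameterizes a curve on $\Gamma$ with $\vec y(0)=\vec p$ and $\vec y\,'(0)=\vec\tau_i$, then $\partial_{\vec\tau_i}\,\vec\id = (\vec\id\circ\vec y)'(0) = \vec y\,'(0) = \vec\tau_i$. Substituting this into the surface Jacobian of Definition~\ref{def:2.5}\ref{item:def2.5v} gives $\nabs\,\vec\id = \sum_{i=1}^n \vec\tau_i\otimes\vec\tau_i$, which is exactly $\mat P_\Gamma$ by \eqref{eq:PGamma}.

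For \ref{item:nabsidnabsf}, I would use \ref{item:nabsid} to write $\nabs\,\vec\id:\nabs\,\vec f = \mat P_\Gamma:\nabs\,\vec f$ and then expand the projection in an orthonormal basis of $\tanspace_{\vec p}\,\Gamma$. Since $(\vec a\otimes\vec b):\mat B = \vec a\,.\,\mat B\,\vec b$, each term contributes $\vec\tau_i\,.\,(\nabs\,\vec f)\,\vec\tau_i$. By Remark~\ref{rem:2.6}\ref{item:2.6vi} one has $(\nabs\,\vec f)\,\vec\tau_i = \partial_{\vec\tau_i}\,\vec f$, so the sum becomes $\sum_{i=1}^n \vec\tau_i\,.\,\partial_{\vec\tau_i}\,\vec f$, which is $\nabs\,.\,\vec f$ by Definition~\ref{def:2.5}\ref{item:def2.5iii}. (Equivalently, one could combine \ref{item:nabsid} with Lemma~\ref{lem:productrules}\ref{item:trnabsf} after observing that $\mat P_\Gamma:\nabs\,\vec f = \tr(\nabs\,\vec f)$.)

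For \ref{item:Deltasid}, I would work component-wise as dictated by Definition~\ref{def:2.5}\ref{item:def2.5LBd}, so that $(\Delta_s\,\vec\id)\,.\,\vec\ek_i = \Delta_s\,x_i$, where $x_i = \vec\id\,.\,\vec\ek_i$ is the $i$-th coordinate function. Since $x_i$ extends to $\bR^d$ with full gradient $\nabla\,x_i = \vec\ek_i$, the extension formula in Remark~\ref{rem:altforms}\ref{item:eq:grad} gives $\nabs\,x_i = \vec\ek_i - (\vec\nu\,.\,\vec\ek_i)\,\vec\nu$. Taking a further surface divergence, the constant part drops out and the product rule Lemma~\ref{lem:productrules}\ref{item:pri} yields $\Delta_s\,x_i = -\,\vec\nu\,.\,\nabs\,(\vec\nu\,.\,\vec\ek_i) - (\vec\nu\,.\,\vec\ek_i)\,\nabs\,.\,\vec\nu$. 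Here is the one point that needs care: the surface gradient $\nabs\,(\vec\nu\,.\,\vec\ek_i)$ is tangential by Remark~\ref{rem:2.6}\ref{item:Pnabs} and hence orthogonal to $\vec\nu$, so the first term vanishes, leaving $\Delta_s\,x_i = -(\nabs\,.\,\vec\nu)\,(\vec\nu\,.\,\vec\ek_i)$. Reassembling over $i$ gives $\Delta_s\,\vec\id = -(\nabs\,.\,\vec\nu)\,\vec\nu$. The main obstacle is bookkeeping rather than depth: one must invoke the $C^2$ hypothesis precisely so that $\vec\nu$, and thus $\nabs\,(\vec\nu\,.\,\vec\ek_i)$, is well defined, and one must respect that the vector Laplace--Beltrami is the component-wise operator of Definition~\ref{def:2.5}\ref{item:def2.5LBd}, not a naive divergence of the matrix $\nabs\,\vec\id$.
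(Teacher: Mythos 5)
Your proposal is correct and follows essentially the same route as the paper: part \ref{item:nabsid} from Definitions~\ref{def:2.5}\ref{item:def2.5i},\ref{item:def2.5v} and \eqref{eq:PGamma}, part \ref{item:nabsidnabsf} via the trace identity (your direct expansion simply inlines the proof of Lemma~\ref{lem:productrules}\ref{item:trnabsf}, which the paper cites), and part \ref{item:Deltasid} component-wise using Remark~\ref{rem:altforms}\ref{item:eq:grad}, the product rule and the orthogonality of $\nabs\,(\vec\nu\,.\,\vec\ek_i)$ to $\vec\nu$. No gaps.
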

\begin{proof}
\ref{item:nabsid}
The result follows from 
Definitions~\ref{def:2.5}\ref{item:def2.5v}, \ref{def:2.5}\ref{item:def2.5i} %
and \eqref{eq:PGamma}.
\\
\ref{item:nabsidnabsf}
It follows from \ref{item:nabsid}, Remark~\ref{rem:2.6}\ref{item:2.6vi}
and Lemma~\ref{lem:productrules}\ref{item:trnabsf} that
$\nabs\,\vec\id : \nabs\,\vec f = \tr (\nabs\,\vec f) = \nabs\,.\,\vec f$.
\\
\ref{item:Deltasid}
Using Remark~\ref{rem:altforms}\ref{item:eq:grad}, and on recalling 
{\rm Definition~\ref{def:2.5}\ref{item:def2.5LBd}},\ref{item:def2.5LB},
we compute for $i=1,\ldots,d$ 
\begin{align*}
(\Delta_s\,\vec\id)\,.\,\vec\ek_i & =
\Delta_s\,(\vec\id\,.\,\vec\ek_i) = 
\nabs\,.\left[\nabs\,(\vec\id\,.\,\vec\ek_i)\right] = \nabs\,.\left[
\vec\ek_i-(\vec\ek_i\,.\,\vec\nu)\,\vec\nu\right] 
\nonumber \\ &
= -\nabs\,.\left[(\vec\nu\,.\,\vec\ek_i)\,\vec\nu\right]
= -(\vec\nu\,.\,\vec\ek_i)\,\nabs\,.\,\vec\nu 
\qquad\text{on } \Gamma\,,
\end{align*}
where we have noted that $\nabs\,(\vec\nu\,.\,\vec\ek_i)$ 
is orthogonal to $\vec\nu$.
\end{proof}

\subsection{Curvature} \label{subsec:curvature}

We now define the fundamental curvature quantities
needed for the material presented in the remainder of this 
\arxivyesno{work}{chapter}.

\newcommand{\onbtau}{\vec{\mathfrak t}}
\begin{definition} \label{def:Wp}
Let $\Gamma$ be an orientable $C^2$--hypersurface
with normal vector field $\vec\nu$.
\begin{enumerate}
\item \label{item:Wpi}
The Weingarten map at $\vec p\in\Gamma$ is defined through
\begin{equation*}
\Wein_{\vec p} : \tanspace_{\vec p}\,\Gamma \to \tanspace_{\vec p}\,\Gamma\,,\quad
\Wein_{\vec p}(\vec\tau) = -(\partial_{\vec\tau}\,\vec\nu)(\vec p)\,.
\end{equation*}
\item 
The corresponding bilinear form is called the second
fundamental form and is given by
\begin{equation*}
\II_{\vec p} (\vec\tau_1,\vec\tau_2) = \Wein_{\vec p}(\vec\tau_1)\,.\,\vec\tau_2 
= -(\partial_{\vec\tau_1}\,\vec\nu)(\vec p)\,.\,\vec\tau_2
\end{equation*}
for all $\vec\tau_1,\vec\tau_2 \in \tanspace_{\vec p}\,\Gamma$,
$\vec p\in\Gamma$.
It can be shown that $\Wein_{\vec p}$ is self-adjoint, which implies
that $\II_{\vec p}$ is symmetric, see e.g.\ {\rm\citet[\S3.9]{Kuhnel15}}.
Hence there exists an orthonormal basis
$\{\onbtau_1,\ldots,\onbtau_{d-1}\}$ of $\tanspace_{\vec p}\,\Gamma$, 
consisting of eigenvectors of $\Wein_{\vec p}$ with corresponding eigenvalues
$\varkappa_1,\ldots,\varkappa_{d-1}$.
\end{enumerate}
\end{definition}

\begin{definition} \label{def:2.8}
\rule{0pt}{0pt}
\begin{enumerate}
\item The eigenvalues $\varkappa_1,\ldots,\varkappa_{d-1}$ of
$\Wein_{\vec p}$ are called the principal curvatures of $\Gamma$ at $\vec p$.
\item \label{item:def2.8ii}
The mean curvature $\varkappa$ of $\Gamma$ at $\vec p$ is
defined to be the trace of $\Wein_{\vec p}$, i.e.\
\begin{equation*}
\varkappa(\vec p) = \tr \Wein_{\vec p}=\varkappa_1+\ldots+\varkappa_{d-1}\,.
\end{equation*}
\item \label{item:def2.8iii}
The mean curvature vector $\vec\varkappa$ of $\Gamma$ at $\vec p$
is defined as
\begin{equation*}
\vec\varkappa(\vec p)=\varkappa(\vec p)\,\vec\nu(\vec p)\,.
\end{equation*}
\item \label{item:def2.8iv}
For $d=3$ the Gaussian curvature $\Gauss$ at $\vec p$ is the
determinant of $\Wein_{\vec p}$, i.e.\ the product of the principal
curvatures. This means we set
\begin{equation*}
\Gauss(\vec p) = \det \Wein_{\vec p} = \varkappa_1\,\varkappa_2\,.
\end{equation*}
\end{enumerate}
\end{definition}

\begin{lem} \label{lem:nabsnu}
Let $\Gamma$ be an orientable $C^2$--hypersurface with normal vector field
$\vec\nu$.
The Jacobian $(\nabs\,\vec\nu)(\vec p)$, for $\vec p \in \Gamma$, 
induces a self-adjoint linear map from $\bR^d$ to $\bR^d$ that collapses to
$-\Wein_{\vec p}$ on $\tanspace_{\vec p}\,\Gamma$ and that maps 
$\normalspace_{\vec p}\,\Gamma$ to zero. As a consequence, the following hold:
\begin{enumerate}
\item \label{item:nabsnunu}
\[
(\nabs\,\vec\nu)^\transT\,\vec\nu = (\nabs\,\vec\nu)\,\vec\nu = \vec 0
 \qquad\text{on }\Gamma\,,
\]
\item \label{item:nabsnuT}
\[
(\nabs\,\vec\nu)^\transT = \nabs\,\vec\nu  \qquad\text{on }\Gamma\,,
\]
\item \label{item:trnabsnu}
\[
\varkappa = -\tr(\nabs\,\vec\nu)
\qquad\text{on }\Gamma\,,
\]
\item \label{item:nabsnu2}
and
\[
|\nabs\,\vec\nu|^2 = \nabs\,\vec\nu:\nabs\,\vec\nu =
\varkappa_1^2 + \cdots + \varkappa_{d-1}^2
\qquad\text{on }\Gamma\,.
\]
\end{enumerate}
\end{lem}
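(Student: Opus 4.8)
The plan is to determine the structure of $\nabs\,\vec\nu$ as a linear endomorphism of $\bR^d$ and then to read off \ref{item:nabsnunu}--\ref{item:nabsnu2} by diagonalising in a well-chosen orthonormal basis. I would start with the action on tangent and normal vectors. By Remark~\ref{rem:2.6}\ref{item:2.6vi}, for every $\vec\tau\in\tanspace_{\vec p}\,\Gamma$ one has $(\nabs\,\vec\nu)(\vec p)\,\vec\tau = (\partial_{\vec\tau}\,\vec\nu)(\vec p)$, whereas $(\nabs\,\vec\nu)(\vec p)\,\vec\vvv = \vec 0$ for $\vec\vvv\in\normalspace_{\vec p}\,\Gamma$; the latter is precisely the claim that $\nabs\,\vec\nu$ annihilates the normal space. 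Invoking Definition~\ref{def:Wp}\ref{item:Wpi}, the former reads $(\nabs\,\vec\nu)(\vec p)\,\vec\tau = -\Wein_{\vec p}(\vec\tau)$, so the induced map collapses to $-\Wein_{\vec p}$ on $\tanspace_{\vec p}\,\Gamma$. Differentiating $\vec\nu\,.\,\vec\nu\equiv1$ along a curve with tangent $\vec\tau$ gives $(\partial_{\vec\tau}\,\vec\nu)\,.\,\vec\nu=0$, confirming that the image of $\nabs\,\vec\nu$ lies in $\tanspace_{\vec p}\,\Gamma$, consistent with $\Wein_{\vec p}$ being tangent-valued.

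For the self-adjointness, that is, assertion \ref{item:nabsnuT}, and for the remaining identities I would fix the orthonormal basis $\{\onbtau_1,\ldots,\onbtau_{d-1},\vec\nu\}$ of $\bR^d$, where $\{\onbtau_1,\ldots,\onbtau_{d-1}\}$ is the eigenbasis of $\Wein_{\vec p}$ with eigenvalues $\varkappa_1,\ldots,\varkappa_{d-1}$ furnished by Definition~\ref{def:Wp}. Then $(\nabs\,\vec\nu)\,\onbtau_i=-\varkappa_i\,\onbtau_i$ and $(\nabs\,\vec\nu)\,\vec\nu=\vec 0$, so in this basis $\nabs\,\vec\nu$ is represented by $\diag(-\varkappa_1,\ldots,-\varkappa_{d-1},0)$. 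Being diagonal, this matrix is symmetric, which yields $(\nabs\,\vec\nu)^\transT=\nabs\,\vec\nu$ and shows that the induced map is self-adjoint.

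The three remaining claims are then immediate from this diagonal representation. Claim \ref{item:nabsnunu} holds since $\vec\nu\in\normalspace_{\vec p}\,\Gamma$ gives $(\nabs\,\vec\nu)\,\vec\nu=\vec 0$, and \ref{item:nabsnuT} upgrades this to $(\nabs\,\vec\nu)^\transT\,\vec\nu=\vec 0$ as well. For \ref{item:trnabsnu} I would take the trace, $\tr(\nabs\,\vec\nu)=-(\varkappa_1+\cdots+\varkappa_{d-1})=-\varkappa$ by Definition~\ref{def:2.8}\ref{item:def2.8ii}. Finally \ref{item:nabsnu2} follows because, using \ref{item:nabsnuT}, $\nabs\,\vec\nu:\nabs\,\vec\nu=\tr\bigl((\nabs\,\vec\nu)^\transT\,\nabs\,\vec\nu\bigr)=\tr\bigl((\nabs\,\vec\nu)^2\bigr)$ equals the sum $\varkappa_1^2+\cdots+\varkappa_{d-1}^2$ of the squared eigenvalues. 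The only substantive input is the self-adjointness of $\Wein_{\vec p}$, already recorded in Definition~\ref{def:Wp}; once the eigenbasis is extended by $\vec\nu$, each assertion reduces to inspecting a diagonal matrix, so I anticipate no real obstacle beyond this bookkeeping.
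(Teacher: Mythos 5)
Your proof is correct and follows essentially the same route as the paper: both decompose the action of $(\nabs\,\vec\nu)(\vec p)$ on $\tanspace_{\vec p}\,\Gamma$ and $\normalspace_{\vec p}\,\Gamma$ via Remark~\ref{rem:2.6}\ref{item:2.6vi} and Definition~\ref{def:Wp}, and both conclude \ref{item:trnabsnu} and \ref{item:nabsnu2} by extending the orthonormal eigenbasis of $\Wein_{\vec p}$ by $\vec\nu(\vec p)$. The only (harmless) difference is that you obtain the symmetry \ref{item:nabsnuT} as a corollary of orthogonal diagonalizability, whereas the paper first checks $(\nabs\,\vec\nu)^\transT\,\vec\nu = \tfrac12\,\nabs\,|\vec\nu|^2 = \vec 0$ directly and matches the map with its transpose on tangent vectors; note also that your eigenvalues $-\varkappa_1,\ldots,-\varkappa_{d-1},0$ carry the correct sign, where the paper's proof has a small sign slip.
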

\begin{proof}
Let $\vec p \in \Gamma$.
It follows from {\rm Remark~\ref{rem:2.6}\ref{item:2.6vi}} and 
{\rm Definition~\ref{def:Wp}} that
$\Wein_{\vec p}(\vec\tau) = $ \linebreak $- (\nabs\,\vec\nu)(\vec p)\,\vec\tau
= -\left((\nabs\,\vec\nu)(\vec p)\right)^\transT \vec\tau$ for all 
$\vec\tau \in \tanspace_{\vec p}\,\Gamma$.
Moreover, on 
denoting $\vec\nu = (\nu_1,\ldots,\nu_d)^\transT$, 
and on noting from Definition~\ref{def:2.5}\ref{item:def2.5v}
that the $i$-th column of $(\nabs\,\vec\nu)^\transT$ is 
$\nabs\,\nu_i$, we have 
that
\[
(\nabs\,\vec\nu)^\transT\,\vec\nu
= \sum_{i=1}^d \nu_i\,\nabs\,\nu_i 
= \tfrac12\,\nabs\,|\vec\nu|^2 = 0\,.
\]
Combining this with Remark~\ref{rem:2.6}\ref{item:2.6vi}
yields that 
\arxivyesno{$(\nabs\,\vec\nu)(\vec p)\,\vec\nu = 
\left((\nabs\,\vec\nu)(\vec p)\right)^\transT \vec\nu = \vec 0$,}
{$(\nabs\,\vec\nu)(\vec p)\,\vec\nu = 
\left((\nabs\,\vec\nu)(\vec p)\right)^\transT \vec\nu $ $= \vec 0$,}
and so the linear map induced by $(\nabs\,\vec\nu)(\vec p)$ is self-adjoint.
This proves \ref{item:nabsnunu} and \ref{item:nabsnuT}. Furthermore, if
$\{\onbtau_1,\ldots,\onbtau_{d-1}\}$ 
is an orthonormal basis of $\tanspace_{\vec p}\,\Gamma$, consisting of
eigenvectors of $\Wein_{\vec p}$ with corresponding eigenvalues
$\varkappa_1,\ldots,\varkappa_{d-1}$, then
$\{\onbtau_1,\ldots,\onbtau_{d-1},\vec\nu(\vec p)\}$ is an orthonormal basis of
$\bR^d$, consisting of
eigenvectors of $(\nabs\,\vec\nu)(\vec p)$ with corresponding eigenvalues
$\varkappa_1,\ldots,\varkappa_{d-1},0$. This implies \ref{item:trnabsnu},
on recalling Definition~\ref{def:2.8}\ref{item:def2.8ii}, 
and \ref{item:nabsnu2}.
\end{proof}

The next lemma shows that while the sign of the mean curvature $\varkappa$
depends on the choice of the  unit normal $\vec\nu$, the mean curvature vector 
$\vec\varkappa$ is an invariant under the change of the sign of the normal.

\begin{lem} \label{lem:varkappa}
Let $\Gamma$ be an orientable $C^2$--hypersurface with normal vector field
$\vec\nu$. The following formulas for the mean curvature and the mean
curvature vector hold true.
\begin{enumerate}
\item \label{item:varkappa}
For the mean curvature it holds that
\begin{equation*}
\varkappa = -\nabs\,.\,\vec\nu \qquad\text{on } \Gamma\,.
\end{equation*}
\item \label{item:vecvarkappa}
For the mean curvature vector it holds that
\begin{equation*}
\vec\varkappa = \varkappa\,\vec\nu =\Delta_s\,\vec\id
\qquad\text{on } \Gamma\,.
\end{equation*}
\end{enumerate}
\end{lem}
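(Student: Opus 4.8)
The plan is to obtain both identities by chaining together results already established in Lemmas~\ref{lem:productrules}, \ref{lem:nabsid} and \ref{lem:nabsnu}, so that essentially no new computation is required; the substantive geometric work has already been carried out in those preceding lemmas.

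For part~\ref{item:varkappa}, I would start from Lemma~\ref{lem:nabsnu}\ref{item:trnabsnu}, which gives $\varkappa = -\tr(\nabs\,\vec\nu)$ on $\Gamma$. I would then apply the trace identity of Lemma~\ref{lem:productrules}\ref{item:trnabsf} with $\vec f = \vec\nu$, namely $\tr(\nabs\,\vec\nu) = \nabs\,.\,\vec\nu$, to conclude that $\varkappa = -\nabs\,.\,\vec\nu$.

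For part~\ref{item:vecvarkappa}, the first equality $\vec\varkappa = \varkappa\,\vec\nu$ is simply the definition of the mean curvature vector, recall Definition~\ref{def:2.8}\ref{item:def2.8iii}. For the second equality I would invoke Lemma~\ref{lem:nabsid}\ref{item:Deltasid}, which states $\Delta_s\,\vec\id = -(\nabs\,.\,\vec\nu)\,\vec\nu$ on $\Gamma$, and then substitute part~\ref{item:varkappa} to replace $-\nabs\,.\,\vec\nu$ by $\varkappa$, yielding $\Delta_s\,\vec\id = \varkappa\,\vec\nu$.

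There is no genuine obstacle here, since every ingredient is already available; the only care needed is with signs and the orientation dependence flagged in the sentence preceding the statement. Reversing the orientation flips the sign of both $\varkappa$ and $\vec\nu$, so their product $\vec\varkappa$ is orientation-independent, fully consistent with its expression as $\Delta_s\,\vec\id$, which manifestly involves no choice of normal. I would note this briefly to reconcile the sign-dependent formula in \ref{item:varkappa} with the sign-free one in \ref{item:vecvarkappa}.
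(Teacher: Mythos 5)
Your proof is correct and follows exactly the paper's own argument: part~\ref{item:varkappa} via Lemma~\ref{lem:nabsnu}\ref{item:trnabsnu} combined with Lemma~\ref{lem:productrules}\ref{item:trnabsf}, and part~\ref{item:vecvarkappa} via Definition~\ref{def:2.8}\ref{item:def2.8iii}, Lemma~\ref{lem:nabsid}\ref{item:Deltasid} and part~\ref{item:varkappa}. The closing remark on orientation independence is a correct (if optional) addition that matches the paper's own commentary preceding the lemma.
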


\begin{proof}
\ref{item:varkappa} This follows from Lemma~\ref{lem:nabsnu}\ref{item:trnabsnu} and Lemma~\ref{lem:productrules}\ref{item:trnabsf}.
\\
\ref{item:vecvarkappa}
The result follows directly from \ref{item:varkappa},
Definition~\ref{def:2.8}\ref{item:def2.8iii} and
\arxivyesno{Lemma~\ref{lem:nabsid}\ref{item:Deltasid}.}
{Lemma\linebreak \ref{lem:nabsid}\ref{item:Deltasid}.}
\end{proof}

\begin{lem} \label{lem:divf} 
Let $\Gamma$ be an orientable $C^2$--hypersurface with normal vector field
$\vec\nu$. Let $\vec f \in [C^1(\Gamma)]^d$ and set
$\vec f_\subT = \mat P_\Gamma\,\vec f$ on $\Gamma$. Then it holds that
\begin{enumerate}
\item \label{item:divfi}
\begin{equation*}
\nabs\,.\,\vec f
= -(\vec f\,.\,\vec\nu)\,\varkappa + \nabs\,.\,
\vec f_\subT\qquad\text{on }\Gamma\,,
\end{equation*}
\item \label{item:divfii}
\begin{equation*} 
\nabs\,\vec f = 
(\vec f\,.\,\vec\nu)\,\nabs\,\vec\nu
+\vec\nu\otimes\nabs\,(\vec f\,.\,\vec\nu)
+\nabs\,\vec f_\subT\qquad\text{on } \Gamma\,,
\end{equation*}
\item \label{item:divfiii}
\begin{equation*}
 \mat D_s(\vec f) = (\vec f\,.\,\vec\nu)\,\nabs\,\vec\nu
+\tfrac12\, (\mat P_\Gamma\,\nabs\,\vec f_\subT
+(\nabs\,\vec f_\subT)^\transT \mat P_\Gamma)
\qquad\text{on } \Gamma\,.
\end{equation*}
\end{enumerate}
\end{lem}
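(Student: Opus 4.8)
The plan is to decompose the vector field $\vec f$ into its normal and tangential parts, $\vec f = (\vec f\,.\,\vec\nu)\,\vec\nu + \vec f_\subT$, and then to apply the product rules from Lemma~\ref{lem:productrules} together with the identities for $\nabs\,\vec\nu$ from Lemma~\ref{lem:nabsnu} and the curvature formula $\varkappa = -\nabs\,.\,\vec\nu$ from Lemma~\ref{lem:varkappa}\ref{item:varkappa}. The three parts are linked: I would establish \ref{item:divfii} first, since it is the finest statement, then obtain \ref{item:divfi} by taking the trace and \ref{item:divfiii} by symmetrizing and projecting.

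For \ref{item:divfii} I would start from the splitting $\vec f = (\vec f\,.\,\vec\nu)\,\vec\nu + \vec f_\subT$ and apply the surface Jacobian, using linearity and the product rule Lemma~\ref{lem:productrules}\ref{item:priii} on the scalar-times-vector term $(\vec f\,.\,\vec\nu)\,\vec\nu$. This gives $\nabs\,[(\vec f\,.\,\vec\nu)\,\vec\nu] = \vec\nu \otimes \nabs\,(\vec f\,.\,\vec\nu) + (\vec f\,.\,\vec\nu)\,\nabs\,\vec\nu$, which accounts for the first two terms on the right-hand side, with the remaining term $\nabs\,\vec f_\subT$ coming directly from the tangential part. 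This step is essentially bookkeeping once the product rule is invoked.

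For \ref{item:divfi} I would take the trace of the identity in \ref{item:divfii}, using Lemma~\ref{lem:productrules}\ref{item:trnabsf} to convert $\tr(\nabs\,\cdot)$ into $\nabs\,.\,(\cdot)$. The term $\tr(\vec\nu \otimes \nabs\,(\vec f\,.\,\vec\nu)) = \vec\nu\,.\,\nabs\,(\vec f\,.\,\vec\nu)$ vanishes because a surface gradient is tangential (Remark~\ref{rem:2.6}\ref{item:Pnabs}) and hence orthogonal to $\vec\nu$. The term $\tr((\vec f\,.\,\vec\nu)\,\nabs\,\vec\nu) = (\vec f\,.\,\vec\nu)\,\tr(\nabs\,\vec\nu) = -(\vec f\,.\,\vec\nu)\,\varkappa$ by Lemma~\ref{lem:nabsnu}\ref{item:trnabsnu}, producing exactly the claimed curvature term, while the tangential part yields $\nabs\,.\,\vec f_\subT$.

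For \ref{item:divfiii}, the definition $\mat D_s(\vec f) = \tfrac12\,\mat P_\Gamma\,(\nabs\,\vec f + (\nabs\,\vec f)^\transT)\,\mat P_\Gamma$ means I would substitute \ref{item:divfii} and simplify using that $\mat P_\Gamma$ annihilates $\vec\nu$. The term $\vec\nu \otimes \nabs\,(\vec f\,.\,\vec\nu)$ and its transpose both get killed after multiplication by $\mat P_\Gamma$ on the side carrying the $\vec\nu$ factor, so the gradient-of-normal-component term drops out entirely. For the curvature term, I would use that $\nabs\,\vec\nu$ is already symmetric (Lemma~\ref{lem:nabsnu}\ref{item:nabsnuT}) and tangential on both sides (Lemma~\ref{lem:nabsnu}\ref{item:nabsnunu}), so $\mat P_\Gamma\,\nabs\,\vec\nu\,\mat P_\Gamma = \nabs\,\vec\nu$, leaving $(\vec f\,.\,\vec\nu)\,\nabs\,\vec\nu$ unchanged. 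The tangential contribution gives the symmetrized projected term. The main obstacle, though entirely routine, is keeping the projection bookkeeping straight for the mixed terms: one must verify carefully that $\mat P_\Gamma\,(\vec\nu \otimes \vec a)\,\mat P_\Gamma = \vec 0$ and that the remaining $\mat P_\Gamma$ factors act trivially on the already-tangential objects, so that no spurious terms survive.
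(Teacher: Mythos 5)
Your proposal is correct and takes essentially the same approach as the paper: decompose $\vec f = (\vec f\,.\,\vec\nu)\,\vec\nu + \vec f_\subT$, apply the product rules of Lemma~\ref{lem:productrules}, and deduce \ref{item:divfiii} from \ref{item:divfii} using that $\nabs\,\vec\nu$ is symmetric, annihilates $\vec\nu$, and that the surface Jacobian satisfies $\nabs\,\vec f = (\nabs\,\vec f)\,\mat P_\Gamma$. The only cosmetic difference is that you obtain \ref{item:divfi} by taking the trace of \ref{item:divfii} (via Lemma~\ref{lem:productrules}\ref{item:trnabsf} and Lemma~\ref{lem:nabsnu}\ref{item:trnabsnu}), whereas the paper proves it directly from the divergence product rule Lemma~\ref{lem:productrules}\ref{item:pri}; both are equally valid one-line arguments.
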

\begin{proof}
\ref{item:divfi}
Using Lemma~\ref{lem:productrules}\ref{item:pri},
Definition~\ref{def:2.5}\ref{item:def2.5ii} and
Lemma~\ref{lem:varkappa}\ref{item:varkappa}, we compute that
\[
\nabs\,.\,\vec f
=\nabs\,.\left((\vec f\,.\,\vec\nu)\,\vec\nu\right) 
+ \nabs\,.\,\vec f_\subT = (\vec f\,.\,\vec\nu)
\,\nabs\,.\,\vec\nu +\nabs\,.\,\vec f_\subT 
= - (\vec f\,.\,\vec\nu)\,\varkappa + \nabs\,.\,\vec f_\subT\,.
\]
\ref{item:divfii} 
Similarly, it follows from Lemma~\ref{lem:productrules}\ref{item:priii} that
\[
\nabs\,\vec f
=\nabs\left((\vec f\,.\,\vec\nu)\,\vec\nu\right) 
+ \nabs\,\vec f_\subT 
= (\vec f\,.\,\vec\nu)\,\nabs\,\vec\nu
+\vec\nu\otimes\nabs\,(\vec f\,.\,\vec\nu)
+\nabs\,\vec f_\subT\,.
\]
\ref{item:divfiii} 
The desired result follows immediately from \ref{item:divvii},
{\rm Definitions~\ref{def:2.5}\ref{item:def2.5v}}, \arxivyesno{}{\linebreak}%
\ref{def:2.5}\ref{item:def2.5vi} and the fact that
$\nabs\,\vec\nu$ is a symmetric mapping that maps tangent
vectors to tangent vectors, recall Lemma~\ref{lem:nabsnu}.
\end{proof}

In the derivation of some relevant formulas it is sometimes helpful to
be able to extend functions defined on a hypersurface $\Gamma$ to a
neighbourhood of $\Gamma$. This then frequently allows us to use the
calculus in $\bR^d$ to compute for quantities on the surface,
recall Remark~\ref{rem:altforms}\ref{item:eq:grad}. 
Let $\Gamma\subset\bR^d$ be a compact orientable $C^2$--hypersurface 
without boundary and with normal vector field $\vec\nu$.
For $\delta>0$ we define a tubular neighbourhood
\begin{equation*} 
\mathcal{N}_\delta = \{\vec z\in\bR^d : \vec z = \vec p +
\eta\,\vec\nu(\vec p),\ \vec p\in\Gamma,\ |\eta|<\delta\}
\end{equation*}
of $\Gamma$. In \citet[Appendix 14.6]{GilbargT83} it is shown that
there is a bijective relation between $\vec z\in \mathcal{N}_\delta$ and
$(\vec p,\eta)\in \Gamma\times(-\delta,\delta)$, provided that $\delta$
is small enough. We can hence define the functions 
$\vec\Pi_\Gamma(\vec z) = \vec p$ 
and $d_{\Gamma}(\vec z) = \eta$, 
and it turns out that $|d_\Gamma(\vec z)|$ is the distance of
$\vec z$ to $\Gamma$, where $d_\Gamma(\vec z)$ is positive if $\vec z$ lies
on the side towards which $\vec\nu$ is pointing, and negative
otherwise. We call $d_\Gamma$ the signed distance function to $\Gamma$,
and $\vec\Pi_\Gamma(\vec z)$ is the projection of $\vec z$ onto $\Gamma$, 
i.e.\ 
\begin{equation*}
\vec\Pi_\Gamma(\vec z) = \argmin_{\vec y\in\Gamma} |\vec y-\vec z|\,.
\end{equation*}
For later use, we recall from 
\citet[Appendix 14.6]{GilbargT83} that, for $\delta$ sufficiently small, 
\begin{subequations} \label{eq:dGamma}
\begin{equation} \label{eq:dGammaa}
d_\Gamma \in C^2(\mathcal{N}_\delta)\,.
\end{equation}
It also holds that
\begin{equation} \label{eq:graddist}
\nabla\,d_\Gamma = \vec\nu \circ \vec\Pi_\Gamma \quad 
\Rightarrow \quad\left|\nabla\,d_\Gamma\right|=1 
\quad\text{in } \mathcal{N}_\delta\,,
\end{equation}
\end{subequations}
which can be shown as follows. It clearly holds that
\[
\vec\id =\vec\Pi_\Gamma + d_\Gamma\,\vec\nu \circ \vec\Pi_\Gamma
\quad\text{and} \quad
d_\Gamma = \left(\vec\id - \vec\Pi_\Gamma\right) 
.\left(\vec\nu \circ \vec\Pi_\Gamma\right) 
\quad\text{in } \mathcal{N}_\delta\,.
\]
Differentiating the second identity, and observing the first, yields 
for $k=1,\ldots,d$
\begin{align*}
\partial_k\, d_\Gamma & = \left(\vec\ek_k - \partial_k\,\vec\Pi_\Gamma
\right) . \left(\vec\nu \circ \vec\Pi_\Gamma\right)
- \left(\vec\id - \vec\Pi_\Gamma\right) .\,
 \partial_k \left(\vec\nu \circ \vec\Pi_\Gamma\right) \nonumber \\ & 
= \vec\ek_k\,.\,\vec\nu \circ \vec\Pi_\Gamma
- d_\Gamma\left(\vec\nu \circ \vec\Pi_\Gamma\right) . \,
 \partial_k \left(\vec\nu \circ \vec\Pi_\Gamma\right) \nonumber \\ & 
= \vec\ek_k\,.\,\vec\nu \circ \vec\Pi_\Gamma
- d_\Gamma\,\tfrac12\,\partial_k \left|\vec\nu \circ \vec\Pi_\Gamma\right|^2
= \vec\ek_k\,.\,\vec\nu \circ \vec\Pi_\Gamma
\quad\text{in } \mathcal{N}_\delta\,,
\end{align*}
where we have noted that $\partial_k\,\vec\Pi_\Gamma$
is tangential in $\mathcal{N}_\delta$.
This proves (\ref{eq:graddist}).

We now extend a function $f$ defined on $\Gamma$ to $\mathcal{N}_\delta$, 
for $\delta$ sufficiently small, via 
\begin{equation} \label{eq:extendf}
f = f \circ \vec\Pi_\Gamma
\quad\text{in } \mathcal{N}_\delta\,.
\end{equation}
This means that we extend $f$ constantly in the normal direction, and so
we obtain from Remark~\ref{rem:altforms}\ref{item:eq:grad} that
\begin{equation} \label{eq:nablanabs}
\nabla\,f \,.\,\vec\nu = 0\,, \quad\text{and hence}\quad
\nabla\, f = \nabs\, f \qquad\text{on } \Gamma\,.
\end{equation}

In the flat case the Schwarz theorem yields that the
Hessian is symmetric. In the curved case, however, 
this is no longer the case. In fact, the following result holds.

\begin{lem} \label{lem:surfhess}
Let $\Gamma$ be an orientable $C^2$--hypersurface with normal vector field
$\vec\nu$, and let $f \in C^2(\Gamma)$.
For the surface Hessian it holds that
\begin{equation*}
\nabs^2\,f - (\nabs^2\,f)^\transT =
[(\nabs\,\vec\nu)\,\nabs\,f]\otimes
\vec\nu-\vec\nu\otimes[(\nabs\,\vec\nu)\,\nabs\,f] 
\qquad\text{on } \Gamma\,.
\end{equation*}
\end{lem}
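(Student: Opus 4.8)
The plan is to pass to the ambient space by extending $f$ off $\Gamma$ and to read off the surface Hessian in Cartesian components, so that the asymmetry can be traced entirely to the normal direction. First I would take the constant-in-normal extension $f = f\circ\vec\Pi_\Gamma$ of \eqref{eq:extendf}, so that $\nabla\,f\,.\,\vec\nu = 0$ holds throughout the tubular neighbourhood $\mathcal{N}_\delta$ (a neighbourhood version of \eqref{eq:nablanabs}), and extend the normal by $\vec\nu = \nabla\,d_\Gamma$ as in \eqref{eq:graddist}. The latter choice is crucial: since $|\nabla\,d_\Gamma| = 1$ in $\mathcal{N}_\delta$, the ambient Jacobian of $\vec\nu$ is symmetric, $\partial_i\,\nu_j = \partial_i\partial_j\,d_\Gamma = \partial_j\,\nu_i$, and $\sum_m \nu_m\,\partial_m\,\nu_l = \tfrac12\,\partial_l\,|\vec\nu|^2 = 0$, i.e.\ $\partial_{\vec\nu}\,\vec\nu = \vec 0$.

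Next I would compute the entries of $\nabs^2\,f$ in these coordinates. By Remark~\ref{rem:altforms}\ref{item:partialsd},\ref{item:eq:grad} and $\nabs\,f = \nabla\,f$ on $\Gamma$, the ambient function $\partial_i\,f$ is a valid extension of the surface function $\partial_{s_i}\,f$, whence
\begin{equation*}
(\nabs^2\,f)_{ij} = \partial_{s_j}\,\partial_{s_i}\,f
= \partial_i\partial_j\,f - \nu_j\sum_{k=1}^d \nu_k\,\partial_i\partial_k\,f
\qquad\text{on }\Gamma\,.
\end{equation*}
Subtracting the transposed entry and cancelling $\partial_i\partial_j\,f = \partial_j\partial_i\,f$ by the ambient Schwarz theorem leaves $(\nabs^2 f - (\nabs^2 f)^\transT)_{ij} = \nu_i\,b_j - \nu_j\,b_i$, where $b_i := \sum_k \nu_k\,\partial_i\partial_k\,f$. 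Thus the entire content of the lemma reduces to identifying $\vec b$ with $-(\nabs\,\vec\nu)\,\nabs\,f$, and only up to multiples of $\vec\nu$, since those are invisible to the antisymmetric combination $\nu_i\,b_j - \nu_j\,b_i$.

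To identify $\vec b$, I would differentiate the neighbourhood identity $\sum_k \nu_k\,\partial_k\,f = 0$ with respect to $\partial_i$, which gives $b_i = -\sum_k (\partial_i\,\nu_k)\,\partial_k\,f$. On the other side, writing $\vec w = (\nabs\,\vec\nu)\,\nabs\,f$ and invoking the symmetry $(\nabs\,\vec\nu)^\transT = \nabs\,\vec\nu$ from Lemma~\ref{lem:nabsnu}\ref{item:nabsnuT}, one has $w_i = \sum_l (\partial_{s_i}\,\nu_l)\,(\nabs f)_l$; the two extension properties above then collapse $\partial_{s_i}\,\nu_l$ to $\partial_i\,\nu_l$ and $(\nabs f)_l$ to $\partial_l\,f$ on $\Gamma$, so that $w_i = \sum_l (\partial_i\,\nu_l)\,\partial_l\,f = -b_i$. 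Substituting $\vec b = -\vec w$ into $\nu_i\,b_j - \nu_j\,b_i$ yields exactly $w_i\,\nu_j - w_j\,\nu_i$, which is the $(i,j)$ entry of $\vec w\otimes\vec\nu - \vec\nu\otimes\vec w$, as claimed.

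The main obstacle I anticipate is bookkeeping rather than depth. One must respect that surface derivatives do not commute, and keep careful track of which identities hold only on $\Gamma$ versus throughout $\mathcal{N}_\delta$: in particular $\sum_k \nu_k\,\partial_k\,f = 0$ must hold in the \emph{neighbourhood} in order to be differentiable in $\partial_i$, which is precisely why the constant normal extension is the right one. The reduction of surface derivatives of $\vec\nu$ to ambient ones rests on both the symmetry of $\nabs\,\vec\nu$ and the eikonal consequence $\partial_{\vec\nu}\,\vec\nu = \vec 0$, and getting the transpose/antisymmetrisation indices right is the only genuinely error-prone step.
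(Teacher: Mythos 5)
Your proof is correct and follows essentially the same route as the paper's: both arguments extend $f$ (and $\vec\nu$) constantly in the normal direction, write the surface Hessian entries as ambient second derivatives plus a normal correction term, cancel the symmetric part via the ambient Schwarz theorem, and identify the correction with $(\nabs\,\vec\nu)\,\nabs\,f$ by differentiating the identity $\nabla\,f\,.\,\vec\nu=0$ and invoking the symmetry of $\nabs\,\vec\nu$ from Lemma~\ref{lem:nabsnu}\ref{item:nabsnuT}. The only differences are cosmetic — you antisymmetrise first and differentiate the orthogonality identity with ambient derivatives throughout the tubular neighbourhood, whereas the paper manipulates the mixed surface derivatives directly on $\Gamma$ — so nothing of substance separates the two arguments.
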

\begin{proof}
Recalling from Remark~\ref{rem:altforms}\ref{item:partialsd}
the notation $\nabs = (\partial_{s_1}, \ldots, \partial_{s_d})^\transT$, 
and denoting $\vec\nu = (\nu_1,\ldots,\nu_d)^\transT$, 
the claim can be equivalently written as 
\begin{equation} \label{eq:partialsij}
\partial_{s_j}\,\partial_{s_i}\,f - 
\partial_{s_i}\,\partial_{s_j}\,f =
[(\nabs\,\vec\nu)\,\nabs\,f]_i\,\nu_j
- [(\nabs\,\vec\nu)\,\nabs\,f]_j\,\nu_i \qquad\text{on } \Gamma\,,
\end{equation}
for all $i,j \in\{1,\ldots,d\}$.
In order to prove this,
we extend $f$ to a neighbourhood of $\Gamma$ as in (\ref{eq:extendf}).
It follows from (\ref{eq:nablanabs}) that 
$\partial_{s_i}(\nabla\, f \,.\,\vec\nu)=0$ on $\Gamma$,
and so we compute, using Remark~\ref{rem:altforms}\ref{item:eq:grad},
for all $i,j=1,\ldots,d$ that
\[
\partial_{s_i}\,\partial_{s_j}\,f = \partial_{s_i}\, 
(\partial_j\,f - (\nabla\,f \,.\,\vec\nu)\,\nu_j)
= \partial_{s_i}\,\partial_j\,f
= \partial_i\,\partial_j\,f - (\nabla\,\partial_j\,f\,.\,\vec\nu)\,\nu_i \,.
\]
In addition we have, on using the Schwarz theorem
and on extending $\vec\nu$ to the neighbourhood of $\Gamma$ similarly to
(\ref{eq:extendf}), 
and so (\ref{eq:nablanabs}) yields 
$\partial_j\,\vec\nu = \partial_{s_j}\,\vec\nu$,
that
\begin{align*}
0&= \partial_{s_j}\,(\nabla\,f\,.\,\vec\nu) 
=\partial_j\, (\nabla\,f \,.\,\vec\nu)
- \left( \nabla\, (\nabla\,f \,.\,\vec\nu)\,.\,\vec\nu\right) \nu_j \\
&= \nabla\,\partial_j\,f\,.\,\vec\nu  + \nabla\,f\,.\,\partial_j\,\vec\nu
- \left( \nabla\, (\nabla\,f \,.\,\vec\nu)\,.\,\vec\nu\right) \nu_j 
= \nabla\,\partial_j\,f\,.\,\vec\nu  + \nabs\,f\,.\,\partial_{s_j}\,\vec\nu
 \,.
\end{align*}
Combining the above with the Schwarz theorem and 
Lemma~\ref{lem:nabsnu}\ref{item:nabsnuT} yields that
\begin{align*}
\partial_{s_j}\,\partial_{s_i}\,f - \partial_{s_i}\,\partial_{s_j}\,f & 
= (\nabs\,f\,.\,\partial_{s_i}\,\vec\nu)\,\nu_j 
- (\nabs\,f\,.\,\partial_{s_j}\,\vec\nu)\,\nu_i \\ &
= (\nabs\,f\,.\,\nabs\,\nu_i)\,\nu_j - (\nabs\,f\,.\,\nabs\,\nu_j)\,\nu_i \\
& = [ (\nabs\,\vec\nu)\,\nabs\, f ]_i\,\nu_j -
[ (\nabs\,\vec\nu)\,\nabs\, f ]_j\,\nu_i\,.
\end{align*}
This proves (\ref{eq:partialsij}).
\end{proof}

\begin{lem} \label{lem:Deltasnu}
Let $\Gamma$ be an orientable $C^3$--hypersurface with normal vector field
$\vec\nu$. It holds that
\[
\nabs\,\varkappa = - \Delta_s\,\vec\nu - |\nabs\,\vec\nu|^2\,\vec\nu
\qquad\text{on } \Gamma\,.
\]
\end{lem}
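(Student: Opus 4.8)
The plan is to prove the identity componentwise, working throughout in the notation $\nabs = (\partial_{s_1},\ldots,\partial_{s_d})^\transT$ and $\Delta_s = \sum_{k=1}^d \partial_{s_k}^2$ from Remark~\ref{rem:altforms}\ref{item:partialsd}. Writing $\vec\nu = (\nu_1,\ldots,\nu_d)^\transT$, the $i$-th component of the claim reads $\partial_{s_i}\,\varkappa = -\Delta_s\,\nu_i - |\nabs\,\vec\nu|^2\,\nu_i$, since $(\Delta_s\,\vec\nu)\,.\,\vec\ek_i = \Delta_s\,\nu_i$ by Definition~\ref{def:2.5}\ref{item:def2.5LBd}. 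Starting from $\varkappa = -\nabs\,.\,\vec\nu = -\sum_{k=1}^d \partial_{s_k}\,\nu_k$ (Lemma~\ref{lem:varkappa}\ref{item:varkappa}), I would differentiate tangentially to obtain $\partial_{s_i}\,\varkappa = -\sum_{k=1}^d \partial_{s_i}\,\partial_{s_k}\,\nu_k$, so that the whole task reduces to rewriting $\sum_k \partial_{s_i}\,\partial_{s_k}\,\nu_k$ as $\Delta_s\,\nu_i + |\nabs\,\vec\nu|^2\,\nu_i$.

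The crucial step is to exchange the order of the two surface derivatives, which on a curved hypersurface produces curvature terms. Here I would invoke the surface Hessian commutator \eqref{eq:partialsij} of Lemma~\ref{lem:surfhess}, applied to $f = \nu_k$; this is legitimate precisely because $\Gamma$ is assumed $C^3$, so that $\vec\nu$, and hence each $\nu_k$, lies in $C^2(\Gamma)$. Choosing the indices so that the left-hand side becomes $\partial_{s_i}\,\partial_{s_k}\,\nu_k - \partial_{s_k}\,\partial_{s_i}\,\nu_k$ and summing over $k$ gives $\sum_k(\partial_{s_i}\,\partial_{s_k}\,\nu_k - \partial_{s_k}\,\partial_{s_i}\,\nu_k) = \nu_i\sum_k [(\nabs\,\vec\nu)\,\nabs\,\nu_k]_k - \sum_k \nu_k\,[(\nabs\,\vec\nu)\,\nabs\,\nu_k]_i$. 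Expanding the bracket via $[(\nabs\,\vec\nu)\,\nabs\,\nu_k]_i = \sum_m \partial_{s_m}\,\nu_i\,\partial_{s_m}\,\nu_k$, the first sum collapses to $\sum_{k,m}(\partial_{s_m}\,\nu_k)^2 = |\nabs\,\vec\nu|^2$ (recall Lemma~\ref{lem:nabsnu}\ref{item:nabsnu2}), while the second sum carries the factor $\sum_k \nu_k\,\partial_{s_m}\,\nu_k = \tfrac12\,\partial_{s_m}\,|\vec\nu|^2 = 0$, since $|\vec\nu|\equiv1$. Hence $\sum_k \partial_{s_i}\,\partial_{s_k}\,\nu_k = \sum_k \partial_{s_k}\,\partial_{s_i}\,\nu_k + |\nabs\,\vec\nu|^2\,\nu_i$.

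It then remains to identify $\sum_k \partial_{s_k}\,\partial_{s_i}\,\nu_k$ with $\Delta_s\,\nu_i$. For this I would use the symmetry of the Jacobian of the normal, $\partial_{s_i}\,\nu_k = \partial_{s_k}\,\nu_i$, which is Lemma~\ref{lem:nabsnu}\ref{item:nabsnuT} read entrywise; differentiating this $C^1$ identity tangentially with $\partial_{s_k}$ and summing yields $\sum_k \partial_{s_k}\,\partial_{s_i}\,\nu_k = \sum_k \partial_{s_k}\,\partial_{s_k}\,\nu_i = \Delta_s\,\nu_i$. Combining the last three relations gives $\partial_{s_i}\,\varkappa = -\Delta_s\,\nu_i - |\nabs\,\vec\nu|^2\,\nu_i$, which is the assertion. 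I expect the only genuinely delicate point to be the bookkeeping in the commutator step: getting the index placement in Lemma~\ref{lem:surfhess} right and recognising that one of the two resulting sums equals $|\nabs\,\vec\nu|^2$ while the other annihilates thanks to $|\vec\nu|\equiv1$; the remaining manipulations are routine once the order of differentiation has been swapped.
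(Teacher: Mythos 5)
Your proposal is correct and follows essentially the same argument as the paper's proof: a componentwise computation combining the surface Hessian commutator (Lemma~\ref{lem:surfhess}), the symmetry of $\nabs\,\vec\nu$ (Lemma~\ref{lem:nabsnu}\ref{item:nabsnuT}) and $\varkappa=-\nabs\,.\,\vec\nu$, with the term $\sum_k \nu_k\,\partial_{s_m}\nu_k=\tfrac12\,\partial_{s_m}|\vec\nu|^2=0$ killing the unwanted contribution (the paper cites Lemma~\ref{lem:nabsnu}\ref{item:nabsnunu} for the same fact). The only cosmetic difference is that you start from $\nabs\,\varkappa$ and apply the commutator before the symmetry, whereas the paper starts from $\Delta_s\,\vec\nu$ and applies them in the opposite order.
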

\begin{proof}
Using the same notation as in the proof of Lemma~\ref{lem:surfhess},
it follows from 
Lemma \ref{lem:nabsnu}\ref{item:nabsnuT}, Lemma~\ref{lem:surfhess},
Lemma~\ref{lem:varkappa}\ref{item:varkappa} and 
Lemma~\ref{lem:nabsnu}\ref{item:nabsnunu} that, for $j=1,\ldots,d$,
\begin{align*}
\Delta_s \,\nu_j & = \sum_{i=1}^d \partial_{s_i}\,\partial_{s_i}\,\nu_j
= \sum_{i=1}^d \partial_{s_i}\,\partial_{s_j}\,\nu_i \nonumber \\ &
= \sum_{i=1}^d \left(\partial_{s_j}\,\partial_{s_i}\,\nu_i
- [(\nabs\,\vec\nu)\,\nabs\,\nu_i]_i\,\nu_j
+ [(\nabs\,\vec\nu)\,\nabs\,\nu_i]_j\,\nu_i\right)  \nonumber \\ &
= \partial_{s_j}\,(\nabs\,.\,\vec\nu)
- \nu_j\,\sum_{i=1}^d |\nabs\,\nu_i|^2
+ (\nabs\,\nu_j)\,.\,\sum_{i=1}^d (\nabs\,\nu_i)\,\nu_i \nonumber \\ &
= \partial_{s_j}\,(\nabs\,.\,\vec\nu) - \nu_j\,|\nabs\,\vec\nu|^2
+ (\nabs\,\nu_j)\,.\,(\nabs\,\vec\nu)^\transT\,\vec\nu
= - \partial_{s_j}\,\varkappa - \nu_j\,|\nabs\,\vec\nu|^2\,.
\end{align*}
This yields the claim.
\end{proof}
  
\subsection{The divergence theorem}

\begin{definition} \label{def:boundary}
\rule{0pt}{0pt}
\begin{enumerate}
\item \label{item:def2.16i}
A subset $\Gamma\subset\bR^d$ is called an
$n$-dimensional $C^k$--surface with boundary,
for $1\leq n \leq d$ and $k\geq1$, 
if for each point $\vec p\in\Gamma$ one of the following conditions 
is satisfied.
\begin{enumerate}
\item \label{item:2.10i}
There exist an open neighbourhood $V\subset \bR^d$ of $\vec p$ and
a bijective $C^k$--map $\localpara:U\to V\cap\Gamma$, with
$U\subset\bR^n$ open and connected, such that the Jacobian
$\nabla\,\localpara : U \to \bR^{d\times n}$ of $\localpara$ has full rank
in $U$.
\item \label{item:2.10ii}
There exist open and connected sets $U\subset\bR^n$,
$V\subset\bR^d$, with $\vec0 \in U$ and $\vec p\in V$, and an injective
$C^k$--map $\localpara: U\to\bR^d$, such that $\localpara(\vec0) = \vec p$ 
and
\begin{equation*}
\localpara(U\cap (\bR^{n-1} \times \bRgeq)) = V \cap \Gamma \,.
\end{equation*}
\end{enumerate}
\item \label{item:2.10iii}
The maps $\localpara:U\to\bR^d$ in \ref{item:2.10i} and \ref{item:2.10ii} 
are called local parameterizations of $\Gamma$. In the latter case, we call
$\localpara$ a local boundary parameterization of $\Gamma$.
\item \label{item:2.10iv}
If $n=d-1$, then we call $\Gamma$ a hypersurface with boundary.
\end{enumerate}
\end{definition}

\begin{rem} 
\rule{0pt}{0pt}
\begin{enumerate}
\item 
A boundary point of $\Gamma$ is a point on
$\Gamma$ for which the second condition in the definition is
fulfilled. The set of all boundary points is called
the boundary of $\Gamma$ and is denoted by $\partial\Gamma$.
\item 
If $\Gamma$ is an $n$-dimensional surface with boundary,
its boundary $\partial\Gamma$ is either empty or an
$(n-1)$-dimensional surface without boundary, see
{\rm\citet[\S3.1]{AgricolaF02}} for details.
\item
If $\partial\Gamma$ is empty, we say that $\Gamma$ is 
a surface without boundary. If, in addition, $\Gamma$ is compact, then
we call $\Gamma$ a closed surface. Here we note that
{\rm Definition~\ref{def:boundary}} 
implies that any bounded hypersurface is compact.
\item
All the definitions and results in {\rm \S\ref{subsec:surfaces}} and
{\rm \S\ref{subsec:curvature}} have been stated for $\vec p \in \Gamma
\setminus\partial\Gamma$. However, they easily generalize to
$\vec p \in \partial\Gamma$ for a surface with boundary. The only required
changes are as follows. In {\rm Definition~\ref{def:Ckf}}, $U$ is replaced by
$U \cap (\bR^{n-1}\times \bRgeq)$
for local boundary parameterizations $\localpara$, and similarly in
{\rm Remark~\ref{rem:altforms}\ref{item:altformg}}. 
In addition, in {\rm Definition~\ref{def:2.5}\ref{item:def2.5i}}, 
we choose a curve $\vec y : (-1,0] \to \Gamma$, or
$\vec y : [0,1) \to \Gamma$, and take the natural one-sided limit
in the definition of $(\partial_{\vec\tau}\, f)(\vec p)$.
\end{enumerate}
\end{rem}

\begin{definition} \label{def:int}
Let $\Gamma$ be an $n$-dimensional $C^1$--surface in $\bR^d$,
$f:\Gamma \to\bR$ a function and let $\localpara : U \to \Gamma$ be a
local parameterization of $\Gamma\setminus\partial\Gamma$. 
Then, on recalling \eqref{eq:detg}, we define
\begin{equation}\label{eq:int}
\int_{\localpara(U)} f \dH{n} = \int_U f\circ\localpara\,\sqrt{g} \dL{n}\,,
\end{equation}
for all functions $f$ such that $f\circ\localpara$ is
integrable, where $\mathcal{L}^n$ is the $n$-dimensional Lebesgue measure.
For a local boundary parameterization $\localpara$ of $\Gamma$, 
we replace $U$ with
$U \cap (\bR^{n-1}\times \bRgeq)$ in {\rm (\ref{eq:int})}.
The integral $\int_\Gamma f \dH{n}$ is defined using a partition of unity and 
the definition \eqref{eq:int}, see {\rm\citet[\S{}XII.1]{AmannE09}} for details.
The induced measure, defined by $\mathcal{H}^{n}(\Gamma) = \int_\Gamma
1\dH{n}$, is called the $n$-dimensional Hausdorff measure in $\bR^d$.
\end{definition}

\begin{definition} \label{def:conormal}
For a $C^1$--hypersurface $\Gamma$ with boundary it
holds that for $\vec p\in\partial\Gamma$ the tangent space
$\tanspace_{\vec p}\,\partial\Gamma$ is $(d-2)$-dimensional,
$\tanspace_{\vec p}\,\Gamma$ is $(d-1)$-dimensional and
$\tanspace_{\vec p}\,\partial\Gamma\subset \tanspace_{\vec p}\,\Gamma$. 
We can hence choose a unique vector $\conormal(\vec p)
\in \tanspace_{\vec p}\,\Gamma$, 
which we call the outer unit conormal, such that
\begin{enumerate}
\item 
$|\conormal(\vec p)|=1$,
\item \label{item:conormalii}
$\conormal(\vec p) \in \normalspace_{\vec p}\,\partial\Gamma$,
\item 
there exists a curve $\vec y:(-1,0]\to\Gamma$ on $\Gamma$ 
with $\vec y(0)=\vec p$ and $\vec y\,'(0)=\conormal(\vec p)$.
\end{enumerate}
\end{definition}

We will frequently use the following generalization of the divergence
theorem on hypersurfaces.

\begin{thm} \label{thm:div}
Let $\Gamma$ be a compact orientable $C^2$--hypersurface 
with normal vector field $\vec\nu$, and let $\vec f \in [C^1(\Gamma)]^d$. 
Then it holds that
\begin{equation*}
\int_\Gamma \nabs\,.\,\vec f+\varkappa\,\vec f\, .\,\vec\nu 
\dH{d-1} = \int_{\partial\Gamma} \vec f\,.\,\conormal \dH{d-2}\,,
\end{equation*}
where $\conormal$ is the outer unit conormal to $\partial\Gamma$.
\end{thm}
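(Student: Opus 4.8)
The plan is to prove the divergence theorem on hypersurfaces by reducing it to the classical divergence theorem in $\bR^{d-1}$ via local parameterizations, after first decomposing $\vec f$ into its tangential and normal parts. The key observation is Lemma~\ref{lem:divf}\ref{item:divfi}, which tells us that $\nabs\,.\,\vec f + \varkappa\,\vec f\,.\,\vec\nu = \nabs\,.\,\vec f_\subT$, where $\vec f_\subT = \mat P_\Gamma\,\vec f$ is the tangential part of $\vec f$. This is a crucial simplification: the integrand on the left-hand side is \emph{exactly} the surface divergence of a purely tangential vector field. The normal contribution $\varkappa\,\vec f\,.\,\vec\nu$ is precisely what is needed to absorb the curvature correction, so that the statement reduces to the tangential divergence theorem
\[
\int_\Gamma \nabs\,.\,\vec f_\subT \dH{d-1} = \int_{\partial\Gamma} \vec f_\subT\,.\,\conormal \dH{d-2}\,,
\]
where I have also used that $\vec f\,.\,\conormal = \vec f_\subT\,.\,\conormal$, since $\conormal$ is tangential to $\Gamma$.

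\emph{First} I would reduce to the local case using a partition of unity subordinate to a finite cover of the compact hypersurface $\Gamma$ by parameterization patches; by linearity it suffices to prove the identity when $\vec f_\subT$ is supported in a single patch $\localpara(U)$. \emph{Second}, for an interior patch (one parameterized as in Definition~\ref{def:hypersurf}, away from $\partial\Gamma$) the boundary integral vanishes and I must show $\int_{\localpara(U)} \nabs\,.\,\vec f_\subT \dH{d-1} = 0$; \emph{third}, for a boundary patch (parameterized over $U \cap (\bR^{n-1}\times\bRgeq)$ as in Definition~\ref{def:boundary}\ref{item:2.10ii}) I must recover the conormal boundary term. In both cases I pull back to coordinates: writing $\vec f_\subT = \sum_{j} a^j\,\vec\partial_j$ in the tangent frame, the representations in Remark~\ref{rem:altforms}\ref{item:altformg} together with the area element $\sqrt g$ from \eqref{eq:detg} convert $\nabs\,.\,\vec f_\subT$ into the coordinate divergence $\frac{1}{\sqrt g}\sum_j \partial_j(\sqrt g\,a^j)$, so that $\int (\nabs\,.\,\vec f_\subT)\sqrt g \dL{n} = \int \sum_j \partial_j(\sqrt g\, a^j)\dL{n}$ is a genuine coordinate divergence with no metric factors left over.

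\emph{The hard part} will be handling the boundary term correctly, i.e.\ verifying that applying the classical divergence theorem in the half-space coordinates produces exactly $\int_{\partial\Gamma}\vec f\,.\,\conormal\dH{d-2}$ with the right orientation and the right density. After the pullback, Gauss's theorem in $\bR^{n-1}\times\bRgeq$ yields a boundary integral over $\{u_n = 0\}$ of the component $\sqrt g\, a^n$ against the inner coordinate normal $-\vec\ek_n$; I then need to check that, under the parameterization, this coordinate flux matches $\vec f_\subT\,.\,\conormal$ integrated against the induced Hausdorff measure $\dH{d-2}$ on $\partial\Gamma$. This amounts to identifying the pushforward of $-\vec\ek_n$ (suitably normalized by the metric) with the outer unit conormal $\conormal$ of Definition~\ref{def:conormal}, using that $\conormal$ is the unique unit tangent vector to $\Gamma$ that is normal to $\partial\Gamma$ and points outward, together with the compatibility of the area elements of $\Gamma$ and $\partial\Gamma$. \emph{Finally}, summing the interior contributions (which vanish) and the boundary contributions (which assemble to the full boundary integral) over the partition of unity, and invoking Lemma~\ref{lem:divf}\ref{item:divfi} to rewrite the integrand back in terms of the original $\vec f$, completes the proof.
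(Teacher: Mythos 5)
Your proof is correct, and its pivotal step coincides with the paper's: reduce to a tangential field so that the curvature term is absorbed. The paper does this inline, setting $\vec h = \vec f - (\vec f\,.\,\vec\nu)\,\vec\nu$ and computing $\nabs\,.\,\vec h = \nabs\,.\,\vec f + \varkappa\,\vec f\,.\,\vec\nu$ from Lemma~\ref{lem:productrules}\ref{item:pri} and Lemma~\ref{lem:varkappa}\ref{item:varkappa}; you instead quote Lemma~\ref{lem:divf}\ref{item:divfi}, which is the identical identity, and this is legitimate since that lemma precedes Theorem~\ref{thm:div} and its proof nowhere uses the divergence theorem, so there is no circularity. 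Where you genuinely depart from the paper is in the remaining step: the paper treats the divergence theorem for tangential fields as known, citing \citet[\S{}XII.3]{AmannE09} and \citet[\S3.8]{AgricolaF02}, whereas you prove it, via a partition of unity, the coordinate formula $(\nabs\,.\,\vec f_\subT)\circ\localpara = g^{-1/2}\sum_j \partial_j(\sqrt{g}\,a^j)$ for $\vec f_\subT = \sum_j a^j\,\vec\partial_j$ (which itself rests on the identity $\sum_{i,j} g^{ij}\,(\partial_i\,\vec\partial_k)\,.\,\vec\partial_j = \partial_k \ln \sqrt{g}$, available here since $\localpara$ is $C^2$), and the classical divergence theorem in the half-space. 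The boundary matching you single out as the hard part does close: the outer unit conormal is $\conormal = -(g^{nn})^{-1/2}\sum_k g^{nk}\,\vec\partial_k$, so that $\vec f_\subT\,.\,\conormal = -(g^{nn})^{-1/2}\,a^n$, and the cofactor identity $g^{nn} = g_{\partial}/g$, with $g_{\partial}$ the metric determinant of $\partial\Gamma$ in the coordinates $u_1,\ldots,u_{n-1}$, converts the coordinate flux $-\sqrt{g}\,a^n \dL{n-1}$ into exactly $(\vec f_\subT\,.\,\conormal)\,\sqrt{g_{\partial}} \dL{n-1} = \vec f\,.\,\conormal \dH{d-2}$. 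In short, your route buys a self-contained argument where the paper outsources to the literature, at the cost of these coordinate computations; the paper's version is correspondingly shorter.
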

\begin{proof} 
The result is well-known for tangential vector fields $\vec f$, for which
$\varkappa\,\vec f\,.\,\vec\nu$ is not present. 
This special case can be shown on
surfaces similarly to the case of flat domains in $\bR^d$,
see e.g.\ \citet[\S{}XII.3]{AmannE09} or \citet[\S3.8]{AgricolaF02}. 
In the case of a nontangential vector field $\vec f$, we define the 
tangential vector field
\begin{equation*}
  \vec h = \vec f -(\vec f\,.\,\vec\nu)\,\vec\nu
\end{equation*}
and compute, on using Lemma~\ref{lem:productrules}\ref{item:pri}, 
$\nabs\,(\vec f\,.\,\vec\nu)$ being orthogonal to $\vec\nu$ and
Lemma~\ref{lem:varkappa}\ref{item:varkappa}, that
\[
  \nabs\, .\,\vec h =
     \nabs\,.\,\vec f - (\vec f\,.\,\vec\nu)\,\nabs\,.\,\vec\nu
  = \nabs\,.\,\vec f + \varkappa\,\vec f\,.\,\vec\nu\,.
\]
Using the divergence theorem for $\vec h$, taking into account that
$\vec h\,.\,\conormal = \vec f\,.\,\conormal$, 
we obtain the assertion of the theorem.
\end{proof}      

\begin{rem} \label{rem:ibp}
\rule{0pt}{0pt}
\begin{enumerate}
\item \label{item:ibp}
The following integration by parts rule is a direct consequence of 
{\rm Theorem~\ref{thm:div}}, 
{\rm Lemma~\ref{lem:productrules}\ref{item:pri}}
and {\rm Definition~\ref{def:2.5}\ref{item:def2.5ii}}.
Let $f \in C^2(\Gamma)$ and $\eta \in C^1(\Gamma)$. Then it holds that
\begin{equation*}
\int_\Gamma \eta\,\Delta_s\,f + \nabs\,f\,.\,\nabs\,\eta \dH{d-1} 
= \int_{\partial\Gamma} \eta\,\nabs\,f\,\,.\,\conormal \dH{d-2}\,.
\end{equation*}
\item \label{item:GG}
Moreover, for $f \in C^1(\Gamma)$ we obtain the Gauss--Green formula
\begin{equation*}
\int_\Gamma \nabs\,f + f\,\varkappa\,\vec\nu \dH{d-1} 
= \int_{\partial\Gamma} f\,\conormal \dH{d-2}\,,
\end{equation*}
by choosing $\vec f = f\,\vec\ek_i$, $i=1,\ldots,d$, in 
{\rm Theorem~\ref{thm:div}}, and applying 
{\rm Lemma~\ref{lem:productrules}\ref{item:pri}}. 
\item \label{item:ibpvec}
Let $\vec f \in [C^2(\Gamma)]^d$ and $\vec\eta \in [C^1(\Gamma)]^d$. 
Then it follows from \ref{item:ibp} 
and {\rm Remark~\ref{rem:2.6}\ref{item:2.6vi}} that
\[
\int_\Gamma \vec\eta\,.\,\Delta_s\,\vec f + \nabs\,\vec f : \nabs\,\vec\eta
 \dH{d-1} 
= \int_{\partial\Gamma} \vec\eta\,.\,(\nabs\,\vec f)\,\conormal \dH{d-2}\,.
\]
On noting that for symmetric matrices $\mat A \in \bR^{d \times d}$ it holds 
that $\mat P_\Gamma\,\mat A\,\mat P_\Gamma : \mat B = 
\mat P_\Gamma\,\mat A\,\mat P_\Gamma : 
\tfrac12\,\mat P_\Gamma\,(\mat B + \mat B^\transT)\,
\mat P_\Gamma$ on $\Gamma$ for all $\mat B \in \bR^{d \times d}$, we obtain
furthermore that
\[
\int_\Gamma \vec\eta\,.\left(\nabs\,.\,\mat D_s(\vec f)\right)
+ \mat D_s(\vec f) : \mat D_s(\vec\eta) \dH{d-1} 
= \int_{\partial\Gamma} \vec\eta\,.\,\mat D_s(\vec f)\,\conormal \dH{d-2}\,,
\]
where we have recalled {\rm Definition~\ref{def:2.5}\ref{item:def2.5vi}}
and used {\rm Theorem~\ref{thm:div}} with
{\rm Lemma~\ref{lem:productrules}\ref{item:priv}} and
{\rm Remark~\ref{rem:2.6}\ref{item:Dsf}}.
\item \label{item:weakvarkappa}
Of fundamental importance in the development of numerical approximations
for curvature driven evolution equations is the following identity.
Let $\vec\eta \in [C^1(\Gamma)]^d$. Then it follows from \ref{item:ibpvec},
{\rm Lemma~\ref{lem:varkappa}\ref{item:vecvarkappa}}, 
{\rm Lemma~\ref{lem:nabsid}\ref{item:nabsid}}
and {\rm Definition~\ref{def:conormal}} that
\begin{equation*}
\int_\Gamma \varkappa\,\vec\eta\,.\,\vec\nu + \nabs\,\vec\id: \nabs\,\vec\eta
 \dH{d-1} 
= \int_{\partial\Gamma} \vec\eta\,.\,\conormal \dH{d-2}\,.
\end{equation*}
\item \label{item:weakweingarten}
For numerical approximations of the Weingarten map, 
recall {\rm Definition \ref{def:Wp}\ref{item:Wpi}},
the following identity is of use. Let $\mat\eta \in [C^1(\Gamma)]^{d\times d}$.
Then 
it follows from {\rm Theorem~\ref{thm:div}},
with $\vec f = \mat\eta^\transT\,\vec\nu$, on recalling 
{\rm Lemma \ref{lem:productrules}\ref{item:priv}}, that
\[
\int_\Gamma \nabs\,\vec\nu : \mat\eta + 
\vec\nu\,.\,[\varkappa\,\mat\eta\,\vec\nu + 
\nabs\,.\,\mat\eta] \dH{d-1} 
= \int_{\partial\Gamma} \vec\nu\,.\,\mat\eta\,\conormal \dH{d-2}\, .
\]
\end{enumerate}
\end{rem}

\subsection{Evolving surfaces and transport theorems} \label{subsec:evolve}

We are mainly interested in evolving hypersurfaces. Hence we now
consider hypersurfaces which evolve in a time interval $[0,T]$ and
define the term evolving hypersurface.

\begin{definition} \label{def:GT}
\rule{0pt}{0pt}
\begin{enumerate}
\item 
Let $(\Gamma(t))_{t\in [0,T]}$ be a family of
$C^k$--hypersurfaces (with or without boundary), for $k\geq1$. The set
\begin{equation*}
\GT = \bigcup_{t\in[0,T]} (\Gamma(t)\times\{t\})
\end{equation*}
is called a $C^k$--evolving hypersurface if it is a $C^k$--hypersurface
with boundary in $\bR^{d+1}$, such that
$\tanspace_{(\vec p,t)}\,\GT \neq \bR^d\times\{0\}$ for all
$(\vec p,t) \in \GT$. We will often identify
$\GT$ with $(\Gamma(t))_{t\in [0,T]}$, and call the latter also a
$C^k$--evolving hypersurface.
\item \label{item:defV}
Let $\GT$ be a $C^1$--evolving hypersurface and
$(\vec p_0,t_0)\in \GT$. We assume that $\GT$ allows for a
continuous vector field $\vec\nu :\GT\to\bR^d$, such that
$\vec\nu(\cdot,t)$ is a unit normal to $\Gamma(t)$. Furthermore, let
$\vec y:(t_0-\delta,t_0+\delta)\to\bR^d$, for some $\delta>0$, with
$\vec y(t)\in\Gamma(t)$ and $\vec y(t_0)=\vec p_0$ be a 
smooth curve in $\bR^d$. Then the normal velocity of $\Gamma(t_0)$ at 
$\vec p_0$ is defined as
\begin{equation*}
\mathcal{V}(\vec p_0,t_0) = \vec\nu(\vec p_0,t_0)\,.\,\vec y\,'(t_0)\,.
\end{equation*}
\item \label{item:orientGT}
Let $\GT$ be a $C^k$--evolving hypersurface satisfying the assumptions in
\ref{item:defV}. Then we call $\GT$ a $C^k$--evolving orientable hypersurface.
\end{enumerate}
\end{definition}

\begin{rem} \label{rem:2.16}
Let $\GT$ be a $C^1$--evolving orientable hypersurface.
\begin{enumerate}
\item 
The condition $\tanspace_{(\vec p,t)}\,\GT \neq\bR^d\times\{0\}$, 
for all $(\vec p,t) \in \GT$, guarantees 
the existence of a curve $\vec y$ in 
{\rm Definition~\ref{def:GT}\ref{item:defV}}, recall
{\rm Remark~\ref{rem:2.6}\ref{item:rem2.6i}}.

\item \label{item:2.16iii} 
It is easy to show that $\mathcal{V}$ does not depend on the 
curve $\vec y$. 
To see this, we note that 
$(\vec y'(t_0),1) \in \tanspace_{(\vec p_0,t_0)}\,\GT$.
Hence, on letting $\{\vec\tau_1,\ldots,\vec\tau_{d-1}\}$ denote a
basis of $\tanspace_{\vec p_0}\,\Gamma(t_0)$, we have that
$\{(\vec\tau_1,0),\ldots,(\vec\tau_{d-1},0),(\vec y'(t_0),1)\}$
is a basis of $\tanspace_{(\vec p_0,t_0)}\,\GT$.
Then it follows from {\rm Definition~\ref{def:GT}\ref{item:defV}} that
$\vec y'(t_0)= \mathcal{V}(\vec p_0,t_0)\,\vec\nu(\vec p_0,t_0) 
+ \sum_{i=1}^{d-1} \alpha_i \,\vec\tau_{i}$
for some $\alpha_i \in \bR$, $i=1,\ldots,d-1$, and so
$(\mathcal{V}(\vec p_0,t_0)\,\vec\nu(\vec p_0,t_0),1) \in 
\tanspace_{(\vec p_0,t_0)}\,\GT$.
In addition, we observe that there exists a unique vector 
$(\vec\omega,1) \in \tanspace_{(\vec p_0,t_0)}\,\GT$
with $\vec\omega$ parallel to $\vec\nu(\vec p_0,t_0)$. 
Therefore we have that $\vec\omega = \mathcal{V}(\vec p_0,t_0)\,
\vec\nu(\vec p_0,t_0)$, and so $\mathcal{V}(\vec p_0,t_0)$ 
does not depend on $\vec y$.
\item 
Moreover, we observe that 
$(1+\mathcal{V}^2)^{-\frac12}\,(\vec\nu, -\mathcal{V})$ 
is a continuous normal vector field on $\GT$, and so $\GT$ is an
orientable hypersurface in $\bR^{d+1}$.
\item \label{item:signeddist}
If $\GT$ is a $C^2$--evolving hypersurface, then 
it is even easier to show that $\mathcal{V}$ does not depend on the 
curve $\vec y$. 
In order to do so, we choose a $C^1$--function $f$ defined in a small
neighbourhood of $\GT$ such that $f=0$ on $\GT$, 
and such that $(\nabla\,f)(\vec z,t)\neq \vec 0$ for all 
$(\vec z,t) \in \GT$. A possible choice for $f(\cdot,t)$ is the 
signed distance function $d_\Gamma(\cdot,t)$ to $\Gamma (t)$, recall
\eqref{eq:dGamma}.
We then compute for a curve $\vec y$ as in 
{\rm Definition~\ref{def:GT}\ref{item:defV}} that
\[ 
0= \ddt\, f(\vec y(t),t) = (\nabla\, f)(\vec y(t),t)\,.\,\vec y^\prime (t)
+ (\partial_t\, f) (\vec y(t),t)\,.
\]
As $\vec\nu = \nabla\,f / |\nabla\,f|$, we obtain
\[
\mathcal{V}(\vec p_0,t_0) =  \vec\nu (\vec p_0,t_0)\,.\, \vec y^\prime (t_0)=
- \left( \frac{\partial_t\,f}{|\nabla\, f|}\right) (\vec p_0,t_0)\,, 
\]
where the right hand side does not depend on  $\vec y$.
\end{enumerate}
\end{rem}

Typically we will consider evolving hypersurfaces that are given by a
global parameterization as follows.

\begin{definition} \label{def:globalx}
Let $\GT$ be a $C^k$--evolving hypersurface, and let 
$\Upsilon$ be a $C^k$--hypersurface in $\bR^d$, with $k\geq 1$.
\begin{enumerate}
\item \label{item:globalx}
A $C^k$--map $\vec x : \Upsilon\times [0,T] \to \bR^d$
such that $\vec x(\cdot,t)$ is a diffeomorphism from
$\Upsilon$ to $\Gamma(t)$ for all $t\in [0,T]$ is called a global
parameterization of $\GT$.
\item \label{item:vecV}
The full velocity of $\Gamma(t)$ on $\GT$, 
induced by the parameterization $\vec x$, is defined by
\begin{equation*} 
\vec{\mathcal{V}}(\vec x(\vec q,t),t) = (\partial_t\,\vec x) (\vec q,t) 
\quad\forall\ (\vec q,t) \in \Upsilon \times [0,T]\,.
\end{equation*}
\item \label{item:VT}
The tangential velocity of $\Gamma(t)$ on $\GT$, 
induced by the parameterization $\vec x$, is defined by
\[ 
\vec{\mathcal{V}}_\subT = \mat P_\Gamma\,\vec{\mathcal{V}} 
\qquad\text{on } \Gamma(t)\,,
\]
recall \eqref{eq:PGamma}.
\item \label{item:DsV}
We define the rate of deformation tensor of $\Gamma(t)$ by 
\[ 
\mat D_s(\vec{\mathcal{V}}) \qquad\text{on } \Gamma(t)\,,
\]
recall {\rm Definition~\ref{def:2.5}\ref{item:def2.5vi}}.
\end{enumerate}
\end{definition}

\begin{rem} \label{rem:vecV}
\rule{0pt}{0pt}
\begin{enumerate}
\item \label{item:VVnu}
For the normal velocity of an evolving orientable hypersurface $\GT$ 
defined in {\rm Definition~\ref{def:GT}\ref{item:defV}} it holds that
\[
\mathcal{V} = \vec{\mathcal{V}}\,.\,\vec\nu \qquad\text{on } \GT\,.
\]
Hence we have that
\begin{equation*}
\vec{\mathcal{V}} = \mathcal{V}\,\vec\nu + \vec{\mathcal{V}}_\subT
\qquad\text{on } \GT\,.
\end{equation*} 
Here we stress that $\mathcal{V}$ does not depend on the 
parameterization $\vec x$, while $\vec{\mathcal{V}}_\subT$ clearly does.
\item
The expression ``rate of deformation tensor'' for $\mat D_s(\vec{\mathcal{V}})$
is justified, because it encodes how $\Gamma(t)$ is locally deformed due to the
motion induced by $\vec x$. This will be made rigorous in
{\rm Lemma~\ref{lem:dtgij}}, below.
\end{enumerate}
\end{rem}

For the velocity field $\vec{\mathcal{V}}$ on $\GT$ we have the following
properties. Here and throughout, for notational convenience, 
we often identify $\Gamma(t) \times \{t\}$ with $\Gamma(t)$.

\begin{lem} \label{lem:divv} 
Let $\GT$ be a $C^2$--evolving orientable hypersurface with 
a global parameterization leading to the velocity field 
$\vec{\mathcal{V}}$.
\begin{enumerate}
\item \label{item:divvi}
It holds that
\begin{equation*}
\nabs\,.\,\vec{\mathcal{V}}
= -\mathcal{V}\,\varkappa + \nabs\,.\,\vec{\mathcal{V}}_\subT
\qquad\text{on }\Gamma(t)\,.
\end{equation*}
\item \label{item:divvii}
Moreover, it holds that
\begin{equation*} 
\nabs\,\vec{\mathcal{V}} = 
\mathcal{V}\,\nabs\,\vec\nu+\vec\nu\otimes\nabs\,\mathcal{V}
+\nabs\,\vec{\mathcal{V}}_\subT
\qquad\text{on } \Gamma(t)\,.
\end{equation*}
\item \label{item:divviii}
Finally, for the rate of deformation tensor 
it holds that
\begin{equation*}
 \mat D_s(\vec{\mathcal{V}}) = \mathcal{V}\,\nabs\,\vec\nu
+\tfrac12\, (\mat P_\Gamma\,\nabs\,\vec{\mathcal{V}}_\subT
+(\nabs\,\vec{\mathcal{V}}_\subT)^\transT \mat P_\Gamma)
\qquad\text{on } \Gamma(t)\,.
\end{equation*}
\end{enumerate}
\end{lem}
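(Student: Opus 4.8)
Looking at this statement, I see that Lemma~\ref{lem:divv} is precisely Lemma~\ref{lem:divf} applied to the specific vector field $\vec{\mathcal{V}}$. The three parts correspond exactly to the three parts of Lemma~\ref{lem:divf}. Let me sketch a proof plan.

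The plan is to recognize this lemma as a direct specialization of Lemma~\ref{lem:divf} to the particular choice of vector field $\vec f = \vec{\mathcal{V}}$. Comparing the two statements side by side, parts \ref{item:divvi}--\ref{item:divviii} of the present lemma are literally parts \ref{item:divfi}--\ref{item:divfiii} of Lemma~\ref{lem:divf}, once one makes the two identifications $\vec f\,.\,\vec\nu = \mathcal{V}$ and $\vec f_\subT = \vec{\mathcal{V}}_\subT$. So almost no new computation is required; the work is entirely in checking hypotheses and matching notation.

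First I would verify the regularity requirement. Lemma~\ref{lem:divf} demands $\vec f \in [C^1(\Gamma)]^d$, so I must confirm that $\vec{\mathcal{V}}(\cdot,t) \in [C^1(\Gamma(t))]^d$. Since $\GT$ is assumed here to be a $C^2$--evolving orientable hypersurface carrying a global parameterization $\vec x$, Definition~\ref{def:globalx}\ref{item:globalx} tells us $\vec x$ is a $C^2$--map, and Definition~\ref{def:globalx}\ref{item:vecV} defines $\vec{\mathcal{V}} = \partial_t\,\vec x$. Hence $\vec{\mathcal{V}}$ is $C^1$, as needed. This is the one place where the $C^2$ hypothesis (rather than merely $C^1$) is genuinely used.

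Next I would record the two notational identifications. By Remark~\ref{rem:vecV}\ref{item:VVnu} the normal component satisfies $\vec{\mathcal{V}}\,.\,\vec\nu = \mathcal{V}$, so every occurrence of $(\vec f\,.\,\vec\nu)$ in Lemma~\ref{lem:divf} is replaced by $\mathcal{V}$. By Definition~\ref{def:globalx}\ref{item:VT} the tangential velocity is $\vec{\mathcal{V}}_\subT = \mat P_\Gamma\,\vec{\mathcal{V}}$, which is exactly the tangential part $\vec f_\subT = \mat P_\Gamma\,\vec f$ appearing in Lemma~\ref{lem:divf}. With these two substitutions in hand, part \ref{item:divvi} is immediate from Lemma~\ref{lem:divf}\ref{item:divfi}, part \ref{item:divvii} from Lemma~\ref{lem:divf}\ref{item:divfii}, and part \ref{item:divviii} from Lemma~\ref{lem:divf}\ref{item:divfiii}.

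In short, there is no real obstacle: the result is a corollary of Lemma~\ref{lem:divf}. The only things to confirm are the regularity of $\vec{\mathcal{V}}$ and the two identities from Remark~\ref{rem:vecV}\ref{item:VVnu} and Definition~\ref{def:globalx}\ref{item:VT} that translate the generic decomposition of $\vec f$ into its velocity-specific form. I would therefore write the proof as a single short paragraph invoking these three ingredients.
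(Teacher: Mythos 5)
Your proposal is correct and is essentially identical to the paper's own proof, which likewise derives all three parts immediately from Lemma~\ref{lem:divf} after noting $\vec{\mathcal{V}}(\cdot,t) \in [C^1(\Gamma(t))]^d$ and the identification $\mathcal{V} = \vec{\mathcal{V}}\,.\,\vec\nu$ from Remark~\ref{rem:vecV}\ref{item:VVnu}. Your additional explicit check that $\vec{\mathcal{V}}_\subT = \mat P_\Gamma\,\vec{\mathcal{V}}$ matches $\vec f_\subT$ is a detail the paper leaves implicit, but it is the same argument.
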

\begin{proof}
The desired results follow immediately from Lemma~\ref{lem:divf}, on noting
$\vec{\mathcal{V}}(\cdot,t) \in $ \linebreak $[C^1(\Gamma(t))]^d$ and
Remark~\ref{rem:vecV}\ref{item:VVnu}.
\end{proof}

It is often convenient to also consider local parameterizations of $\Gamma(t)$,
as defined in Definition~\ref{def:boundary}. 
To this end, let
$\vec\varphi:U\to\bR^d$, $U\subset\bR^{d-1}$ open and connected, 
be a local parameterization of $\Upsilon$. Then
\begin{equation} \label{eq:localpara}
\localpara(t) = \vec x(\cdot,t) \circ \vec\varphi \qquad\text{in } U\,,
\quad t \in [0,T]\,,
\end{equation}
defines a local parameterization of $\Gamma(t)$. 

We now define the time derivative of a function
$f:\GT\to\bR$. We cannot differentiate $f(\vec p,t)$
directly with respect to $t$ due to the fact that $\vec p$ might not
lie on $\Gamma(t)$ for different times $t$. When differentiating with
respect to $t$, we need to move the point $\vec p$. There is hence
some ambiguity in defining the time derivative. 

\begin{definition}\label{def:2.18}
Let $\GT$ be a $C^1$--evolving hypersurface with a
global parameterization $\vec x:\Upsilon \times [0,T]\to\bR^d$,
and let $f\in C^1(\GT)$.
\begin{enumerate}
\item \label{item:def2.18i}
The expression
\begin{equation*}
(\matpartx\,f)(\vec x(\vec q,t),t) = \ddt\, f(\vec x(\vec q,t),t) 
\quad\forall\ (\vec q,t) \in \Upsilon \times [0,T]
\end{equation*}
is the time derivative following the parameterizations $\vec x(\cdot,t)$
of $f$ on $\Gamma(t)$. It is also called the material time derivative
induced by $\vec x$.
\item \label{item:def2.18ii}
The normal time derivative of $f$ on $\Gamma(t)$ is defined as
\[
  \matpartn\,f = \matpartx\,f -\vec{\mathcal{V}}_\subT\,.\,\nabs\,f\,.
\]
\end{enumerate}
\end{definition}

\begin{rem}\label{rem:timeder} 
\rule{0pt}{0pt}
\begin{enumerate}
\item \label{item:rem2.19i}
The quantity $\matpartx\,f$ depends on the parameterization
$\vec x$. Moreover, it holds that
$\vec{\mathcal{V}} = \matpartx\,\vec\id$ on $\Gamma(t)$ and
$\vec{\mathcal{V}} \circ \localpara = \partial_t\,\localpara$ in $U$.
\end{enumerate}
For the following observations, we assume that 
$f$ is extended to a neighbourhood of $\GT$.
\begin{enumerate}
\setcounter{enumi}{1}
\item \label{item:rem2.19ii}
It holds that
\begin{equation*}
\matpartx\,f = \partial_t\,f +\vec{\mathcal{V}}\,.\,\nabla\,f
\qquad\text{on } \Gamma(t)\,,
\end{equation*}
where $\nabla\,f$ denotes the gradient in $\bR^d$ of the extension $f$.
\item \label{item:rem2.19iii}
Moreover, we have that
\begin{equation*}
\matpartn\,f = \partial_t\, f + \mathcal{V}\,\vec\nu\,.\,\nabla\, f
\qquad\text{on } \Gamma(t)\,.
\end{equation*}
Taking {\rm Remark~\ref{rem:2.16}\ref{item:2.16iii}} into account, 
we observe, in particular, that $\matpartn\,f$ does not depend on $\vec x$. 
In fact, $\matpartn\,f$ is the derivative of $f$ in the direction 
$(\mathcal{V}\,\vec\nu, 1)$, where the vector
$(\mathcal{V}\,\vec\nu, 1)$ is a space time tangential vector of the
evolving surface $\GT$. I.e.\
\[
\matpartn\,f = \partial_{(\mathcal{V}\,\vec\nu, 1)}\,f 
\qquad\text{on } \GT\,,
\]
recall {\rm Definition~\ref{def:2.5}\ref{item:def2.5i}}.
\item
Taking a curve $t\mapsto\vec y(t)\in\bR^d$ such that 
$\vec y(t) \in \Gamma(t)$ and
$\vec y\,'(t)=(\mathcal{V}\,\vec\nu) \circ \vec y$, 
we obtain from \ref{item:rem2.19iii} that
\begin{equation*}
(\matpartn\,f)(\vec y(t),t) = \ddt\,f(\vec y(t),t)\,.
\end{equation*}
\end{enumerate}
\end{rem}

For the time-dependent metric tensor $(g_{ij}(t))_{i,j=1,\ldots,d-1}$, 
which, similarly to Remark \ref{rem:altforms}\ref{item:altformg}, 
is defined via
\begin{equation} \label{eq:gijt}
g_{ij}(t) = \partial_i\,\localpara(t)\,.\,\partial_j\,\localpara(t)
\qquad\text{in } U\,,
\end{equation}
for $\localpara(t) = \vec x(\cdot,t) \circ \vec\varphi$,
recall (\ref{eq:localpara}),
we obtain the following lemma. Here and throughout, for notational convenience,
we often omit the dependence on $t$.

\begin{lem} \label{lem:dtgij}
Let $\GT$ be a $C^2$--evolving hypersurface with a global parameterization 
$\vec x$ leading to the velocity field 
$\vec{\mathcal{V}}$, and let the metric tensor be defined by \eqref{eq:gijt}.
Then it holds that
\begin{equation*} 
\partial_t\,g_{ij} 
= 2\,((\mat D_s(\vec{\mathcal{V}})\circ\localpara)
\,\partial_i\,\localpara)\,.\,\partial_j\localpara
\qquad\text{in } U \,,
\end{equation*}
where $\mat D_s(\vec{\mathcal{V}})$ is the rate of deformation tensor,
recall {\rm Definition~\ref{def:globalx}\ref{item:DsV}}. 
\end{lem}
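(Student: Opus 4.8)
The plan is to differentiate the defining relation \eqref{eq:gijt} directly and then recognise the rate-of-deformation tensor. First I would apply the product rule in $t$ to $g_{ij} = \partial_i\localpara\,.\,\partial_j\localpara$, obtaining
\[
\partial_t\,g_{ij} = (\partial_t\,\partial_i\localpara)\,.\,\partial_j\localpara
+ \partial_i\localpara\,.\,(\partial_t\,\partial_j\localpara) \qquad\text{in } U\,.
\]
Since $\localpara(t) = \vec x(\cdot,t)\circ\vec\varphi$ is $C^2$ jointly in its arguments, Schwarz's theorem lets me interchange $\partial_t$ and $\partial_i$, and Remark~\ref{rem:timeder}\ref{item:rem2.19i} identifies $\partial_t\localpara = \vec{\mathcal{V}}\circ\localpara$. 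Hence $\partial_t\,\partial_i\localpara = \partial_i(\vec{\mathcal{V}}\circ\localpara)$.

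The central step is to evaluate $\partial_i(\vec{\mathcal{V}}\circ\localpara)$. Fixing $\vec u\in U$ and reading off the coordinate curve through $\vec u$ in the $i$-th parameter direction, whose image lies on $\Gamma(t)$ and whose velocity at the base point is the tangent vector $\partial_i\localpara$, Definition~\ref{def:2.5}\ref{item:def2.5i} (applied component-wise) gives $\partial_i(\vec{\mathcal{V}}\circ\localpara) = (\partial_{\partial_i\localpara}\,\vec{\mathcal{V}})\circ\localpara$. Because $\partial_i\localpara$ is tangential (Definition~\ref{def:2.3}), Remark~\ref{rem:2.6}\ref{item:2.6vi} rewrites this as $((\nabs\,\vec{\mathcal{V}})\circ\localpara)\,\partial_i\localpara$, and analogously for the $j$-index. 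Substituting into the product rule yields
\[
\partial_t\,g_{ij} = (((\nabs\,\vec{\mathcal{V}})\circ\localpara)\,\partial_i\localpara)\,.\,\partial_j\localpara
+ \partial_i\localpara\,.\,(((\nabs\,\vec{\mathcal{V}})\circ\localpara)\,\partial_j\localpara)
\qquad\text{in } U\,.
\]

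It then remains to match this with $2\,((\mat D_s(\vec{\mathcal{V}})\circ\localpara)\,\partial_i\localpara)\,.\,\partial_j\localpara$. Writing $\vec a = \partial_i\localpara$ and $\vec b = \partial_j\localpara$, both tangential so that $\mat P_\Gamma\,\vec a = \vec a$ and $\mat P_\Gamma\,\vec b = \vec b$ by \eqref{eq:PGamma}, and using the symmetry of $\mat P_\Gamma$ together with Definition~\ref{def:2.5}\ref{item:def2.5vi}, I compute
\[
(2\,\mat D_s(\vec{\mathcal{V}})\,\vec a)\,.\,\vec b
= ((\nabs\,\vec{\mathcal{V}} + (\nabs\,\vec{\mathcal{V}})^\transT)\,\vec a)\,.\,\vec b
= (\nabs\,\vec{\mathcal{V}}\,\vec a)\,.\,\vec b + \vec a\,.\,(\nabs\,\vec{\mathcal{V}}\,\vec b)\,,
\]
which, evaluated along $\localpara$, is exactly the right-hand side of the previous display; this gives the claim. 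The only delicate points are the chain-rule identity $\partial_i(\vec{\mathcal{V}}\circ\localpara) = ((\nabs\,\vec{\mathcal{V}})\circ\localpara)\,\partial_i\localpara$, which hinges on $\partial_i\localpara$ being tangential so that the surface Jacobian reproduces the full directional derivative, and the bookkeeping with the two projections in $\mat D_s$, which I can discard precisely because they act on tangent vectors and $\mat P_\Gamma$ is symmetric. These are the steps I would treat most carefully, the rest being routine.
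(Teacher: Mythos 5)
Your proposal is correct and follows essentially the same route as the paper's own proof: differentiate $g_{ij}=\partial_i\localpara\,.\,\partial_j\localpara$ in time, use $\partial_t\localpara = \vec{\mathcal{V}}\circ\localpara$ together with Remark~\ref{rem:2.6}\ref{item:2.6vi} to identify $\partial_i(\vec{\mathcal{V}}\circ\localpara)$ with $((\nabs\,\vec{\mathcal{V}})\circ\localpara)\,\partial_i\localpara$, and then absorb the projections in $\mat D_s(\vec{\mathcal{V}})$ using that $\partial_i\localpara$, $\partial_j\localpara$ are tangential. You merely spell out two steps the paper leaves implicit (the chain-rule identification via Definition~\ref{def:2.5}\ref{item:def2.5i} and the $\mat P_\Gamma$ bookkeeping), which is fine.
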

\begin{proof}
We introduce the shorthand notation
$\vec\partial_i = \partial_i\,\localpara$, $i=1,\ldots,d-1$, and
recall from Remark~\ref{rem:timeder}\ref{item:rem2.19i} that
$\vec{\mathcal{V}}\circ\localpara = \partial_t\,\localpara$ in $U$.
Then we compute, using Remark~\ref{rem:2.6}\ref{item:2.6vi},
\begin{align*}
\partial_t\, g_{ij} & = 
\partial_t\, (\partial_i\,\localpara\,.\,\partial_j\,\localpara)
= \partial_i\,(\vec{\mathcal{V}}\circ\localpara)\,.\,\vec\partial_j 
+ \vec\partial_i\,.\,\partial_j\,(\vec{\mathcal{V}}\circ\localpara)
 \nonumber \\ &
=((\nabs\,\vec{\mathcal{V}})\circ\localpara\,\vec\partial_i)\,.\,\vec\partial_j
+\vec\partial_i\,.\,(\nabs\,\vec{\mathcal{V}})\circ\localpara\,\vec\partial_j
 \nonumber \\ &
= (\nabs\,\vec{\mathcal{V}}+(\nabs\,\vec{\mathcal{V}})^\transT)\circ\localpara
\,\vec\partial_i\,.\,\vec\partial_j\,.
\end{align*}
As $\vec\partial_i$ and $\vec\partial_j$ are tangential, the claim follows.
\end{proof}

In order to compute the first variation of area, it is crucial to
know how the area element $\sqrt{g}$ evolves in time, recall
Definition~\ref{def:int}. This is studied in the next lemma.

\begin{lem} \label{lem:derive-g} 
Let $\GT$ be a $C^2$--evolving orientable hypersurface with a global 
parameterization $\vec x$ leading to the velocity field 
$\vec{\mathcal{V}}$, and let the metric tensor be defined by \eqref{eq:gijt}.
It holds that
\begin{equation*}
\partial_t \sqrt{g} = (\nabs\,.\,\vec{\mathcal{V}})\circ\localpara
\,\sqrt{g} 
= (-\mathcal{V}\,\varkappa +\nabs\,.\,\vec{\mathcal{V}}_\subT)\circ\localpara
\,\sqrt{g} \qquad\text{in } U\,,
\end{equation*}
where
\[
g(t)=\det\left((g_{ij}(t))_{i,j=1,\ldots,d-1}\right).
\]
\end{lem}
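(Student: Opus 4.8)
The plan is to differentiate the determinant $g = \det((g_{ij}))$ using Jacobi's formula and then substitute the expression for $\partial_t g_{ij}$ from Lemma~\ref{lem:dtgij}. Recall that for an invertible matrix-valued function $A(t)$ one has $\partial_t \det A = \det A \, \tr(A^{-1}\,\partial_t A)$. Applying this with $A = (g_{ij})$ gives
\[
\partial_t\, g = g \sum_{i,j=1}^{d-1} g^{ij}\,\partial_t\,g_{ij}\,,
\]
and since $\sqrt{g}$ is the quantity of interest, the chain rule yields $\partial_t \sqrt{g} = \tfrac{1}{2\sqrt{g}}\,\partial_t g = \tfrac12\,\sqrt{g}\sum_{i,j} g^{ij}\,\partial_t g_{ij}$.

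\textbf{The main computation} is then to show that $\tfrac12 \sum_{i,j} g^{ij}\,\partial_t g_{ij} = (\nabs\,.\,\vec{\mathcal{V}})\circ\localpara$. First I would insert the formula from Lemma~\ref{lem:dtgij}, namely $\partial_t g_{ij} = 2\,((\mat D_s(\vec{\mathcal{V}})\circ\localpara)\,\vec\partial_i)\,.\,\vec\partial_j$ where $\vec\partial_i = \partial_i\,\localpara$, so that the factor of $2$ cancels against the $\tfrac12$ and I am left with
\[
\partial_t\sqrt{g} = \sqrt{g}\sum_{i,j=1}^{d-1} g^{ij}\,
((\mat D_s(\vec{\mathcal{V}})\circ\localpara)\,\vec\partial_i)\,.\,\vec\partial_j\,.
\]
The key point is that $\sum_{i,j} g^{ij}\,\vec\partial_i \otimes \vec\partial_j$ equals the tangential projection $\mat P_\Gamma$ (pulled back via $\localpara$): this is exactly the statement that $\{\vec\partial_i\}$ and the dual basis obtained through $(g^{ij})$ reproduce the identity on the tangent space, and it connects directly to the alternative representation of surface differential operators in Remark~\ref{rem:altforms}\ref{item:altformg}. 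Using this, together with the fact that $\mat D_s(\vec{\mathcal{V}})$ already maps into and annihilates according to $\mat P_\Gamma$ (recall Remark~\ref{rem:2.6}\ref{item:Dsf}), the sum collapses to a trace: $\sum_{i,j} g^{ij}\,(\mat D_s(\vec{\mathcal{V}})\,\vec\partial_i)\,.\,\vec\partial_j = \tr(\mat D_s(\vec{\mathcal{V}}))$ on $\Gamma(t)$.

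\textbf{To finish}, I would identify this trace with the surface divergence. By the definition of $\mat D_s$ and Lemma~\ref{lem:productrules}\ref{item:trnabsf}, $\tr(\mat D_s(\vec{\mathcal{V}})) = \tr(\nabs\,\vec{\mathcal{V}}) = \nabs\,.\,\vec{\mathcal{V}}$, where the symmetrization and the projections do not change the trace because $\nabs\,\vec{\mathcal{V}}$ already satisfies $\nabs\,\vec{\mathcal{V}} = (\nabs\,\vec{\mathcal{V}})\,\mat P_\Gamma$ by Remark~\ref{rem:2.6}\ref{item:Pnabs}. This establishes the first equality $\partial_t\sqrt{g} = (\nabs\,.\,\vec{\mathcal{V}})\circ\localpara\,\sqrt{g}$. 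The second equality is then immediate from Lemma~\ref{lem:divv}\ref{item:divvi}, which gives $\nabs\,.\,\vec{\mathcal{V}} = -\mathcal{V}\,\varkappa + \nabs\,.\,\vec{\mathcal{V}}_\subT$.

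\textbf{The main obstacle} I anticipate is the bookkeeping in the step identifying $\sum_{i,j} g^{ij}\,\vec\partial_i\otimes\vec\partial_j$ with $\mat P_\Gamma$ and converting the resulting contraction into a clean trace. This requires care because $\mat D_s(\vec{\mathcal{V}})$ is a $d\times d$ matrix while the sum involves the tangential frame $\{\vec\partial_i\}$; one must verify that contracting a symmetric tangential tensor against the metric-dual frame indeed reproduces its trace in the ambient space, using that the orthogonal complement (the normal direction) contributes nothing precisely because $\mat D_s(\vec{\mathcal{V}})$ annihilates normal vectors. Everything else is routine linear algebra and direct appeals to the lemmas already proved.
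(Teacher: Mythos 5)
Your proposal is correct, and it follows the paper's outline — Jacobi's formula, the chain rule for $\sqrt{g}$, insertion of $\partial_t\,g_{ij}$, and Lemma~\ref{lem:divv}\ref{item:divvi} for the second identity — but it handles the key middle step by a genuinely different route. The paper does not use the statement of Lemma~\ref{lem:dtgij} at all: it reaches back into the \emph{proof} of that lemma for the pre-symmetrized identity $\partial_t\,g_{ij} = \partial_i\,(\vec{\mathcal{V}}\circ\localpara)\,.\,\vec\partial_j + \vec\partial_i\,.\,\partial_j\,(\vec{\mathcal{V}}\circ\localpara)$, so that, by symmetry of $(g^{ij})$, $\tfrac12\sum_{i,j}g^{ij}\,\partial_t\,g_{ij} = \sum_{i,j}g^{ij}\,\partial_i\,(\vec{\mathcal{V}}\circ\localpara)\,.\,\vec\partial_j$, which is exactly the coordinate representation of $(\nabs\,.\,\vec{\mathcal{V}})\circ\localpara$ from Remark~\ref{rem:altforms}\ref{item:altformg} — no projections or traces are needed. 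You instead work with the stated $\mat D_s$-formula and must therefore supply the frame identity $\sum_{i,j} g^{ij}\,\vec\partial_i\otimes\vec\partial_j = \mat P_\Gamma\circ\localpara$, together with $\mat D_s(\vec{\mathcal{V}})\,\mat P_\Gamma = \mat D_s(\vec{\mathcal{V}})$ and $\tr(\mat D_s(\vec{\mathcal{V}})) = \tr(\nabs\,\vec{\mathcal{V}}) = \nabs\,.\,\vec{\mathcal{V}}$. All of these claims are true: the first follows by expanding $\vec\partial_i$ in an orthonormal tangent frame, the second from $\mat P_\Gamma^2 = \mat P_\Gamma$, and the last from Remark~\ref{rem:2.6}\ref{item:Pnabs}, cyclicity of the trace and Lemma~\ref{lem:productrules}\ref{item:trnabsf}, so your argument closes. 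What you gain is modularity — you use only the statement of Lemma~\ref{lem:dtgij}, not its proof — and a geometrically transparent intermediate result, $\partial_t \sqrt{g} = \tr(\mat D_s(\vec{\mathcal{V}}))\circ\localpara\,\sqrt{g}$, i.e.\ the area element grows at the rate given by the trace of the rate-of-deformation tensor; what the paper gains is brevity, since the coordinate formula for the surface divergence does all the work in one line.
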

\begin{proof} 
\newcommand{\matg}{{\mat G}}
Jacobi's formula for the derivative of the
determinant, see e.g.\ \citet[Lemma 5.3]{EckGK17}, for
$\matg(t)=(g_{ij}(t))_{i,j=1,\ldots,d-1}$, gives
\begin{equation*}
\partial_t \det\matg(t) = \det\matg(t)
\tr\left(\matg^{-1}(t)\,\partial_t\,\matg(t)\right).
\end{equation*}
As $g(t)=\det\matg(t)$, and since $\matg(t)$ is symmetric, we obtain from
the proof of Lemma~\ref{lem:dtgij} that
\begin{align*}
\partial_t \sqrt{g} &= \partial_t \sqrt{\det\matg} 
= \tfrac12\,\sqrt{g}\,\tr\left(\matg^{-1}\,\partial_t\,\matg\right)
= \tfrac12\,\sqrt{g}\,\,\matg^{-1} : \partial_t\,\matg
\nonumber \\ &
=\tfrac12\,\sqrt{g}\,\sum^{d-1}_{i,j=1} g^{ij}\,\partial_t\,g_{ij} 
=\sqrt{g}\,\sum^{d-1}_{i,j=1}
g^{ij}\,\partial_i\,(\vec{\mathcal{V}}\circ\localpara)
\,.\,\partial_j\,\localpara 
\nonumber \\ &
= \sqrt{g}\,(\nabs\,.\,\vec{\mathcal{V}}) \circ\localpara\,,
\end{align*}
where in the last step we have recalled
Remark~\ref{rem:altforms}\ref{item:altformg}.
The second identity is then a direct consequence of 
Lemma~\ref{lem:divv}\ref{item:divvi}.
\end{proof}

We can now prove a transport theorem for evolving hypersurfaces, which
will be crucial for many arguments that follow.

\begin{thm} \label{thm:trans} 
Let $\GT$ be a compact $C^2$--evolving orientable
hypersurface with a global parameterization leading to the
velocity field $\vec{\mathcal{V}}$, and let $f \in C^1(\GT)$. 
Then it holds that
\begin{align*}
\ddt\,\int_{\Gamma(t)} f \dH{d-1}
&=\int_{\Gamma(t)} (\matpartx\,f+f\,\nabs\,.\,\vec{\mathcal{V}}) \dH{d-1}\\
&=\int_{\Gamma(t)} (\matpartx\,f+f\,\nabs\,.\,\vec{\mathcal{V}}_\subT
-f\,\mathcal{V}\,\varkappa) \dH{d-1}\\
&=\int_{\Gamma(t)} (\matpartn\,f-f\,\mathcal{V}\,\varkappa)\dH{d-1}
+ \int_{\partial\Gamma(t)} f\,\vec{\mathcal{V}}\,.\,\conormal \dH{d-2}\,,
\end{align*}
where $\conormal$ is the outer unit conormal to $\partial\Gamma$.
\end{thm}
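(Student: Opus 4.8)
The plan is to prove the three equalities in sequence. The first one is the substantive step and rests on differentiating the pulled-back integral on a fixed reference domain; the second and third then follow by substituting identities established earlier in this section, together with the surface divergence theorem to produce the boundary term.

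For the first equality I would begin by localizing. Since $\Gamma(t)$ is compact, so is the reference surface $\Upsilon$, and I cover it by finitely many charts $\vec\varphi:U\to\Upsilon$ with a subordinate (time-independent) partition of unity; by linearity it suffices to treat a single term supported in one chart. Using the induced local parameterization $\localpara(t)=\vec x(\cdot,t)\circ\vec\varphi$ from \eqref{eq:localpara} and Definition~\ref{def:int}, I write $\int_{\localpara(U)} f \dH{d-1} = \int_U (f\circ\localpara)\,\sqrt{g}\dL{d-1}$. The crucial point is that the reference domain $U$ does not depend on $t$, so I may differentiate under the integral sign and apply the product rule to get
\begin{equation*}
\partial_t\left[(f\circ\localpara)\,\sqrt{g}\right]
= \partial_t(f\circ\localpara)\,\sqrt{g}
+ (f\circ\localpara)\,\partial_t\sqrt{g}
\qquad\text{in } U\,.
\end{equation*}
By Definition~\ref{def:2.18}\ref{item:def2.18i} the first factor is $(\matpartx\,f)\circ\localpara$, while Lemma~\ref{lem:derive-g} identifies $\partial_t\sqrt{g} = (\nabs\,.\,\vec{\mathcal{V}})\circ\localpara\,\sqrt{g}$. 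Transforming back via Definition~\ref{def:int} and summing over the partition of unity yields $\ddt\int_{\Gamma(t)} f \dH{d-1} = \int_{\Gamma(t)} (\matpartx\,f + f\,\nabs\,.\,\vec{\mathcal{V}}) \dH{d-1}$.

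The second equality is then immediate: substituting the splitting $\nabs\,.\,\vec{\mathcal{V}} = -\mathcal{V}\,\varkappa + \nabs\,.\,\vec{\mathcal{V}}_\subT$ from Lemma~\ref{lem:divv}\ref{item:divvi} into the integrand rearranges it into the stated form. For the third equality I would use the normal time derivative, Definition~\ref{def:2.18}\ref{item:def2.18ii}, to write $\matpartx\,f = \matpartn\,f + \vec{\mathcal{V}}_\subT\,.\,\nabs\,f$, and then merge the two tangential terms by the product rule Lemma~\ref{lem:productrules}\ref{item:pri}:
\begin{equation*}
\matpartx\,f + f\,\nabs\,.\,\vec{\mathcal{V}}_\subT
= \matpartn\,f + \nabs\,.\,(f\,\vec{\mathcal{V}}_\subT)\,.
\end{equation*}
Since $f\,\vec{\mathcal{V}}_\subT$ is tangential, Theorem~\ref{thm:div} makes its curvature term vanish and converts $\int_{\Gamma(t)} \nabs\,.\,(f\,\vec{\mathcal{V}}_\subT) \dH{d-1}$ into $\int_{\partial\Gamma(t)} f\,\vec{\mathcal{V}}_\subT\,.\,\conormal \dH{d-2}$. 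Finally, because $\conormal$ is tangential, so that $\conormal\,.\,\vec\nu = 0$, the splitting $\vec{\mathcal{V}} = \mathcal{V}\,\vec\nu + \vec{\mathcal{V}}_\subT$ from Remark~\ref{rem:vecV}\ref{item:VVnu} gives $\vec{\mathcal{V}}_\subT\,.\,\conormal = \vec{\mathcal{V}}\,.\,\conormal$ on $\partial\Gamma(t)$, which yields the third line.

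The main obstacle is analytic rather than algebraic: justifying the interchange of $\ddt$ with the integral. This is exactly where compactness of $\GT$ enters, guaranteeing a finite atlas and continuous, hence bounded, $t$-derivatives of the integrands $(f\circ\localpara)\,\sqrt{g}$ on each fixed chart $U$, so that the standard theorem on differentiation under the integral applies. Once that is secured, the remainder is bookkeeping with the identities already proved in Lemmas~\ref{lem:divv} and~\ref{lem:derive-g} and the divergence Theorem~\ref{thm:div}.
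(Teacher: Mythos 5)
Your proposal is correct and follows essentially the same route as the paper's proof: localization via a single chart plus partition of unity, differentiation of the pulled-back integral using Lemma~\ref{lem:derive-g} for $\partial_t\sqrt{g}$, then Lemma~\ref{lem:divv}\ref{item:divvi} for the second identity, and Definition~\ref{def:2.18}\ref{item:def2.18ii}, Lemma~\ref{lem:productrules}\ref{item:pri} and Theorem~\ref{thm:div} (with the tangentiality of $\vec{\mathcal{V}}_\subT$ and of $\conormal$) for the third. Your explicit justification of differentiating under the integral sign is a small addition of rigor but not a different argument.
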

\begin{proof} 
We first consider an $f$ with support in the image of a single 
time-depen\-dent local parameterization 
$\localpara(t) = \vec x(\cdot,t)\circ\vec\varphi$, recall (\ref{eq:localpara}). 
In this case we can compute from Definition~\ref{def:int},
Definition~\ref{def:2.18}\ref{item:def2.18i}
and Lemma~\ref{lem:derive-g} that
\begin{align*}
\ddt\,\int_{\Gamma(t)} f \dH{d-1} & =
\ddt\,\int_{\vec x (\vec\varphi(U),t)} f \dH{d-1} \\
& =\ddt\,\int_U
f(\vec x(\vec\varphi(\vec u),t),t)\,\sqrt{g} \dL{d-1}\\ &
= \int_U (\matpartx\,f+f\,\nabs\,.\,\vec{\mathcal{V}})\circ\localpara
\,\sqrt{g} \dL{d-1}\\&
= \int_{\Gamma(t)} (\matpartx\,f + f\,\nabs\,.\,\vec{\mathcal{V}}) \dH{d-1}\,.
\end{align*}
Using a partition of unity argument now proves the first identity in the
claim. Lemma \ref{lem:divv}\ref{item:divvi} yields the second
identity, and Definition~\ref{def:2.18}\ref{item:def2.18ii} 
and \arxivyesno{}{\linebreak}%
Lemma~\ref{lem:productrules}\ref{item:pri}
then give the last identity, on noting from Theorem~\ref{thm:div} 
and Definition~\ref{def:conormal} that
\begin{align*} 
\int_{\Gamma(t)} \nabs\,.\,(f\,\vec{\mathcal{V}_\subT}) \dH{d-1} &
= - \int_{\Gamma(t)} \varkappa\,f\,\vec{\mathcal{V}}_\subT\,.\,\vec\nu\dH{d-1}
+ \int_{\partial\Gamma(t)} f\,\vec{\mathcal{V}}_\subT\,.\,\conormal \dH{d-2}
\nonumber \\ &
= \int_{\partial\Gamma(t)} f\,\vec{\mathcal{V}}\,.\,\conormal \dH{d-2}\,.
\end{align*}
\end{proof}

We also have the following transport theorem for moving domains.

\begin{thm} \label{thm:transvol}
Let $\GT$ be a compact $C^2$--evolving orientable hypersurface,
such that $\Gamma(t)$ is bounding a domain $\Omega(t)\subset\bR^d$,
for $t\in [0,T]$. We assume that $\vec\nu(t)$ is the outer unit normal 
to $\Omega(t)$ on $\Gamma(t)$, and that $f\in C^1(\overline\HT)$, where
\begin{equation*}
\HT = \bigcup_{t\in[0,T]} (\Omega(t)\times\{t\})\,.
\end{equation*}
Then it holds that
\begin{equation}\label{eq:transvol}
\ddt\,\int_{\Omega(t)} f \dL{d} =
\int_{\Omega(t)} \partial_t\,f \dL{d} 
+ \int_{\Gamma(t)} f\,\mathcal{V} \dH{d-1}\,.
\end{equation}
\end{thm}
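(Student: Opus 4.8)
The plan is to reduce the volume transport theorem to the surface transport theorem already established in Theorem~\ref{thm:trans}, by exploiting the standard change of variables for moving domains. First I would introduce a global parameterization of the evolving domain: since $\Gamma(t)$ bounds $\Omega(t)$, I would extend the boundary velocity field $\vec{\mathcal{V}}$ to a $C^1$ velocity field $\vec w$ on $\overline{\HT}$, so that the flow map $\vec\Phi(\cdot,t):\overline{\Omega(0)}\to\overline{\Omega(t)}$ solving $\partial_t\,\vec\Phi = \vec w(\vec\Phi,t)$ gives a diffeomorphism with $\vec\Phi(\cdot,t)|_{\Gamma(0)}$ parameterizing $\Gamma(t)$. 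Pulling back to the fixed reference domain $\Omega(0)$ converts the moving integral into $\int_{\Omega(0)} f(\vec\Phi(\vec q,t),t)\,J(\vec q,t)\dL{d}$, where $J=\det\nabla_{\vec q}\vec\Phi$ is the Jacobian of the flow.

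The key computational step is then to differentiate under the integral sign on the fixed domain. This produces two terms: the material derivative of $f$ along the flow, which equals $\partial_t\,f + \vec w\,.\,\nabla\,f$ by the chain rule, and the derivative of the Jacobian, for which I would invoke Jacobi's formula (exactly as in the proof of Lemma~\ref{lem:derive-g}) to obtain $\partial_t\,J = (\nabla\,.\,\vec w)\,J$. Pushing these back forward to $\Omega(t)$ yields
\begin{equation*}
\ddt\,\int_{\Omega(t)} f \dL{d}
= \int_{\Omega(t)} \left(\partial_t\,f + \vec w\,.\,\nabla\,f + f\,\nabla\,.\,\vec w\right)\dL{d}
= \int_{\Omega(t)} \partial_t\,f \dL{d}
+ \int_{\Omega(t)} \nabla\,.\,(f\,\vec w)\dL{d}\,.
\end{equation*}

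To finish, I would apply the classical divergence theorem in $\bR^d$ to the bulk term, giving $\int_{\Omega(t)}\nabla\,.\,(f\,\vec w)\dL{d} = \int_{\Gamma(t)} f\,\vec w\,.\,\vec\nu\dH{d-1}$, and then observe that on $\Gamma(t)$ the extended field satisfies $\vec w\,.\,\vec\nu = \vec{\mathcal{V}}\,.\,\vec\nu = \mathcal{V}$ by Remark~\ref{rem:vecV}\ref{item:VVnu}, since $\vec w$ agrees with the boundary velocity there. This produces precisely $\int_{\Gamma(t)} f\,\mathcal{V}\dH{d-1}$, completing \eqref{eq:transvol}. The essential point worth stressing is that only the normal component of the velocity survives on the boundary, which is why the arbitrary choice of tangential extension of $\vec w$ into the bulk does not affect the result.

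The main obstacle I anticipate is the justification of the velocity extension and the associated regularity: one must guarantee a $C^1$ field $\vec w$ on $\overline{\HT}$ whose flow preserves the family $\Omega(t)$ and restricts to the given surface motion, which requires the evolving hypersurface to bound the domains consistently in time. Once such a $\vec w$ exists, the differentiation-under-the-integral and the application of Jacobi's formula are routine, mirroring Lemma~\ref{lem:derive-g}; the real care lies in setting up the flow and confirming that the boundary trace recovers $\mathcal{V}$ independently of the extension.
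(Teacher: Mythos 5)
Your proposal is correct and takes essentially the same approach as the paper: the paper's proof is a citation to \citet[Chapter~2]{PrussS16} together with exactly the sketch you flesh out, namely transporting the domain by a bulk velocity field $\vec{\mathcal{V}}_\subHT$ with $\vec{\mathcal{V}}_\subHT\,.\,\vec\nu = \mathcal{V}$ on $\GT$ and arguing as for Theorem~\ref{thm:trans} --- i.e.\ the Reynolds transport computation (pull-back by the flow map, Jacobi's formula as in Lemma~\ref{lem:derive-g}, then the divergence theorem) that the paper records in Remark~\ref{rem:reynolds}. The extension issue you flag at the end is the one point the paper assumes rather than proves (it simply posits the field $\vec{\mathcal{V}}_\subHT$), and it can be settled by the signed distance function construction implicit in Remark~\ref{rem:2.16}\ref{item:signeddist}, so your account is, if anything, more complete than the paper's.
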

\begin{proof} 
A proof can be found in \citet[Chapter~2]{PrussS16}. If
the moving domain is transported with a velocity field
$\vec{\mathcal{V}}_\subHT : \HT \to \bR^d$, 
with $\vec{\mathcal{V}}_\subHT\,.\,\vec\nu = \mathcal{V}$ on $\GT$,
then the theorem can be shown similarly to Theorem~\ref{thm:trans}. 
An alternative proof would integrate \eqref{eq:transvol}, 
and use the divergence theorem in space-time, see e.g.\
\citet[\S7.3]{EckGK17}.
\end{proof}

\begin{rem} \label{rem:reynolds}
If the moving domain $\HT$ is transported with a velocity field 
$\vec{\mathcal{V}}_\subHT : \HT \to \bR^d$, we obtain the Reynolds transport 
theorem
\begin{align*}
\ddt\,\int_{\Omega(t)} f\dL{d} &
= \int_{\Omega(t)} \partial_t\,f \dL{d} + 
\int_{\Gamma(t)} f\,\vec{\mathcal{V}}_\subHT\,.\,\vec\nu \dH{d-1} \\ &
= \int_{\Omega(t)} \partial_t\,f + \nabla\,.\,(f\,\vec{\mathcal{V}}_\subHT)
\dL{d}\,,
\end{align*}
where we have used the divergence theorem in $\bR^d$,
and where we once again assumed that $\vec\nu(t)$ is the outer unit normal
to $\Omega(t)$ on $\Gamma(t)$.
\end{rem}

Using Theorem~\ref{thm:transvol}, one can also show a
transport theorem for two-phase moving domains. To this end,
let $\Omega\subset\bR^d$ be a fixed, bounded domain. Suppose
that $\GT$ is a compact $C^2$--evolving orientable hypersurface with
$\Gamma(t)\subset\Omega$, such that $\Gamma(t)$ encloses a region
$\Omega_-(t)\subset\Omega$, with $\Gamma(t)=\partial\Omega_-(t)$,
for all $t \in [0,T]$. 
Let $\Omega_+(t)= \Omega\setminus\overline{\Omega_-(t)}$,
and let $\vec\nu(t)$ denote the outer normal to $\Omega_-(t)$ on $\Gamma(t)$,
see Figure~\ref{fig:sketch}.
\begin{figure}
\begin{center}
\arxivyesno{
\begin{tikzpicture}
}{
\begin{tikzpicture}[thick,scale=0.6, every node/.style={transform shape}]
}
\draw (-1,-3) -- (4,-3) -- (4,2) -- (-1,2) -- (-1,-3);

\draw[thick] plot [smooth cycle, tension=0.6]
coordinates {(-0.5,-0.25) (0.25,0.5) (1,0.75) (2,0.5) (2.5,-0.5) (2,-1.25) 
(1.25,-1) (0.5,-1.5) (-0.25, -1)};

\draw (2.5,0.5) node {$\Gamma(t)$};
\draw (1.2,-0.35) node {$\Omega_-(t)$};
\draw[-latex] (2.5, -0.5) to (3.1, -0.5) node[above] {$\vec\nu(t)$};
\draw (3,-2) node {$\Omega_+(t)$};
\end{tikzpicture}
\end{center}
\caption{A sketch of the domain $\Omega$ in the case $d=2$.}
\label{fig:sketch}
\end{figure}
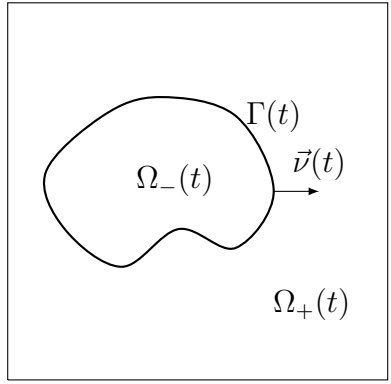%
We have  
\[ 
\overline{\Omega} = \overline{\Omega_-(t)} \cup \overline{\Omega_+(t)}\,,
\quad
\overline{\Omega_-(t)} \cap \overline{\Omega_+(t)} = \Gamma(t)\,, 
\quad
\partial\Omega_+(t) = \Gamma(t)\cup\partial\Omega\,.
\]
Now let
\begin{equation*}
\HTi{\pm} = \bigcup_{t\in[0,T]} (\Omega_\pm(t)\times\{t\})\,,
\end{equation*}
and let $f_\pm: \HTi{\pm} \to\bR$ be given such that each
$f_\pm$ has a continuous extension to $\overline{\HTi{\pm}}$.
Defining $f:\HTi{-} \cup \HTi{+} \to\bR$ as
\begin{equation} \label{eq:fpm}
f(\cdot,t) = f_-(\cdot,t)\,\charfcn{\Omega_-(t)} + 
f_+(\cdot,t)\,\charfcn{\Omega_+(t)} \quad\forall\ t\in[0,T]\,,
\end{equation}
where, here and throughout, $\charfcn{\mathfrak{A}}$ defines the characteristic 
function for a set $\mathfrak{A}$, we let
\begin{equation} \label{eq:fjump}
[f]^+_-(\vec z,t) = \lim_{\vec y\to \vec z\atop \vec y\in\Omega_+(t)} 
f(\vec y,t) - \lim_{\vec y\to \vec z\atop \vec y\in\Omega_-(t)} f(\vec y,t)
\quad\forall\ (\vec z,t) \in \GT\,.
\end{equation}
Then we obtain that following result.

\begin{thm} \label{thm:2phasetransport}
Let $\GT \subset \Omega \times [0,T]$ 
be a compact $C^2$--evolving orientable hypersurface,
such that $\Gamma(t)$ is bounding a domain $\Omega_-(t)\subset\bR^d$,
for $t\in [0,T]$. We assume that $\vec\nu(t)$ is the outer unit normal 
to $\Omega_-(t)$ on $\Gamma(t)$, and that 
$f_\pm\in C^1(\overline{\HTi{\pm}})$.
Then, for $f$ as defined in \eqref{eq:fpm}, it holds that
\begin{equation*}
\ddt\,\int_\Omega f \dL{d} = \int_{\Omega_-(t)} \partial_t\,f \dL{d} 
+ \int_{\Omega_+(t)} \partial_t\, f \dL{d} 
- \int_{\Gamma(t)} [f]^+_-\,\mathcal{V} \dH{d-1}\,.
\end{equation*}
\end{thm}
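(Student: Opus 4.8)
The plan is to split the integral over the fixed domain $\Omega$ into the two moving subdomains, apply Theorem~\ref{thm:transvol} to each piece separately, and then recombine. Since $\Gamma(t)$ has $\mathcal{L}^d$-measure zero and $f$ agrees with $f_-$ on $\Omega_-(t)$ and with $f_+$ on $\Omega_+(t)$ by the definition \eqref{eq:fpm}, I would first write $\int_\Omega f \dL{d} = \int_{\Omega_-(t)} f_- \dL{d} + \int_{\Omega_+(t)} f_+ \dL{d}$ and differentiate each summand in $t$.

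For the inner region $\Omega_-(t)$, which is genuinely bounded by the closed evolving hypersurface $\Gamma(t)$ with $\vec\nu$ as its outer normal, Theorem~\ref{thm:transvol} applies verbatim and gives
\begin{equation*}
\ddt \int_{\Omega_-(t)} f_- \dL{d} = \int_{\Omega_-(t)} \partial_t f_- \dL{d} + \int_{\Gamma(t)} f_-\,\mathcal{V} \dH{d-1}\,,
\end{equation*}
where on $\Gamma(t)$ the symbol $f_-$ denotes the limit from inside $\Omega_-(t)$, i.e.\ the continuous extension guaranteed by $f_-\in C^1(\overline{\HTi{-}})$.

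The outer region $\Omega_+(t)$ is the delicate case, and I expect it to be the main obstacle. Its boundary $\partial\Omega_+(t) = \Gamma(t)\cup\partial\Omega$ consists of a moving part $\Gamma(t)$ and the fixed part $\partial\Omega$, so Theorem~\ref{thm:transvol} does not apply in its stated form. The outer unit normal of $\Omega_+(t)$ on $\Gamma(t)$ is $-\vec\nu$, so the relevant normal velocity there is $-\mathcal{V}$, while $\partial\Omega$ is stationary and should contribute no flux. To make this rigorous I would invoke the Reynolds form of the theorem from Remark~\ref{rem:reynolds}, applied with a velocity field $\vec{\mathcal{V}}_\subHT$ chosen to equal $\mathcal{V}\,\vec\nu$ in a neighbourhood of $\Gamma(t)$ and to vanish near $\partial\Omega$; the boundary integral then localizes to $\Gamma(t)$ with $\vec{\mathcal{V}}_\subHT\,.\,(-\vec\nu) = -\mathcal{V}$, yielding
\begin{equation*}
\ddt \int_{\Omega_+(t)} f_+ \dL{d} = \int_{\Omega_+(t)} \partial_t f_+ \dL{d} - \int_{\Gamma(t)} f_+\,\mathcal{V} \dH{d-1}\,.
\end{equation*}

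Finally I would add the two identities. The interface terms combine as $\int_{\Gamma(t)} (f_- - f_+)\,\mathcal{V} \dH{d-1} = -\int_{\Gamma(t)} [f]^+_-\,\mathcal{V} \dH{d-1}$ by the definition \eqref{eq:fjump} of the jump, while the bulk terms become $\int_{\Omega_\pm(t)} \partial_t f \dL{d}$ since $\partial_t f = \partial_t f_\pm$ on $\Omega_\pm(t)$. This reproduces exactly the claimed formula. The only genuinely nontrivial points are the correct bookkeeping of the normal orientation on $\Gamma(t)$ when viewed from $\Omega_+(t)$, and the verification that the fixed outer boundary $\partial\Omega$ makes no contribution.
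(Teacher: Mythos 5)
Your proof is correct and takes essentially the same route as the paper, whose entire proof is the single sentence that the claim follows directly from Theorem~\ref{thm:transvol}; you simply make that explicit by applying the theorem to $\Omega_-(t)$ and its Reynolds variant (Remark~\ref{rem:reynolds}, with a velocity field cut off near the fixed boundary $\partial\Omega$) to $\Omega_+(t)$, with the correct sign coming from the outer normal $-\vec\nu$ and the jump convention \eqref{eq:fjump}. No gaps.
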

\begin{proof}
The claim follows directly from Theorem~\ref{thm:transvol}.
\end{proof}

\begin{rem} \label{rem:dnufjump}
We naturally extend \eqref{eq:fjump} to vector-valued quantities. For example,
if $f$, as defined in \eqref{eq:fpm}, has a continuous extension
to $\overline\Omega \times [0,T]$, and such that each $\nabla\,f_\pm$ 
has a continuous extension to $\overline{\HTi{\pm}}$, then we define
\begin{align} \label{eq:dnufjump}
& [\partial_{\vec\nu}\,f]^+_-(\vec z,t) = 
\lim_{\vec y\to \vec z\atop \vec y\in\Omega_+(t)} 
(\nabla\,f_+)(\vec y,t) \,.\,\vec\nu(\vec z,t)
- \lim_{\vec y\to \vec z\atop \vec y\in\Omega_-(t)} 
(\nabla\,f_-)(\vec y,t) \,.\,\vec\nu(\vec z,t) \nonumber \\ & \hspace{7cm}
\quad\forall\ (\vec z,t) \in \GT\,.
\end{align}
For such an $f$, integration by parts in $\bR^d$ immediately yields that
\begin{align*}
& \int_{\Omega_-(t)\cup\Omega_+(t)} \eta\,\Delta\,f \dL{d}
\nonumber \\ & \quad
= \int_{\partial\Omega} \eta\,\partial_{\vec\nu_\Omega}\,f \dH{d-1}
 - \int_{\Gamma(t)} \eta\,[\partial_{\vec\nu}\,f]^+_- \dH{d-1}
- \int_\Omega \nabla\,f\,.\,\nabla\,\eta \dL{d} 
\end{align*}
for all $\eta \in C^1(\overline\Omega)$,
where $\vec\nu_\Omega$ denotes the outer unit normal to $\Omega$ on 
$\partial\Omega$.
\end{rem}

\subsection{Time derivatives of the normal}

We also frequently need time derivatives of the normal. The
relevant results are stated in the following lemma.

\begin{lem} \label{lem:dtnu}
Let $\GT$ be a $C^2$--evolving orientable hypersurface.  
\begin{enumerate}
\item \label{item:dtnu}
Let $\vec{\mathcal{V}}$ be the velocity field induced by a  
a global parameterization of $\GT$. Then it holds that
\begin{equation*}
\matpartx\,\vec\nu = -(\nabs\,\vec{\mathcal{V}})^\transT\,\vec\nu
\qquad\text{on } \Gamma(t)\,.
\end{equation*}
\item \label{item:normdtnu}
The normal time derivative of $\vec\nu$ satisfies
\begin{equation*}
\matpartn\,\vec\nu = -\nabs\,\mathcal{V} \qquad\text{on } \Gamma(t)\,.
\end{equation*}
\end{enumerate}
\end{lem}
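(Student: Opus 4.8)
The plan is to prove \ref{item:dtnu} by a tangentiality argument and then to deduce \ref{item:normdtnu} algebraically from it. First I would observe that differentiating $\vec\nu\,.\,\vec\nu = 1$ along the parameterization gives $\vec\nu\,.\,\matpartx\,\vec\nu = 0$, so $\matpartx\,\vec\nu$ is a tangential field. I would then check that $(\nabs\,\vec{\mathcal{V}})^\transT\,\vec\nu$ is tangential as well: expanding $\vec\nu\,.\,(\nabs\,\vec{\mathcal{V}})^\transT\,\vec\nu = \sum_i \nu_i\,(\vec\nu\,.\,\nabs\,\mathcal{V}_i)$ and invoking {\rm Remark~\ref{rem:2.6}\ref{item:Pnabs}} (so that each $\nabs\,\mathcal{V}_i$ is tangential) makes this vanish. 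Hence both sides of \ref{item:dtnu} are tangential, and it suffices to match their pairings against a basis $\{\vec\partial_1,\ldots,\vec\partial_{d-1}\}$ of $\tanspace_{\vec p}\,\Gamma(t)$.

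For that matching I would work in a local parameterization $\localpara(t) = \vec x(\cdot,t)\circ\vec\varphi$, recalling from {\rm Remark~\ref{rem:timeder}\ref{item:rem2.19i}} that $\vec{\mathcal{V}}\circ\localpara = \partial_t\,\localpara$. Differentiating the orthogonality relation $(\vec\nu\circ\localpara)\,.\,\vec\partial_j = 0$ in $t$ yields
\[
(\matpartx\,\vec\nu)\circ\localpara\,.\,\vec\partial_j
= -(\vec\nu\circ\localpara)\,.\,\partial_j\,(\vec{\mathcal{V}}\circ\localpara)\,,
\]
and {\rm Remark~\ref{rem:2.6}\ref{item:2.6vi}} rewrites the right-hand side as $-\vec\nu\,.\,(\nabs\,\vec{\mathcal{V}})\,\vec\partial_j = -\vec\partial_j\,.\,(\nabs\,\vec{\mathcal{V}})^\transT\,\vec\nu$. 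Since the two tangential fields then agree when tested against every $\vec\partial_j$, they coincide on $\Gamma(t)$, which is \ref{item:dtnu}.

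For \ref{item:normdtnu} I would start from the component-wise definition $\matpartn\,\vec\nu = \matpartx\,\vec\nu - (\nabs\,\vec\nu)\,\vec{\mathcal{V}}_\subT$ coming from {\rm Definition~\ref{def:2.18}\ref{item:def2.18ii}} and substitute \ref{item:dtnu}. Transposing {\rm Lemma~\ref{lem:divv}\ref{item:divvii}} and using $(\nabs\,\vec\nu)^\transT = \nabs\,\vec\nu$ from {\rm Lemma~\ref{lem:nabsnu}\ref{item:nabsnuT}}, then applying the result to $\vec\nu$, the term $\mathcal{V}\,(\nabs\,\vec\nu)\,\vec\nu$ drops out by {\rm Lemma~\ref{lem:nabsnu}\ref{item:nabsnunu}} while $(\nabs\,\mathcal{V}\otimes\vec\nu)\,\vec\nu = \nabs\,\mathcal{V}$, leaving
\[
\matpartn\,\vec\nu = -\nabs\,\mathcal{V} - (\nabs\,\vec{\mathcal{V}}_\subT)^\transT\,\vec\nu - (\nabs\,\vec\nu)\,\vec{\mathcal{V}}_\subT\,.
\]
The final step is to cancel the last two terms: applying the product rule {\rm Lemma~\ref{lem:productrules}\ref{item:prii}} to $\nabs\,(\vec{\mathcal{V}}_\subT\,.\,\vec\nu) = \vec 0$ (the argument vanishes as $\vec{\mathcal{V}}_\subT$ is tangential) and using $(\nabs\,\vec\nu)^\transT = \nabs\,\vec\nu$ gives exactly $(\nabs\,\vec{\mathcal{V}}_\subT)^\transT\,\vec\nu + (\nabs\,\vec\nu)\,\vec{\mathcal{V}}_\subT = \vec 0$, which yields the claim.

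The step I expect to be the main obstacle is the tangentiality bookkeeping in \ref{item:dtnu}: one must check that $(\nabs\,\vec{\mathcal{V}})^\transT\,\vec\nu$ has no normal component, being careful that this is the transpose and so the clean annihilation property $\nabs\,\vec f\,\vec\vvv = \vec 0$ for normal $\vec\vvv$ does not apply directly, and one must transfer the parameterization-level identity back to an intrinsic statement on $\Gamma(t)$. Once the transposes are tracked correctly, the remaining manipulations in \ref{item:normdtnu} are routine.
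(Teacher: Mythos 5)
Your argument is correct, and part \ref{item:dtnu} is essentially the paper's own proof: the paper likewise works in a local parameterization, uses $\vec{\mathcal{V}}\circ\localpara = \partial_t\,\localpara$ and Schwarz to get $\matpartx\,\vec\tau_i = (\nabs\,\vec{\mathcal{V}})\,\vec\tau_i$, differentiates the orthogonality $\vec\nu\,.\,\vec\tau_i = 0$ to match tangential components, and disposes of the normal component via $(\matpartx\,\vec\nu)\,.\,\vec\nu = \tfrac12\,\matpartx\,|\vec\nu|^2 = 0$ together with $\vec\nu\,.\,(\nabs\,\vec{\mathcal{V}})^\transT\,\vec\nu = ((\nabs\,\vec{\mathcal{V}})\,\vec\nu)\,.\,\vec\nu = 0$; your column expansion $(\nabs\,\vec{\mathcal{V}})^\transT\,\vec\nu = \sum_i \nu_i\,\nabs\,\mathcal{V}_i$ with tangentiality of scalar surface gradients is an equally valid bookkeeping of the same fact. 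In part \ref{item:normdtnu} you take a mildly different algebraic route: the paper, after substituting \ref{item:dtnu} and using the symmetry of $\nabs\,\vec\nu$ from Lemma~\ref{lem:nabsnu}\ref{item:nabsnuT}, replaces $\vec{\mathcal{V}}_\subT$ by $\vec{\mathcal{V}}$ via Lemma~\ref{lem:nabsnu}\ref{item:nabsnunu} and then applies the product rule Lemma~\ref{lem:productrules}\ref{item:prii} once to obtain $-\nabs\,(\vec{\mathcal{V}}\,.\,\vec\nu) = -\nabs\,\mathcal{V}$, whereas you expand $(\nabs\,\vec{\mathcal{V}})^\transT\,\vec\nu$ through the transpose of the decomposition Lemma~\ref{lem:divv}\ref{item:divvii} and then cancel the residual terms by applying the same product rule to $\vec{\mathcal{V}}_\subT\,.\,\vec\nu = 0$. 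Both computations rest on exactly the same ingredients (symmetry of $\nabs\,\vec\nu$, its annihilation of $\vec\nu$, and Lemma~\ref{lem:productrules}\ref{item:prii}), just ordered differently; the paper's ordering is a line shorter since it never needs Lemma~\ref{lem:divv}, but nothing substantive is gained or lost either way.
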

\begin{proof} 
\ref{item:dtnu}
For $\vec p \in \Gamma(t)$, with $\vec p = \vec x(\vec\varphi(\vec u,t),t)
= \localpara(\vec u,t)$, recall \eqref{eq:localpara}, 
we define a basis $\{\vec\tau_1,\ldots,\vec\tau_{d-1}\}$ of the tangent space 
$\tanspace_{\vec p}\,\Gamma(t)$ via 
$\vec\tau_i(\vec p) = \partial_i\,\localpara(\vec u, t)$, $i=1\,\ldots,d-1$.
We also recall from Remark~\ref{rem:timeder}\ref{item:rem2.19i} that 
$\vec{\mathcal{V}} \circ \localpara = \partial_t\,\localpara$ and hence note 
that
\begin{align*}
\matpartx\,\vec\tau_i &
= \partial_t\,[(\partial_i\,\localpara)\circ\localpara^{-1}]
= [\partial_i\,(\partial_t\,\localpara)]\circ\localpara^{-1}
= [\partial_i\,(\vec{\mathcal{V}} \circ \localpara)] \circ\localpara^{-1}
= \partial_{\vec\tau_i}\,\vec{\mathcal{V}} \\ &
= (\nabs\,\vec{\mathcal{V}})\,\vec\tau_i
\quad\text{on } \Gamma(t)\,,
\end{align*}
where for the last step we have recalled 
Remark~\ref{rem:2.6}\ref{item:2.6vi}.
As $\vec\nu\,.\,\vec\tau_i = 0$, it follows for $i=1,\ldots,d-1$ that
\begin{equation*}
(\matpartx\,\vec\nu)\,.\,\vec\tau_i 
= -\vec\nu\,.\,\matpartx\,\vec\tau_i 
= -\vec\nu\,.\,((\nabs\,\vec{\mathcal{V}})\,\vec\tau_i) 
= -((\nabs\,\vec{\mathcal{V}})^\transT\,\vec\nu)\,.\,\vec\tau_i
\quad\text{on } \Gamma(t)\,.
\end{equation*}
Since $\{\vec\tau_1,\ldots,\vec\tau_{d-1}\}$ is a basis of the tangent space
$\tanspace_{\vec p}\,\Gamma(t)$, we obtain, 
on using $(\matpartx\,\vec\nu)\,.\,\vec\nu =
\frac12\,\matpartx\,|\vec\nu|^2 = 0$ and the fact that
$\vec\nu\,.\,(\nabs\,\vec{\mathcal{V}})^\transT\,\vec\nu =
(\nabs\,\vec{\mathcal{V}})\,\vec\nu\,.\,\vec\nu = 0$, recall
Remark~\ref{rem:2.6}\ref{item:2.6vi}, that 
\begin{equation*}
\matpartx\,\vec\nu = -(\nabs\,\vec{\mathcal{V}})^\transT\,\vec\nu
\qquad\text{on } \Gamma(t)\,.
\end{equation*}
\ref{item:normdtnu}
Let $\vec x$ be an arbitrary global parameterization of $\GT$, with induced
velocity field $\vec{\mathcal{V}}$.
Using Definition~\ref{def:2.18}\ref{item:def2.18ii},
Lemma~\ref{lem:nabsnu}, the result from \ref{item:dtnu}
and Lemma~\ref{lem:productrules}\ref{item:prii}, 
we compute
\begin{align*}
\matpartn\,\vec\nu &
= \matpartx\,\vec\nu - (\nabs\,\vec\nu)\,\vec{\mathcal{V}}_\subT
= \matpartx\,\vec\nu - (\nabs\,\vec\nu)^\transT\,\vec{\mathcal{V}}_\subT
= -(\nabs\,\vec{\mathcal{V}})^\transT \,\vec\nu - (\nabs\,\vec\nu )^\transT\,
\vec{\mathcal{V}}_\subT \nonumber \\ &
= -(\nabs\,\vec{\mathcal{V}})^\transT \,\vec\nu - (\nabs\,\vec\nu )^\transT\,
\vec{\mathcal{V}}
= -\nabs\,(\vec{\mathcal{V}}\,.\,\vec\nu)
= -\nabs\,{\mathcal{V}}
\qquad\text{on }\Gamma(t)\,.
\end{align*}
\end{proof}

\subsection{Time derivatives of the mean curvature}
In order to be able to compute the first variation of energies that depend on
the mean curvature, for example with the help of Theorem~\ref{thm:trans},
we need expressions for the time derivatives of curvature.

We begin with the following commutator rule for time derivatives and
surface differential operators.

\begin{lem} \label{lem:commdiff} 
Let $\GT$ be a $C^2$--evolving orientable hypersurface with
a global parameterization leading to the
velocity field $\vec{\mathcal{V}}$, 
and let $f : \GT \to \bR$, $\vec f : \GT \to \bR^d$ be $C^1$--functions. 
Then we have the following results:
\begin{enumerate}
\item \label{item:commdiffi}
\begin{equation*}
\matpartx\,\nabs\, f - \nabs\,\matpartx\, f =
[\nabs\,\vec{\mathcal{V}} - 2\,\mat D_s(\vec{\mathcal{V}})]\,\nabs\,f
\qquad\text{on } \Gamma(t)\,.
\end{equation*}
\item \label{item:commdiffii}
\begin{equation*}
\matpartx\,\nabs\,\vec f - \nabs\,\matpartx\,\vec f = 
(\nabs\,\vec f)\,
[\nabs\,\vec{\mathcal{V}} - 2\,\mat D_s(\vec{\mathcal{V}})]^\transT
\qquad\text{on } \Gamma(t)\,.
\end{equation*}
\item \label{item:commdiffiii}
\begin{equation*}
\matpartx\,(\nabs\,.\,\vec f) - \nabs\,.\,(\matpartx\,\vec f) =
[\nabs\,\vec{\mathcal{V}}-2\,\mat D_s(\vec{\mathcal{V}})] : \nabs\,\vec f
\qquad\text{on } \Gamma(t)\,.
\end{equation*}
\end{enumerate}
\end{lem}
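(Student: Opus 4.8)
The plan is to reduce everything to a pointwise computation in a time-dependent local parameterization $\localpara(t) = \vec x(\cdot,t)\circ\vec\varphi$, recall~\eqref{eq:localpara}, and to prove \ref{item:commdiffi} first, since \ref{item:commdiffii} and \ref{item:commdiffiii} will follow from it by a componentwise argument and a trace. Writing $\vec\partial_i = \partial_i\,\localpara$ and $\hat f = f\circ\localpara$, I would use the coordinate formula $(\nabs\,f)\circ\localpara = \sum_{i,j} g^{ij}\,(\partial_i\,\hat f)\,\vec\partial_j$ from Remark~\ref{rem:altforms}\ref{item:altformg}, together with the key observation that in these coordinates the material derivative acts simply as $\partial_t$: indeed $(\matpartx\,f)\circ\localpara = \partial_t\,\hat f$ and $\vec{\mathcal{V}}\circ\localpara = \partial_t\,\localpara$, recall Remark~\ref{rem:timeder}\ref{item:rem2.19i}.

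For \ref{item:commdiffi} I would differentiate $(\nabs\,f)\circ\localpara$ in time using the product rule. The term in which $\partial_t$ hits $\partial_i\,\hat f$ gives, via Schwarz's theorem $\partial_t\,\partial_i\,\hat f = \partial_i\,\partial_t\,\hat f$, exactly $\sum_{i,j} g^{ij}\,(\partial_i\,\partial_t\,\hat f)\,\vec\partial_j = (\nabs\,\matpartx\,f)\circ\localpara$, which cancels against the second term in the commutator. Two contributions then remain. First, $\partial_t\,\vec\partial_j = \partial_j(\vec{\mathcal{V}}\circ\localpara) = (\nabs\,\vec{\mathcal{V}})\,\vec\partial_j$, where the last equality uses that $\vec\partial_j$ is tangential, recall Remark~\ref{rem:2.6}\ref{item:2.6vi}; collecting this gives $(\nabs\,\vec{\mathcal{V}})\,(\nabs\,f)$. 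Second, differentiating the inverse metric via $\partial_t\,g^{ij} = -\sum_{k,l} g^{ik}\,g^{jl}\,\partial_t\,g_{kl}$ and inserting $\partial_t\,g_{kl} = 2\,((\mat D_s(\vec{\mathcal{V}})\circ\localpara)\,\vec\partial_k)\,.\,\vec\partial_l$ from Lemma~\ref{lem:dtgij}, I would use the identity $\sum_{j,l} g^{jl}\,(\vec w\,.\,\vec\partial_l)\,\vec\partial_j = \mat P_\Gamma\,\vec w$ for the tangential projection together with the fact that $\mat D_s(\vec{\mathcal{V}})$ maps into the tangent space (recall Remark~\ref{rem:2.6}\ref{item:Dsf}) to collapse the quadruple sum to $-2\,\mat D_s(\vec{\mathcal{V}})\,(\nabs\,f)$. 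Adding the two contributions yields $[\nabs\,\vec{\mathcal{V}} - 2\,\mat D_s(\vec{\mathcal{V}})]\,\nabs\,f$.

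For \ref{item:commdiffii}, since the Cartesian basis $\{\vec\ek_i\}$ is fixed in time, $\matpartx$ commutes with taking components, and the $i$-th row of $\nabs\,\vec f$ is $\nabs\,(\vec f\,.\,\vec\ek_i)$, recall Remark~\ref{rem:2.6}\ref{item:2.6vi}. Applying \ref{item:commdiffi} to each scalar component $\vec f\,.\,\vec\ek_i$ and transposing assembles the matrix identity $(\nabs\,\vec f)\,[\nabs\,\vec{\mathcal{V}} - 2\,\mat D_s(\vec{\mathcal{V}})]^\transT$. Finally, \ref{item:commdiffiii} follows by taking the trace of \ref{item:commdiffii}: since $\nabs\,.\,\vec f = \tr(\nabs\,\vec f)$ by Lemma~\ref{lem:productrules}\ref{item:trnabsf} and $\matpartx$ commutes with the trace, the left-hand side equals $\tr(\matpartx\,\nabs\,\vec f - \nabs\,\matpartx\,\vec f)$, and cyclicity of the trace turns $\tr((\nabs\,\vec f)\,\mat A^\transT)$ with $\mat A = \nabs\,\vec{\mathcal{V}} - 2\,\mat D_s(\vec{\mathcal{V}})$ into $\tr(\mat A^\transT\,\nabs\,\vec f) = \mat A : \nabs\,\vec f$.

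The main obstacle is the bookkeeping in \ref{item:commdiffi}: one has to differentiate the inverse metric correctly and recognize that the resulting double contraction against $\partial_t\,g_{kl}$ collapses, via the metric form of the tangential projection, to $\mat D_s(\vec{\mathcal{V}})$ acting directly on $\nabs\,f$. Everything else is routine product-rule calculus, and since the asserted identities are pointwise they may be verified in a single chart, with no partition-of-unity or parameterization-independence argument needed.
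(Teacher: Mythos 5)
Your proposal is correct, and parts \ref{item:commdiffii} and \ref{item:commdiffiii} coincide with the paper's own argument (componentwise reduction via Remark~\ref{rem:2.6}\ref{item:2.6vi}, then the trace identity of Lemma~\ref{lem:productrules}\ref{item:trnabsf}). For part \ref{item:commdiffi}, however, you take a genuinely different route. The paper decomposes the commutator into normal and tangential parts: the normal component follows from $\matpartx\,(\nabs\,f\,.\,\vec\nu)=0$ combined with $\matpartx\,\vec\nu=-(\nabs\,\vec{\mathcal{V}})^\transT\,\vec\nu$ from Lemma~\ref{lem:dtnu}\ref{item:dtnu}, while the tangential component is computed by extending $f$ constantly in the normal direction, recall \eqref{eq:extendf}, and working with bulk derivatives and the Schwarz theorem; the two pieces are then reassembled into $[\nabs\,\vec{\mathcal{V}}-2\,\mat D_s(\vec{\mathcal{V}})]\,\nabs\,f$ on recalling Definition~\ref{def:2.5}\ref{item:def2.5vi} and Remark~\ref{rem:2.6}\ref{item:Pnabs}. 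You instead differentiate the chart representation $(\nabs\,f)\circ\localpara=\sum_{i,j}g^{ij}\,\partial_i(f\circ\localpara)\,\vec\partial_j$ in time: the term where $\partial_t$ hits $g^{ij}$, via $\partial_t\,g^{ij}=-\sum_{k,l}g^{ik}\,g^{jl}\,\partial_t\,g_{kl}$ and Lemma~\ref{lem:dtgij}, collapses to $-2\,\mat P_\Gamma\,\mat D_s(\vec{\mathcal{V}})\,\nabs\,f=-2\,\mat D_s(\vec{\mathcal{V}})\,\nabs\,f$ --- your projection identity $\sum_{j,l}g^{jl}(\vec w\,.\,\vec\partial_l)\,\vec\partial_j=\mat P_\Gamma\,\vec w$ is correct, and the image of $\mat D_s(\vec{\mathcal{V}})$ is indeed tangential --- while the term where $\partial_t$ hits $\vec\partial_j$ gives $(\nabs\,\vec{\mathcal{V}})\,\nabs\,f$ and the remaining term cancels $\nabs\,\matpartx\,f$. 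Your route has the merit of making visible why the rate of deformation tensor appears (it is precisely the time derivative of the metric), and it leans on Lemma~\ref{lem:dtgij} rather than Lemma~\ref{lem:dtnu}; the paper's route avoids differentiating the inverse metric and the attendant index bookkeeping, reusing instead the constant-normal extension technique employed elsewhere (e.g.\ in the proof of Lemma~\ref{lem:surfhess}). One shared caveat, which is not a defect of your argument relative to the paper's: both proofs use mixed second derivatives of $f$ (in your case $\partial_t\,\partial_i(f\circ\localpara)$, in the paper's case second derivatives of the bulk extension), i.e.\ slightly more regularity than the stated $C^1$.
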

\begin{proof} 
\ref{item:commdiffi}
We extend $f$ to a neighbourhood of $\GT$, such that the extension from
$\Gamma(t)$ is constant in the $\vec\nu$-direction, recall
(\ref{eq:extendf}). 
On noting (\ref{eq:nablanabs})
and Lemma~\ref{lem:dtnu}\ref{item:dtnu}, it holds on $\Gamma(t)$ that
\begin{align*}
0&=\matpartx\,(\nabla\,f\,.\,\vec\nu) = \matpartx\,(\nabs\,f\,.\,\vec\nu) 
= (\matpartx\,\nabs\,f)\,.\,\vec\nu + \nabs\,f\,.\,\matpartx\,\vec\nu\\
&= (\matpartx\,\nabs\,f)\,.\,\vec\nu - (\nabs\,f)\,.\,
(\nabs\,\vec{\mathcal{V}})^\transT\,\vec\nu 
=(\matpartx\,\nabs\,f)\,.\,\vec\nu - ((\nabs\,\vec{\mathcal{V}})\,
\nabs\,f)\,.\,\vec\nu\,.
\end{align*}
As $\nabs\,\matpartx\,f$ is tangential, we obtain
\[
(\matpartx\,\nabs\,f-\nabs\,\matpartx\,f)\,.\,\vec\nu=
((\nabs\,\vec{\mathcal{V}})\,\nabs\,f)\,.\,\vec\nu\,.
\]
We now identify the tangential part of 
$\matpartx\,\nabs\,f-\nabs\,\matpartx\,f$, recall \eqref{eq:PGamma}.
We compute, on noting (\ref{eq:nablanabs}),
Remark~\ref{rem:2.6}\ref{item:Pnabs}, 
Remark~\ref{rem:timeder}\ref{item:rem2.19ii},
Remark~\ref{rem:altforms}\ref{item:eq:grad},
Lemma~\ref{lem:productrules}\ref{item:prii} and
Remark~\ref{rem:2.6}\ref{item:2.6vi} that
\begin{align*}
\mat P_\Gamma\,(\matpartx\,\nabs\,f-\nabs\,\matpartx\,f)
&= \mat P_\Gamma\,\matpartx\,\nabla\,f- \nabs\,\matpartx\,f\\
&= \mat P_\Gamma\,(\partial_t\,\nabla\,f + (\nabla^2\,f)\,\vec{\mathcal{V}})
-\nabs\,(\partial_t\,f+\vec{\mathcal{V}}\,.\,\nabla\,f)\\
&= \mat P_\Gamma\,(\partial_t\,\nabla\,f + (\nabla^2\,f)\,\vec{\mathcal{V}})
- \mat P_\Gamma\,\nabla\,\partial_t\,f
-\nabs\,(\vec{\mathcal{V}}\,.\,\nabla\,f)\\
&= \mat P_\Gamma\,(\nabla^2\,f)\,\vec{\mathcal{V}} 
-(\nabs\,\vec{\mathcal{V}})^\transT\,\nabla\,f
- \mat P_\Gamma\,\sum_{i=1}^d \vec{\mathcal{V}}\,.\,\vec\ek_i\,\nabla\,\partial_i\,f 
\\ &
= -(\nabs\,\vec{\mathcal{V}})^\transT\,\nabs\,f\,.
\end{align*}
Combining the above, on recalling Definition~\ref{def:2.5}\ref{item:def2.5vi}
and Remark~\ref{rem:2.6}\ref{item:Pnabs},
yields that
\begin{align*}
& \matpartx\,\nabs\,f-\nabs\,\matpartx\,f
 = \mat P_\Gamma\,(\matpartx\,\nabs\,f-\nabs\,\matpartx\,f)
+ (\mat\Id - \mat P_\Gamma)\,(\matpartx\,\nabs\,f-\nabs\,\matpartx\,f) 
\\ & \quad
= -(\nabs\,\vec{\mathcal{V}})^\transT\,\nabs\,f 
+ (\mat\Id - \mat P_\Gamma)\,(\nabs\,\vec{\mathcal{V}}) \,\nabs\,f 
= [\nabs\,\vec{\mathcal{V}} - 2\,\mat D_s(\vec{\mathcal{V}})]\,\nabs\,f\,,
\end{align*}
which shows the claim.
\\
\ref{item:commdiffii}
It follows from \ref{item:commdiffi} and Remark~\ref{rem:2.6}\ref{item:2.6vi}
that
\begin{align*}
(\matpartx\,\nabs\,\vec f - \nabs\,\matpartx\,\vec f)^\transT\,\vec\ek_i &
= (\matpartx\,\nabs - \nabs\,\matpartx)\,(\vec f\,.\,\vec\ek_i) \\ &
= [\nabs\,\vec{\mathcal{V}} - 2\,\mat D_s(\vec{\mathcal{V}})]\,(\nabs\,
\vec f)^\transT\,\vec\ek_i\,,
\end{align*}
for $i=1,\ldots,d$, and this proves the desired result.
\\
\ref{item:commdiffiii}
Using \ref{item:commdiffii} and Lemma~\ref{lem:productrules}\ref{item:trnabsf},
we compute
\begin{align*}
& \matpartx\,(\nabs\,.\,\vec f) - \nabs\,.\,(\matpartx\,\vec f) =
\tr\left(\matpartx\,(\nabs\,\vec f) - \nabs\,(\matpartx\,\vec f)\right) 
\\ & \qquad
= \tr\left((\nabs\,\vec f)\,
[\nabs\,\vec{\mathcal{V}} - 2\,\mat D_s(\vec{\mathcal{V}})]^\transT
\right) 
= [\nabs\,\vec{\mathcal{V}} - 2\,\mat D_s(\vec{\mathcal{V}})] : 
\nabs\,\vec f\,,
\end{align*}
which yields the desired result.
\end{proof}

We now obtain formulas for the time derivatives of the mean curvature.
\begin{lem} \label{lem:derkappa} 
Let $\GT$ be a $C^3$--evolving orientable hypersurface.
\begin{enumerate}
\item \label{item:lem10.3i}
Let $\vec{\mathcal{V}}_\subT$ be the tangential velocity field induced by a  
a global parameterization of $\GT$. Then it holds that
\[
\matpartx\,\varkappa
= \Delta_s\,{\mathcal{V}}+ {\mathcal{V}}\,|\nabs\,\vec\nu|^2  +
\vec{\mathcal{V}}_\subT\,.\,\nabs\,\varkappa
\qquad\text{on }\Gamma(t)\,.
\]
\item \label{item:lem10.3ii}
The normal time derivative of the mean curvature satisfies
\begin{equation*}
\matpartn\,\varkappa = \Delta_s\,{\mathcal{V}}+{\mathcal{V}}\,|\nabs\,\vec\nu|^2
\qquad\text{on } \Gamma(t)\,.
\end{equation*}
\end{enumerate}
\end{lem}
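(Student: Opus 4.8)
The plan is to prove part \ref{item:lem10.3i} by a direct computation and then read off part \ref{item:lem10.3ii} immediately from Definition~\ref{def:2.18}\ref{item:def2.18ii}: since $\matpartn\,\varkappa = \matpartx\,\varkappa - \vec{\mathcal{V}}_\subT\,.\,\nabs\,\varkappa$, the term $\vec{\mathcal{V}}_\subT\,.\,\nabs\,\varkappa$ is precisely the only difference between the two asserted formulas, so \ref{item:lem10.3ii} is a one-line consequence of \ref{item:lem10.3i}.

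For \ref{item:lem10.3i} I would start from $\varkappa = -\nabs\,.\,\vec\nu$ (Lemma~\ref{lem:varkappa}\ref{item:varkappa}) and apply the commutator rule Lemma~\ref{lem:commdiff}\ref{item:commdiffiii} with $\vec f = \vec\nu$, which gives $\matpartx\,\varkappa = -\nabs\,.\,(\matpartx\,\vec\nu) - [\nabs\,\vec{\mathcal{V}} - 2\,\mat D_s(\vec{\mathcal{V}})]:\nabs\,\vec\nu$. Next I substitute $\matpartx\,\vec\nu = -(\nabs\,\vec{\mathcal{V}})^\transT\,\vec\nu$ from Lemma~\ref{lem:dtnu}\ref{item:dtnu} and expand the tensor divergence with Lemma~\ref{lem:productrules}\ref{item:priv}, noting that $\nabs\,.\,(\nabs\,\vec{\mathcal{V}}) = \Delta_s\,\vec{\mathcal{V}}$ (Definition~\ref{def:2.5}\ref{item:def2.5LBd},\ref{item:def2.5tensor}). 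The rate-of-deformation contribution simplifies using the symmetry of $\nabs\,\vec\nu$ together with $\mat P_\Gamma\,\nabs\,\vec\nu\,\mat P_\Gamma = \nabs\,\vec\nu$ (Lemma~\ref{lem:nabsnu}), which reduces $[\nabs\,\vec{\mathcal{V}} - 2\,\mat D_s(\vec{\mathcal{V}})]:\nabs\,\vec\nu$ to $-\nabs\,\vec{\mathcal{V}}:\nabs\,\vec\nu$. Collecting the pieces collapses everything to the compact form $\matpartx\,\varkappa = \vec\nu\,.\,\Delta_s\,\vec{\mathcal{V}} + 2\,\nabs\,\vec{\mathcal{V}}:\nabs\,\vec\nu$.

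The decisive step is to recognise that this compact form is essentially a Leibniz rule. Writing $\Delta_s(\vec{\mathcal{V}}\,.\,\vec\nu)$ out componentwise yields the product identity $\Delta_s(\vec{\mathcal{V}}\,.\,\vec\nu) = \vec\nu\,.\,\Delta_s\,\vec{\mathcal{V}} + \vec{\mathcal{V}}\,.\,\Delta_s\,\vec\nu + 2\,\nabs\,\vec{\mathcal{V}}:\nabs\,\vec\nu$ (each scalar Laplace--Beltrami expanded via Lemma~\ref{lem:productrules}\ref{item:pri}). Since $\vec{\mathcal{V}}\,.\,\vec\nu = \mathcal{V}$ by Remark~\ref{rem:vecV}\ref{item:VVnu}, substituting this relation cancels both the $\vec\nu\,.\,\Delta_s\,\vec{\mathcal{V}}$ and the $2\,\nabs\,\vec{\mathcal{V}}:\nabs\,\vec\nu$ terms at once, leaving $\matpartx\,\varkappa = \Delta_s\,\mathcal{V} - \vec{\mathcal{V}}\,.\,\Delta_s\,\vec\nu$. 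Finally I would invoke Lemma~\ref{lem:Deltasnu} in the form $\Delta_s\,\vec\nu = -\nabs\,\varkappa - |\nabs\,\vec\nu|^2\,\vec\nu$; using $\vec{\mathcal{V}}\,.\,\vec\nu = \mathcal{V}$ and that $\nabs\,\varkappa$ is tangential (so $\vec{\mathcal{V}}\,.\,\nabs\,\varkappa = \vec{\mathcal{V}}_\subT\,.\,\nabs\,\varkappa$) gives exactly $\matpartx\,\varkappa = \Delta_s\,\mathcal{V} + \mathcal{V}\,|\nabs\,\vec\nu|^2 + \vec{\mathcal{V}}_\subT\,.\,\nabs\,\varkappa$.

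I expect the main obstacle to be the bookkeeping in the reduction to the compact form $\vec\nu\,.\,\Delta_s\,\vec{\mathcal{V}} + 2\,\nabs\,\vec{\mathcal{V}}:\nabs\,\vec\nu$: one must chain the commutator rule, Lemma~\ref{lem:dtnu}, and the tensor product rule correctly, and in particular exploit the symmetry and projection properties of $\nabs\,\vec\nu$ to annihilate the $\mat D_s(\vec{\mathcal{V}})$ contribution. Once this form is reached, the Leibniz observation plus Lemma~\ref{lem:Deltasnu} make the rest automatic. A stylistic alternative, which keeps the tangential terms out of the computation, is to prove \ref{item:lem10.3ii} first: as $\matpartn\,\varkappa$ is parameterization independent (Remark~\ref{rem:timeder}\ref{item:rem2.19iii}), one may evaluate it with a purely normal velocity ($\vec{\mathcal{V}}_\subT = \vec 0$), where the same algebra gives \ref{item:lem10.3ii} directly and \ref{item:lem10.3i} follows from Definition~\ref{def:2.18}\ref{item:def2.18ii}; however this requires justifying the existence of such a parameterization, so I would favour the direct route above.
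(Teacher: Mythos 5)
Your proof is correct, and its middle portion takes a genuinely different route from the paper's. Both arguments open identically --- write $\varkappa = -\nabs\,.\,\vec\nu$, apply the commutator rule Lemma~\ref{lem:commdiff}\ref{item:commdiffiii} with $\vec f = \vec\nu$, and substitute Lemma~\ref{lem:dtnu}\ref{item:dtnu} --- and both close with Lemma~\ref{lem:Deltasnu}; part \ref{item:lem10.3ii} is obtained from part \ref{item:lem10.3i} via Definition~\ref{def:2.18}\ref{item:def2.18ii} in both cases. The difference lies in between. The paper invokes Lemma~\ref{lem:divv}\ref{item:divvii},\ref{item:divviii} to split $\nabs\,\vec{\mathcal{V}}$ and $\mat D_s(\vec{\mathcal{V}})$ into contributions from $\mathcal{V}\,\vec\nu$ and $\vec{\mathcal{V}}_\subT$, carrying the tangential velocity explicitly through the whole computation via $(\nabs\,\vec{\mathcal{V}}_\subT)^\transT\,\vec\nu = -(\nabs\,\vec\nu)\,\vec{\mathcal{V}}_\subT$ until it reaches $\Delta_s\,\mathcal{V} - (\Delta_s\,\vec\nu)\,.\,\vec{\mathcal{V}}_\subT + \mathcal{V}\,|\nabs\,\vec\nu|^2$. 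You instead keep the full velocity intact: the tensor product rule Lemma~\ref{lem:productrules}\ref{item:priv} together with $\nabs\,.\,(\nabs\,\vec{\mathcal{V}}) = \Delta_s\,\vec{\mathcal{V}}$, and the annihilation $[\nabs\,\vec{\mathcal{V}} - 2\,\mat D_s(\vec{\mathcal{V}})]:\nabs\,\vec\nu = -\nabs\,\vec{\mathcal{V}}:\nabs\,\vec\nu$ (valid since $\mat P_\Gamma\,\nabs\,\vec\nu\,\mat P_\Gamma = \nabs\,\vec\nu$ and $\nabs\,\vec\nu$ is symmetric, by Lemma~\ref{lem:nabsnu}), give the parameterization-neutral intermediate identity $\matpartx\,\varkappa = \vec\nu\,.\,\Delta_s\,\vec{\mathcal{V}} + 2\,\nabs\,\vec{\mathcal{V}}:\nabs\,\vec\nu$, which your Leibniz rule for $\Delta_s(\vec{\mathcal{V}}\,.\,\vec\nu)$ converts into $\Delta_s\,\mathcal{V} - \vec{\mathcal{V}}\,.\,\Delta_s\,\vec\nu$. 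What your route buys is economy: no normal/tangential splitting of the velocity gradient is needed, and $\vec{\mathcal{V}}_\subT$ enters only at the last step through $\vec{\mathcal{V}}\,.\,\nabs\,\varkappa = \vec{\mathcal{V}}_\subT\,.\,\nabs\,\varkappa$. The price is two small facts not stated in the paper, which a write-up should verify explicitly: the identity $\nabs\,.\,(\nabs\,\vec f) = \Delta_s\,\vec f$ (immediate from Definition~\ref{def:2.5}\ref{item:def2.5LB},\ref{item:def2.5LBd},\ref{item:def2.5tensor} and Remark~\ref{rem:2.6}\ref{item:2.6vi}) and the Leibniz rule $\Delta_s(\vec f\,.\,\vec g) = \vec g\,.\,\Delta_s\,\vec f + \vec f\,.\,\Delta_s\,\vec g + 2\,\nabs\,\vec f : \nabs\,\vec g$ (componentwise from Lemma~\ref{lem:productrules}\ref{item:pri} and the scalar gradient product rule). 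Both are elementary and hold at the regularity assumed here, so the proof stands.
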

\begin{proof} 
\ref{item:lem10.3i}
It follows from 
Lemma~\ref{lem:varkappa}\ref{item:varkappa},
Lemma~\ref{lem:commdiff}\ref{item:commdiffiii}, 
\arxivyesno{Lemma~\ref{lem:dtnu}\ref{item:dtnu},}
{Lemma \linebreak \ref{lem:dtnu}\ref{item:dtnu},}
Lemma~\ref{lem:divv}\ref{item:divvii}, \ref{item:divviii} and
Lemma~\ref{lem:nabsnu} that
\begin{align*}
\matpartx\,\varkappa
&= -\matpartx\,(\nabs\,.\,\vec\nu)
= -\nabs\,.\,(\matpartx\,\vec\nu)-[\nabs\,\vec{\mathcal{V}}-
2\,\mat D_s(\vec{\mathcal{V}})]:\nabs\,\vec\nu\\
&=\nabs\,.\,((\nabs\,\vec{\mathcal{V}})^\transT\,\vec\nu) +
\mathcal{V}\,|\nabs\,\vec\nu|^2 
+ \nabs\,\vec{\mathcal{V}}_\subT:\nabs\,\vec\nu\\
&= \nabs\,.\,(\nabs\,{\mathcal{V}} 
+ (\nabs\,\vec{\mathcal{V}}_\subT)^\transT\,\vec\nu) + 
\mathcal{V}\,|\nabs\,\vec\nu|^2 
+ \nabs\,\vec{\mathcal{V}}_\subT:\nabs\,\vec\nu\,.
\end{align*}
As $\vec{\mathcal{V}}_\subT\,.\,\vec\nu=0$, we obtain
from Lemma~\ref{lem:productrules}\ref{item:prii} and 
Lemma~\ref{lem:nabsnu}\ref{item:nabsnuT} that
$(\nabs\,\vec{\mathcal{V}}_\subT)^\transT\,\!\vec\nu 
=\! -(\nabs\,\vec\nu)\,\vec{\mathcal{V}}_\subT$, and hence
Lemma~\ref{lem:productrules}\ref{item:pri} implies that
\begin{align*}
\matpartx\,\varkappa & =
\Delta_s\,{\mathcal{V}}
- \nabs\,.\,((\nabs\,\vec\nu)\,\vec{\mathcal{V}}_\subT)
+ {\mathcal{V}}\,|\nabs\,\vec\nu|^2 
+ \nabs\,\vec{\mathcal{V}}_\subT:\nabs\,\vec\nu \\ & 
= \Delta_s\,{\mathcal{V}}
 - (\Delta_s\,\vec\nu) \,.\,\vec{\mathcal{V}}_\subT
- \nabs\,\vec\nu:\nabs\,\vec{\mathcal{V}}_\subT
+ {\mathcal{V}}\,|\nabs\,\vec\nu|^2 
+ \nabs\,\vec{\mathcal{V}}_\subT:\nabs\,\vec\nu \\ &
= \Delta_s\,{\mathcal{V}}
 - (\Delta_s\,\vec\nu) \,.\,\vec{\mathcal{V}}_\subT
+ {\mathcal{V}}\,|\nabs\,\vec\nu|^2 \,.
\end{align*}
Combining this with Lemma~\ref{lem:Deltasnu} yields the 
desired result.
\\
\ref{item:lem10.3ii}
On choosing an arbitrary global parameterization of $\GT$, with induced
tangential velocity field $\vec{\mathcal{V}}_\subT$,
the claim follows from \ref{item:lem10.3i}
and Definition~\ref{def:2.18}\ref{item:def2.18ii}.
\end{proof}

\subsection{Gauss--Bonnet theorem}
In the case $d=3$, we can consider curves $\curvegamma$ on a hypersurface
$\Gamma$. For any curve in $\bR^3$, we can define the curvature vector
\begin{equation}\label{eq:identgamma}
\vec\varkappa_\curvegamma = \vec\id_{ss} \qquad\text{on } \curvegamma\,,
\end{equation}
where $\partial_s^2$ denotes the second derivative with respect to arclength
on $\gamma$. We note that \eqref{eq:identgamma} is invariant under a change of
parameterization of the curve.

Later we will need the Gauss--Bonnet theorem, which uses \eqref{eq:identgamma}
for the special case $\curvegamma = \partial\Gamma \subset \Gamma$.

\begin{thm}[Gauss--Bonnet] \label{thm:GB}
Let $\Gamma$ be a compact orientable $C^2$--hyper\-surface in $\bR^3$. 
Then it holds that
\begin{equation*}
\int_\Gamma \Gauss \dH2 = 2\,\pi\,{\rm m}(\Gamma) + \int_{\partial\Gamma}
\vec\varkappa_{\partial\Gamma}\,.\,\conormal \dH1\,,
\end{equation*}
where $\conormal$ is the outer unit conormal to $\partial\Gamma$, and where
where ${\rm m}(\Gamma)\in\bZ$ is the Euler characteristic of $\Gamma$. 
\end{thm}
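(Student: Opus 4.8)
The plan is to reduce the assertion to the classical Gauss--Bonnet theorem, the two pieces of genuine work being the identification of the boundary integrand with a geodesic curvature and the combinatorial passage to the Euler characteristic. First I would parameterize $\partial\Gamma$ by arclength and set $\vec t = \vec\id_s$ for its unit tangent, so that by \eqref{eq:identgamma} the boundary curvature vector is $\vec\varkappa_{\partial\Gamma} = \vec\id_{ss} = \vec t_s$, which is orthogonal to $\vec t$ because $\partial_s\,|\vec t|^2 = 0$. At each $\vec p \in \partial\Gamma$ the triple $\{\vec t,\conormal,\vec\nu\}$ is an orthonormal (Darboux) frame of $\bR^3$: indeed $\conormal \in \tanspace_{\vec p}\,\Gamma$ is the outer unit conormal with $\conormal \in \normalspace_{\vec p}\,\partial\Gamma$, recall Definition~\ref{def:conormal}, while $\vec\nu$ is the surface normal. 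Hence $\vec\varkappa_{\partial\Gamma} \in \spa\{\conormal,\vec\nu\}$ and decomposes as $\vec\varkappa_{\partial\Gamma} = \varkappa_g\,\conormal + \varkappa_n\,\vec\nu$, where $\varkappa_g := \vec\varkappa_{\partial\Gamma}\,.\,\conormal$ is the signed geodesic curvature (relative to the outer conormal) and $\varkappa_n := \vec\varkappa_{\partial\Gamma}\,.\,\vec\nu$ the normal curvature. The boundary term is therefore precisely $\int_{\partial\Gamma}\varkappa_g\dH{1}$, and the claim becomes $\int_\Gamma\Gauss\dH{2} - \int_{\partial\Gamma}\varkappa_g\dH{1} = 2\,\pi\,{\rm m}(\Gamma)$; using the \emph{outer} conormal is exactly what produces the stated sign.

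To prove this I would triangulate $\Gamma$ by finitely many curvilinear triangles $\{T_k\}$ whose boundary edges lie on $\partial\Gamma$, possible since $\Gamma$ is compact and $C^2$, and apply on each the local Gauss--Bonnet identity
\[
\int_{T_k}\Gauss\dH{2} - \int_{\partial T_k}\varkappa_g\dH{1} = \sum_{j}\theta^k_j - \pi\,,
\]
where the $\theta^k_j$ are the interior angles of $T_k$ and $\varkappa_g$ is taken with respect to the outer conormal of $T_k$. This local identity is the differential-geometric heart of the argument and the place where $\Gauss$ genuinely enters; it is proved on a single coordinate patch by writing $\varkappa_g$ as the rotation rate of the Darboux frame relative to a parallel frame and integrating the resulting exact one-form by Green's theorem, the curvature of the connection contributing exactly $\int_{T_k}\Gauss\dH{2}$.

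Summing over all triangles, each interior edge is shared by two triangles whose outer conormals point oppositely, so $\varkappa_g$ there takes opposite signs and the interior-edge contributions cancel, leaving only $\int_{\partial\Gamma}\varkappa_g\dH{1}$. The angle terms reorganize combinatorially: the interior angles around each interior vertex sum to $2\,\pi$ and those at each (smooth) boundary vertex to $\pi$, whence $\sum_{k,j}\theta^k_j = 2\,\pi\,V_{\rm int} + \pi\,V_\partial$; combined with the face--edge relation $3F = 2E_{\rm int} + E_\partial$, with $V_\partial = E_\partial$, and with Euler's formula $V - E + F = {\rm m}(\Gamma)$, this gives $\sum_{k,j}\theta^k_j - \pi\,F = 2\,\pi\,{\rm m}(\Gamma)$, which completes the proof.

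The main obstacle is twofold. First, establishing the local identity rigorously at $C^2$ regularity requires integrating the connection form and invoking Green's theorem, and hence care about the smoothness of the triangulation and of the induced frames. Second, one must carry out the angle-and-edge bookkeeping that turns the purely metric local contributions into the integer topological invariant ${\rm m}(\Gamma)\in\bZ$, and verify independence of the triangulation. Both steps are classical but delicate; since the excerpt already relies on \cite{doCarmo76,Kuhnel15} for background surface geometry, an economical alternative is to cite the classical Gauss--Bonnet theorem from those references directly, once the first paragraph has recast the boundary term as $\int_{\partial\Gamma}\varkappa_g\dH{1}$.
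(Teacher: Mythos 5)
Your proposal is correct, but it takes a genuinely different route from the paper: the paper offers no proof at all, simply referring the reader to \citet[\S4F]{Kuhnel15} for both the definition of the Euler characteristic and the proof of the formula. You instead reconstruct the classical argument in full: identification of the boundary integrand with a (signed) geodesic curvature via the Darboux frame $\{\vec t,\conormal,\vec\nu\}$, a triangulation of $\Gamma$, the local Gauss--Bonnet identity on each curvilinear triangle, cancellation of interior-edge contributions because adjacent triangles induce opposite outer conormals, and the angle/Euler-formula bookkeeping (your combinatorics $3F=2E_{\rm int}+E_\partial$, $V_\partial=E_\partial$, $V-E+F={\rm m}(\Gamma)$ does close correctly, and your sign convention is consistent: with $\varkappa_g=\vec\varkappa_{\partial\Gamma}\,.\,\conormal$ one has $\varkappa_g=-k_g$ for the classical geodesic curvature, so the stated formula is equivalent to the textbook form $\int_\Gamma \Gauss\dH{2}+\int_{\partial\Gamma}k_g\dH{1}=2\,\pi\,{\rm m}(\Gamma)$). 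What each approach buys: the paper's citation is economical and sidesteps the delicate points you flag (rigour of the local identity at $C^2$ regularity, independence of the triangulation), which is reasonable for a classical result used only as a tool later; your version is self-contained and, more importantly, makes explicit the one piece of work the paper leaves implicit, namely the translation between the paper's boundary term $\vec\varkappa_{\partial\Gamma}\,.\,\conormal$ with the \emph{outer} conormal and the geodesic-curvature term in the standard references. Indeed, your closing suggestion --- recast the boundary term and then cite the classical theorem --- is essentially the paper's approach plus the missing bridge, and would be the cleanest way to finish.
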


For a definition of the Euler characteristic and a
proof of the Gauss--Bonnet formula we refer to \citet[\S4F]{Kuhnel15}.

\section{Parametric finite elements} \label{sec:PFEA}
In this section we discuss the main concepts that are necessary for the 
numerical approximation of curvature driven evolution equations with the help
of parametric finite elements.
Readers familiar with these concepts, and readers more interested in the actual
finite element approximations discussed in this \arxivyesno{work}{chapter}, 
may skip this section and go directly to the appropriate sections of interest,
below.

\subsection{Polyhedral surfaces} \label{subsec:polysurf}
In order to approximate a smooth surface we use polyhedral surfaces as
follows. The idea of using polyhedral surfaces to
approximate the curvature driven evolution of hypersurfaces
goes back to \cite{Dziuk91}.

\begin{definition} \label{def:polysurf}
\rule{0pt}{0pt}
\begin{enumerate}
\item \label{item:polysurf}
A subset $\Gamma^h\subset\bR^d$ is called an $n$-dimensional polyhedral 
surface for $1 \leq n \leq d$, 
with or without boundary, if it is the finite union of closed,
nondegenerate $n$-simplices, 
where the intersection of any two simplices is either empty or a common 
$k$-simplex, $0 \leq k < n$.
\item If $n=d-1$, then we call $\Gamma^h$ a polyhedral hypersurface.
\item If $n=1$, then we call $\Gamma^h$ a polygonal curve.
\end{enumerate}
\end{definition}

\begin{rem} \label{rem:polysurf}
\rule{0pt}{0pt}
\begin{enumerate}
\item
The boundary of a polyhedral surface $\Gamma^h$, as a
$C^0$--surface, is defined as in {\rm Definition~\ref{def:boundary}},
and is denoted by $\partial\Gamma^h$. Of course, $\partial\Gamma^h$ is given
as the union of all $(n-1)$-simplices, that form part of the boundary of 
exactly one of the $n$-simplices that make up $\Gamma^h$.
If $\partial\Gamma^h$ is empty, then we call $\Gamma^h$ a 
closed polyhedral surface.
\item
For a polyhedral hypersurface $\Gamma^h$ with boundary, 
the outer conormal to $\Gamma^h$ is well-defined almost everywhere
on $\partial\Gamma^h$, where locally we use the definition
{\rm Definition~\ref{def:conormal}}.
\item \label{item:triangulation}
In order to define geometric quantities for, and finite element spaces on
polyhedral surfaces, it is often convenient to define $\Gamma^h$ in
terms of a triangulation. To this end, from now on, we let
an $n$-dimensional polyhedral surface be given by
\[
\Gamma^h = \bigcup_{j=1}^J \overline{\sigma_j}\,,
\]
where $\{\sigma_j\}_{j=1}^J$ is a family of disjoint, 
(relatively) open $n$-simplices, such that
$\overline{\sigma_i}\cap\overline{\sigma_j}$ for $i\not=j$ is either empty or
a common $k$-simplex of $\overline{\sigma_i}$ and $\overline{\sigma_j}$, 
$0 \leq k < n$.
For later use, we denote the vertices of $\Gamma^h$
by $\{\vec q_k\}_{k=1}^K$, and assume that the vertices of $\sigma_j$
are given by $\{\vec q_{j,k}\}_{k=1}^{n+1}$, $j=1,\ldots,J$.
\end{enumerate}
\end{rem}

\begin{definition} \label{def:Vh}
Let $\Gamma^h=\bigcup_{j=1}^J \overline{\sigma_j}$ be an $n$-dimensional
polyhedral surface, as described in 
{\rm Remark~\ref{rem:polysurf}\ref{item:triangulation}},
with vertices $\{\vec q_k\}_{k=1}^K$.
\begin{enumerate}
\item \label{item:Vh}
We define the finite element spaces of continuous
piecewise linear functions on $\Gamma^h$ via
\begin{align*}
\Whh &= \{\chi \in C(\Gamma^h) : \chi_{\mid_{\sigma_j}}
\text{ is affine for $j=1, \ldots, J$} \}\,,\\
\Vhh & = [\Whh]^d\,,\quad\matVhh = [\Whh]^{d\times d} \,.
\end{align*}
We let $\{\phi^{\Gamma^h}_k\}_{k=1}^K$ denote
the standard basis of $\Whh$, i.e.\ 
\[
\phi^{\Gamma^h}_i(\vec q_j) = \delta_{ij}\,, \qquad i,j = 1,\ldots,K\,.
\]
Moreover, we let $\pi_{\Gamma^h}:C(\Gamma^h) \to \Whh$ be the standard 
interpolation operator, i.e.\
\[ \pi_{\Gamma^h}\,\eta = \sum_{k=1}^K \eta(\vec q_k)\,\phi^{\Gamma^h}_k 
\quad\forall\ \eta \in C(\Gamma^h)\,,
\]
and similarly $\vec\pi_{\Gamma^h}:[C(\Gamma^h)]^d \to \Vhh$.
\item \label{item:Vch}
We define the spaces of piecewise constant functions on $\Gamma^h$ via
\begin{align*}
\Wchh &= \{\chi \in L^\infty(\Gamma^h) : \chi_{\mid_{\sigma_j}}
\text{ is constant for $j=1, \ldots, J$} \}\,,\\
\Vchh & = [\Wchh]^d\,,\quad\matVchh = [\Wchh]^{d\times d} \,.
\end{align*}
\item \label{item:ip0}
We let $\left\langle \cdot, \cdot \right\rangle_{\Gamma^h}$
denote the $L^2$--inner product on $\Gamma^h$,
with \text{$|\cdot|_{\Gamma^h}$} the associated $L^2$--norm,
and we extend these definitions to any $n$-dimensional piecewise 
$C^1$--surface $\Gamma$.
For piecewise continuous functions, 
$u,\vvv\in L^\infty(\Gamma^h)$, with possible jumps
across the edges of $\{\sigma_j\}_{j=1}^J$,
we introduce the mass lumped inner product
$\left\langle\cdot,\cdot\right\rangle^h_{\Gamma^h}$ as
\begin{equation}
\left\langle u, \vvv \right\rangle^h_{\Gamma^h} =
\frac1{n+1}\,\sum_{j=1}^J \mathcal{H}^{n}(\sigma_j)\,
\sum_{k=1}^{n+1} (u\,\vvv)((\vec q_{j,k})^-)\,,
\label{eq:ip0}
\end{equation}
where $u((\vec q)^-) =
\underset{\sigma_j\ni \vec p\to \vec q}{\lim}\, u(\vec p)$.
The definition \eqref{eq:ip0}
is naturally extended to vector- and tensor-valued functions. We also let
\begin{equation} \label{eq:ip0norm}
\left|u\right|_{\Gamma^h}^h = \left( \left\langle u, u 
\right\rangle^h_{\Gamma^h}\right)^\frac12 ,
\end{equation}
which on $\Whh$ defines the norm induced by \eqref{eq:ip0}. We extend
\eqref{eq:ip0norm} to vector-valued functions to obtain a norm on $\Vhh$.
\end{enumerate}
\end{definition}

\begin{rem} \label{rem:ip0}
It follows from {\rm Definition~\ref{def:Vh}} that
\[
\left\langle \eta, 1 \right\rangle^h_{\Gamma^h} = 
\left\langle  \pi_{\Gamma^h}\,\eta, 1 \right\rangle_{\Gamma^h} \qquad
\forall\ \eta \in C(\Gamma^h)\,.
\]
\end{rem}

\subsubsection{Orientation}
In order to discuss the orientation of a polyhedral hypersurface, we begin with
the definition of the wedge product in $\bR^d$.

\begin{definition} \label{def:wedge}
Let $\vec\vvv_1, \ldots, \vec\vvv_{d-1} \in \bR^d$. Then the wedge product 
\[
 \vec z = \vec\vvv_1\land\cdots\land\vec\vvv_{d-1}
\]
is the unique vector $\vec z \in \bR^d$ such 
that $\vec b\,.\,\vec z= \det(\vec\vvv_1,\ldots,\vec\vvv_{d-1}, \vec b)$
for all $\vec b\in\bR^d$. 
\end{definition}

\begin{rem} \label{rem:wedge}
\rule{0pt}{0pt}
\begin{enumerate}
\item \label{item:wedge2d3d}
The wedge product is the usual cross product
of two vectors in $\bR^3$, and the anti-clockwise rotation 
through $\frac\pi2$ of a vector in $\bR^2$.
\item \label{item:wedge}
The wedge product $\vec\vvv_1\land\cdots\land\vec\vvv_{d-1}$
is perpendicular to each of the 
$(d-1)$ vectors $\vec\vvv_1,\ldots,\vec\vvv_{d-1}$, and has length equal to 
the volume of the parallelotope spanned by them.
\item \label{item:measure}
The measure of a $(d-1)$-simplex $\sigma$, with vertices
$\{\vec q_{k}\}_{k=1}^{d}$, can be computed via 
\[
\mathcal{H}^{d-1}(\sigma) = 
\frac{1}{d-1}\,|(\vec q_{2}-\vec q_{1}) \land \cdots \land
(\vec q_{d}-\vec q_{1})|\,.
\]
\end{enumerate}
\end{rem}

We recall the following definition of orientable polyhedral hypersurface 
\arxivyesno{}{\linebreak}%
from \citet[p.~20]{Matveev06}.

\begin{definition} \label{def:orientable}
We say that the polyhedral hypersurface 
$\Gamma^h=\bigcup_{j=1}^J \overline{\sigma_j}$ is
orientable, if it is possible to consistently 
orientate the simplices $\{\sigma_j\}_{j=1}^J$, e.g.\ by choosing the order
$\{\vec q_{j,k}\}_{k=1}^d$ for the vertices of $\sigma_j$,
$j=1,\ldots,J$, in such a way, that on nonempty intersections 
$\overline{\sigma_i} \cap \overline{\sigma_j}$ that form a $(d-2)$-simplex,
the two orientations induced by $\sigma_i$ and $\sigma_j$ are
opposite to each other.
\end{definition}

\begin{rem} \label{rem:orientable}
For a polyhedral hypersurface in $\bR^3$,
each triangle is oriented by choosing a direction 
around the boundary of the triangle. On each triangle, this gives a direction 
to every edge of the triangle. 
If this can be done in such a way, that two neighbouring edges are always
pointing in the opposite direction, then the surface is orientable. An example
for a non-orientable polyhedral hypersurface in $\bR^3$ is a triangulation of
the M\"obius strip.
Of course, for $d=2$ we are dealing with polygonal curves,
and they are always orientable.
\end{rem}

\begin{definition} \label{def:nuh}
Let $\Gamma^h=\bigcup_{j=1}^J \overline{\sigma_j}$ be an
orientable polyhedral hypersurface $\Gamma^h$, with a consistent
ordering of the vertices $\{\vec q_{j,k}\}_{k=1}^d$ for each
$\sigma_j$, $j=1,\ldots,J$.
Then we define the consistent, piecewise constant unit normal 
$\vec\nu^h \in \Vchh$ 
via
\begin{equation*} 
 \vec\nu^h =
\frac{(\vec q_{j,2}-\vec q_{j,1}) \land \cdots \land
(\vec q_{j,d}-\vec q_{j,1})}{|(\vec q_{j,2}-\vec q_{j,1}) \land
\cdots \land (\vec q_{j,d}-\vec q_{j,1})|}
\qquad\text{on } \sigma_j\,,
\end{equation*}
$j = 1,\ldots,J$, recall {\rm Definition~\ref{def:wedge}}.
\end{definition}

\begin{rem} \label{rem:nuh}
Of course, changing the orientation of $\Gamma^h$ will change the
sign of $\vec\nu^h$. For the majority of the approximations introduced in this
\arxivyesno{work}{chapter}, 
the choice of normal is not important. We will clearly state 
the choice of the sign of $\vec\nu^h$ in situations where it is critical. 
\end{rem}

\begin{definition} \label{def:omegah}
Let $\Gamma^h$ be an orientable polyhedral hypersurface with unit normal
$\vec\nu^h$. Then we define the vertex normal vector 
$\vec\omega^h \in \Vhh$ to be the mass-lumped $L^2$--projection 
of $\vec\nu^h$ onto $\Vhh$, i.e.\
\begin{equation} \label{eq:omegah}
\left\langle \vec\omega^h, \vec\varphi \right\rangle_{\Gamma^h}^h 
= \left\langle \vec\nu^h, \vec\varphi \right\rangle_{\Gamma^h}
\quad\forall\ \vec\varphi\in\Vhh\,.
\end{equation}
\end{definition}

\begin{rem} \label{rem:omegah}
It is easy to see that, for $k=1,\ldots,K$,
\begin{equation} \label{eq:omegahk}
\vec\omega^h(\vec q_k) = 
\frac{1}{\mathcal{H}^{d-1}(\Lambda_k)}
\sum_{\sigma_j\in\mathcal{T}_k} 
\mathcal{H}^{d-1}(\sigma_j)\,\vec\nu^h_{\mid_{\sigma_j}}
\,,
\end{equation}
where
\[
\Lambda_k = \bigcup_{\sigma_j \in \mathcal{T}_k} \overline{\sigma_j}
\quad\text{ and } \quad
\mathcal{T}_k= \{\sigma_j : \vec q_k \in \overline{\sigma_j}\}
\,.
\]
In particular, we note that one can interpret 
$\vec\omega^h(\vec q_k)$ as a weighted normal at the
vertex $\vec q_k$ of $\Gamma^h$.
It follows from \eqref{eq:ip0} and \eqref{eq:omegah} that 
\begin{equation} \label{eq:omegahnuh}
\left\langle \chi\,\vec\omega^h, \vec\varphi \right\rangle_{\Gamma^h}^h 
= \left\langle \chi\,\vec\nu^h, \vec\varphi \right\rangle_{\Gamma^h}^h
\quad\forall\ \chi\in\Whh\,,\ \vec\varphi\in\Vhh\,.
\end{equation}
Combining \eqref{eq:omegahnuh} and \eqref{eq:omegah} yields that
\begin{equation} \label{eq:intnuh}
\left\langle \vec\omega^h, \vec\varphi \right\rangle_{\Gamma^h}^h 
= \left\langle \vec\nu^h, \vec\varphi \right\rangle_{\Gamma^h}^h
= \left\langle \vec\nu^h, \vec\varphi \right\rangle_{\Gamma^h}
\quad\forall\ \vec\varphi\in\Vhh\,.
\end{equation}
\end{rem}

\subsubsection{Polygonal curves} \label{subsubsec:polycurves}
Most of the above definitions simplify dramatically when $\Gamma^h$ is a
polygonal curve. Given a closed polygonal curve 
$\Gamma^h=\bigcup_{j=1}^J \overline{\sigma_j}$, we can parameterize
$\Gamma^h$ with the help of a finite element function defined on the
periodic unit interval $\bI = \RZ$.

\begin{definition} \label{def:VhI}
Let $\bI=\bigcup^J_{j=1} I_j$ be decomposed into the intervals
$I_j=[q_{j-1},$ $q_j]$, given by the nodes $q_j=j\,h$, $h=J^{-1}$, for 
$j=0,\ldots,J$. We make use of the periodicity of $\RZ$, i.e.\ 
$q_J= q_0$, $q_{J+1} = q_1$ and so on.
\begin{enumerate}
\item
We define the finite element spaces of periodic, continuous piecewise linear
functions in $\bI$ via
\begin{align*}
\WhI & = \{\chi\in C(\bI) : \chi_{\mid_{I_j}}
\text{ is affine for $j=1,\ldots,J$} \} \,, \\ 
\VhI & = [\WhI]^d\,.
\end{align*}
\item
We let $\left\langle\cdot,\cdot\right\rangle_{\bI}$ denote the $L^2$--inner
product on $\bI$. 
For piecewise continuous functions, $u,\vvv\in L^\infty(\bI)$, 
with possible jumps at the nodes $\{q_j\}_{j=1}^J$, we introduce the mass
lumped inner product on $\bI$ as
\begin{equation*}
\left\langle u, \vvv \right\rangle_\bI^h = \tfrac12\,h\,\sum_{j=1}^J 
\left[(u\,\vvv)(q_j^-) + (u\,\vvv)(q_{j-1}^+)\right]. 
\end{equation*}
\end{enumerate}
\end{definition}

\begin{rem} \label{rem:VhI}
Given a closed polygonal curve $\Gamma^h=\bigcup_{j=1}^J \overline{\sigma_j}$,
we can now find a function $\vec X^h \in \VhI$ such that 
$\Gamma^h = \vec X^h(\bI)$. Then the following hold.
\begin{enumerate}
\item
It follows from {\rm Remark~\ref{rem:altforms}\ref{item:altformg}} that
\begin{alignat*}{2}
(\nabs\, f) \circ \vec X^h
&=  \partial_s\,(f\circ \vec X^h)\,\vec X^h_s
\qquad &&\text{in } I_j\,,\\
(\nabs\,.\,\vec f)\circ \vec X^h
&= \partial_s\,(\vec f\circ \vec X^h)\,.\,\vec X^h_s
\qquad &&\text{in } I_j\,,
\end{alignat*}
for $j=1,\ldots,J$,
where $\vec X^h_s = \partial_s\,\vec X^h$ and
$\partial_s = |\partial_1\,\vec X^h|^{-1}\,\partial_1$ denotes
differentiation with respect to the arclength of $\Gamma^h$.
{From} now on we define the shorthand notation 
$\vec X^h_\rho = \partial_1\,\vec X^h$, i.e.\ $\rho \in \bI$ plays the role
of the parameterization variable.
\item
We have that
\[
\left\langle f, 1 \right\rangle_{\Gamma^h} =
\left\langle f \circ \vec X^h, |\vec X^h_\rho| \right\rangle_{\bI} 
\quad\text{and}\quad
\left\langle f, 1 \right\rangle_{\Gamma^h}^h =
\left\langle f \circ \vec X^h, |\vec X^h_\rho| \right\rangle_{\bI}^h. 
\]
\item \label{item:nuh}
For the normal $\vec\nu^h$ on $\Gamma^h$ defined as in 
{\rm Definition~\ref{def:nuh}} and 
\arxivyesno{{\rm Remark~\ref{rem:wedge}\ref{item:wedge2d3d}},}
{{\rm Remark \ref{rem:wedge}\ref{item:wedge2d3d}},}
it holds that
\begin{equation} \label{eq:nuh2d}
\vec\nu^h \circ \vec X^h = -(\vec X^h_s)^\perp
\qquad\text{in } I_j\,,
\end{equation}
if $\overline{\sigma_j} = [\vec q^h_{j,1}, \vec q^h_{j,2}] =
[\vec X^h(q_{j-1}), \vec X^h(q_{j})]$,
for $j=1,\ldots,J$.
Here $\cdot^\perp$, acting on $\bR^2$, denotes clockwise rotation by
$\frac{\pi}{2}$.
\item \label{item:omegah}
In order to find a simple expression for
the vertex normal $\vec\omega^h$ on $\Gamma^h$ defined as in 
{\rm Definition~\ref{def:omegah}}, we let
$\vec h_j = \vec X^h(q_j) -\vec X^h(q_{j-1})$,
$j=1,\ldots,J+1$, which according to 
{\rm Definition~\ref{def:polysurf}\ref{item:polysurf}} are nonzero.
Then \eqref{eq:nuh2d} reduces to
\[
(\vec\nu^h \circ \vec X^h)_{\mid_{I_j}}
= \vec\nu^h_j = - \frac{\vec h_j^\perp}
{|\vec h_j|} \qquad j = 1,\ldots,J+1\,.
\]
Hence, on recalling \eqref{eq:omegahk}, we obtain the weighted vertex normals
\begin{align}
\vec\omega^h(\vec X^h(q_j)) & = \frac{|\vec h_{j}|\,\vec\nu^h_{j} +
|\vec h_{j+1}|\,\vec\nu^h_{j+1}}{|\vec h_{j}| + |\vec h_{j+1}|}
 = - \frac{\left(\vec h_{j} + \vec h_{j+1} \right)^\perp}
{|\vec h_{j}| + |\vec h_{j+1}|} \nonumber \\ &
 = - \frac{\left(\vec X^h(q_{j+1}) - \vec X^h(q_{j-1}) \right)^\perp}
{|\vec h_{j}| + |\vec h_{j+1}|} \quad
 j = 1,\ldots,J\,.
\label{eq:omegah2d}
\end{align}
\end{enumerate}
\end{rem}

\subsection{Stability estimates} \label{subsec:stabest}

\begin{lem}\label{lem:stab3d}
Let $\Gamma^h=\bigcup^J_{j=1}\overline{\sigma_j}$ be a
two-dimensional polyhedral surface. 
Then we have for $j=1,\ldots, J$ that
\begin{equation}\label{eq:DNorm}
\tfrac12\,\int_{\sigma_j}|\nabs\,\vec X|^2\dH{2} \geq 
\mathcal{H}^2(\vec X(\sigma_j)) \qquad 
\forall\ \vec X \in \Vhh
\end{equation}
with equality for $\vec X= \vec\id_{\mid_{\Gamma^h}} 
\in \Vhh$.
\end{lem}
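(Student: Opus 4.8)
The plan is to reduce the claim on each simplex $\sigma_j$ to an elementary scalar inequality between the two column vectors of the (constant) surface Jacobian, and then to finish with the arithmetic--geometric mean inequality. Since $\vec X \in \Vhh$ is affine on $\sigma_j$, the surface Jacobian $\nabs\,\vec X$ is constant there. I would fix an orthonormal basis $\{\vec\tau_1,\vec\tau_2\}$ of the two-dimensional tangent space of $\sigma_j$ and set $\vec a_i = (\nabs\,\vec X)\,\vec\tau_i = \partial_{\vec\tau_i}\,\vec X \in \bR^d$, for $i=1,2$, recalling Remark~\ref{rem:2.6}\ref{item:2.6vi}. From Definition~\ref{def:2.5}\ref{item:def2.5v} we have $\nabs\,\vec X = \vec a_1 \otimes \vec\tau_1 + \vec a_2 \otimes \vec\tau_2$, so using $(\vec a\otimes\vec b):(\vec c\otimes\vec d) = (\vec a\,.\,\vec c)(\vec b\,.\,\vec d)$ and the orthonormality of $\{\vec\tau_1,\vec\tau_2\}$ I obtain
\[
|\nabs\,\vec X|^2 = \nabs\,\vec X : \nabs\,\vec X = |\vec a_1|^2 + |\vec a_2|^2
\qquad\text{on } \sigma_j\,.
\]

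For the right-hand side I would use that the affine map $\vec X$ carries $\sigma_j$ onto the (possibly degenerate) triangle $\vec X(\sigma_j)$ whose two spanning tangent vectors are $\vec a_1,\vec a_2$; hence the local area element is scaled by the square root of the Gram determinant of $\vec a_1,\vec a_2$, giving
\[
\mathcal{H}^2(\vec X(\sigma_j))
= \sqrt{|\vec a_1|^2\,|\vec a_2|^2 - (\vec a_1\,.\,\vec a_2)^2}\;\mathcal{H}^2(\sigma_j)\,.
\]
Equivalently, this follows from Remark~\ref{rem:wedge}\ref{item:measure} applied to the image simplex, since $|\vec a_1 \land \vec a_2|^2 = |\vec a_1|^2\,|\vec a_2|^2 - (\vec a_1\,.\,\vec a_2)^2$. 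Both sides of \eqref{eq:DNorm} are therefore constant multiples of $\mathcal{H}^2(\sigma_j)$, so the claim reduces to the scalar inequality
\[
\tfrac12\,(|\vec a_1|^2 + |\vec a_2|^2) \geq \sqrt{|\vec a_1|^2\,|\vec a_2|^2 - (\vec a_1\,.\,\vec a_2)^2}\,.
\]

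This last step is immediate: discarding the nonnegative term $(\vec a_1\,.\,\vec a_2)^2$ gives $\sqrt{|\vec a_1|^2\,|\vec a_2|^2 - (\vec a_1\,.\,\vec a_2)^2} \leq |\vec a_1|\,|\vec a_2|$, and the arithmetic--geometric mean inequality yields $|\vec a_1|\,|\vec a_2| \leq \tfrac12(|\vec a_1|^2 + |\vec a_2|^2)$; integrating over $\sigma_j$ then establishes \eqref{eq:DNorm}. For the equality statement I would take $\vec X = \vec\id_{\mid_{\Gamma^h}}$, so that $\nabs\,\vec X = \nabs\,\vec\id = \mat P_\Gamma$ by Lemma~\ref{lem:nabsid}\ref{item:nabsid}, and hence $\vec a_i = \vec\tau_i$; then $|\vec a_1| = |\vec a_2| = 1$ and $\vec a_1\,.\,\vec a_2 = 0$, so both of the above inequalities hold with equality and $\mathcal{H}^2(\vec X(\sigma_j)) = \mathcal{H}^2(\sigma_j)$.

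I do not expect a genuine obstacle here: the only point needing a little care is the area-element formula for the image simplex, which is where the geometry enters, namely the codimension-independent Gram-determinant scaling of an affine map restricted to a two-plane. Everything else is the two-variable AM--GM inequality together with discarding the Cauchy--Schwarz defect $(\vec a_1\,.\,\vec a_2)^2$. As a byproduct, the equality analysis makes transparent that \eqref{eq:DNorm} is sharp precisely for the affine maps that act conformally on each simplex (equal stretching along $\vec\tau_1,\vec\tau_2$ and preservation of their orthogonality), which is the structural reason behind the good mesh properties advertised in the introduction.
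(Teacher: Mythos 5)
Your proof is correct and takes essentially the same route as the paper's: constancy of the integrands on each affine simplex, the identity $|\nabs\,\vec X|^2 = |\partial_{\vec\tau_1}\,\vec X|^2 + |\partial_{\vec\tau_2}\,\vec X|^2$, the Gram-determinant formula \eqref{eq:sigarea} for $\mathcal{H}^2(\vec X(\sigma_j))$, the chain of Cauchy--Schwarz plus AM--GM as in \eqref{eq:vecineq}, and the same equality analysis for $\vec X = \vec\id_{\mid_{\Gamma^h}}$. The only caveat is your ``equivalently'' aside via Remark~\ref{rem:wedge}\ref{item:measure}: the paper's wedge product is defined only for $d-1$ vectors in $\bR^d$, so that alternative justification is literally available only when $d=3$, whereas your primary Gram-determinant argument works for any $d$ and coincides with the paper's.
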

\begin{proof}
We note that the integrands in \eqref{eq:DNorm} are constant. In
particular, we recall from Definition~\ref{def:2.5}\ref{item:def2.5v} that,
for $\vec X \in \Vhh$,
\begin{subequations} \label{eq:tander}
\begin{equation}\label{eq:tanderY}
\nabs\,\vec X = \sum^2_{i=1} (\partial_{\vec\tau_i}\,\vec X)
\otimes\vec\tau_i\quad\text{and}\quad
|\nabs\,\vec X|^2 = \sum^2_{i=1} |\partial_{\vec\tau_i}\,\vec X
|^2 \qquad\text{on } \sigma_j\,,
\end{equation}
and so
\begin{equation}\label{eq:tanderid}
\nabs\,\vec\id = \sum^2_{i=1} \vec\tau_i\otimes\vec\tau_i
\quad\text{and}\quad
|\nabs\,\vec\id|^2 = 2 \qquad\text{on } \sigma_j\,,
\end{equation}
\end{subequations}
where $\{\vec\tau_1,\vec\tau_2\}$ is an orthonormal basis for the tangent plane
of $\sigma_j$. Moreover, it holds that
\begin{equation} \label{eq:sigarea}
\mathcal{H}^2(\vec X(\sigma_j))=\int_{\sigma_j} \sqrt{g} \dH2\,,
\end{equation}
where, similarly to (\ref{eq:int}) and (\ref{eq:detg}), 
\[
g=\det\left( \partial_{\vec\tau_i}\,\vec X\,.\,\partial_{\vec\tau_j}\,\vec X 
\right)_{i,j=1,2}
= |\partial_{\vec\tau_1}\,\vec X|^2\,|\partial_{\vec\tau_2}\,\vec X|^2
-\left(\partial_{\vec\tau_1}\,\vec X\,.\,\partial_{\vec\tau_2}\,\vec X\right)^2.
\]
Next, we note that
\begin{equation} \label{eq:vecineq}
\sqrt{g} \leq |\partial_{\vec\tau_1}\,\vec X|\,|\partial_{\vec\tau_2}\,\vec X|
\leq \tfrac12 
\left(|\partial_{\vec\tau_1}\,\vec X|^2 + 
|\partial_{\vec\tau_2}\,\vec X|^2 \right) ,
\end{equation}
with equality if and only if 
$\partial_{\vec\tau_1}\,\vec X\,.\,\partial_{\vec\tau_2}\,\vec X=0$ and 
$|\partial_{\vec\tau_1}\,\vec X|=|\partial_{\vec\tau_2}\,\vec X|$.
The desired results (\ref{eq:DNorm}) then follow immediately
on combining (\ref{eq:tander}), (\ref{eq:sigarea}) and (\ref{eq:vecineq}).
\end{proof}

\begin{rem}
A result like that in {\rm Lemma~\ref{lem:stab3d}} is not true 
for an $n$-dimen\-sional polyhedral surface 
with $n \neq 2$. In this case 
\[
\mathcal{H}^{n}(\vec X(\sigma_j))=\int_{\sigma_j} \sqrt{g} \dH{n}\,,
\]
where $g=\det\left( \partial_{\vec\tau_i}\,\vec X\,.\,
\partial_{\vec\tau_j}\,\vec X \right)_{i,j=1,\ldots,n}$,
with $\{\vec\tau_1,\ldots,\vec\tau_n\}$ being an orthonormal basis for 
the tangent space of $\sigma_j$,
scales with respect to $\vec X$ with the power $n$. Whereas
\[
|\nabs\, \vec X|^2 = \sum^{n}_{i=1} \left| \partial_{\vec\tau_i}\,\vec X
\right|^2 
\]
scales to the power two. Hence a simple scaling argument shows that
there can be no constant $c_0$ such that 
$$
c_0\,\int_{\sigma_j}|\nabs\,\vec X|^2 \dH{n} \ge 
\mathcal{H}^{n}(\vec X(\sigma_j))\quad\forall\ \vec X\in \Vhh\,.
$$
This shows that the estimate in
{\rm Lemma~\ref{lem:stab3d}} 
can only be used for 2-dimensio\-nal polyhedral surfaces.
\end{rem}

Despite the above remark, we are able to prove the following crucial 
stability bound, which will be extensively used in later sections, 
for $n=1$ as well as for $n=2$. 

\begin{lem} \label{lem:stab2d3d}
Let $\Gamma^h=\bigcup^J_{j=1}\overline{\sigma_j}$ be an $n$-dimensional 
polyhedral surface, and let $\vec X \in \Vhh$.
Then it holds, in the case $n=1$, that
\begin{equation*} 
\left\langle\nabs\,\vec X, \nabs\,(\vec X-\vec\id)
\right\rangle_{\Gamma^h}
\geq \mathcal{H}^{1}(\vec X(\Gamma^h)) - 
\mathcal{H}^{1}(\Gamma^h) + 
\left| |\nabs\,\vec X| - 1 \right|^2_{\Gamma^h} .
\end{equation*}
Moreover, in the case $n=2$, we have that
\begin{equation*} 
\left\langle\nabs\,\vec X, \nabs\,(\vec X-\vec\id)
\right\rangle_{\Gamma^h}
\geq \mathcal{H}^{2}(\vec X(\Gamma^h)) - 
\mathcal{H}^{2}(\Gamma^h) + 
 \tfrac12\left| \nabs\,(\vec X-\vec\id)\right|^2_{\Gamma^h} .
\end{equation*}
\end{lem}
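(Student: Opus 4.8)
The plan is to treat the two cases separately, since they rest on different algebraic inequalities. Both hinge on the fact that the integrands are piecewise constant on each simplex $\sigma_j$, so $\langle\nabs\,\vec X, \nabs\,(\vec X-\vec\id)\rangle_{\Gamma^h}$ splits as a sum over $j$ of the simplex-wise quantities $\mathcal{H}^n(\sigma_j)\,\nabs\,\vec X : \nabs\,(\vec X-\vec\id)$, and similarly for the area and norm terms. Throughout I would use $\nabs\,\vec\id = \mat P_\Gamma$ from Lemma~\ref{lem:nabsid}\ref{item:nabsid}, which on the flat simplex $\sigma_j$ is the orthogonal projection onto its tangent plane, together with the representations in \eqref{eq:tander}.

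First I would expand the left-hand side as $\langle\nabs\,\vec X, \nabs\,\vec X\rangle_{\Gamma^h} - \langle\nabs\,\vec X, \nabs\,\vec\id\rangle_{\Gamma^h}$. On $\sigma_j$ the first term is $|\nabs\,\vec X|^2$ and, by Lemma~\ref{lem:nabsid}\ref{item:nabsidnabsf} applied with the roles reversed (or directly from $\nabs\,\vec\id = \mat P_\Gamma$ and Remark~\ref{rem:2.6}\ref{item:Pnabs}), the second term equals $\tr(\nabs\,\vec X) = \nabs\,.\,\vec X$. So the integrand reduces to $|\nabs\,\vec X|^2 - \nabs\,.\,\vec X$ on each $\sigma_j$. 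In the case $n=1$, writing $\partial_{\vec\tau}\,\vec X$ for the single tangential derivative, this is $|\partial_{\vec\tau}\,\vec X|^2 - \vec\tau\,.\,\partial_{\vec\tau}\,\vec X$, whereas $\mathcal{H}^1(\vec X(\sigma_j)) = \int_{\sigma_j}|\partial_{\vec\tau}\,\vec X|\dH{1}$ and $\mathcal{H}^1(\sigma_j)=\int_{\sigma_j}1\dH{1}$. Setting $a=|\partial_{\vec\tau}\,\vec X|$, I would verify the scalar inequality $a^2 - \vec\tau\,.\,\partial_{\vec\tau}\,\vec X \ge a - 1 + (a-1)^2$, which after using $\vec\tau\,.\,\partial_{\vec\tau}\,\vec X \le a$ (Cauchy--Schwarz, since $|\vec\tau|=1$) reduces to the identity $a^2 - a = (a-1) + (a-1)^2$; recognizing the $(a-1)^2$ term as the integrand of $\left||\nabs\,\vec X|-1\right|^2_{\Gamma^h}$ then yields the claim after summing over $j$.

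For the case $n=2$ the target lower bound involves $\tfrac12|\nabs\,(\vec X-\vec\id)|^2$, so I would instead complete a square. The key is the pointwise identity $|\nabs\,\vec X|^2 - \nabs\,.\,\vec X = \tfrac12|\nabs\,(\vec X-\vec\id)|^2 + \left(\tfrac12|\nabs\,\vec X|^2 - \tfrac12|\nabs\,\vec\id|^2\right)$, which one checks by expanding $\tfrac12|\nabs\,(\vec X-\vec\id)|^2 = \tfrac12|\nabs\,\vec X|^2 - \nabs\,\vec X:\nabs\,\vec\id + \tfrac12|\nabs\,\vec\id|^2$ and using $\nabs\,\vec X:\nabs\,\vec\id=\nabs\,.\,\vec X$ together with $|\nabs\,\vec\id|^2=2$ from \eqref{eq:tanderid}. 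Since on $\sigma_j$ the term $\tfrac12|\nabs\,\vec\id|^2 = 1 = \mathcal{H}^2(\sigma_j)/\mathcal{H}^2(\sigma_j)$ integrates to $\mathcal{H}^2(\sigma_j)$, and Lemma~\ref{lem:stab3d} gives $\tfrac12\int_{\sigma_j}|\nabs\,\vec X|^2 \ge \mathcal{H}^2(\vec X(\sigma_j))$, summing the identity over $j$ produces exactly the asserted bound.

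I expect the main obstacle to be organizing the $n=2$ argument so that Lemma~\ref{lem:stab3d} is invoked cleanly: the completion of the square must be done pointwise on each simplex before integrating, and one must keep careful track of the fact that $\tfrac12|\nabs\,\vec\id|^2$ contributes the $-\mathcal{H}^2(\Gamma^h)$ term while the area inequality supplies the $+\mathcal{H}^2(\vec X(\Gamma^h))$ term. The $n=1$ case is more elementary but requires the reduction to a one-variable inequality and the observation that equality in Cauchy--Schwarz is not needed, only the bound. In both cases the piecewise-constant structure of the integrands is what makes the simplex-wise reduction legitimate, so I would state that reduction explicitly at the outset.
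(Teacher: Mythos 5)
Your proposal is correct and takes essentially the same route as the paper: in the case $n=2$ your pointwise ``completion of the square'' is exactly the polarization identity $\left\langle\nabs\,\vec X,\nabs\,(\vec X-\vec\id)\right\rangle_{\Gamma^h} = \tfrac12\left[|\nabs\,\vec X|^2_{\Gamma^h} - |\nabs\,\vec\id|^2_{\Gamma^h} + |\nabs\,(\vec X-\vec\id)|^2_{\Gamma^h}\right]$ that the paper combines with Lemma~\ref{lem:stab3d}, and in the case $n=1$ your scalar reduction (Cauchy--Schwarz together with $a^2-a=(a-1)+(a-1)^2$ for $a=|\nabs\,\vec X|$) is the paper's element-wise computation with the edge vectors $\vec h_j$, $\vec h_j^{\vec X}$ rewritten in terms of the piecewise constant tangential derivative. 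There are no gaps.
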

\begin{proof}
For $n=2$ it follows from Lemma~\ref{lem:stab3d} that
\begin{align*}
&\left\langle\nabs\,\vec X,\nabs\,(\vec X-\vec\id)\right\rangle_{\Gamma^h} 
\nonumber \\ & \qquad
 = \tfrac12 \left[ \left|\nabs\,\vec X\right|^2_{\Gamma^h} 
- \left|\nabs\,\vec\id\right|^2_{\Gamma^h} 
+ \left|\nabs\,(\vec X-\vec\id)\right|^2_{\Gamma^h} \right]
\nonumber \\ & \qquad
\geq \mathcal{H}^{2}(\vec X(\Gamma^h)) - \mathcal{H}^{2}(\Gamma^h)
+ \tfrac12\left|\nabs\,(\vec X-\vec\id)\right|^2_{\Gamma^h} . 
\end{align*}
For $n=1$, we let $\vec h_j = \vec q_{j,2} - \vec q_{j,1}$,
and similarly $\vec h_j^{\vec X} = \vec X(\vec q_{j,2}) - 
\vec X(\vec q_{j,1})$, for $j=1,\ldots,J$. 
Then, on using ideas from \citet[Theorem~2]{Dziuk99b}, 
it follows from the Cauchy--Schwarz inequality that
\begin{align}
& \left\langle\nabs\,\vec X, \nabs\,(\vec X-\vec\id)
\right\rangle_{\Gamma^h}
= \sum_{j=1}^J \left[\frac{|\vec h_j^{\vec X}|^2 -
\vec h_j^{\vec X}\,.\,\vec h_j}{|\vec h_j|}\right] 
\nonumber \\ & \qquad\qquad
= \sum_{j=1}^J \left[\frac{(|\vec h_j^{\vec X}| - |\vec h_j|)^2 +
|\vec h_j^{\vec X}|\,|\vec h_j| -
\vec h_j^{\vec X}\,.\,\vec h_j}{|\vec h_j|}
+ |\vec h_j^{\vec X}| - |\vec h_j| \right] 
\nonumber \\ & \qquad\qquad
\geq \sum_{j=1}^J \left[|\vec h_j^{\vec X}| - |\vec h_j|
+ \left(\frac{| \vec h_j^{\vec X} |}{|\vec h_j|} -\frac{ | \vec h_j
|}{|\vec h_j|} \right)^2\, |\vec h_j| \right] 
\nonumber \\&\qquad\qquad
= \mathcal{H}^1(\vec X(\Gamma^h)) - \mathcal{H}^1(\Gamma^h) + 
\left| | \nabs\,\vec X| - 1 \right|^2_{\Gamma^h} . \label{eq:stab2d}
\end{align}
\end{proof}

The result in Lemma~\ref{lem:stab2d3d} is relevant for semi-implicit time
discretizations. For fully implicit discretizations we need the following
result.

\begin{lem} \label{lem:fdfistab2d}
Let $\Gamma^h=\bigcup^J_{j=1}\overline{\sigma_j}$ be a 
polygonal curve, and let $\vec X \in \Vhh$. 
Then it holds that
\begin{equation*} 
\left\langle\nabs\,\vec\id, \nabs\,(\vec\id-\vec X)
\right\rangle_{\Gamma^h}
\geq \mathcal{H}^{1}(\Gamma^h) - \mathcal{H}^{1}(\vec X(\Gamma^h)) \,.
\end{equation*}
\end{lem}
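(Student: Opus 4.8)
The plan is to use that on a polygonal curve every integrand appearing in $\left\langle\nabs\,\vec\id,\nabs\,(\vec\id-\vec X)\right\rangle_{\Gamma^h}$ is piecewise constant, so that the inner product collapses to a sum over the segments $\sigma_j$, after which a single termwise Cauchy--Schwarz estimate gives the claim. First I would reuse the notation from the proof of Lemma~\ref{lem:stab2d3d}, writing $\vec h_j=\vec q_{j,2}-\vec q_{j,1}$ and $\vec h_j^{\vec X}=\vec X(\vec q_{j,2})-\vec X(\vec q_{j,1})$ for $j=1,\ldots,J$, and letting $\vec\tau_j=\vec h_j/|\vec h_j|$ be the unit tangent to $\sigma_j$.

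On $\sigma_j$, Definition~\ref{def:2.5}\ref{item:def2.5v} together with Lemma~\ref{lem:nabsid}\ref{item:nabsid} gives $\nabs\,\vec\id=\vec\tau_j\otimes\vec\tau_j$, while differentiating the affine map $\vec X$ along arclength yields $\partial_{\vec\tau_j}\,\vec X=\vec h_j^{\vec X}/|\vec h_j|$, hence $\nabs\,\vec X=(\vec h_j^{\vec X}/|\vec h_j|)\otimes\vec\tau_j$. Using that $(\vec a\otimes\vec\tau_j):(\vec b\otimes\vec\tau_j)=\vec a\,.\,\vec b$ since $|\vec\tau_j|=1$, I would then compute on $\sigma_j$
\[
\nabs\,\vec\id:\nabs\,(\vec\id-\vec X)
=\vec\tau_j\,.\left(\vec\tau_j-\frac{\vec h_j^{\vec X}}{|\vec h_j|}\right)
=1-\frac{\vec h_j\,.\,\vec h_j^{\vec X}}{|\vec h_j|^2}\,.
\]
Multiplying by $\mathcal{H}^1(\sigma_j)=|\vec h_j|$ and summing over $j$ gives the exact identity
\[
\left\langle\nabs\,\vec\id,\nabs\,(\vec\id-\vec X)\right\rangle_{\Gamma^h}
=\sum_{j=1}^J\left(|\vec h_j|-\frac{\vec h_j\,.\,\vec h_j^{\vec X}}{|\vec h_j|}\right).
\]

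The remaining and only inequality is the termwise Cauchy--Schwarz bound $\vec h_j\,.\,\vec h_j^{\vec X}\leq|\vec h_j|\,|\vec h_j^{\vec X}|$, which yields $\vec h_j\,.\,\vec h_j^{\vec X}/|\vec h_j|\leq|\vec h_j^{\vec X}|$. Substituting this and recalling $\mathcal{H}^1(\Gamma^h)=\sum_j|\vec h_j|$ and $\mathcal{H}^1(\vec X(\Gamma^h))=\sum_j|\vec h_j^{\vec X}|$ gives the asserted lower bound. There is no genuine obstacle here: the whole argument is the piecewise-constant reduction followed by one elementary inequality, and the estimate is sharp precisely when each $\vec h_j^{\vec X}$ is a nonnegative multiple of $\vec h_j$.
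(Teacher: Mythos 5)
Your proposal is correct and follows essentially the same route as the paper's proof: the paper likewise reduces the inner product to the segment-wise identity $\sum_{j=1}^J\bigl(|\vec h_j|^2-\vec h_j\,.\,\vec h_j^{\vec X}\bigr)/|\vec h_j|$ and then applies Cauchy--Schwarz termwise. The only difference is that you spell out the derivation of that identity from Definition~\ref{def:2.5}\ref{item:def2.5v} and Lemma~\ref{lem:nabsid}\ref{item:nabsid}, which the paper leaves implicit.
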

\begin{proof}
Using the same notation as in the proof of Lemma~\ref{lem:stab2d3d}, we have
that
\begin{align*}
\left\langle\nabs\,\vec\id, \nabs\,(\vec\id-\vec X)
\right\rangle_{\Gamma^h} & 
= \sum_{j=1}^J \left[\frac{|\vec h_j|^2 -
\vec h_j\,.\,\vec h_j^{\vec X}}{|\vec h_j|}\right] 
\nonumber \\ & 
\geq \sum_{j=1}^J \left[|\vec h_j| - |\vec h_j^{\vec X}| \right]
= \mathcal{H}^1(\Gamma^h) - \mathcal{H}^1(\vec X(\Gamma^h)) \,.
\end{align*}
\end{proof}

\begin{rem} \label{rem:fdfistab2d}
A result analogous to {\rm Lemma~\ref{lem:fdfistab2d}} for $n$-dimensional 
polyhedral surfaces with $n > 1$ is not true. To see this, we construct the
following counterexample.
Let $\Gamma^h$ be given by a
single $n$-simplex, and let $\{\vec\tau_1,\ldots,\vec\tau_n\}$ 
be an orthonormal basis for the tangent space of $\Gamma^h$. Now choose 
$\vec X \in \Vhh$ such that
$\partial_{\vec\tau_1}\,\vec X = \alpha\,\vec\tau_1$ and
$\partial_{\vec\tau_i}\,\vec X = \epsilon\,\vec\tau_i$, $i=2,\ldots,n$,
for $\alpha,\epsilon\in\bRplus$.
Then it holds that
$\mathcal{H}^n(\vec X(\Gamma^h)) = \int_{\Gamma^h}\,
\sqrt{\det\left( \partial_{\vec\tau_i}\,\vec X\,.\,
\partial_{\vec\tau_j}\,\vec X \right)_{i,j=1,\ldots,n}}\dH{n}
=\alpha\,\epsilon^{n-1}\, \mathcal{H}^n(\Gamma^h)$.
Moreover, $|\nabs\,\vec\id|^2 = n$ and
$\nabs\,\vec\id : \nabs\,\vec X = \alpha + (n-1)\,\epsilon$
on $\Gamma^h$, and so 
\[
\left\langle\nabs\,\vec\id, \nabs\,(\vec\id-\vec X) \right\rangle_{\Gamma^h}
\geq \mathcal{H}^{n}(\Gamma^h) - \mathcal{H}^{n}(\vec X(\Gamma^h)) 
\]
is equivalent to
$n - (\alpha + (n-1)\,\epsilon) \geq 1 - \alpha\,\epsilon^{n-1}$, and hence to
$(n-1)\,(1 - \epsilon) \geq \alpha\,(1 - \epsilon^{n-1})$. 
Choosing $\epsilon \in (0,1)$ and
$\alpha > (n-1)\,\frac{1-\epsilon}{1-\epsilon^{n-1}}$ 
yields a contradiction.
\end{rem}

\subsection{Curvature approximations}
Given a polyhedral hypersurface
$\Gamma^h=\bigcup_{j=1}^J \overline{\sigma_j}$, it is clear from
Definition~\ref{def:2.5} that first order differential operators
are well-defined almost everywhere on $\Gamma^h$, for example for functions
in $\Whh$ or $\Vhh$. However, second order operators are not.

That means that discrete curvature approximations need to be defined in a 
suitable way. For everything that follows we assume that $\Gamma^h$ is a 
closed hypersurface.

One way is to define the discrete Laplace--Beltrami operator
$\LapGh : \Whh \to \Whh$ via
\begin{equation} \label{eq:discreteLB}
\left\langle \LapGh \chi, \zeta \right\rangle_{\Gamma^h}^h
= - \left\langle \nabs\,\chi, \nabs\,\zeta \right\rangle_{\Gamma^h}
\quad\forall\ \zeta \in \Whh\,,
\end{equation}
which is a discrete analogue of Remark~\ref{rem:ibp}\ref{item:ibp}.
As usual, for $\vec\chi \in \Vhh$, we define $\LapGh \vec\chi$
component-wise. Then a possible approximation to the curvature vector,
recall Lemma~\ref{lem:varkappa}\ref{item:vecvarkappa}, is
$\vec\kappa^h = \LapGh \vec\id$, i.e.\
$\vec\kappa^h \in \Vhh$ is the unique solution to
\begin{equation} \label{eq:veckappah}
\left\langle \vec\kappa^h, \vec\eta\right\rangle^h_{\Gamma^h}
= - \left\langle \nabs\,\vec\id, \nabs\,\vec\eta \right\rangle_{\Gamma^h}
\quad\forall\ \vec\eta \in \Vhh\,.
\end{equation}
Of course, $\vec\kappa^h$ gives both an approximation to the mean curvature, as
well as a notion of a vertex normal direction, which in general will be
different to the direction defined by Definition~\ref{def:omegah}.

Some special polyhedral hypersurfaces 
allow an alternative definition of mean curvature. 

\begin{definition} \label{def:conformal}
A closed orientable polyhedral hypersurface $\Gamma^h$,
with unit normal $\vec\nu^h$, is called a conformal polyhedral
hypersurface, if there exists a $\kappa^h \in \Whh$ such that
\begin{equation} \label{eq:conformal}
\left\langle \kappa^h\,\vec\nu^h, \vec\eta\right\rangle^h_{\Gamma^h}
= - \left\langle \nabs\,\vec\id, \nabs\,\vec\eta \right\rangle_{\Gamma^h}
\quad\forall\ \vec\eta \in \Vhh\,.
\end{equation}
\end{definition}

\begin{rem} \label{rem:conformal}
\rule{0pt}{0pt}
\begin{enumerate}
\item \label{item:parallel}
For a conformal polyhedral hypersurface $\Gamma^h$, 
the two vertex normal directions
defined by $\vec\kappa^h$ in \eqref{eq:veckappah} and $\vec\omega^h$ in
{\rm Definition~\ref{def:omegah}} agree, i.e.\ the two vectors are parallel at
each vertex of $\Gamma^h$. In particular, on recalling \eqref{eq:omegahnuh},
it holds that
\[
\vec\pi_{\Gamma^h}\left[ \kappa^h\,\vec\omega^h \right] 
= \vec\kappa^h\,.
\]
\item
It is discussed in {\rm \citet[\S4.1]{gflows3d}} that for $d=3$ the geometric
property from \ref{item:parallel} means that the triangulation of $\Gamma^h$
is characterized by a good mesh quality. 
In the case $d=2$ it holds that any conformal polygonal curve is weakly
equidistributed, see the following theorem.
\end{enumerate}
\end{rem}

\begin{thm}\label{thm:equid}
Let $\Gamma^h$ be a closed conformal polygonal curve in $\bR^2$,
as defined in {\rm Definition~\ref{def:conformal}}.
Then any two neighbouring elements on $\Gamma^h$ either have equal
length, or they are parallel.
\end{thm}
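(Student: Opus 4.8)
The plan is to unwind the conformality condition $\eqref{eq:conformal}$ by testing it against the nodal basis functions of $\Vhh$ and reading off what it says at each vertex. First I would use the explicit formula for the mass-lumped inner product and for the discrete surface gradient on a polygonal curve. Recall from Remark~\ref{rem:VhI}\ref{item:omegah} that, with $\vec h_j = \vec X^h(q_j) - \vec X^h(q_{j-1})$ denoting the edge vectors, the piecewise constant normal on the $j$-th segment is $\vec\nu^h_j = -\vec h_j^\perp/|\vec h_j|$. The right-hand side of \eqref{eq:conformal}, when $\vec\eta$ is the vector nodal basis function at a vertex $\vec X^h(q_j)$ paired with a coordinate direction, reduces to a difference of unit tangent vectors $\vec h_{j+1}/|\vec h_{j+1}| - \vec h_j/|\vec h_j|$, because $-\langle\nabs\vec\id,\nabs\vec\eta\rangle_{\Gamma^h}$ is exactly the discrete Laplace--Beltrami of the identity, which at an interior vertex of a polygon is the second difference of the position.

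\textbf{Extracting the vertex identity.} Carrying out this computation, the conformality equation \eqref{eq:conformal} at the vertex $\vec X^h(q_j)$ becomes, after multiplying by the appropriate lumped mass weight,
\begin{equation*}
\tfrac12\,(|\vec h_j| + |\vec h_{j+1}|)\,\kappa^h(\vec X^h(q_j))\,\vec\omega^h(\vec X^h(q_j))
= \frac{\vec h_{j+1}}{|\vec h_{j+1}|} - \frac{\vec h_j}{|\vec h_j|}\,.
\end{equation*}
Here I would invoke \eqref{eq:omegahnuh} to replace $\vec\nu^h$ by the vertex normal $\vec\omega^h$, whose explicit form \eqref{eq:omegah2d} is $\vec\omega^h(\vec X^h(q_j)) = -(\vec h_j + \vec h_{j+1})^\perp/(|\vec h_j| + |\vec h_{j+1}|)$. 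The key structural fact is that the left-hand side is a scalar multiple of the vector $(\vec h_j + \vec h_{j+1})^\perp$, i.e.\ of the direction perpendicular to the \emph{sum} of the two adjacent edge vectors.

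\textbf{The geometric conclusion.} The heart of the argument is then a two-dimensional vector computation: the right-hand side $\vec h_{j+1}/|\vec h_{j+1}| - \vec h_j/|\vec h_j|$ is forced to be parallel to $(\vec h_j + \vec h_{j+1})^\perp$. I would compute the inner product of the right-hand side with $\vec h_j + \vec h_{j+1}$ (which must vanish, since it is being claimed orthogonal to a multiple of $(\vec h_j + \vec h_{j+1})^\perp$) and simplify. Writing $a = |\vec h_j|$, $b = |\vec h_{j+1}|$ and letting $\vec h_j\,.\,\vec h_{j+1}$ be the edge dot product, this orthogonality condition collapses to a relation of the form $(a - b)\,(ab - \vec h_j\,.\,\vec h_{j+1}) = 0$ after clearing denominators. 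The two factors give exactly the dichotomy in the statement: either $a = b$, i.e.\ the two neighbouring elements have equal length, or $\vec h_j\,.\,\vec h_{j+1} = ab = |\vec h_j|\,|\vec h_{j+1}|$, which by the equality case of Cauchy--Schwarz means $\vec h_j$ and $\vec h_{j+1}$ are parallel.

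\textbf{Anticipated obstacle.} The main obstacle is the careful bookkeeping in the vector identity: ensuring the perpendicular-rotation operator $\cdot^\perp$ is handled correctly so that the scalar-multiple conclusion is genuinely forced, and then factoring the resulting trigonometric/algebraic relation cleanly into the $(a-b)$ and $(ab - \vec h_j\,.\,\vec h_{j+1})$ pieces. The rest is routine, relying entirely on the explicit formulas already established for $\vec\nu^h$, $\vec\omega^h$, and the mass-lumped product on the polygonal curve.
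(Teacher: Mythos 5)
Your proposal is correct and follows essentially the same route as the paper: localize the conformality relation at a single vertex, use the explicit vertex normal formula \eqref{eq:omegah2d} to see that the curvature term is orthogonal to $\vec h_j + \vec h_{j+1}$, and factor the resulting scalar relation via the equality case of Cauchy--Schwarz. The only cosmetic difference is that you first extract the full vector identity at the vertex (testing with all nodal basis directions) and then project onto $\vec h_j + \vec h_{j+1}$, whereas the paper tests \eqref{eq:conformal} directly with the single test function whose value at $q_j$ is $\vec h_j + \vec h_{j+1}$, so the $\kappa^h$ term drops out immediately --- algebraically the two are identical.
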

\begin{proof}
We choose a $\vec X^h \in \VhI$ with $\Gamma^h = \vec X^h(\bI)$. Then it
follows from Remark~\ref{rem:VhI} and Definition~\ref{def:conformal} that
there exists a $\kappa^h \in \WhI$ such that
\begin{equation} \label{eq:conformalI}
\left\langle \kappa^h\,\vec\omega^h\circ\vec X^h, \vec\eta\,
|\vec X^h_\rho| \right\rangle^h_{\bI}
= - \left\langle \vec X^h_s, \vec\eta_s\, |\vec X^h_\rho| 
\right\rangle_{\bI} \quad\forall\ \vec\eta \in \VhI\,.
\end{equation}
On using the notation from Remark~\ref{rem:VhI}\ref{item:omegah},
we fix a $j \in \{1,\ldots,J\}$, and then need to show that
\begin{equation} \label{eq:TMh}
|\vec h_{j}|= |\vec h_{j+1}| 
\quad\mbox{if} \quad\vec h_{j} \nparallel \vec h_{j+1}\,.
\end{equation}
We recall from Definition~\ref{def:polysurf}\ref{item:polysurf}
that $\vec h_{j}$ and $\vec h_{j+1}$ are nonzero.
If $\vec h_{j} + \vec h_{j+1} = \vec0$,
then (\ref{eq:TMh}) directly follows. 
Otherwise, we observe from \eqref{eq:omegah2d} that
\[
\vec\omega^h(\vec X^h(q_j)) = 
- \frac{\left(\vec X^h(q_{j+1}) - \vec X^h(q_{j-1}) \right)^\perp}
{|\vec h_{j}| + |\vec h_{j+1}|} \,.
\]
Hence choosing an $\vec\eta \in \VhI$ in \eqref{eq:conformalI} 
with 
\[
\vec\eta(q_i) = \delta_{ij}\left(\vec X^h(q_{j+1}) - \vec X^h(q_{j-1})\right)
= \delta_{ij}\left(\vec h_{j} + \vec h_{j+1}\right),
\] 
for $i=1,\ldots,J$, we obtain
\begin{equation*} 
0 = \left(\frac{\vec h_{j+1}}{|\vec h_{j+1}|}
- \frac{\vec h_{j}}{|\vec h_{j}|}  \right) .
\left( \vec h_{j} + \vec h_{j+1} \right) 
=
\frac{|\vec h_{j+1}|-|\vec h_{j}|}
{|\vec h_{j}|\,|\vec h_{j+1}|} \left(
|\vec h_{j}|\,|\vec h_{j+1}|
- \vec h_{j}\,.\,\vec h_{j+1} \right).
\end{equation*}
The Cauchy--Schwarz inequality now implies that 
$|\vec h_{j}|= |\vec h_{j+1}|$ if $\vec h_{j}$
and $\vec h_{j+1}$ are not parallel.
\end{proof}

For polyhedral hypersurfaces that do not satisfy the special property
in Definition~\ref{def:conformal}, we can introduce a discrete mean curvature
as follows. 
Find $(\vec X, \kappa^h) \in \Vhh\times\Whh$ such that
\begin{subequations} \label{eq:BGNcurvature}
\begin{align}
 \left\langle \vec X - \vec\id,\chi\,\vec\nu^h\right\rangle^h_{\Gamma^h}
& = 0 \quad\forall\ \chi \in \Whh\,, \label{eq:BGNcurvaturea} \\
\left\langle \kappa^h\,\vec\nu^h, \vec\eta\right\rangle^h_{\Gamma^h}
+ \left\langle \nabs\,\vec X, \nabs\,\vec\eta \right\rangle_{\Gamma^h} & = 0
\quad\forall\ \vec\eta \in \Vhh\,.
\label{eq:BGNcurvatureb}
\end{align}
\end{subequations}

\begin{rem} \label{rem:BGNcurvature}
\rule{0pt}{0pt}
\begin{enumerate}
\item 
The system \eqref{eq:BGNcurvature} can be viewed as a linearization of
\eqref{eq:conformal}, where in some sense
we allow $\Gamma^h$ to deform slightly, by moving
vertices tangentially. 
\item
If $(\vec\id_{\mid_{\Gamma^h}}, \kappa^h)\in \Vhh\times\Whh$ 
solves \eqref{eq:BGNcurvature}, then $\kappa^h$ solves \eqref{eq:conformal}, 
and so $\Gamma^h$ is a conformal polyhedral hypersurface.
\end{enumerate}
\end{rem}

Under a mild assumption, there exists a unique solution to the system
\eqref{eq:BGNcurvature}. 

\begin{assumption} \label{ass:A}
Let $\Gamma^h$ be an orientable polyhedral hypersurface with unit normal
$\vec\nu^h$ and vertex normal vector $\vec\omega^h \in \Vhh$.
\begin{enumerate}
\item \label{item:assA1}
Let $\dim \spa\{\vec\omega^h(\vec q_k)\}_{k=1}^K = d$.
\item \label{item:assA2}
Let $\vec\omega^h(\vec q_k) \not= \vec 0$, $k=1,\ldots,K$.
\end{enumerate}
\end{assumption}

\begin{rem} \label{rem:assA}
{\rm Assumption~\ref{ass:A}\ref{item:assA1}} means that the discrete 
vertex normals of $\Gamma^h$ span the whole space $\bR^d$. 
On recalling \eqref{eq:omegah} we observe that
{\rm Assumption~\ref{ass:A}\ref{item:assA1}} is equivalent to
$\dim \left\{ \int_{\Gamma^h} \chi\,\vec\nu^h \dH{d-1} : \chi \in \Whh \right\}
= d$.
Clearly, {\rm Assumption~\ref{ass:A}} is only violated in very rare occasions. 
For example, it always holds for surfaces $\Gamma^h$ without 
self-intersections.
\end{rem}

\begin{lem} \label{lem:exXk}
Let $\Gamma^h$ satisfy {\rm Assumption~\ref{ass:A}}. Then there exists a unique
solution $(\vec X, \kappa^h) \in \Vhh\times\Whh$ to \eqref{eq:BGNcurvature}. 
\end{lem}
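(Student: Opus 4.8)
The plan is to exploit that \eqref{eq:BGNcurvature} is a \emph{square} linear system: the unknowns live in $\Vhh \times \Whh$, of dimension $dK + K$, while \eqref{eq:BGNcurvaturea} supplies $K$ scalar equations (testing with the nodal basis $\{\phi^{\Gamma^h}_k\}_{k=1}^K$) and \eqref{eq:BGNcurvatureb} supplies $dK$ equations. Hence existence and uniqueness are equivalent, and it suffices to show that the associated homogeneous problem — obtained by replacing $\vec\id$ with $\vec0$ in \eqref{eq:BGNcurvaturea} — admits only the trivial solution $(\vec X,\kappa^h)=(\vec0,0)$. So I would fix a solution of the homogeneous system and show it vanishes.

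The main step is an energy argument that decouples the two equations. First I would test the homogeneous \eqref{eq:BGNcurvaturea} with $\chi=\phi^{\Gamma^h}_k$. Using the symmetry of the inner product together with \eqref{eq:omegahnuh} to replace $\vec\nu^h$ by $\vec\omega^h$, and then the definition \eqref{eq:ip0} of the mass-lumped inner product (in which $\phi^{\Gamma^h}_k$ isolates the vertex $\vec q_k$), this yields $m_k\,\vec\omega^h(\vec q_k)\,.\,\vec X(\vec q_k)=0$, where $m_k=\tfrac{1}{n+1}\sum_{\sigma_j\in\mathcal{T}_k}\mathcal{H}^{d-1}(\sigma_j)>0$; hence $\vec\omega^h(\vec q_k)\,.\,\vec X(\vec q_k)=0$ for every vertex. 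Next I would test \eqref{eq:BGNcurvatureb} with $\vec\eta=\vec X$. The coupling term $\left\langle \kappa^h\,\vec\nu^h,\vec X\right\rangle^h_{\Gamma^h}$ equals $\left\langle \kappa^h\,\vec\omega^h,\vec X\right\rangle^h_{\Gamma^h}$ by \eqref{eq:omegahnuh}, and evaluating the mass-lumped form vertex-by-vertex gives $\sum_k m_k\,\kappa^h(\vec q_k)\,\vec\omega^h(\vec q_k)\,.\,\vec X(\vec q_k)=0$ by the constraint just derived. Thus only $\left\langle \nabs\,\vec X,\nabs\,\vec X\right\rangle_{\Gamma^h}=0$ survives, forcing $\nabs\,\vec X=\vec0$ on every simplex, so $\vec X$ is constant on each connected component. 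Feeding $\vec X\equiv\vec c$ back into $\vec\omega^h(\vec q_k)\,.\,\vec X(\vec q_k)=0$ and invoking Assumption~\ref{ass:A}\ref{item:assA1}, that $\{\vec\omega^h(\vec q_k)\}_{k=1}^K$ spans $\bR^d$, yields $\vec c=\vec0$, i.e.\ $\vec X=\vec0$.

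Finally, with $\vec X=\vec0$ the second equation reduces to $\left\langle \kappa^h\,\vec\nu^h,\vec\eta\right\rangle^h_{\Gamma^h}=0$ for all $\vec\eta\in\Vhh$, which via \eqref{eq:omegahnuh} becomes $\sum_k m_k\,\kappa^h(\vec q_k)\,\vec\omega^h(\vec q_k)\,.\,\vec\eta(\vec q_k)=0$. Choosing $\vec\eta$ with $\vec\eta(\vec q_l)=\delta_{kl}\,\vec\omega^h(\vec q_k)$ gives $m_k\,\kappa^h(\vec q_k)\,|\vec\omega^h(\vec q_k)|^2=0$, and Assumption~\ref{ass:A}\ref{item:assA2}, that $\vec\omega^h(\vec q_k)\neq\vec0$, then forces $\kappa^h(\vec q_k)=0$ for all $k$, hence $\kappa^h=0$. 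This proves that the homogeneous system has only the trivial solution, and the square-system dimension count then delivers both existence and uniqueness for the original right-hand side. The step I expect to be the crux is recognizing that the constraint \eqref{eq:BGNcurvaturea} is exactly what annihilates the coupling term $\left\langle \kappa^h\,\vec\nu^h,\vec X\right\rangle^h_{\Gamma^h}$ after the $\vec\nu^h\to\vec\omega^h$ reduction, thereby turning the test with $\vec\eta=\vec X$ into a coercivity estimate; a secondary subtlety is the use of connectedness when passing from $\nabs\,\vec X=\vec0$ to $\vec X\equiv\vec c$, so that the global spanning hypothesis can be applied.
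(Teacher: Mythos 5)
Your proposal is correct and takes essentially the same route as the paper: both reduce the square linear system to uniqueness for the homogeneous problem, use \eqref{eq:BGNcurvaturea} to annihilate the coupling term so that testing \eqref{eq:BGNcurvatureb} with $\vec\eta=\vec X$ gives $\nabs\,\vec X=\vec 0$, and then invoke Assumption~\ref{ass:A}\ref{item:assA2} to force $\kappa^h=0$ and Assumption~\ref{ass:A}\ref{item:assA1} to force the constant $\vec X$ to vanish. The differences are cosmetic: you extract the pointwise vertex identities $m_k\,\vec\omega^h(\vec q_k)\,.\,\vec X(\vec q_k)=0$ by testing with nodal basis functions, whereas the paper takes $\chi=\kappa^h$ in \eqref{eq:BGNcurvaturea} and $\vec\eta=\vec\pi_{\Gamma^h}\,[\kappa^h\,\vec\omega^h]$ in \eqref{eq:BGNcurvatureb} together with \eqref{eq:intnuh}, and you conclude $\vec X=\vec 0$ before $\kappa^h=0$ rather than after (the connectedness subtlety you flag is present, and silently glossed over, in the paper's proof as well).
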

\begin{proof}
As (\ref{eq:BGNcurvature}) is a linear system, where the
number of unknowns equals the number of equations,
it is enough to show uniqueness.
We hence consider the homogeneous system and assume that
$(\vec X_0, \kappa_0) \in \Vhh\times\Whh$ is such that
\begin{subequations} 
\begin{align}
 \left\langle \vec X_0 ,\chi\,\vec\nu^h\right\rangle^h_{\Gamma^h}
& = 0 \quad\forall\ \chi \in \Whh\,, \label{eq:BGNexa} \\
\left\langle \kappa_0\,\vec\nu^h, \vec\eta\right\rangle^h_{\Gamma^h}
+ \left\langle \nabs\,\vec X_0, \nabs\,\vec\eta \right\rangle_{\Gamma^h} & = 0
\quad\forall\ \vec\eta \in \Vhh\,.
\label{eq:BGNexb}
\end{align}
\end{subequations}
Choosing $\chi = \kappa_0\in\Whh$ in \eqref{eq:BGNexa} and 
$\vec\eta = \vec X_0 \in \Vhh$ in \eqref{eq:BGNexb} yields that 
$|\nabs\,\vec X_0|_{\Gamma^h}^2 = 0$, and so
$\vec X_0$ is constant, i.e.\ $\vec X_0 = \vec X^c$ 
on $\Gamma^h$ for $\vec X^c \in \bR^d$.
In particular, choosing $\vec\eta =
\vec\pi_{\Gamma^h}\,[\kappa_0\,\vec\omega^h]$ in \eqref{eq:BGNexb} yields,
on recalling \eqref{eq:omegahnuh} and \eqref{eq:ip0norm}, that
\[
0 = \left\langle \kappa_0\,\vec\omega^h, \kappa_0\,\vec\omega^h
\right\rangle^h_{\Gamma^h} = 
\left(\left| \kappa_0\,\vec\omega^h \right|_{\Gamma^h}^h\right)^2
,
\]
and so $\vec\pi_{\Gamma^h}\,[\kappa_0\,\vec\omega^h] = \vec 0$.
Now Assumption~\ref{ass:A}\ref{item:assA2} yields that $\kappa_0 = 0$.
Moreover, it follows from \eqref{eq:BGNexa}, on recalling \eqref{eq:intnuh}, 
that
\begin{equation*}
0 = \left\langle\vec X^c, \chi\,\vec\nu^h\right\rangle_{\Gamma^h}^h
= \left\langle\vec X^c, \chi\,\vec\nu^h\right\rangle_{\Gamma^h}
= \vec X^c\,.\,\int_{\Gamma^h} \chi\,\vec\nu^h \dH{d-1}  
\quad\forall\ \chi \in \Whh\,, 
\end{equation*}
and so Assumption~\ref{ass:A}\ref{item:assA1}, recall Remark~\ref{rem:assA},
implies that $\vec X^c = \vec0$.
Hence we have shown that there exists a unique solution
$(\vec X,\kappa^h) \in \Vhh\times \Whh$ to (\ref{eq:BGNcurvature}).
\end{proof}

Further discrete curvature approximations can be obtained with the help of
approximations to the Weingarten map, recall Definition~\ref{def:Wp} and
Lemma \ref{lem:nabsnu}. In particular, using
Remark~\ref{rem:ibp}\ref{item:weakweingarten} leads to the following 
discretization of $\nabs\,\vec\nu$, which goes back to \citet[(3.2)]{Heine04}.
Given a closed polyhedral hypersurface $\Gamma^h$ 
and a curvature vector approximation
$\vec\kappa^h\in \Vhh$, find $\mat W^h\in\matVhh$ such that
\begin{subequations} \label{eq:matWh}
\begin{equation} 
\left\langle \mat W^h, \mat\chi \right\rangle_{\Gamma^h} =
- \left\langle \vec\kappa^h, \mat\chi\,\vec\nu^h 
\right\rangle_{\Gamma^h}
- \left\langle \vec\nu^h , \nabs\,.\,\mat\chi \right\rangle_{\Gamma^h} \qquad
\forall\ \mat\chi \in \matVhh\,.
\label{eq:Heine04}
\end{equation}
For example, $\vec\kappa^h$ can be defined via \eqref{eq:veckappah},
or via \eqref{eq:veckappah} without mass lumping, which
corresponds to the choice in \citet[(3.1)]{Heine04}.
We note that $\mat W^h$ is not necessarily symmetric, whereas 
$\nabs\,\vec\nu$ is, recall Lemma \ref{lem:nabsnu}\ref{item:nabsnuT}. 
An alternative approximation of 
$\nabs\,\vec\nu$ replaces \eqref{eq:Heine04} with
\begin{equation} 
\left\langle \mat W^h, \mat\chi \right\rangle_{\Gamma^h}^h =
- \tfrac12 \left\langle \vec\nu^h , (\mat\chi + \mat\chi^\transT)\,\vec\kappa^h 
+ \nabs\,.\,
(\mat\chi + \mat\chi^\transT) \right\rangle_{\Gamma^h}^h \qquad
\forall\ \mat\chi \in \matVhh\,,
\label{eq:Whsym}
\end{equation}
which yields $(\mat W^h)^\transT = \mat W^h$, and which
was considered, for example, in \citet[(4.12b)]{nsns2phase}.

A slightly modified version of \eqref{eq:Heine04} has been utilized in
\cite{willmore}, and is given as follows. 
Given $\Gamma^h$ and a mean curvature approximation
$\kappa^h\in \Whh$, find $\mat W^h\in\matVhh$ such that
\begin{equation} 
\left\langle \mat W^h, \mat\chi \right\rangle_{\Gamma^h}^h =
- \left\langle \kappa^h\,\vec\nu^h , \mat\chi\,\vec\nu^h 
\right\rangle_{\Gamma^h}^h
- \left\langle \vec\nu^h , \nabs\,.\,\mat\chi \right\rangle_{\Gamma^h} \qquad
\forall\ \mat\chi \in \matVhh\,.
\label{eq:Wh}
\end{equation}

Finally, piecewise constant approximations to $\nabs\,\vec\nu$ can be defined 
by
\begin{equation} \label{eq:nabsomega}
\nabs\,\vec\omega^h \in \matVchh \quad\text{and}\quad
\nabs\left(\vec\pi_{\Gamma^h}\frac{\vec\omega^h}{|\vec\omega^h|} 
\right) \in \matVchh\,,
\end{equation}
\end{subequations}
the latter of which clearly needs Assumption~\ref{ass:A}\ref{item:assA2} to
hold, and has been employed in e.g.\ 
\cite{willmore}. We note that the two approximations in \eqref{eq:nabsomega} 
are in general not symmetric. 

\begin{rem} \label{rem:inconsistent}
For the case of curves, $d=2$, and adopting the notation 
of {\rm \S\ref{subsubsec:polycurves}}, we set $\Gamma^h = \vec X^h(\bI)$,
where $\vec X^h \in \VhI$ interpolates $\vec x$ with $\Gamma=\vec x(\bI)$.
Then it is shown in {\rm \citet[Lemma~2.2]{DeckelnickD09}} 
that
the approximation $\vec\kappa^h$ from \eqref{eq:veckappah} 
approximates the true curvature vector, $\vec\varkappa$, of $\Gamma$ 
with order $O(h)$ in $[L^2(\bI)]^2$ for smooth $\Gamma$.
Unfortunately, for the case $d=3$ it is shown in {\rm \cite{Heine04}}
that \eqref{eq:veckappah} and \eqref{eq:Heine04} are not convergent on 
general meshes, see also \cite{HildebrandtPW06}.
Similar conclusions can be drawn from the numerical experiments in
{\rm \citet[\S4.2.1]{willmore}}, and also apply to \eqref{eq:Whsym}
and \eqref{eq:Wh}. 
However, we note that the approximations \eqref{eq:nabsomega} and
\eqref{eq:BGNcurvature} behave better 
in practice, see Tables~2 and 5 and Tables~3 and 6 in {\rm \cite{willmore}},
respectively, for closely related approximations.
Moreover, one can prove convergence for higher order piecewise polynomial
approximations of $\Gamma$ and $\vec\varkappa$, see {\rm \cite{Heine04}}.
Even though \eqref{eq:veckappah} may not be convergent for continuous 
piecewise linears,
it turns out that the use of such an approximation does lead to convergence
in approximating geometric flows;
see e.g.\ {\rm \S\ref{subsubsec:Dziuk}} below. 
\end{rem}

\subsection{Evolving polyhedral surfaces and transport theorems} 
\label{subsec:ESFEM}

We now define discrete analogues to 
evolving hypersurfaces, their velocity fields and material time derivatives.
For more details we refer to \citet[\S5.4]{DziukE13}.

\begin{definition} \label{def:GhT}
\rule{0pt}{0pt}
\begin{enumerate}
\item \label{item:GhT}
Let $(\Gamma^h(t))_{t\in [0,T]}$ be a family of polyhedral
hypersurfaces, such that each $\Gamma^h(t)$ admits a triangulation of the form
{\rm Remark~\ref{rem:polysurf}\ref{item:triangulation}} for fixed $J$ and $K$, 
and such that the position of each vertex $\vec q_k$, 
$k=1,\ldots,K$, is a $C^1$-function in time. Then the set
\begin{equation*}
\GhT = \bigcup_{t\in[0,T]} (\Gamma^h(t)\times\{t\})
\end{equation*}
is called an evolving polyhedral hypersurface.
We will often identify $\GhT$ with \arxivyesno{\linebreak}{}%
$(\Gamma^h(t))_{t\in [0,T]}$, 
and call the latter also an evolving polyhedral hypersurface.
\item \label{item:vecVh}
The velocity of $\Gamma^h(t)$ on $\GhT$ is defined by
\begin{equation*}
\vec{\mathcal{V}}^h(\vec z, t) = \sum_{k=1}^{K}
\left[\ddt\,\vec q_k(t)\right] \phi^{\Gamma^h(t)}_k(\vec z) 
\quad\forall\ (\vec z,t) \in \GhT\,,
\end{equation*}
where we have recalled the notation from 
{\rm Definition~\ref{def:Vh}\ref{item:Vh}}.
\item
We define the finite element spaces
\[
\WhGhT = \{ \chi \in C(\GhT) : 
\chi(\cdot, t) \in \Wht \quad\forall\ t \in [0,T] \}
\]
and $\VhGhT = [\WhGhT]^d$.
\item \label{item:matpartxh}
Let $f \in L^\infty(\GhT)$, with 
$f \in C^1(\SigmahTi{j})$ for $j = 1,\ldots,J$, where
\[
\SigmahTi{j}= \bigcup_{t\in[0,T]} (\overline{\sigma_j(t)}\times\{t\})
\]
is a $C^1$--evolving hypersurface. 
For $j \in \{1,\ldots,J\}$, let 
$\vec x : \Upsilon\times[0,T] \to \bR^d$ be a global parameterization of
$\SigmahTi{j}$ such that $\vec x(\cdot,t) : \Upsilon \to
\overline{\sigma_j(t)}$ is an affine function.
Then we define the discrete time derivative of $f$ by
\begin{equation*} 
\matpartxh\, f = \matpartx\, f 
\qquad\text{on } \SigmahTi{j}\,,
\end{equation*}
recall {\rm Definition~\ref{def:2.18}\ref{item:def2.18i}}.
\item
We define the finite element spaces
\[
\WhTGhT = \{ \chi \in \WhGhT : \matpartxh\,\chi \in C(\GhT) \}
\]
and $\VhTGhT = [\WhTGhT]^d$ of finite element functions on $\GhT$
with a continuous material derivative.
\end{enumerate}
\end{definition}

\begin{rem} \label{rem:GhT}
\rule{0pt}{0pt}
\begin{enumerate}
\item \label{item:phihk}
On introducing the short hand notation $\phi_k^h(\cdot, t) = 
\phi_k^{\Gamma^h(t)}$, it holds that
\[
\matpartxh\,\phi_k^h = 0 \qquad\text{on } \GhT\,,\qquad k = 1,\ldots,K\,.
\]
\item \label{item:matpartxhfh}
In general the discrete material derivative $\matpartxh\,f$ is only defined
piecewise on $\GhT$. But 
a direct consequence of \ref{item:phihk} is that for $\chi \in \WhGhT$,
with $\chi(\vec q_k(\cdot), \cdot) \in C^1([0,T])$, $k=1,\ldots,K$,
it holds that
\begin{equation*}
(\matpartxh\,\chi) (\vec z, t) = \sum_{k=1}^{K}
\left[\ddt\,\chi(\vec q_k(t),t)\right] \phi^{\Gamma^h(t)}_k(\vec z) 
\quad\forall\ (\vec z,t) \in \GhT\,,
\end{equation*}
i.e.\ we can choose a continuous representation of $\matpartxh\,\chi$, and 
hence $\chi \in \WhTGhT$.
\item \label{item:VhinVhGhT}
We have that $\vec{\mathcal{V}}^h \in \VhGhT$ with
$\vec{\mathcal{V}}^h = \matpartxh\,\vec\id$ on $\GhT$.
\item 
On extending $f$ to a neighbourhood of $\SigmahTi{j}$, $j=1,\ldots,J$,
it holds that
\begin{equation*} 
\matpartxh\, f = \partial_t\,f + \vec{\mathcal{V}}^h\,.\,\nabla\,f
\qquad\text{on } \SigmahTi{j}\,,
\end{equation*}
recall {\rm Remark~\ref{rem:timeder}\ref{item:rem2.19ii}}.
\end{enumerate}
\end{rem}

\begin{thm} \label{thm:disctrans}
Let $\GhT$ be an evolving polyhedral hypersurface, and let 
$\eta,\zeta \in \WhTGhT$. 
\begin{enumerate}
\item \label{item:disctrans}
It holds that
\begin{equation*}
\ddt \left\langle \eta, \zeta \right\rangle_{\Gamma^h(t)}
 = \left\langle \matpartxh\,\eta, \zeta \right\rangle_{\Gamma^h(t)}
 + \left\langle \eta, \matpartxh\,\zeta \right\rangle_{\Gamma^h(t)}
+ \left\langle \eta\,\zeta, \nabs\,.\,\vec{\mathcal{V}}^h 
\right\rangle_{\Gamma^h(t)} .
\end{equation*}
\item \label{item:disctransh}
It holds that
\begin{equation*}
\ddt \left\langle \eta, \zeta \right\rangle_{\Gamma^h(t)}^h
 = \left\langle \matpartxh\,\eta, \zeta \right\rangle_{\Gamma^h(t)}^h
 + \left\langle \eta, \matpartxh\,\zeta \right\rangle_{\Gamma^h(t)}^h
+ \left\langle \eta\,\zeta, \nabs\,.\,\vec{\mathcal{V}}^h 
\right\rangle_{\Gamma^h(t)}^h .
\end{equation*}
\end{enumerate}
\end{thm}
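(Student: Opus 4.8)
The plan is to reduce both identities to the evolving-surface transport theorem applied simplex by simplex, exploiting the fact that on each moving simplex every quantity is affine in space and $C^1$ in time.

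For \ref{item:disctrans}, I would write $\left\langle\eta,\zeta\right\rangle_{\Gamma^h(t)} = \sum_{j=1}^J \int_{\sigma_j(t)}\eta\,\zeta\dH{n}$. Each $\SigmahTi{j}$ is a $C^1$--evolving hypersurface on which $\eta,\zeta$ restrict to $C^1$ functions, recall Definition~\ref{def:GhT}\ref{item:matpartxh}. On a single simplex the first identity of Theorem~\ref{thm:trans}, applied with $f=\eta\,\zeta$, holds \emph{without} a boundary contribution: its derivation only differentiates the local area element $\sqrt g$ once in time, via Lemma~\ref{lem:derive-g}, and uses no integration by parts, so it remains valid for the flat, $C^1$--in--time moving simplex $\sigma_j(t)$ even though $\sigma_j(t)$ has a boundary. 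Hence
\[
\ddt\int_{\sigma_j(t)}\eta\,\zeta\dH{n}
= \int_{\sigma_j(t)}\big(\matpartxh\,(\eta\,\zeta)+\eta\,\zeta\,\nabs\,.\,\vec{\mathcal{V}}^h\big)\dH{n}\,.
\]
Using the product rule $\matpartxh\,(\eta\,\zeta)=(\matpartxh\,\eta)\,\zeta+\eta\,\matpartxh\,\zeta$, which holds because $\matpartxh$ is the directional derivative of Definition~\ref{def:2.18}\ref{item:def2.18i}, and summing over $j=1,\ldots,J$ gives the claim; no edge terms appear precisely because the boundary-free form was used.

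For \ref{item:disctransh}, I would differentiate the explicit mass-lumped expression
\[
\left\langle\eta,\zeta\right\rangle^h_{\Gamma^h(t)}
= \tfrac{1}{n+1}\sum_{j=1}^J \mathcal{H}^{n}(\sigma_j(t))\sum_{k=1}^{n+1}(\eta\,\zeta)((\vec q_{j,k}(t))^-,t)
\]
from \eqref{eq:ip0} term by term. For the area factor, Lemma~\ref{lem:derive-g} with $f\equiv1$ yields $\ddt\,\mathcal{H}^{n}(\sigma_j(t))=\int_{\sigma_j(t)}\nabs\,.\,\vec{\mathcal{V}}^h\dH{n}=\mathcal{H}^{n}(\sigma_j(t))\,(\nabs\,.\,\vec{\mathcal{V}}^h)_{\mid_{\sigma_j}}$, since $\vec{\mathcal{V}}^h$ is affine and so $\nabs\,.\,\vec{\mathcal{V}}^h$ is constant on each simplex. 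For the nodal values, each $\vec q_{j,k}(t)$ is a material point of the affine parameterization of $\SigmahTi{j}$, so by Definition~\ref{def:2.18}\ref{item:def2.18i} together with Definition~\ref{def:GhT}\ref{item:matpartxh} one has $\ddt\,(\eta\,\zeta)((\vec q_{j,k}(t))^-,t)=(\matpartxh\,(\eta\,\zeta))((\vec q_{j,k}(t))^-)$. Assembling these contributions and again invoking the product rule for $\matpartxh$ produces exactly $\left\langle\matpartxh\,\eta,\zeta\right\rangle^h_{\Gamma^h(t)}+\left\langle\eta,\matpartxh\,\zeta\right\rangle^h_{\Gamma^h(t)}+\left\langle\eta\,\zeta,\nabs\,.\,\vec{\mathcal{V}}^h\right\rangle^h_{\Gamma^h(t)}$, where the last term uses that the simplexwise constant $(\nabs\,.\,\vec{\mathcal{V}}^h)_{\mid_{\sigma_j}}$ agrees with each of its lumped nodal evaluations.

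The routine parts are the product rule and the bookkeeping of the sums. The only genuine points to get right are the justification that the simplexwise transport formula carries no boundary term and is valid under merely $C^1$--in--time motion of the vertices (so that Lemma~\ref{lem:derive-g} still applies to $g_{ij}\in C^1$), and the matching of the piecewise-constant $\nabs\,.\,\vec{\mathcal{V}}^h$ with its mass-lumped nodal value; I expect these to be the main, though mild, obstacles.
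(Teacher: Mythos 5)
Your proof is correct and follows essentially the same route as the paper: part \ref{item:disctrans} is exactly the paper's argument (Theorem~\ref{thm:trans} applied on each evolving simplex $\sigma_j(t)$, then the product rule for $\matpartxh$ and summation over $j$), and your remarks on why no boundary terms arise and why $C^1$-in-time vertex motion suffices are valid refinements of points the paper leaves implicit. For part \ref{item:disctransh}, where the paper only asserts the proof is analogous and cites an external reference, your direct differentiation of the mass-lumped sum from \eqref{eq:ip0} --- using Lemma~\ref{lem:derive-g} for the simplex areas, the material-point property of the vertices under the affine parameterizations, and the per-simplex constancy of $\nabs\,.\,\vec{\mathcal{V}}^h$ --- is precisely the intended argument and completes it correctly.
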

\begin{proof}
\ref{item:disctrans}
Using the transport theorem, Theorem~\ref{thm:trans}, on each evolving
simplex $\sigma_j(t)$ of $\Gamma^h(t)$, and using the assumptions
$\eta,\zeta \in \WhTGhT$, leads to
\begin{align*}
\ddt \left\langle \eta, \zeta \right\rangle_{\Gamma^h(t)} &
= \ddt\,\sum_{j=1}^J \int_{\sigma_j(t)} \eta\,\zeta \dH{d-1}  \\ &
= \sum_{j=1}^J \int_{\sigma_j(t)} \matpartxh\,(\eta\,\zeta) + \eta\,\zeta\,
\nabs\,.\,\vec{\mathcal{V}}^h \dH{d-1} \nonumber \\ &
= \left\langle \matpartxh\,\eta, \zeta \right\rangle_{\Gamma^h(t)}
 + \left\langle \eta, \matpartxh\,\zeta \right\rangle_{\Gamma^h(t)}
+ \left\langle \eta\,\zeta, \nabs\,.\,\vec{\mathcal{V}}^h 
\right\rangle_{\Gamma^h(t)} .
\end{align*}
\ref{item:disctransh}
This proof is analogous to \ref{item:disctrans} and can be found in
\citet[Lemma~3.1]{tpfs}.
\end{proof}

\begin{thm} \label{thm:disctransvol}
Let $\GhT$ be an evolving polyhedral hypersurface,
such that $\Gamma^h(t)$ is bounding a domain $\Omega^h(t)\subset\bR^d$,
for $t\in [0,T]$. We assume that $\vec\nu^h(t)$ is the outer unit normal 
to $\Omega^h(t)$ on $\Gamma^h(t)$, and that $f\in C^1(\overline\HhT)$, where
\begin{equation*}
\HhT = \bigcup_{t\in[0,T]} (\Omega^h(t)\times\{t\})\,.
\end{equation*}
Then it holds that
\begin{equation*} 
\ddt\,\int_{\Omega^h(t)} f \dL{d} =
\int_{\Omega^h(t)} \partial_t\,f \dL{d} 
+ \left\langle f , \vec{\mathcal{V}}^h\,.\,\vec\nu^h
\right\rangle_{\Gamma^h(t)}.
\end{equation*}
\end{thm}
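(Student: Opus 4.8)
The plan is to treat this as the polyhedral analogue of Theorem~\ref{thm:transvol} and to prove it by the divergence theorem in space-time, exactly as suggested there for the smooth case; the advantage of this route is that it requires no extension of $\vec{\mathcal{V}}^h$ into the bulk. First I would fix $t_1 < t_2$ in $[0,T]$ and consider the space-time region $\bigcup_{t\in[t_1,t_2]}(\Omega^h(t)\times\{t\}) \subset \bR^{d+1}$, whose boundary splits into the two flat slices $\Omega^h(t_1)\times\{t_1\}$ and $\Omega^h(t_2)\times\{t_2\}$ and the lateral part $\bigcup_{t\in[t_1,t_2]}(\Gamma^h(t)\times\{t\})$.

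The key geometric computation is the space-time outer unit normal on the lateral boundary. Since each boundary point moves with velocity $\vec{\mathcal{V}}^h = \matpartxh\,\vec\id$ (recall Remark~\ref{rem:GhT}\ref{item:VhinVhGhT}), the space-time tangent space at a lateral boundary point is spanned by $(\vec\tau_i,0)$, $i=1,\ldots,d-1$, with $\vec\tau_i \in \tanspace_{\vec p}\,\Gamma^h(t)$, together with $(\vec{\mathcal{V}}^h,1)$. Orthogonalizing against these (the argument parallels Remark~\ref{rem:2.16}) shows that the outer unit normal is $(1+(\vec{\mathcal{V}}^h\,.\,\vec\nu^h)^2)^{-\frac12}\,(\vec\nu^h,\,-\vec{\mathcal{V}}^h\,.\,\vec\nu^h)$, and that the lateral surface element equals $(1+(\vec{\mathcal{V}}^h\,.\,\vec\nu^h)^2)^{\frac12}\,\dH{d-1}\,\mathrm{d}t$.

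I would then apply the divergence theorem in $\bR^{d+1}$ to the vector field that is constant in the spatial directions and equal to $f$ in the time direction; its space-time divergence is $\partial_t\,f$. The two flat slices contribute $\int_{\Omega^h(t_2)} f \dL{d} - \int_{\Omega^h(t_1)} f \dL{d}$ (the flat normals being $(\vec 0,\pm1)$), while on the lateral boundary the normalization factor in the normal cancels the surface-element Jacobian, leaving $-\int_{t_1}^{t_2}\int_{\Gamma^h(t)} f\,(\vec{\mathcal{V}}^h\,.\,\vec\nu^h)\dH{d-1}\,\mathrm{d}t$. Rearranging and invoking Fubini identifies $\int_{\Omega^h(t_2)}f - \int_{\Omega^h(t_1)} f$ with the time integral over $[t_1,t_2]$ of $\int_{\Omega^h(t)}\partial_t\,f \dL{d} + \int_{\Gamma^h(t)} f\,(\vec{\mathcal{V}}^h\,.\,\vec\nu^h)\dH{d-1}$. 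Differentiating in $t_2$ and recognizing the surface integral as $\langle f, \vec{\mathcal{V}}^h\,.\,\vec\nu^h\rangle_{\Gamma^h(t)}$ then yields the claim.

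The main obstacle is the reduced regularity: $\Gamma^h(t)$ is only piecewise affine, so $\vec\nu^h$ jumps across the edges of the triangulation and the space-time region has merely a Lipschitz boundary. I would handle this by noting that the edges and vertices of $\Gamma^h(t)$ sweep out a set of $\mathcal{H}^{d}$-measure zero in the lateral boundary, so the normal computation and the boundary integral are unaffected, and that the divergence theorem holds on Lipschitz domains for the $C^1$ integrand $f \in C^1(\overline\HhT)$. An equivalent route, matching the remark following Theorem~\ref{thm:transvol}, would instead extend $\vec{\mathcal{V}}^h$ to a Lipschitz transport velocity $\vec{\mathcal{V}}^h_\subHT$ on $\overline\HhT$ (e.g.\ via a moving bulk triangulation with piecewise-affine velocity agreeing with $\vec{\mathcal{V}}^h$ on $\Gamma^h(t)$), apply the Reynolds transport theorem of Remark~\ref{rem:reynolds}, and use the spatial divergence theorem together with $\vec{\mathcal{V}}^h_\subHT\,.\,\vec\nu^h = \vec{\mathcal{V}}^h\,.\,\vec\nu^h$ on $\Gamma^h(t)$.
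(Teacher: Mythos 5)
Your proposal is correct and follows essentially the same route as the paper: the paper's proof consists of invoking the space-time divergence theorem argument of \citet[\S7.3]{EckGK17} in a variant valid for Lipschitz domains, which is precisely the integrated space-time divergence computation you carry out, including the observation that the polyhedral lateral boundary is only Lipschitz. Your write-up simply makes explicit the normal and surface-element computations that the paper delegates to the citation.
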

\begin{proof} 
This follows as in \citet[\S7.3]{EckGK17} using a variant of
the divergence theorem for Lipschitz domains.
\end{proof}

\subsection{Further results for evolving polyhedral surfaces}
\label{subsec:further}

We state discrete analogues of Lemma~\ref{lem:dtnu} and 
Lemma~\ref{lem:commdiff}.

\begin{lem} \label{lem:dtnuh}
Let $\GhT$ be an evolving polyhedral hypersurface. Then it holds that
\begin{equation*}
\matpartxh\,\vec\nu^h = -(\nabs\,\vec{\mathcal{V}}^h)^\transT\,\vec\nu^h
\qquad\text{a.e.\ on } \Gamma^h(t)\,.
\end{equation*}
\end{lem}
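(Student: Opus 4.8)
The plan is to mimic the proof of the smooth analogue, Lemma~\ref{lem:dtnu}\ref{item:dtnu}, working piecewise on each evolving simplex $\sigma_j(t)$ of $\Gamma^h(t)$, since the discrete material derivative $\matpartxh$ is defined precisely as the smooth material derivative $\matpartx$ on each $\SigmahTi{j}$, recall Definition~\ref{def:GhT}\ref{item:matpartxh}. The key structural observation is that on the interior of each simplex $\sigma_j(t)$ the normal $\vec\nu^h$ is \emph{constant} in space (it is piecewise constant, recall Definition~\ref{def:nuh}), while $\vec{\mathcal{V}}^h$ is affine on each simplex, so $\nabs\,\vec{\mathcal{V}}^h$ is likewise constant on $\sigma_j(t)$.

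First I would fix $j \in \{1,\ldots,J\}$ and work on $\SigmahTi{j}$, which is a $C^1$--evolving hypersurface by Definition~\ref{def:GhT}\ref{item:matpartxh}, with affine global parameterization $\vec x(\cdot,t)$ and induced velocity field $\vec{\mathcal{V}}^h$. Since $\vec{\mathcal{V}}^h \in \VhGhT$ with $\vec{\mathcal{V}}^h = \matpartxh\,\vec\id$ on $\GhT$, recall Remark~\ref{rem:GhT}\ref{item:VhinVhGhT}, the hypotheses of Lemma~\ref{lem:dtnu}\ref{item:dtnu} are met on each simplex, with $\vec\nu$ replaced by the piecewise constant $\vec\nu^h$. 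Applying that lemma directly on $\SigmahTi{j}$ gives
\begin{equation*}
\matpartxh\,\vec\nu^h = \matpartx\,\vec\nu^h
= -(\nabs\,\vec{\mathcal{V}}^h)^\transT\,\vec\nu^h
\qquad\text{on } \sigma_j(t)\,,
\end{equation*}
for $j=1,\ldots,J$. Since the simplices cover $\Gamma^h(t)$ up to their lower-dimensional boundaries, this establishes the identity almost everywhere on $\Gamma^h(t)$, which is all that is claimed.

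The only subtlety to verify is that Lemma~\ref{lem:dtnu}\ref{item:dtnu} genuinely applies to the piecewise constant field $\vec\nu^h$ restricted to a single simplex: its proof used a basis $\{\vec\tau_1,\ldots,\vec\tau_{d-1}\}$ of the tangent space together with the relations $(\matpartx\,\vec\nu)\,.\,\vec\nu = \tfrac12\,\matpartx\,|\vec\nu|^2 = 0$ and $\vec\nu\,.\,(\nabs\,\vec{\mathcal{V}})^\transT\,\vec\nu = 0$. These all hold for $\vec\nu^h$, since $|\vec\nu^h|=1$ and $\nabs\,\vec\nu^h = \mat 0$ on the simplex interior means $\vec\nu^h$ trivially plays the role of a unit normal with the required orthogonality properties. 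I do not anticipate a genuine obstacle here; the main point is simply to confirm that the smooth computation goes through unchanged on each affine piece and that no cross-simplex jump terms enter, because $\matpartxh$ is defined purely piecewise.
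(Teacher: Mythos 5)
Your proposal is correct and follows essentially the same route as the paper: the paper's proof consists precisely of appealing to Lemma~\ref{lem:dtnu}\ref{item:dtnu} on each evolving simplex $\sigma_j(t)$ of $\Gamma^h(t)$, in the same spirit as the proof of Theorem~\ref{thm:disctrans}\ref{item:disctrans}. You simply spell out the details (affine parameterization, piecewise constant normal, no cross-simplex terms) that the paper leaves implicit.
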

\begin{proof}
Similarly to the proof of Theorem~\ref{thm:disctrans}\ref{item:disctrans},
we appeal to 
\arxivyesno{Lemma~\ref{lem:dtnu}\ref{item:dtnu}}
{Lemma\linebreak\ref{lem:dtnu}\ref{item:dtnu}} 
on each evolving simplex $\sigma_j(t)$ of $\Gamma^h(t)$.
\end{proof}

\begin{lem} \label{lem:commdiffh} 
Let $\GhT$ be an evolving polyhedral hypersurface, and let 
$\eta \in \WhTGhT$, $\vec\eta \in \VhTGhT$. 
Then we have the following results, where we recall from
{\rm Definition~\ref{def:2.5}\ref{item:def2.5vi}} that
$\mat D_s(\vec{\mathcal{V}}^h) = \frac12\, 
\mat P_{\Gamma^h}\,
(\nabs\,\vec{\mathcal{V}}^h + (\nabs\,\vec{\mathcal{V}}^h)^\transT)\,
\mat P_{\Gamma ^h}$ almost everywhere on $\Gamma^h(t)$.
\begin{enumerate}
\item \label{item:commdiffhi}
\begin{equation*}
\matpartxh\,\nabs\,\eta - \nabs\,\matpartxh\,\eta =
[\nabs\,\vec{\mathcal{V}}^h - 2\,\mat D_s(\vec{\mathcal{V}}^h)]\,\nabs\,\eta
\qquad\text{a.e.\ on } \Gamma^h(t)\,.
\end{equation*}
\item \label{item:commdiffhii}
\begin{equation*}
\matpartxh\,\nabs\,\vec\eta - \nabs\,\matpartxh\,\vec\eta = 
(\nabs\,\vec\eta)\,
[\nabs\,\vec{\mathcal{V}}^h - 2\,\mat D_s(\vec{\mathcal{V}}^h)]^\transT
\qquad\text{a.e.\ on } \Gamma^h(t)\,.
\end{equation*}
\item \label{item:commdiffhiii}
\begin{equation*}
\matpartxh\,(\nabs\,.\,\vec\eta) - \nabs\,.\,(\matpartxh\,\vec\eta) =
[\nabs\,\vec{\mathcal{V}}^h-2\,\mat D_s(\vec{\mathcal{V}}^h)] 
: \nabs\,\vec\eta \quad\text{a.e.\ on } \Gamma^h(t)\,.
\end{equation*}
\end{enumerate}
\end{lem}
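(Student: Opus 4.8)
The plan is to prove all three identities simplex by simplex, exactly as in the proofs of Lemma~\ref{lem:dtnuh} and Theorem~\ref{thm:disctrans}\ref{item:disctrans}. Fix $j \in \{1,\ldots,J\}$ and recall from Definition~\ref{def:GhT}\ref{item:matpartxh} that $\SigmahTi{j}$ is an evolving hypersurface on which $\matpartxh = \matpartx$. The restrictions of $\eta \in \WhTGhT$ and $\vec\eta \in \VhTGhT$ to $\SigmahTi{j}$ are affine in space and, by Remark~\ref{rem:GhT}\ref{item:matpartxhfh}, have $C^1$ nodal values in time, so they are admissible for the commutator calculus on $\sigma_j(t)$. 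Applying Lemma~\ref{lem:commdiff}\ref{item:commdiffi}--\ref{item:commdiffiii} on each $\sigma_j(t)$, with $\vec{\mathcal{V}}^h$ in the role of the velocity field, then yields the three claimed identities on the open simplex $\sigma_j(t)$; since the open simplices cover $\Gamma^h(t)$ up to the measure-zero union of their edges, the identities hold almost everywhere on $\Gamma^h(t)$.

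First I would dispose of the one genuine subtlety: Lemma~\ref{lem:commdiff} is stated for $C^2$--evolving surfaces, whereas each $\SigmahTi{j}$ is flat and its vertices move only $C^1$ in time. The point is that the smooth proof of Lemma~\ref{lem:commdiff}\ref{item:commdiffi} uses only (a) the normal extension \eqref{eq:extendf}, (b) the relation $\matpartx\,\vec\nu = -(\nabs\,\vec{\mathcal{V}})^\transT\,\vec\nu$ from Lemma~\ref{lem:dtnu}\ref{item:dtnu}, and (c) first-order surface calculus together with the Euclidean Schwarz theorem. On a flat simplex the normal $\vec\nu^h$ is constant, so $\nabs\,\vec\nu^h$ vanishes identically and the extension of any affine function is again affine in $\bR^d$ with vanishing Euclidean Hessian; moreover, ingredient (b) is available in the discrete form $\matpartxh\,\vec\nu^h = -(\nabs\,\vec{\mathcal{V}}^h)^\transT\,\vec\nu^h$, which is precisely Lemma~\ref{lem:dtnuh}. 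Hence only first spatial derivatives of $\eta$, $\vec\eta$ and $\vec{\mathcal{V}}^h$, and first material time derivatives of the (piecewise constant in space) gradients $\nabs\,\eta$, $\nabs\,\vec\eta$, enter the computation, and all of these are well-defined under the stated $C^1$--in-time hypotheses. Thus the smooth argument carries over verbatim onto each $\sigma_j(t)$; verifying this reduction of regularity is the main---indeed the only---obstacle.

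With \ref{item:commdiffhi} in hand, parts \ref{item:commdiffhii} and \ref{item:commdiffhiii} follow exactly as in the smooth case. For \ref{item:commdiffhii} I would apply \ref{item:commdiffhi} to each scalar component $\vec\eta\,.\,\vec\ek_i$ and reassemble, using Remark~\ref{rem:2.6}\ref{item:2.6vi} to identify the rows of the surface Jacobian. For \ref{item:commdiffhiii} I would take the trace of \ref{item:commdiffhii} and invoke Lemma~\ref{lem:productrules}\ref{item:trnabsf} to replace $\tr(\nabs\,\cdot)$ by $\nabs\,.\,\cdot$, so that $\matpartxh\,(\nabs\,.\,\vec\eta) - \nabs\,.\,(\matpartxh\,\vec\eta) = \tr((\nabs\,\vec\eta)\,[\nabs\,\vec{\mathcal{V}}^h - 2\,\mat D_s(\vec{\mathcal{V}}^h)]^\transT) = [\nabs\,\vec{\mathcal{V}}^h - 2\,\mat D_s(\vec{\mathcal{V}}^h)] : \nabs\,\vec\eta$ almost everywhere on $\Gamma^h(t)$, as required.
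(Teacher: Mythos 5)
Your proposal is correct and follows essentially the same route as the paper, whose proof consists of exactly this observation: appeal to Lemma~\ref{lem:commdiff} on each evolving simplex $\sigma_j(t)$, as in the proofs of Theorem~\ref{thm:disctrans}\ref{item:disctrans} and Lemma~\ref{lem:dtnuh}. Your additional discussion of why the $C^2$ hypothesis of the smooth lemma can be relaxed on a flat simplex with $C^1$-in-time vertices (constant normal, affine extensions, only one material time derivative needed, with Lemma~\ref{lem:dtnuh} supplying the discrete analogue of Lemma~\ref{lem:dtnu}\ref{item:dtnu}) is a sound elaboration of a point the paper leaves implicit.
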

\begin{proof}
Similarly to the proof of Theorem~\ref{thm:disctrans}\ref{item:disctrans},
we appeal to Lemma~\ref{lem:commdiff} on each evolving simplex
$\sigma_j(t)$ of $\Gamma^h(t)$.
\end{proof}

\section{Mean curvature flow} \label{sec:mc}
The main ideas needed to numerically solve curvature driven evolution 
equations are most easily introduced with the help of the mean curvature flow.
For a family of closed evolving hypersurfaces $(\Gamma(t))_{t\in [0,T]}$
in $\bR^d$, $d\geq2$, we consider at each time the total surface area 
$|\Gamma(t)| = \mathcal{H}^{d-1} (\Gamma(t))$.
In this section we will only consider closed surfaces $\Gamma(t)$, i.e.\
surfaces which are compact and without boundary. 
The transport theorem, Theorem~\ref{thm:trans}, gives
\begin{equation}\label{eq:firstvar}
\ddt\left|\Gamma(t)\right| = - \left\langle \varkappa, \mathcal{V} 
\right\rangle_{\Gamma(t)} ,
\end{equation}
where $\varkappa$ is the mean curvature of $\Gamma(t)$, $\mathcal V$ is its
normal velocity, and where we recall that
$\left\langle \cdot, \cdot \right\rangle_{\Gamma(t)}$
denotes the $L^2$--inner product on $\Gamma(t)$. 
Here we used that $\Gamma(t)$ has no boundary.
We hence obtain that the geometric evolution law for an evolving hypersurface
\begin{equation} \label{eq:MCFgradflow}
\mathcal V = \varkappa \qquad\text{on } \Gamma(t)
\end{equation}
most efficiently decreases the surface area, 
and hence it is also called the $L^2$--gradient flow of $|\Gamma(t)|$,
see e.g.\ \cite{Mantegazza11,Garcke13} for details.
This law is the most fundamental curvature driven evolution law and
has been studied in detail both analytically and numerically, we refer to
\cite{Huisken84,GageH86,Giga06,Mantegazza11,Garcke13} and 
\cite{DeckelnickDE05}, and the references therein, for details.

\subsection{Weak formulation}
In this section we want to introduce, for mean curvature flow, 
ideas introduced by the present authors to approximate curvature 
driven evolution laws for hypersurfaces.
These ideas lead to stable approximations, which, in addition, are such that 
the quality of the mesh approximating the evolving surface in general
remains good. In fact, the latter property is crucial,
as many parametric approaches suffer from mesh degeneracies during the 
evolution. These degeneracies may even lead to situations, where 
the resulting algorithms break down during the evolution. 
The ideas presented in this section
will then be the basis for more complex evolution laws studied later on.
We will also compare our approach to other methods in the 
literature dealing with mean curvature flow.

The basis of our approach is a weak formulation, which we introduce next. 
The goal is to write the evolution law $\mathcal V= \varkappa$ in a weak form.
To this end, we firstly note that the evolution law, 
for a global parameterization $\vec x :\Upsilon \times [0,T] \to \bR^d$, 
and corresponding orientable hypersurfaces $\Gamma(t)= \vec x (\Upsilon,t)$, 
recall Definition~\ref{def:globalx}, can be written as 
\begin{equation} \label{eq:BGNweakmcf}
\vec{\mathcal{V}}\,.\,\vec\nu =\varkappa\,,\qquad
\varkappa\,\vec\nu = \Delta_s\,\vec\id
\qquad\text{on } \Gamma(t)\,,
\end{equation}
where we have noted Remark~\ref{rem:vecV}\ref{item:VVnu}
and Lemma~\ref{lem:varkappa}\ref{item:vecvarkappa}.
On recalling from Remark \ref{rem:ibp}\ref{item:weakvarkappa} the weak
formulation of the second identity, we propose the following weak formulation
for mean curvature flow.
Given a closed hypersurface $\Gamma(0)$, find an evolving hypersurface 
$(\Gamma(t))_{t\in[0,T]}$
with a global parameterization and induced velocity field
$\vec{\mathcal{V}}$, and $\varkappa \in L^2(\GT)$ as follows. 
For almost all $t \in (0,T)$, find
$(\vec{\mathcal V}(\cdot,t),\varkappa(\cdot,t))\in 
[L^2(\Gamma(t))]^d \times L^2(\Gamma(t))$ such that 
\begin{subequations} \label{eq:weakmc}
\begin{align}
 \left\langle \vec{\mathcal{V}}, \chi\,\vec\nu \right\rangle_{\Gamma(t)} 
- \left\langle \varkappa,\chi \right\rangle_{\Gamma(t)} & = 0
\quad\forall\ \chi \in L^2(\Gamma(t))\,, \label{eq:weakmca} \\
 \left\langle \varkappa\,\vec\nu,\vec\eta \right\rangle_{\Gamma(t)} 
+ \left\langle \nabs\,\vec\id , \nabs\,\vec\eta \right\rangle_{\Gamma(t)} & = 0
\quad\forall\ \vec\eta \in \Vt\,. \label{eq:weakmcb} 
\end{align}
\end{subequations}
We note here that we consider closed surfaces and hence no boundary term 
appears in \eqref{eq:weakmcb}.
Using the weak formulation (\ref{eq:weakmcb}) of 
$\varkappa\,\vec\nu = \Delta_s\,\vec\id$ was the fundamental idea of 
\cite{Dziuk91}, which made it possible to approximate smooth surfaces
and their mean curvature by piecewise smooth surfaces.
Finally we remark that $H^1(\Gamma(t))$ denotes the usual Sobolev space
of square integrable functions on $\Gamma(t)$ with square integrable surface
gradient, and we refer to \citet[I\,\S4]{Wloka87} for an introduction to 
Sobolev spaces on 
surfaces. 

\subsection{Finite element approximation}\label{subsec:mcFEA}

Given an initial polyhedral hypersurface $\Gamma^0$ the plan is to construct 
polyhedral hypersurfaces $\Gamma^m$, $m=1,\ldots,M$, which approximate the
true continuous solution $\Gamma(t_m)$ to the mean curvature flow at times 
$0 = t_0 < t_1<\ldots<t_M=T$, 
which form a partition of a time interval $[0,T]$
with time steps 
\begin{equation} \label{eq:ttaum}
\ttau_m= t_{m+1}-t_m\,,\qquad m=0,\ldots,M-1\,.
\end{equation}
An idea going back to \cite{Dziuk91} 
is to parameterize $\Gamma^{m+1}$ over $\Gamma^m$ with the help of 
parameterizations $\vec X^{m+1}:\Gamma^m \to \bR^d$. 
We recall the definitions of the finite element spaces and inner products 
from Definition~\ref{def:Vh}, and then recall the following finite element 
approximation of \eqref{eq:weakmc} for mean curvature flow from
\cite{triplejMC,gflows3d}.

Let the closed polyhedral hypersurface $\Gamma^0$ be an approximation of
$\Gamma(0)$. Then, for $m=0,\ldots,M-1$, find 
$(\vec X^{m+1},\kappa^{m+1}) \in \Vhm\times\Whm$ such that
\begin{subequations} \label{eq:MCIP}
\begin{align}
\left\langle \frac{\vec X^{m+1}-\vec\id}{\ttau_m}, 
\chi\,\vec\nu^m\right\rangle^h_{\Gamma^m}
- \left\langle \kappa^{m+1}, \chi \right\rangle_{\Gamma^m}^h & = 0
\quad\forall\ \chi \in \Whm\,, \label{eq:MCIPa} \\
\left\langle \kappa^{m+1}\,\vec\nu^m, \vec\eta\right\rangle^h_{\Gamma^m}
+ \left\langle \nabs\,\vec X^{m+1}, \nabs\,\vec\eta \right\rangle_{\Gamma^m} & = 0
\quad\forall\ \vec\eta \in \Vhm \label{eq:MCIPb}
\end{align}
\end{subequations}
and set
$\Gamma^{m+1} = \vec X^{m+1}(\Gamma^m).$

Here we recall from Definition~\ref{def:globalx}\ref{item:vecV} that 
$\frac{\vec X^{m+1}-\vec\id}{\ttau_m}$ on $\Gamma^m$
is a natural approximation of $\vec{\mathcal{V}}$ on $\Gamma(t_m)$.
We also remark that although the original problem was highly nonlinear, 
the system (\ref{eq:MCIP}) is linear and easy to solve. The main reason for
this is that the geometry, which enters the weak formulations via the area
element, the normal vector and the surface gradients, is taken explicitly.

\begin{rem}[Surfaces with boundary] \label{rem:boundaries}
We recall that \eqref{eq:MCFgradflow} was derived as the $L^2$--gra\-dient flow
for $|\Gamma(t)|$ from \eqref{eq:firstvar} for an evolving hypersurface without
boundary. If we allow $\partial\Gamma(t)$ to be nonempty, on the other hand,
then \eqref{eq:firstvar} needs to be adapted to
\begin{equation} \label{eq:boundaryfirstvar}
\ddt\left|\Gamma(t)\right| = 
- \left\langle \varkappa\,\vec\nu, \vec{\mathcal{V}}
\right\rangle_{\Gamma(t)}
+ \left\langle \vec{\mathcal{V}} , \conormal \right\rangle_{\partial\Gamma(t)},
\end{equation}
where $\vec{\mathcal{V}}$ is the velocity field induced by a global
parameterization of the evolving hypersurface, and $\conormal(t)$ denotes the
outer unit conormal on $\partial\Gamma(t)$,
recall {\rm Theorem~\ref{thm:trans}}.
Now choosing a boundary condition that makes the last term in
\eqref{eq:boundaryfirstvar} vanish will ensure that \eqref{eq:MCFgradflow} is
still the  $L^2$--gradient flow for $|\Gamma(t)|$. The simplest such boundary
condition fixes
\begin{equation} \label{eq:fixedbc}
\partial\Gamma(t) = \partial\Gamma(0) \quad\forall\ t \in (0,T]\,.
\end{equation}
The approximation \eqref{eq:MCIP} can be easily generalized to this situation,
by replacing the space $\Vhm$ in \eqref{eq:MCIPb} with
\begin{equation} \label{eq:VhmD}
\VhmD = \left\{ 
\vec\eta \in \Vhm : \vec\eta = \vec 0 \text{ on } \partial\Gamma^m
\right\},
\end{equation}
and by seeking $\vec X^{m+1}$ such that $\vec X^{m+1} -
\vec\id_{\mid_{\Gamma^m}} \in \VhmD$. More complicated boundary conditions
can be handled in a similar way, for example the case of prescribed contact
angles when the boundary $\partial\Gamma(t)$ is allowed to move along the
boundary of a fixed given domain.
A further related example 
is the evolution of a cluster of hypersurfaces, where the 
boundaries of a number of surfaces are required to remain attached to each
other. Typically triple junction points in the plane, and triple junction lines
in $\bR^3$ are of interest, and these situations can be naturally approximated
with the methods presented in this \arxivyesno{work}{chapter} 
both for mean curvature flow, as
well as for more general geometric evolution equations. 
We refer the interested reader to the series of 
papers {\rm \cite{triplej,triplejMC,triplejANI,clust3d,ejam3d}}.
\end{rem}

\begin{rem}[Implementation] \label{rem:meshgen}
We note that implementing the system \eqref{eq:MCIP} is not difficult.
For $d=2$ an equivalent finite difference formulation can be derived, see
{\rm \S\ref{subsubsec:curvesMC}} below.
For $d=3$ the scheme \eqref{eq:MCIP} can either be implemented directly 
in a high level computing environment like MATLAB,
or within a finite element toolbox that allows the approximation of PDEs on
two-dimensional hypersurfaces in $\bR^3$.
Examples for such toolboxes are 
ALBERTA, {\rm\cite{Alberta}};
Dune, {\rm\cite{dunefem}};
and FELICITY, {\rm\cite{Walker18}}.
An advantage of these toolboxes is that they allow a nearly
dimension-independent implementation of the scheme, and so the cases $d=2$ and
$d=3$ can be treated together.
For the piecewise linear approximation \eqref{eq:MCIP}, the only difference 
to standard problems on flat, stationary domains in $\bR^{d-1}$ is then, 
that the vertices of the initial mesh, on which the PDE is to be approximated, 
have $d$ coordinates, rather than $d-1$, and that the vertices of the mesh are
moved after each time step.

The initial mesh can either be created with the help of a simple, coarse
macro-trian\-gulation, that is then refined and transformed with the help of the
capabilities of the chosen finite element toolbox. Or it can be created
by using sophisticated 3D volume mesh generators, that allow to extract the
surface mesh of the generated 3D volume mesh. Examples for such volume mesh
generators are 
Gmsh, {\rm\cite{GeuzaineR09}};
CGAL, {\rm\cite{RineauY19}};
TIGER, {\rm\cite{Walker13}};
Cleaver, {\rm\cite{Cleaver}};
and NETGEN, {\rm\cite{Schoberl97}}.
\end{rem}

\subsection{Discrete linear systems}\label{subsec:3.3}
We now describe the linear systems arising from (\ref{eq:MCIP}).
We introduce the matrices $\vec N_{\Gamma^m} \in (\bR^d)^{K\times K}$,
$M_{\Gamma^m}, A_{\Gamma^m}\in \bR^{K\times K}$ and $\mat A_{\Gamma^m}\in
(\bR^{d\times d})^{K\times K}$ with entries
\begin{align}
\left[M_{\Gamma^m}\right]_{kl} & 
=\left\langle\phi_k^{\Gamma^m},\phi_l^{\Gamma^m}
\right\rangle_{\Gamma^m}^h,\quad
\left[\vec N_{\Gamma^m}\right]_{kl}
=\int_{\Gamma^{m}} \pi_{\Gamma^m}\left[\phi_k^{\Gamma^m}
\,\phi_l^{\Gamma^m}\right] \vec\nu^m \dH{d-1}\,, \quad\nonumber \\
\left[A_{\Gamma^m}\right]_{kl} & = \left\langle\nabs\,\phi_k^{\Gamma^m},
\nabs\,\phi_l^{\Gamma^m}\right\rangle_{\Gamma^m} 
\label{eq:mat0}
\end{align}
and $\left[\mat A_{\Gamma^m}\right]_{kl} = \left[A_{\Gamma^m}\right]_{kl}\,
\mat\Id$, where we have recalled Definition~\ref{def:Vh}\ref{item:Vh}.
Assembling these matrices is similar to the situation of finite element
methods for domains in $\bR^{d-1}$. In addition to computing the volume
of a simplex, one has to compute its normal and
the surface gradients of the basis functions.
The latter can be computed using the formula
\begin{equation*}
\nabs\,\phi_k^{\Gamma^m} 
= \sum_{i=1}^{d-1} \left(\partial_{\vec\tau_i}\,\phi_k^{\Gamma^m}\right)
\vec\tau_i\,,
\end{equation*}
where $\{\vec\tau_1,\ldots,\vec\tau_{d-1}\}$ is an orthonormal basis of the
tangent space to the simplex, recall 
Definition~\ref{def:2.5}\ref{item:def2.5ii}.
We can then formulate (\ref{eq:MCIP}) as: Find 
\arxivyesno{$(\delta\vec X^{m+1},\kappa^{m+1}) \in (\bR^d)^K \times \bR^K$}
{$(\delta\vec X^{m+1},$ $\kappa^{m+1}) \in (\bR^d)^K \times \bR^K$}
such that
\begin{equation}
\begin{pmatrix} \ttau_m\,M_{\Gamma^m} & -\vec N^\transT_{\Gamma^m} \\ 
\vec N_{\Gamma^m} & \mat A_{\Gamma^m}
\end{pmatrix}\begin{pmatrix} \kappa^{m+1} \\ \delta\vec X^{m+1}\end{pmatrix}
= \begin{pmatrix} 0 \\ -\mat A_{\Gamma^m}\,\vec X^m \end{pmatrix},
\label{eq:MCvec}
\end{equation}
where, with the obvious abuse of notation, $\delta\vec X^{m+1}
=(\delta\vec X^{m+1}_1,\ldots,\delta\vec X^{m+1}_K)^\transT$, 
$\kappa^{m+1}=(\kappa^{m+1}_1,\ldots,\kappa^{m+1}_K)^\transT$,
and $\vec X^m =(\vec X^m_1,\ldots,\vec X^m_K)^\transT$
are the vectors of coefficients with respect to the standard
basis for $\vec X^{m+1}- \vec\id_{\mid_{\Gamma^m}}$, $\kappa^{m+1}$
and $\vec\id_{\mid_{\Gamma^m}}$, respectively.
In the following section we will show that the above system, under very mild
assumptions, is invertible. Hence it can be solved, for example, 
with a sparse direct solution method like UMFPACK, see \cite{Davis04}, 
in an efficient way.
It is also possible to solve the system iteratively, by first using a Schur
complement approach to (\ref{eq:MCvec}), in order to eliminate $\kappa^{m+1}$, 
and then use a (precondioned) conjugate gradient solver, 
see e.g.\ \cite{gflows3d} for details. In fact, the Schur complement 
approach essentially boils down to the system (\ref{eq:elimkappa}), below, 
which we derive next.  

It is also possible to rewrite the system \eqref{eq:MCIP} 
as an equation for $\vec X^{m+1}$ as the only unknown. 
To this end, let $\vec\omega^m \in \Vhm$ be the vertex normal to $\Gamma^m$,
recall Definition~\ref{def:omegah}.
Then it follows from \eqref{eq:omegahnuh} that
we can compute $\kappa^{m+1}$ from \eqref{eq:MCIPa} via
\begin{equation*}
\kappa^{m+1} = \frac1{\ttau_m}\,\pi_{\Gamma^m} \left[ \left(\vec X^{m+1} - 
\vec\id_{\mid_{\Gamma^m}}\right) .\, \vec\omega^m\right] , 
\end{equation*}
recall Definition~\ref{def:Vh}\ref{item:Vh}.
Hence we can rewrite \eqref{eq:MCIP} as:
Find $\vec X^{m+1} \in \Vhm$ such that
\begin{equation}
\left\langle \frac{\vec X^{m+1}-\vec\id}{\ttau_m}\,.\,
\vec\omega^m, \vec{\eta}\,.\,\vec\omega^m
\right\rangle_{\Gamma^m}^h +
\left\langle \nabs\,\vec X^{m+1},\nabs\,\vec{\eta}\right\rangle_{\Gamma^m}
= 0
\quad\forall\ \vec{\eta} \in \Vhm\,.
\label{eq:elimkappa}
\end{equation}

\subsubsection{Curves in the plane} \label{subsubsec:curvesMC}
The system \eqref{eq:MCIP} is particularly simple in the case of closed curves.
On recalling Definition~\ref{def:VhI}, we can reformulate it as follows.

Let $\vec X^0 \in \VhI$ be such that $\Gamma^0 = \vec X^0(\bI)$ is a polygonal
approximation of $\Gamma(0)$. Then, for $m=0,\ldots,M-1$,  
find $(\vec X^{m+1},\kappa^{m+1}) \in \VhI\times \WhI$ such that
\begin{subequations} \label{eq:MCIP2d}
\begin{align}
& \left\langle \frac{\vec X^{m+1}-\vec X^m}{\ttau_m}, 
\chi\,\vec\nu^m\circ\vec X^m\,
|\vec X^m_\rho| \right\rangle^h_{\bI}
- \left\langle \kappa^{m+1}, \chi \,|\vec X^m_\rho| 
\right\rangle_{\bI}^h = 0
\quad\forall\ \chi \in \WhI\,, \label{eq:MCIP2da} \\
& \left\langle \kappa^{m+1}\,\vec\nu^m \circ \vec X^m, 
\vec\eta\,|\vec X^m_\rho| \right\rangle^h_{\bI}
+ \left\langle \vec X^{m+1}_\rho, \vec\eta_\rho\,|\vec X^m_\rho|^{-1} 
\right\rangle_{\bI} = 0
\quad\forall\ \vec\eta \in \VhI \label{eq:MCIP2db}
\end{align}
\end{subequations}
and set $\Gamma^{m+1} = \vec X^{m+1}(\bI)$.

It is now straightforward to rewrite \eqref{eq:MCIP2d} as a finite 
difference scheme. Let $\kappa_j^{m+1}$, $\vec X_j^{m+1}$, $\vec X_j^{m}$
and $\vec\omega^m_j$ be the values of 
$\kappa^{m+1}$, $\vec X^{m+1}$, $\vec X^{m}$ and $\vec\omega^m \circ \vec X^m$
at the node $q_j$, for $j=0,\ldots,J+1$. 
Let $\vec h^m_j = \vec X^m_j -\vec X^m_{j-1}$, $j=1,\ldots,J+1$.
Then, on recalling \eqref{eq:omegah2d}, we obtain the weighted vertex normals
\begin{equation}
\vec\omega^m_j = - \frac{\left(\vec X^m_{j+1} - \vec X^m_{j-1} \right)^\perp}
{|\vec h^m_{j}| + |\vec h^m_{j+1}|}
\qquad j = 1,\ldots,J\,.
\label{eq:omega2d}
\end{equation}
Equation (\ref{eq:MCIP2da}) can hence be 
rewritten in the following finite difference form
\begin{subequations} \label{eq:FD}
\begin{equation}
 \frac1{\ttau_m} \left(\vec X^{m+1}_j -\vec X^m_j\right) .\, \vec\omega^m_j
= \kappa^{m+1}_j\,,\quad j = 1,\ldots,J\,.
\label{eq:FD1}
\end{equation}
Testing (\ref{eq:MCIP2db}) with the standard basis functions 
gives the finite difference type equations
\begin{equation} 
 \kappa^{m+1}_j\,\vec\omega^m_j =
\frac2{|\vec h^m_{j}|+|\vec h^m_{j+1}|} 
\left( \frac{ \vec X^{m+1}_{j+1} -
\vec X^{m+1}_j}{|\vec h^m_{j+1}| } -
\frac{\vec X^{m+1}_{j} - \vec X^{m+1}_{j-1}}{|\vec h^m_{j}|}\right)
\quad j = 1,\ldots,J\,.
\label{eq:FD2}
\end{equation}
\end{subequations}
Of course, it is now also possible to use (\ref{eq:FD1}) in order to 
reduce (\ref{eq:FD2}) to an equation for the nodal values of $\vec X^{m+1}$,
in order to obtain a finite difference analogue of \eqref{eq:elimkappa}.

\subsection{Existence and uniqueness}\label{subsec:3.4}
In order to show existence and uniqueness of solutions to \eqref{eq:MCIP},
we recall Assumption~\ref{ass:A}.

\begin{thm}\label{thm:uniqueMC}
Let $\Gamma^m$ satisfy {\rm Assumption~\ref{ass:A}\ref{item:assA1}}.
Then there exists a unique solution 
$(\vec X^{m+1},\kappa^{m+1}) \in \Vhm\times \Whm$
to the system \eqref{eq:MCIP}.
\end{thm}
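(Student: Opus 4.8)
The plan is to use that \eqref{eq:MCIP} is a square linear system, i.e.\ the number of scalar unknowns (the nodal values of $\vec X^{m+1}\in\Vhm$ and $\kappa^{m+1}\in\Whm$) equals the number of scalar equations obtained by testing \eqref{eq:MCIPa} against all $\chi\in\Whm$ and \eqref{eq:MCIPb} against all $\vec\eta\in\Vhm$. Hence existence follows as soon as uniqueness is established, exactly as in the proof of Lemma~\ref{lem:exXk}, and it suffices to prove that the associated homogeneous problem has only the trivial solution. So I would let $(\vec X_0,\kappa_0)\in\Vhm\times\Whm$ solve the homogeneous analogue of \eqref{eq:MCIPa}, \eqref{eq:MCIPb} (drop $\vec\id$ and set the right-hand sides to zero) and aim to conclude $\vec X_0=\vec 0$ and $\kappa_0=0$.

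The heart of the argument is an energy estimate. First I would test the homogeneous form of \eqref{eq:MCIPa} with $\chi=\kappa_0\in\Whm$, which gives $\ttau_m^{-1}\langle\vec X_0,\kappa_0\,\vec\nu^m\rangle_{\Gamma^m}^h = (\left|\kappa_0\right|_{\Gamma^m}^h)^2$. Next I would test the homogeneous form of \eqref{eq:MCIPb} with $\vec\eta=\vec X_0\in\Vhm$, which gives $\langle\kappa_0\,\vec\nu^m,\vec X_0\rangle_{\Gamma^m}^h = -\left|\nabs\,\vec X_0\right|_{\Gamma^m}^2$. Since the mass-lumped pairing is symmetric, the two mixed terms agree, and eliminating them yields $\ttau_m\,(\left|\kappa_0\right|_{\Gamma^m}^h)^2 + \left|\nabs\,\vec X_0\right|_{\Gamma^m}^2 = 0$. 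As $\ttau_m>0$ and both terms are nonnegative, each of them must vanish. This is exactly where the time step is used: it supplies the coercivity that lets us conclude $\kappa_0=0$ from Assumption~\ref{ass:A}\ref{item:assA1} alone, whereas in the time-step-free situation of Lemma~\ref{lem:exXk} one additionally needed Assumption~\ref{ass:A}\ref{item:assA2}.

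From $\left|\kappa_0\right|_{\Gamma^m}^h=0$ I read off $\kappa_0=0$, since \eqref{eq:ip0norm} defines a genuine norm on $\Whm$; and from $\nabs\,\vec X_0=\vec 0$ I conclude that $\vec X_0$ is a constant vector $\vec X^c$, as in the proof of Lemma~\ref{lem:exXk}. It then remains to show $\vec X^c=\vec 0$. Substituting $\kappa_0=0$ back into the homogeneous \eqref{eq:MCIPa} gives $\langle\vec X^c,\chi\,\vec\nu^m\rangle_{\Gamma^m}^h=0$ for all $\chi\in\Whm$. Applying \eqref{eq:intnuh} with the admissible test vector $\chi\,\vec X^c\in\Vhm$, this rewrites as $\vec X^c\,.\,\int_{\Gamma^m}\chi\,\vec\nu^m\dH{d-1}=0$ for all $\chi\in\Whm$. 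By Remark~\ref{rem:assA}, Assumption~\ref{ass:A}\ref{item:assA1} is precisely the statement that the vectors $\int_{\Gamma^m}\chi\,\vec\nu^m\dH{d-1}$ span $\bR^d$, so $\vec X^c$ is orthogonal to all of $\bR^d$, forcing $\vec X^c=\vec 0$.

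I do not anticipate a genuine obstacle: the whole proof is a coercivity/energy estimate combined with the nondegeneracy encoded in Assumption~\ref{ass:A}\ref{item:assA1}. The only step needing a little care is the passage from the mass-lumped pairing $\langle\vec X^c,\chi\,\vec\nu^m\rangle_{\Gamma^m}^h$ to the exact surface integral against $\vec\nu^m$, which hinges on \eqref{eq:intnuh} and on the fact that $\chi\,\vec X^c$ is itself a member of $\Vhm$. The conceptual message is that including the factor $\ttau_m$ relaxes the hypotheses needed for well-posedness from all of Assumption~\ref{ass:A} down to part~\ref{item:assA1} only.
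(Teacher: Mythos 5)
Your proof is correct and follows essentially the same route as the paper's: reduce existence to uniqueness of the square linear system, derive the energy identity $\left|\nabs\,\vec X_0\right|_{\Gamma^m}^2 + \ttau_m\left(\left|\kappa_0\right|_{\Gamma^m}^h\right)^2 = 0$ by testing the homogeneous system with $\kappa_0$ and $\vec X_0$, conclude $\kappa_0=0$ and $\vec X_0$ constant, and then use \eqref{eq:intnuh} together with Assumption~\ref{ass:A}\ref{item:assA1} (via Remark~\ref{rem:assA}) to force the constant vector to vanish. Your side remark is also accurate: the $\ttau_m$-term supplies the coercivity in $\kappa$ that makes Assumption~\ref{ass:A}\ref{item:assA2} unnecessary here, in contrast to the situation of Lemma~\ref{lem:exXk}.
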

\begin{proof}
Similarly to the proof of Lemma~\ref{lem:exXk}, existence follows from
uniqueness, and so we 
consider the homogeneous system
\begin{subequations}
\begin{align}
\left\langle \vec X, \chi\,\vec\nu^m\right\rangle^h_{\Gamma^m}
- \ttau_m\left\langle \kappa, \chi \right\rangle_{\Gamma^m}^h & = 0
\quad\forall\ \chi \in \Whm\,, \label{u:eq:MCIPa} \\
\left\langle \kappa\,\vec\nu^m, \vec\eta\right\rangle^h_{\Gamma^m}
+ \left\langle \nabs\,\vec X, \nabs\,\vec\eta \right\rangle_{\Gamma^m} & = 0
\quad\forall\ \vec\eta \in \Vhm\,, \label{u:eq:MCIPb}
\end{align}
\end{subequations}
for unknowns $(\vec X,\kappa) \in \Vhm\times \Whm$.
Choosing $\chi = \kappa\in \Whm$ in (\ref{u:eq:MCIPa}) and
$\vec\eta = \vec X\in\Vhm$ in (\ref{u:eq:MCIPb}) yields that
\begin{equation} \label{eq:unique0}
\left|\nabs\,\vec X\right|_{\Gamma^m}^2 
+\ttau_m\left(\left|\kappa\right|_{\Gamma^m}^h\right)^2 =0\,,
\end{equation}
where we have recalled \eqref{eq:ip0norm}. 
It follows from (\ref{eq:unique0}) that $\kappa = 0$ and
$\vec X = \vec X^c$ on $\Gamma^m$, for $\vec X^c\in\bR^d$. 
Together with \eqref{u:eq:MCIPa} this implies,
on recalling \eqref{eq:intnuh}, that
\begin{equation}
\left\langle\vec X^c, \chi\,\vec\nu^m\right\rangle_{\Gamma^m} = 0 
\quad\forall\ \chi \in \Whm\,. \label{eq:unique1}
\end{equation}
As in the proof of Lemma~\ref{lem:exXk} it follows from (\ref{eq:unique1}) 
and Assumption \ref{ass:A}\ref{item:assA1} that $\vec X^c = \vec0$.
Hence we have shown that there exists a unique solution
$(\vec X^{m+1},\kappa^{m+1}) \in \Vhm\times \Whm$ to (\ref{eq:MCIP}).
\end{proof}

\subsection{Stability}\label{subsec:3.5}
For $d=2$ and $d=3$ it can be shown that the scheme 
(\ref{eq:MCIP}) is unconditionally stable.

\begin{thm} \label{thm:stabMC}
Let $d=2$ or $d=3$. 
Let $(\vec X^{m+1},\kappa^{m+1}) \in \Vhm \times \Whm$ be a solution to
\eqref{eq:MCIP}. Then we have that
\begin{equation*} 
\left|\Gamma^{m+1}\right| 
+ \ttau_m\left(\left|\kappa^{m+1}\right|^h_{\Gamma^m}\right)^2 
\leq \left|\Gamma^m\right|\,.
\end{equation*}
\end{thm}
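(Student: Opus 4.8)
The plan is to test the two equations of the scheme \eqref{eq:MCIP} against the natural energy choices and then feed the resulting Dirichlet-type term into the discrete stability bound of Lemma~\ref{lem:stab2d3d}. First I would set $\chi = \kappa^{m+1} \in \Whm$ in \eqref{eq:MCIPa}, which gives
\begin{equation*}
\left\langle \frac{\vec X^{m+1}-\vec\id}{\ttau_m}, \kappa^{m+1}\,\vec\nu^m\right\rangle^h_{\Gamma^m} = \left\langle \kappa^{m+1}, \kappa^{m+1} \right\rangle_{\Gamma^m}^h = \left(\left|\kappa^{m+1}\right|^h_{\Gamma^m}\right)^2,
\end{equation*}
on recalling \eqref{eq:ip0norm}.

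Next I would choose $\vec\eta = \vec X^{m+1} - \vec\id \in \Vhm$ in \eqref{eq:MCIPb}. The key observation is that the first term of \eqref{eq:MCIPb} is then exactly $\ttau_m$ times the left-hand side of the identity just displayed, so after substitution it equals $\ttau_m\,(\left|\kappa^{m+1}\right|^h_{\Gamma^m})^2$. Hence \eqref{eq:MCIPb} reduces to
\begin{equation*}
\ttau_m \left(\left|\kappa^{m+1}\right|^h_{\Gamma^m}\right)^2 + \left\langle \nabs\,\vec X^{m+1}, \nabs\,(\vec X^{m+1}-\vec\id) \right\rangle_{\Gamma^m} = 0.
\end{equation*}

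The final ingredient is Lemma~\ref{lem:stab2d3d}, applied with $\vec X = \vec X^{m+1}$ on the $n$-dimensional polyhedral surface $\Gamma^m$, where $n = d-1 \in \{1,2\}$ precisely because the hypothesis restricts us to $d \in \{2,3\}$. Since $\Gamma^{m+1} = \vec X^{m+1}(\Gamma^m)$, so that $\mathcal{H}^n(\vec X^{m+1}(\Gamma^m)) = |\Gamma^{m+1}|$ and $\mathcal{H}^n(\Gamma^m) = |\Gamma^m|$, and since the remaining term on the right-hand side of the lemma is nonnegative in both the $n=1$ and $n=2$ cases, I would deduce
\begin{equation*}
\left\langle \nabs\,\vec X^{m+1}, \nabs\,(\vec X^{m+1}-\vec\id) \right\rangle_{\Gamma^m} \geq |\Gamma^{m+1}| - |\Gamma^m|.
\end{equation*}
Inserting this into the previous display and rearranging immediately yields the asserted bound $|\Gamma^{m+1}| + \ttau_m (\left|\kappa^{m+1}\right|^h_{\Gamma^m})^2 \leq |\Gamma^m|$.

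The main obstacle, and the reason the theorem is confined to $d=2,3$, is entirely hidden in Lemma~\ref{lem:stab2d3d}: the inequality $\langle \nabs\,\vec X, \nabs\,(\vec X-\vec\id)\rangle_{\Gamma^m} \geq |\vec X(\Gamma^m)| - |\Gamma^m|$ rests on the quadratic scaling of the length element (for $n=1$) or on the algebraic identity $\tfrac12(|\nabs\vec X|^2 - |\nabs\vec\id|^2 + |\nabs(\vec X-\vec\id)|^2)$ combined with Lemma~\ref{lem:stab3d} (for $n=2$), and it genuinely fails for $n>2$ by the scaling counterexample recorded in the preceding remark. Given that lemma, the remaining steps above are a routine energy estimate requiring no further geometric input.
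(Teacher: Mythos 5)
Your proof is correct and follows essentially the same route as the paper: testing \eqref{eq:MCIPa} with $\kappa^{m+1}$ and \eqref{eq:MCIPb} with (a scalar multiple of) $\vec X^{m+1}-\vec\id_{\mid_{\Gamma^m}}$ to obtain $\ttau_m\bigl(\left|\kappa^{m+1}\right|^h_{\Gamma^m}\bigr)^2 + \left\langle\nabs\,\vec X^{m+1}, \nabs\,(\vec X^{m+1}-\vec\id)\right\rangle_{\Gamma^m} = 0$, and then invoking Lemma~\ref{lem:stab2d3d}. The only difference from the paper's proof is the harmless rescaling of the test function $\vec\eta$ by $\ttau_m$, and your closing remark on why $d\in\{2,3\}$ is essential is also consistent with the paper's discussion.
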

\begin{proof}
Choosing  $\chi = \kappa^{m+1}\in \Whm$ in (\ref{eq:MCIPa}) and
$\vec\eta = \frac1{\ttau_m}\,(\vec X^{m+1}-\vec\id_{\mid_{\Gamma^m}})\in\Vhm$
in (\ref{eq:MCIPb}) yields that
\begin{equation} \label{eq:stabMC0}
\left\langle\nabs\,\vec X^{m+1}, 
\nabs\left(\vec X^{m+1}-\vec\id\right)\right\rangle_{\Gamma^m}
+\ttau_m\left(\left|\kappa^{m+1}\right|^h_{\Gamma^m}\right)^2 = 0\,.
\end{equation}
In addition, we have from Lemma~\ref{lem:stab2d3d} that
\begin{equation} \label{eq:Xidstab}
\left\langle\nabs\,\vec X^{m+1}, \nabs\left(\vec X^{m+1}-\vec\id\right)
\right\rangle_{\Gamma^m}
\geq \left|\Gamma^{m+1}\right| - \left|\Gamma^m\right|\,.
\end{equation}
Combining (\ref{eq:stabMC0}) and \eqref{eq:Xidstab} then yields the claim. 
\end{proof}

\begin{rem} \label{rem:stabMC}
Clearly, if $\{(\vec X^{m+1},\kappa^{m+1})\}_{m=0}^{M-1}$ denote solutions to
\eqref{eq:MCIP} for 
\arxivyesno{$m=0,\ldots,$ $M-1$,}{$m=0,\ldots,M-1$,}
then the above theorem immediately yields the energy bound
\[
\left|\Gamma^k\right| + \sum_{m=0}^{k-1} 
\ttau_m\left(\left|\kappa^{m+1}\right|^h_{\Gamma^m}\right)^2 
\leq \left|\Gamma^0\right| .
\]
for $k=1,\ldots,M$.
\end{rem}
 
\subsection{Equipartition property} \label{subsec:equi}
Many numerical methods that use polyhedral surfaces to approximate
an evolving hypersurface suffer from the fact that the meshes
will have very inhomogeneous properties during the evolution.
For example, mesh points can come very close to each other during the 
evolution, or some angles in the mesh can become rather small or rather 
large. This leads to very unstable and inaccurate situations. 
In addition, the condition number of the linear systems to be solved at each time level will become very large. In some situations,
the computations cannot even be continued after some time.
It is an important property of the approach discussed in this section so far, 
that the mesh typically behaves very well. 
We will discuss this below for mean curvature flow, 
which is the simplest possible situation. 
However, similar results hold true for the extension of this scheme to 
more complicated flows discussed in the remaining sections of this
\arxivyesno{work}{chapter}.

The behavior of the mesh is best explained with the help of a semidiscrete
version of the scheme (\ref{eq:MCIP}), 
where we recall Definition~\ref{def:GhT}.
Given the closed polyhedral hypersurface $\Gamma^h(0)$,
find an evolving polyhedral hypersurface $\GhT$,
with induced velocity $\vec{\mathcal{V}}^h \in \VhGhT$,
and $\kappa^h \in \WhGhT$,
i.e.\ $\vec{\mathcal{V}}^h(\cdot, t) \in \Vht$ and $\kappa^h(\cdot,t)\in \Wht$
for all $t \in [0,T]$, such that, for all $t \in (0,T]$,
\begin{subequations} \label{eq:semidis}
\begin{align}
\left\langle \vec{\mathcal{V}}^h, \chi\,\vec\nu^h
\right\rangle^h_{\Gamma^h(t)} -
\left\langle \kappa^h , \chi\right\rangle_{\Gamma^h(t)} &= 0 \qquad
\forall\ \chi \in \Wht\,,\label{eq:semidisa}\\
\left\langle\kappa^h\,\vec\nu^h,\vec\eta\right\rangle^h_{\Gamma^h(t)} 
+ \left\langle \nabs\,\vec\id,
\nabs\,\vec\eta\right\rangle_{\Gamma^h(t)} &= 0\qquad
\forall\ \vec\eta\in \Vht\,.
\label{eq:semidisb}
\end{align}
\end{subequations}

We can prove that the scheme \eqref{eq:semidis} is stable,
i.e.\ a semidiscrete analogue of Theorem~\ref{thm:stabMC} holds, and that
its tangential motion ensures that any solution satisfies the property
in Definition~\ref{def:conformal}.

\begin{thm}\label{thm:semidis}
Let $(\GhT, \kappa^h)$ be a solution of \eqref{eq:semidis}. 
\begin{enumerate}
\item \label{item:semidisstab}
It holds that
\[
\ddt\left|\Gamma^h(t)\right| + 
\left(\left|\kappa^h\right|^h_{\Gamma^h(t)}\right)^2
= 0\,.
\]
\item \label{item:semidisTM}
For any $t \in (0,T]$, it holds that
$\Gamma^h(t)$ is a conformal polyhedral surface.
In particular, for $d=2$, any two neighbouring elements of the curve 
$\Gamma^h(t)$ either have equal length, or they are parallel.
\end{enumerate}
\end{thm}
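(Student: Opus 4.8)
The plan is to treat the two parts separately. Part~\ref{item:semidisstab} will follow from an energy identity obtained by differentiating the area with the discrete transport theorem and testing the two scheme equations against each other, while part~\ref{item:semidisTM} is essentially a restatement of the defining property of a conformal polyhedral surface.

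For part~\ref{item:semidisstab}, I would first differentiate the area. Since the constant function $1$ lies in $\WhTGhT$ with $\matpartxh\,1 = 0$, Theorem~\ref{thm:disctrans}\ref{item:disctrans} applied with $\eta=\zeta=1$ gives $\ddt\left|\Gamma^h(t)\right| = \left\langle 1, \nabs\,.\,\vec{\mathcal{V}}^h\right\rangle_{\Gamma^h(t)}$. As $\vec{\mathcal{V}}^h$ is affine on each flat simplex, Lemma~\ref{lem:nabsid}\ref{item:nabsidnabsf} yields $\nabs\,.\,\vec{\mathcal{V}}^h = \nabs\,\vec\id : \nabs\,\vec{\mathcal{V}}^h$ almost everywhere, so that
\[
\ddt\left|\Gamma^h(t)\right|
= \left\langle \nabs\,\vec\id, \nabs\,\vec{\mathcal{V}}^h\right\rangle_{\Gamma^h(t)}.
\]
I would then test the scheme against itself: since $\vec{\mathcal{V}}^h(\cdot,t)\in\Vht$, choosing $\vec\eta=\vec{\mathcal{V}}^h$ in \eqref{eq:semidisb} gives $\left\langle \nabs\,\vec\id, \nabs\,\vec{\mathcal{V}}^h\right\rangle_{\Gamma^h(t)} = -\left\langle \kappa^h\,\vec\nu^h, \vec{\mathcal{V}}^h\right\rangle^h_{\Gamma^h(t)}$, and since $\kappa^h(\cdot,t)\in\Wht$, choosing $\chi=\kappa^h$ in \eqref{eq:semidisa} gives $\left\langle \vec{\mathcal{V}}^h, \kappa^h\,\vec\nu^h\right\rangle^h_{\Gamma^h(t)} = \left(\left|\kappa^h\right|^h_{\Gamma^h(t)}\right)^2$. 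Combining the three identities produces the asserted $\ddt\left|\Gamma^h(t)\right| = -\left(\left|\kappa^h\right|^h_{\Gamma^h(t)}\right)^2$.

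For part~\ref{item:semidisTM}, I observe that for each fixed $t\in(0,T]$ equation \eqref{eq:semidisb}, after moving the gradient term to the other side, is precisely the identity \eqref{eq:conformal} defining a conformal polyhedral hypersurface in Definition~\ref{def:conformal}, with witness $\kappa^h(\cdot,t)\in\Wht$; hence $\Gamma^h(t)$ is conformal. The concluding claim for $d=2$ is then immediate from Theorem~\ref{thm:equid}. I do not expect a real obstacle here: the computation is short, and the only points needing care are the admissibility of the test functions $\vec{\mathcal{V}}^h$ and $\kappa^h$ in their finite element spaces and keeping the mass-lumping consistent, so that testing \eqref{eq:semidisa} with $\kappa^h$ delivers exactly the mass-lumped quantity $\left(\left|\kappa^h\right|^h_{\Gamma^h(t)}\right)^2$ appearing in the statement.
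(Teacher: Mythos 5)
Your proposal is correct and follows essentially the same route as the paper: part~\ref{item:semidisstab} is obtained by the identical chain — the discrete transport theorem (Theorem~\ref{thm:disctrans}), Lemma~\ref{lem:nabsid}\ref{item:nabsidnabsf}, and testing \eqref{eq:semidisb} with $\vec\eta = \vec{\mathcal{V}}^h$ and \eqref{eq:semidisa} with $\chi = \kappa^h$ — while part~\ref{item:semidisTM} is read off from Definition~\ref{def:conformal} and Theorem~\ref{thm:equid} exactly as in the paper. The only difference is that you spell out the intermediate identities which the paper compresses into a single display.
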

\begin{proof}
\ref{item:semidisstab}
Choosing $\chi = \kappa^h(\cdot,t)\in \Wht$ in \eqref{eq:semidisa} and
$\vec\eta = \vec{\mathcal{V}}^h(\cdot, t) \in \Vht$ in \eqref{eq:semidisb} 
gives, on recalling Theorem~\ref{thm:disctrans} and
Lemma~\ref{lem:nabsid}\ref{item:nabsidnabsf}, that
\[
\ddt\left|\Gamma^h(t)\right| = \left\langle 1, \nabs\,.\,\vec{\mathcal{V}}^h
\right\rangle_{\Gamma^h(t)} =
 \left\langle \nabs\,\vec\id, \nabs\,\vec{\mathcal{V}}^h 
\right\rangle_{\Gamma^h(t)} 
= - \left\langle \kappa^h,\kappa^h \right\rangle_{\Gamma^h(t)}^h ,
\]
which is the claim. \\
\ref{item:semidisTM}
This follows directly from Definition~\ref{def:conformal} and
Theorem~\ref{thm:equid}. 
\end{proof}

\begin{rem} \label{rem:semidisTM}
In general it is not clear whether solutions to 
\eqref{eq:semidis} exist. For example, for $d\geq3$ the topology of the
triangulation fixed by $\Gamma^h(0)$ may be such that no solutions satisfying
both \eqref{eq:semidisa} and \eqref{eq:semidisb} exist. In those situations,
the fully discrete scheme \eqref{eq:MCIP}, even for very small time step sizes
$\ttau_m$, may exhibit mesh defects that would
not be expected for surfaces satisfying {\rm Definition~\ref{def:conformal}},
recall {\rm Remark~\ref{rem:conformal}}.
An example for such defects was recently presented in
{\rm \citet[Fig.~27]{ElliottF17}}.
\end{rem}

In the case of curves, i.e.\ $d=2$, the scheme \eqref{eq:semidis} can be
easily interpreted as a differential-algebraic system of equations.
To this end, we adopt the notation from \S\ref{subsubsec:curvesMC} for
$\vec X^h(t) \in \VhI$, and let
$\vec h_{j}(t) = \vec X^h_j(t) - \vec X^h_{j-1}(t)$, $j=1,\ldots,J+1$.
Then we obtain the obvious analogue of \eqref{eq:omega2d} 
for the vertex normals $\vec\omega^h_j(t)$ and,
similarly to (\ref{eq:FD}), the
system \eqref{eq:semidis} can then be written as
the following differential-algebraic system of equations (DAEs).
Given $\vec X^h(0) \in \VhI$, for $t \in (0,T]$ find
$\vec X^h(t) \in \VhI$ and $\kappa^h(t) \in \WhI$ such that
\begin{subequations} 
\begin{align}
& - \left( \ddt\,\vec X_j^h \right) . \left( 
\frac{\left( \vec X_{j+1}^h - \vec X_{j-1}^h \right)^\perp}
{\left|\vec h_{j}\right| + \left|\vec h_{j+1}\right| }\right) = \kappa_j^h \,,
\label{eq:FD12} \\
& -\kappa_j^h \,
\frac{\left( \vec X_{j+1}^h - \vec X_{j-1}^h \right)^\perp}
{\left|\vec h_{j}\right| + \left|\vec h_{j+1}\right|} = 
{\frac2 {\left|\vec h_{j}\right| +\left|\vec h_{j+1}\right|}} 
\left( \frac{ \vec X_{j+1}^h -
\vec X_j^h}{\left|\vec h_{j+1}\right| } -
\frac{\vec X_{j}^h -\vec X_{j-1}^h}{\left|\vec h_{j}\right| }\right) ,
\label{eq:FD22}
\end{align}
\end{subequations}
for $j = 1,\ldots,J$.

\begin{rem}
Equation \eqref{eq:FD12} only specifies one direction of 
$\ddt\,\vec X_j^h$, while the evolution of the remaining direction is
enforced via the algebraic conditions in \eqref{eq:FD22}.
Hence the overall system is
a highly nonlinear and degenerate differential-algebraic system of
equations.
The defining equations become singular where vertices coalesce. 
However, as is shown in {\rm Theorem~\ref{thm:semidis}\ref{item:semidisTM}}, 
in regions where the curve is not straight, this does not happen.
\end{rem}

So far we have discussed the linear fully discrete scheme \eqref{eq:MCIP},
which is obtained as a possible time discretization of \eqref{eq:semidis}.
The fully discrete scheme \eqref{eq:MCIP} will not inherit the conformality
property of Theorem~\ref{thm:semidis}\ref{item:semidisTM}, but solutions to
\eqref{eq:MCIP} still exhibit a tangential motion that leads to a good
distribution of vertices.
We now discuss a fully implicit time discretization of \eqref{eq:semidis}, 
where the solutions are conformal polyhedral surfaces at every time step.
These ideas were first presented in \cite{fdfi},
in the case of curves.

Let the closed polyhedral hypersurface $\Gamma^0$ be an approximation of
$\Gamma(0)$. Then, for $m=0,\ldots, M-1$, 
find a polyhedral hypersurface $\Gamma^{m+1}$,
and $(\vec X^{m},$ $\kappa^{m+1}) \in \Vhmp\times\Whmp$ 
with $\Gamma^m = \vec X^m(\Gamma^{m+1})$, such that
\begin{subequations} \label{eq:MCfdfi}
\begin{align}
\left\langle \frac{\vec\id-\vec X^m}{\ttau_m}, 
\chi\,\vec\nu^{m+1}\right\rangle^h_{\Gamma^{m+1}}
- \left\langle \kappa^{m+1}, \chi \right\rangle_{\Gamma^{m+1}}^h & = 0
\quad\forall\ \chi \in \Whmp\,, \label{eq:MCfdfia} \\
\left\langle \kappa^{m+1}\,\vec\nu^{m+1}, \vec\eta\right\rangle^h_{\Gamma^{m+1}}
+ \left\langle \nabs\,\vec\id, \nabs\,\vec\eta \right\rangle_{\Gamma^{m+1}} 
& = 0 \quad\forall\ \vec\eta \in \Vhmp\,. \label{eq:MCfdfib}
\end{align}
\end{subequations}

\begin{thm}\label{thm:fdfiTM}
Let $\Gamma^{m+1}$ and $(\vec X^m, \kappa^{m+1}) \in \Vhmp\times\Whmp$
be a solution to \eqref{eq:MCfdfi}. 
\begin{enumerate}
\item \label{item:fdfiTM}
Then it holds that
$\Gamma^{m+1}$ is a conformal polyhedral surface.
In particular, for $d=2$, any two neighbouring elements of the curve 
$\Gamma^{m+1}$ either have equal length, or they are parallel.
\item \label{item:fdfistab}
In the case $d=2$ it holds that
\begin{equation}
\left|\Gamma^{m+1}\right| + 
\ttau_m\left(\left|\kappa^{m+1}\right|^h_{\Gamma^{m+1}}\right)^2 
\leq \left|\Gamma^m\right|\,.
\label{eq:stabMCfdfi}
\end{equation}
\end{enumerate}
\end{thm}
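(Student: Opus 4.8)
For part~\ref{item:fdfiTM}, I would observe that the second equation \eqref{eq:MCfdfib} is, after moving the surface-gradient term to the right-hand side, precisely the defining relation \eqref{eq:conformal} of a conformal polyhedral hypersurface from {\rm Definition~\ref{def:conformal}}, with the roles of $\vec\nu^h$, $\kappa^h$ and $\Gamma^h$ played by $\vec\nu^{m+1}$, $\kappa^{m+1}$ and $\Gamma^{m+1}$. Hence $\Gamma^{m+1}$ is conformal directly by definition, and no further computation is needed for the first assertion. The ``in particular'' statement for $d=2$ then follows immediately from {\rm Theorem~\ref{thm:equid}}, which guarantees that any closed conformal polygonal curve has the property that neighbouring elements are either equal in length or parallel.

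For part~\ref{item:fdfistab}, the plan is the standard energy argument, testing each equation with the partner's natural variable. First I would test \eqref{eq:MCfdfia} with $\chi = \kappa^{m+1} \in \Whmp$, giving
\[
\left\langle \frac{\vec\id - \vec X^m}{\ttau_m}, \kappa^{m+1}\,\vec\nu^{m+1}\right\rangle^h_{\Gamma^{m+1}} = \left(\left|\kappa^{m+1}\right|^h_{\Gamma^{m+1}}\right)^2 .
\]
Next I would test \eqref{eq:MCfdfib} with $\vec\eta = \ttau_m^{-1}(\vec\id - \vec X^m)$, which is admissible since both $\vec\id_{\mid_{\Gamma^{m+1}}}$ and $\vec X^m$ lie in $\Vhmp$. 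By symmetry of the mass-lumped inner product, the curvature term so produced coincides with the left-hand side above, and substituting yields
\[
\ttau_m\left(\left|\kappa^{m+1}\right|^h_{\Gamma^{m+1}}\right)^2 + \left\langle \nabs\,\vec\id, \nabs\,(\vec\id - \vec X^m)\right\rangle_{\Gamma^{m+1}} = 0 .
\]

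The crucial final step is to bound the surface-gradient term from below, and here I would invoke {\rm Lemma~\ref{lem:fdfistab2d}} with $\Gamma^h = \Gamma^{m+1}$ and $\vec X = \vec X^m$, which gives
\[
\left\langle \nabs\,\vec\id, \nabs\,(\vec\id - \vec X^m)\right\rangle_{\Gamma^{m+1}} \geq \mathcal{H}^1(\Gamma^{m+1}) - \mathcal{H}^1(\vec X^m(\Gamma^{m+1})) = \left|\Gamma^{m+1}\right| - \left|\Gamma^m\right| ,
\]
upon recalling $\vec X^m(\Gamma^{m+1}) = \Gamma^m$. Combining the last two displays immediately produces the asserted bound \eqref{eq:stabMCfdfi}.

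The one genuine obstacle, and the reason part~\ref{item:fdfistab} is restricted to $d=2$, is exactly this last inequality: {\rm Lemma~\ref{lem:fdfistab2d}} holds only for polygonal curves, and {\rm Remark~\ref{rem:fdfistab2d}} exhibits an explicit counterexample showing that no analogue with a uniform constant survives for $n$-dimensional polyhedral surfaces with $n>1$. Every other part of the argument — the two test-function choices, the symmetry of $\langle\cdot,\cdot\rangle^h_{\Gamma^{m+1}}$, and the algebraic cancellation — is dimension-independent; it is solely the lower bound on $\left\langle \nabs\,\vec\id, \nabs\,(\vec\id - \vec X^m)\right\rangle_{\Gamma^{m+1}}$ that forces $d=2$.
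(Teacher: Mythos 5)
Your proposal is correct and follows essentially the same route as the paper: part~(i) is read off directly from Definition~\ref{def:conformal} and Theorem~\ref{thm:equid}, and part~(ii) uses exactly the paper's test functions $\chi = \kappa^{m+1}$ in \eqref{eq:MCfdfia} and $\vec\eta = \ttau_m^{-1}(\vec\id_{\mid_{\Gamma^{m+1}}} - \vec X^m)$ in \eqref{eq:MCfdfib}, combined with Lemma~\ref{lem:fdfistab2d}. Your closing observation that the restriction to $d=2$ stems solely from the failure of Lemma~\ref{lem:fdfistab2d} for $n>1$ (Remark~\ref{rem:fdfistab2d}) is also accurate.
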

\begin{proof}
\ref{item:fdfiTM}
This follows directly from Definition~\ref{def:conformal} and
Theorem~\ref{thm:equid}. 
\\
\ref{item:fdfistab}
Choosing $\chi = \kappa^{m+1}\in \Whmp$ in (\ref{eq:MCfdfia}) and
$\vec\eta = \frac1{\ttau_m}\,(\vec\id_{\mid_{\Gamma^{m+1}}} - \vec X^m)
\in\Vhmp$ in (\ref{eq:MCfdfib}) yields that
\begin{equation} \label{eq:stabMCfdfi0}
\left\langle\nabs\,\vec\id, 
\nabs\left(\vec\id-\vec X^m\right)\right\rangle_{\Gamma^{m+1}}
+\ttau_m\left(\left|\kappa^{m+1}\right|^h_{\Gamma^{m+1}}\right)^2 = 0\,.
\end{equation}
In addition, we have from Lemma~\ref{lem:fdfistab2d} that
\begin{equation} \label{eq:fdfiXidstab}
\left\langle\nabs\,\vec\id, \nabs\left(\vec\id-\vec X^m\right)
\right\rangle_{\Gamma^{m+1}} \geq \left|\Gamma^{m+1}\right| - 
\left|\Gamma^m\right|\,.
\end{equation}
Combining (\ref{eq:stabMCfdfi0}) and \eqref{eq:fdfiXidstab} 
then yields the claim.
\end{proof}

\begin{rem} \label{rem:fdfiTM}
Similarly to {\rm Remark~\ref{rem:semidisTM}}, in general it is not clear 
whether solutions to \eqref{eq:MCfdfi} exist. However, for $d=2$ solutions in
general appear to exist, and a small adaptation of \eqref{eq:MCfdfi} 
discussed below yields an unconditionally stable scheme that
equidistributes.
\end{rem}

For the case of curves, $d=2$, and building on \eqref{eq:MCfdfi}, 
in \cite{fdfi} the present authors introduced fully discrete
parametric finite element discretizations for $\mathcal{V} =
\varkappa$ that are unconditionally stable and that intrinsically
equidistribute the vertices at each time level. Using the notation
from \S\ref{subsubsec:curvesMC}, see also Remark~\ref{rem:VhI}, we can
reformulate \eqref{eq:MCfdfi} in the case of curves as follows.

Let $\vec X^0 \in \VhI$ be such that $\Gamma^0 = \vec X^0(\bI)$ is a polygonal
approximation of $\Gamma(0)$. Then, for $m=0,\ldots,M-1$,  
find $\Gamma^{m+1} = \vec X^{m+1}(\bI)$,
with $\vec X^{m+1}\in \VhI$, and $\kappa^{m+1} \in\WhI$ such that
for all $\chi\in \WhI$, $\vec\eta \in \VhI$
\begin{subequations} \label{eq:2425}
\begin{align}\label{eq:24a}
  \left\langle \frac{\vec X^{m+1}-\vec X^m}{\ttau_m} ,
  \chi\left(\vec X^{m+1}_\rho\right)^\perp \right\rangle^h_{\bI} 
+ \left\langle
  \kappa^{m+1},\chi\left|\vec X^{m+1}_\rho\right| \right\rangle^h_{\bI}
  &= 0 
\,,\\
  \left\langle \kappa^{m+1}\left(\vec X^{m+1}_\rho\right)^\perp,\vec\eta
  \right\rangle^h_{\bI} -
  \left\langle \vec X^{m+1}_\rho, \vec\eta_\rho\,
\left|\vec X^{m+1}_\rho\right|^{-1}\right\rangle_{\bI}
  &= 0 
\,.\label{eq:25}
\end{align}
\end{subequations}
We recall from Theorem~\ref{thm:fdfiTM} that solutions to \eqref{eq:2425} 
are equidistributed,
at least where elements of the curve $\Gamma^{m+1}$ are not locally parallel. 
This result can be sharpened to full equidistribution by
considering the following adapted version of \eqref{eq:2425}. 
For $m=0,\ldots,M-1$, 
find $\Gamma^{m+1} = \vec X^{m+1}(\bI)$, with $\vec X^{m+1} \in \VhI$,
and $\kappa^{m+1} \in \WhI$ such that
for all $\chi\in \WhI$, $\vec\eta \in \VhI$
\begin{subequations} \label{eq:21617}
\begin{align}\label{eq:216a}
  \left\langle \frac{\vec X^{m+1} -\vec X^m}{\ttau_m}, \chi
  \left(\vec X^{m+1}_\rho\right)^\perp\right\rangle^h_{\bI} 
+ \left|\Gamma^{m+1}\right| \left\langle
  \kappa^{m+1},\chi\right\rangle^h_{\bI}
  &= 0 
\,,\\
  \left\langle\kappa^{m+1}\left(\vec X^{m+1}_\rho\right)^\perp,
\vec\eta\right\rangle^h_{\bI} -
  |\Gamma^{m+1}|^{-1}\left\langle\vec X^{m+1}_\rho, \vec\eta_\rho\right\rangle_{\bI}
  &= 0
\,.\label{eq:217}
\end{align}
\end{subequations}

\begin{thm} \label{thm:stabfdfi}
Let $(\vec X^{m+1},\kappa^{m+1}) \in \VhI \times \WhI$ 
be a solution of \eqref{eq:21617}. 
Then it holds that
\begin{equation} \label{eq:fdfiTM}
\left|\vec X^{m+1}_\rho\right| = \left|\Gamma^{m+1}\right|
\quad\text{in } I_j\,,\qquad j=1,\ldots,J.
\end{equation}
Moreover, $(\vec X^{m+1},\kappa^{m+1})$ solves
\eqref{eq:2425}, and satisfies the stability estimate
\begin{equation} \label{eq:stabfdfi}
\left|\Gamma^{m+1}\right| + \ttau_m\left|\Gamma^{m+1}\right| 
\left\langle\kappa^{m+1},\kappa^{m+1}
\right\rangle^h_{\bI} \leq \left|\Gamma^m\right|.
\end{equation}
\end{thm}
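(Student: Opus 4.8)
The plan is to prove the three assertions in turn, exploiting the fact that \eqref{eq:21617} differs from \eqref{eq:2425} only by replacing the \emph{local} quantity $|\vec X^{m+1}_\rho|$ by the \emph{global} constant $|\Gamma^{m+1}|$; once the equidistribution \eqref{eq:fdfiTM} is in hand, the remaining two claims follow almost mechanically.

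First I would establish \eqref{eq:fdfiTM}. Mimicking the finite-difference reduction of \S\ref{subsubsec:curvesMC} and the test-function choice in the proof of Theorem~\ref{thm:equid}, I would test \eqref{eq:217} with $\vec\eta \in \VhI$ satisfying $\vec\eta(q_i) = \delta_{ij}\,\vec z$ for an arbitrary $\vec z \in \bR^2$. Writing $\vec h_j = \vec X^{m+1}(q_j) - \vec X^{m+1}(q_{j-1})$, so that $\vec X^{m+1}_\rho = \vec h_j/h$ on $I_j$, a short computation of the mass-lumped term and the stiffness term yields the nodal identity
\[
\tfrac12\,\kappa^{m+1}_j\,(\vec h_j + \vec h_{j+1})^\perp
= |\Gamma^{m+1}|^{-1}\,h^{-1}\,(\vec h_j - \vec h_{j+1})\,.
\]
Taking the inner product with $\vec h_j + \vec h_{j+1}$ annihilates the left-hand side (as $\vec w^\perp\,.\,\vec w = 0$) and leaves $|\vec h_j|^2 = |\vec h_{j+1}|^2$ for every $j$. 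By the periodicity of $\bI$ all edge lengths then coincide, and summing gives $|\vec h_j| = |\Gamma^{m+1}|/J = h\,|\Gamma^{m+1}|$, i.e.\ $|\vec X^{m+1}_\rho| = |\Gamma^{m+1}|$ on each $I_j$, which is \eqref{eq:fdfiTM}.

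Second, that $(\vec X^{m+1},\kappa^{m+1})$ solves \eqref{eq:2425} follows by substituting \eqref{eq:fdfiTM}: since $|\vec X^{m+1}_\rho|$ takes the constant value $|\Gamma^{m+1}|$ on every interval (hence also in both one-sided limits at each node), the factor $|\vec X^{m+1}_\rho|$ in \eqref{eq:24a} and the factor $|\vec X^{m+1}_\rho|^{-1}$ in \eqref{eq:25} may be pulled out as $|\Gamma^{m+1}|$ and $|\Gamma^{m+1}|^{-1}$ respectively, turning \eqref{eq:216a}--\eqref{eq:217} verbatim into \eqref{eq:24a}--\eqref{eq:25}. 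For the stability estimate I would choose $\chi = \kappa^{m+1}$ in \eqref{eq:216a} and $\vec\eta = \vec X^{m+1}-\vec X^m$ in \eqref{eq:217}; using symmetry of the inner products to cancel the common mass-lumped term gives
\[
|\Gamma^{m+1}|^{-1}\left\langle \vec X^{m+1}_\rho, (\vec X^{m+1}-\vec X^m)_\rho\right\rangle_{\bI}
= -\,\ttau_m\,|\Gamma^{m+1}|\left\langle \kappa^{m+1},\kappa^{m+1}\right\rangle^h_{\bI}\,.
\]
Invoking \eqref{eq:fdfiTM} yields $\langle \vec X^{m+1}_\rho, \vec X^{m+1}_\rho\rangle_{\bI} = |\Gamma^{m+1}|^2$, while Cauchy--Schwarz on each interval gives $\langle \vec X^{m+1}_\rho, \vec X^m_\rho\rangle_{\bI} \le |\Gamma^{m+1}|\,|\Gamma^m|$, so the left-hand inner product is bounded below by $|\Gamma^{m+1}|^2 - |\Gamma^{m+1}|\,|\Gamma^m|$. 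Dividing by $|\Gamma^{m+1}|>0$ and rearranging produces exactly \eqref{eq:stabfdfi}.

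I expect the main obstacle to be the careful bookkeeping in the first step: correctly evaluating the mass-lumped product $\langle \kappa^{m+1}(\vec X^{m+1}_\rho)^\perp, \vec\eta\rangle^h_{\bI}$ against the ordinary $L^2$ stiffness term $\langle \vec X^{m+1}_\rho, \vec\eta_\rho\rangle_{\bI}$, keeping track of the two one-sided limits of the jumping quantity $(\vec X^{m+1}_\rho)^\perp$ at each node. Once the nodal identity is assembled correctly, the perpendicularity trick and the substitution arguments are routine.
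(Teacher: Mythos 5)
Your proof is correct, and its first two parts coincide with the paper's own argument: the equidistribution \eqref{eq:fdfiTM} is obtained, exactly as in the paper, by testing \eqref{eq:217} with a nodal test function supported at $q_j$ (the paper plugs in $\vec\eta(q_i)=\delta_{ij}(\vec h^{m+1}_j+\vec h^{m+1}_{j+1})$ directly, you first derive the full nodal identity for arbitrary $\vec z$ and then dot with $\vec h_j+\vec h_{j+1}$ --- the same computation), and the passage to \eqref{eq:2425} is the same substitution of the constant $|\vec X^{m+1}_\rho|=|\Gamma^{m+1}|$. Where you genuinely deviate is the stability estimate: the paper transports the solution back onto the polygonal curve, observes that $(\vec X^m\circ(\vec X^{m+1})^{-1},\kappa^{m+1}\circ(\vec X^{m+1})^{-1})$ solves \eqref{eq:MCfdfi}, and then invokes Theorem~\ref{thm:fdfiTM}\ref{item:fdfistab} (whose proof rests on the edge-wise Cauchy--Schwarz bound of Lemma~\ref{lem:fdfistab2d}), whereas you argue directly in the parameter domain, testing \eqref{eq:216a} with $\chi=\kappa^{m+1}$ and \eqref{eq:217} with $\vec\eta=\vec X^{m+1}-\vec X^m$, and then using the already-established equidistribution together with the pointwise Cauchy--Schwarz inequality to get $\langle\vec X^{m+1}_\rho,(\vec X^{m+1}-\vec X^m)_\rho\rangle_\bI\geq|\Gamma^{m+1}|^2-|\Gamma^{m+1}|\,|\Gamma^m|$. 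The two routes rest on the same inequality (once $|\vec X^{m+1}_\rho|$ is constant, your integral estimate is exactly the edge-wise estimate of Lemma~\ref{lem:fdfistab2d} divided through by the common edge length), but yours is self-contained and avoids the slightly awkward reparametrization map $(\vec X^{m+1})^{-1}$, while the paper's buys brevity by reusing the already-proven stability of the fully implicit surface scheme. Both are valid; your version arguably makes clearer that equidistribution is what turns the fully implicit stiffness term into a usable lower bound.
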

\begin{proof}
If $(\vec X^{m+1},\kappa^{m+1}) \in \VhI \times \WhI$ 
is a solution of \eqref{eq:21617}, then on using the notation from 
Remark~\ref{rem:VhI}\ref{item:omegah}, and similarly to the proof
of Theorem~\ref{thm:equid}, we fix $j \in \{1,\ldots,J\}$ and
choose $\vec\eta \in \VhI$ in \eqref{eq:MCfdfib} with 
\[
\vec\eta(q_i) = 
\delta_{ij}\left(\vec X^{m+1}(q_{j+1}) - \vec X^{m+1}(q_{j-1})\right) 
= \delta_{ij}\left(\vec h_{j}^{m+1} + \vec h_{j+1}^{m+1}\right),
\]
for $i=1,\ldots,J$, in order to obtain
\begin{equation}
0 = \left(\vec h^{m+1}_{j+1} - \vec h^{m+1}_{j} \right) .
\left( \vec h^{m+1}_{j+1} + \vec h^{m+1}_{j} \right) = 
\left|\vec h^{m+1}_{j+1}\right|^2 - \left|\vec h^{m+1}_{j}\right|^2 \,.
\label{eq:eqTM0}
\end{equation}
Since \eqref{eq:eqTM0} holds for $j=1,\ldots,J$, we obtain 
\eqref{eq:fdfiTM}.
It follows from \eqref{eq:fdfiTM} that 
\arxivyesno{\linebreak}{}%
$(\vec X^{m+1},\kappa^{m+1}) \in \VhI \times \WhI$ also solves
\eqref{eq:2425}, and 
$(\vec X^m \circ (\vec X^{m+1})^{-1},
\kappa^{m+1} \circ (\vec X^{m+1})^{-1})
\in \Vhmp \times \Whmp$ is a solution to \eqref{eq:MCfdfi},
satisfying the stability bound \eqref{eq:stabMCfdfi}. Hence we obtain
\eqref{eq:stabfdfi}. 
\end{proof}

\begin{rem}
\rule{0pt}{0pt}
\begin{enumerate}
\item
Clearly, \eqref{eq:fdfiTM} means
that any solution to \eqref{eq:21617} is truly equidistributed.
\item
The system \eqref{eq:21617} is nonlinear, and so it can be solved either by
a Newton method or with the following iteration. 
Given $\Gamma^{m+1,0} = \vec X^{m+1,0}(\bI)$, with
$\vec X^{m+1,0}\in\VhI$, we
seek for $i\ge 0$ solutions $(\vec X^{m+1,i+1},
\kappa^{m+1,i+1})\in \VhI \times\WhI$ such that for all
$\chi\in \WhI$, $\vec\eta\in\VhI$
\begin{align*}
& \left\langle \frac{\vec X^{m+1,i+1}-\vec X^m}{\ttau_m}, 
\chi\left(\vec X^{m+1,i}_\rho\right)^\perp\right\rangle^h_{\bI} +
\left|\Gamma^{m+i,i}\right|
\left\langle\kappa^{m+1,i+1}, \chi\right\rangle^h_{\bI}
= 0 
\,,\\
& \left\langle\kappa^{m+1,i+1}\left(\vec X^{m+1,i}_\rho\right)^\perp,
\vec\eta\right\rangle^h_{\bI} 
- \left|\Gamma^{m+1,i}\right|^{-1}\left\langle\vec
X^{m+1,i+1}_\rho, \vec\eta_\rho\right\rangle
= 0 
\,,
\end{align*}
and set $\Gamma^{m+1,i+1} = \vec X^{m+1,i+1}(\bI)$.
This system has a unique solution and can be solved similarly to the
discussion in {\rm \S\ref{subsec:3.3}}. We refer to 
{\rm \cite{fdfi}} for details.
\end{enumerate}
\end{rem}

As an example, we show an evolution of curve shortening flow, i.e.\ mean
curvature flow in the case $d=2$, computed with the scheme \eqref{eq:MCIP} 
in Figure~\ref{fig:mcf}.
\begin{figure}
\center
\arxivyesno{
\newcommand\lheight{2.5cm} 
\newcommand\llheight{1.8cm} 
}{
\newcommand\lheight{2.0cm} 
\newcommand\llheight{1.5cm} 
}
\includegraphics[angle=-90,totalheight=\lheight]{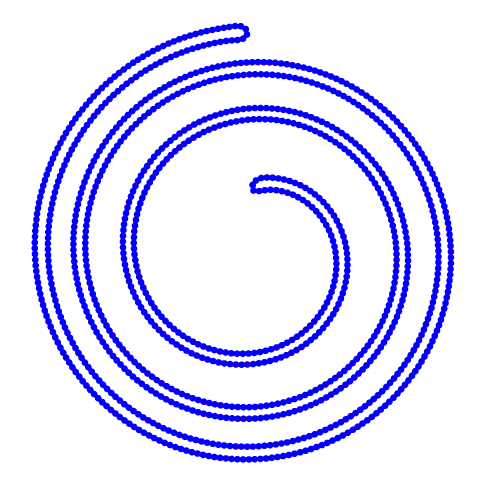}
\includegraphics[angle=-90,totalheight=\lheight]{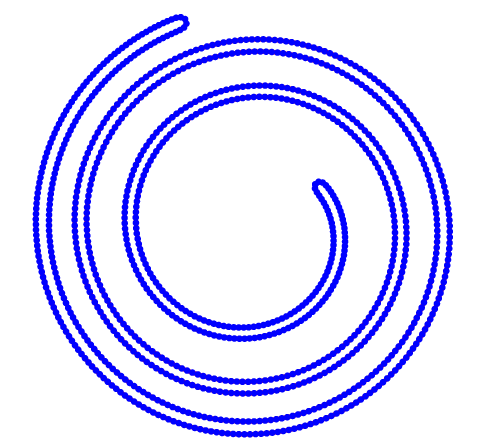}
\includegraphics[angle=-90,totalheight=\lheight]{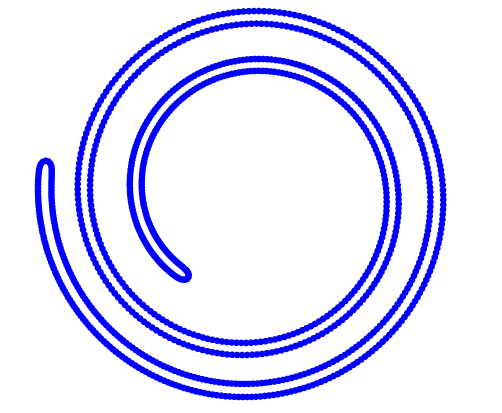}
\includegraphics[angle=-90,totalheight=\lheight]{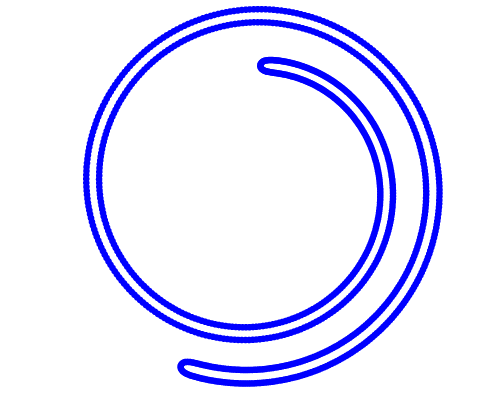}
\includegraphics[angle=-90,totalheight=\lheight]{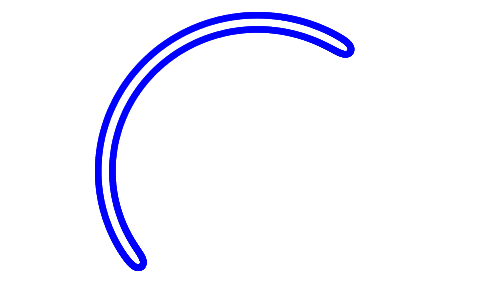}
\includegraphics[angle=-90,totalheight=\lheight]{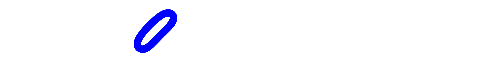} \\
\includegraphics[angle=-90,totalheight=\llheight]{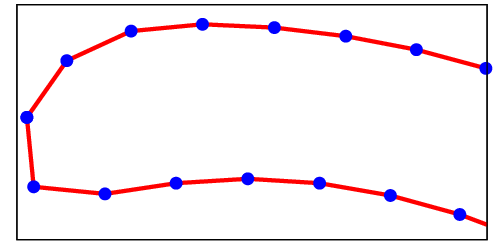}
\includegraphics[angle=-90,totalheight=\llheight]{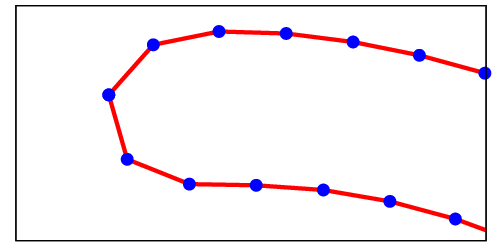}
\includegraphics[angle=-90,totalheight=\llheight]{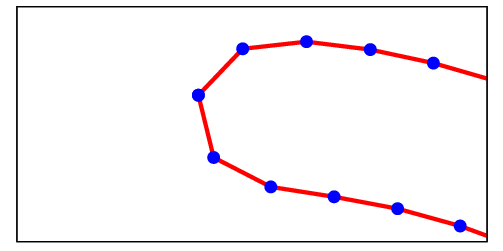}
\includegraphics[angle=-90,totalheight=\llheight]{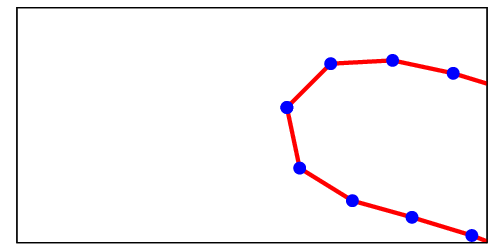}
\includegraphics[angle=-90,totalheight=\llheight]{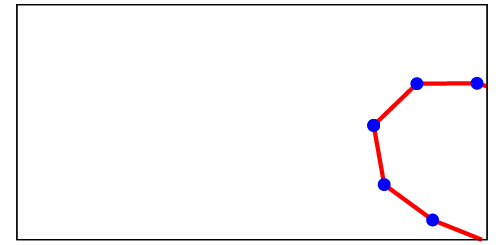}
\qquad\qquad
\includegraphics[angle=-90,totalheight=\llheight]{figures/spiral_Qtheta0c_t0d}
\includegraphics[angle=-90,totalheight=\llheight]{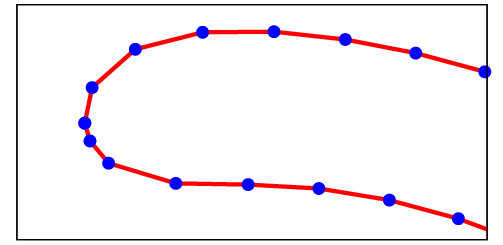}
\includegraphics[angle=-90,totalheight=\llheight]{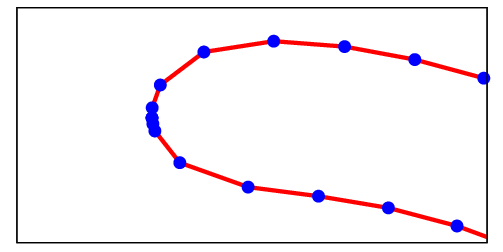}
\includegraphics[angle=-90,totalheight=\llheight]{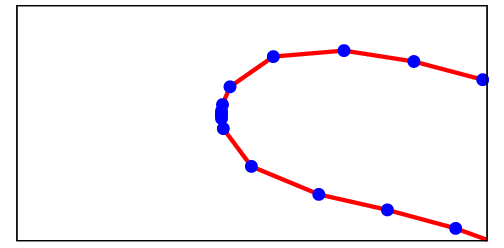}
\includegraphics[angle=-90,totalheight=\llheight]{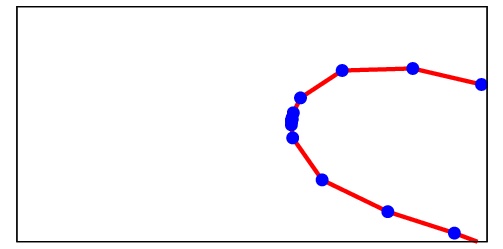}
\caption{Mean curvature flow for a spiral. Numerical solution
for the scheme \eqref{eq:MCIP} with $J = 1024$ and 
$\ttau_m=\ttau = 10^{-7}$, $m=0,\ldots,M-1$. The discrete solution is shown at
times $t = 0,\ 0.001,\ 0.005,\ 0.01,\ 0.02,\ 0.024$.
Below we show details of the vertex distributions for the scheme
\eqref{eq:MCIP} close to the inner tip (left),
and compare that with the classical Dziuk scheme \eqref{eq:DziukMC} (right). 
Coalescence for the latter scheme means that the simulation breaks down
and cannot be continued.
}
\label{fig:mcf}
\end{figure}%

\subsection{Alternative parametric methods} \label{subsec:alternativeMC}
The discussion so far was focussed on contributions of the present
authors. We will now discuss other numerical methods for the numerical 
approximation of mean curvature flow, focussing on parametric finite element
methods.

\subsubsection{The classical Dziuk approach} \label{subsubsec:Dziuk}
The approach introduced by \cite{Dziuk91} is based on a weak formulation of
\begin{equation} \label{eq:Dziukweakmcf}
\vec{\mathcal{V}} =\vec\varkappa\,,\quad
\vec\varkappa = \Delta_s\,\vec\id \quad\text{on } \Gamma(t)
\qquad\Rightarrow\qquad
\vec{\mathcal{V}} = \Delta_s\,\vec\id 
\quad\text{on }\Gamma(t)\,,
\end{equation}
rather than \eqref{eq:BGNweakmcf}. 
Since $\Delta_s\,\vec\id = \vec\varkappa = \varkappa\,\vec\nu$ 
is collinear to  the normal, 
recall Lemma~\ref{lem:varkappa}\ref{item:vecvarkappa},
the evolving surface always moves in a direction collinear to the normal. 
The system to be solved, using the notation of \S\ref{subsec:mcFEA},
is then, given the closed polyhedral hypersurface $\Gamma^m$,
to find $\vec X^{m+1} \in \Vhm$ such that
\begin{equation} \label{eq:DziukMC}
  \left\langle\frac{\vec X^{m+1}-\vec\id}{\ttau_m},
    \vec\eta\right\rangle_{\Gamma^m} + \left\langle\nabs\,\vec X^{m+1},
  \nabs\,\vec\eta\right\rangle_{\Gamma^m} = 0 
\quad\forall\ \vec\eta\in\Vhm\,.
\end{equation}
Existence, uniqueness and stability of this fully discrete linear
system can be shown similarly to the results in \S\ref{subsec:3.4} and
\S\ref{subsec:3.5}. For the case $d=2$ an error estimate 
for a semidiscrete continuous-in-time variant of \eqref{eq:DziukMC} 
with mass lumping is shown in \cite{Dziuk94}, 
on assuming that the approximated solution is sufficiently smooth.
Even though for the case $d=3$ no convergence results have been shown for
either \eqref{eq:DziukMC} or \eqref{eq:MCIP}, in practice both approximations
appear to be convergent, see e.g.\ \citet[Table~1]{gflows3d}.

\subsubsection{The convergent finite element algorithm of 
Kov{\'a}cs, Li, Lubich}
Very recently, a first proof of convergence for
a numerical method for mean curvature flow for $d=3$ was given by
\cite{KovacsLL19}. They prove error estimates for semi- as well as 
full-discretizations of mean curvature flow, again 
assuming that the approximated solution is sufficiently smooth.
They use the system
\[
\vec{\mathcal{V}} = \varkappa\,\vec\nu\,,\quad
\matpartx\,\vec\nu = \Delta_s\,\vec\nu + \left|\nabs\,\vec\nu\right|^2
\vec\nu\,,\quad
\matpartx\,\varkappa = \Delta_s\,\varkappa + \left|\nabs\,\vec\nu\right|^2
\varkappa
\qquad\text{on } \Gamma(t)\,.
\]
The last two equations can be derived using basic geometric analysis and
the law $\vec{\mathcal{V}}=\varkappa\,\vec\nu$; see
Lemma~\ref{lem:dtnu}\ref{item:dtnu}, Lemma~\ref{lem:Deltasnu}, 
Lemma~\ref{lem:productrules}\ref{item:priii},
Lemma \ref{lem:nabsnu}\ref{item:nabsnunu}
and Lemma~\ref{lem:derkappa}\ref{item:lem10.3i}.
Compared to \eqref{eq:MCIP} 
the approach has the disadvantages that meshes can still deteriorate,
as the approximated flow is normal, similarly to the classical Dziuk approach,
and that an additional system has to be solved. Of course, a clear advantage 
is that it is presently the only method, for which convergence can be shown 
for $d=3$.

\subsubsection{Alternative numerical methods that equidistribute} 
\label{subsubsec:equid}
For the mean curvature flow of curves in the plane, the scheme \eqref{eq:21617}
equidistributes the vertices of the polygonal approximation of the curve at
every time step. In this section we discuss some alternative numerical methods
that achieve the same goal.

As discussed
before, it is desirable to control the tangential distribution of mesh
points, and so to prevent numerical singularities such as coalescence
or swallow tails, i.e.\ numerically induced self-intersections. In
practice, many authors use mesh smoothing methods, see e.g.\
\cite{Sethian85,DziukKS02,BanschMN05}. However, mesh smoothing has
some undesirable features, i.e.\ they might smoothen the solution
excessively and stability results that hold for the original
approximation are in general lost. For geometric evolution laws that
have some conservation properties, for example flows where the enclosed
area is conserved, great care must be taken to maintain the conservation 
properties after mesh smoothing.

In the case of an evolving curve $(\Gamma(t))_{t\in[0,T]}$, we consider
parameterizations $\vec x : \bI\times[0,T]\to\bR^2$, where $\bI= \bR/\bZ$ 
is the periodic interval $[0,1]$, such that $\Gamma(t) = \vec x(\bI,t)$,
$t \in [0,T]$ are solutions of the curvature flow equation 
$\mathcal{V} = \varkappa$. 
With $s$ we denote the arclength of $\Gamma(t)$, so that the
unit tangent is given by 
$\vec x_s(\rho,t) = \frac{\vec x_\rho(\rho,t)}{|\vec x_\rho(\rho,t)|}$, 
$\rho\in \bI$, and a possible normal on $\Gamma(t)$ 
is $\vec\nu \circ \vec x = - \vec x_s^\perp$.
The basic idea is to only consider parameterizations that satisfy
\begin{equation}\label{eq:equi}
\left|\vec x_\rho(\rho,t)\right|=\left|\Gamma(t)\right|
\quad\forall\rho\in \bI\,,\ t\in [0,T] \,,
\end{equation}
which means that $\rho$, up to a constant, is arclength. 
First such approaches are due to \cite{KesslerKL84}, and
were analyzed and modified in \cite{Strain89,HouLS94,MikulaS01}. 
It is also
possible to discretize an evolution law for the position vector $\vec x$ 
in the form
\begin{equation}\label{eq:tangterm}
  \partial_t\,\vec x = (\mathcal{V}\,\vec\nu) \circ \vec x + \alpha\,\vec x_s
\qquad\text{in } \bI\,,
\end{equation}
where the tangential velocity is chosen such that the vertices of the
polygonal approximation of $\Gamma(t)$ remain close to being equidistributed. 
This idea has been used in \cite{Kimura94}, and \cite{MikulaS01} show
that choosing $\alpha$ such that
\begin{equation*}
\alpha_s = (\mathcal{V}\,\varkappa) \circ \vec x
-\frac1{\left|\Gamma(t)\right|}
\left\langle \mathcal{V},\varkappa\right\rangle_{\Gamma(t)} ,
\end{equation*}
leads to the property \eqref{eq:equi}, if \eqref{eq:equi} is fulfilled
at the initial time. 
Also \cite{DeckelnickD95} use \eqref{eq:tangterm} with 
\begin{equation} \label{eq:TMDD95}
\alpha = \frac{\vec x_{\rho\rho}\,.\,\vec x_\rho}{|\vec x_\rho|^3} 
\qquad\text{in } \bI\,,
\end{equation}
and observe well distributed polygonal meshes in practice. 
In addition, an error analysis can be performed for a semidiscrete 
continuous-in-time finite element approximation.
The resulting fully discrete scheme is particularly simple, and 
can be formulated as follows. Given $\vec X^m \in \VhI$, 
find $\vec X^{m+1} \in \VhI$ such that
\begin{equation}
\left\langle \frac{\vec X^{m+1}-\vec X^m}{\ttau_m}, \vec\eta
\left|\vec X^m_\rho\right|^2 \right\rangle_{\bI}
+ \left\langle \vec X^{m+1}_\rho , \vec\eta_\rho 
\right\rangle_{\bI} = 0 
\quad\forall\ \vec\eta\in \VhI\,,
\label{eq:DD95}
\end{equation}
where we observe that the numerical computations presented in
\cite{DeckelnickD95} use mass lumping for the first term in \eqref{eq:DD95}.
The authors in \cite{PanW08,Pan08}, on the other hand, 
solve $\mathcal{V}=\varkappa$ with the side constraint
\begin{equation*}
  \vec x_\rho(\rho,t)\,.\,\vec x_{\rho\rho} (\rho,t) = 0 \quad
  \forall\ \rho\in \bI\,,\ t\in [0,T]\,,
\end{equation*}
which leads to $(\frac12|\vec x_\rho|^2)_\rho = \vec x_\rho \,.\,\vec
x_{\rho\rho} = 0$ and hence to equidistributed parameterizations. However
this approach leads to very nonlinear fully discrete problems.

\subsubsection{Using the DeTurck trick to obtain good mesh properties}
Mean curvature flow $\mathcal{V}=\varkappa$, as a parabolic equation,
has certain degeneracies, which basically stem from the fact that a
reparameterization of a solution leads to another solution. It was an
idea by \cite{DeTurck83}
to use solutions of the harmonic map heat flow, in
order to reparameterize the evolution of curvature flows in such a way
that the reparameterized equations are strongly parabolic. 
\cite{ElliottF17} used the DeTurck trick to solve
the mean curvature flow numerically. 
The resulting algorithm has the remarkable property, that provided the
initial surface is approximated by a triangulation with a good mesh quality,
the meshes will remain good during the evolution. 
This, roughly speaking, is due to the fact that,
under appropriate assumptions, harmonic maps are conformal maps, which
hence preserve angles.

The algorithm of \cite{ElliottF17}, for the case $d=3$, may at times lead 
to better meshes than the method discussed in \S\ref{subsec:mcFEA}. 
However, the systems
to be solved are more complicated, and the approach also seems to
be less flexible, e.g.\ when it comes to anisotropy, triple junctions
and the coupling of the interface evolution to other fields. 
We refer to \cite{ElliottF17} for more details about this approach and for
an error analysis in the case $d=2$ for a semidiscrete continuous-in-time 
variant. This error analysis is an extension of that in \cite{DeckelnickD95}
for the variant with the specific tangential velocity \eqref{eq:TMDD95}.
The error analysis in \cite{ElliottF17} was extended to the fully discrete
case in \cite{BarrettDS17}. 

\subsubsection{Other numerical approaches}
In some situations the surface to be computed can be written as a
graph. Finite element approximations in this case have been analyzed in 
\cite{Dziuk99b} and \cite{DeckelnickD99}.
For the case of axisymmetric surfaces moving by mean curvature flow,
numerical schemes have been presented in 
\cite{MayerS02,aximcf}.
We also mention \cite{MikulaRSS14}, 
who proposed methods for a tangential redistribution of points on evolving 
surfaces. They use volume-oriented and length-oriented tangential 
redistribution methods, which can be applied to
manifolds in any dimension. Completely different
approaches are the level set method and the phase field method, which are
\arxivyesno{discussed, for example, 
in {\em Handbook of Numerical Analysis, Vol.~XXI \& XXII}}
{discussed in other chapters of this handbook.}

\section{Surface diffusion and other flows } \label{sec:surfdiff}

In this section we extend the ideas from Section~\ref{sec:mc} to more
general evolution laws for evolving hypersurfaces in
$\bR^d$, $d\geq2$.

\subsection{Properties of the surface diffusion flow}
Fourth order geometric evolution equations for hypersurfaces have played
an important role in the last twenty years. A basic
example, which can be formulated in a very simple way, is motion by surface
diffusion
\begin{equation*} 
  \mathcal{V} = -\Delta_s\,\varkappa
\qquad\text{on }\Gamma(t)\,.
\end{equation*}
This flow was introduced by \cite{Mullins57}
to describe diffusion at interfaces in alloys. The flow
has the remarkable property that for closed surfaces it is surface
area decreasing and the enclosed volume is preserved. These two
properties follow from the transport theorems as follows. If we define
$\Omega(t)$ to be the domain enclosed by the closed hypersurface $\Gamma(t)$,
with $\vec\nu(t)$ denoting the outer unit normal to $\Omega(t)$ on $\Gamma(t)$,
then we obtain with the help of Theorem~\ref{thm:transvol} that
\begin{equation} \label{eq:ddtvol}
\ddt\,\mathcal{L}^d(\Omega(t)) = \ddt\,\int_{\Omega(t)} 1\dL{d} 
= \left\langle 1,\mathcal{V} \right\rangle_{\Gamma(t)}
= - \left\langle 1,\Delta_s\,\varkappa \right\rangle_{\Gamma(t)} = 0 \,,
\end{equation}
where the last identity follows from the divergence theorem on
manifolds, recall Remark~\ref{rem:ibp}\ref{item:ibp}. Furthermore, for the
total surface area, $|\Gamma(t)|$, we obtain from Theorem~\ref{thm:trans} that
\begin{equation} \label{eq:ddtareaSD}
\ddt\,|\Gamma(t)| 
= -\left\langle \varkappa,\mathcal{V} \right\rangle_{\Gamma(t)}
= \left\langle \varkappa,\Delta_s\,\varkappa \right\rangle_{\Gamma(t)}
= - \left| \nabs\,\varkappa \right|_{\Gamma(t)}^2 \leq 0\,, 
\end{equation}
where for the last identity we recall once more 
Remark~\ref{rem:ibp}\ref{item:ibp}.
In fact, the properties \eqref{eq:ddtvol} and \eqref{eq:ddtareaSD} are closely
related to the fact that mathematically surface diffusion can be interpreted 
as the $H^{-1}$--gradient flow of $|\Gamma(t)|$, see e.g.\
\cite{TaylorC94}.

Using the formulation
\begin{equation} \label{eq:strongSD}
\vec{\mathcal{V}}\,.\,\vec\nu = -\Delta_s\,\varkappa\,, \quad
\varkappa\,\vec\nu = \Delta_s\,\vec\id \qquad\text{on } \Gamma(t)\,,
\end{equation}
it is now possible to formulate a finite element approximation.

\subsection{Finite element approximation for surface diffusion} 
\label{subsec:sdFEA}
Using the same notation as in \S\ref{subsec:mcFEA},
we recall the following finite element approximation 
of \eqref{eq:strongSD} for surface diffusion from \cite{triplej,gflows3d}.
Let the closed polyhedral hypersurface $\Gamma^0$ be an approximation of
$\Gamma(0)$. Then, for $m=0,\ldots,M-1$, find 
$(\vec X^{m+1},\kappa^{m+1}) \in \Vhm\times\Whm$ such that
\begin{subequations} \label{eq:sdfea}
\begin{align}
\left\langle \frac{\vec X^{m+1}-\vec\id}{\ttau_m}, 
\chi\,\vec\nu^m\right\rangle^h_{\Gamma^m}
- \left\langle \nabs\, \kappa^{m+1}, \nabs\, \chi \right\rangle_{\Gamma^m} & = 0
\quad\forall\ \chi \in \Whm\,, \label{eq:26a} \\
\left\langle \kappa^{m+1}\,\vec\nu^m, \vec\eta\right\rangle^h_{\Gamma^m}
+ \left\langle \nabs\, \vec X^{m+1}, \nabs\, \vec\eta 
\right\rangle_{\Gamma^m} & = 0 
\quad\forall\ \vec\eta \in \Vhm \label{eq:26b}
\end{align}
\end{subequations}
and set $\Gamma^{m+1} = \vec X^{m+1}(\Gamma^m)$.
Existence and uniqueness of discrete solutions for this scheme can be
shown as in \S\ref{subsec:mcFEA}.

\begin{thm} \label{thm:uniqueSD}
Let $\Gamma^m$ satisfy {\rm Assumption~\ref{ass:A}\ref{item:assA1}}.
Then there exists a unique solution $(\vec X^{m+1}, \kappa^{m+1})\in
\Vhm\times \Whm$ to the system \eqref{eq:sdfea}.
\end{thm}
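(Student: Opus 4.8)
The plan is to follow the template of Theorem~\ref{thm:uniqueMC}: since \eqref{eq:sdfea} is a square linear system (equal numbers of unknowns and equations), existence is equivalent to uniqueness, so it suffices to show that the homogeneous problem has only the trivial solution. Concretely, I would take $(\vec X,\kappa)\in\Vhm\times\Whm$ satisfying
\begin{align*}
\left\langle \vec X, \chi\,\vec\nu^m\right\rangle^h_{\Gamma^m}
- \ttau_m\left\langle \nabs\,\kappa, \nabs\,\chi \right\rangle_{\Gamma^m} &= 0
\quad\forall\ \chi \in \Whm\,, \\
\left\langle \kappa\,\vec\nu^m, \vec\eta\right\rangle^h_{\Gamma^m}
+ \left\langle \nabs\,\vec X, \nabs\,\vec\eta \right\rangle_{\Gamma^m} &= 0
\quad\forall\ \vec\eta \in \Vhm\,,
\end{align*}
obtained from \eqref{eq:26a}--\eqref{eq:26b} by scaling the first equation by $\ttau_m$ and discarding the $\vec\id$ data, and aim to deduce $\vec X=\vec 0$ and $\kappa=0$.

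The first step is the energy test: choose $\chi=\kappa$ in the first equation and $\vec\eta=\vec X$ in the second, and subtract the resulting identities. Because the lumped inner product is symmetric, the cross terms $\langle\vec X,\kappa\,\vec\nu^m\rangle^h_{\Gamma^m}$ cancel and one is left with $|\nabs\,\vec X|^2_{\Gamma^m}+\ttau_m\,|\nabs\,\kappa|^2_{\Gamma^m}=0$. This is the one genuine difference from the mean curvature flow case \eqref{eq:MCIP}: since \eqref{eq:26a} contains the \emph{stiffness} pairing $\langle\nabs\,\kappa,\nabs\,\chi\rangle_{\Gamma^m}$ rather than a mass term, the identity controls only gradients. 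As $\ttau_m>0$, I can therefore conclude merely that $\vec X\equiv\vec X^c$ and $\kappa\equiv\kappa^c$ are \emph{constants}, not yet that $\kappa$ vanishes — so the constant $\kappa^c$ must be eliminated separately.

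The remaining work is to dispose of these two constants, and this is exactly where Assumption~\ref{ass:A}\ref{item:assA1} enters. Feeding $\nabs\,\kappa^c=0$ back into the first equation gives $\langle\vec X^c,\chi\,\vec\nu^m\rangle^h_{\Gamma^m}=0$ for all $\chi\in\Whm$; rewriting this via \eqref{eq:intnuh} as $\vec X^c\cdot\int_{\Gamma^m}\chi\,\vec\nu^m\dH{d-1}=0$ and invoking Remark~\ref{rem:assA} forces $\vec X^c=\vec 0$. With $\nabs\,\vec X^c=0$ the second equation collapses to $\langle\kappa^c\,\vec\nu^m,\vec\eta\rangle^h_{\Gamma^m}=0$ for all $\vec\eta\in\Vhm$; since $\kappa^c\in\Whm$ is constant, \eqref{eq:omegahnuh} lets me replace $\vec\nu^m$ by $\vec\omega^m$ and then test with $\vec\eta=\kappa^c\,\vec\omega^m\in\Vhm$, yielding $(|\kappa^c\,\vec\omega^m|^h_{\Gamma^m})^2=0$ and hence $\kappa^c\,\vec\omega^m(\vec q_k)=\vec 0$ at every vertex. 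The main (and essentially only) obstacle is this final inference: unlike Lemma~\ref{lem:exXk}, where $\kappa_0$ varies and the vertexwise nonvanishing of Assumption~\ref{ass:A}\ref{item:assA2} is needed, here $\kappa^c$ is a single scalar, so it suffices that \emph{some} vertex normal be nonzero — which is guaranteed by Assumption~\ref{ass:A}\ref{item:assA1} alone, as otherwise $\spa\{\vec\omega^m(\vec q_k)\}_{k=1}^K$ could not be all of $\bR^d$. Thus $\kappa^c=0$, and uniqueness, hence existence, follows.
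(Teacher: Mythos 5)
Your proposal is correct and follows essentially the same argument as the paper's proof: reduce to the homogeneous system, test with $\chi=\kappa$ and $\vec\eta=\vec X$ to get $|\nabs\,\vec X|_{\Gamma^m}^2+\ttau_m|\nabs\,\kappa|_{\Gamma^m}^2=0$, and then use Assumption~\ref{ass:A}\ref{item:assA1} twice to kill the constants $\vec X^c$ (via \eqref{eq:intnuh} and Remark~\ref{rem:assA}) and $\kappa^c$ (via \eqref{eq:omegahnuh} and testing against $\vec\omega^m$). The only differences from the paper — eliminating $\vec X^c$ before $\kappa^c$, and testing with $\kappa^c\,\vec\omega^m$ rather than arguing by contradiction with $\vec\eta=\vec\omega^m$ — are cosmetic, since the two elimination steps are independent of each other.
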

\begin{proof} 
The proof is very similar to the proof of Theorem~\ref{thm:uniqueMC}.
For a solution $(\vec X,\kappa)\in\Vhm\times \Whm$ of the homogeneous system
\begin{subequations} 
\begin{align}
\left\langle \vec X, \chi\,\vec\nu^m\right\rangle^h_{\Gamma^m}
- \ttau_m\left\langle \nabs\,\kappa, \nabs\,\chi \right\rangle_{\Gamma^m} & = 0
\quad\forall\ \chi \in \Whm\,, \label{eq:28a} \\
\left\langle \kappa\,\vec\nu^m, \vec\eta\right\rangle^h_{\Gamma^m}
+ \left\langle \nabs\,\vec X, \nabs\,\vec\eta \right\rangle_{\Gamma^m} & = 0
\quad\forall\ \vec\eta \in \Vhm \label{eq:28b}
\end{align}
\end{subequations}
we first prove $\kappa = 0$, and then proceed as in the proof of
Lemma~\ref{lem:exXk}.
Choosing $\chi = \kappa\in \Whm$ in (\ref{eq:28a}) and
$\vec\eta = \vec X\in\Vhm$ in (\ref{eq:28b}) yields that
\begin{equation*} 
\left|\nabs\,\vec X\right|_{\Gamma^m}^2 
+\ttau_m\left|\nabs\,\kappa\right|_{\Gamma^m}^2 =0\,.
\end{equation*}
We hence obtain that $\kappa = \kappa^c$ and 
$\vec X = \vec X^c$ on $\Gamma^m$, for $\kappa^c\in\bR$ and 
$\vec X^c\in \bR^d$, and so it follows from \eqref{eq:28b} 
and \eqref{eq:omegahnuh} that
\begin{equation}
0 = \left\langle\kappa^c\,\vec\nu^m, \vec\eta\right\rangle_{\Gamma^m}^h 
 = \kappa^c\left\langle\vec\omega^m, \vec\eta\right\rangle_{\Gamma^m}^h 
\quad\forall\ \vec\eta \in \Vhm\,. \label{eq:unique11}
\end{equation}
If we assume that $\kappa^c\not=0$, then 
choosing $\vec\eta = \vec\omega^m\in\Vhm$ in (\ref{eq:unique11}) 
yields, on recalling (\ref{eq:ip0norm}), that 
$|\vec\omega^m|_{\Gamma_m}^h = 0$, and so $\vec\omega^m = \vec 0$.
However, that clearly contradicts Assumption~\ref{ass:A}\ref{item:assA1}
and hence we have that $\kappa^c=0$. Moreover, as in the proof of
Lemma~\ref{lem:exXk}, we obtain from \eqref{eq:28a} and
Assumption~\ref{ass:A}\ref{item:assA1} that $\vec X^c=0$. 
Hence we have shown that there exists a unique solution
$(\vec X^{m+1},\kappa^{m+1})$ $ \in \Vhm\times \Whm$ to \eqref{eq:sdfea}.
\end{proof} 

Similarly to \S\ref{subsec:3.5}, it can be shown that the scheme 
(\ref{eq:sdfea}) is unconditionally stable for $d=2$ and $d=3$.

\begin{thm} \label{thm:stabSD}
Let $d=2$ or $d=3$. 
Let $(\vec X^{m+1},\kappa^{m+1}) \in \Vhm \times \Whm$ be a solution to
\eqref{eq:sdfea}. Then we have that
\begin{equation*} 
\left|\Gamma^{m+1}\right| 
+ \ttau_m\left|\nabs\,\kappa^{m+1}\right|_{\Gamma^m}^2 
\leq \left|\Gamma^m\right|.
\end{equation*}
\end{thm}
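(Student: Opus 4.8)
The plan is to imitate the proof of Theorem~\ref{thm:stabMC} almost verbatim, since the surface diffusion scheme \eqref{eq:sdfea} differs from the mean curvature flow scheme \eqref{eq:MCIP} only in the first equation, where the zeroth-order pairing $\left\langle\kappa^{m+1},\chi\right\rangle^h_{\Gamma^m}$ is replaced by the Dirichlet form $\left\langle\nabs\,\kappa^{m+1},\nabs\,\chi\right\rangle_{\Gamma^m}$. The second equation \eqref{eq:26b} is identical to \eqref{eq:MCIPb}, and it is this equation that carries the geometric content. Existence of a solution is already guaranteed by Theorem~\ref{thm:uniqueSD}, so only the a priori bound is needed. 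First I would choose the test function $\chi = \kappa^{m+1}\in\Whm$ in \eqref{eq:26a}, obtaining
\[
\left\langle \frac{\vec X^{m+1}-\vec\id}{\ttau_m}, \kappa^{m+1}\,\vec\nu^m\right\rangle^h_{\Gamma^m} = \left|\nabs\,\kappa^{m+1}\right|_{\Gamma^m}^2,
\]
and then choose $\vec\eta = \frac1{\ttau_m}(\vec X^{m+1}-\vec\id_{\mid_{\Gamma^m}})\in\Vhm$ in \eqref{eq:26b}.

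The key structural point is that the mass-lumped curvature term appearing in \eqref{eq:26b} under this choice is exactly, by symmetry of $\left\langle\cdot,\cdot\right\rangle^h_{\Gamma^m}$, the left-hand side of the previous display. Substituting one into the other eliminates $\kappa^{m+1}$ from the coupling and leaves, after multiplying through by $\ttau_m$,
\[
\left\langle\nabs\,\vec X^{m+1}, \nabs\,(\vec X^{m+1}-\vec\id)\right\rangle_{\Gamma^m} + \ttau_m\left|\nabs\,\kappa^{m+1}\right|_{\Gamma^m}^2 = 0.
\]

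Finally, I would apply Lemma~\ref{lem:stab2d3d} (in both its $n=1$ and $n=2$ forms, discarding the nonnegative remainder term) to bound the first term below by $\mathcal{H}^{n}(\vec X^{m+1}(\Gamma^m)) - \mathcal{H}^{n}(\Gamma^m) = \left|\Gamma^{m+1}\right| - \left|\Gamma^m\right|$, which immediately gives the claim. The only place where the dimensional restriction $d\in\{2,3\}$ enters is this invocation of Lemma~\ref{lem:stab2d3d}; the rest of the argument is purely algebraic and dimension-independent. There is essentially no serious obstacle — the one thing to verify carefully is the symmetry and identification of the two curvature pairings that lets the cross terms combine, which is precisely the discrete manifestation of the $H^{-1}$--gradient-flow structure of surface diffusion.
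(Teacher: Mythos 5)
Your proposal is correct and follows the paper's own proof essentially verbatim: test \eqref{eq:26a} with $\chi = \kappa^{m+1}$ and \eqref{eq:26b} with $\vec\eta = \frac1{\ttau_m}(\vec X^{m+1}-\vec\id_{\mid_{\Gamma^m}})$, combine via the common curvature pairing to obtain the identity $\left\langle\nabs\,\vec X^{m+1}, \nabs\,(\vec X^{m+1}-\vec\id)\right\rangle_{\Gamma^m} + \ttau_m\,|\nabs\,\kappa^{m+1}|_{\Gamma^m}^2 = 0$, and then conclude with Lemma~\ref{lem:stab2d3d}. Your observation that the restriction $d\in\{2,3\}$ enters only through Lemma~\ref{lem:stab2d3d} (i.e.\ $n=d-1\in\{1,2\}$) is also exactly right.
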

\begin{proof}
Choosing $\chi = \kappa^{m+1}\in \Whm$ in (\ref{eq:26a}) and
$\vec\eta = \frac1{\ttau_m}\,(\vec X^{m+1}-\vec\id_{\mid_{\Gamma^m}})\in\Vhm$
in (\ref{eq:26b}) yields that
\begin{equation} \label{eq:stabSD0}
\left\langle\nabs\,\vec X^{m+1}, 
\nabs\left(\vec X^{m+1}-\vec\id\right)\right\rangle_{\Gamma^m}
+\ttau_m \left|\nabs\,\kappa^{m+1}\right|_{\Gamma^m}^2 = 0\,.
\end{equation}
Combining (\ref{eq:stabSD0}) and Lemma~\ref{lem:stab2d3d} yields the claim. 
\end{proof}

\subsection{Volume conservation for the semidiscrete scheme} \label{subsec:vol}
An important aspect of discretizations for geometric evolution equations
is to mimic decay and conservation properties of the evolution laws
on the discrete level. In this section we discuss the volume conservation
properties of the scheme \eqref{eq:sdfea} for surface diffusion,
recall \eqref{eq:ddtvol}. 

On recalling Definition~\ref{def:GhT}, we consider the following 
semidiscrete variant of \eqref{eq:sdfea}. 
Given the closed polyhedral hypersurface $\Gamma^h(0)$,
find an evolving polyhedral hypersurface $\GhT$ 
with induced velocity $\vec{\mathcal{V}}^h \in \VhGhT$,
and $\kappa^h \in \WhGhT$,
i.e.\ $\vec{\mathcal{V}}^h(\cdot, t) \in \Vht$ and $\kappa^h(\cdot,t)\in \Wht$
for all $t \in [0,T]$, such that, for all $t \in (0,T]$,
\begin{subequations} \label{eq:SDSD}
\begin{align}
\left\langle \vec{\mathcal{V}}^h, \chi\,\vec\nu^h\right\rangle^h_{\Gamma^h(t)}
- \left\langle \nabs\,\kappa^h ,\nabs\,\chi\right\rangle_{\Gamma^h(t)} 
&= 0 \quad\forall\ \chi \in \Wht\,,\label{eq:SDSD1}\\
\left\langle\kappa^h\,\vec\nu^h,\vec\eta\right\rangle^h_{\Gamma^h(t)} 
+ \left\langle \nabs\,\vec\id,\nabs\,\vec\eta\right\rangle_{\Gamma^h(t)} 
&= 0\quad\forall\ \vec\eta\in \Vht\,. \label{eq:SDSD2}
\end{align}
\end{subequations}

\begin{thm} \label{thm:SDSD}
Let $(\GhT, \kappa^h)$ be a solution of \eqref{eq:SDSD}, 
and let $\Omega^h(t)$ be the domain enclosed by $\Gamma^h(t)$, for 
$t \in [0,T]$. 
\begin{enumerate}
\item \label{item:SDSDstab}
It holds that
\[
\ddt\left|\Gamma^h(t)\right| +
\left|\nabs\,\kappa^h \right|_{\Gamma^h(t)}^2 = 0\,.
\]
\item \label{item:SDSDvol}
It holds that
\[
\ddt\,\mathcal{L}^d(\Omega^h(t)) = 0\,.
\] 
\item \label{item:SDSDTM}
For any $t \in (0,T]$, it holds that
$\Gamma^h(t)$ is a conformal polyhedral surface.
In particular, for $d=2$, any two neighbouring elements of the curve 
$\Gamma^h(t)$ either have equal length, or they are parallel.
\end{enumerate}
\end{thm}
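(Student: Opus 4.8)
The plan is to establish the three assertions by testing the two equations of \eqref{eq:SDSD} against judiciously chosen functions, in the spirit of the proofs of Theorem~\ref{thm:stabSD} and Theorem~\ref{thm:semidis}, but now exploiting the semidiscrete (continuous-in-time) structure so that the discrete transport theorems apply directly.

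For part~\ref{item:SDSDstab} I would first invoke Theorem~\ref{thm:disctrans}\ref{item:disctrans} with $\eta=\zeta=1$, using $\matpartxh\,1=0$, to get $\ddt\,|\Gamma^h(t)| = \langle 1,\nabs\,.\,\vec{\mathcal{V}}^h\rangle_{\Gamma^h(t)}$, and then rewrite the right-hand side via Lemma~\ref{lem:nabsid}\ref{item:nabsidnabsf} as $\langle\nabs\,\vec\id,\nabs\,\vec{\mathcal{V}}^h\rangle_{\Gamma^h(t)}$. Choosing $\vec\eta=\vec{\mathcal{V}}^h\in\Vht$ in \eqref{eq:SDSD2} converts this into $-\langle\kappa^h\,\vec\nu^h,\vec{\mathcal{V}}^h\rangle^h_{\Gamma^h(t)}$, and choosing $\chi=\kappa^h\in\Wht$ in \eqref{eq:SDSD1} identifies that same quantity with $|\nabs\,\kappa^h|^2_{\Gamma^h(t)}$. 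Concatenating the three identities yields the claimed energy law.

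For part~\ref{item:SDSDvol} I would start from Theorem~\ref{thm:disctransvol} with $f=1$, which gives $\ddt\,\mathcal{L}^d(\Omega^h(t)) = \langle 1,\vec{\mathcal{V}}^h\,.\,\vec\nu^h\rangle_{\Gamma^h(t)}$ in the exact (non-lumped) inner product. The delicate point, which I expect to be the main obstacle, is that testing \eqref{eq:SDSD1} with $\chi=1$ only delivers $\langle\vec{\mathcal{V}}^h,\vec\nu^h\rangle^h_{\Gamma^h(t)}=0$ in the \emph{mass-lumped} pairing, so the two inner products must be reconciled. This is resolved precisely by \eqref{eq:intnuh}, which asserts $\langle\vec\nu^h,\vec\varphi\rangle^h_{\Gamma^h(t)}=\langle\vec\nu^h,\vec\varphi\rangle_{\Gamma^h(t)}$ for all $\vec\varphi\in\Vht$; taking $\vec\varphi=\vec{\mathcal{V}}^h$ bridges the exact and lumped pairings and lets me conclude $\ddt\,\mathcal{L}^d(\Omega^h(t))=0$.

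Part~\ref{item:SDSDTM} requires no new computation: equation \eqref{eq:SDSD2} is verbatim the defining relation \eqref{eq:conformal} of a conformal polyhedral hypersurface, so $\Gamma^h(t)$ is conformal for every $t$ by Definition~\ref{def:conformal}, and the equal-length-or-parallel dichotomy for $d=2$ then follows immediately from Theorem~\ref{thm:equid}, exactly as in Theorem~\ref{thm:semidis}\ref{item:semidisTM}.
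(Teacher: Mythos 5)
Your proposal is correct and follows essentially the same route as the paper's own proof: part~(i) by testing \eqref{eq:SDSD1} with $\kappa^h$ and \eqref{eq:SDSD2} with $\vec{\mathcal{V}}^h$ after applying Theorem~\ref{thm:disctrans} and Lemma~\ref{lem:nabsid}\ref{item:nabsidnabsf}, part~(ii) by combining Theorem~\ref{thm:disctransvol}, the choice $\chi=1$ in \eqref{eq:SDSD1} and the inner-product identity \eqref{eq:intnuh}, and part~(iii) directly from Definition~\ref{def:conformal} and Theorem~\ref{thm:equid}. The only cosmetic difference is that the paper writes a $\pm$ in part~(ii) to account for whether $\vec\nu^h$ is the outer or inner normal of $\Omega^h(t)$, which does not affect the conclusion.
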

\begin{proof} 
\ref{item:SDSDstab} 
Similarly to the proof of Theorem~\ref{thm:semidis}\ref{item:semidisstab},
choosing $\chi = \kappa^h(\cdot,t)\in \Wht$ in \eqref{eq:SDSD1} and
$\vec\eta = \vec{\mathcal{V}}^h(\cdot, t) \in \Vht$ in \eqref{eq:SDSD2} gives
\[
\ddt\left|\Gamma^h(t)\right| 
= \left\langle \nabs\,\vec\id, \nabs\,\vec{\mathcal{V}}^h 
 \right\rangle_{\Gamma^h(t)} 
= - \left|\nabs\,\kappa^h \right|_{\Gamma^h(t)}^2 ,
\]
which is the claim. \\
\ref{item:SDSDvol} 
Choosing $\chi = 1$ in \eqref{eq:SDSD1} yields,
on recalling Theorem~\ref{thm:disctransvol},
$\vec{\mathcal{V}}^h(\cdot, t) \in \Vht$ and \eqref{eq:intnuh}, that
\begin{equation*}
\ddt\,\mathcal{L}^d(\Omega^h(t)) 
= \pm 
\left\langle \vec{\mathcal{V}}^h,\vec\nu^h\right\rangle_{\Gamma^h(t)} 
= \pm 
\left\langle \vec{\mathcal{V}}^h,\vec\nu^h\right\rangle_{\Gamma^h(t)}^h 
= 0\,,
\end{equation*}
where the sign $\pm$ depends on whether $\vec\nu^h$ here is the outer/inner 
normal of $\Omega^h(t)$ on $\Gamma^h(t)$.\\
\ref{item:SDSDTM}
This follows directly from Definition~\ref{def:conformal} and
Theorem~\ref{thm:equid}. 
\end{proof}

\begin{rem} \label{rem:SDSD}
\rule{0pt}{0pt}
\begin{enumerate}
\item \label{item:remSDSDi}
The result in {\rm Theorem~\ref{thm:SDSD}\ref{item:SDSDstab}} is the
semidiscrete analogue of the stability result 
{\rm Theorem~\ref{thm:stabSD}} for the fully discrete scheme \eqref{eq:sdfea},
and it mimics the energy law \eqref{eq:ddtareaSD} on the discrete level.
\item \label{item:remSDSDii}
The result in {\rm Theorem~\ref{thm:SDSD}\ref{item:SDSDvol}} mimics the 
volume conservation property \eqref{eq:ddtvol} on the discrete level.
Moreover, while the fully discrete scheme \eqref{eq:sdfea} does not conserve 
the enclosed volume exactly, it does satisfy
\begin{equation*} 
\left\langle \vec X^{m+1} - \vec\id, \vec\nu^m \right\rangle_{\Gamma^m}
= \left\langle \vec X^{m+1} - \vec\id, \vec\nu^m \right\rangle_{\Gamma^m}^h
= 0\,,
\end{equation*}
recall \eqref{eq:intnuh}. 
On noting {\rm Theorem~\ref{thm:disctransvol}}
this can be interpreted as a fully discrete analogue of
{\rm Theorem~\ref{thm:SDSD}\ref{item:SDSDvol}}. In fact, in practice the scheme
\eqref{eq:sdfea} exhibits excellent volume conservation properties, with the
relative enclosed volume loss tending to zero as the time step size goes to
zero.
\end{enumerate}
\end{rem}

\subsection{Generalizations to other flows} \label{subsec:otherflows}
The finite element approximation \eqref{eq:sdfea} for surface diffusion can
easily be adapted to more general situations. The resultant discretizations
will, under certain circumstances, again satisfy a stability result.

Here we consider flows of the form
\begin{equation}\label{eq:Fgeomev}
  \mathcal{V} = \mathfrak{F} (\varkappa)\qquad\text{on } \qquad\Gamma(t)\,,
\end{equation}
where $\mathfrak{F}$ maps functions on the closed hypersurface $\Gamma(t)$ 
to functions on $\Gamma(t)$. For mean curvature flow we have 
$\mathfrak{F}(\chi) = \chi$, while surface diffusion
is obtained with $\mathfrak{F}(\chi) = -\Delta_s\,\chi$. 
If the map $\mathfrak{F}$ is such that
\begin{equation}\label{eq:Fineq}
 \left\langle \mathfrak{F}(\varkappa),\varkappa \right\rangle_{\Gamma(t)}
\geq 0\,,
\end{equation}
then we have, on recalling Theorem~\ref{thm:trans}, that
\begin{equation}\label{eq:Fineq2}
\ddt\left|\Gamma(t)\right| 
= - \left\langle \mathcal{V},\varkappa\right\rangle_{\Gamma(t)}
= - \left\langle \mathfrak{F}(\varkappa),\varkappa\right\rangle_{\Gamma(t)}
\leq 0\,.
\end{equation}
The inequality \eqref{eq:Fineq} clearly is true for mean curvature flow, 
where we obtain
\begin{equation*}
\left\langle \mathfrak{F}(\varkappa),\varkappa\right\rangle_{\Gamma(t)}
= \left| \varkappa \right|_{\Gamma(t)}^2 \geq 0\,,
\end{equation*}
and also for surface diffusion, where
\begin{equation*}
\left\langle \mathfrak{F}(\varkappa),\varkappa\right\rangle_{\Gamma(t)}
= \left| \nabs\,\varkappa \right|_{\Gamma(t)}^2 \geq 0\,,
\end{equation*}
recall \eqref{eq:ddtareaSD}. 
Our goal now is to obtain a stability result like that in 
Theorem~\ref{thm:stabSD} for discretizations of flows of the form 
\eqref{eq:Fgeomev} that satisfy the energy bound \eqref{eq:Fineq2}. 
As a consistency condition for a discrete formulation, we require that there 
is a map $\mathfrak{F}^m: \Whm \to \Whm$ which
approximates $\mathfrak{F}$ and is such that
\begin{equation} \label{eq:Fconsist}
\left\langle\mathfrak{F}^m(\chi),\chi\right\rangle^h_{\Gamma^m} \geq 0
\quad\forall\ \chi \in \Whm\,.
\end{equation}
We consider now the following finite element approximation of 
(\ref{eq:Fgeomev}). 
Let the closed polyhedral hypersurface $\Gamma^0$ be an approximation of
$\Gamma(0)$. Then, for $m=0,\ldots,M-1$, find 
$(\vec X^{m+1},$ $\kappa^{m+1}) \in \Vhm\times\Whm$ such that
\begin{subequations} \label{eq:F12}
\begin{align}
\left\langle \frac{\vec X^{m+1}-\vec\id}{\ttau_m}, \chi\,\vec\nu^m
\right\rangle^h_{\Gamma^m}
-\left\langle \mathfrak{F}^m(\kappa^{m+1}), \chi
\right\rangle^h_{\Gamma^m} & = 0 \quad\forall\ \chi \in \Whm\,, 
\label{eq:F1} \\
\left\langle \kappa^{m+1}\,\vec\nu^m,\vec\eta\right\rangle^h_{\Gamma^m}
+ \left\langle \nabs\,\vec X^{m+1}, \nabs\,\vec\eta \right\rangle_{\Gamma^m} 
& = 0 \quad\forall\ \vec\eta \in \Vhm\, \label{eq:F2}
\end{align}
\end{subequations}
and set $\Gamma^{m+1} = \vec X^{m+1}(\Gamma^m)$.
We obtain the following stability theorem, together with an existence and
uniqueness result in the linear case.

\begin{thm} \label{thm:other}
\rule{0pt}{0pt}
\begin{enumerate}
\item \label{item:otherex}
Let $\Gamma^m$ satisfy {\rm Assumption~\ref{ass:A}}, and let
$\mathfrak{F}^m : \Whm \to \Whm$ be a linear map such that 
\eqref{eq:Fconsist} holds.
Then there exists a unique solution $(\vec X^{m+1}, \kappa^{m+1})\in
\Vhm\times \Whm$ to the system \eqref{eq:F12}.
\item \label{item:otherstab}
Let $d=2$ or $d=3$. 
Let $(\vec X^{m+1},\kappa^{m+1}) \in \Vhm \times \Whm$ be a solution to
\eqref{eq:F12}. Then we have that
\begin{equation*}
\left|\Gamma^{m+1}\right| + \ttau_m 
\left\langle \mathfrak{F}^m(\kappa^{m+1}), \kappa^{m+1} 
\right\rangle^h_{\Gamma^m} \leq \left|\Gamma^m\right|,
\end{equation*}
where both terms on the left hand side are nonnegative if
$\mathfrak{F}^m$ satisfies \eqref{eq:Fconsist}.
\end{enumerate}
\end{thm}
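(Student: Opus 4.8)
The plan is to treat the two parts separately, in each case recycling the testing strategy already used for mean curvature flow and surface diffusion. For the stability estimate in \ref{item:otherstab} I would mimic the proof of Theorem~\ref{thm:stabSD}: test \eqref{eq:F1} with $\chi = \kappa^{m+1} \in \Whm$ and test \eqref{eq:F2} with $\vec\eta = \frac{1}{\ttau_m}(\vec X^{m+1} - \vec\id_{\mid_{\Gamma^m}}) \in \Vhm$. The first choice identifies $\langle \frac{\vec X^{m+1}-\vec\id}{\ttau_m}, \kappa^{m+1}\,\vec\nu^m\rangle^h_{\Gamma^m}$ with $\langle \mathfrak{F}^m(\kappa^{m+1}), \kappa^{m+1}\rangle^h_{\Gamma^m}$; substituting this identity into the second tested equation turns the mass-lumped cross term into $\langle \mathfrak{F}^m(\kappa^{m+1}), \kappa^{m+1}\rangle^h_{\Gamma^m}$, leaving $\ttau_m\,\langle \mathfrak{F}^m(\kappa^{m+1}), \kappa^{m+1}\rangle^h_{\Gamma^m} + \langle\nabs\,\vec X^{m+1}, \nabs(\vec X^{m+1}-\vec\id)\rangle_{\Gamma^m} = 0$. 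Applying Lemma~\ref{lem:stab2d3d} (valid precisely for $n=1$ and $n=2$, i.e.\ $d=2,3$) to bound the Dirichlet-type term below by $|\Gamma^{m+1}| - |\Gamma^m|$ then yields the claimed inequality, and nonnegativity of the energy term is exactly the consistency hypothesis \eqref{eq:Fconsist}.

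For existence and uniqueness in \ref{item:otherex}, since $\mathfrak{F}^m$ is linear the system \eqref{eq:F12} is a square linear system, so it suffices to show that the homogeneous problem---$\langle \vec X, \chi\,\vec\nu^m\rangle^h_{\Gamma^m} = \ttau_m\,\langle \mathfrak{F}^m(\kappa), \chi\rangle^h_{\Gamma^m}$ for all $\chi \in \Whm$, together with $\langle \kappa\,\vec\nu^m, \vec\eta\rangle^h_{\Gamma^m} + \langle \nabs\,\vec X, \nabs\,\vec\eta\rangle_{\Gamma^m} = 0$ for all $\vec\eta \in \Vhm$---admits only the trivial solution. Testing the two homogeneous equations with $\chi = \kappa$ and $\vec\eta = \vec X$ respectively, and using the symmetry of the lumped inner product, I would obtain $\ttau_m\,\langle \mathfrak{F}^m(\kappa),\kappa\rangle^h_{\Gamma^m} + |\nabs\,\vec X|_{\Gamma^m}^2 = 0$. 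Both summands are nonnegative by \eqref{eq:Fconsist}, so in particular $\vec X = \vec X^c$ is constant on $\Gamma^m$.

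The main obstacle is that \eqref{eq:Fconsist} only delivers $\langle \mathfrak{F}^m(\kappa),\kappa\rangle^h_{\Gamma^m} = 0$, which---in contrast to the mean curvature flow case of Theorem~\ref{thm:uniqueMC}---does not by itself force $\kappa = 0$, since $\mathfrak{F}^m$ is merely positive semidefinite. To circumvent this I would feed $\nabs\,\vec X^c = \vec 0$ back into the second homogeneous equation, obtaining $\langle \kappa\,\vec\nu^m, \vec\eta\rangle^h_{\Gamma^m} = 0$ for all $\vec\eta \in \Vhm$, and then argue exactly as in Lemma~\ref{lem:exXk}: replace $\vec\nu^m$ by $\vec\omega^m$ via \eqref{eq:omegahnuh} and choose $\vec\eta = \vec\pi_{\Gamma^m}[\kappa\,\vec\omega^m]$ to conclude $\vec\pi_{\Gamma^m}[\kappa\,\vec\omega^m] = \vec 0$, whence Assumption~\ref{ass:A}\ref{item:assA2} forces $\kappa = 0$ nodewise. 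Finally, with $\kappa = 0$ and $\mathfrak{F}^m(0) = 0$ by linearity, the first homogeneous equation reduces, using \eqref{eq:intnuh}, to $\vec X^c\,.\,\int_{\Gamma^m}\chi\,\vec\nu^m\dH{d-1} = 0$ for all $\chi \in \Whm$, and Assumption~\ref{ass:A}\ref{item:assA1} (via Remark~\ref{rem:assA}) gives $\vec X^c = \vec 0$. This dependence on the full Assumption~\ref{ass:A}, rather than part \ref{item:assA1} alone, parallels the situation in Theorem~\ref{thm:uniqueSD}.
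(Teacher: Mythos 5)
Your proof is correct and is exactly what the paper intends: part \ref{item:otherstab} mirrors the proof of Theorem~\ref{thm:stabSD} (test with $\chi=\kappa^{m+1}$ and $\vec\eta=\frac1{\ttau_m}(\vec X^{m+1}-\vec\id_{\mid_{\Gamma^m}})$, then apply Lemma~\ref{lem:stab2d3d}), and part \ref{item:otherex} mirrors the proof of Lemma~\ref{lem:exXk}, including the essential extra step of testing with $\vec\pi_{\Gamma^m}[\kappa\,\vec\omega^m]$ and invoking Assumption~\ref{ass:A}\ref{item:assA2}, since semidefiniteness of $\mathfrak{F}^m$ alone does not force $\kappa=0$. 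One cosmetic slip in your closing remark: the precedent for needing the full Assumption~\ref{ass:A} is Lemma~\ref{lem:exXk}, not Theorem~\ref{thm:uniqueSD}, whose statement requires only Assumption~\ref{ass:A}\ref{item:assA1}.
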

\begin{proof}
\ref{item:otherex}
The proof is analogous to the proof of Lemma~\ref{lem:exXk}.
\\
\ref{item:otherstab}
The proof is analogous to the proof of Theorem~\ref{thm:stabSD}.
\end{proof}

\begin{rem}[Examples]  \label{rem:other}
\rule{0pt}{0pt}
\begin{enumerate}
\item 
Choosing $\mathfrak{F}(\varkappa) = -\Delta_s\,\varkappa$
and $\mathfrak{F}^m(\chi) = - \LapGm \chi$, recall \eqref{eq:discreteLB}, 
we recover the 
results in {\rm Theorem~\ref{thm:uniqueSD}} and {\rm Theorem~\ref{thm:stabSD}}
for surface diffusion. Similarly, for 
$\mathfrak{F}(\varkappa) = \varkappa$ and $\mathfrak{F}^m(\chi) = \chi$
we obtain {\rm Theorem~\ref{thm:uniqueMC}} and {\rm Theorem~\ref{thm:stabMC}}
for mean curvature flow.
\item 
Of interest are also nonlinear flows
$\mathcal{V}=f(\varkappa)$, i.e.\ $\mathfrak{F}(\varkappa) =
f(\varkappa)$, where $f:(a,b)\to\bR$ with $-\infty\le a <
b\le\infty$ is a strictly monotonically increasing continuous
function such that $f(0)=0$. An example is $f(r)=|r|^{\beta-1}\,r$,
with $\beta \in \bRplus$, which has applications in image analysis, see
{\rm \cite{AlvarezGLM93,SapiroT94,MikulaS01}}. Here we define
$\mathfrak{F}^m (\chi) = \pi_{\Gamma^m}\,[f(\chi)]$, 
recall {\rm Definition~\ref{def:Vh}\ref{item:Vh}}.
For the resulting nonlinear scheme \eqref{eq:F12} we obtain the stability
result {\rm Theorem~\ref{thm:other}\ref{item:otherstab}}.
Existence and uniqueness results for \eqref{eq:F12} are shown in
{\rm \citet[Theorem~2.1]{gflows3d}}.
\item 
Also the volume conserving flow
\begin{equation}\label{eq:conservedMC}
\mathcal{V} = \mathfrak{F}(\varkappa) =\varkappa -
\Gmint{\Gamma(t)}\,\varkappa
\qquad\text{on } \Gamma(t)\,,
\end{equation}
where we define the average
\begin{equation} \label{eq:mint}
\Gmint{\Gamma(t)}\,\varkappa = \frac1{|\Gamma(t)|}\,
\int_{\Gamma(t)}\varkappa\dH{d-1}\,,
\end{equation}
falls into the class of functions for which stability can be shown. 
Here, for all $\chi \in \Whm$, we choose 
$\mathfrak{F}^m(\chi) = \chi - \Gmint{\Gamma^m}\,\chi \in \Whm$ and obtain
\begin{equation*}
\left\langle \mathfrak{F}^m(\chi), \chi \right\rangle^h_{\Gamma^m} = 
\left(\left| \chi - \Gmint{\Gamma^m}\,\chi \right|^h_{\Gamma^m}\right)^2
\geq 0\,,
\end{equation*}
and so both results in {\rm Theorem~\ref{thm:other}} hold.
\item 
In materials science \eqref{eq:conservedMC} is called surface
attachment limited kinetics (SALK). A flow interpolating between
SALK and surface diffusion is
\begin{equation*}
\mathcal{V} 
= -\Delta_s\left(\frac1\alpha -
\frac1\xi\,\Delta_s\right)^{-1}\varkappa
\qquad\text{on } \Gamma(t)\,,
\end{equation*}
where $\alpha,\,\xi \in \bRplus$.
This flow was proposed by {\rm \cite{TaylorC94}} and analyzed by
{\rm \cite{ElliottG97,EscherGI01}} as an evolution law for interfaces in
materials science. 
Introducing the new variable $y$, this flow can be restated as 
\begin{equation*}
\vec{\mathcal{V}}\,.\,\vec\nu = -\Delta_s\, y,\quad
\left(\frac1\alpha-\frac1\xi\,\Delta_s\right)y=\varkappa\,,
\qquad\varkappa\,\vec\nu = \Delta_s\,\vec\id
\qquad\text{on } \Gamma(t)\,.
\end{equation*}
Hence, for any $\chi \in \Whm$, the function $\mathfrak{F}^m(\chi) \in \Whm$ 
is defined via
\begin{equation}\label{eq:inter1}
\left\langle \mathfrak{F}^m(\chi), \eta \right\rangle^h_{\Gamma^m} = 
\left\langle\nabs\,Y(\chi),\nabs\,\eta\right\rangle_{\Gamma^m}
\quad\forall\ \eta\in \Whm\,,
\end{equation}
where $Y \in \Whm$ solves
\begin{equation}\label{eq:inter2}
\frac1\xi\left\langle\nabs\,Y,\nabs\, \varphi \right\rangle_{\Gamma^m}
+\frac1\alpha\left\langle Y,\varphi\right\rangle^h_{\Gamma^m} =
\left\langle\chi,\varphi \right\rangle^h_{\Gamma^m}
\quad\forall\ \varphi\in \Whm\,.
\end{equation}
Choosing $\eta=\chi$ in \eqref{eq:inter1} and
$\varphi=\chi-\frac1\alpha Y$ in \eqref{eq:inter2} then 
yields that
\begin{equation*}
\left\langle \mathfrak{F}^m(\chi),\chi \right\rangle_{\Gamma^m}^h  
= \left\langle\nabs\,Y,\nabs\, \chi \right\rangle_{\Gamma^m}
=\frac1\alpha \left|\nabs\,Y \right|^2_{\Gamma^m} +
\xi \left(\left|\chi-\frac1\alpha\,Y\right|^h_{\Gamma^m}\right)^2
\geq 0 \,,
\end{equation*}
and hence we obtain a stable discretization,
satisfying the two results in {\rm Theorem~\ref{thm:other}}, also in this case.
\end{enumerate}
\end{rem}

\begin{rem} [Semidiscrete schemes and tangential motion] \label{rem:othersd}
\rule{0pt}{0pt}
\begin{enumerate}
\item \label{item:othersd}
Similarly to {\rm \S\ref{subsec:vol}}, semidiscrete variants of the schemes
discussed in {\rm Remark~\ref{rem:other}} can also be considered.
In each case they will satisfy {\rm Theorem~\ref{thm:SDSD}\ref{item:SDSDTM}}, 
the appropriate analogue of {\rm Theorem~\ref{thm:SDSD}\ref{item:SDSDstab}}, 
as well as {\rm Theorem~\ref{thm:SDSD}\ref{item:SDSDvol}} when the
approximated flow is volume preserving,
provided that the semidiscrete analogue of $\mathfrak F^m$ satisfies 
$\left\langle \mathfrak F^h(\chi), 1 \right\rangle^h_{\Gamma^h(t)} = 0$
for all $\chi \in \Wht$.
\item \label{item:otherfdfi}
In the case of curves, $d=2$, for all the situations in 
{\rm Remark \ref{rem:other}}, fully discrete fully implicit schemes along the
lines of \eqref{eq:21617} can be considered. In each case these schemes will
inherit the stability properties from {\rm Remark~\ref{rem:other}} and, in
addition, they will satisfy the strong equidistribution property
\eqref{eq:fdfiTM}. For further details we refer to {\rm \cite{fdfi}}.
\end{enumerate}
\end{rem}

\subsection{Approximations with reduced or induced tangential motion} 
\label{subsec:TM}

In this section we consider an alternative to \eqref{eq:F2},
that allows to either reduce the tangential motion or encourage tangential 
motion in selected directions.
To this end, we assume that $\Gamma^m$ satisfies
Assumption~\ref{ass:A}\ref{item:assA2} and let
$\vec\tau^m_i \in \Vhm$, for $i=1,\ldots,d-1$, be such that
$\left\{\frac{\vec\omega^m(\vec q^m_k)}{|\vec\omega^m(\vec q^m_k)|},
\vec\tau^m_1(\vec q^m_k),\ldots,\vec\tau^m_{d-1}(\vec q^m_k)\right\}$
is an orthonormal basis of $\bR^d$ for $k=1,\ldots,K$.
Moreover, we choose coefficients $0\leq\alpha^m_i,\delta^m_i 
\in \Whm$, $i=1,\ldots, d-1$, and forcing terms
$c^m_i\in \Whm$, $i=1,\ldots, d-1$. Then, in place of
\eqref{eq:F12} we consider the following approximation.

Let the closed polyhedral hypersurface $\Gamma^0$ be an approximation of
$\Gamma(0)$. Then, for $m=0,\ldots,M-1$, find 
$(\vec X^{m+1},\kappa^{m+1}) \in \Vhm\times\Whm$ 
and $\beta^{m+1}_i \in \Whm$, $i=1,\ldots,d-1$, such that
\begin{subequations} \label{eq:F12beta}
\begin{align}
& \left\langle \frac{\vec X^{m+1}-\vec\id}{\ttau_m}, \chi\,\vec\nu^m
\right\rangle^h_{\Gamma^m}
-\left\langle \mathfrak{F}^m(\kappa^{m+1}), \chi
\right\rangle^h_{\Gamma^m} = 0 \quad\forall\ \chi \in \Whm\,, 
\label{eq:F1beta} \\
& \left\langle \alpha^m_i\,\frac{\vec X^{m+1}-\vec\id}{\ttau_m},
\xi\,\vec\tau^m_i\right\rangle_{\Gamma^m}^h
- \left\langle \alpha^m_i\left[\delta^m_i\,\beta^{m+1}_i + c^m_i\right], 
\xi \right\rangle_{\Gamma^m}^h = 0 \quad\forall\ \xi \in \Whm\,,
\nonumber \\ & \hspace{7cm} i=1,\ldots,d-1 \,, \label{eq:Betaa} \\
& \left\langle \kappa^{m+1}\,\vec\nu^m 
+ \sum_{i=1}^{d-1} \alpha^m_i\,\beta^{m+1}_i\,\vec\tau^m_i,
\vec\eta\right\rangle_{\Gamma^m}^h
+ \left\langle \nabs\, \vec X^{m+1}, \nabs\, \vec\eta \right\rangle_{\Gamma^m}
 = 0 \quad\forall\ \vec\eta \in \Vhm  \label{eq:Betab}
\end{align}
\end{subequations}
and set $\Gamma^{m+1} = \vec X^{m+1}(\Gamma^m)$.

\begin{thm}\label{thm:uniqueTM}
\rule{0pt}{0pt}
\begin{enumerate}
\item \label{item:uniqueTM}
Let $\Gamma^m$ satisfy {\rm Assumption~\ref{ass:A}} and let
$\mathfrak{F}^m : \Whm \to \Whm$ be a linear map such that 
\eqref{eq:Fconsist} holds. 
Then there exists a solution 
$(\vec X^{m+1},$ $\kappa^{m+1},\beta_1^{m+1},\ldots,\beta_{d-1}^{m+1})\in
\Vhm\times [\Whm]^d$ to the system \eqref{eq:F12beta}, with 
\arxivyesno{
$(\vec X^{m+1}, \kappa^{m+1}, 
\pi_{\Gamma^m}\,[\alpha^m_1\,\beta_1^{m+1}],\ldots,$ \linebreak $
\pi_{\Gamma^m}\,[\alpha^m_{d-1}\,\beta_{d-1}^{m+1}])$}
{$(\vec X^{m+1}, \kappa^{m+1}, 
\pi_{\Gamma^m}\,[\alpha^m_1\,\beta_1^{m+1}],\ldots,
\pi_{\Gamma^m}\,[\alpha^m_{d-1}\,\beta_{d-1}^{m+1}])$}
being unique.
\item \label{item:stabTM}
Let $d=2$ or $d=3$. 
Let $(\vec X^{m+1},\kappa^{m+1},\beta_1^{m+1},\ldots,\beta_{d-1}^{m+1}) 
\in \Vhm \times [\Whm]^d$ be a solution to \eqref{eq:F12beta}. 
Then we have that
\begin{align}
& |\Gamma^{m+1}| + 
\ttau_m \left\langle \mathfrak{F}^m(\kappa^{m+1}), \kappa^{m+1} 
\right\rangle^h_{\Gamma^m}
\nonumber \\ & \qquad
+\ttau_m \left\langle\alpha^m_i\left[\delta^m_i\,\beta^{m+1}_i + c^m_i\right],
\beta^{m+1}_i \right\rangle_{\Gamma^m}^h 
\leq |\Gamma^m|\,.
\label{eq:stabk}
\end{align}
\end{enumerate}
\end{thm}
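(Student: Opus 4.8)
The plan is to follow the template of Lemma~\ref{lem:exXk} and Theorem~\ref{thm:uniqueMC} for part~\ref{item:uniqueTM}, and of Theorem~\ref{thm:stabSD} for part~\ref{item:stabTM}, the new ingredient being that the tangential equations \eqref{eq:Betaa} are decoupled using the orthonormal frame $\{\vec\omega^m/|\vec\omega^m|,\vec\tau^m_1,\ldots,\vec\tau^m_{d-1}\}$ at each vertex.

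For part~\ref{item:stabTM} I would choose $\chi=\kappa^{m+1}$ in \eqref{eq:F1beta}, $\xi=\beta^{m+1}_i$ in \eqref{eq:Betaa}, and $\vec\eta=\ttau_m^{-1}(\vec X^{m+1}-\vec\id)$ in \eqref{eq:Betab}. Because the mass-lumped products are evaluated nodewise, the first term of \eqref{eq:Betab} splits into a normal part which equals $\langle\mathfrak{F}^m(\kappa^{m+1}),\kappa^{m+1}\rangle^h_{\Gamma^m}$ by \eqref{eq:F1beta}, and tangential parts which equal $\langle\alpha^m_i[\delta^m_i\beta^{m+1}_i+c^m_i],\beta^{m+1}_i\rangle^h_{\Gamma^m}$ by \eqref{eq:Betaa}, leaving the term $\ttau_m^{-1}\langle\nabs\vec X^{m+1},\nabs(\vec X^{m+1}-\vec\id)\rangle_{\Gamma^m}$. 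Lemma~\ref{lem:stab2d3d} bounds this last quantity below by $\ttau_m^{-1}(|\Gamma^{m+1}|-|\Gamma^m|)$, and rearranging yields \eqref{eq:stabk} (with the tangential contribution understood summed over $i=1,\ldots,d-1$).

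For part~\ref{item:uniqueTM}, since \eqref{eq:F12beta} is linear, I would deduce existence from the uniqueness of the reduced variables. In the associated homogeneous system (with $\vec X$ in place of $\vec X^{m+1}-\vec\id$ and $c^m_i=0$), the same three test choices $\chi=\kappa$, $\xi=\beta_i$, $\vec\eta=\vec X$ give $|\nabs\vec X|^2_{\Gamma^m}+\ttau_m\langle\mathfrak{F}^m(\kappa),\kappa\rangle^h_{\Gamma^m}+\ttau_m\sum_{i=1}^{d-1}\langle\alpha^m_i\delta^m_i,(\beta_i)^2\rangle^h_{\Gamma^m}=0$. Each summand is nonnegative, by \eqref{eq:Fconsist} and $\alpha^m_i,\delta^m_i\geq0$, so $\vec X\equiv\vec X^c$ is constant. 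Replacing $\vec\nu^m$ by $\vec\omega^m$ through \eqref{eq:omegahnuh}, the homogeneous \eqref{eq:Betab} then forces, at each vertex $\vec q_k$, the identity $\kappa(\vec q_k)\,\vec\omega^m(\vec q_k)+\sum_{i=1}^{d-1}\alpha^m_i(\vec q_k)\,\beta_i(\vec q_k)\,\vec\tau^m_i(\vec q_k)=\vec0$; decomposing in the orthonormal frame and using Assumption~\ref{ass:A}\ref{item:assA2} yields $\kappa=0$ and $\pi_{\Gamma^m}[\alpha^m_i\beta_i]=0$. Finally \eqref{eq:F1beta} with $\kappa=0$, together with \eqref{eq:intnuh}, reduces to $\vec X^c\,.\,\int_{\Gamma^m}\chi\,\vec\nu^m\dH{d-1}=0$ for all $\chi\in\Whm$, whence Assumption~\ref{ass:A}\ref{item:assA1} gives $\vec X^c=\vec0$.

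This shows the solution is unique in $(\vec X^{m+1},\kappa^{m+1},\pi_{\Gamma^m}[\alpha^m_1\beta^{m+1}_1],\ldots,\pi_{\Gamma^m}[\alpha^m_{d-1}\beta^{m+1}_{d-1}])$, which is exactly the asserted uniqueness. The step I expect to be the main obstacle is the existence argument, since the nodal values $\beta_i(\vec q_k)$ enter \eqref{eq:F12beta} only through the products $\alpha^m_i\beta_i$, so the full system is not injective in $\beta_i$ at vertices where $\alpha^m_i(\vec q_k)=0$. I would circumvent this by fixing those redundant nodal values to zero, reducing \eqref{eq:F12beta} to a square linear system in the remaining unknowns whose homogeneous solution is trivial by the computation above; invertibility of that reduced system then provides a solution, which extends to a full solution of \eqref{eq:F12beta} by the prescribed choice on the redundant nodes.
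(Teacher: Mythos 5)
Your proposal is correct and follows essentially the same route as the paper: part~\ref{item:stabTM} uses exactly the paper's test functions $\chi=\kappa^{m+1}$, $\xi=\beta^{m+1}_i$, $\vec\eta=\ttau_m^{-1}(\vec X^{m+1}-\vec\id_{\mid_{\Gamma^m}})$ combined with Lemma~\ref{lem:stab2d3d}, and for part~\ref{item:uniqueTM} your device of zeroing the nodal values of $\beta_i$ where $\alpha^m_i$ vanishes is precisely the paper's reduction to the spaces $B^m_i(\Gamma^m)$, followed by the same energy argument and the same use of \eqref{eq:omegahnuh}, \eqref{eq:intnuh} and Assumptions~\ref{ass:A}\ref{item:assA1},\ref{item:assA2}. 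The only (immaterial) difference is that you extract $\kappa=0$ and $\pi_{\Gamma^m}[\alpha^m_i\beta_i]=\vec 0$ in one step via the nodewise orthonormal-frame decomposition, whereas the paper obtains them by successive choices of test functions.
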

\begin{proof}
\ref{item:uniqueTM}
Let
\[
B^m_i(\Gamma^m) = \left\{ \xi \in \Whm : \xi(\vec q^m_k) = 0 \ \text{ if }
\alpha^m_i(\vec q^m_k) = 0\,, k = 1,\ldots,K \right\} \,,
\]
for $i = 1,\ldots,d-1$. We now prove the desired results by showing existence
of a unique solution to the system \eqref{eq:F12beta} with
$\beta^{m+1}_i \in \Whm$ replaced by $\beta^{m+1}_i \in B^m_i(\Gamma^m)$, 
and with $\Whm$ in \eqref{eq:Betaa} replaced by $B^m_i(\Gamma^m)$. 
As usual,
existence to this adapted linear system follows from uniqueness, and so we
let $(\vec X,\kappa,b_1,\ldots,b_{d-1}) \in \Vhm\times \Whm \times
B^m_1(\Gamma^m) \times \cdots \times B^m_{d-1}(\Gamma^m)$ be such that
\begin{subequations} \label{eq:F1betaIP}
\begin{align}
& \left\langle \vec X, \chi\,\vec\omega^m\right\rangle_{\Gamma^m}^h
- \ttau_m \left\langle \mathfrak{F}^m( \kappa), \chi \right\rangle_{\Gamma^m}^h 
 = 0 \quad\forall\ \chi \in \Whm\,, \label{eq:F1betaIPa} \\
& \left\langle \alpha^m_i\,\vec X, \xi\,\vec\tau^m_i\right\rangle_{\Gamma^m}^h
- \ttau_m\left\langle \alpha^m_i\,\delta^m_i\,b_i, \xi 
\right\rangle_{\Gamma^m}^h 
= 0 \quad\forall\ \xi \in B^m_i(\Gamma^m)\,, 
\nonumber \\ & \hspace{7cm} i=1,\ldots,d-1 \,, \label{eq:F1betaIPb} \\
& \left\langle \kappa\,\vec\omega^m + \sum_{i=1}^{d-1}
\alpha^m_i\,b_i\,\vec\tau^m_i, \vec\eta\right\rangle_{\Gamma^m}^h
+ \left\langle \nabs\,\vec X, \nabs\,\vec\eta \right\rangle_{\Gamma^m} = 0
\quad\forall\ \vec\eta \in \Vhm\,, \label{eq:F1betaIPc}
\end{align}
\end{subequations}
where we have observed \eqref{eq:omegahnuh}. 
Choosing $\chi = \kappa\in \Whm$ in (\ref{eq:F1betaIPa}),
$\xi = b_i \in B^m_i(\Gamma^m)$ in (\ref{eq:F1betaIPb}) for $i=1,\ldots,d-1$,
and $\vec\eta = \vec X\in\Vhm$ in (\ref{eq:F1betaIPc}) yields that
\begin{equation} \label{eq:unique00}
\ttau_m \left\langle \mathfrak{F}^m( \kappa), \kappa \right\rangle_{\Gamma^m}^h
+\ttau_m \left\langle\alpha^m_i\,\delta^m_i\,b_i , b_i 
\right\rangle_{\Gamma^m}^h
+ \left|\nabs\, \vec X\right|_{\Gamma^m}^2  =0\,.
\end{equation}
It immediately follows from \eqref{eq:unique00}, \eqref{eq:Fconsist} and
our assumptions on $\alpha^m_i$ and $\delta^m_i$ 
that $\vec X$ is constant on $\Gamma^m$.
Hence choosing $\vec\eta = \vec\tau^m_i$ in \eqref{eq:F1betaIPc} yields
that $\left| \pi_{\Gamma^m}[\alpha^m_i\,b_i] \right|_{\Gamma^m}^h = 0$, and so
$\pi_{\Gamma^m}[\alpha^m_i\,b_i] = 0$, which implies that
$b_i = 0 \in B^m_i(\Gamma^m)$, for $i=1,\ldots,d-1$.
The remainder of the proof proceeds as the proof
of Lemma~\ref{lem:exXk} to show that $\kappa = 0$ and 
$\vec X = \vec 0$. 
This proves the uniqueness of a solution 
to \eqref{eq:F1betaIP}, and hence the existence of a solution
$(\vec X^{m+1},\kappa^{m+1},\beta_1^{m+1},\ldots,\beta_{d-1}^{m+1}) 
\in \Vhm\times \Whm \times 
B^m_1(\Gamma^m) \times \cdots \times B^m_{d-1}(\Gamma^m)$ to the modified
system discussed at the beginning of the proof. Clearly, this solution also
solves the original system \eqref{eq:F1beta}, with only the values
of $\beta_i^{m+1}$ arbitrary where $\alpha_i^{m}$ vanishes.
\\
\ref{item:stabTM}
Choosing $\chi = \kappa^{m+1} \in \Whm$ in \eqref{eq:F1beta},
$\xi = \beta_i^{m+1} \in \Whm$ in \eqref{eq:Betaa} for $i=1,\ldots,d-1$ and
$\vec\eta = \frac1{\ttau_m}\,(\vec X^{m+1}-\vec\id_{\mid_{\Gamma^m}})\in\Vhm$
in \eqref{eq:Betab} yields the desired result on 
recalling Lemma~\ref{lem:stab2d3d}.
\end{proof}

\begin{rem} \label{rem:F12beta}
\rule{0pt}{0pt}
\begin{enumerate}
\item
Clearly, \eqref{eq:F12beta} with $\alpha^m_i=0$, for $i=1,\ldots,d-1$,
reduces to the original scheme \eqref{eq:F12}.
\item
In the case $d=2$ or $d=3$, if $\mathfrak{F}^m$ satisfies \eqref{eq:Fconsist}, 
and if $c^m_i=0$ for $i=1,\ldots,d-1$, then \eqref{eq:stabk} provides a 
stability estimate for \eqref{eq:F12beta}.
\item \label{item:strategies}
Choosing $\delta^m_i(\vec q^m_k)=1$ and $c^m_i(\vec q^m_k)=0$ 
for $i=1,\ldots,d-1$, 
it follows intuitively from \eqref{eq:stabk} that tangential motion of 
$\vec q^m_k$ in the direction of $\vec\tau^m_i(\vec q^m_k)$ will be 
suppressed if $\alpha^m_i(\vec q^m_k)$ is large. 
Conversely, it is also clear from \eqref{eq:Betaa} that
choosing $\delta^m_i(\vec q^m_k)=0$ and $\alpha^m_i(\vec q^m_k)>0$ allows 
us to fix the tangential motion. These observations form an
ansatz to control tangential movement in the discrete evolution of
geometric flows.
In particular, in {\rm \cite{willmore}} the following strategies have been 
proposed and employed.
\begin{enumerate}[label={{\rm (S\arabic*)}}]
\item \label{item:S1}
$\alpha^m_i = \alpha \in \bRgeq\,,\ \delta^m_i = 1\,,\ c^m_i = 0\,;$
\item \label{item:S2}
$\alpha^m_i = \alpha \in \bRplus\,,\ \delta^m_i = \delta\in\bRplus\,,$ \\ 
$c^m_i(\vec q^m_k) = \frac1{\ttau_m}\left(\vec z^m_k-\vec q^m_k\right)
.\,\vec\tau^m_{i}(\vec q^m_k)\,,\ k=1,\ldots,K\,;$
\item \label{item:S3}
$\alpha^m_i = 1\,,\ \delta^m_i = 0\,,$ \\ 
$c^m_i(\vec q^m_k) = \frac1{\ttau_m}\left(\vec z^m_k-\vec q^m_k\right)
.\,\vec\tau^m_{i}(\vec q^m_k)\,,\ k=1,\ldots,K\,;$
\end{enumerate}
for $i=1,\ldots,d-1$, 
where $\vec z^m_k$ is the average of the neighbouring nodes of $\vec q^m_k$.
The effect of these strategies can be summarized as follows. With increasing
$\alpha>0$, \ref{item:S1} leads to smaller $|\beta^{m+1}_i|$, $i=1\to d-1$, 
and hence to less tangential motion, see \eqref{eq:Betab}. Strategy 
\ref{item:S2}, on the other hand, is intended to induce a tangential movement 
towards the ``barycentres'' $\vec z^m_k$.
Lastly, \ref{item:S3} completely fixes the tangential motion, so that 
after the time step, each vertex $\vec X^{m+1}(\vec q^m_k)$ has the same 
tangential components as $\vec z^m_k$.
\item \label{item:F12betasd}
Similarly to {\rm \S\ref{subsec:vol}}, a semidiscrete variant of the scheme
\eqref{eq:F12beta} can also be considered.
It will satisfy the appropriate analogue of 
{\rm Theorem~\ref{thm:SDSD}\ref{item:SDSDstab}}, 
as well as {\rm Theorem~\ref{thm:SDSD}\ref{item:SDSDvol}} when the
approximated flow is volume preserving.
\end{enumerate}
\end{rem}

\begin{rem} \label{rem:Qtheta}
\rule{0pt}{0pt}
\begin{enumerate}
\item \label{item:normalize}
In place of \eqref{eq:F12} and  \eqref{eq:F12beta} it is possible to consider 
two closely related variants, where in \eqref{eq:F12} and \eqref{eq:F12beta} 
the terms $\vec\nu^m$ are replaced by $\frac{\vec\omega^m}{|\vec\omega^m|}$. 
On recalling \eqref{eq:omegahnuh} this is a minor change to the original
schemes, where at each vertex $\vec\omega^m$ is now normalized.
For these variants all the results in 
{\rm Theorem~\ref{thm:other}} and {\rm Theorem~\ref{thm:uniqueTM}} still hold
true, as well as most of the results in 
{\rm Remark~\ref{rem:other}\ref{item:othersd}} and
{\rm Remark~\ref{rem:F12beta}}. The only property in the latter two remarks
that no longer holds is the volume preservation. As can be seen from the proof
of {\rm Theorem~\ref{thm:SDSD}\ref{item:SDSDvol}}, volume conservation requires 
$\vec\nu^m$ to be present in \eqref{eq:F12} and \eqref{eq:F12beta}, and it is
for this reason that in general we prefer these schemes in their original form.
\item \label{item:Qtheta}
In the case of mean curvature flow, when $\mathfrak F^m(\chi) = \chi$, 
the solution $\vec X^{m+1}\in \Vhm$
to \eqref{eq:F12beta} for the strategy \ref{item:S1} from
{\rm Remark~\ref{rem:F12beta}\ref{item:strategies}} satisfies
\begin{align} 
& \frac1{\ttau_m}
\left\langle \left[\left(\vec X^{m+1}-\vec\id\right).\,\vec\omega^m\right]
\vec\omega^m
+ \alpha\,\sum_{i=1}^{d-1} \left[\left(\vec X^{m+1}-\vec\id\right).\,
\vec\tau^m_i\right]
\vec\tau^m_i, \vec\eta\right\rangle^h_{\Gamma^m} 
\nonumber \\ & \hspace{2cm}
+ \left\langle \nabs\,\vec X^{m+1}, \nabs\,\vec\eta \right\rangle_{\Gamma^m}
 = 0 \quad\forall\ \vec\eta \in \Vhm\,. \label{eq:MCalpha0}
\end{align}
A closely related approximation is obtained by normalizing $\vec\omega^m$ in
\arxivyesno{}{\linebreak}%
\eqref{eq:MCalpha0} to yield
\begin{align} 
& \frac1{\ttau_m}
\left\langle \left[\left(\vec X^{m+1}-\vec\id\right).\,\vec\omega^m\right]
\frac{\vec\omega^m}{\left|\vec\omega^m\right|^2}
+ \alpha\,\sum_{i=1}^{d-1} \left[\left(\vec X^{m+1}-\vec\id\right).\,
\vec\tau^m_i\right]\vec\tau^m_i, \vec\eta\right\rangle^h_{\Gamma^m} 
\nonumber \\ & \hspace{2cm}
+ \left\langle \nabs\,\vec X^{m+1}, \nabs\,\vec\eta \right\rangle_{\Gamma^m}
 = 0 \quad\forall\ \vec\eta \in \Vhm\,, \label{eq:MCalpha}
\end{align}
and this corresponds to the variant of \eqref{eq:F12beta} discussed in
\ref{item:normalize}.
On introducing $\mat Q^m_\theta \in \matVhm$, for $\theta \in\bRgeq$, with 
\begin{equation} \label{eq:Qm}
\mat Q^m_\theta(\vec q^m_k) = 
\theta\,\mat\Id + (1-\theta)\,
\frac{\vec\omega^m(\vec q^m_k) \otimes \vec\omega^m(\vec q^m_k)}
{\left|\vec\omega^m(\vec q^m_k)\right|^2}\,,
\quad k= 1 ,\ldots, K\,,
\end{equation}
it immediately follows from the fact that 
\arxivyesno{$\{\frac{\vec\omega^m(\vec q^m_k)}{|\vec\omega^m(\vec q^m_k)|},
\vec\tau^m_1(\vec q^m_k),\ldots,\vec\tau^m_{d-1}(\vec q^m_k)\}$}
{$\{\frac{\vec\omega^m(\vec q^m_k)}{|\vec\omega^m(\vec q^m_k)|},
\vec\tau^m_1(\vec q^m_k),\ldots,$\linebreak 
$\vec\tau^m_{d-1}(\vec q^m_k)\}$}
is an orthonormal basis, that for every $k=1,\ldots,K$ and for every 
$\vec z \in \bR^d$
\begin{align*} 
& \left[\vec z \,.\,\vec\omega^m(\vec q^m_k)\right]
\frac{\vec\omega^m(\vec q^m_k)}{\left|\vec\omega^m(\vec q^m_k)\right|^2}
+ \alpha\,\sum_{i=1}^{d-1} \left[\vec z\,.\,\vec\tau^m_i(\vec q^m_k)\right]
\vec\tau^m_i(\vec q^m_k)  \nonumber \\ & \hspace{1cm}
= (1-\alpha)\,\vec z \,.\,\vec\omega^m(\vec q^m_k)\,
\frac{\vec\omega^m(\vec q^m_k)}{\left|\vec\omega^m(\vec q^m_k)\right|^2}
+ \alpha\,\vec z 
= \mat Q^m_\alpha(\vec q^m_k)\,\vec z\,.
\end{align*}
Hence, for $\alpha = \theta$, \eqref{eq:MCalpha} is equivalent to
\begin{equation} \label{eq:MCtheta}
\left\langle \mat Q^m_\theta\,\frac{\vec X^{m+1}-\vec\id}{\ttau_m}, 
\vec\eta\right\rangle^h_{\Gamma^m}
+ \left\langle \nabs\,\vec X^{m+1}, \nabs\,\vec\eta \right\rangle_{\Gamma^m}
 = 0 \quad\forall\ \vec\eta \in \Vhm\,. 
\end{equation}
Clearly, $\theta = 1$ collapses to \eqref{eq:DziukMC} with mass lumping,
while $\theta = 0$ is \eqref{eq:elimkappa} with normalized $\vec\omega^m$,
recall \ref{item:normalize}. For $\theta \in (0,1)$ we obtain some
interpolation between the two, with the tangential motion as described in
\ref{item:S1} from {\rm Remark~\ref{rem:F12beta}\ref{item:strategies}}
for $\alpha = \theta$.
We also observe that the main idea in {\rm \cite{ElliottF17}}, 
for the case $d=2$, 
is to introduce $\theta\,\mat\Id + (1-\theta)\,\vec\nu^m \otimes \vec\nu^m$
into the first term in \eqref{eq:DD95}, similarly to how \eqref{eq:MCtheta}
relates to \eqref{eq:DziukMC}.
\end{enumerate}
\end{rem}

\begin{rem}[Discrete linear systems] \label{rem:disc15}
It is a simple matter to adapt the definitions and techniques in 
{\rm \S\ref{subsec:3.3}} in order to derive the discrete linear systems that
need to be solved at each time level for the approximations
\eqref{eq:sdfea}, \eqref{eq:F12} and \eqref{eq:F12beta}. 
The most efficient way is employing a sparse direct solution method such
as UMFPACK, see {\rm \cite{Davis04}}.
\end{rem}

\subsection{Alternative parametric methods} \label{subsec:alternativeSD}

An alternative parametric finite element approximation of surface diffusion is
given in \cite{BanschMN05}. Their scheme is based on a discretization of 
the formulation
\begin{equation*}
\vec{\mathcal{V}} = \mathcal{V}\,\vec\nu\,,\quad
\mathcal{V} = -\Delta_s\,\varkappa\,, \quad
\varkappa = \vec\varkappa\,.\,\vec\nu\,,\quad
\vec\varkappa = \Delta_s\,\vec\id\,,
\qquad\text{on } \Gamma(t)\,,
\end{equation*}
in contrast to \eqref{eq:strongSD}. Compared to \eqref{eq:sdfea} there are two
more variables, and as the surface always moves in a direction collinear to the
normal, the surface meshes will in general deteriorate.

We refer also to \cite{BanschMN04}, where, on assuming a sufficiently
smooth solution, an error analysis is presented
for a semidiscrete finite element approximation by continuous
piecewise polynomials of degree $k\geq1$ for a graph formulation of 
surface diffusion.  
A semidiscrete finite element approximation of axisymmetric surface diffusion 
by continuous piecewise linears 
in a graph formulation has been considered in \cite{ColemanFM96}.
The corresponding error analysis, on assuming a sufficiently 
smooth solution, is presented in \cite{DeckelnickDE03}.
In addition, a parametric finite element approximation of axisymmetric surface 
diffusion has been considered in \cite{axisd}.

\section{Anisotropic flows} \label{sec:ani}
\subsection{Derivation of the governing equations} \label{subsec:ani1}
In many interface problems the energy density is directionally
dependent. This can result, for example, from a material's directional
dependence of a physical property. This appears, for example, in a
crystal where the energy of an interface depends on how its direction
is related to the crystal lattice orientations. A typical anisotropic
surface energy has the form
\begin{equation}\label{eq:11}
  |\Gamma|_\gamma = \int_\Gamma \gamma(\vec\nu) \dH{d-1}\,,
\end{equation}
where $\Gamma$ is a closed orientable $C^1$-hypersurface 
in $\bR^d$, $d\geq2$, with a unit normal field $\vec\nu$,
and $\gamma:\bS^{d-1} \to \bRplus$ is a given anisotropic energy density. 
It is mathematically convenient to extend $\gamma$ to a function on $\bR^d$.
In particular, denoting the extension again with $\gamma$, 
we assume that it is absolutely homogeneous of degree one, i.e.\
\begin{equation*}
  \gamma(\lambda\,\vec p) = |\lambda|\,\gamma(\vec p) \quad 
\forall\ \vec p\in \bR^d,\,\,\lambda\in \bR\,.
\end{equation*}
Assuming from now on that $\gamma \in C^2(\bR^d\setminus\{\vec0\})$,
we can differentiate this identity with respect to $\lambda$ to obtain
\begin{equation} \label{eq:gammadash}
  \gamma'(\vec p)\,.\,\vec p = \gamma(\vec p) \quad\forall\ 
\vec p\in\bR^d\setminus\{\vec 0\}\,,
\end{equation}
where $\gamma'$ is the gradient of $\gamma$. 
In the isotropic case, $\gamma(\vec p)=|\vec p|$, and so $\gamma'(\vec p)= \frac{\vec p}{|\vec p|}$.
We refer to \cite{TaylorCH92,BellettiniNP99,Giga06} for more details on
anisotropic energies in materials science and geometry.

\begin{lem} \label{lem:ani}
Let $(\Gamma(t))_{t\in[0,T]}$ be a $C^2$--evolving closed orientable 
hypersurface with normal vector field $\vec\nu$.
Then we have the anisotropic version of {\rm (\ref{eq:firstvar})} 
\begin{equation*}
\ddt\,|\Gamma(t)|_\gamma 
= -\left\langle \varkappa_\gamma,\mathcal{V} \right\rangle_{\Gamma(t)},
\end{equation*}
where $\varkappa_\gamma = -\nabs\,.\,\vec\nu_\gamma$ is the
weighted mean curvature and $\vec\nu_\gamma = \gamma'(\vec\nu)$ is
the Cahn--Hoffmann vector, the anisotropic version of 
{\rm Lemma \,\ref{lem:varkappa}\ref{item:varkappa}}.  
\end{lem}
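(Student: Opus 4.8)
The plan is to differentiate the anisotropic area $|\Gamma(t)|_\gamma = \int_{\Gamma(t)}\gamma(\vec\nu)\dH{d-1}$ directly by means of the transport theorem, Theorem~\ref{thm:trans}, applied with $f = \gamma(\vec\nu)$. Since $\Gamma(t)$ is closed, the boundary term in the third identity of Theorem~\ref{thm:trans} vanishes, so that
\begin{equation*}
\ddt\,|\Gamma(t)|_\gamma
= \int_{\Gamma(t)}\left(\matpartn\,\gamma(\vec\nu)
- \gamma(\vec\nu)\,\mathcal{V}\,\varkappa\right)\dH{d-1}\,.
\end{equation*}
I prefer this normal-derivative form because the result is intrinsic (independent of the tangential velocity) and because $\matpartn\,\vec\nu$ is especially simple.

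First I would compute the normal time derivative of the integrand by the chain rule, which is legitimate since $\matpartn$ is a genuine directional derivative (recall Remark~\ref{rem:timeder}\ref{item:rem2.19iii}): using $\matpartn\,\vec\nu = -\nabs\,\mathcal{V}$ from Lemma~\ref{lem:dtnu}\ref{item:normdtnu}, one obtains $\matpartn\,\gamma(\vec\nu) = \gamma'(\vec\nu)\,.\,\matpartn\,\vec\nu = -\vec\nu_\gamma\,.\,\nabs\,\mathcal{V}$. Next I would invoke the product rule Lemma~\ref{lem:productrules}\ref{item:pri}, namely $\nabs\,.\,(\mathcal{V}\,\vec\nu_\gamma) = \vec\nu_\gamma\,.\,\nabs\,\mathcal{V} + \mathcal{V}\,\nabs\,.\,\vec\nu_\gamma$, to rewrite this as
\begin{equation*}
\matpartn\,\gamma(\vec\nu)
= \mathcal{V}\,\nabs\,.\,\vec\nu_\gamma - \nabs\,.\,(\mathcal{V}\,\vec\nu_\gamma)
= -\mathcal{V}\,\varkappa_\gamma - \nabs\,.\,(\mathcal{V}\,\vec\nu_\gamma)\,,
\end{equation*}
where the last step uses the definition $\varkappa_\gamma = -\nabs\,.\,\vec\nu_\gamma$. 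The term $-\mathcal{V}\,\varkappa_\gamma$ is precisely the desired integrand.

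It then remains to show that the two leftover contributions, $-\nabs\,.\,(\mathcal{V}\,\vec\nu_\gamma)$ and $-\gamma(\vec\nu)\,\mathcal{V}\,\varkappa$, integrate to zero over the closed surface. Here the one genuinely anisotropy-specific ingredient enters: the degree-one homogeneity of $\gamma$ in the form $\gamma(\vec\nu) = \gamma'(\vec\nu)\,.\,\vec\nu = \vec\nu_\gamma\,.\,\vec\nu$, which is \eqref{eq:gammadash} evaluated at $\vec p = \vec\nu$. With it, $\gamma(\vec\nu)\,\mathcal{V}\,\varkappa = \varkappa\,(\mathcal{V}\,\vec\nu_\gamma)\,.\,\vec\nu$, so applying the divergence theorem Theorem~\ref{thm:div} to the field $\vec f = \mathcal{V}\,\vec\nu_\gamma$ on the closed surface gives $\int_{\Gamma(t)}[\nabs\,.\,(\mathcal{V}\,\vec\nu_\gamma) + \gamma(\vec\nu)\,\mathcal{V}\,\varkappa]\dH{d-1} = 0$. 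Combining the three displays yields $\ddt\,|\Gamma(t)|_\gamma = -\left\langle \varkappa_\gamma,\mathcal{V}\right\rangle_{\Gamma(t)}$.

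The computation is essentially mechanical once the homogeneity identity is in hand, and I do not expect a serious obstacle. The only points requiring care are the bookkeeping of signs (in particular that $\varkappa = -\nabs\,.\,\vec\nu$ and $\varkappa_\gamma = -\nabs\,.\,\vec\nu_\gamma$ play superficially opposite roles in the two cancelling terms) and the observation that the chain rule for $\matpartn$ applied to the composition $\gamma\circ\vec\nu$ is valid because $\matpartn$ is a first-order directional derivative.
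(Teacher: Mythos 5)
Your proof is correct and follows essentially the same route as the paper's: both apply the third form of the transport theorem (Theorem~\ref{thm:trans}) to $f=\gamma(\vec\nu)$, use the chain rule with $\matpartn\,\vec\nu=-\nabs\,\mathcal{V}$ from Lemma~\ref{lem:dtnu}\ref{item:normdtnu}, the homogeneity identity \eqref{eq:gammadash}, the product rule Lemma~\ref{lem:productrules}\ref{item:pri} and the divergence theorem, Theorem~\ref{thm:div}. The only difference is the order of the last two steps (the paper converts the $-\gamma(\vec\nu)\,\mathcal{V}\,\varkappa$ term via Theorem~\ref{thm:div} first and then applies the product rule, while you apply the product rule first and let Theorem~\ref{thm:div} cancel the leftover pair), which is purely a matter of bookkeeping.
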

\begin{proof} 
It follows from the transport theorem, Theorem~\ref{thm:trans}, 
on noting (\ref{eq:11}), \eqref{eq:gammadash}, 
Lemma \ref{lem:dtnu}\ref{item:normdtnu}, Theorem~\ref{thm:div} and 
Lemma~\ref{lem:productrules}\ref{item:pri}, that
\begin{align*}
\ddt\,|\Gamma(t)|_\gamma &
= \ddt \left\langle 1, \gamma(\vec\nu) \right\rangle_{\Gamma(t)}
= \left\langle 1, \matpartn\,\gamma(\vec\nu)
-\gamma(\vec\nu)\,\mathcal{V}\,\varkappa \right\rangle_{\Gamma(t)}\\ &
= \left\langle 1, \gamma'(\vec\nu)\,.\,\matpartn\,\vec\nu - 
\gamma'(\vec\nu)\,.\,\vec\nu\,\mathcal{V}\,\varkappa \right\rangle_{\Gamma(t)}
\\ &
= \left\langle 1, -\gamma'(\vec\nu)\,.\,\nabs\,\mathcal{V}
+\nabs\,.\,(\gamma'(\vec\nu)\,\mathcal{V}) \right\rangle_{\Gamma(t)}\\ &
= \left\langle\nabs\,.\,\gamma'(\vec\nu),\mathcal{V}\right\rangle_{\Gamma(t)}
= -\left\langle \varkappa_\gamma,\mathcal{V} \right\rangle_{\Gamma(t)}.
\end{align*}
\end{proof}

We can hence define anisotropic versions of mean curvature flow and
surface diffusion as
\begin{equation}\label{eq:aniflows}
\mbox{(a)} \quad
\mathcal{V} = \beta(\vec\nu)\,\varkappa_\gamma
\qquad\mbox{and} \qquad
\mbox{(b)} \quad
\mathcal{V} =
-\nabs\,.\left(\beta(\vec\nu)\,\nabs\, \varkappa_\gamma\right),
\end{equation}                                                  
where $\beta: \bS^{d-1} \to\bRplus$ is a smooth kinetic coefficient.
We refer to
\cite{TaylorCH92,CahnT94,TaylorC94,Giga06} for a derivation and more
information about these evolution laws.

For anisotropic mean curvature flow we obtain from Lemma \ref{lem:ani} and 
\arxivyesno{}{\linebreak}%
(\ref{eq:aniflows}) that
\begin{equation}\label{eq:anisoMCin}
\ddt\,|\Gamma(t)|_\gamma 
= -\left\langle \varkappa_\gamma,\mathcal{V} \right\rangle_{\Gamma(t)}
= -\left\langle \beta(\vec\nu) , \varkappa_\gamma^2 \right\rangle_{\Gamma(t)}
\leq 0\,,
\end{equation}
and for anisotropic surface diffusion it holds with the help of 
Lemma \ref{lem:productrules}\ref{item:pri}, Definition 
\ref{def:2.5}\ref{item:def2.5ii} 
and the
divergence theorem, Theorem \ref{thm:div}, that
\begin{equation}\label{eq:anisoSDin}
\ddt\,|\Gamma(t)|_\gamma 
= -\left\langle \varkappa_\gamma,\mathcal{V} \right\rangle_{\Gamma(t)}
= -\left\langle \beta(\vec\nu),\left|\nabs\,\varkappa_\gamma\right|^2 
\right\rangle_{\Gamma(t)}
\leq 0\,.
\end{equation}

Of course, also for the surface energy $|\Gamma|_\gamma$, an
isoperimetric problem can be formulated. Here one wants to find the
shape, which minimizes $|\Gamma|_\gamma$ under all shapes with a given 
enclosed volume. In order to do so, one defines the dual function
\begin{equation*}
  \gamma^\ast (\vec q) = \sup_{\vec p\in\bR^d\setminus\{\vec 0\}}
  \frac{\vec p\,.\,\vec q}{\gamma(\vec p)} \quad\forall\ \vec q \in \bR^d\,.
\end{equation*}
Then the solution of the isoperimetric problem is, up to a scaling,
the Wulff shape
\begin{equation*}
\mathcal{W} = \{\vec q\in\bR^d : \gamma^\ast(\vec q) \le 1\}\,.
\end{equation*}
This is the $1$-ball of $\gamma^\ast$ and we also define the $1$-ball
of $\gamma$
\begin{equation*}
\mathcal{F} = \{\vec p \in\bR^d : \gamma(\vec p) \le 1\}\,,
\end{equation*}
which is called Frank diagram. We refer to Figure~\ref{fig:wulfffrank}
for examples. 
Also surface energies for which the Frank diagram or the Wulff
shape have flat parts, edges and corners are of interest. These
are called crystalline surface energies, and we will be able to
approximate these also in a stable manner.
\begin{figure}
\center
\arxivyesno{
\newcommand\lheight{3cm} 
}{
\newcommand\lheight{2.5cm} 
}
\includegraphics[angle=0,totalheight=\lheight]{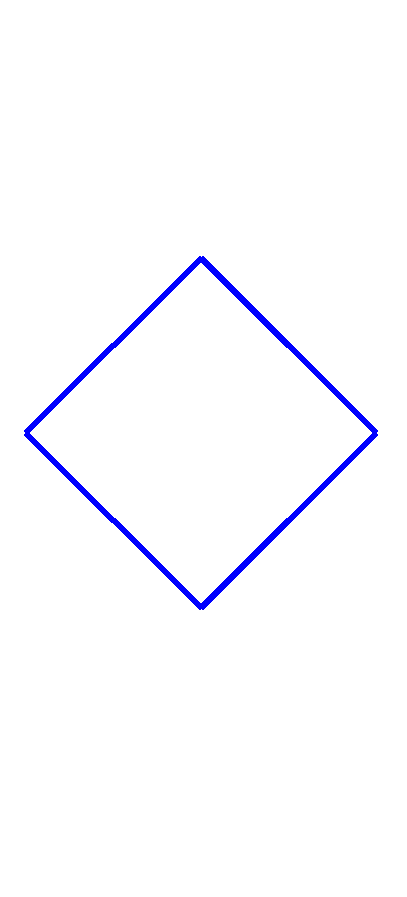} \quad
\includegraphics[angle=0,totalheight=\lheight]{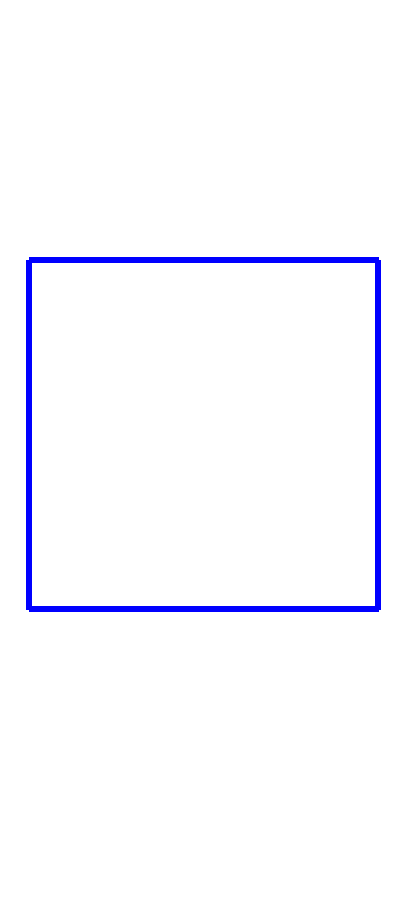} \qquad\qquad\qquad
\includegraphics[angle=0,totalheight=\lheight]{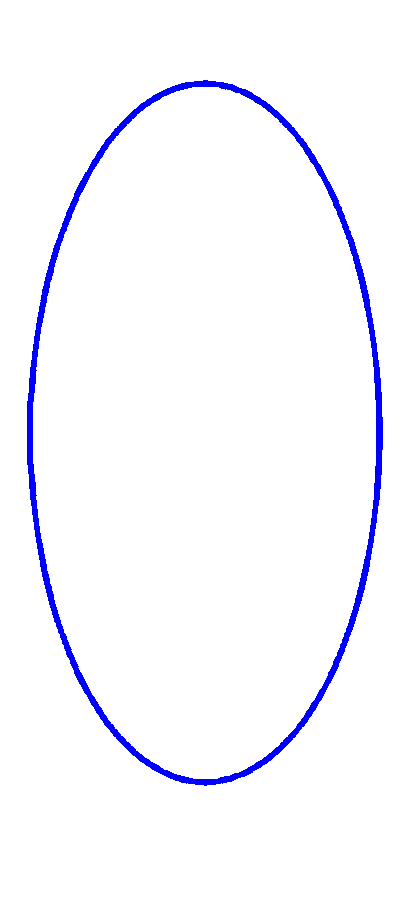} \quad
\includegraphics[angle=0,totalheight=\lheight]{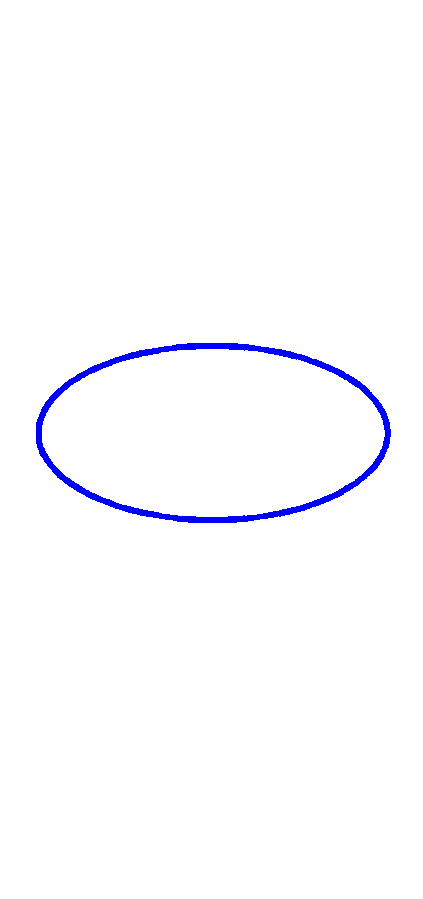}
\caption{Frank diagram and Wulff shape in $\bR^2$ for the
$l^1$-norm, $\gamma(\vec p) = \sum_{i=1}^2 |p_i|$, left,
and for the weighted norm $\gamma(\vec p) = 
\left(\vec p\,.\,\protect\mat{G}\,\vec p\right)^\frac12$, 
$\protect\mat{G} = \frac14\,\binom{4\ 0}{0\ 1}$, right.
}
\label{fig:wulfffrank}
\end{figure}%

\subsection{Suitable weak formulations}
Coming up with stable discretizations is a very difficult task as the flows
\begin{equation*}
\mathcal{V} = \beta(\vec\nu)\,\varkappa_\gamma \quad\text{and}\quad 
\mathcal{V} = 
-\nabs\,.\left(\beta(\vec\nu)\,\nabs\,\varkappa_\gamma\right) 
\qquad\text{on }\Gamma(t)
\end{equation*}
are even more nonlinear than their isotropic counterparts. A major
difficulty is that an analogue of the identity
\begin{equation}\label{eq:dzkappa}
\vec\varkappa = \varkappa\,\vec\nu = \Delta_s\,\vec\id\qquad\mbox{on }\Gamma(t)\,,
\end{equation}
recall Lemma \ref{lem:varkappa}\ref{item:vecvarkappa}, 
is no longer true in general.
However, in many practical situations the present authors were able to
come up with an anisotropic version of (\ref{eq:dzkappa}).

The main observation is that (\ref{eq:dzkappa}) remains true
if we replace the standard Euclidean inner product in $\bR^d$ by an
inner product 
\begin{equation} \label{eq:anisopro}
\left(\vec u,\vec\vvv\right)_\tG = \vec u\,.\,\mat\tG\,\vec\vvv \quad\forall\
\vec u, \vec\vvv\in\bR^d\,,
\end{equation} where $\mat\tG \in \bR^{d\times d}$
is symmetric and positive definite.
One only has to replace the mean curvature
vector $\vec\varkappa$ and the Laplace--Beltrami operator $\Delta_s$
by versions which are appropriate for this new inner product. 
In fact, one just has to consider the canonical Laplace--Beltrami 
operator on $\Gamma(t)$ with respect to the
Riemannian metric given by the new inner product.
However, the surface energy density related to the
new inner product needs to be identified. It can be shown, see
\citet[Lemma~2.1]{ani3d}, that
\begin{equation}\label{eq:111}
  \gamma(\vec\nu) = \sqrt{\vec\nu\,.\, \mat G\,\vec\nu}
\end{equation}
leads to $\mat\tG = [\det\mat G]^{\frac{1}{d-1}} \mat G^{-1}$ and
a weighted mean curvature $\varkappa_\gamma$ 
for which we have a relationship similar to
that in the isotropic case, recall (\ref{eq:dzkappa}). 
In fact, it is shown that
\begin{equation*}
  \varkappa_\gamma\, \vec\nu = \gamma(\vec\nu)\,
  \mat\tG\,\Lapg\,\vec\id  
  \qquad\mbox{on }\Gamma(t)\,,
\end{equation*}
see \citet[(2.33), (2.37)]{ani3d},
where $\Lapg = \nabs^{\tG}\,.\,\nabs^{\tG}$
is the Laplace--Beltrami operator induced by the inner product
\eqref{eq:dzkappa} with $(\nabs^{\tG}\,.)$ and $\nabs^{\tG}$
the associated tangential divergence and tangential gradient,
respectively. 
A suitable generalized divergence theorem on manifolds then
allows one to introduce a weak formulation of
$\gamma(\vec\nu)\,\mat\tG\, \Lapg \,\vec\id$.
Unfortunately, simple anisotropies of the form (\ref{eq:111}) 
only lead to ellipsoidal Wulff shapes as on the right hand side of
Figure~\ref{fig:wulfffrank}, and of course we would like to handle more
general situations.

We now consider a larger class of surface energy densities, which
are given as suitable norms of the ellipsoidal anisotropies. In particular,
we choose
\begin{equation} \label{eq:g}
\gamma(\vec p) = \left(\sum_{\ell=1}^L\,
\left[\gamma_{\ell}(\vec p)\right]^r\right)^{\frac1r}, \qquad
\gamma_\ell(\vec p)= \sqrt{\vec p\,.\,\mat G_{\ell}\,\vec p}\,,
\quad\ell=1,\ldots, L\,,
\end{equation}
so that
\[
\gamma'(\vec p) = \left[\gamma(\vec p)\right]^{1-r}\sum_{\ell=1}^L\,
\left[\gamma_{\ell}(\vec p)\right]^{r-1} \gamma'_\ell(\vec p)\,. 
\]
Here $r\in [1,\infty)$ and $\mat G_{\ell} \in \bR^{d\times d}$, 
$\ell=1,\ldots, L$, are symmetric and positive definite. It turns
out that most energies of relevance can be approximated by the above
class of energies. In particular, hexagonal and cubic anisotropies can be
modelled with appropriate choices of $r$, $L$ and 
$\{\mat G_{\ell}\}_{\ell=1}^L$, see Figure~\ref{fig:BGNwulff}.
\begin{figure}
\center
\arxivyesno{
\newcommand\lheight{3cm} 
}
{
\newcommand\lheight{2.5cm} 
}
\includegraphics[angle=0,totalheight=\lheight]{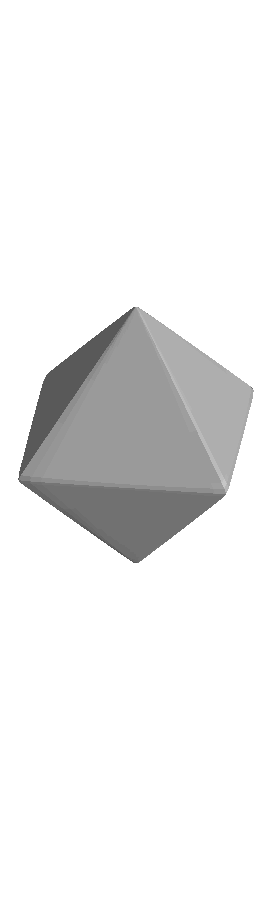} 
\includegraphics[angle=0,totalheight=\lheight]{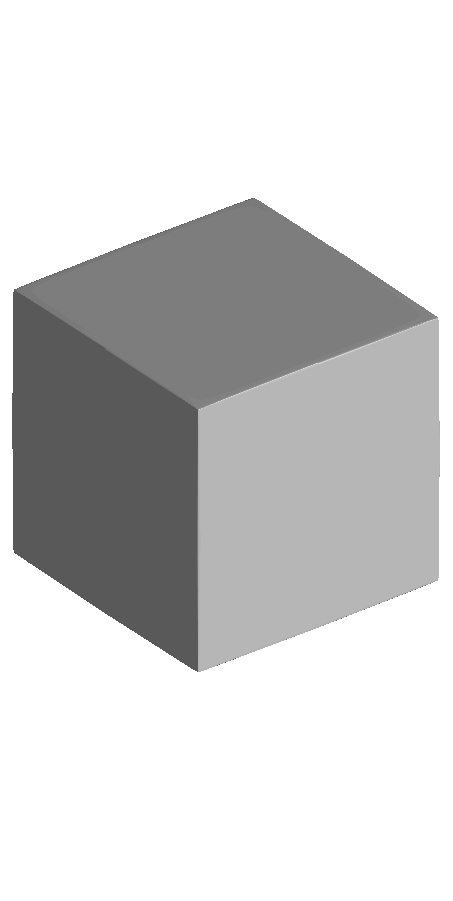} \qquad\quad
\includegraphics[angle=0,totalheight=\lheight]{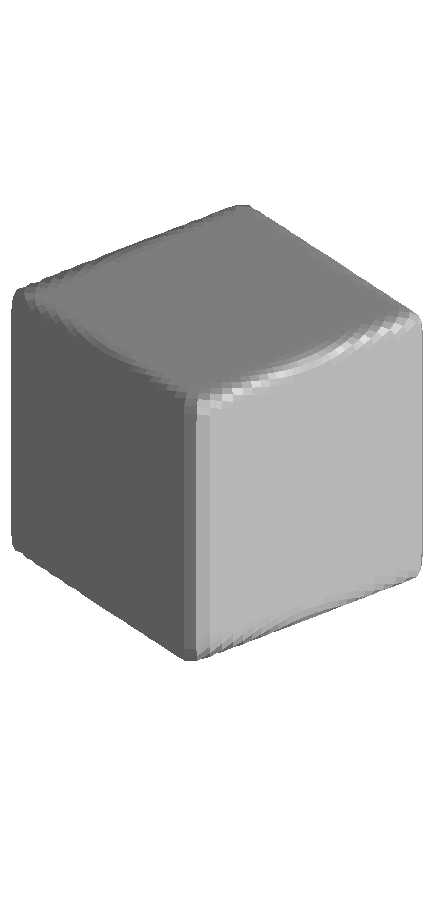} \
\includegraphics[angle=0,totalheight=\lheight]{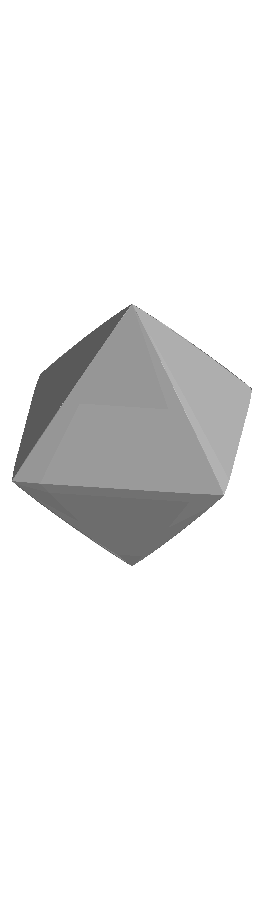} \qquad
\includegraphics[angle=0,totalheight=\lheight]{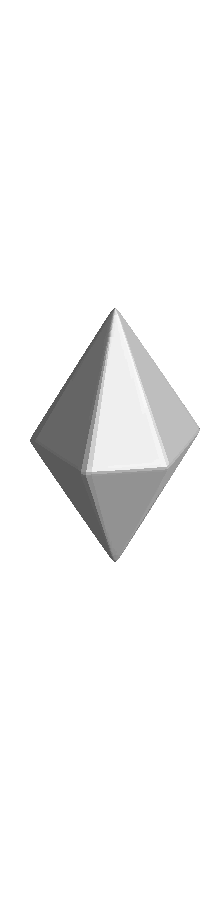} 
\includegraphics[angle=0,totalheight=\lheight]{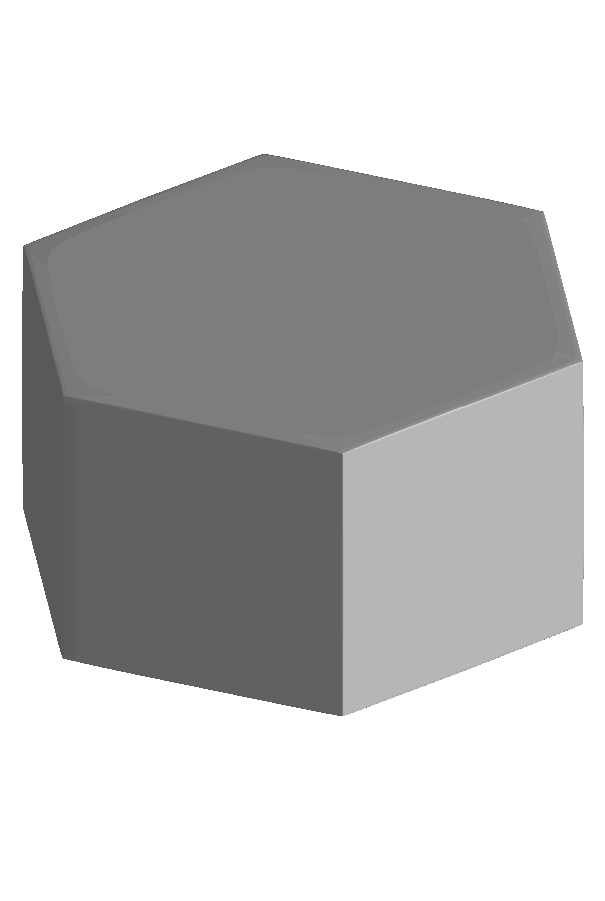}
\caption{Frank diagram and Wulff shape in $\bR^3$ for a
regularized $l^1$-norm, 
$\gamma(\vec p) = \sum_{\ell=1}^3\,[ \epsilon^2\,|\vec p|^2+
p_\ell^2(1-\epsilon^2)]^\frac12$, $\epsilon = 0.01$, left,
a cubic anisotropy, 
$\gamma(\vec p) = [\sum_{\ell=1}^3\,[ \epsilon^2\,|\vec p|^2+
p_\ell^2(1-\epsilon^2)]^\frac r2]^\frac1r$, $\epsilon = 0.01$, $r =
30$, middle, and a hexagonal anisotropy, 
$\gamma(\vec p) = \sum_{\ell=1}^4\,[ \vec p\,.\,
\protect\mat R_\ell^{\protect\transT}\,
\diag(1, \epsilon^2, \epsilon^2)\,\protect\mat R_\ell^{}\,\vec p ]^\frac12$, 
$\epsilon = 0.01$, right. Here $\protect\mat R_\ell^{}$, 
$\ell = 1,\ldots,4$ are suitable rotation matrices.
}
\label{fig:BGNwulff}
\end{figure}%
We remark that in the planar case, $d=2$, from a modelling point of view, there
is no benefit in choosing $r>1$. Moreover, the choice $r=1$ has the advantage
that it leads to linear schemes, i.e.\ a linear system of equations needs to be
solved at each time level.

Most of the surface calculus discussed in Section~\ref{sec:geometry} 
can be repeated in the
context of the energies discussed above. We obtain for example, see
\citet[Theorem~2.1]{ani3d}, that for a $C^2$-hypersurface $\Gamma$ it holds
that
\begin{equation}
\label{varkappaBGN}
\varkappa_\gamma \,\vec\nu=
\sum_{\ell=1}^L \gamma_\ell(\vec\nu)\,\mat \tG_\ell\,
\nabs^{\tG_\ell}\,.\left[
\left[\frac{\gamma_\ell(\vec\nu)}{\gamma(\vec\nu)}\right]^{r-1}
\nabs^{\tG_\ell}\,\vec{\rm id}\right]
\qquad\text{on }\Gamma\,,
\end{equation}
where $\mat \tG_\ell = \left[\det \mat G_\ell\right]^{\frac1{d-1}} 
\mat G_\ell^{-1}$, $\ell=1,\ldots,L$, and
$\nabs^{\tG_\ell}$, $\left(\nabs^{\tG_\ell}\,.\right)$ are defined as follows. 
Let $\Gamma \subset \bR^d$ be a $C^1$-hypersurface and let $\vec p \in \Gamma$.
Let $\{\vec\tau_1,\ldots,\vec\tau_{d-1}\}$
be an orthonormal basis of $\tanspace_{\vec p}\,\Gamma$ with respect to the
inner product $(\cdot,\cdot)_{\tG_\ell}$ on $\bR^d$ induced by $\mat\tG_\ell$, 
recall (\ref{eq:anisopro}).
Let $f:\Gamma\to\bR$, $\vec f :\Gamma\to\bR^d$ be $C^1$-functions. 
The anisotropic surface gradient of $f$, 
the anisotropic surface divergence of $\vec f$
and the anisotropic surface Jacobian of $\vec f$
at the point $\vec p \in \Gamma$ are defined as 
\begin{subequations} \label{eq:calgamma}
\begin{align}
(\nabs^{\tG_\ell}\, f) (\vec p) &= \sum_{i=1}^{d-1}
(\partial_{\vec\tau_i}\,f)(\vec p)\,\vec\tau_i\,,
\label{eq:nabsgamma} \\
(\nabs^{\tG_\ell}\,.\, \vec f) (\vec p) &= 
\sum_{i=1}^{d-1} \mat\tG_\ell\, \vec\tau_i \,.\,
(\partial_{\vec\tau_i}\,\vec f)(\vec p)\,,
\label{eq:divsgamma} \\
(\nabs^{\tG_\ell}\, \vec f) (\vec p) &= \sum_{i=1}^{d-1}
(\partial_{\vec\tau_i}\,\vec f)(\vec p)\otimes \mat\tG_\ell\,\vec\tau_i\,.
\label{eq:nabsgamma_vec}
\end{align}
\end{subequations}
The definitions (\ref{eq:calgamma}) are anisotropic
versions of Definition \ref{def:2.5}\ref{item:def2.5ii}--\ref{item:def2.5v}
in the case $n=d-1$. If, in addition, $\vec g \in C^1(\Gamma)$, we also define 
\begin{equation*}
\left(\nabs^{\tG_\ell}\,\vec f, \nabs^{\tG_\ell}\,\vec g\right)_{\tG_\ell} 
(\vec p) =
\sum_{i=1}^{d-1} \left((\partial_{\vec\tau_i}\,\vec f)(\vec p),
(\partial_{\vec\tau_i}\,\vec g)(\vec p) \right)_{\tG_\ell}\,.
\end{equation*}

Using a generalised divergence theorem, see \citet[Lemma~2.8]{ani3d},
we can obtain the following weak formulations of the anisotropic flows 
\eqref{eq:aniflows} in the case that the anisotropic energy density $\gamma$
is of the form \eqref{eq:g}. 
Given a closed hypersurface $\Gamma(0)$, find an evolving hypersurface 
$(\Gamma(t))_{t\in[0,T]}$
with a global parameterization and induced velocity field
$\vec{\mathcal{V}}$, and $\varkappa_\gamma \in L^2(\GT)$ as follows. 
For almost all $t \in (0,T)$, find
$\vec{\mathcal V}(\cdot,t)\in [L^2(\Gamma(t))]^d$
and $\varkappa_\gamma(\cdot,t)\in L^2(\Gamma(t))$,
respectively $\Wt$, such that 
\begin{subequations} \label{eq:waBGN}
\begin{align}
\left\langle \vec{\mathcal{V}},\chi\,\vec\nu  \right\rangle_{\Gamma(t)} =
\begin{cases} \displaystyle
\left\langle \beta(\vec\nu)\,\varkappa_\gamma, \chi \right\rangle_{\Gamma(t)}
\qquad &\forall\ \chi \in L^2(\Gamma(t))\\
\left\langle \beta(\vec\nu)\,\nabs\,\varkappa_\gamma,
\nabs\,\chi \right\rangle_{\Gamma(t)} 
\qquad &\forall\ \chi \in \Wt
\end{cases} ,\label{eq:wa1} \\
\left\langle \varkappa_\gamma \,\vec\nu,\vec\eta \right\rangle_{\Gamma(t)}
+ \left\langle 
\nabs^{\tG}\,\vec{\rm id} , \nabs^{\tG}\,\vec\eta 
\right\rangle_{\Gamma(t),\gamma}
=0 \quad\forall\ \vec\eta \in \Vt\,,
\label{eq:wa2BGN}
\end{align}
\end{subequations}
where 
\begin{equation} \label{eq:ipGG}
\left\langle \nabs^{\tG}\,\vec\chi, \nabs^{\tG}\,\vec\eta
\right\rangle_{\Gamma(t),\gamma}
=
\sum_{\ell=1}^L \int_{\Gamma(t)} \left[
\frac{\gamma_\ell(\vec\nu)}{\gamma(\vec\nu)}\right]^{r-1}
(\nabs^{\tG_\ell}\,\vec\chi, \nabs^{\tG_\ell}\,\vec\eta)_{\tG_\ell}\,
\gamma_\ell(\vec\nu)\dH{d-1}
\end{equation} 
for all $\vec\chi,\vec\eta \in \Vt$.
In the above weak formulations only derivatives up to first
order appear, and hence once again the equations can be discretized with
the help of continuous piecewise linear finite elements.

\subsection{Finite element approximation}
The weak formulations (\ref{eq:waBGN}) 
can be used to formulate finite element approximations
of \eqref{eq:aniflows}.
Discretizing the velocity and the geometry,
similarly as in (\ref{eq:MCIP}) for isotropic mean curvature flow, 
we recall the following finite element approximations from
\cite{triplejANI,ani3d}. 

Let the closed polyhedral hypersurface $\Gamma^0$ be an approximation of
$\Gamma(0)$. Then, for $m=0,\ldots,M-1$, find 
$\Gamma^{m+1} = \vec X^{m+1}(\Gamma^m)$, for $\vec X^{m+1} \in \Vhm$, 
with unit normal $\vec\nu^{m+1}$, and $\kappa^{m+1}_\gamma \in \Whm$ such that 
\begin{subequations} \label{eq:aniBGN}
\begin{align}
& \left\langle\frac{\vec X^{m+1}-\vec\id}{\ttau_m}, \chi\,\vec\nu^m 
\right\rangle^h_{\Gamma^m}
= \begin{cases} 
\left\langle \beta(\vec\nu^m)\,\kappa^{m+1}_\gamma, \chi 
\right\rangle_{\Gamma^m}^h \\
\left\langle \beta(\vec\nu^m)\,\nabs\,\kappa^{m+1}_\gamma, 
\nabs\,\chi \right\rangle_{\Gamma^m}
\end{cases}
\ \forall\ \chi \in \Whm\,, 
\label{eq:discaniso1} \\
& \left\langle\kappa^{m+1}_\gamma\,\vec\nu^m, \vec\eta
\right\rangle^h_{\Gamma^m}
+ \left\langle \nabs^{\tG_{\ell}}\,\vec X^{m+1},
\nabs^{\tG_{\ell}}\,\vec\eta \right\rangle_{\Gamma^m,\gamma}
=0 \quad\forall\ \vec\eta \in \Vhm \,.
\label{eq:discaniso2}
\end{align}
\end{subequations}
In the above, we have used the notation
\begin{align*} 
& \left\langle \nabs^{\tG_{\ell}}\,\vec X^{m+1},
\nabs^{\tG_{\ell}}\,\vec\eta \right\rangle_{\Gamma^m,\gamma} 
\nonumber \\ & \
=\sum_{\ell=1}^L \int_{\Gamma^m} \left[
\frac{\gamma_\ell(\vec\nu^{m+1} \circ \vec X^{m+1})}
{\gamma(\vec\nu^{m+1}\circ \vec X^{m+1})} \right]^{r-1}
(\nabs^{\tG_{\ell}}\,\vec X^{m+1},\nabs^{\tG_{\ell}}\,\vec\eta)_{\tG_\ell}
\,\gamma_{\ell}(\vec\nu^m) \dH{d-1} \,,
\end{align*}
i.e.\ a discrete analogue of (\ref{eq:ipGG}).

\begin{rem}[Curves in the plane] \label{rem:ipGGh2d}
On using the notation from {\rm \S\ref{subsubsec:polycurves}}, and similarly to
\eqref{eq:MCIP2d}, the systems \eqref{eq:aniBGN} simplify considerably in the
case $d=2$, i.e.\ for the evolution of curves in the plane, recall also
{\rm \citet[(3.5)]{ani3d}}. 
In particular, we can reformulate \eqref{eq:aniBGN} as follows.
Let $\vec X^0 \in \VhI$ be such that $\Gamma^0 = \vec X^0(\bI)$ is a polygonal
approximation of $\Gamma(0)$. Then, for $m=0,\ldots,M-1$, find 
$(\vec X^{m+1},\kappa^{m+1}_\gamma) \in\VhI\times\WhI$ such that
\begin{subequations} 
\begin{align}\label{eq:aniBGN2da}
& - \left\langle \frac{\vec X^{m+1}-\vec X^m}{\ttau_m} ,
  \chi\,[\vec X^{m}_\rho]^\perp \right\rangle^h_{\bI} 
= \begin{cases}
\left\langle \beta\left(-\frac{[\vec X^{m}_\rho]^\perp}{|\vec
X^{m}_\rho|}\right)
\,\kappa^{m+1}_\gamma,
\chi\,|\vec X^{m}_\rho| \right\rangle^h_{\bI} \\
\left\langle \beta\left(-\frac{[\vec X^{m}_\rho]^\perp}{|\vec
X^{m}_\rho|}\right)
(\kappa^{m+1}_\gamma)_\rho, \chi_\rho\,|\vec X^{m}_\rho|^{-1} 
\right\rangle_{\bI}
\end{cases} \,,\\
&  - \left\langle \kappa^{m+1}_\gamma\,[\vec X^m_\rho]^\perp,\vec\eta
  \right\rangle^h_{\bI} +
\sum_{\ell=1}^L \left\langle 
\left[ \frac{\gamma_\ell([\vec X^{m+1}_\rho]^\perp)}
{\gamma([\vec X^{m+1}_\rho]^\perp)} \right]^{r-1}
\frac{\mat G_\ell\,[\vec X^{m+1}_\rho]^\perp}
{\gamma_{\ell}([\vec X^m_\rho]^\perp)},\vec\eta_\rho^\perp
\right\rangle_\bI = 0 
\label{eq:aniBGN2db}
\end{align}
\end{subequations}
for all $\chi \in \WhI$ and $\vec\eta \in \VhI$.
\end{rem}

For $r=1$ in \eqref{eq:g} the systems (\ref{eq:aniBGN}) do not depend on
$\vec\nu^{m+1}$, and so they reduce to linear
systems for the unknowns $\vec X^{m+1}$ and $\kappa^{m+1}_\gamma$. 
For these discrete systems we have the
following existence and uniqueness result, which can be shown by
generalizing the proofs of the corresponding isotropic cases, 
see Theorems~\ref{thm:uniqueMC} and \ref{thm:uniqueSD}.

\begin{thm}\label{thm:uniqueMCani}
Let $\Gamma^m$ satisfy {\rm Assumption~\ref{ass:A}\ref{item:assA1}}, and let
$\gamma$ be of the form \eqref{eq:g} with $r=1$.
Then there exists a unique solution
$(\vec X^{m+1},\kappa^{m+1}_\gamma) \in \Vhm\times \Whm$
to the systems \eqref{eq:aniBGN}.
\end{thm}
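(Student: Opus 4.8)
The plan is to follow closely the arguments used to prove Theorems~\ref{thm:uniqueMC} and~\ref{thm:uniqueSD}. For $r=1$ the weight $[\gamma_\ell(\vec\nu)/\gamma(\vec\nu)]^{r-1}$ appearing in \eqref{eq:ipGG} equals one, so the anisotropic bilinear form in \eqref{eq:discaniso2} depends only on the geometry of $\Gamma^m$, which enters explicitly through $\vec\nu^m$, the surface gradients and the area element. Consequently \eqref{eq:aniBGN} is a linear system in which the number of unknowns equals the number of equations, and it suffices to establish uniqueness; existence then follows automatically. Accordingly, I would consider the homogeneous version of \eqref{eq:aniBGN} (drop the $\vec\id$ terms) for an unknown $(\vec X,\kappa)\in\Vhm\times\Whm$, treating the two cases of \eqref{eq:discaniso1}, anisotropic mean curvature flow and anisotropic surface diffusion, in parallel.

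First I would test the (homogeneous) first equation with $\chi=\kappa\in\Whm$ and the second equation with $\vec\eta=\vec X\in\Vhm$, and then eliminate the common cross term $\langle\kappa\,\vec\nu^m,\vec X\rangle^h_{\Gamma^m}$. In the mean curvature flow case this leaves
\[
\ttau_m\left\langle\beta(\vec\nu^m)\,\kappa,\kappa\right\rangle^h_{\Gamma^m}
+\left\langle\nabs^{\tG_\ell}\,\vec X,\nabs^{\tG_\ell}\,\vec X\right\rangle_{\Gamma^m,\gamma}=0\,,
\]
while in the surface diffusion case the same identity holds with the first term replaced by $\ttau_m\langle\beta(\vec\nu^m)\,\nabs\,\kappa,\nabs\,\kappa\rangle_{\Gamma^m}$. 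The crucial structural fact, which here replaces the isotropic bound $|\nabs\,\vec X|^2\geq0$, is that for $r=1$
\[
\left\langle\nabs^{\tG_\ell}\,\vec X,\nabs^{\tG_\ell}\,\vec X\right\rangle_{\Gamma^m,\gamma}
=\sum_{\ell=1}^L\int_{\Gamma^m}(\nabs^{\tG_\ell}\,\vec X,\nabs^{\tG_\ell}\,\vec X)_{\tG_\ell}\,\gamma_\ell(\vec\nu^m)\dH{d-1}
\]
is nonnegative, since each $\mat\tG_\ell=[\det\mat G_\ell]^{1/(d-1)}\,\mat G_\ell^{-1}$ is symmetric positive definite and $\gamma_\ell(\vec\nu^m)>0$, and that it vanishes exactly when every tangential derivative of $\vec X$ vanishes, i.e.\ when $\vec X$ is (piecewise, hence by continuity globally) constant on $\Gamma^m$. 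As $\beta>0$, both summands above are nonnegative and must vanish separately. Thus $\vec X\equiv\vec X^c\in\bR^d$; in the mean curvature flow case $\langle\beta(\vec\nu^m)\,\kappa,\kappa\rangle^h_{\Gamma^m}=0$ already gives $\kappa=0$, whereas in the surface diffusion case $\nabs\,\kappa=0$ only yields $\kappa\equiv\kappa^c\in\bR$.

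It then remains to invoke Assumption~\ref{ass:A}\ref{item:assA1} exactly as in Lemma~\ref{lem:exXk} and Theorem~\ref{thm:uniqueSD}. In the surface diffusion case, inserting $\vec X\equiv\vec X^c$ and $\kappa\equiv\kappa^c$ into the homogeneous second equation leaves $\langle\kappa^c\,\vec\nu^m,\vec\eta\rangle^h_{\Gamma^m}=0$ for all $\vec\eta\in\Vhm$; recalling \eqref{eq:omegahnuh} and choosing $\vec\eta=\vec\pi_{\Gamma^m}[\kappa^c\,\vec\omega^m]$ gives $(|\kappa^c\,\vec\omega^m|^h_{\Gamma^m})^2=0$, so Assumption~\ref{ass:A}\ref{item:assA1}, which forces $\vec\omega^m\not\equiv\vec0$, yields $\kappa^c=0$. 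In both cases the first equation now reduces to $\langle\vec X^c,\chi\,\vec\nu^m\rangle^h_{\Gamma^m}=0$ for all $\chi\in\Whm$; by \eqref{eq:intnuh} this reads $\vec X^c\,.\,\int_{\Gamma^m}\chi\,\vec\nu^m\dH{d-1}=0$, whence Assumption~\ref{ass:A}\ref{item:assA1} together with Remark~\ref{rem:assA} forces $\vec X^c=\vec0$. This establishes uniqueness, and hence existence. The main obstacle is precisely the nonnegativity and null-space characterization of the anisotropic energy displayed above; this is the only genuine departure from the isotropic proofs, and it rests entirely on the positive definiteness of the $\mat\tG_\ell$ and the positivity of the $\gamma_\ell(\vec\nu^m)$, while every subsequent step is a verbatim transcription of the isotropic arguments.
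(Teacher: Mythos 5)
Your proposal is correct and is essentially the paper's own argument: the paper's proof simply refers back to Theorems~\ref{thm:uniqueMC} and \ref{thm:uniqueSD}, and you carry out exactly that transcription, correctly identifying that the only genuinely new ingredient is the positive semidefiniteness (and characterization of the kernel) of the form $\left\langle \nabs^{\tG_\ell}\,\cdot, \nabs^{\tG_\ell}\,\cdot \right\rangle_{\Gamma^m,\gamma}$ for $r=1$, which follows from the positive definiteness of the $\mat\tG_\ell$ and the positivity of $\gamma_\ell(\vec\nu^m)$. Your use of $\vec\eta = \vec\pi_{\Gamma^m}[\kappa^c\,\vec\omega^m]$ in the surface diffusion case is a trivial variant of the paper's choice $\vec\eta = \vec\omega^m$ in Theorem~\ref{thm:uniqueSD}, and all remaining steps coincide with the isotropic proofs.
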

\begin{proof}
The two results can be shown as in the proofs of
Theorem~\ref{thm:uniqueMC} and Theorem~\ref{thm:uniqueSD}, see
also \citet[Theorem~3.1]{ani3d}.
\end{proof}

\begin{rem} 
For the nonlinear discretizations with $r>1$, neither
existence nor uniqueness results are known. However, in practice 
there are no difficulties in finding solutions to the nonlinear systems
{\rm(\ref{eq:aniBGN})}, and the employed iterative solvers always
converged.
\end{rem}

The main property of the schemes \eqref{eq:aniBGN} are that they can be 
shown to be stable. This enables one to compute solutions for strongly
anisotropic, nearly crystalline anisotropies. Something which is
difficult with other schemes in the literature. The main insight is
that a local inequality for the anisotropic energy is true, which is
similar to the isotropic version stated in Lemma~\ref{lem:stab3d} for $d=3$. 
In fact, we have the following result.

\begin{lem}\label{lem:anistab3d}
Let $d=3$ and let $\Gamma^h =\bigcup^J_{j=1}\overline{\sigma}_j$ be a 
polyhedral hypersurface, with unit normal $\vec\nu^h$, in $\bR^d$.
Then we have for $j=1,\ldots,J$ and $\ell=1,\ldots,L$ that
\begin{equation*} 
\tfrac12\,\int_{\sigma_j} \gamma_\ell(\vec\nu^h)\,
|\nabs^{\tG_\ell}\,\vec X|^2_{\tG_\ell} \dH{2} \geq  
\int_{\vec X(\sigma_j)} \gamma_\ell(\vec\nu^h_{\vec X}) \dH{2}
\quad\forall\ \vec X\in \Vhh\,,
\end{equation*}
with equality for $\vec X= \vec\id_{\mid_{\Gamma^h}} \in \Vhh$.
Here $\vec\nu^h_{\vec X}$ is the unit normal on the polyhedral hypersurface
$\vec X(\Gamma^h)$ 
and $|\cdot|^2_{\tG_\ell}=(\cdot,\cdot)_{\tG_\ell}$.
\end{lem}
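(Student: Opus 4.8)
The plan is to deduce this anisotropic estimate from the isotropic estimate in Lemma~\ref{lem:stab3d} via a single linear change of coordinates that converts the weighted inner product $(\cdot,\cdot)_{\tG_\ell}$ into the Euclidean one. Since $\vec X$ is affine and $\vec\nu^h$ is constant on $\sigma_j$, every integrand appearing is constant on $\sigma_j$, so I only need to compare constant values multiplied by the relevant areas. First I would introduce the symmetric positive definite square root $\mat B_\ell = \mat\tG_\ell^{\frac12}$, so that $(\vec u,\vec\vvv)_{\tG_\ell} = (\mat B_\ell\,\vec u)\,.\,(\mat B_\ell\,\vec\vvv)$ and $|\vec u|_{\tG_\ell} = |\mat B_\ell\,\vec u|$ for all $\vec u,\vec\vvv \in \bR^d$. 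Writing $\hat\sigma_j = \mat B_\ell(\sigma_j)$ and $\widehat{\vec X} = \mat B_\ell \circ \vec X \circ \mat B_\ell^{-1}$, this defines an affine map on the transformed simplex with $\widehat{\vec X}(\hat\sigma_j) = \mat B_\ell(\vec X(\sigma_j))$, and with $\widehat{\vec X} = \vec\id$ on $\hat\sigma_j$ precisely when $\vec X = \vec\id_{\mid_{\Gamma^h}}$.

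Next I would verify two consequences of this substitution. If $\{\vec\tau_1,\vec\tau_2\}$ is a $(\cdot,\cdot)_{\tG_\ell}$-orthonormal tangent basis of $\sigma_j$, then $\{\mat B_\ell\,\vec\tau_1,\mat B_\ell\,\vec\tau_2\}$ is a Euclidean-orthonormal tangent basis of $\hat\sigma_j$ (since $\mat B_\ell\,\vec\tau_i\,.\,\mat B_\ell\,\vec\tau_j = (\vec\tau_i,\vec\tau_j)_{\tG_\ell}$), and for the affine maps one has $\partial_{\mat B_\ell\vec\tau_i}\,\widehat{\vec X} = \mat B_\ell\,(\partial_{\vec\tau_i}\,\vec X)$. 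Recalling that $|\nabs^{\tG_\ell}\,\vec X|^2_{\tG_\ell} = \sum_{i=1}^2 |\partial_{\vec\tau_i}\,\vec X|^2_{\tG_\ell}$, this yields the Dirichlet-energy identity $|\nabs\,\widehat{\vec X}|^2 = |\nabs^{\tG_\ell}\,\vec X|^2_{\tG_\ell}$ on $\hat\sigma_j$. The second, geometric, ingredient is the area-scaling identity $\mathcal{H}^2(\mat B_\ell(S)) = \gamma_\ell(\vec n)\,\mathcal{H}^2(S)$ for every planar $S\subset\bR^3$ with Euclidean unit normal $\vec n$.

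To establish this scaling identity I would use the standard fact that a linear map $\mat B$ changes $(d-1)$-dimensional area of a planar region with unit normal $\vec n$ by the factor $|\det\mat B|\,|\mat B^{-\transT}\vec n|$. Writing $c_\ell = [\det\mat G_\ell]^{\frac12}$, so that $\mat\tG_\ell = c_\ell\,\mat G_\ell^{-1}$ and $\det\mat\tG_\ell = c_\ell$ for $d=3$, I compute $\det\mat B_\ell = (\det\mat\tG_\ell)^{\frac12} = c_\ell^{\frac12}$ and $|\mat B_\ell^{-\transT}\vec n|^2 = \vec n\,.\,\mat\tG_\ell^{-1}\vec n = c_\ell^{-1}\,\vec n\,.\,\mat G_\ell\,\vec n = c_\ell^{-1}\,[\gamma_\ell(\vec n)]^2$, whence the factor is $c_\ell^{\frac12}\,c_\ell^{-\frac12}\,\gamma_\ell(\vec n) = \gamma_\ell(\vec n)$. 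Applying this with $S=\sigma_j$ (normal $\vec\nu^h$), and with $S=\vec X(\sigma_j)$ (normal $\vec\nu^h_{\vec X}$, noting $\vec X(\sigma_j)$ is planar), gives $\gamma_\ell(\vec\nu^h)\,\mathcal{H}^2(\sigma_j) = \mathcal{H}^2(\hat\sigma_j)$ and $\int_{\vec X(\sigma_j)}\gamma_\ell(\vec\nu^h_{\vec X})\dH{2} = \mathcal{H}^2(\mat B_\ell(\vec X(\sigma_j))) = \mathcal{H}^2(\widehat{\vec X}(\hat\sigma_j))$.

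Finally I would chain the pieces: using the Dirichlet-energy identity together with the first scaling relation, the left-hand side equals $\tfrac12\int_{\hat\sigma_j}|\nabs\,\widehat{\vec X}|^2\dH{2}$; Lemma~\ref{lem:stab3d} applied to $\widehat{\vec X}$ on $\hat\sigma_j$ bounds this below by $\mathcal{H}^2(\widehat{\vec X}(\hat\sigma_j))$, which by the second scaling relation is exactly the right-hand side, with equality inherited from the identity case $\widehat{\vec X} = \vec\id$. The main obstacle is purely bookkeeping: arranging that the normalizing constant $c_\ell$ in $\mat\tG_\ell = c_\ell\,\mat G_\ell^{-1}$ cancels correctly between the Jacobian $|\det\mat B_\ell|$ and the normal-stretch factor $|\mat B_\ell^{-\transT}\vec n|$, so that the combined area factor is precisely $\gamma_\ell(\vec n)$. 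This cancellation is special to $d=3$ (equivalently $d-1=2$), and is exactly where the restriction to two-dimensional surfaces, already present in Lemma~\ref{lem:stab3d}, re-enters the argument.
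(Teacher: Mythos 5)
Your proof is correct. Note first that the paper does not prove Lemma~\ref{lem:anistab3d} inline at all: its ``proof'' is the citation \citet[Lemma~3.1]{ani3d}. Your argument is therefore a self-contained alternative, and it is a clean one: conjugating by $\mat B_\ell=\mat\tG_\ell^{\frac12}$ turns the $\tG_\ell$-weighted Dirichlet energy into the Euclidean one and the weighted area element into the Euclidean area element, after which the isotropic Lemma~\ref{lem:stab3d} does all the work. Each intermediate step checks out: $\{\mat B_\ell\vec\tau_1,\mat B_\ell\vec\tau_2\}$ is a Euclidean-orthonormal tangent basis of $\mat B_\ell(\sigma_j)$; the chain rule gives $\partial_{\mat B_\ell\vec\tau_i}\,\widehat{\vec X}=\mat B_\ell\,\partial_{\vec\tau_i}\vec X$, hence $|\nabs\,\widehat{\vec X}|^2=|\nabs^{\tG_\ell}\,\vec X|^2_{\tG_\ell}$; and the area factor $|\det\mat B_\ell|\,|\mat B_\ell^{-\transT}\vec n|=\gamma_\ell(\vec n)$ is computed correctly, using that $\mat B_\ell$ is symmetric. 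The invocation of Lemma~\ref{lem:stab3d} is also legitimate: $\mat B_\ell(\Gamma^h)$ is again a two-dimensional polyhedral surface and $\widehat{\vec X}=\mat B_\ell\circ\vec X\circ\mat B_\ell^{-1}$ is continuous and piecewise affine on it (alternatively, the isotropic inequality is purely element-wise, so a single nondegenerate triangle suffices). The equality case and the degenerate case $\mathcal{H}^2(\vec X(\sigma_j))=0$ are both handled by the same identities. In substance this is the same mechanism that underlies the cited proof: the normalization $\mat\tG_\ell=[\det\mat G_\ell]^{\frac{1}{d-1}}\,\mat G_\ell^{-1}$ is chosen precisely so that the square root of $\mat\tG_\ell$ intertwines anisotropic and Euclidean area; your version makes that mechanism explicit rather than working with wedge-product identities.

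One point of your closing commentary is off, although it does not damage the proof. The cancellation producing the factor $\gamma_\ell(\vec n)$ is \emph{not} special to $d=3$: with the general normalization $c_\ell=[\det\mat G_\ell]^{\frac{1}{d-1}}$ one has $\det\mat\tG_\ell=c_\ell^{d}\,c_\ell^{-(d-1)}=c_\ell$ in every dimension, and the identical computation yields $\mathcal{H}^{d-1}(\mat B_\ell(S))=\gamma_\ell(\vec n)\,\mathcal{H}^{d-1}(S)$ for hyperplanar $S$ for all $d\geq 2$. Your bookkeeping only looked dimension-dependent because you fixed $c_\ell=[\det\mat G_\ell]^{\frac12}$, which is the $d=3$ instance of the general exponent. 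The genuine restriction to $d=3$ enters solely through Lemma~\ref{lem:stab3d}, whose bound of area by Dirichlet energy fails for $n=d-1\neq 2$ by the scaling argument in the remark following it -- a point you do acknowledge, so this is a mislabeling of where the dimension restriction lives, not a gap.
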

\begin{proof} 
See \citet[Lemma~3.1]{ani3d}.
\end{proof}

\begin{lem} \label{lem:anistab2d3d}
Let $\gamma$ be of the form \eqref{eq:g} and let $d=2$ or $d=3$.
Let $\Gamma^h=\bigcup^J_{j=1}\overline{\sigma_j}$ be a 
polyhedral hypersurface, with unit normal $\vec\nu^h$, in $\bR^d$.
Let $\gamma$ be of the form \eqref{eq:g} 
and let $\vec X \in \Vhh$. 
Then it holds that
\begin{equation*} 
\left\langle\nabs^\tG\,\vec X, \nabs^\tG\,(\vec X-\vec\id) 
\right\rangle_{\Gamma^h,\gamma}
\geq |\vec X(\Gamma^h)|_\gamma - |\Gamma^h|_\gamma\,,
\end{equation*}
where $|\Gamma^h|_\gamma = \left\langle 1, \gamma(\vec\nu^h) \right\rangle_{\Gamma^h}$.
\end{lem}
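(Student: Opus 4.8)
The plan is to fix the deformation $\vec X \in \Vhh$ and to exploit that, for this fixed $\vec X$, the bracket $\left\langle\,\cdot\,,\,\cdot\,\right\rangle_{\Gamma^h,\gamma}$ in \eqref{eq:ipGG} is generated by a symmetric, positive semidefinite bilinear form. Writing $w_\ell = [\gamma_\ell(\vec\nu^h_{\vec X})/\gamma(\vec\nu^h_{\vec X})]^{r-1} \geq 0$ for the weight evaluated at the normal $\vec\nu^h_{\vec X}$ of the image surface $\vec X(\Gamma^h)$, recall Lemma~\ref{lem:anistab3d}, I first observe that $w_\ell$, $\vec\nu^h$ and $(\nabs^{\tG_\ell}\,\vec X,\nabs^{\tG_\ell}\,\vec\eta)_{\tG_\ell}$ are all constant on each $\sigma_j$, and I set
\[
B(\vec\chi,\vec\eta) = \sum_{\ell=1}^L \int_{\Gamma^h} w_\ell\,
(\nabs^{\tG_\ell}\,\vec\chi,\nabs^{\tG_\ell}\,\vec\eta)_{\tG_\ell}\,
\gamma_\ell(\vec\nu^h) \dH{d-1}\,,
\]
so that $\left\langle\nabs^\tG\,\vec X,\nabs^\tG\,(\vec X-\vec\id)\right\rangle_{\Gamma^h,\gamma} = B(\vec X,\vec X-\vec\id)$. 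Since each $(\cdot,\cdot)_{\tG_\ell}$ is an inner product and $w_\ell,\gamma_\ell(\vec\nu^h)\ge0$, the form $B$ is symmetric and positive semidefinite.

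Two elementary scalar facts about the weights will do all the bookkeeping at the end, applied pointwise on each $\sigma_j$ where the normals are constant. Abbreviating $a = \vec\nu^h$ and $b = \vec\nu^h_{\vec X}$, the definition of $w_\ell$ gives the exact identity $\sum_\ell w_\ell\,\gamma_\ell(b) = \gamma(b)$, whereas H\"older's inequality with exponents $r$ and $r/(r-1)$ yields $\sum_\ell w_\ell\,\gamma_\ell(a) = [\gamma(b)]^{1-r}\sum_\ell \gamma_\ell(b)^{r-1}\gamma_\ell(a)\leq\gamma(a)$. The identity collapses the weighted densities on the image surface, and the inequality controls the reference surface. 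I expect this H\"older step — reconciling the weight, evaluated at $b$, with the density factor $\gamma_\ell(\vec\nu^h)$, evaluated at $a$ — to be the real crux of the argument.

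For $d=3$ I would then use polarization of $B$,
\[
B(\vec X,\vec X-\vec\id) = \tfrac12\,B(\vec X,\vec X)
- \tfrac12\,B(\vec\id,\vec\id)
+ \tfrac12\,B(\vec X-\vec\id,\vec X-\vec\id)
\geq \tfrac12\,B(\vec X,\vec X) - \tfrac12\,B(\vec\id,\vec\id)\,.
\]
The lower bound $\tfrac12\,B(\vec X,\vec X)\ge |\vec X(\Gamma^h)|_\gamma$ follows by applying Lemma~\ref{lem:anistab3d} on each $\sigma_j$ and each $\ell$, multiplying by $w_\ell\ge0$, and summing over $\ell$, where $\sum_\ell w_\ell\,\gamma_\ell(b)\,\mathcal{H}^2(\vec X(\sigma_j)) = \gamma(b)\,\mathcal{H}^2(\vec X(\sigma_j)) = \int_{\vec X(\sigma_j)}\gamma(\vec\nu^h_{\vec X})\dH2$ by the identity above; summation over $j$ finishes this term. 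For $\tfrac12\,B(\vec\id,\vec\id)$ I would invoke the equality case of Lemma~\ref{lem:anistab3d} (with $\vec X=\vec\id$) to obtain $\tfrac12\,B(\vec\id,\vec\id) = \sum_j \mathcal{H}^2(\sigma_j)\sum_\ell w_\ell\,\gamma_\ell(a)$, and then the H\"older fact to bound this above by $\sum_j \mathcal{H}^2(\sigma_j)\,\gamma(a) = |\Gamma^h|_\gamma$. Combining the two estimates gives the claim for $d=3$.

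For $d=2$ the polarization route is unavailable: as the scaling argument in the Remark following Lemma~\ref{lem:stab3d} shows, there is no per-element bound $\tfrac12\,B(\vec X,\vec X)\ge |\vec X(\Gamma^h)|_\gamma$ for curves. Instead I would estimate $B(\vec X,\vec X-\vec\id)$ directly, segment by segment: working in the $(\cdot,\cdot)_{\tG_\ell}$-geometry, a Cauchy--Schwarz computation in exact analogy with the $n=1$ case of Lemma~\ref{lem:stab2d3d}, recall \eqref{eq:stab2d}, gives for each $\ell$ the element-wise estimate $\int_{\sigma_j}(\nabs^{\tG_\ell}\,\vec X,\nabs^{\tG_\ell}\,(\vec X-\vec\id))_{\tG_\ell}\,\gamma_\ell(\vec\nu^h)\dH1 \ge \int_{\vec X(\sigma_j)}\gamma_\ell(\vec\nu^h_{\vec X})\dH1 - \int_{\sigma_j}\gamma_\ell(\vec\nu^h)\dH1$. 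Multiplying by $w_\ell$ and summing over $\ell$, the identity converts the first right-hand term into $|\vec X(\sigma_j)|_\gamma$, while the H\"older inequality bounds $\sum_\ell w_\ell\int_{\sigma_j}\gamma_\ell(\vec\nu^h)\dH1$ above by $\int_{\sigma_j}\gamma(\vec\nu^h)\dH1$, so that (since this term is subtracted) the per-segment contribution is at least $|\vec X(\sigma_j)|_\gamma - \int_{\sigma_j}\gamma(\vec\nu^h)\dH1$. Summation over $j$ then closes the case. The only genuine new work here is the element-wise anisotropic Cauchy--Schwarz estimate, but it is structurally identical to the isotropic curve estimate already carried out in the proof of Lemma~\ref{lem:stab2d3d}.
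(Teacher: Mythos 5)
Your proposal is correct and follows essentially the same route as the paper: the paper's proof simply defers to \citet[Theorem~3.2]{ani3d} for $d=3$ and to \citet[Theorem~2.5]{triplejANI} for $d=2$, and those arguments are precisely what you reconstruct — Lemma~\ref{lem:anistab3d} combined with polarization of the positive semidefinite form and H\"older's inequality for the weights $[\gamma_\ell/\gamma]^{r-1}$ in the case $d=3$, and the element-wise Cauchy--Schwarz computation in the $(\cdot,\cdot)_{\tG_\ell}$-geometry, in analogy with \eqref{eq:stab2d}, together with the same H\"older step for the case $d=2$, $r\geq1$. Your write-up just makes the cited proofs self-contained, with the only detail left implicit being the planar identity $\gamma_\ell(\vec\nu^h)\,\mathcal{H}^1(\sigma_j)=\bigl(\vec h_j\,.\,\mat\tG_\ell\,\vec h_j\bigr)^{\frac12}$ that makes the analogy with \eqref{eq:stab2d} exact.
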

\begin{proof}
The proof for $d=3$ hinges on Lemma~\ref{lem:anistab3d} and
can be found in \citet[Theorem~3.2]{ani3d}.
On recalling Remark~\ref{rem:ipGGh2d},
the result for $d=2$, and $r=1$, can be shown by using ideas similar to 
(\ref{eq:stab2d}) in the proof of Lemma~\ref{lem:stab2d3d}, see 
\citet[Theorem~2.5]{triplejANI}.
It can easily be extended to the case $r \geq 1$,
on using the techniques in {\rm \citet[Theorem~3.2]{ani3d}}.
\end{proof}

\begin{thm} \label{thm:stab3dani}
Let $\gamma$ be of the form \eqref{eq:g} and let $d=2$ or $d=3$.
Let $(\vec X^{m+1},\kappa_\gamma^{m+1}) \in \Vhm\times\Whm$ be a solution of 
{\rm (\ref{eq:aniBGN})}. Then it holds that
\begin{equation}
|\Gamma^{m+1}|_\gamma + \ttau_m
\begin{cases}
\left\langle \beta(\vec\nu^m)\,\kappa_\gamma^{m+1},\kappa_\gamma^{m+1}
\right\rangle_{\Gamma^m}^h   \\
\left\langle\beta(\vec\nu^m)\,\nabs\,\kappa_\gamma^{m+1}, 
\nabs\,\kappa_\gamma^{m+1} \right\rangle_{\Gamma^m} 
\end{cases}
\leq |\Gamma^m|_\gamma \,, \label{eq:stab00}
\end{equation}
where we recall {\rm (\ref{eq:11})}.
\end{thm}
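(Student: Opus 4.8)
The plan is to mimic the isotropic stability arguments of Theorem~\ref{thm:stabMC} and Theorem~\ref{thm:stabSD}, with the local area inequality Lemma~\ref{lem:stab2d3d} replaced by its anisotropic counterpart Lemma~\ref{lem:anistab2d3d}. First I would test the velocity equation \eqref{eq:discaniso1} with $\chi = \kappa_\gamma^{m+1} \in \Whm$, which produces
\[
\left\langle \frac{\vec X^{m+1} - \vec\id}{\ttau_m}, \kappa_\gamma^{m+1}\,\vec\nu^m \right\rangle^h_{\Gamma^m}
= \begin{cases} \left\langle \beta(\vec\nu^m)\,\kappa_\gamma^{m+1}, \kappa_\gamma^{m+1}\right\rangle^h_{\Gamma^m}, \\[4pt] \left\langle \beta(\vec\nu^m)\,\nabs\,\kappa_\gamma^{m+1}, \nabs\,\kappa_\gamma^{m+1}\right\rangle_{\Gamma^m}, \end{cases}
\]
recording the quadratic dissipation term on the right, which is nonnegative since $\beta$ is positive. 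Next I would test the curvature equation \eqref{eq:discaniso2} with $\vec\eta = \ttau_m^{-1}(\vec X^{m+1} - \vec\id_{\mid_{\Gamma^m}}) \in \Vhm$, obtaining
\[
\left\langle \kappa_\gamma^{m+1}\,\vec\nu^m, \frac{\vec X^{m+1} - \vec\id}{\ttau_m} \right\rangle^h_{\Gamma^m} + \frac{1}{\ttau_m}\left\langle \nabs^{\tG}\,\vec X^{m+1}, \nabs^{\tG}(\vec X^{m+1} - \vec\id) \right\rangle_{\Gamma^m, \gamma} = 0.
\]

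The key observation is that the first term of this second identity coincides with the left-hand side of the first, by symmetry of the mass-lumped inner product. Eliminating that common mixed term gives
\[
\frac{1}{\ttau_m}\left\langle \nabs^{\tG}\,\vec X^{m+1}, \nabs^{\tG}(\vec X^{m+1} - \vec\id) \right\rangle_{\Gamma^m, \gamma} + \begin{cases} \left\langle \beta(\vec\nu^m)\,\kappa_\gamma^{m+1}, \kappa_\gamma^{m+1}\right\rangle^h_{\Gamma^m} \\[4pt] \left\langle \beta(\vec\nu^m)\,\nabs\,\kappa_\gamma^{m+1}, \nabs\,\kappa_\gamma^{m+1}\right\rangle_{\Gamma^m} \end{cases} = 0.
\]
Finally I would invoke Lemma~\ref{lem:anistab2d3d} with $\vec X = \vec X^{m+1}$ on $\Gamma^h = \Gamma^m$, which bounds the anisotropic Dirichlet term from below by $|\vec X^{m+1}(\Gamma^m)|_\gamma - |\Gamma^m|_\gamma = |\Gamma^{m+1}|_\gamma - |\Gamma^m|_\gamma$, since $\Gamma^{m+1} = \vec X^{m+1}(\Gamma^m)$. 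Multiplying through by $\ttau_m$ and rearranging then yields \eqref{eq:stab00}.

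The only genuinely nontrivial ingredient is the anisotropic local energy inequality Lemma~\ref{lem:anistab2d3d}, which I am assuming; everything else is the same testing-and-cancellation bookkeeping as in the isotropic case, so I do not expect the combination itself to be a real obstacle. The one point I would check carefully is that the weighted bilinear form $\left\langle \nabs^{\tG}\cdot, \nabs^{\tG}\cdot \right\rangle_{\Gamma^m, \gamma}$ appearing in \eqref{eq:discaniso2} is \emph{exactly} the form to which Lemma~\ref{lem:anistab2d3d} applies, including the correct placement of the weights $[\gamma_\ell(\vec\nu^{m+1} \circ \vec X^{m+1})/\gamma(\vec\nu^{m+1}\circ \vec X^{m+1})]^{r-1}$ and $\gamma_\ell(\vec\nu^m)$, so that testing with $\vec X^{m+1} - \vec\id$ indeed reproduces the left-hand side of that lemma. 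Provided this matching holds, the argument closes uniformly in both the mean-curvature and surface-diffusion cases and for all $r \geq 1$.
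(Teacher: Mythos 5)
Your proof is correct and essentially identical to the paper's: the paper likewise tests \eqref{eq:discaniso1} with $\chi = \kappa_\gamma^{m+1}$ and \eqref{eq:discaniso2} with $\vec\eta = \frac1{\ttau_m}\,(\vec X^{m+1}-\vec\id_{\mid_{\Gamma^m}})$, cancels the common mixed term, and concludes via Lemma~\ref{lem:anistab2d3d}. The weight-placement point you flag is not an obstacle, since Lemma~\ref{lem:anistab2d3d} is stated exactly for the discrete form $\left\langle \nabs^{\tG}\,\cdot\,, \nabs^{\tG}\,\cdot\,\right\rangle_{\Gamma^m,\gamma}$ appearing in \eqref{eq:discaniso2}, with the $[\gamma_\ell/\gamma]^{r-1}$ factors evaluated on the image surface $\vec X^{m+1}(\Gamma^m)$.
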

\begin{proof}
Similarly to the proof of Theorem~\ref{thm:stabMC}, we can
choose $\chi = \kappa^{m+1}_\gamma\in \Whm$ in (\ref{eq:discaniso1}) and
$\vec\eta = \frac1{\ttau_m}\,(\vec X^{m+1}-\vec\id_{\mid_{\Gamma^m}})\in\Vhm$
in (\ref{eq:discaniso2}), and then combine with Lemma~\ref{lem:anistab2d3d},
in order to obtain the desired results.
\end{proof}

The inequalities \eqref{eq:stab00} are the discrete analogues of
\eqref{eq:anisoMCin} and \eqref{eq:anisoSDin}, respectively,
and anisotropic analogues of Theorems \ref{thm:stabMC} and \ref{thm:stabSD}, 
respectively. 

\subsection{Solution method and discrete systems} \label{subsec:anisolve}
In comparison to the isotropic case the assembly of the matrices is
only slightly more complicated. In addition to the matrices in
\S\ref{subsec:3.3}, we define $M_{\Gamma^m,\beta},$ $A_{\Gamma^m,\beta} 
\in \bR^{K\times K}$ with entries
\[
\left[M_{\Gamma^m,\beta}\right]_{kl} 
=\left\langle \beta(\vec\nu^m)\,\phi^{\Gamma^m}_k,
\phi_l^{\Gamma^m} \right\rangle_{\Gamma^m}^h, \
\left[A_{\Gamma^m,\beta}\right]_{kl} =
\left\langle\beta(\vec\nu^m)\,\nabs\,\phi^{\Gamma^m}_k,\nabs\,\phi_l^{\Gamma^m}
\right\rangle_{\Gamma^m}.
\]
Moreover, given a $\vec\vvv \in \Vchm$, we introduce 
$\mat {A}_{\Gamma^m,\gamma}(\vec\vvv) \in (\bR^{d\times d})^{K\times K}$ 
with entries 
\[
\left[\mat {A}_{\Gamma^m,\gamma}(\vec\vvv) \right]_{kl}
 =  \sum_{\ell=1}^L \left\langle \left[
\frac{\gamma_\ell(\vec\vvv)}{\gamma(\vec\vvv)} \right]^{r-1}
\gamma_{\ell}(\vec\nu^m)\,
\nabs^{\tG_{\ell}}\,\phi^{\Gamma^m}_k,\nabs^{\tG_{\ell}}
\,\phi_l^{\Gamma^m} \right\rangle_{\Gamma^m} \mat\tG_{\ell}\,, 
\]
where we have noted (\ref{eq:ipGG}) and (\ref{eq:anisopro}). 
Assembling these matrices is straightforward, since
assembling e.g.\ $\left(\left\langle\nabs^{\tG_\ell}\,\phi^{\Gamma^m}_k,
\nabs^{\tG_\ell}\,\phi_l^{\Gamma^m}\right\rangle_{\Gamma^m}\right)_{k,l=1}^K$ 
is very similar to assembling $A_{\Gamma^m}$ in (\ref{eq:mat0}).
Using the notation from \eqref{eq:MCvec}, we can then formulate 
\eqref{eq:aniBGN} as:
Find $(\delta\vec X^{m+1},\kappa^{m+1}_\gamma)$ $\in (\bR^d)^K \times \bR^K$ 
such that
\begin{equation}
\begin{pmatrix} \ttau_m
\begin{cases} M_{\Gamma^m,\beta} \\ A_{\Gamma^m,\beta} \end{cases} 
& -\vec N^\transT_{\Gamma^m} \\ 
\vec N_{\Gamma^m} & \mat A_{\Gamma^m,\gamma}(\vec\vvv^{m+1})
\end{pmatrix}
\begin{pmatrix} \kappa^{m+1}_\gamma \\ \delta\vec X^{m+1}
\end{pmatrix}
= \begin{pmatrix} 0 \\ -\mat A_{\Gamma^m,\gamma}(\vec\vvv^{m+1})
\,\vec X^m \end{pmatrix},
\label{eq:anivec}
\end{equation}
where we have used the notation $\vec\vvv^{m+1} = \vec\nu^{m+1} \circ \vec
X^{m+1}$, and where we recall that
$\delta\vec X^{m+1}$ is the vector of coefficients with respect to the 
standard basis for $\vec X^{m+1}- \vec\id_{\mid_{\Gamma^m}} \in \Vhm$.

If $\gamma$ is of the form \eqref{eq:g} with $r=1$, then the system 
\eqref{eq:anivec} does not depend on $\vec\vvv^{m+1}$ and so is linear. 
Hence it can be solved analogously to \eqref{eq:MCvec}, i.e.\ either with a 
sparse direct solver or with the help of a Schur complement approach and
a preconditioned conjugate gradient solver, see \citet[\S4]{ani3d} for
details. 

If $r > 1$, on the other hand, then the nonlinear system \eqref{eq:anivec} 
can be solved with a lagged fixed-point type iteration, where each iterate is 
obtained by solving a linear system of the form \eqref{eq:anivec}, with 
$\vec\vvv^{m+1}$ replaced by a given quantity $\vec\vvv^{m+1,i}$.
We note that in practice such an iteration always converges, and
refer to \citet[\S4]{ani3d} for further details.

\subsection{Volume conservation for semidiscrete schemes}
For the semidiscrete continuous-in-time variants of \eqref{eq:aniBGN} 
it is straightforward to prove the obvious anisotropic analogues of
Theorem~\ref{thm:semidis}\ref{item:semidisstab} 
and Theorem~\ref{thm:SDSD}\ref{item:SDSDstab}, as well as 
Theorem~\ref{thm:SDSD}\ref{item:SDSDvol} in the case of anisotropic surface
diffusion. 

\begin{rem}[Tangential motion] \label{rem:aniTM}
In the anisotropic situation 
the tangential motion induced by the semidiscrete analogue of
\eqref{eq:discaniso2} will no longer lead to surfaces satisfying
{\rm Definition~\ref{def:conformal}}. In particular, 
{\rm Theorem~\ref{thm:semidis}\ref{item:semidisTM}} 
will in general not hold.
In the planar case, and if $\gamma$ is of the form \eqref{eq:g} with
$L=1$ and $r=1$, then equidistribution with respect to $\gamma$ can be shown,
see {\rm \citet[Remark~2.7]{triplejANI}}. 
However, the fully discrete schemes \eqref{eq:aniBGN}, for any $\gamma$ of the
form \eqref{eq:g} and for $d=2$ and $d=3$, exhibit good quality
meshes in practice, without coalescence that could lead to a breakdown of the 
schemes.
See, for example, the numerical simulations in
{\rm \citet[\S5]{ani3d}}.
\end{rem}

\subsection{Alternative numerical approaches}
\label{subsec:alternativeANI}
For the case $d=2$ a numerical scheme for anisotropic mean curvature flow
has been proposed in \cite{Dziuk99}, and an error estimate 
for a semidiscrete continuous-in-time variant is shown, 
on assuming that the approximated solution is sufficiently smooth.

Moreover, and still for $d=2$, fully implicit fully discrete schemes,
similar to \eqref{eq:21617}, for
anisotropic evolution equations are considered in \cite{fdfi}.

The extension of the methods discussed in this section to anisotropic 
evolution laws of surface clusters and surfaces with boundary has been
discussed in \cite{triplejANI,clust3d,ejam3d}.

For parametric methods for anisotropic curvature driven flows in higher
codimension we refer to \cite{Pozzi07,Pozzi08,curves3d}.

We refer also to \cite{DeckelnickD99}, where, on assuming a sufficiently
smooth solution, an error analysis is presented
for a semidiscrete finite element approximation 
by continuous piecewise linears of a graph formulation
of anisotropic mean curvature flow.
We refer to \cite{DeckelnickDE05b} for the corresponding error analysis
for a fully discrete finite element approximation 
of a graph formulation of anisotropic surface diffusion.

We also refer to \cite{DeckelnickDE05} for the approximation of 
anisotropic mean curvature flow in the context of level set and 
phase field methods. 
For the former we mention \cite{ClarenzHRVW05,BurgerHSV07},
while for the latter we refer to 
\cite{CaginalpL87,GarckeSN99,Benes03,GraserKS13}
and the present authors' work in \cite{eck,vch} on stable approximations of 
anisotropic mean curvature flow and anisotropic surface diffusion.

Finally we mention that for strongly
anisotropic surface energies, some authors propose a curvature energy
regularization, which leads to higher order flows
with similarities to Willmore flow, 
see \cite{Burger05,HausserV05,TorabiLVW09}.

\section{Coupling bulk equations to geometric equations on the
surface and applications to crystal growth}
We now consider how curvature driven interface evolutions can involve
quantities defined in the bulk regions surrounding the interface.

\subsection{The Mullins--Sekerka problem}
Let us start with one of the simplest problems that couple geometric equations
on the interface to equations in the bulk. Let
$\Omega\subset\bR^d$, $d\geq2$, 
be a domain with a Lipschitz boundary $\partial\Omega$, 
which is separated by an
interface into two different phases. The interface is at each time $t$
assumed to be a closed $C^2$--hypersurface $\Gamma(t)\subset\Omega$. 
We adopt the notation in \S\ref{subsec:evolve}, recall Figure~\ref{fig:sketch},
and assume that two phases occupy regions $\Omega_-(t)$ and
$\Omega_+(t) = \Omega\setminus \overline{\Omega_-(t)}$
with $\Gamma(t)=\partial\Omega_-(t)$,
and with $\vec\nu$ denoting the outer unit normal to $\Omega_-(t)$
on $\Gamma(t)$. 
Then the Mullins--Sekerka problem is given as follows. 
Given $\Gamma(0)$, find $u:\Omega\times [0,T]\to\bR$ and the evolving interface
$(\Gamma(t))_{t\in[0,T]}$ such that for all $t\in(0,T]$ the following
conditions hold
\begin{subequations} \label{eq:MS}
\begin{alignat}{4}
-\Delta\,u &=0 \quad && \text{in } \Omega_-(t)\,,\qquad &
-\Delta\, u &=0 \quad &&\text{in } \Omega_+(t)\,,\label{eq:MS1}\\
u &= \varkappa \quad &&\text{on } \Gamma(t)\,,
& \left[\partial_{\vec\nu}\,u\right]^+_-
 &= -\mathcal{V}\quad && \text{on } \Gamma(t)\,,\label{eq:MS2}\\
\partial_{\vec\nu_\Omega}\,u &= 0 \quad && \text{on } 
\partial\Omega\,,\label{eq:MS3}
\end{alignat}
\end{subequations}
where we have recalled the notation \eqref{eq:dnufjump} and $\vec\nu_{\Omega}$
is the outer unit normal to $\Omega$ on $\partial\Omega$.

\subsubsection{Weak formulation of the Mullins--Sekerka problem}
A finite element approximation of the Mullins--Sekerka problem needs
a suitable weak formulation, which is given as follows.
Given a closed hypersurface $\Gamma(0) \subset \Omega$, 
we seek an evolving hypersurface $(\Gamma(t))_{t\in[0,T]}$
that separates $\Omega$ into $\Omega_-(t)$ and $\Omega_+(t)$,
with a global parameterization and induced velocity field
$\vec{\mathcal{V}}$, and $\varkappa \in L^2(\GT)$ as well as 
$u : \Omega \times [0,T] \to \bR$ 
as follows. 
For almost all $t \in (0,T)$, find
$(u(\cdot,t),\vec {\mathcal V}(\cdot,t),\varkappa(\cdot,t))\in H^1(\Omega) 
\times [L^2(\Gamma(t))]^d \times L^2(\Gamma(t))$ such that 
\begin{subequations} \label{eq:WMS}
\begin{align} \label{eq:WMS1}
&\left(\nabla\, u,\nabla\,\phi\right) =
\left\langle \vec{\mathcal{V}},\phi\,\vec\nu \right\rangle_{\Gamma(t)} 
\quad\forall\ \phi\in H^1(\Omega)\,,\\
&\left\langle u,\chi \right\rangle_{\Gamma(t)} 
= \left\langle \varkappa, \chi \right\rangle_{\Gamma(t)}
\quad\forall\ \chi\in L^2(\Gamma(t))\label{eq:WMS2}\,,\\
&\left\langle \varkappa\,\vec\nu,\vec\eta \right\rangle_{\Gamma(t)} 
+ \left\langle \nabs\,\vec\id , \nabs\,\vec\eta 
\right\rangle_{\Gamma(t)}=0 \qquad
\forall\ \vec\eta\in \Vt\,. \label{eq:WMS3}
\end{align}
\end{subequations}
Here $(\cdot,\cdot) = \left\langle\cdot,\cdot\right\rangle_\Omega$ 
is the $L^2$--inner product on $\Omega$. It is easy to show that a sufficiently
smooth solution to \eqref{eq:WMS} solves \eqref{eq:MS}. To this end, 
we observe that it follows from \eqref{eq:WMS1} and Remark~\ref{rem:dnufjump},
on using a density argument, that
\begin{align}
\left\langle \mathcal{V},\phi\right\rangle_{\Gamma(t)} & 
= \left\langle \vec{\mathcal{V}},\phi\,\vec\nu \right\rangle_{\Gamma(t)} 
= \left(\nabla\, u,\nabla\,\phi\right) 
 \nonumber \\ &
= - \int_{\Omega_-(t) \cup \Omega_+(t)} \phi\,\Delta u\dL{d}
+ \left\langle \partial_{\vec\nu_\Omega}\,u,\phi 
\right\rangle_{\partial\Omega} 
- \left\langle 
   \left[\partial_{\vec\nu}\,u\right]^+_-,\phi \right\rangle_{\Gamma(t)} 
 \nonumber \\ & \hspace{6cm}
\quad\forall\ \phi \in H^1(\Omega)\,. \label{eq:divjump}
\end{align}
Now the fundamental lemma of the calculus of variations, together
with \eqref{eq:divjump}, yields that \eqref{eq:MS1}, \eqref{eq:MS3}
and the second condition in \eqref{eq:MS2} hold.
Similarly, it follows from \eqref{eq:WMS2} and \eqref{eq:WMS3},
on recalling Remark~\ref{rem:ibp}\ref{item:weakvarkappa},
that the first condition in \eqref{eq:MS2} is also satisfied.

\begin{rem} \label{rem:WMS}
For a sufficiently smooth weak solution of \eqref{eq:WMS}, we can formally
prove the following results.
\begin{enumerate}
\item \label{item:WMSi}
The Mullins--Sekerka problem decreases the surface area of the interface. 
This follows from the transport theorem, 
{\rm Theorem~\ref{thm:trans}}, \eqref{eq:WMS2} with $\chi = \mathcal{V}$ and
\eqref{eq:WMS1} with $\phi=u$, as 
\[
\ddt\,|\Gamma(t)|
= - \left\langle \varkappa,\mathcal{V} \right\rangle_{\Gamma(t)} 
= - \left\langle u,\mathcal{V} \right\rangle_{\Gamma(t)}
= - \left\langle \vec{\mathcal{V}},u\,\vec\nu \right\rangle_{\Gamma(t)}
= - \left|\nabla\,u\right|_\Omega^2 \leq 0\,.
\]
\item \label{item:WMSii}
The Mullins--Sekerka problem preserves the volume $\mathcal{L}^d(\Omega_-(t))$,
and \arxivyesno{}{\linebreak}%
hence $\mathcal{L}^d(\Omega_+(t))$,
as the transport theorem, {\rm Theorem~\ref{thm:transvol}}, yields that
\[
\ddt\,\mathcal{L}^d(\Omega_-(t))
= \left\langle\mathcal{V},1 \right\rangle_{\Gamma(t)}
= \left\langle \vec{\mathcal{V}},\vec\nu \right\rangle_{\Gamma(t)}
= \left(\nabla\, u, \nabla\, 1\right) = 0\,,
\]
where we have chosen $\phi= 1$ in \eqref{eq:WMS1}.
\end{enumerate}
\end{rem}

\begin{rem} \label{rem:FWMS}
Given the hypersurface $\Gamma(t)\subset\Omega$, and hence its normal
$\vec\nu(t)$ and curvature
$\varkappa(t)$, we can solve \eqref{eq:MS1}, \eqref{eq:MS3}
and the first equation in \eqref{eq:MS2} to obtain a bulk function
$u(t)$ that depends  on $\varkappa(t)$. Defining now
\begin{equation*}
\mathfrak{F}(\varkappa) = -\left[\partial_{\vec\nu}\,u\right]^+_- ,
\end{equation*}
we obtain that
\[
\left\langle \mathfrak{F}(\varkappa),\varkappa \right\rangle_{\Gamma(t)}
= - \left\langle \left[\partial_{\vec\nu}\,u\right]^+_-,\varkappa 
  \right\rangle_{\Gamma(t)} 
=- \left\langle \left[\partial_{\vec\nu}\,u\right]^+_-, u 
  \right\rangle_{\Gamma(t)} 
= \left|\nabla\,u\right|_\Omega^2 \geq 0\,,
\]
where we have observed the first equation in \eqref{eq:MS2}, 
and the choice $\phi=u$ in \eqref{eq:divjump}, on noting \eqref{eq:MS1}
and \eqref{eq:MS3}. 
Therefore, the Mullins--Sekerka problem \eqref{eq:MS} can be written 
in the general form \eqref{eq:Fgeomev} satisfying \eqref{eq:Fineq}.
\end{rem}

\subsubsection{An unfitted finite element approximation of the 
Mullins--Sekerka problem}
In addition to the discretization of the evolving hypersurface, we need
a discretization of the domain $\Omega$. For simplicity we assume that
$\Omega$ is a polyhedral domain. Although a generalization to a curved
domain $\Omega$ is possible using suitable boundary approximations, see
\cite{Ciarlet78}. For all $m\geq0$, let $\mathcal{T}^m$ be a
regular partitioning of $\Omega$ into disjoint open simplices, so that
$\overline{\Omega}=\cup_{\sigmaO\in\mathcal{T}^m}\overline{\sigmaO}$, 
see \cite{Ciarlet78} for a definition of regular partitioning
and further details about finite elements.
Note that we allow for time-dependent bulk triangulations.
Associated with $\mathcal{T}^m$ is the finite element space 
\begin{equation} \label{eq:Sm}
S^m = \left\{\chi \in C(\overline{\Omega}) : \chi_{\mid_{\sigmaO}}
\text{ is affine } \forall\ \sigmaO \in \mathcal{T}^m\right\} 
\subset H^1(\Omega)\,. 
\end{equation}
Let $K_{S^m}$ be the number of nodes of $\mathcal{T}^m$ and  
let $\{\vec p^m_{k}\}_{k=1}^{K_{S^m}}$ be the coordinates of these nodes.
Let $\{\phi_{k}^{S^m}\}_{k=1}^{K_{S^m}}$ be the standard basis functions 
for $S^m$.
We introduce $I^m:C(\overline{\Omega})\to S^m$, the interpolation
operator, such that $(I^m \eta)(\vec p_k^m)= \eta(\vec p_k^m)$ 
for $k=1,\ldots, K_{S^m}$. 
A discrete semi-inner product on $C(\overline{\Omega})$ is then defined by 
\begin{equation*} 
\left(\eta_1,\eta_2\right)^h_m = \left(I^m[\eta_1\,\eta_2],1\right) 
\quad\forall\ \eta_1,\,\eta_2 \in C(\overline{\Omega})\,,
\end{equation*} 
with the induced seminorm given by 
$|\eta|_{\Omega,m}  = [\,(\eta,\eta)^h_m\,]^{\frac{1}{2}}$
for $\eta \in C(\overline{\Omega})$.

We can now introduce the finite element approximation of the
Mullins--Sekerka problem from \cite{dendritic},  
based on the weak formulation \eqref{eq:WMS}, as follows. 
Let the closed polyhedral hypersurface $\Gamma^0$ be an approximation of
$\Gamma(0)$ and recall the time interval partitioning \eqref{eq:ttaum}. 
Then, for $m=0,\ldots,M-1$, find 
$(U^{m+1},\vec X^{m+1},\kappa^{m+1}) \in S^m \times \Vhm \times \Whm$ such that
\begin{subequations}
\label{eq:DMS}
\begin{align}\label{eq:DMS1}
&\left(\nabla\, U^{m+1}, \nabla\,\varphi\right) 
= \left\langle \pi_{\Gamma^m}\left[
\frac{\vec X^{m+1}-\vec\id}{\ttau_m} \,.\,\vec\omega^m \right], \varphi
\right\rangle_{\Gamma^m} \quad\forall\ \varphi \in S^m\,,\\
&\left\langle\kappa^{m+1},\chi\right\rangle^h_{\Gamma^m} =\left\langle
U^{m+1},\chi\right\rangle_{\Gamma^m}
\quad\forall\ \chi \in \Whm \,,\label{eq:DMS2}\\
&\left\langle \kappa^{m+1}\,\vec\nu^m,\vec\eta\right\rangle^h_{\Gamma^m} +
  \left\langle\nabs\,\vec X^{m+1},\nabs\,\vec\eta\right\rangle_{\Gamma^m}
= 0
\quad\forall\ \vec\eta \in \Vhm \label{eq:DMS3}
\end{align}
\end{subequations}
and set $\Gamma^{m+1} = \vec X^{m+1}(\Gamma^m)$. 
In the above we have recalled (\ref{eq:omegah}). 

\begin{rem}[Implementation] \label{rem:unfitted}
The approximation \eqref{eq:DMS} is called unfitted because the surface mesh is
totally independent of the bulk mesh, and is not fitted to the bulk
mesh in the sense that the surface mesh does not consist of faces of the
bulk mesh, see {\rm \cite{BarrettE82}}.
As a consequence, special quadrature rules need to be employed in order to
calculate the right hand sides in \eqref{eq:DMS1} and \eqref{eq:DMS2}. 
Here the most challenging aspect is to compute intersections 
$\sigma^m \cap \sigmaO^m$ between an arbitrary element 
$\sigma^m \subset \Gamma^m$ and an element $\sigmaO^m\in\mathcal{T}^m$
of the bulk mesh.
An algorithm that describes how these intersections can be calculated is given
in {\rm \citet[p.\ 6284]{dendritic}}, see also Figures~4 and 5 in 
{\rm \cite{dendritic}}. 
\end{rem}

Before stating our next result, we need an assumption on the compatibility
between bulk and surface mesh.

\begin{assumption} \label{ass:B}
Let $\Gamma^m$ and $\mathcal{T}^m$ be such that 
\[
\dim\left\{ \int_{\Gamma^m} \varphi\,\vec\omega^m\dH{d-1} : \varphi \in S^m
\right\} = d\,.\]
\end{assumption}

\begin{thm} \label{thm:HDMS}
Let $\Gamma^m$ and $\mathcal{T}^m$ satisfy 
{\rm Assumption~\ref{ass:B}}. 
Then there exists a unique solution $(U^{m+1}, \vec X^{m+1},\kappa^{m+1})\in
S^m\times \Vhm\times \Whm$ to \eqref{eq:DMS}.
In addition, if $d=2$ or $d=3$,
then a solution to \eqref{eq:DMS} satisfies
the stability estimate
\begin{equation*}
|\Gamma^{m+1}| + \ttau_m \left|\nabla\, U^{m+1}\right|_\Omega^2
\leq |\Gamma^m|\,.
\end{equation*}
\end{thm}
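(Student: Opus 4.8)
The plan is to handle the two assertions in turn, first deriving the stability estimate and then obtaining existence from uniqueness, since \eqref{eq:DMS} is a square linear system (the number of unknowns in $S^m\times\Vhm\times\Whm$ equals the number of test equations).

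For the stability bound I would test the three equations of \eqref{eq:DMS} against a compatible triple, mirroring the proof of Theorem~\ref{thm:stabMC}. Concretely, take $\varphi = U^{m+1}\in S^m$ in \eqref{eq:DMS1}, take $\chi = \pi_{\Gamma^m}[\frac{\vec X^{m+1}-\vec\id}{\ttau_m}\cdot\vec\omega^m]\in\Whm$ in \eqref{eq:DMS2}, and take $\vec\eta = \frac{1}{\ttau_m}(\vec X^{m+1}-\vec\id_{\mid_{\Gamma^m}})\in\Vhm$ in \eqref{eq:DMS3}. Testing \eqref{eq:DMS1} gives $|\nabla U^{m+1}|_\Omega^2$ equal to the interface term on the right; the chosen $\chi$ is exactly that term, so \eqref{eq:DMS2} rewrites it as $\langle\kappa^{m+1},\pi_{\Gamma^m}[\frac{\vec X^{m+1}-\vec\id}{\ttau_m}\cdot\vec\omega^m]\rangle^h_{\Gamma^m}$. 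Since the mass-lumped product sees only nodal values, this collapses to $\langle\kappa^{m+1}\vec\omega^m,\frac{\vec X^{m+1}-\vec\id}{\ttau_m}\rangle^h_{\Gamma^m}$, which by \eqref{eq:omegahnuh} equals $\langle\kappa^{m+1}\vec\nu^m,\frac{\vec X^{m+1}-\vec\id}{\ttau_m}\rangle^h_{\Gamma^m}$. Substituting the above $\vec\eta$ into \eqref{eq:DMS3} then gives $\ttau_m|\nabla U^{m+1}|_\Omega^2 = -\langle\nabs\vec X^{m+1},\nabs(\vec X^{m+1}-\vec\id)\rangle_{\Gamma^m}$, and Lemma~\ref{lem:stab2d3d} (applicable precisely for $n=1,2$, i.e.\ $d=2,3$) bounds the right-hand side below by $-(|\Gamma^{m+1}|-|\Gamma^m|)$, yielding the claim.

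For uniqueness I would run the same chain on the homogeneous system (dropping $\vec\id$ and using $\vec\eta=\frac{1}{\ttau_m}\vec X$), obtaining $|\nabla U|_\Omega^2 + \frac{1}{\ttau_m}|\nabs\vec X|^2_{\Gamma^m}=0$; hence $U\equiv U^c$ is constant on the connected domain $\Omega$ and $\vec X\equiv\vec X^c$ is constant on $\Gamma^m$. Putting $\nabla U=0$ into the homogeneous \eqref{eq:DMS1} leaves $\vec X^c\cdot\int_{\Gamma^m}\varphi\,\vec\omega^m\dH{d-1}=0$ for all $\varphi\in S^m$, so Assumption~\ref{ass:B} forces $\vec X^c=\vec 0$. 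The homogeneous \eqref{eq:DMS2} with $U=U^c$ then reads $\langle\kappa-U^c,\chi\rangle^h_{\Gamma^m}=0$ for all $\chi\in\Whm$ (using Remark~\ref{rem:ip0} to identify the two inner products against the constant), whence $\kappa\equiv U^c$. Finally, as $\nabs\vec X=0$, the homogeneous \eqref{eq:DMS3} reduces to $\langle\kappa\vec\nu^m,\vec\eta\rangle^h_{\Gamma^m}=0$ for all $\vec\eta\in\Vhm$, which by \eqref{eq:omegahnuh} and a nodal argument yields $U^c\,\vec\omega^m(\vec q_k)=\vec 0$ at every vertex; since Assumption~\ref{ass:B} precludes $\vec\omega^m\equiv\vec 0$, some vertex normal is nonzero, so $U^c=0$, and therefore $U=0$, $\kappa=0$, $\vec X=\vec 0$.

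The step I expect to demand the most care is the use of Assumption~\ref{ass:B} in place of Assumption~\ref{ass:A}: because the bulk variable $U$ couples to the surface only through the mass-lumped normal term, the conclusion $\vec X^c=\vec 0$ cannot be drawn as in Lemma~\ref{lem:exXk} but must come from the bulk--surface mesh-compatibility condition, and constancy then has to be propagated through the three unknowns in the correct order ($\vec X^c$ first, then $\kappa=U^c$, then $U^c=0$) rather than treating each field in isolation. The supporting bookkeeping—checking at each stage that the mass-lumped inner product genuinely reduces interpolated test functions to their nodal values—is routine but indispensable.
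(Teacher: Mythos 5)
Your proposal is correct and takes essentially the same route as the paper's own proof: the same test functions ($\varphi=U^{m+1}$ in \eqref{eq:DMS1}, $\chi=\pi_{\Gamma^m}[(\vec X^{m+1}-\vec\id)\,.\,\vec\omega^m]$ up to the $1/\ttau_m$ scaling in \eqref{eq:DMS2}, $\vec\eta=\vec X^{m+1}-\vec\id_{\mid_{\Gamma^m}}$ in \eqref{eq:DMS3}), the same collapse of the mass-lumped term via nodal values and \eqref{eq:omegahnuh}, Lemma~\ref{lem:stab2d3d} for the stability bound, and the same homogeneous-system argument in which Assumption~\ref{ass:B} (rather than Assumption~\ref{ass:A}) first forces $\vec X^c=\vec 0$ and then, via $\vec\omega^m\not\equiv\vec 0$, gives $\kappa=U^c=0$. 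One slip of wording only: since Lemma~\ref{lem:stab2d3d} gives $\langle\nabs\,\vec X^{m+1},\nabs\,(\vec X^{m+1}-\vec\id)\rangle_{\Gamma^m}\geq|\Gamma^{m+1}|-|\Gamma^m|$, the right-hand side $-\langle\nabs\,\vec X^{m+1},\nabs\,(\vec X^{m+1}-\vec\id)\rangle_{\Gamma^m}$ of your identity is bounded \emph{above} (not below) by $|\Gamma^m|-|\Gamma^{m+1}|$, which is the direction actually needed for the claim.
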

\begin{proof} 
Existence follows from uniqueness and hence we consider
the homogeneous linear system. Find $(U,\vec X,\kappa) \in
S^m\times\underline{V} (\Gamma^m)\times \Whm$ such that
\begin{subequations} \label{eq:HDMS}
\begin{align}\label{eq:HDMS1}
\ttau_m\left(\nabla\,U,\nabla\,\varphi\right) 
- \left\langle\pi_{\Gamma^m}[\vec X\,.\,\vec\omega^m],\varphi
\right\rangle_{\Gamma^m} & = 0\quad\forall\ \varphi\in S^m\,,\\
\left\langle \kappa,\chi\right\rangle^h_{\Gamma^m} - \left\langle
U,\chi\right\rangle_{\Gamma^m} &=0\quad\forall\ \chi\in
\Whm\,,\label{eq:HDMS2}\\
\left\langle\kappa\,\vec\omega^m,\vec\eta\right\rangle^h_{\Gamma^m} +
\left\langle\nabs\,\vec X, \nabs\,\vec\eta\right\rangle_{\Gamma^m} &=0
\quad\forall\ \vec\eta\in\Vhm\,,\label{eq:HDMS3}
\end{align}
\end{subequations}
where we have noted (\ref{eq:omegahnuh}) in (\ref{eq:HDMS3}).
Choosing $\varphi=U$ in \eqref{eq:HDMS1}, 
$\chi=\pi_{\Gamma^m}[\vec X\,.\,\vec\omega^m]$ in \eqref{eq:HDMS2} 
and $\vec\eta = \vec X$ in \eqref{eq:HDMS3} gives
\begin{equation*}
\ttau_m\left|\nabla\,U\right|_\Omega^2 
+ \left|\nabs\,\vec X\right|_{\Gamma^m}^2 = 0\,.
\end{equation*}
We hence obtain that $U$ is constant in $\Omega$ and that 
$\vec X$ is constant on $\Gamma^m$.
Now \eqref{eq:HDMS1} and Assumption~\ref{ass:B} yield that $\vec X = \vec 0$. 
Moreover, it follows from \eqref{eq:HDMS2} that $\kappa = U$ is constant on
$\Gamma^m$. Hence we can proceed as in the proof of Theorem~\ref{thm:uniqueSD}
to show that $\kappa = U = 0$, and so we obtain uniqueness. 

It remains to prove the stability bound. Here we choose
$\varphi= U^{m+1}$ in \eqref{eq:DMS1}, 
$\chi = \pi_{\Gamma^m}[(\vec X^{m+1}-\vec\id)\,.\,\vec\omega^m]$ 
in \eqref{eq:DMS2} and 
$\vec\eta = \vec X^{m+1}-\vec\id_{\mid_{\Gamma^m}}$ in \eqref{eq:DMS3} 
and obtain
\begin{equation*}
\ttau_m\left|\nabla\,U^{m+1}\right|_\Omega^2 + 
\left\langle\nabs\,\vec X^{m+1},\nabs\,(\vec X^{m+1}-\vec\id) 
\right\rangle_{\Gamma^m} =0\,.
\end{equation*}
Now Lemma~\ref{lem:stab2d3d} yields the claim.
\end{proof}

\begin{rem}[Semidiscrete scheme] \label{rem:MSsd}
We note that the above stability bound is a discrete analogue of
{\rm Remark~\ref{rem:WMS}\ref{item:WMSi}}. However, the fully discrete scheme
\eqref{eq:DMS} will in general not satisfy an exact discrete analogue of
the volume conservation property
{\rm Remark~\ref{rem:WMS}\ref{item:WMSii}}. That is because
choosing $\varphi=1$ in \eqref{eq:DMS1} only leads to
\begin{align} \label{eq:Xiddiff}
0 & = \left\langle \pi_{\Gamma^m}\left[
\left(\vec X^{m+1}-\vec\id\right).\,\vec\omega^m \right], 1
\right\rangle_{\Gamma^m}
= \left\langle \vec X^{m+1}-\vec\id,\vec\omega^m \right\rangle_{\Gamma^m}^h
\nonumber \\ &
= \left\langle \vec X^{m+1}-\vec\id, \vec\nu^m\right\rangle_{\Gamma^m}^h
= \left\langle \vec X^{m+1}-\vec\id, \vec\nu^m\right\rangle_{\Gamma^m},
\end{align}
where we have noted \eqref{eq:omegahnuh} and \eqref{eq:intnuh}. 
In general, the terms in \eqref{eq:Xiddiff} do not equal the 
discrete volume change
${\mathcal L}^{d}(\Omega_-^{m+1}) - {\mathcal L}^{d}(\Omega_-^m)$,
where $\Omega_-^m$ denotes the interior of $\Gamma^m$.
However, on recalling {\rm Remark~\ref{rem:SDSD}\ref{item:remSDSDii}}
it is possible to prove an exact discrete analogue of 
{\rm Remark~\ref{rem:WMS}\ref{item:WMSii}}
for a semidiscrete version of the scheme \eqref{eq:DMS}.
Moreover, this semidiscrete scheme will satisfy the mesh property
{\rm Theorem~\ref{thm:SDSD}\ref{item:SDSDTM}}.
\end{rem}

\begin{rem}[Discrete linear systems] \label{rem:MSsolve}
On recalling the notation from {\rm \S\ref{subsec:3.3}}, 
we can formulate the
linear systems of equations to  be solved at each time level for
\eqref{eq:DMS} as follows.
Find $(U^{m+1},\kappa^{m+1},\delta\vec X^{m+1})
\in \bR^{K_{S^m}}\times \bR^K\times (\bR^d)^K$ such that
\begin{equation}
\begin{pmatrix}
 A_\Omega & 0 & -\frac1{\ttau_m}\,\NbulkTMS \\
-\Mbulk & M_{\Gamma^m} & 0 \\
0 & \vec N_{\Gamma^m} & \mat A_{\Gamma^m}
\end{pmatrix}
\begin{pmatrix} U^{m+1} \\ \kappa^{m+1} \\ \delta\vec X^{m+1} \end{pmatrix}
=
\begin{pmatrix} 0 \\ 0 \\ -\mat A_{\Gamma^m}\,\vec X^m \end{pmatrix} \,,
\label{eq:lin}
\end{equation}
where we use a similar abuse of notation as in \eqref{eq:MCvec}. The
definitions of the matrices in \eqref{eq:lin} are either given in
\eqref{eq:mat0}, or they follow directly from 
\eqref{eq:DMS}, see also {\rm \citet[\S4.1]{dendritic}} for details.
In practice, the linear system  \eqref{eq:lin} can either be 
solved with a sparse direct solution method like UMFPACK, see 
{\rm \cite{Davis04}}, 
or by first using a Schur complement approach and then use a (precondioned) 
conjugate gradient solver. Once again, we refer to 
{\rm \citet[\S4.1]{dendritic}} for more details.
\end{rem}
           
\subsection{The Stefan problem with a (kinetic) Gibbs--Thomson law}
In general crystal growth phenomena involve more complex models, in
comparison to the above Mullins--Sekerka model. The overall model is
the Stefan problem with a Gibbs--Thomson law and kinetic undercooling
with anisotropic surface energy \eqref{eq:11} taken into account. It
is given as follows with the same notation and conventions as for (\ref{eq:MS}).

Given $\Gamma(0)$ and, if $\vartheta >0$, $u_0 : \Omega \to \bR$,
find $u : \Omega \times [0,T] \to \bR$ 
and the evolving interface $(\Gamma(t))_{t\in[0,T]}$ such that
$\vartheta\,u(\cdot,0) = \vartheta\,u_0$ and 
for all $t\in (0,T]$ the following conditions hold
\begin{subequations} \label{eq:MS1abc}
\begin{alignat}{4}
& \vartheta\,\partial_t\,u - \conduct_-\,\Delta u = f
\quad && \text{in } \Omega_-(t)\,, \quad && 
\vartheta\,\partial_t\,u - \conduct_+\,\Delta u = f \quad 
&&\text{in } \Omega_+(t)\,, \label{eq:MS1a} \\
& \frac{\MSrho\,\mathcal{V}}{\beta(\vec\nu)} = 
\alpha\,\varkappa_\gamma - a\,u \quad && \text{on } \Gamma(t)\,, &&
\left[ \conduct\,\partial_{\vec\nu}\, u \right]_-^+ 
=-\lambda\,{\cal V} \quad && \text{on } \Gamma(t)\,, \label{eq:MS1b} \\ 
& \partial_{\vec\nu_\Omega} u = 0 \quad &&\mbox{on } \partial_{\rm N}\Omega\,, 
&& u = u_{\rm D} \quad && \text{on } \partial_{\rm D} \Omega \,, 
\label{eq:MS1d}
\end{alignat}
\end{subequations}
where $[\conduct\,\partial_{\vec\nu}\,u]_-^+$ 
is defined similarly to (\ref{eq:dnufjump}), and $\partial\Omega =
\overline{\partial_{\rm N}\Omega}\cup\overline{\partial_{\rm D}\Omega}$ with
$\partial_{\rm N}\Omega\cap\partial_{\rm D}\Omega = \emptyset$.
In the above system $u$ denotes the deviation from the melting
temperature at a planar interface, $f$
describes heat sources, 
$\vartheta \in \bRgeq$ is the volumetric heat capacity, and
$\conduct(\cdot,t) = \conduct_+\,\charfcn{\Omega_+(t)} 
+ \conduct_-\,\charfcn{\Omega_-(t)}$, with $\conduct_{\pm} \in \bRplus$,
is the phase-dependent thermal conductivity, recall \eqref{eq:fpm}.
Moreover, 
$\lambda\in \bRplus$ is the latent heat
per unit volume, $\alpha\in\bRplus$ is an interfacial energy density per 
surface area, $\MSrho\in\bRgeq$ is a kinetic coefficient and 
$a \in \bRplus$ is a coefficient having the dimension entropy/volume. 
In addition, as in \S\ref{subsec:ani1},
$\beta: \bS^{d-1} \to\bRplus$ is a dimensionless mobility
function, which allows one to describe the dependence of the mobility on
the local orientation of the interface. 
Clearly, on choosing $\vartheta=\MSrho=0$, $\conduct=a=\alpha=\lambda=1$
and $\partial\Omega= \partial_{\rm N} \Omega$ then (\ref{eq:MS1abc})
in the isotropic case collapses to (\ref{eq:MS}).

In order to state the weak form of (\ref{eq:MS1abc}), we introduce the 
function spaces 
\begin{equation} \label{eq:S0D}
S_0 = \{ \phi \in H^1(\Omega) : \phi = 0 \ \mbox{ on } \partial_{\rm D}\Omega \}
\; \text{and}\;
S_{\rm D} = \{ \phi \in H^1(\Omega) : \phi = u_{\rm D} \ \mbox{ on } \partial_{\rm D}\Omega\}\,, 
\end{equation}
where we assume for simplicity from now on that $u_{\rm D} \in \bR$,
with $u_{\rm D} = 0$ in the case 
$\partial_{\rm N}\Omega = \partial\Omega$.
Now a weak formulation of (\ref{eq:MS1abc}) for an anisotropic surface energy
density of the form \eqref{eq:g} can be obtained by 
generalizing the weak formulation of the
Mullins--Sekerka problem, (\ref{eq:WMS}), as follows.

Given a closed hypersurface $\Gamma(0) \subset \Omega$ and, 
if $\vartheta >0$, $u_0 \in L^2(\Omega)$,
we seek an evolving hypersurface $(\Gamma(t))_{t\in[0,T]}$
that separates $\Omega$ into $\Omega_-(t)$ and $\Omega_+(t)$,
with a global parameterization and induced velocity field
$\vec{\mathcal{V}}$, and $\varkappa \in L^2(\GT)$ as well as 
$u\in H^1(0,T;L^2(\Omega)) \cap L^2(0,T;H^1(\Omega)))$ 
with $\vartheta\,u(\cdot,0)=\vartheta\,u_0$ as follows.
For almost all $t \in (0,T)$, find
\arxivyesno{
$(u(\cdot,t),$ $\vec {\mathcal V}(\cdot,t),\varkappa(\cdot,t))\in S_{\rm D} 
\times [L^2(\Gamma(t))]^d \times L^2(\Gamma(t))$}
{$(u(\cdot,t),\vec {\mathcal V}(\cdot,t),\varkappa(\cdot,t))\in S_{\rm D} 
\times [L^2(\Gamma(t))]^d \times L^2(\Gamma(t))$} 
such that 
\begin{subequations}
\label{eq:2}
\begin{align}
&\vartheta\left(\partial_t\,u, \phi\right) 
+ \left(\conduct\,\nabla\,u, \nabla\,\phi\right) - \left(f, \phi\right) 
= \lambda\left\langle \vec{\mathcal{V}},\phi\,\vec\nu\right\rangle_{\Gamma(t)} 
\quad\forall\ \phi\in S_0\,,\label{eq:2aa} \\ &
\MSrho\left\langle \frac{\vec{\mathcal{V}}}{\beta(\vec\nu)},\chi \,\vec\nu
\right\rangle_{\Gamma(t)} 
= \left\langle \alpha\,\varkappa_\gamma - a\,u, \chi \right\rangle_{\Gamma(t)} 
\quad\forall\ \chi \in L^2(\Gamma(t))
\,,\label{eq:2bb} \\
& \left\langle \varkappa_\gamma\,\vec\nu,\vec\eta \right\rangle_{\Gamma(t)} 
+ \left\langle \nabs^\tG\,\vec\id,\nabs^\tG\,\vec\eta 
\right\rangle_{\Gamma(t),\gamma} = 0 
\quad\forall\ \vec\eta \in \Vt\,, \label{eq:2cc}
\end{align}
\end{subequations}
where we have adopted the notation (\ref{eq:ipGG}).

We can establish the following formal a priori bound for a sufficiently 
\arxivyesno{}{\linebreak}%
smooth solution of \eqref{eq:2}. 
Choosing $\phi=u-u_{\rm D}$ in (\ref{eq:2aa}), 
$\chi=\frac\lambda{a}\,\vec{\mathcal{V}}\,.\,\vec\nu$ in (\ref{eq:2bb}) and 
$\vec\eta=\frac{\alpha\,\lambda}a\,\vec {\mathcal V}$ in (\ref{eq:2cc})  
we obtain, on noting the transport theorem, Theorem \ref{thm:transvol},
and Lemma \ref{lem:ani} that
\begin{align}
&\ddt\left(\frac\vartheta2\left|u-u_{\rm D}\right|_\Omega^2 + 
\frac{\alpha\,\lambda}a\,|\Gamma(t)|_\gamma
+\lambda\,u_{\rm D}\,\mathcal{L}^d(\Omega_-(t))\right) \nonumber \\ & 
\qquad + \left(\conduct\,\nabla\,u, \nabla\,u\right) 
+\frac{\lambda\,\rho}{a} \left\langle \beta(\vec\nu), {\mathcal V}^2
\right\rangle_{\Gamma(t)}
= \left(f, u - u_{\rm D}\right) . \label{eq:testD}
\end{align}

We now introduce the finite element approximations of $S_0$ and $S_{\rm D}$.
On recalling (\ref{eq:Sm}) and \eqref{eq:S0D}, we set 
\begin{equation} \label{eq:Sm0D}
S^m_0=S^m\cap S_0 \quad\text{and}\quad
\SmD = S^m \cap S_{\rm D}\,.
\end{equation}
Then a finite element approximation of (\ref{eq:2}) 
from \cite{dendritic}, combining the ideas 
introduced for mean curvature flow in Section~\ref{sec:mc}, for anisotropic
flows in Section~\ref{sec:ani} and for the Mullins--Sekerka
problem above, is given as follows.
Let the closed polyhedral hypersurface $\Gamma^0$ be an approximation of 
$\Gamma(0)$, 
and, if $\vartheta>0$, let $U^0\in S^0_{\rm D}$ be an approximation of $u_0$, e.g.\
$U^0 = I^0\,u_0$ if $u_0 \in C(\overline{\Omega})$.
Then, for $m=0,\ldots, M-1$, find $(U^{m+1},\vec X^{m+1},\kappa^{m+1}_\gamma)
\in \SmD \times \Vhm \times \Whm$ such that
\begin{subequations} \label{eq:MSHG}
\begin{align}
& \vartheta\left(\frac{U^{m+1}-U^m}{\ttau_m}, \varphi \right)^h_m + 
\left(\conduct\,\nabla\,U^{m+1}, \nabla\,\varphi\right) 
- \lambda \left\langle \pi_{\Gamma^m}\left[
\frac{\vec X^{m+1}-\vec\id}{\ttau_m} \,.\,\vec\omega^m \right], \varphi
\right\rangle_{\Gamma^m} \nonumber \\ & \hspace{4cm}
 = \left(f^{m+1}, \varphi\right)^h_m \quad\forall\ \varphi \in S^m_0
\,, \label{eq:MSHGa}\\
& \MSrho\left\langle 
[\beta(\vec\nu^m)]^{-1}\,\frac{\vec X^{m+1}-\vec\id}{\ttau_m},
\chi\,\vec\omega^m \right\rangle_{\Gamma^m}^h - \alpha \left\langle \kappa^{m+1}_\gamma, 
\chi\right\rangle_{\Gamma^m}^h
+ a \left\langle U^{m+1}, \chi \right\rangle_{\Gamma^m} =0 
\nonumber \\
& \hspace {6cm} \quad\forall\ \chi \in \Whm
\,, \label{eq:MSHGb} \\
& \left\langle \kappa^{m+1}_\gamma\,\vec\nu^m, \vec\eta \right\rangle_{\Gamma^m}^h + 
\left\langle \nabs^\tG\,\vec X^{m+1}, \nabs^\tG\,\vec\eta \right\rangle_{\Gamma^m,\gamma}
= 0 \quad\forall\ \vec\eta \in \Vhm
 \label{eq:MSHGc} 
\end{align}
\end{subequations}
and set $\Gamma^{m+1} = \vec X^{m+1}(\Gamma^m)$.
In the above, we have introduced 
\arxivyesno{$f^{m+1} = f(\cdot,t_{m+1})$,}
{$f^{m+1} = $ \linebreak $f(\cdot,t_{m+1})$,}
where we assume for convenience that $f(\cdot,t) \in C(\overline{\Omega})$
for all $t \in [0,T]$.
Moreover, we have recalled (\ref{eq:omegah}) and observe that using
$\vec\omega^m$ in \arxivyesno{}{\linebreak}%
\eqref{eq:MSHGb}, and not $\vec\nu^m$, is necessary in order
be able to prove existence, uniqueness and stability results for
\eqref{eq:MSHG}, see the proof of Theorem~\ref{thm:stab} below.

\begin{rem}[Implementation] \label{rem:MSHG}
Compared to the computational challenges discussed in 
{\rm Remark~\ref{rem:unfitted}} for \eqref{eq:DMS}, the only new
difficulty in \eqref{eq:MSHG} is the term
$(\conduct\,\nabla\,U^{m+1},$ $\nabla\,\varphi)$. 
In order to compute this in the case $\conduct_+ \not= \conduct_-$, it is
necessary to calculate $\mathcal{L}^d(\sigmaO \cap \Omega^m_-)$ for
every $\sigmaO \in \mathcal{T}^m$, where $\Omega^m_-$ denotes the interior of
$\Gamma^m$. This can be done as described in {\rm \citet[Remark~4.2]{crystal}},
but turns out to be computationally expensive due to the unfitted nature of 
the finite element approximation.
Alternatively, suitable numerical approximations of the integral
$\left(\conduct\,\nabla\,U^{m+1}, \nabla\,\varphi\right)$ can be introduced,
similarly to what we present in {\rm \S\ref{subsec:one-sided}}
and {\rm Section~\ref{sec:tpf}}, below.
The case of phase-dependent forcings $f_\pm$ in \eqref{eq:MS1a} can be dealt
with similarly, on introducing a suitable analogue of
$(f^{m+1}, \varphi)_m^h$ in \eqref{eq:MSHGa}. 
\end{rem}

On defining 
\[
\mathcal{E}^m(U,\vec X) = 
\frac\vartheta2\,|U - u_{\rm D}|_{\Omega,m}^2 +
\frac{\alpha\,\lambda}a\,|\vec X(\Gamma^m)|_\gamma
\]
for $U \in C(\overline\Omega)$ and $\vec X \in \Vhm$,
we have the following results.

\begin{thm} \label{thm:stab}
Let $\Gamma^m$ and $\mathcal{T}^m$ satisfy 
{\rm Assumption~\ref{ass:A}\ref{item:assA2}} 
and {\rm Assumption~\ref{ass:B}}, and let $U^m \in C(\overline\Omega)$.
Let $\gamma$ be of the form \eqref{eq:g} with $r=1$.
Then there exists a unique solution 
$(U^{m+1}, \vec X^{m+1}, \kappa^{m+1}_\gamma)$ $ 
\in \SmD\times \Vhm \times \Whm$ to {\rm (\ref{eq:MSHG})}. 
Moreover, if $d=2$ or $d=3$ and if 
$\gamma$ is of the form \eqref{eq:g} with $r\in[1,\infty)$, 
then a solution to \eqref{eq:MSHG} satisfies
\begin{align}
& \mathcal{E}^m(U^{m+1}, \vec X^{m+1})
+\lambda\,u_{\rm D}\left\langle \vec X^{m+1}-\vec\id, 
\vec\nu^m\right\rangle_{\Gamma^m}
+ \frac\vartheta2\left|U^{m+1} - U^m\right|_{\Omega,m}^2
\nonumber \\ & \hspace{0.2cm}
+ \ttau_m\left(\conduct\,\nabla\,U^{m+1}, \nabla\,U^{m+1}\right)
+\ttau_m\,\frac{\lambda\,\MSrho}{a}\left(\left|[\beta(\vec\nu^m)]^{-\frac12}\,
\frac{\vec X^{m+1}-\vec\id}{\ttau_m}\,.\,\vec\omega^m\right|_{\Gamma^m}^h
\right)^2
\nonumber \\ & \hspace{2cm}
\leq \mathcal{E}^m(U^m,\vec\id)
+ \ttau_m\left(f^{m+1}, U^{m+1} - u_{\rm D}\right)^h_m . \label{eq:stab}
\end{align}
\end{thm}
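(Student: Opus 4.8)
The plan is to establish the two claims separately, reusing the template of Theorem~\ref{thm:HDMS} and Theorem~\ref{thm:uniqueSD}. For the linear case $r=1$, system \eqref{eq:MSHG} is square, so existence follows from uniqueness. For the stability bound \eqref{eq:stab}, valid for all $r\in[1,\infty)$, I would test the three equations with the discrete counterparts of the formal multipliers used in \eqref{eq:testD}, namely $\phi = u - u_{\rm D}$, $\chi = \frac{\lambda}{a}\,\vec{\mathcal{V}}\,.\,\vec\nu$ and $\vec\eta = \frac{\alpha\lambda}{a}\,\vec{\mathcal{V}}$.

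Concretely, for the stability estimate I would choose $\varphi = U^{m+1}-u_{\rm D}\in S^m_0$ in \eqref{eq:MSHGa} (legitimate since $u_{\rm D}\in\bR$), $\chi = \frac{\lambda}{a}\,\pi_{\Gamma^m}[\frac{\vec X^{m+1}-\vec\id}{\ttau_m}\,.\,\vec\omega^m]\in\Whm$ in \eqref{eq:MSHGb}, and $\vec\eta = \frac{\alpha\lambda}{a}\,\frac{\vec X^{m+1}-\vec\id}{\ttau_m}\in\Vhm$ in \eqref{eq:MSHGc}, and then add the three resulting identities. The two coupling terms carrying $U^{m+1}$ (the last term of \eqref{eq:MSHGa} and the $a$-term of \eqref{eq:MSHGb}) cancel, and the two terms carrying $\kappa^{m+1}_\gamma$ (the $\alpha$-term of \eqref{eq:MSHGb} and the first term of \eqref{eq:MSHGc}) also cancel; for the latter I would use \eqref{eq:omegahnuh} to replace $\vec\nu^m$ by $\vec\omega^m$ inside the mass-lumped product, together with the fact that mass lumping sees only nodal values. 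After multiplying by $\ttau_m$ there remain: the heat-capacity term, which the identity $2\,a\,(a-b)=a^2-b^2+(a-b)^2$ turns into $\frac\vartheta2(|U^{m+1}-u_{\rm D}|_{\Omega,m}^2-|U^m-u_{\rm D}|_{\Omega,m}^2+|U^{m+1}-U^m|_{\Omega,m}^2)$; the nonnegative conduction term $\ttau_m(\conduct\,\nabla\,U^{m+1},\nabla\,U^{m+1})$; the kinetic term, which collapses to $\ttau_m\frac{\lambda\MSrho}{a}(|[\beta(\vec\nu^m)]^{-\frac12}\frac{\vec X^{m+1}-\vec\id}{\ttau_m}\,.\,\vec\omega^m|_{\Gamma^m}^h)^2$ because $\pi_{\Gamma^m}[\cdots]$ reproduces $\frac{\vec X^{m+1}-\vec\id}{\ttau_m}\,.\,\vec\omega^m$ at each node; the volume term $\lambda\,u_{\rm D}\langle\vec X^{m+1}-\vec\id,\vec\nu^m\rangle_{\Gamma^m}$, obtained from the leftover $u_{\rm D}$ contribution via Remark~\ref{rem:ip0}, \eqref{eq:omegahnuh} and \eqref{eq:intnuh}; and the anisotropic Dirichlet term $\frac{\alpha\lambda}{a}\langle\nabs^\tG\,\vec X^{m+1},\nabs^\tG(\vec X^{m+1}-\vec\id)\rangle_{\Gamma^m,\gamma}$, for which Lemma~\ref{lem:anistab2d3d} (valid for $d\in\{2,3\}$ and any $r\in[1,\infty)$) supplies the lower bound $\frac{\alpha\lambda}{a}(|\Gamma^{m+1}|_\gamma-|\Gamma^m|_\gamma)$. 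Collecting the terms and recognizing $\mathcal{E}^m$ yields \eqref{eq:stab}.

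For existence and uniqueness when $r=1$ I would pass to the homogeneous system ($U^m=\vec 0$, $f^{m+1}=0$, $u_{\rm D}=0$, so $\SmD=S^m_0$) and test exactly as above; the same cancellations give $\frac{\vartheta}{\ttau_m}|U|_{\Omega,m}^2+(\conduct\,\nabla U,\nabla U)+(\text{nonneg.\ kinetic})+\frac{\alpha\lambda}{a\,\ttau_m}\langle\nabs^\tG\vec X,\nabs^\tG\vec X\rangle_{\Gamma^m,\gamma}=0$. Hence $U$ is constant and, by positive definiteness of the $\mat G_\ell$, $\vec X$ is constant on $\Gamma^m$. Testing the first equation with $\varphi=1$, and using $\int_{\Gamma^m}\vec\nu^m\dH{d-1}=\vec0$ for the closed surface, forces $U=0$ when $\vartheta>0$ (the case $\vartheta=0$ is handled directly as in Theorem~\ref{thm:uniqueSD}); the first equation together with Assumption~\ref{ass:B} then gives $\vec X=\vec0$. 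Finally \eqref{eq:MSHGc} reduces to $\langle\kappa_\gamma\,\vec\omega^m,\vec\eta\rangle_{\Gamma^m}^h=0$, and choosing $\vec\eta=\pi_{\Gamma^m}[\kappa_\gamma\,\vec\omega^m]$ with Assumption~\ref{ass:A}\ref{item:assA2} gives $\kappa_\gamma=0$, whence $U=0$, completing uniqueness.

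The main obstacle I anticipate is the precise bookkeeping of the coupling-term cancellations: this is exactly where the deliberate use of $\vec\omega^m$ rather than $\vec\nu^m$ in \eqref{eq:MSHGb}, combined with \eqref{eq:omegahnuh}, \eqref{eq:intnuh} and Remark~\ref{rem:ip0}, is indispensable, and it is the reason the scheme is written with $\vec\omega^m$. A secondary point of care is that the stability argument extends to all $r\in[1,\infty)$ only because Lemma~\ref{lem:anistab2d3d} holds in that generality, whereas the existence and uniqueness assertion is confined to $r=1$, where the system is genuinely linear.
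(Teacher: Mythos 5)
Your proposal is correct and follows essentially the same route as the paper's proof: the stability bound is obtained with exactly the paper's test functions $\varphi = U^{m+1}-u_{\rm D}$, $\chi = \tfrac{\lambda}{a}\,\pi_{\Gamma^m}[(\vec X^{m+1}-\vec\id)\,.\,\vec\omega^m]$ and $\vec\eta = \tfrac{\alpha\,\lambda}{a}\,(\vec X^{m+1}-\vec\id_{\mid_{\Gamma^m}})$ (up to your harmless $1/\ttau_m$ rescaling), with the same use of \eqref{eq:omegahnuh}, \eqref{eq:intnuh}, Remark~\ref{rem:ip0} and the nodal nature of mass lumping for the cancellations and the leftover $u_{\rm D}$ term, and Lemma~\ref{lem:anistab2d3d} for the geometric term, while the uniqueness argument via the homogeneous system, the energy identity, Assumption~\ref{ass:B} and Assumption~\ref{ass:A}\ref{item:assA2} is likewise the paper's. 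The only blemish is your detour of testing with $\varphi=1$ to conclude $U=0$ when $\vartheta>0$: this test is admissible only if $S^m_0=S^m$, and it is in any case redundant, since the term $\vartheta\left|U\right|_{\Omega,m}^2$ in the homogeneous energy identity already forces $U=0$ in that case.
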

\begin{proof}
The existence and uniqueness proof for $r=1$ proceeds analogously to the proof 
of Theorem~\ref{thm:HDMS}. In particular, if
$(U,\vec X,\kappa_\gamma) \in S^m_0\times\underline{V} (\Gamma^m)\times \Whm$ 
denotes a solution to the homogeneous analogue of \eqref{eq:MSHG},
similarly to \eqref{eq:HDMS}, then choosing 
$\varphi=U$, $\chi = \frac\lambda{a}\,
\pi_{\Gamma^m}\,[\vec X\,.\,\vec\omega^m]$
and $\vec\eta=\frac{\alpha\,\lambda}a\,\vec X$ yields
\begin{align*} 
& \vartheta\left|U\right|_{\Omega,m}^2 + 
\ttau_m\left(\conduct\,\nabla\, U, \nabla\, U\right)
+\frac{\lambda\,\MSrho}{\ttau_m\,a} \left(\left|[\beta(\vec\nu^m)]^{-\frac12}\,
\vec X\,.\,\vec\omega^m\right|_{\Gamma^m}^h\right)^2 \nonumber \\ & \qquad
+ \frac{\alpha\,\lambda}a\left\langle\nabs^\tG\,\vec X, \nabs^\tG\,\vec X 
\right\rangle_{\Gamma^m,\gamma} =0 \,,
\end{align*}
which implies that $U$ is constant in $\Omega$, with $U=0$ if 
$\vartheta > 0$ or $S^m_0 \not= S^m$, 
and that $\vec X$ is constant on $\Gamma^m$,
since all the involved physical parameters are nonnegative, with
$\conduct$, $\alpha$, $a$ and $\lambda$ being positive.
As in the proof of Theorem~\ref{thm:HDMS} we can then infer 
from Assumption~\ref{ass:B} that $\vec X = \vec 0$.
This implies that $\kappa_\gamma = \frac a\alpha\,U$ is constant on $\Gamma^m$
and so we can proceed as in the proof of Theorem~\ref{thm:HDMS}
to show that $\kappa_\gamma = U = 0$. Overall this proves
existence of a unique solution $(U^{m+1}, \vec X^{m+1}, \kappa^{m+1}_\gamma) 
\in \SmD\times \Vhm \times \Whm$ to \eqref{eq:MSHG}. 

It remains to establish (\ref{eq:stab}). 
Choosing $\varphi=U^{m+1}-u_{\rm D}$ in (\ref{eq:MSHGa}), 
$\chi = \frac\lambda{a}\,
\pi_{\Gamma^m}[(\vec X^{m+1}-\vec\id_{\mid_{\Gamma^m}})\,.\,\vec\omega^m]$ 
in (\ref{eq:MSHGb}) and
$\vec\eta=\frac{\alpha\,\lambda}a\,({\vec X^{m+1}-\vec\id_{\mid_{\Gamma^m}}})$ 
in (\ref{eq:MSHGc}) yields that
\begin{align*}
& \vartheta\left(U^{m+1}-U^m, U^{m+1} - u_{\rm D}\right)^h_m + 
\ttau_m\left(\conduct\,\nabla\, U^{m+1}, \nabla\, U^{m+1}\right)
\nonumber \\ & \qquad
+ \frac{\alpha\,\lambda}a
\left\langle \nabs^\tG\,\vec X^{m+1},\nabs^\tG\,(\vec X^{m+1} - \vec\id) 
\right\rangle_{\Gamma^m,\gamma}
\nonumber \\ & \qquad
+\ttau_m\,\frac{\lambda\,\rho}{a} \left(\left|[\beta(\vec\nu^m)]^{-\frac12}\,
\frac{\vec X^{m+1}-\vec\id}{\ttau_m}
\,.\,\vec\omega^m\right|_{\Gamma^m}^h\right)^2
\nonumber \\ & \qquad
= -\lambda\,u_{\rm D} \left\langle \vec X^{m+1}-\vec\id,
\vec\omega^m\right\rangle_{\Gamma^m}^h
+ \ttau_m\left(f^{m+1}, U^{m+1} - u_{\rm D}\right)^h_m
\end{align*}
and hence (\ref{eq:stab}) follows immediately from
Lemma~\ref{lem:anistab2d3d} and \eqref{eq:intnuh}.
\end{proof}

\begin{rem}[Semidiscrete scheme] \label{rem:stab1}
We note that \eqref{eq:stab} closely mimics the corres\-ponding continuous 
energy law \eqref{eq:testD}. 
The reason why it is not an exact discrete analogue of \eqref{eq:testD} 
has been discussed in {\rm Remark~\ref{rem:MSsd}} already.
However, on recalling {\rm Remark~\ref{rem:SDSD}\ref{item:remSDSDii}}
it is possible to prove an exact discrete analogue of {\rm (\ref{eq:testD})}
for a semidiscrete version of the scheme \eqref{eq:MSHG},
see also {\rm \citet[Remark 3.5]{dendritic}}.
In addition, this semidiscrete scheme will feature the tangential motion
discussed in {\rm Remark~\ref{rem:aniTM}}.
\end{rem}

\begin{rem}[Discrete systems] \label{rem:MSHGsolve}
It is a simple matter to combine the techniques in 
{\rm Remark~\ref{rem:MSsolve}} and {\rm \S\ref{subsec:anisolve}} in order to
derive the discrete systems that need to be solved at each time level
for \eqref{eq:MSHG}.  
If $\gamma$ is of the form \eqref{eq:g} with $r=1$, then the systems
are linear and can hence it can be solved analogously to 
{\rm Remark~\ref{rem:MSsolve}}, i.e.\ either with a 
sparse direct solver or with the help of a Schur complement approach and
a preconditioned conjugate gradient solver.
If $r > 1$, on the other hand, then the systems are nonlinear and can be
solved with a lagged fixed-point type iteration. 
We refer to {\rm \citet[\S4]{dendritic}} for further details.
\end{rem}

\subsection{One-sided free boundary problems} \label{subsec:one-sided}
In this section we consider the situation where diffusion is
restricted to one phase. We hence study one-sided versions of the 
Mullins--Sekerka or Stefan problems. This is, for example, relevant for snow
crystal growth, where diffusion can be restricted to the gas phase. In this
case we only find the unknown $u$ in one-phase, which occupies at time
$t$ a domain $\Omega_+(t)$. 
Once again, for simplicity of the
presentation, we always assume that the region
$\Omega_-(t) = \Omega\setminus \overline{\Omega_+(t)}$ is compactly contained
in $\Omega$.
The problem now reads as follows. 
Given $\Gamma(0)$ and, if $\vartheta >0$, $u_0 : \Omega_+(0) \to \bR$,
find the evolving interface $(\Gamma(t))_{t\in[0,T]}$ and 
$u(\cdot,t) : \Omega_+(t) \to \bR$, $t \in [0,T]$, such that
$\vartheta\,u(\cdot,0) = \vartheta\,u_0(\cdot)$ and 
for all $t\in (0,T]$ the following conditions hold
\begin{subequations} \label{eq:os1}
\begin{alignat}{2}
&\vartheta\,\partial_t\,u - \conduct_+\,\Delta u = f \qquad  \mbox{in } 
\Omega_+(t)\,, \qquad u = u_{\rm D} \qquad &&\mbox{on } \partial\Omega \,,
\label{eq:os1a} \\
&\frac{\MSrho\,\mathcal{V}}{\beta(\vec\nu)} = 
\alpha\,\varkappa_\gamma - a\,u \quad\text{and } \quad
\conduct_+\,\partial_{\vec\nu}\, u =-\lambda\,{\cal V} \quad 
&&\mbox{on } \Gamma(t),  \label{eq:os1b}
\end{alignat}
\end{subequations}
where the given data satisfies the same conditions as in (\ref{eq:MS1abc}). 
For snow crystal growth $-u$ is a suitably scaled concentration with 
$-u_{\rm D}$ being the scaled supersaturation, see 
\cite{Libbrecht05,jcg,crystal}. 

As we wish to model snow crystal growth, we now state a relevant
hexagonal surface energy. Let $d=3$ and
$l_\epsilon(\vec p) = \left[ \epsilon^2\,|\vec p|^2 +
  p_1^2\,(1-\epsilon^2) \right]^{\frac12}$ for $\epsilon>0$.
We then introduce the rotation matrices
\[
\mat{R}_{1}(\theta)=\left(\!\!\!\scriptsize
\begin{array}{rrr} \cos\theta & \sin\theta&0 \\
-\sin\theta & \cos\theta & 0 \\ 0 & 0 & 1 \end{array}\!\! \right) \quad 
\mbox{and} \quad  
\mat{R}_{2}(\theta)=\left(\!\!\!\scriptsize
\begin{array}{rrr} \cos\theta & 0 & \sin\theta \\
0 & 1 & 0 \\ -\sin\theta & 0 & \cos\theta \end{array}\!\! \right).
\]
Then setting
\begin{equation*} 
\gamma(\vec p) = 
l_\epsilon\left(\mat{R}_2(\tfrac{\pi}2)\,\vec p\right) +
\tfrac{1}{\sqrt{3}}\sum_{\ell = 1}^3
l_\epsilon\left(\mat{R}_1(\theta_0+\tfrac{\ell\,\pi}3)\,\vec p\right)\,,
\end{equation*}
where $\theta_0 \in [0,\frac\pi3)$ rotates the
anisotropy in the $x_1-x_2$ plane,
defines a density of the form \eqref{eq:g}, with $r=1$ and $L=4$, 
that approximates
a crystalline surface energy density with a regular hexagonal prism as its
Wulff shape, where each face of the Wulff shape has the same distance to the 
origin.

We recall from \cite{crystal} an unfitted finite element approximation of 
the above one-sided problem, following ideas of \cite{BarrettE82}. 
Given an approximation of the interface $\Gamma^m$, we 
let $\Omega^m_+$ denote the exterior of $\Gamma^m$ and let
$\Omega^m_-$ denote the interior of $\Gamma^m$, so that
$\Gamma^m = \partial\Omega^m_- = \overline{\Omega^m_-} \cap 
\overline{\Omega^m_+}$.
We now introduce the appropriate discrete trial and test function
spaces. To this end, let $\Omega^{m,h}_+$ be an approximation to
$\Omega^m_+$ and set $\Omega^{m,h}_- = \Omega \setminus
\overline \Omega^{m,h}_+$. We stress that $\Omega^{m,h}_+$ need not necessarily
be a union of elements from $\mathcal{T}^m$.
Then we define, on recalling (\ref{eq:Sm}) and \eqref{eq:Sm0D},
the finite element spaces
\begin{align}
\Sml & = \left\{ \chi \in S^m : \chi(\vec p^m_k) = 0 \text{ if } 
\supp \phi^{S^m}_k \subset \overline{\Omega^{m,h}_-},\ k=1,\ldots, 
K_{S^m} \right\}, 
\nonumber \\ 
\quad\Smhoml & = \Smhom \cap \Sml\,,
\qquad\SmDl = \SmD \cap \Sml\,.
\label{eq:Sml}
\end{align}

Our finite element approximation of (\ref{eq:os1}) is then given as follows.
Let the closed polyhedral hypersurface $\Gamma^0$ be an approximation of
$\Gamma(0)$, and, if $\vartheta>0$, let $U^0\in S^0_{\rm D}$ be an 
approximation of $u_0$, e.g.\ $U^0 = I^0\,u_0$ if 
$u_0 \in C(\overline{\Omega})$ is an 
extension of the given $u_0 \in C(\overline{\Omega_+(0)})$.
Then, for $m=0,\ldots, M-1$, find $(U^{m+1},\vec X^{m+1},\kappa^{m+1}_\gamma) 
\in \SmDl \times \Vhm \times \Whm$ such that
\begin{subequations} \label{eq:uHG}
\begin{align}
&
\vartheta\left(\frac{U^{m+1}-U^m}{\ttau_m}, \varphi \right)^h_{m,+} +
 \conduct_+\left(\nabla\,U^{m+1}, \nabla\,\varphi\right)_{m,+} 
\nonumber \\ & \
- \lambda \left\langle \pi_{\Gamma^m}\left[
\frac{\vec X^{m+1}-\vec\id}{\ttau_m} \,.\,\vec\omega^m \right], \varphi
\right\rangle_{\Gamma^m}  
= \left(f^{m+1}, \varphi\right)^h_{m,+} 
\quad\forall\ \varphi \in \Smhoml\,, \label{eq:uHGa}\\
& \MSrho\left\langle 
[\beta(\vec\nu^m)]^{-1}\,\frac{\vec X^{m+1}-\vec\id}{\ttau_m},
\chi\,\vec\omega^m \right\rangle_{\Gamma^m}^h 
- \alpha \left\langle \kappa^{m+1}_\gamma, 
\chi\right\rangle_{\Gamma^m}^h
+ a \left\langle U^{m+1}, \chi \right\rangle_{\Gamma^m} =0
\nonumber \\ & \hspace{6cm} \quad\forall\ \chi \in \Whm
\,, \label{eq:uHGb} \\
& \left\langle \kappa^{m+1}_\gamma\,\vec\nu^m, \vec\eta 
\right\rangle_{\Gamma^m}^h 
+ \left\langle \nabs^\tG\,\vec X^{m+1},\nabs^\tG\,\vec\eta 
\right\rangle_{\Gamma^m,\gamma} = 0 \quad\forall\ \vec\eta \in \Vhm
\label{eq:uHGc} 
\end{align}
\end{subequations}
and set $\Gamma^{m+1} = \vec X^{m+1}(\Gamma^m)$.
Here we define for all $\chi, \varphi \in S^m$
\begin{align} \label{eq:intml}
\left(\nabla\,\chi , \nabla\,\varphi\right)_{m,+} & =
\int_{\Omega^{m,h}_+} \nabla\,\chi \,.\, \nabla\,\varphi \dL{d}
\nonumber \\ & 
= \sum_{o \in \mathcal{T}^m} \frac{\mathcal{L}^d(o \cap \Omega^{m,h}_+)}
{\mathcal{L}^d(o)}\, \int_{o} \nabla\,\chi \,.\, \nabla\,\varphi \dL{d}
\end{align}
and, in a similar fashion, 
\begin{equation*} 
\left(\chi, \varphi\right)_{m,+}^h 
= \sum_{o \in \mathcal{T}^m}\frac{\mathcal{L}^d(o \cap \Omega^{m,h}_+)}
{\mathcal{L}^d(o)} \, \int_{o} I^m [\chi \,\varphi ] \dL{d}\,.
\end{equation*}
It follows immediately from (\ref{eq:Sml}) and (\ref{eq:intml}) that
\begin{equation*} 
\left(\nabla\,\varphi, \nabla\,\varphi\right)_{m,+} > 0 
\quad\forall\ \varphi \in \Smhoml \setminus \{0\}\,.
\end{equation*}

\begin{rem}
\rule{0pt}{0pt}
\begin{enumerate}
\item 
We note that for $\vartheta>0$ the approximation {\rm (\ref{eq:uHG})} is
only meaningful when the discrete solid region does not shrink, see
{\rm \citet[Remark~3.1]{crystal}}.
In practice this technical constraint is not very restrictive, since in most
physically relevant applications the solid region grows.
\item 
Existence and uniqueness for $r=1$, as well
as stability for $r \in [1,\infty)$, can be shown for 
\eqref{eq:uHG} under appropriate assumptions, 
see {\rm \citet[Theorems~3.1 and 3.2]{crystal}} 
for the proofs for the case $r=1$.
We note that the stability proof in {\rm \citet[Theorem~3.2]{crystal}},
for $d=2$ and $d=3$, immediately carries over to the case $r>1$, on recalling
{\rm Lemma~\ref{lem:anistab2d3d}}.
\item 
The main new computational challenge in implementing the scheme 
\arxivyesno{}{\linebreak}%
\eqref{eq:uHG}, compared to \eqref{eq:MSHG} with $\conduct_+ = \conduct_-$, 
is the determination of $\Smhoml$ and the calculation of the terms involving
$(\cdot,\cdot)_{m,+}$ and $(\cdot,\cdot)_{m,+}^h$ at each time step.
Clearly, the involved difficulty crucially depends on the choice of
$\Omega^{m,h}_+$ as an approximation to $\Omega^m_+$. A thorough discussion of
possible choices can be found in {\rm \citet[\S4.1]{crystal}}.
\end{enumerate}
\end{rem}

We present two numerical snow crystal growth simulations based on the scheme
\eqref{eq:uHG} with a hexagonal surface energy density $\gamma$,
and with a time-dependent mobility $\beta$, in
Figure~\ref{fig:crystal}.
\begin{figure}
\center
\arxivyesno{
\newcommand\lwidth{0.45\textwidth} 
\newcommand\lheight{2cm} 
\newcommand\llheight{1.2cm} 
}{
\newcommand\lwidth{0.3\textwidth} 
\newcommand\lheight{1.5cm} 
\newcommand\llheight{1.0cm} 
}
\includegraphics[angle=-90,width=\lwidth]{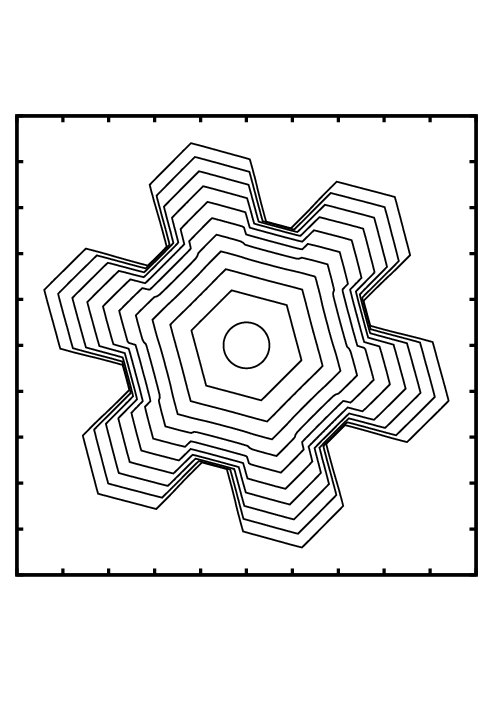}
\includegraphics[angle=-90,width=\lwidth]{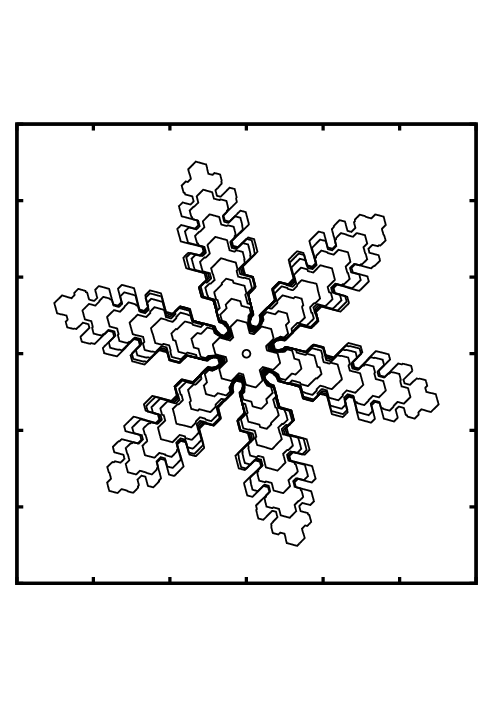} \\
\includegraphics[angle=-0,totalheight=\lheight]{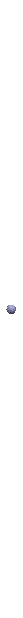}
\includegraphics[angle=-0,totalheight=\lheight]{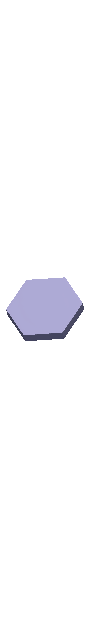}
\includegraphics[angle=-0,totalheight=\lheight]{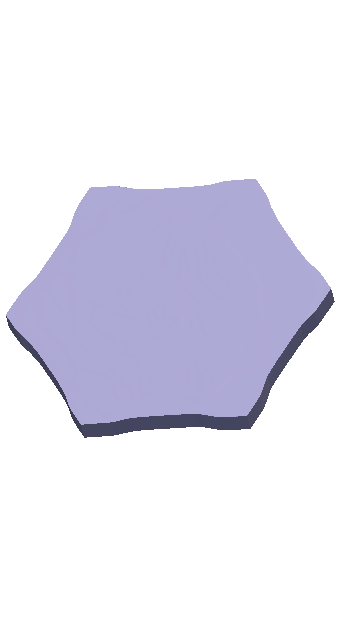}
\includegraphics[angle=-0,totalheight=\lheight]{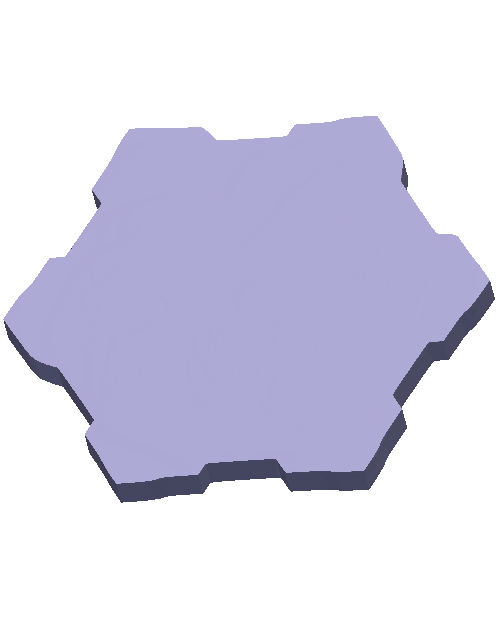}
\includegraphics[angle=-0,totalheight=\lheight]{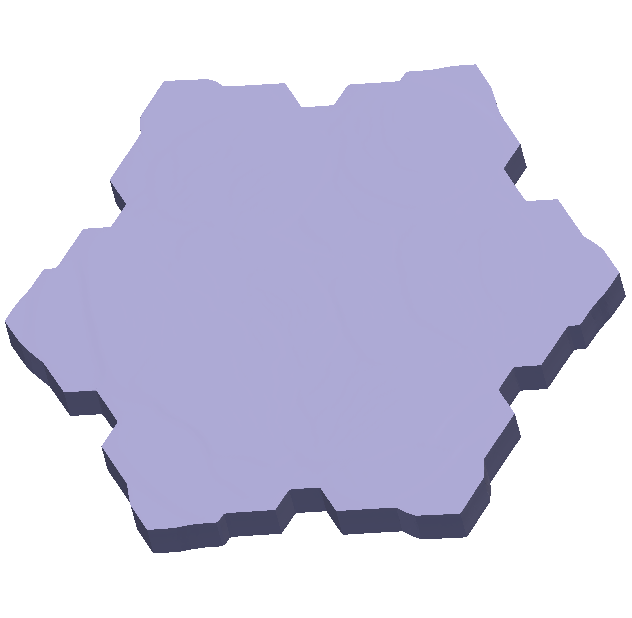}
\includegraphics[angle=-0,totalheight=\lheight]{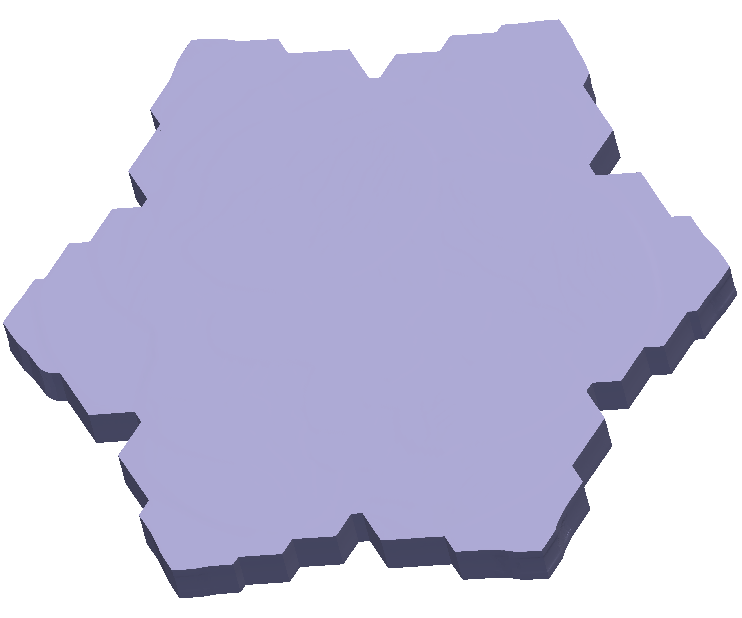}\\
\includegraphics[angle=-0,totalheight=\llheight]{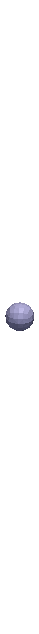} \
\includegraphics[angle=-0,totalheight=\llheight]{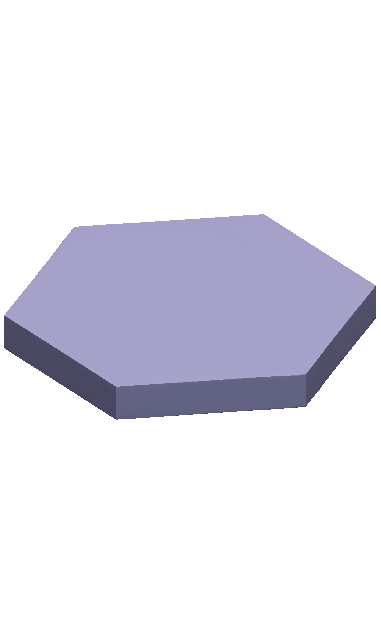} \
\includegraphics[angle=-0,totalheight=\llheight]{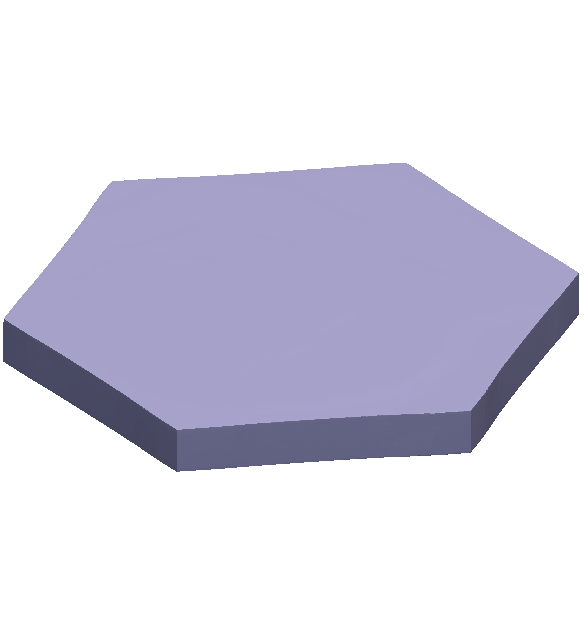} \
\includegraphics[angle=-0,totalheight=\llheight]{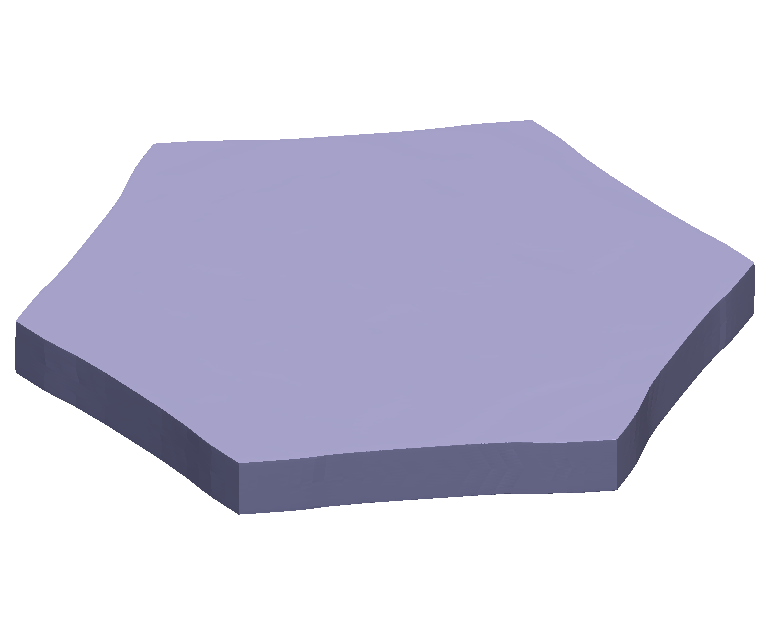} \
\includegraphics[angle=-0,totalheight=\llheight]{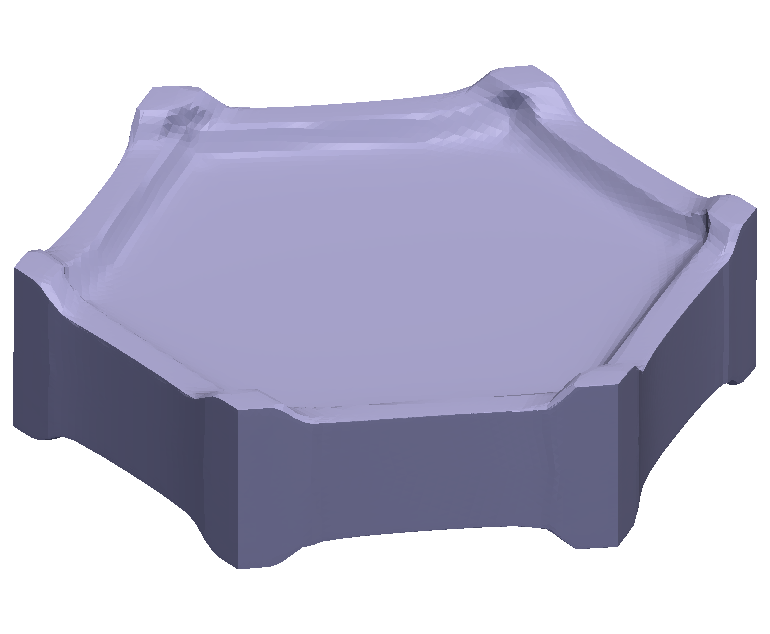} \
\includegraphics[angle=-0,totalheight=\llheight]{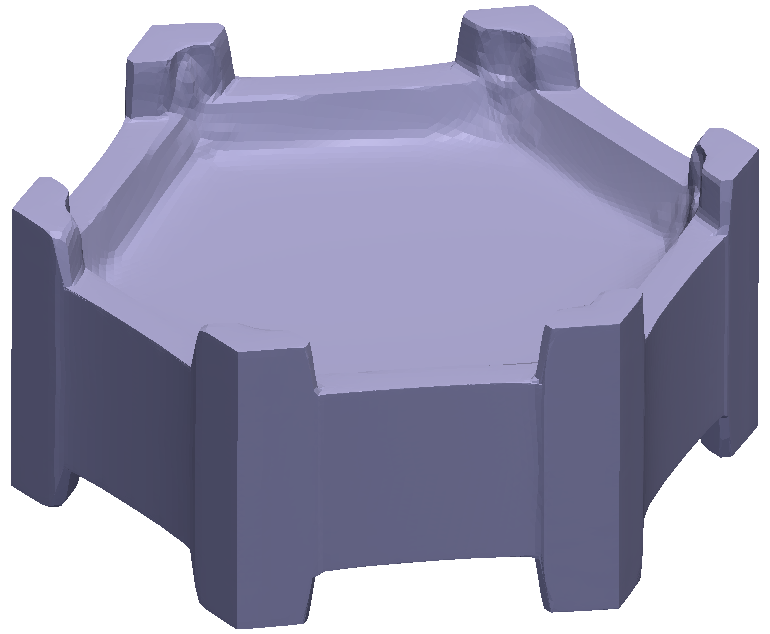}
\caption{Visualizations of the numerical experiments from
\citet[Fig.\ 12]{crystal}, above, \citet[Fig.\ 22]{crystal}, middle,
and \citet[Fig.\ 27 (left)]{crystal}, below.
The 2D solutions are shown at times $t=0,\,0.5,\ldots,5$ (left) and 
$t=0,\,5,\ldots,40$ (right).
The snapshots for the 3D simulations are taken at times 
$t=0,\ 5,\ 50,\ 100,\ 150,\ 200$ (middle) and
$t=0,\ 10,\ldots, 50$ (below).
}
\label{fig:crystal}
\end{figure}%

\subsection{Alternative numerical approaches}
\label{subsec:alternativeMS}

Other authors who numerically studied crystal growth with the help of a front
tracking type method are 
\cite{RoosenT91,Yokoyama93,Almgren93,Schmidt93,Schmidt96,Schmidt98,JuricT96,%
BanschS00}. %
A level set method for crystal growth was studied in
\cite{SethianS92}.
In the context of phase field approximations, we refer to
\cite{Kobayashi93,WheelerMS93,KarmaR98,DebierreKCG03,Nestler05}, as well
as to the works by the present authors \cite{eck,vch,pfsi}. 
Solidification phenomena can also be described with the
help of cellular automata and we refer to
\cite{Reiter05,Libbrecht08,GravnerG09} for details.

\section{Two-phase flow} \label{sec:tpf}
We now study free boundary problems appearing in fluid flow. 
Here we consider two
immiscible fluids that are separated by an interface,
which moves in time with a velocity given by that of the fluid. We
start with the simplest situation in which the evolution in the two
phases is given by Stokes flow. For the discretization of the evolving
hypersurface we consider the parametric finite element approximation
introduced above. The good mesh properties are particularly helpful in
two-phase flow as the mesh, which is moved by the fluid velocity, 
can change quite drastically. Many other approaches, using a surface mesh, have
problems in such situations due the mesh deteriorating. This is not
the case, in most situations, for the approach stated below. 
Moreover, on using a simple XFEM pressure
space enrichment, we obtain exact volume conservation for the two phase regions.
Furthermore, our fully discrete finite element approximation can be shown to be
unconditionally stable.
A common feature in two-phase flow is that nonphysical velocities
appear in numerical approximations. If surface tension effects are
taken into account, a jump discontinuity in the pressure results, and
this poses serious challenges for the numerical method. This can lead
to nonphysical velocities. These so-called spurious velocities are
typically avoided by the XFEM approach discussed in this section.

\subsection{Two-phase Stokes flow} \label{subsec:stokes}
In this section we consider the flow of viscous, incompressible,
immiscible two fluid systems in a low Reynolds regime, i.e.\ we
can neglect inertia terms. The governing equations for the velocity $\vec u$ 
and the pressure $p$ are given by the momentum equation and conservation of
mass, i.e.\
\begin{equation} \label{eq:Stokes}
-\mu_\pm \,\Delta\,\vec u +\nabla\, p =\vec f\,,\qquad 
\nabla\,.\,\vec u =0 \qquad\text{in } \Omega_\pm(t)\,,
\end{equation}
where $\vec f$
is a possible forcing, and 
$\Omega_+(t)$ and $\Omega_-(t)$ are the time dependent regions
occupied by the two fluid phases as in Figure~\ref{fig:sketch} 
in \S\ref{subsec:evolve},
where $\Omega \subset \bR^d$ is again a fixed domain, with $d\geq2$.
As usual, we let $\vec\nu$ denote the outer unit normal to $\Omega_-(t)$
on $\Gamma(t) = \partial\Omega_-(t)$.
We consider the case in which the viscosity of the two fluids can be
different and introduce 
$\mu(\cdot,t) = \mu_+\,\charfcn{\Omega_+(t)} + \mu_-\,\charfcn{\Omega_-(t)}$,
with $\mu_\pm \in \bRplus$ denoting the fluid viscosities, 
recall \eqref{eq:fpm}. 
We now define the conditions that have to hold on the
interface. Therefore, we introduce the stress tensor 
\begin{equation} \label{eq:sigma}
\mat\sigma = \mu \,(\nabla\,\vec u + (\nabla\,\vec u)^\transT) - p\,\mat\Id
= 2\,\mu\, \mat D (\vec u)-p\,\mat\Id\,,
\end{equation}
where 
$\mat D(\vec u)=\tfrac12 \,(\nabla\vec u+(\nabla\vec u)^\transT)$ is the
rate of deformation tensor.
Using the fact that the velocity is divergence free, we can rewrite 
\eqref{eq:Stokes} as
\[
- \nabla\,.\,\mat \sigma = \vec f\,,\qquad 
\nabla\,.\,\vec u =0 \qquad\text{in } \Omega_\pm(t)\,.
\]
On the moving interface, we require
\begin{equation*}
\left[\vec u\right]_-^+ = \vec 0\,,\quad 
\big[\mat\sigma\,\vec\nu\big]_-^+= -\gamma_0\,\varkappa\,\vec\nu\,,\quad 
\mathcal{V} = \vec u\,.\,\vec\nu \qquad\text{on } \Gamma(t)\,, 
\end{equation*}
where $\gamma_0 \in \bRplus$ represents surface tension.
To close the system we prescribe initial data $\Gamma(0)$ and the boundary 
condition $\vec u = \vec 0$ on $\partial\Omega$.
Overall, we can rewrite the total system as follows. Given $\Gamma(0)$, find 
$\vec u : \Omega \times [0,T] \to \bR^d$,
$p : \Omega \times [0,T] \to \bR$
and the evolving interface $(\Gamma(t))_{t \in [0,T]}$
such that for all $t \in (0,T]$ the following conditions hold 
\begin{subequations} \label{eq:tf2}
\begin{align}
&- 2\,\mu\,\nabla\,.\,\mat D(\vec u) + \nabla\,p = \vec f\,, \ \
\nabla\,.\,\vec u = 0 \quad\text{in } \Omega_\pm(t)\,, 
\quad\vec u = \vec 0 \quad\mbox{on } \partial\Omega\,,
\label{eq:tf2a} \\
&\left[\vec u\right]_-^+ = \vec 0\,, \ \
\left[\left(2\,\mu\,\mat D(\vec u) - p\,\mat\Id \right) \vec\nu \right]_-^+ 
= -\gamma_0\,\varkappa\,\vec\nu\,,\ \
\mathcal{V} = \vec u\,.\,\vec\nu 
\quad\text{on } \Gamma(t)\,.
\label{eq:tf2b} 
\end{align}
\end{subequations}
A finite element approximation of \eqref{eq:tf2} needs
a suitable weak formulation. Here a choice has to be made regarding
the tangential velocity $\vec{\mathcal{V}}_\subT$, recall 
Definition~\ref{def:globalx}\ref{item:VT}, which will have important
repercussions on the discrete level. Given our results derived in 
Sections~\ref{sec:mc} and \ref{sec:surfdiff}, it is natural to not fix
$\vec{\mathcal{V}}_\subT$ explicitly and to use the formulations
\begin{equation} \label{eq:BGNweakStokes}
\vec{\mathcal{V}}\,.\,\vec\nu =\vec u\,.\,\vec\nu\quad\text{and}\quad
\varkappa\,\vec\nu = \Delta_s\,\vec\id
\qquad\text{on } \Gamma(t)\,,
\end{equation}
compare with \eqref{eq:BGNweakmcf}. An alternative approach would let
$\vec{\mathcal{V}}_\subT$ be given by the fluid flow, and so use the
formulations
\begin{equation*} 
\vec{\mathcal{V}} =\vec u\quad\text{and}\quad
\vec\varkappa = \Delta_s\,\vec\id
\qquad\text{on } \Gamma(t)\,,
\end{equation*}
compare with \eqref{eq:Dziukweakmcf}. We will consider this alternative
approach in \S\ref{subsubsec:DziukStokes}. However, our main focus is on the
formulation \eqref{eq:BGNweakStokes}, which will lead to good mesh properties
for the discrete schemes.

We begin with the following simple generalization of Remark~\ref{rem:dnufjump}
\begin{align}
& - \int_{\Omega_-(t)\cup\Omega_+(t)} \left(\nabla\,.\,\mat\sigma\right).
\,\vec\xi \dL{d} \nonumber \\ &
= 2\left(\mu\,\mat D(\vec u), \mat D(\vec\xi)\right)
- \left(p, \nabla\,.\,\vec\xi\right) + \left\langle  
\left[\mat\sigma\,\vec\nu\right]_-^+,\vec\xi \right\rangle_{\Gamma(t)}
\quad\forall\ \vec\xi \in [H^1_0(\Omega)]^d\,,
\label{eq:dnufjumpgen}
\end{align}
where we have noted for symmetric matrices $\mat A \in \bR^{d \times d}$
that $\mat A : \mat B = \mat A : \tfrac12\,(\mat B + \mat B^\transT)$ 
for all $\mat B \in \bR^{d \times d}$.  
Now (\ref{eq:dnufjumpgen}) leads to the following weak formulation of 
(\ref{eq:tf2}), using the formulations \eqref{eq:BGNweakStokes}.
Given a closed hypersurface $\Gamma(0) \subset \Omega$,
we seek an evolving hypersurface $(\Gamma(t))_{t\in[0,T]}$
that separates $\Omega$ into $\Omega_-(t)$ and $\Omega_+(t)$,
with a global parameterization and induced velocity field
$\vec{\mathcal{V}}$, and $\varkappa \in L^2(\GT)$ as well as 
$\vec u : \Omega \times [0,T] \to \bR^d$ and
$p : \Omega \times [0,T] \to \bR$ as follows. 
For almost all $t \in (0,T)$, find
$(\vec u(\cdot,t),p(\cdot,t),\vec {\mathcal V}(\cdot,t),\varkappa(\cdot,t))
\in [H^1_0(\Omega)]^d \times L^2(\Omega) \times [L^2(\Gamma(t))]^d
\times L^2(\Gamma(t))$ such that
\begin{subequations}
\label{eq:weak}
\begin{align}
& 2\left(\mu\,\mat D(\vec u), \mat D(\vec\xi)\right) 
- \left(p, \nabla\,.\,\vec\xi\right)
= \left(\vec f, \vec\xi\right) 
+ \gamma_0\left\langle \varkappa\,\vec\nu, \vec\xi \right\rangle_{\Gamma(t)}
\quad\forall\ \vec\xi \in [H^1_0(\Omega)]^d\,,\label{eq:weaka} \\
&\left(\nabla\,.\,\vec u, \varphi\right) = 0 \quad\forall\ \varphi \in 
L^2(\Omega) \,, \label{eq:weakb} \\
&  \left\langle \vec{\mathcal{V}}\,.\,\vec\nu,\chi \right\rangle_{\Gamma(t)}
  = \left\langle \vec u\,.\,\vec\nu,\chi \right\rangle_{\Gamma(t)} 
\quad\forall\ \chi \in L^2(\Gamma(t))\,, \label{eq:weakc} \\
& \left\langle \varkappa\,\vec\nu, \vec\eta \right\rangle_{\Gamma(t)}
+ \left\langle \nabs\,\vec{\id}, \nabs\,\vec\eta \right\rangle_{\Gamma(t)}
 = 0  \quad\forall\ \vec\eta \in \Vt\,. \label{eq:weakd}
\end{align}
\end{subequations}

\begin{rem} \label{rem:weak}
For a sufficiently smooth weak solution of \eqref{eq:weak}, we can formally
prove the following results.
\begin{enumerate}
\item 
The two-phase Stokes flow, in the absence of external forcings, 
decreases the surface area of the interface. 
This follows from the transport theorem, {\rm Theorem~\ref{thm:trans}},
\eqref{eq:weakc} with $\chi= \varkappa$, 
\eqref{eq:weaka} with $\xi=\vec u$ and
\eqref{eq:weakb} with $\varphi=p$, as one then obtains the energy identity
\begin{align}
\gamma_0\,\ddt\,|\Gamma(t)|
& =- \gamma_0\left\langle \varkappa\,\vec\nu, \vec{\mathcal{V}}
  \right\rangle_{\Gamma(t)}
=- \gamma_0\left\langle \varkappa\,\vec\nu, \vec u
  \right\rangle_{\Gamma(t)}
  \nonumber \\ &
=-2 \left( \mu \,\mat D(\vec u),\mat D(\vec u) \right) 
+ \left( \vec f,  \vec u \right) .\label{eq:testD1}
\end{align}
\item \label{item:weakii}
The two-phase Stokes flow preserves $\mathcal{L}^d(\Omega_\pm(t))$, as
the transport theorem, {\rm Theorem~\ref{thm:transvol}},
\eqref{eq:weakc} with $\chi=1$, the divergence theorem
and \eqref{eq:weakb} with $\varphi=\charfcn{\Omega_-(t)}$  
yield that
\begin{equation}
\ddt\,\mathcal{L}^d(\Omega_-(t)) = 
\left\langle \vec{\mathcal{V}} , \vec\nu \right\rangle_{\Gamma(t)}
=\left\langle\vec u,\vec\nu \right\rangle_{\Gamma(t)}
=\left( \nabla\,.\,\vec u, \charfcn{\Omega_-(t)} \right)
=0\,. \label{eq:conserved}
\end{equation}
\end{enumerate}
\end{rem}

\begin{rem} \label{rem:Fweak}
In the case $\vec f= \vec 0$ the evolution \eqref{eq:weak} decreases
the surface area of the interface $\Gamma(t)$, recall \eqref{eq:testD1}, 
and can be written in the general form \eqref{eq:Fgeomev} satisfying 
\eqref{eq:Fineq}.
This can be seen as follows. For a given interface $\Gamma(t)$, with normal 
$\vec\nu(t)$ and curvature $\varkappa(t)$, we solve
\eqref{eq:weaka}, \eqref{eq:weakb} for $(\vec u(\cdot,t),p(\cdot,t))
\in [H^1_0(\Omega)]^d \times L^2(\Omega)$ and set
\[ 
\mathfrak{F} (\varkappa) = \vec u \,.\, \vec\nu\,.
\]
We then compute, similarly to \eqref{eq:testD1}, 
\begin{equation}\label{eq:Fineq2phaseflow} 
\left\langle \mathfrak{F} (\varkappa), \varkappa \right\rangle_{\Gamma(t)} 
= \left\langle \vec u\,.\, \vec\nu, \varkappa \right\rangle_{\Gamma(t)}
= \frac 2\gamma_0 \left(\mu\,\mat D(\vec u), \mat D(\vec u)\right)
\geq 0\,.
\end{equation}
\end{rem}

\subsubsection{Finite element approximation} \label{subsubsec:stokesFEA}

In order to approximate the velocity and pressure on $\mathcal{T}^m$,
recall the assumptions and notation above (\ref{eq:Sm}), we introduce
finite element spaces $\uspace^m \subset [H^1_0(\Omega)]^d$ and
$\pspace^m\subset L^2(\Omega)$.
We require also the space
$\widehat\pspace^m= \pspace^m \cap \widehat\pspace$, 
where $\widehat\pspace = \{ \varphi \in L^2(\Omega) : (\varphi,1)=0\}$. The
velocity/pressure finite element spaces $(\uspace^m,\pspace^m)$
satisfy the LBB inf-sup condition if there exists a
$C \in \bRplus$ independent of $h^m = \max_{o \in \mathcal{T}^m}
\{ \diam(\sigmaO)\}$, such that
\begin{equation} \label{eq:LBB}
\inf_{\varphi \in \widehat\pspace^m} \sup_{\vec\xi \in \uspace^m}
\frac{\left( \varphi, \nabla\,.\,\vec\xi\right)}
{|\varphi|_\Omega\,\|\vec\xi\|_{1,\Omega}} \geq C > 0\,,
\end{equation}
where $\|\cdot\|_{1,\Omega}^2 = |\cdot|_\Omega^2 + |\nabla\,\cdot|_\Omega^2$ 
defines the $H^1$--norm on $\Omega$, see e.g.\ \citet[p.~114]{GiraultR86}. 

Next we introduce a generalization of (\ref{eq:Sm}) to
\begin{equation} \label{eq:calSmk}
\mathcal{S}^m_k = \left\{\chi \in C(\overline{\Omega}) : \chi_{\mid_{\sigmaO}}
\in \mathcal{P}_k(\sigmaO) \quad\forall\ 
\sigmaO \in \mathcal{T}^m\right\} \subset H^1(\Omega)\,,
\end{equation}
where, for $k \in \bN$, 
$\mathcal{P}_k(\sigmaO)$ denotes the space of polynomials of degree
$k$ on $\sigmaO$. In addition, we denote by $\mathcal{S}^m_0$
the space of piecewise constant functions on $\mathcal{T}^m$.
Then, for example,
we may choose the lowest order Taylor--Hood element P2-P1, the P2-P0 element, or
the P2-(P1+P0) element on setting
$\uspace^m = [\mathcal{S}^m_2]^d\cap\uspace$, and 
$\pspace^m = \mathcal{S}^m_1, \mathcal{S}^m_0$
or $\mathcal{S}^m_1 + \mathcal{S}^m_0$, respectively.  
It is well-known that the P2-P1 element 
satisfies the LBB condition (\ref{eq:LBB}) for $d=2$
and $d=3$, where the latter requires 
the weak constraint that all simplices
have a vertex in $\Omega$, see \cite{Boffi97}.
While the other two choices, P2-P0 and P2-(P1+P0),
satisfy it for $d=2$.

Moreover, as in earlier sections, we approximate the interface at time $t_m$
by a polyhedral surface $\Gamma^m$ with the same notation.
Similarly to \S\ref{subsec:one-sided},
let $\Omega^m_+$ denote the exterior of $\Gamma^m$ and 
$\Omega^m_-$ the interior of $\Gamma^m$, so that
$\Gamma^m = \partial\Omega^m_- = \overline{\Omega^m_-} \cap 
\overline{\Omega^m_+}$. For simplicity we assume that the unit normal
$\vec\nu^m$ on $\Gamma^m$ points into $\Omega^m_+$.
Due to the phase-dependent viscosity, $\mu_\pm$,
we now subdivide the elements of the bulk mesh $\mathcal{T}^m$ into exterior, 
interior and interfacial elements as follows. Let
\begin{equation} \label{eq:partT}
\mathcal{T}^m_\pm = \left\{ o \in \mathcal{T}^m : o \subset
\Omega^m_\pm \right\} , 
\quad\mathcal{T}^m_{\Gamma^m} = \left\{ o \in \mathcal{T}^m : o \cap
\Gamma^m \not = \emptyset \right\} .
\end{equation}
The disjoint partition $\mathcal{T}^m = \mathcal{T}^m_- \cup \mathcal{T}^m_+ 
\cup \mathcal{T}^m_{\Gamma^m}$ 
can easily be found e.g.\ with the Algorithm~4.1 in \cite{crystal}. 

Similarly to \S\ref{subsec:one-sided},
we use an unfitted finite element approximation of \arxivyesno{}{\linebreak}%
(\ref{eq:weak}).
We define the discrete viscosity
$\mu^m \in {\mathcal S}^m_0$, for $m\geq 0$, as
\begin{equation} \label{eq:rhoma}
\mu^m_{\mid_{o}} = \begin{cases}
\mu_- & o \in \mathcal{T}^m_-\,, \\
\mu_+ & o \in \mathcal{T}^m_+\,, \\
\tfrac12\,(\mu_- + \mu_+) & o \in \mathcal{T}^m_{\Gamma^m}\,.
\end{cases}
\end{equation}
Then the unfitted finite element approximation of (\ref{eq:weak}) 
from \cite{spurious} is given as follows.
Let the closed polyhedral hypersurface $\Gamma^0$ be an approximation of
$\Gamma(0)$ and recall the time interval partitioning \eqref{eq:ttaum}. 
Then, for $m=0,\ldots,M-1$, find 
$(\vec U^{m+1}, P^{m+1},\vec X^{m+1},$ $\kappa^{m+1})
\in \uspace^m \times \widehat\pspace^m \times \Vhm \times \Whm$ such that
\begin{subequations} \label{eq:HG}
\begin{align}
& 2\left(\mu^m\,\mat D(\vec U^{m+1}), \mat D(\vec\xi) \right)
- \left(P^{m+1}, \nabla\,.\,\vec\xi\right)
= \left(\vec f^{m+1}, \vec\xi\right)
+ \gamma_0\left\langle \kappa^{m+1}\,\vec\nu^m,
  \vec\xi\right\rangle_{\Gamma^m} \nonumber \\ & \hspace{7cm} 
\quad\forall\ \vec\xi \in \uspace^m \,, \label{eq:HGa}\\
& \left(\nabla\,.\,\vec U^{m+1}, \varphi\right)  = 0 
\quad\forall\ \varphi \in \widehat\pspace^m\,, \label{eq:HGb} \\
&  \left\langle \frac{\vec X^{m+1} - \vec\id}{\ttau_m}\,.\,\vec\nu^m ,
\chi \right\rangle_{\Gamma^m}^h
= \left\langle \vec U^{m+1}\,.\,\vec\nu^m, 
\chi \right\rangle_{\Gamma^m} \quad\forall\ \chi \in \Whm\,, \label{eq:HGc} \\
& \left\langle \kappa^{m+1}\,\vec\nu^m, \vec\eta \right\rangle_{\Gamma^m}^h
+ \left\langle \nabs\,\vec X^{m+1}, \nabs\,\vec\eta \right\rangle_{\Gamma^m}
 = 0 \quad\forall\ \vec\eta \in \Vhm \label{eq:HGd}
\end{align}
\end{subequations}
and set $\Gamma^{m+1} = \vec X^{m+1}(\Gamma^m)$.
Here, with the help of the interpolation operator 
$\vec I^m_2 : [C(\overline{\Omega})]^d
\to [\mathcal{S}^m_2]^d$, the natural  generalization of 
$I^m : C(\overline{\Omega}) \to S^m$,
we define $\vec f^{m+1} = \vec I^m_2\,\vec f(\cdot,t_{m+1})$.

We note that the  scheme (\ref{eq:HG}) is linear in the unknowns
$(\vec U^{m+1}, P^{m+1},$ $\vec X^{m+1}, \kappa^{m+1})$.
For the mathematical analysis of \eqref{eq:HG} it is convenient to introduce
a reduced version. Given $\uspace^m$ and $\pspace^m$, we define
\begin{equation} \label{eq:Um0}
\uspace^m_0 =\{ \vec U \in \uspace^m : (\nabla\,.\,\vec U, \varphi) = 0 
\quad\forall\ \varphi \in \widehat\pspace^m \} \,.
\end{equation}
Then the discrete pressure can be eliminated from \eqref{eq:HG} to yield the
following reduced variant. For $m=0,\ldots,M-1$, find 
$(\vec U^{m+1}, \vec X^{m+1},$ $\kappa^{m+1})
\in \uspace^m_0 \times \Vhm \times \Whm$ such that
\begin{subequations} \label{eq:redHG}
\begin{align}
& 2\left(\mu^m\,\mat D(\vec U^{m+1}), \mat D(\vec\xi) \right)
= \left(\vec f^{m+1}, \vec\xi\right)
+ \gamma_0\left\langle \kappa^{m+1}\,\vec\nu^m,
  \vec\xi\right\rangle_{\Gamma^m}
\quad\forall\ \vec\xi \in \uspace^m_0 \,, \label{eq:redHGa}\\
&  \left\langle \frac{\vec X^{m+1} - \vec\id}{\ttau_m}\,.\,\vec\nu^m ,
\chi \right\rangle_{\Gamma^m}^h
= \left\langle \vec U^{m+1}\,.\,\vec\nu^m, 
\chi \right\rangle_{\Gamma^m} \quad\forall\ \chi \in \Whm\,, 
\label{eq:redHGb} \\
& \left\langle \kappa^{m+1}\,\vec\nu^m, \vec\eta \right\rangle_{\Gamma^m}^h
+ \left\langle \nabs\,\vec X^{m+1}, \nabs\,\vec\eta \right\rangle_{\Gamma^m}
 = 0 \quad\forall\ \vec\eta \in \Vhm \label{eq:redHGc}
\end{align}
\end{subequations}
and set $\Gamma^{m+1} = \vec X^{m+1}(\Gamma^m)$.

We now show existence, uniqueness and stability results for the two
schemes \eqref{eq:redHG} and \eqref{eq:HG}.

\begin{thm} \label{thm:2phaseEU}
\rule{0pt}{0pt}
\begin{enumerate}
\item \label{item:redEU}
Let $\Gamma^m$ satisfy {\rm Assumption~\ref{ass:A}}. Then
there exists a unique solution
\arxivyesno{$(\vec U^{m+1}, \vec X^{m+1},$ $ \kappa^{m+1}) 
\in \uspace^m_0 \times \Vhm \times \Whm$}
{$(\vec U^{m+1}, \vec X^{m+1}, \kappa^{m+1}) 
\in \uspace^m_0 \times \Vhm \times \Whm$}
to \eqref{eq:redHG}.
\item \label{item:EredEU} 
If $(\vec U^{m+1}, P^{m+1}, \vec X^{m+1}, \kappa^{m+1}) 
\in \uspace^m\times \widehat\pspace^m \times \Vhm \times \Whm$ solves
\arxivyesno{}{\linebreak}%
\eqref{eq:HG}, then $(\vec U^{m+1}, \vec X^{m+1}, \kappa^{m+1})$
is a solution to \eqref{eq:redHG}.
\item \label{item:2phaseEU}
Let $(\uspace^m,\widehat \pspace^m)$ satisfy the LBB condition
{\rm (\ref{eq:LBB})} and let $\Gamma^m$ satisfy {\rm Assumption~\ref{ass:A}}.
Then there exists a unique solution
$(\vec U^{m+1}, P^{m+1}, \vec X^{m+1},$ $ \kappa^{m+1}) 
\in \uspace^m\times \widehat\pspace^m \times \Vhm \times \Whm$ to
\eqref{eq:HG}.
\end{enumerate}
\end{thm}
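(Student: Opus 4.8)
The plan is to prove the three parts in the order stated, since \ref{item:redEU} is the core result and the other two follow easily from it together with standard inf-sup theory. For \ref{item:redEU}, the system \eqref{eq:redHG} is linear with the number of unknowns equal to the number of equations, so existence follows from uniqueness. First I would set $\vec f^{m+1}=\vec0$ and consider the homogeneous system: find $(\vec U,\vec X,\kappa)\in\uspace^m_0\times\Vhm\times\Whm$ satisfying \eqref{eq:redHGa}--\eqref{eq:redHGc} with zero right-hand data (so $\vec f^{m+1}=\vec 0$ and $\vec\id$ dropped from \eqref{eq:redHGb}). The key algebraic step is to test \eqref{eq:redHGa} with $\vec\xi=\vec U^{m+1}\in\uspace^m_0$, test \eqref{eq:redHGc} with $\vec\eta=\tfrac1{\ttau_m}\vec X\in\Vhm$, and test \eqref{eq:redHGb} (with $\vec\id$ removed) with $\chi=\gamma_0\kappa\in\Whm$. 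Combining these, the interfacial coupling terms involving $\kappa\,\vec\nu^m$ cancel against the velocity-normal term, leaving
\begin{equation*}
2\left(\mu^m\,\mat D(\vec U),\mat D(\vec U)\right)
+\frac{\gamma_0}{\ttau_m}\left|\nabs\,\vec X\right|_{\Gamma^m}^2 = 0\,.
\end{equation*}

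Since $\mu^m>0$ on every element, the first term forces $\mat D(\vec U)=\vec0$ in $\Omega$, and as $\vec U\in\uspace^m_0\subset[H^1_0(\Omega)]^d$ vanishes on $\partial\Omega$, Korn's inequality yields $\vec U=\vec0$. The second term forces $\nabs\,\vec X=\vec0$, so $\vec X=\vec X^c$ is constant on $\Gamma^m$. To conclude, I would proceed exactly as in the proof of Lemma~\ref{lem:exXk} and Theorem~\ref{thm:uniqueMC}: choosing $\vec\eta=\vec\pi_{\Gamma^m}[\kappa\,\vec\omega^m]$ in \eqref{eq:redHGc} (rewritten via \eqref{eq:omegahnuh}) gives $\left|\kappa\,\vec\omega^m\right|_{\Gamma^m}^h=0$, hence $\kappa=0$ by Assumption~\ref{ass:A}\ref{item:assA2}; then \eqref{eq:redHGb} with $\vec X=\vec X^c$ reduces, on recalling \eqref{eq:intnuh}, to $\vec X^c\,.\,\int_{\Gamma^m}\chi\,\vec\nu^m\dH{d-1}=0$ for all $\chi\in\Whm$, and Assumption~\ref{ass:A}\ref{item:assA1} (via Remark~\ref{rem:assA}) forces $\vec X^c=\vec0$. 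This establishes uniqueness, and hence existence, for \eqref{eq:redHG}.

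For \ref{item:EredEU}, I would observe that if $(\vec U^{m+1},P^{m+1},\vec X^{m+1},\kappa^{m+1})$ solves \eqref{eq:HG}, then restricting the test functions in \eqref{eq:HGa} to $\vec\xi\in\uspace^m_0$ annihilates the pressure term $(P^{m+1},\nabs\,.\,\vec\xi)$ by the very definition \eqref{eq:Um0} of $\uspace^m_0$, so \eqref{eq:HGa} collapses to \eqref{eq:redHGa}; equations \eqref{eq:HGc}, \eqref{eq:HGd} are identical to \eqref{eq:redHGb}, \eqref{eq:redHGc}, giving the claim immediately. For \ref{item:2phaseEU}, uniqueness of $(\vec U^{m+1},\vec X^{m+1},\kappa^{m+1})$ is inherited from \ref{item:redEU} via \ref{item:EredEU}; it then remains to recover $P^{m+1}\in\widehat\pspace^m$ uniquely. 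The hard part, or at least the only genuinely nonroutine ingredient, is this pressure recovery: the functional $\vec\xi\mapsto 2(\mu^m\mat D(\vec U^{m+1}),\mat D(\vec\xi))-(\vec f^{m+1},\vec\xi)-\gamma_0\langle\kappa^{m+1}\vec\nu^m,\vec\xi\rangle_{\Gamma^m}$ vanishes on $\uspace^m_0$, so by the LBB condition \eqref{eq:LBB} there is a unique $P^{m+1}\in\widehat\pspace^m$ representing it through $(P^{m+1},\nabs\,.\,\vec\xi)$. I would note that uniqueness of $P^{m+1}$ within $\widehat\pspace^m$ (rather than all of $\pspace^m$) is exactly what the inf-sup bound guarantees, the constant-pressure mode having been quotiented out. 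This completes all three parts.
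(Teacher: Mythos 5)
Your proposal is correct and follows essentially the same route as the paper: part \ref{item:redEU} is exactly the paper's argument (test the homogeneous system to obtain $2\left(\mu^m\,\mat D(\vec U),\mat D(\vec U)\right) + \tfrac{\gamma_0}{\ttau_m}\left|\nabs\,\vec X\right|^2_{\Gamma^m}=0$, then Korn's inequality, then the Lemma~\ref{lem:exXk} argument with Assumption~\ref{ass:A} to kill $\kappa$ and the constant $\vec X^c$), and part \ref{item:EredEU} is the same observation that the pressure term vanishes on $\uspace^m_0$, with $\vec U^{m+1}\in\uspace^m_0$ guaranteed by \eqref{eq:HGb}. The only (cosmetic) difference is in part \ref{item:2phaseEU}: the paper deduces existence from uniqueness of the square homogeneous system, using parts \ref{item:EredEU}, \ref{item:redEU} and then the LBB condition to conclude $P=0$, whereas you construct $P^{m+1}$ directly by applying the inf-sup condition to the residual functional that vanishes on $\uspace^m_0$ --- two standard, equivalent formulations resting on identical ingredients.
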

\begin{proof}
\ref{item:redEU}
Existence follows from uniqueness and hence we consider the homogeneous
linear system. 
Find $(\vec U, \vec X, \kappa) \in \uspace^m_0\times \Vhm \times \Whm$ 
such that
\begin{subequations} 
\begin{align}
&  2\left(\mu^m\,\mat D(\vec U), \mat D(\vec\xi) \right)
= \gamma_0\left\langle \kappa\,\vec\nu^m, \vec\xi\right\rangle_{\Gamma^m}
 \quad\forall\ \vec\xi \in \uspace^m_0 \,, \label{eq:proofa}\\
&  \left\langle \vec X\,.\, \vec\nu^m,
\chi \right\rangle_{\Gamma^m}^h
= \ttau_m \left\langle \vec U\,.\,\vec\nu^m , \chi\right\rangle_{\Gamma^m} 
 \quad\forall\ \chi \in \Whm\,, \label{eq:proofc} \\
& \left\langle \kappa\,\vec\nu^m, \vec\eta \right\rangle_{\Gamma^m}^h
+ \left\langle \nabs\,\vec X, \nabs\,\vec\eta \right\rangle_{\Gamma^m}
 = 0  \quad\forall\ \vec\eta \in \Vhm\,. \label{eq:proofd}
\end{align}
\end{subequations}
We now need to show that the zero solution is the only possible solution.
Choosing $\vec\xi=\ttau_m\,\vec U$ in (\ref{eq:proofa}),
$\chi = \gamma_0\,\kappa$ in (\ref{eq:proofc})
and $\vec\eta=\gamma_0\,\vec X$ in (\ref{eq:proofd}), we obtain
\[
2\,\ttau_m\left(\mu^m\,\mat D(\vec U), \mat D(\vec U) \right)
+ \gamma_0\left| \nabs\,\vec X\right|_{\Gamma^m}^2 
=0\,. 
\]
Now Korn's inequality, see e.g.\ \citet[\S62.15]{Zeidler88}, 
yields $\vec U = \vec 0$. 
Hence it follows from \eqref{eq:proofc}, \eqref{eq:proofd} and the proof of
Lemma~\ref{lem:exXk} that $\vec X = \vec 0$ and $\kappa= 0$. 
\\
\ref{item:EredEU}
The claim follows trivially from \eqref{eq:HGb} and the definition of
$\uspace^m_0$.
\\
\ref{item:2phaseEU}
Upon considering the homogeneous system, for $(\vec U, P, \vec X, \kappa) 
\in \uspace^m\times \widehat\pspace^m\times\Vhm \times \Whm$ it follows 
immediately from \ref{item:EredEU} and \ref{item:redEU} that 
$\vec U = \vec 0$, $\vec X = \vec 0$ and $\kappa = 0$.
Then the LBB inf-sup condition (\ref{eq:LBB}) yields that $P = 0$.
\end{proof}

\begin{thm} \label{thm:2phasestab}
Let $d=2$ or $d=3$.
Let $(\vec U^{m+1}, \vec X^{m+1}, \kappa^{m+1}) 
\in \uspace^m_0 \times \Vhm \times \Whm$ be a solution to \eqref{eq:redHG},
or let $(\vec U^{m+1}, P^{m+1}, \vec X^{m+1}, \kappa^{m+1}) 
\in \uspace^m\times \widehat\pspace^m \times \Vhm \times \Whm$ be a solution to
\eqref{eq:HG}. Then it holds that
\begin{equation} \label{eq:2phasestab}
 \gamma_0\left|\Gamma^{m+1}\right| 
+ 2\,\ttau_m \left(\mu^m\,\mat D(\vec U^{m+1}), \mat D(\vec U^{m+1})\right)
\leq \gamma_0\left|\Gamma^m\right|
+ \ttau_m \left( \vec f^{m+1}, \vec U^{m+1} \right).
\end{equation}
\end{thm}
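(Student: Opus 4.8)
My plan is to mirror the stability proofs for mean curvature flow and surface diffusion, Theorems~\ref{thm:stabMC} and \ref{thm:stabSD}, by choosing the test functions in each equation to be (multiples of) the discrete unknowns and exploiting the skew coupling between the velocity, curvature and position equations. Since by Theorem~\ref{thm:2phaseEU}\ref{item:EredEU} any solution of the full scheme \eqref{eq:HG} yields, in its velocity, position and curvature components, a solution of the reduced scheme \eqref{eq:redHG}, and since the asserted bound \eqref{eq:2phasestab} does not involve the pressure, it suffices to argue for \eqref{eq:redHG}. (For \eqref{eq:HG} directly one tests \eqref{eq:HGa} with $\vec\xi = \vec U^{m+1}$ and observes that $(P^{m+1}, \nabs\,.\,\vec U^{m+1}) = 0$ by \eqref{eq:HGb}, since $P^{m+1} \in \widehat\pspace^m$, so the pressure term drops out and one recovers the same identity.) Concretely, I would test \eqref{eq:redHGa} with $\vec\xi = \vec U^{m+1} \in \uspace^m_0$, test \eqref{eq:redHGb} with $\chi = \gamma_0\,\kappa^{m+1} \in \Whm$, and test \eqref{eq:redHGc} with $\vec\eta = \tfrac{\gamma_0}{\ttau_m}(\vec X^{m+1} - \vec\id_{\mid_{\Gamma^m}}) \in \Vhm$, all of which are admissible choices in the respective spaces.

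The key algebraic step is that the surface tension coupling cancels. Testing \eqref{eq:redHGa} with $\vec U^{m+1}$ gives
\[
2\left(\mu^m\,\mat D(\vec U^{m+1}), \mat D(\vec U^{m+1})\right)
= \left(\vec f^{m+1}, \vec U^{m+1}\right)
+ \gamma_0\left\langle \vec U^{m+1}\,.\,\vec\nu^m, \kappa^{m+1}\right\rangle_{\Gamma^m},
\]
where I have used the scalar symmetry of the last term. The tested \eqref{eq:redHGb} identifies this boundary pairing with $\gamma_0\left\langle \tfrac{\vec X^{m+1}-\vec\id}{\ttau_m}\,.\,\vec\nu^m, \kappa^{m+1}\right\rangle^h_{\Gamma^m}$, while the tested \eqref{eq:redHGc} identifies the same mass-lumped quantity with $-\tfrac{\gamma_0}{\ttau_m}\left\langle \nabs\,\vec X^{m+1}, \nabs\,(\vec X^{m+1}-\vec\id)\right\rangle_{\Gamma^m}$. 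Chaining these two equalities and substituting into the momentum identity, then multiplying through by $\ttau_m$, yields the exact energy equality
\[
2\,\ttau_m\left(\mu^m\,\mat D(\vec U^{m+1}), \mat D(\vec U^{m+1})\right)
+ \gamma_0\left\langle \nabs\,\vec X^{m+1}, \nabs\,(\vec X^{m+1}-\vec\id)\right\rangle_{\Gamma^m}
= \ttau_m\left(\vec f^{m+1}, \vec U^{m+1}\right).
\]

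The final ingredient is the geometric stability inequality of Lemma~\ref{lem:stab2d3d}, which (in the case $n=1$ for $d=2$ and $n=2$ for $d=3$) gives $\left\langle \nabs\,\vec X^{m+1}, \nabs\,(\vec X^{m+1}-\vec\id)\right\rangle_{\Gamma^m} \geq \mathcal{H}^{d-1}(\vec X^{m+1}(\Gamma^m)) - \mathcal{H}^{d-1}(\Gamma^m) = |\Gamma^{m+1}| - |\Gamma^m|$, on recalling $\Gamma^{m+1} = \vec X^{m+1}(\Gamma^m)$. Inserting this lower bound into the energy equality and rearranging produces \eqref{eq:2phasestab}. The only genuinely restrictive step, and hence the explanation for the hypothesis $d=2$ or $d=3$, is precisely this appeal to Lemma~\ref{lem:stab2d3d}: the quadratic bound on $\left\langle \nabs\,\vec X, \nabs\,(\vec X-\vec\id)\right\rangle_{\Gamma^h}$ by the area difference holds only for one- and two-dimensional polyhedral surfaces, as the scaling counterexample preceding that lemma shows. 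Everything else is the routine testing manipulation already used in \S\ref{subsec:3.5}.
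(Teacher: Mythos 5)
Your proof is correct and follows essentially the same route as the paper: reduce to \eqref{eq:redHG} via Theorem~\ref{thm:2phaseEU}\ref{item:EredEU}, test with $\vec U^{m+1}$, $\gamma_0\,\kappa^{m+1}$ and a multiple of $\vec X^{m+1}-\vec\id_{\mid_{\Gamma^m}}$ so that the surface tension terms cancel, and conclude with Lemma~\ref{lem:stab2d3d}; the extra factor $1/\ttau_m$ in your choice of $\vec\eta$ is an immaterial rescaling of the paper's choice. One cosmetic slip: in your parenthetical remark on \eqref{eq:HG} the pressure term involves the bulk divergence, so it should read $\left(P^{m+1},\nabla\,.\,\vec U^{m+1}\right)=0$ rather than $\nabs\,.\,\vec U^{m+1}$.
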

\begin{proof}
It follows from Theorem~\ref{thm:2phaseEU}\ref{item:EredEU} that we only need
to consider \eqref{eq:redHG}.
Choosing $\vec\xi = \vec U^{m+1} \in \uspace^m_0$ in (\ref{eq:redHGa}), 
$\chi = \gamma_0\,\kappa^{m+1}$ in (\ref{eq:redHGb}) and
$\vec\eta=\gamma_0\,(\vec X^{m+1}-\vec\id_{\mid_{\Gamma^m}})$ 
in (\ref{eq:redHGc}) yields that
\begin{align*}
& 2\,\ttau_m\left(\mu^m\,\mat D(\vec U^{m+1}), \mat D(\vec U^{m+1}) \right)
+ \gamma_0\left\langle \nabs\,\vec X^{m+1}, \nabs\,(\vec X^{m+1} - \vec\id) 
\right\rangle_{\Gamma^m}
\nonumber \\ & \qquad 
= \ttau_m\left(\vec f^{m+1}, \vec U^{m+1} \right).
\end{align*}
Hence (\ref{eq:2phasestab}) follows immediately, on recalling
Lemma~\ref{lem:stab2d3d}.
\end{proof}

\begin{rem} \label{rem:stabstab}
\rule{0pt}{0pt}
\begin{enumerate}
\item 
The stability bound \eqref{eq:2phasestab} is a natural fully discrete 
analogue of \eqref{eq:testD1}.
\item
In the case $\vec f = \vec 0$, it is possible to derive the 
stability bound \eqref{eq:2phasestab} 
with the help of the general strategy from {\rm \S\ref{subsec:otherflows}},
on recalling \eqref{eq:Fineq2phaseflow}. 
To this end, for a given $\Gamma^m$ and $\kappa \in \Whm$, we determine
$\vec U \in \uspace^m_0$ as the unique solution to
\begin{equation} \label{eq:HGa2}
 2\left(\mu^m\,\mat D(\vec U), \mat D(\vec\xi) \right)
= \gamma_0\left\langle \kappa\,\vec\nu^m,
   \vec\xi\right\rangle_{\Gamma^m}
\quad\forall\ \vec\xi \in \uspace^m_0 \,.
\end{equation}
We then define $\mathfrak{F}^m (\kappa) \in \Whm$ such that 
$\left\langle \mathfrak{F}^m (\kappa),\chi \right\rangle_{\Gamma^m}^h
=\left\langle \vec U, \chi\,\vec\nu^m \right\rangle_{\Gamma^m}^h$
for all $\chi \in \Whm$. 
Choosing $\vec\xi =\vec U$ in \eqref{eq:HGa2} we obtain that
\[
\left\langle \mathfrak{F}^m (\kappa), \kappa \right\rangle_{\Gamma^m}^h
= \frac 2\gamma_0 \left(\mu^m\,\mat D(\vec U), \mat D(\vec U) \right)
\geq 0 \quad\forall\ \kappa \in \Whm\,.
\]
Now {\rm Theorem~\ref{thm:other}\ref{item:otherstab}} implies the
desired stability result.
\item
The scheme \eqref{eq:HG} leads to well-behaved meshes for 
$\Gamma^m$, $m=1,\ldots,M$, which follows as in 
{\rm \S\ref{subsec:equi}} by considering a semidiscrete version, see 
{\rm \citet[Remark~3]{spurious}}.
In particular, we obtain equidistribution in two space dimensions and 
conformal polyhedral hypersurfaces in three space dimensions.
\end{enumerate}
\end{rem}

\begin{rem}[Discrete linear systems] \label{rem:Stokessolve}
We recall the notations and definitions from {\rm Remark~\ref{rem:MSsolve}}.
Moreover, as is standard practice for the solution of linear systems 
arising from discretizations of (Navier--)Stokes equations, 
we avoid the complications of the constrained pressure space 
$\widehat\pspace^m$ by considering an overdetermined linear system 
with $\pspace^m$ instead. 
Introducing the obvious abuse of notation, the linear system \eqref{eq:HG},
with $\widehat\pspace^m$ replaced by $\pspace^m$, can be formulated as: 
Find $(\vec U^{m+1},P^{m+1},\kappa^{m+1},\delta\vec X^{m+1})\in
(\bR^d)^{K_{\uspace^m}}\times \bR^{K_{\pspace^m}} \times
\bR^K \times (\bR^d)^K$ such that
\begin{equation}
\begin{pmatrix}
 \mat B_\Omega & \vec C_\Omega & -\gamma_0\,\Nbulk & 0 \\
 \vec C^\transT_\Omega & 0 & 0 & 0 \\
 \NbulkT & 0 & 0 & -\frac1{\ttau_m}\,\vec N_{\Gamma^m}^\transT \\
0 & 0 & \vec N_{\Gamma^m} & \mat A_{\Gamma^m}
\end{pmatrix} 
\begin{pmatrix} \vec U^{m+1} \\ P^{m+1} \\ \kappa^{m+1} \\ 
\delta\vec X^{m+1} \end{pmatrix}
=
\begin{pmatrix} \mat M_\Omega\,\vec f^{m+1} \\ 0 \\
0 \\ -\mat A_{\Gamma^m}\,\vec X^m \end{pmatrix} \,,
\label{eq:Stokeslin}
\end{equation}
where $K_{\uspace^m}$ and $K_{\pspace^m}$ denote the degrees of freedom for
the finite element spaces $\uspace^m$ and $\pspace^m$, respectively.
The definitions of the matrices in \eqref{eq:Stokeslin} are either given in
\eqref{eq:mat0}, or they follow directly from 
\eqref{eq:HG}, see also {\rm \citet[\S5]{fluidfbp}} for details.

The overdetermined linear system \eqref{eq:Stokeslin} can either be solved
directly, with the help of a sparse $QR$ factorization method such as 
SPQR, see {\rm \cite{Davis11}}. Or it can be solved with the help of a Schur 
complement approach that eliminates  $(\kappa^{m+1}, \delta\vec X^{m+1})$ 
from \eqref{eq:Stokeslin}, and
then uses an iterative solver for the remaining system in $(\vec U^{m+1},
P^{m+1})$. This approach has the advantage that for the reduced system
well-known solution methods for finite element discretizations for the 
standard (Navier--)Stokes equations may be employed.
In particular, we let 
\[
\Xi_{\Gamma^m}= \begin{pmatrix}
 0 & - \frac1{\ttau_m}\,\vec N_{\Gamma^m}^\transT \\
\vec N_{\Gamma^m} & \mat A_{\Gamma^m}
\end{pmatrix} 
\]
and recall from {\rm Lemma~\ref{lem:exXk}} that if {\rm Assumption~\ref{ass:A}}
holds, then the matrix $\Xi_{\Gamma^m}$ is nonsingular. 
On defining 
$\mat T_\Omega = (\Nbulk \ 0)\,\Xi_{\Gamma^m}^{-1}\,
\begin{pmatrix} \NbulkT \\ 0 \end{pmatrix}$, we can 
reduce \eqref{eq:Stokeslin} to
\begin{equation} \label{eq:SchurkX}
\begin{pmatrix}
\vec B_\Omega + \gamma_0\,\mat T_\Omega
& \vec C_\Omega \\
\vec C_\Omega^\transT & 0 
\end{pmatrix}
\begin{pmatrix}
\vec U^{m+1} \\ P^{m+1} 
\end{pmatrix} 
= \begin{pmatrix}
\mat M_\Omega\,\vec f^{m+1} -\gamma_0\,(\Nbulk \ 0)\, \Xi_{\Gamma^m}^{-1}\,
\begin{pmatrix} 0 \\ \mat {A}_{\Gamma^m}\,\vec X^{m} \end{pmatrix} \\
0
\end{pmatrix}
\end{equation}
and
$\begin{pmatrix} \kappa^{m+1} \\ \delta\vec X^{m+1} \end{pmatrix} 
= \Xi_{\Gamma^m}^{-1}\,
\begin{pmatrix} -\NbulkT\,\vec U^{m+1} \\ -\mat A_{\Gamma^m}\,\vec X^m
\end{pmatrix}.$
The linear system \eqref{eq:SchurkX} can be sol\-ved, 
for example, with preconditioned GMRES iterative 
solvers for standard \arxivyesno{}{\linebreak}%
\mbox{(Navier--)}Stokes discretizations,
see e.g.\ {\rm \cite{ElmanSW05}} for some examples. 
For particular preconditioners for \eqref{eq:SchurkX} 
and further details on possible solution procedures, 
we refer to {\rm \citet[\S5]{fluidfbp}}.
\end{rem}

\subsubsection{Semidiscrete finite element approximation}
\label{subsubsec:sdStokes}

In this section we introduce a continuous-in-time semidiscrete variant of 
\arxivyesno{}{\linebreak}%
\eqref{eq:HG}. Similarly to \eqref{eq:Sm} and \eqref{eq:calSmk}, 
for a fixed regular partitioning $\mathcal{T}^h$ of $\Omega$, with
$\overline{\Omega}=\cup_{\sigmaO\in\mathcal{T}^h}\overline{\sigmaO}$,
we introduce the finite element spaces 
\begin{equation*} 
\mathcal{S}^h_k = \left\{
\chi \in C(\overline{\Omega}) : \chi_{\mid_{\sigmaO}} \in
\mathcal{P}_k(\sigmaO) \quad\forall\ \sigmaO \in {\mathcal{T}}^h\right\} 
\subset H^1(\Omega)\,, \qquad k \in \bN\,.
\end{equation*}
As before, we let $\mathcal{S}^h_0$ denote the space of  
piecewise constant functions on ${\mathcal{T}}^h$.
Let $\uspace^h\subset[H^1_0(\Omega)]^d$
and $\pspace^h(t)\subset L^2(\Omega)$ be the finite element spaces for the
semidiscrete velocity and pressure approximations, and set
$\widehat\pspace^h(t) = \pspace^h(t) \cap \widehat\pspace$.
Note that while $\uspace^h$ is fixed, 
for later developments we allow a time-dependent discrete pressure space
$\pspace^h(t)$, see \S\ref{subsubsec:XFEM} below.
In addition, we use the notation of \S\ref{subsec:ESFEM} for
evolving polyhedral surfaces, the corresponding finite element spaces,
and discrete time derivatives. In addition, we define
\begin{equation} \label{eq:pspacehT}
\widehat\pspace^h_T = \left\{
\varphi\in L^2(0,T;L^2(\Omega)): \varphi(t)\in \widehat\pspace^h(t)
\ \forall\ t\in(0,T]\right\}.
\end{equation}
Given $\Gamma^h(t)$, we denote by
$\Omega^h_+(t)$ the exterior of $\Gamma^h(t)$ and by
$\Omega^h_-(t)$ the interior of $\Gamma^h(t)$, so that
$\Gamma^h(t) = \partial\Omega^h_-(t) = \overline{\Omega^h_-(t)} \cap 
\overline{\Omega^h_+(t)}$. 
The elements of the bulk mesh $\mathcal{T}^h$ are partitioned
into interior, exterior and interfacial elements precisely as in
\eqref{eq:partT}, and the discrete viscosity $\mu^h(t) \in \mathcal{S}^h_0$ is
defined as the natural semidiscrete analogue of
\eqref{eq:rhoma}.

Then we can formulate the semidiscrete analogue of \eqref{eq:HG} as follows.
Given the closed polyhedral hypersurface $\Gamma^h(0)$,
find an evolving polyhedral hypersurface $\GhT$ 
with induced velocity $\vec{\mathcal{V}}^h \in \VhGhT$,
$\kappa^h \in \WhGhT$, 
$\vec U^h \in L^2(0,T; \uspace^h)$ and $P^h \in \widehat \pspace^h_T$
as follows. For all $t \in (0,T]$, find \arxivyesno{}{\linebreak}%
$(\vec U^h(\cdot,t), P^h(\cdot,t),\vec{\mathcal{V}}^h(\cdot,t),
\kappa^h(\cdot,t))\in \uspace^h\times\widehat\pspace^h(t)\times
\Vht\times\Wht$ such that
\begin{subequations} \label{eq:sdHG}
\begin{align}
& 2\left(\mu^h\,\mat D(\vec U^h), \mat D(\vec\xi) \right)
- \left(P^h, \nabla\,.\,\vec\xi\right)
= \left(\vec f^h, \vec\xi\right)
+ \gamma_0\left\langle \kappa^h\,\vec\nu^h,
  \vec\xi\right\rangle_{\Gamma^h(t)} 
\quad\forall\ \vec\xi \in \uspace^h \,, \label{eq:sdHGa}\\
& \left(\nabla\,.\,\vec U^h, \varphi\right)  = 0 
\quad\forall\ \varphi \in \widehat\pspace^h(t)\,, \label{eq:sdHGb} \\
&  \left\langle \vec{\mathcal{V}}^h\,.\,\vec\nu^h ,
\chi \right\rangle_{\Gamma^h(t)}^h
= \left\langle \vec U^h\,.\,\vec\nu^h, 
\chi \right\rangle_{\Gamma^h(t)} \quad\forall\ \chi \in \Wht\,, \label{eq:sdHGc} \\
& \left\langle \kappa^h\,\vec\nu^h, \vec\eta \right\rangle_{\Gamma^h(t)}^h
+ \left\langle \nabs\,\vec\id, \nabs\,\vec\eta \right\rangle_{\Gamma^h(t)}
 = 0 \quad\forall\ \vec\eta \in \Vht\,, \label{eq:sdHGd}
\end{align}
\end{subequations}
where $\vec f^h$ is the natural semidiscrete analogue of the fully discrete 
forcings $\vec f^{m+1}$, $m= 0,\ldots,M-1$.

\begin{thm} \label{thm:sdStokes}
Let $(\GhT,\kappa^h,\vec U^h,P^h)$ be a solution of \eqref{eq:sdHG}.
\begin{enumerate}
\item \label{item:sdStokesstab}
It holds that
\[
\gamma_0\,\ddt\left|\Gamma^h(t)\right| +
2\left( \mu^h\,\mat D(\vec U^h),\mat D(\vec U^h)\right)
= \left( \vec f^h, \vec U^h\right) .
\]
\item \label{item:sdStokesvol}
If $\charfcn{\Omega_-^h(t)} \in \pspace^h(t)$, then
it holds that
\[
\ddt\,\mathcal{L}^d(\Omega^h_-(t)) = 0\,.
\] 
\item \label{item:sdStokesTM}
For any $t \in (0,T]$, it holds that
$\Gamma^h(t)$ is a conformal polyhedral surface.
In particular, for $d=2$, any two neighbouring elements of the curve 
$\Gamma^h(t)$ either have equal length, or they are parallel.
\end{enumerate}
\end{thm}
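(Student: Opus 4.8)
The plan is to establish the three assertions in turn, each closely mirroring the corresponding semidiscrete results for mean curvature flow (Theorem~\ref{thm:semidis}) and surface diffusion (Theorem~\ref{thm:SDSD}); the only genuinely new ingredient is the coupling to the bulk Stokes system.

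For \ref{item:sdStokesstab} I would first test \eqref{eq:sdHGd} with $\vec\eta = \vec{\mathcal{V}}^h(\cdot,t) \in \Vht$. Combining this with the discrete transport theorem Theorem~\ref{thm:disctrans}\ref{item:disctrans} (applied to $\eta=\zeta=1$) and Lemma~\ref{lem:nabsid}\ref{item:nabsidnabsf}, which gives $\nabs\,\vec\id : \nabs\,\vec{\mathcal{V}}^h = \nabs\,.\,\vec{\mathcal{V}}^h$, yields $\ddt\left|\Gamma^h(t)\right| = \left\langle \nabs\,\vec\id, \nabs\,\vec{\mathcal{V}}^h \right\rangle_{\Gamma^h(t)} = -\left\langle \kappa^h\,\vec\nu^h, \vec{\mathcal{V}}^h \right\rangle^h_{\Gamma^h(t)}$. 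Choosing $\chi = \kappa^h(\cdot,t) \in \Wht$ in \eqref{eq:sdHGc} then replaces $\left\langle \kappa^h\,\vec\nu^h, \vec{\mathcal{V}}^h \right\rangle^h$ by $\left\langle \kappa^h\,\vec\nu^h, \vec U^h \right\rangle_{\Gamma^h(t)}$. Finally, testing \eqref{eq:sdHGa} with $\vec\xi = \vec U^h$ and \eqref{eq:sdHGb} with $\varphi = P^h(\cdot,t) \in \widehat\pspace^h(t)$ (so the pressure term drops out) expresses $\gamma_0\left\langle\kappa^h\,\vec\nu^h, \vec U^h\right\rangle_{\Gamma^h(t)}$ as $2\left(\mu^h\,\mat D(\vec U^h), \mat D(\vec U^h)\right) - \left(\vec f^h, \vec U^h\right)$. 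Substituting this back into the expression for $\gamma_0\,\ddt\left|\Gamma^h(t)\right|$ delivers the claimed energy identity.

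For \ref{item:sdStokesvol} I would start from the transport theorem for moving domains, Theorem~\ref{thm:disctransvol} with $f\equiv 1$, recalling that $\vec\nu^h$ is the outer unit normal to $\Omega^h_-(t)$, to obtain $\ddt\,\mathcal{L}^d(\Omega^h_-(t)) = \left\langle 1, \vec{\mathcal{V}}^h\,.\,\vec\nu^h\right\rangle_{\Gamma^h(t)}$. Choosing $\chi=1$ in \eqref{eq:sdHGc}, and using \eqref{eq:intnuh} to identify the mass-lumped and exact $L^2$ products of $\vec{\mathcal{V}}^h\,.\,\vec\nu^h$ against $1$, converts the right-hand side into $\left\langle \vec U^h\,.\,\vec\nu^h, 1\right\rangle_{\Gamma^h(t)} = \int_{\Gamma^h(t)} \vec U^h\,.\,\vec\nu^h\dH{d-1}$. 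The divergence theorem on $\Omega^h_-(t)$ rewrites this as $\left(\nabla\,.\,\vec U^h, \charfcn{\Omega^h_-(t)}\right)$. Here the enrichment hypothesis $\charfcn{\Omega^h_-(t)} \in \pspace^h(t)$ enters: subtracting its mean produces an admissible mean-zero test function in \eqref{eq:sdHGb}, and since $\vec U^h \in \uspace^h \subset [H^1_0(\Omega)]^d$ carries no boundary flux, $\left(\nabla\,.\,\vec U^h, 1\right) = 0$; hence $\left(\nabla\,.\,\vec U^h, \charfcn{\Omega^h_-(t)}\right) = 0$ and volume is conserved. This is the discrete analogue of Remark~\ref{rem:weak}\ref{item:weakii}. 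Assertion \ref{item:sdStokesTM} is then immediate: \eqref{eq:sdHGd} is literally the defining relation \eqref{eq:conformal}, so $\Gamma^h(t)$ is a conformal polyhedral hypersurface in the sense of Definition~\ref{def:conformal} for every $t$, and the sharpening for $d=2$ follows directly from Theorem~\ref{thm:equid}.

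The points requiring care are bookkeeping rather than conceptual: tracking the sign conventions, and justifying when a mass-lumped product $\left\langle\cdot,\cdot\right\rangle^h$ may be exchanged for the exact product $\left\langle\cdot,\cdot\right\rangle$ via \eqref{eq:intnuh}, together with checking the admissibility of each test function (in particular that $P^h(\cdot,t)$ and the mean-zero part of $\charfcn{\Omega^h_-(t)}$ lie in $\widehat\pspace^h(t)$). The main, though still mild, obstacle is \ref{item:sdStokesvol}: one must route the interfacial flux $\left\langle \vec U^h, \vec\nu^h\right\rangle_{\Gamma^h(t)}$ through the bulk incompressibility constraint, which is precisely the step where the XFEM assumption $\charfcn{\Omega^h_-(t)} \in \pspace^h(t)$ is indispensable.
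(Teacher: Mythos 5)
Your proposal is correct and follows essentially the same route as the paper's own proof: part \ref{item:sdStokesstab} uses exactly the test functions $\vec\xi = \vec U^h$, $\varphi = P^h$, $\chi = \kappa^h$, $\vec\eta = \vec{\mathcal{V}}^h$ together with Theorem~\ref{thm:disctrans} and Lemma~\ref{lem:nabsid}\ref{item:nabsidnabsf}; part \ref{item:sdStokesvol} uses Theorem~\ref{thm:disctransvol}, $\chi=1$ in \eqref{eq:sdHGc}, the divergence theorem, and the mean-zero shift of $\charfcn{\Omega^h_-(t)}$ as the test function in \eqref{eq:sdHGb}; and part \ref{item:sdStokesTM} is the same direct appeal to Definition~\ref{def:conformal} and Theorem~\ref{thm:equid}.
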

\begin{proof}
\ref{item:sdStokesstab}
Similarly to the proof of Theorem~\ref{thm:semidis}\ref{item:semidisstab},
choosing $\vec\xi = \vec U^h(\cdot,t) \in \uspace^h$, $\varphi = P^h(\cdot,t)
\in \widehat\pspace^h(t)$, $\chi = \kappa^h(\cdot,t)\in \Wht$ 
$\vec\eta = \vec{\mathcal{V}}^h(\cdot, t) \in \Vht$ in \eqref{eq:sdHG} gives
\begin{align*}
\gamma_0\,\ddt\left|\Gamma^h(t)\right| &
= \gamma_0 \left\langle \nabs\,\vec\id, \nabs\,\vec{\mathcal{V}}^h 
 \right\rangle_{\Gamma^h(t)} 
= -\gamma_0 \left\langle \vec{\mathcal{V}}^h,
\kappa^h\,\vec\nu^h\right\rangle_{\Gamma^h(t)}^h \nonumber \\ &
= -\gamma_0
\left\langle \vec U^h, \kappa^h\,\vec\nu^h\right\rangle_{\Gamma^h(t)} 
= \left( \vec f^h, \vec U^h\right) 
- 2\left( \mu^h\,\mat D(\vec U^h),\mat D(\vec U^h)\right) ,
\end{align*}
which is the claim. \\
\ref{item:sdStokesvol}
Similarly to the proof of Theorem~\ref{thm:SDSD}\ref{item:SDSDvol},
choosing $\chi = 1$ in \eqref{eq:sdHGc} 
and $\varphi = \charfcn{\Omega_-^h(t)} -
\frac{\mathcal{L}^d(\Omega_-^h(t))}{\mathcal{L}^d(\Omega)} \in 
\widehat\pspace^h(t)$ in \eqref{eq:sdHGb} yields,
on using the divergence theorem, that
\begin{align*}
\ddt\,\mathcal{L}^d(\Omega^h_-(t)) &
= \left\langle \vec{\mathcal{V}}^h,\vec\nu^h\right\rangle_{\Gamma^h(t)}^h 
= \left\langle \vec U^h,\vec\nu^h\right\rangle_{\Gamma^h(t)}
= \left( \nabla\,.\,\vec U^h, \charfcn{\Omega^h_-(t)} \right) \nonumber\\ &
= \left( \nabla\,.\,\vec U^h, \charfcn{\Omega^h_-(t)} 
- \frac{\mathcal{L}^d(\Omega_-^h(t))}{\mathcal{L}^d(\Omega)} \right)
= 0\,,
\end{align*}
in a discrete analogue to \eqref{eq:conserved}.\\
\ref{item:sdStokesTM}
This follows directly from Definition~\ref{def:conformal} and
Theorem~\ref{thm:equid}. 
\end{proof}

\subsubsection{\XFEMGamma\ for conservation of the phase volumes} 
\label{subsubsec:XFEM}
Conservation of the total mass, equivalent to the conservation
of $\mathcal{L}^d(\Omega_-(t))$, (\ref{eq:conserved}), 
is clearly a desirable property on the discrete level.
We have seen in Theorem~\ref{thm:sdStokes}\ref{item:sdStokesvol}
that the semidiscrete scheme \eqref{eq:sdHG} conserves
$\mathcal{L}^d(\Omega^h_-(t))$ only if the time-dependent discrete pressure
spaces $\pspace^h(t)$ contain the characteristic function of the discrete 
inner phase $\charfcn{\Omega_-^h(t)}$ for all $t \in [0,T]$.
Hence, for the fully discrete approximation \eqref{eq:HG} 
it was suggested in \citet[\S3.4]{spurious} to extend
the pressure space $\pspace^m$ by one single basis function, namely
$\charfcn{\Omega_-^m}$. There, we referred to this as the
\XFEMGamma\ approach, because the extra contributions to \eqref{eq:HGa}
and \eqref{eq:HGb} coming from $\charfcn{\Omega_-^m} -
\frac{\mathcal{L}^d(\Omega_-^m)}{\mathcal{L}^d(\Omega)} \in \widehat\pspace^m$
can be written in terms of integrals over $\Gamma^m$, on noting
from the divergence theorem that
\begin{equation} \label{eq:XFEMGamma}
\left(\nabla\,.\,\vec\xi,\charfcn{\Omega_-^m}-
 \frac{\mathcal{L}^d(\Omega_-^m)}{\mathcal{L}^d(\Omega)} \right) 
= \left(\nabla\,.\,\vec\xi,\charfcn{\Omega_-^m}\right) 
= \left\langle \vec\nu^m, \vec\xi \right\rangle_{\Gamma^m} 
\quad\forall\ \vec\xi \in \uspace^m\,.
\end{equation}
For the fully discrete approximation \eqref{eq:HG},
even with the \XFEMGamma\ pressure space extension,
it is not possible to show that the total mass is conserved.
However, we observe that combining \eqref{eq:XFEMGamma}, 
with $\vec\xi=\vec U^{m+1}$, \eqref{eq:HGb} and \eqref{eq:HGc} leads to
\begin{equation*} 
\left\langle \vec X^{m+1} - \vec\id, \vec\nu^m \right\rangle_{\Gamma^m} = 0\,,
\end{equation*}
which means that in practice this fully discrete approximation
conserves the volume of the two phases well,
see also Remark~\ref{rem:SDSD}\ref{item:remSDSDii}.

Moreover, it turns out that the \XFEMGamma\ approach avoids spurious 
velocities. To make this precise, we state the following theorem.
\begin{thm} \label{thm:xfem}
\rule{0pt}{0pt}
\begin{enumerate}
\item \label{item:xfemi}
Let $d=2$ or $d=3$.
Let $(\vec U^{m+1},\vec X^{m+1}, \kappa^{m+1}) \in
\uspace^m\times \Vhm\times\Whm$ 
be a solution to \eqref{eq:redHG} with $\vec f^{m+1} = \vec 0$.
If $\vec X^{m+1} = \vec\id_{\mid_{\Gamma^m}}$, then $\vec U^{m+1} = \vec 0$. 
\item \label{item:xfemii}
Let $\charfcn{\Omega_-^m} \in \pspace^m$,
let $\Gamma^m$ satisfy {\rm Assumption~\ref{ass:A}}, 
and let $\Gamma^m$ be a polyhedral surface with constant discrete mean
curvature, i.e.\ there exists a constant
$\overline{\kappa}\in \bR$ such that
\begin{equation*} 
\overline{\kappa} \left\langle\vec\nu^m,\vec\eta\right\rangle_{\Gamma^m}
+\left\langle
\nabs\, \vec\id,\nabs\,\vec\eta\right\rangle_{\Gamma^m}=0
\quad\forall\ \vec\eta\in\Vhm\,.
\end{equation*}
Then $\Gamma^m$ is a conformal polyhedral surface,
recall {\rm Definition~\ref{def:conformal}}, and
\arxivyesno{$(\vec U^{m+1}, \vec X^{m+1},$ $ \kappa^{m+1}) = 
(\vec 0, \vec X^m, \overline{\kappa})\in\uspace^m_0\times\Vhm\times \Whm$}
{$(\vec U^{m+1}, \vec X^{m+1}, \kappa^{m+1}) = 
(\vec 0, \vec X^m, \overline{\kappa})\in\uspace^m_0\times\Vhm\times \Whm$}
is the unique solution to \eqref{eq:redHG} with $\vec f^{m+1} = \vec 0$. 
\item \label{item:xfemiii}
Let the assumptions in \ref{item:xfemii} hold and let 
$(\uspace^m,\widehat \pspace^m)$ satisfy the LBB condition \eqref{eq:LBB}.
Then $\Gamma^m$ is a conformal polyhedral surface, and
\arxivyesno{$(\vec U^{m+1},$ $ P^{m+1}, \vec X^{m+1}, \kappa^{m+1}) = $
\linebreak $(\vec 0,-\gamma_0\,\overline\kappa\,\bigl[
\charfcn{\Omega_-^m} - \frac{\mathcal{L}^d(\Omega_-^m)}{\mathcal{L}^d(\Omega)}
\bigr],\vec\id_{\mid_{\Gamma^m}}, 
\overline{\kappa})\in\uspace^m\times\widehat\pspace^m\times
\Vhm\times \Whm$}
{$(\vec U^{m+1},$ $ P^{m+1}, \vec X^{m+1}, \kappa^{m+1}) = 
(\vec 0,-\gamma_0\,\overline\kappa\,\bigl[
\charfcn{\Omega_-^m} - \frac{\mathcal{L}^d(\Omega_-^m)}{\mathcal{L}^d(\Omega)}
\bigr],\vec\id_{\mid_{\Gamma^m}}, 
\overline{\kappa})\in\uspace^m\times\widehat\pspace^m\times
\Vhm\times \Whm$}
is the unique solution to \eqref{eq:HG} with $\vec f^{m+1} = \vec 0$.
\end{enumerate}
\end{thm}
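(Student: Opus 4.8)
All three parts are verification arguments resting on two earlier facts: the unique solvability of the discrete systems (Theorem~\ref{thm:2phaseEU}) and the divergence identity \eqref{eq:XFEMGamma}. For \ref{item:xfemi} I would argue directly. Since $\vec X^{m+1} = \vec\id_{\mid_{\Gamma^m}}$, the left-hand side of \eqref{eq:redHGb} vanishes, so $\langle \vec U^{m+1}\,.\,\vec\nu^m, \chi\rangle_{\Gamma^m} = 0$ for all $\chi \in \Whm$; taking $\chi = \kappa^{m+1} \in \Whm$ then gives $\langle \kappa^{m+1}\,\vec\nu^m, \vec U^{m+1}\rangle_{\Gamma^m} = 0$. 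Testing \eqref{eq:redHGa} with $\vec\xi = \vec U^{m+1} \in \uspace^m_0$ and recalling $\vec f^{m+1} = \vec 0$ yields $2\,(\mu^m\,\mat D(\vec U^{m+1}), \mat D(\vec U^{m+1})) = 0$, whence Korn's inequality and $\mu^m > 0$ force $\vec U^{m+1} = \vec 0$.

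For \ref{item:xfemii} and \ref{item:xfemiii} the first step is to recognise the stated constant-curvature identity as the conformality condition \eqref{eq:conformal}. Since $\overline\kappa$ is constant, \eqref{eq:intnuh} gives $\langle \overline\kappa\,\vec\nu^m, \vec\eta\rangle^h_{\Gamma^m} = \langle \overline\kappa\,\vec\nu^m, \vec\eta\rangle_{\Gamma^m}$, so the hypothesis is exactly \eqref{eq:conformal} with $\kappa^h = \overline\kappa \in \Whm$, and hence $\Gamma^m$ is conformal in the sense of Definition~\ref{def:conformal}. I would then verify that the stated tuple solves \eqref{eq:redHG} and invoke uniqueness. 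With $\vec U = \vec 0$, $\vec X = \vec\id_{\mid_{\Gamma^m}}$, $\kappa = \overline\kappa$, equations \eqref{eq:redHGb} and \eqref{eq:redHGc} are immediate (the latter being the conformality identity), while \eqref{eq:redHGa} reduces to showing $\langle \overline\kappa\,\vec\nu^m, \vec\xi\rangle_{\Gamma^m} = 0$ for all $\vec\xi \in \uspace^m_0$. This is the heart of the matter. Using \eqref{eq:XFEMGamma}, $\int_{\Gamma^m} \vec\nu^m\,.\,\vec\xi \dH{d-1} = (\nabla\,.\,\vec\xi, \charfcn{\Omega_-^m})$; writing $\charfcn{\Omega_-^m} = (\charfcn{\Omega_-^m} - \tfrac{\mathcal{L}^d(\Omega_-^m)}{\mathcal{L}^d(\Omega)}) + \tfrac{\mathcal{L}^d(\Omega_-^m)}{\mathcal{L}^d(\Omega)}$, the first summand lies in $\widehat\pspace^m$ precisely because $\charfcn{\Omega_-^m} \in \pspace^m$, and so is annihilated by $\vec\xi \in \uspace^m_0$ by the definition \eqref{eq:Um0}, while the constant summand is killed by $(\nabla\,.\,\vec\xi, 1) = 0$, valid since $\vec\xi \in [H^1_0(\Omega)]^d$. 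Hence the surface-tension forcing vanishes, \eqref{eq:redHGa} holds, and Theorem~\ref{thm:2phaseEU}\ref{item:redEU} supplies uniqueness.

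For \ref{item:xfemiii} I would proceed identically for the full system \eqref{eq:HG}, now with the candidate pressure $P^{m+1} = -\gamma_0\,\overline\kappa\,[\charfcn{\Omega_-^m} - \tfrac{\mathcal{L}^d(\Omega_-^m)}{\mathcal{L}^d(\Omega)}]$. One first checks $P^{m+1} \in \widehat\pspace^m$ (it lies in $\pspace^m$ by assumption and has zero mean). Equations \eqref{eq:HGb}, \eqref{eq:HGc}, \eqref{eq:HGd} are verified as before, and in \eqref{eq:HGa} the pressure term $-(P^{m+1}, \nabla\,.\,\vec\xi)$ now reproduces exactly $\gamma_0\langle \overline\kappa\,\vec\nu^m, \vec\xi\rangle_{\Gamma^m}$ via \eqref{eq:XFEMGamma}, this time for every $\vec\xi \in \uspace^m$ rather than only $\uspace^m_0$. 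Uniqueness then follows from Theorem~\ref{thm:2phaseEU}\ref{item:2phaseEU} under the LBB condition. The main obstacle throughout is bookkeeping the interplay between the lumped and unlumped surface inner products and, above all, the single algebraic identity that the enriched pressure contribution balances the surface-tension forcing: this is precisely where the assumption $\charfcn{\Omega_-^m} \in \pspace^m$ enters, and is what renders the scheme free of spurious velocities.
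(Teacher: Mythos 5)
Your proposal is correct, and for parts \ref{item:xfemii} and \ref{item:xfemiii} it coincides with the paper's proof: conformality from \eqref{eq:intnuh}, verification of the candidate tuple where the key identity $\left\langle\vec\nu^m,\vec\xi\right\rangle_{\Gamma^m}=\bigl(\nabla\,.\,\vec\xi,\charfcn{\Omega_-^m}-\tfrac{\mathcal{L}^d(\Omega_-^m)}{\mathcal{L}^d(\Omega)}\bigr)$ from \eqref{eq:XFEMGamma} kills (respectively, balances) the surface-tension forcing, and uniqueness from Theorem~\ref{thm:2phaseEU}\ref{item:redEU} (respectively \ref{item:2phaseEU}). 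The only genuine divergence is in part \ref{item:xfemi}: the paper simply cites the stability bound \eqref{eq:2phasestab} of Theorem~\ref{thm:2phasestab} (with $\Gamma^{m+1}=\Gamma^m$ and $\vec f^{m+1}=\vec 0$) to conclude $\bigl(\mu^m\,\mat D(\vec U^{m+1}),\mat D(\vec U^{m+1})\bigr)=0$, whereas you rederive this directly by testing \eqref{eq:redHGb} with $\chi=\kappa^{m+1}$ and \eqref{eq:redHGa} with $\vec\xi=\vec U^{m+1}$, never touching \eqref{eq:redHGc}. Your route is slightly more self-contained and, since the hypothesis $\vec X^{m+1}=\vec\id_{\mid_{\Gamma^m}}$ makes the term $\left\langle\nabs\,\vec X^{m+1},\nabs\,(\vec X^{m+1}-\vec\id)\right\rangle_{\Gamma^m}$ vanish identically, it bypasses Lemma~\ref{lem:stab2d3d} altogether and thus does not actually need the restriction $d=2$ or $d=3$ stated in the theorem; the paper's route inherits that restriction from Theorem~\ref{thm:2phasestab}. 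Both arguments are equally rigorous, so this buys a marginal generalization at no cost.
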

\begin{proof}
\ref{item:xfemi}
It follows from Theorem~\ref{thm:2phasestab} that
the solution fulfills \eqref{eq:2phasestab} with $\Gamma^{m+1}$ 
replaced by $\Gamma^m$ and $\vec f^{m+1} = \vec 0$. 
Hence we obtain 
\arxivyesno{$(\mu^m\,\mat D (\vec U^{m+1}), \mat D(\vec U^{m+1})) = 0$,} 
{$(\mu^m\,\mat D (\vec U^{m+1}), $ \linebreak $\mat D(\vec U^{m+1}))= 0$,}
and so Korn's inequality implies $\vec U^{m+1} = \vec 0$. 
\\
\ref{item:xfemii}
It immediately follows from \eqref{eq:intnuh} that $\Gamma^m$
is a conformal polyhedral surface.
Theorem~\ref{thm:2phaseEU}\ref{item:redEU} implies that in order to establish 
the remaining result, 
we only need to show that $(\vec U^{m+1}, \vec X^{m+1}, \kappa^{m+1}) = 
(\vec 0, \vec\id_{\mid_{\Gamma^m}}, \overline{\kappa})$ is a solution to 
\eqref{eq:redHG} with $\vec f^{m+1} = \vec 0$. 
But this follows immediately from
$\overline{\kappa}\,\langle\vec\nu^m,\vec\eta\rangle_{\Gamma^m}
= \langle\overline{\kappa}\,\vec\nu^m,\vec\eta\rangle_{\Gamma^m}^h$
for all $\vec\eta \in \Vhm$,
and
\begin{equation*}
\left\langle \vec\nu^m, \vec\xi \right\rangle_{\Gamma^m} 
=\left(\nabla\,.\,\vec\xi,\charfcn{\Omega_-^m}-
 \frac{\mathcal{L}^d(\Omega_-^m)}{\mathcal{L}^d(\Omega)}
\right)=0
\quad\forall\ \vec\xi \in \uspace^m_0\,,
\end{equation*}
where we have 
recalled \eqref{eq:XFEMGamma} and \eqref{eq:Um0}.
\\
\ref{item:xfemiii}
On recalling Theorem~\ref{thm:2phaseEU}\ref{item:2phaseEU}, the proof is
analogous to the proof of \ref{item:xfemii}. It holds that
\begin{align*}
&
\gamma_0\left\langle \kappa^{m+1}\,\vec\nu^m, \vec\xi\right\rangle_{\Gamma^m} 
+ \left(P^{m+1}, \nabla\,.\,\vec\xi\right) \nonumber \\ & \qquad
= \gamma_0\,\overline\kappa 
\left\langle \vec\nu^m, \vec\xi\right\rangle_{\Gamma^m} 
-\gamma_0\,\overline\kappa\left(
\charfcn{\Omega_-^m} - \frac{\mathcal{L}^d(\Omega_-^m)}{\mathcal{L}^d(\Omega)},
\nabla\,.\,\vec\xi\right)
= 0 \quad\forall\ \vec\xi \in \uspace^m\,,
\end{align*}
and this proves the claim.
\end{proof}

\begin{rem} \label{rem:xfem}
It follows from {\rm Theorem~\ref{thm:xfem}} that, 
independently of the choice of $\mu_\pm$, no spurious velocities appear for 
discrete stationary solutions, $\Gamma^{m+1} = \Gamma^m$.
Moreover, for the \XFEMGamma\ approach it holds that polyhedral surfaces with
constant discrete mean curvature are discrete stationary solutions.
In particular, spherical bubbles can be approximated by such polyhedral 
surfaces, and so our method admits a stationary solution with zero velocity
in these situations.
This is not the case for many other discretizations and is one of the
reasons for spurious velocities in simple situations like a spherical bubble.
\end{rem}

\subsubsection{Approximations based on the fluidic tangential velocity} 
\label{subsubsec:DziukStokes}

Let us briefly discuss an alternative approximation of two-phase Stokes flow,
that is based on a weak formulation of \eqref{eq:tf2a} and
\[
\left[\vec u\right]_-^+ = \vec 0\,, \ \
\left[\left(2\,\mu\,\mat D(\vec u) - p\,\mat\Id \right) \vec\nu \right]_-^+ 
= -\gamma_0\,\vec\varkappa\,,\ \
\vec\varkappa = \Delta_s\,\vec\id\,,\ \
\vec{\mathcal{V}} = \vec u \quad\text{on } \Gamma(t)
\]
as opposed to \eqref{eq:tf2} with \eqref{eq:BGNweakStokes}.
The semidiscrete finite element approximation, in line with \eqref{eq:sdHG},
then features the equations
\begin{subequations} \label{eq:sdGD}
\begin{align}
& 2\left(\mu^h\,\mat D(\vec U^h), \mat D(\vec\xi) \right)
- \left(P^h, \nabla\,.\,\vec\xi\right)
= \left(\vec f^h, \vec\xi\right)
+ \gamma_0\left\langle \vec\kappa^h, \vec\xi\right\rangle_{\Gamma^h(t)}^h 
\quad\forall\ \vec\xi \in \uspace^h \,, \label{eq:sdGDa}\\
& \left(\nabla\,.\,\vec U^h, \varphi\right)  = 0 
\quad\forall\ \varphi \in \widehat\pspace^h(t)\,, \label{eq:sdGDb} \\
&  \left\langle \vec{\mathcal{V}}^h, \vec\chi \right\rangle_{\Gamma^h(t)}^h
= \left\langle \vec U^h, \vec\chi \right\rangle_{\Gamma^h(t)}^h 
\quad\forall\ \vec\chi \in \Vht\,, \label{eq:sdGDc} \\
& \left\langle \vec\kappa^h, \vec\eta \right\rangle_{\Gamma^h(t)}^h
+ \left\langle \nabs\,\vec\id, \nabs\,\vec\eta \right\rangle_{\Gamma^h(t)}
 = 0 \quad\forall\ \vec\eta \in \Vht\,, \label{eq:sdGDd}
\end{align}
\end{subequations}
for $(\vec U^h(\cdot,t), P^h(\cdot,t),\vec{\mathcal{V}}^h(\cdot,t),
\vec\kappa^h(\cdot,t))\in \uspace^h\times\widehat\pspace^h(t)\times
\Vht\times\Vht$. We note that a variant of \eqref{eq:sdGD} without numerical
integration can also be considered, see \citet[\S3.6]{spurious} for details on
the fully discrete case.
It is a simple matter to prove
that solutions to \eqref{eq:sdGD} satisfy the stability result
Theorem~\ref{thm:sdStokes}\ref{item:sdStokesstab}. However, since
$\vec\nu^h(\cdot,t)$ is not a valid test function in \eqref{eq:sdGDc}, it is not
possible to prove the volume conservation result in 
Theorem~\ref{thm:sdStokes}\ref{item:sdStokesvol} for a solution of
\eqref{eq:sdGD}, even if $\charfcn{\Omega_-^h(t)} \in \pspace^h(t)$.
However, on choosing $\vec\chi = \vec\omega^h(\cdot,t) \in \Vht$
in \eqref{eq:sdGDc}, it follows
from Theorem~\ref{thm:disctransvol} and \eqref{eq:intnuh} that
\begin{equation} \label{eq:virtualGD}
\ddt\,\mathcal{L}^d(\Omega^h_-(t)) 
= \left\langle \vec{\mathcal{V}}^h,\vec\nu^h\right\rangle_{\Gamma^h(t)}^h 
= \left\langle \vec{\mathcal{V}}^h,\vec\omega^h\right\rangle_{\Gamma^h(t)}^h
= \left\langle \vec U^h, \vec\omega^h \right\rangle_{\Gamma^h(t)}^h .
\end{equation}
Hence, by enforcing the needed condition
\[
\left\langle \vec U^h, \vec\omega^h \right\rangle_{\Gamma^h(t)}^h = 0
\]
directly, together with a suitable Lagrange multiplier $\Psing^h(t) \in \bR$,
we can introduce the following semidiscrete approximation of two-phase Stokes
flow that satisfies Theorem~\ref{thm:sdStokes}\ref{item:sdStokesstab},
\ref{item:sdStokesvol}. Note that in order for \eqref{eq:virtualGD} to hold, it
is crucial to employ numerical integration on $\Gamma^h(t)$ throughout.

Given the closed polyhedral hypersurface $\Gamma^h(0)$,
find an evolving polyhedral hypersurface $\GhT$ 
with induced velocity $\vec{\mathcal{V}}^h \in \VhGhT$,
$\vec\kappa^h \in \VhGhT$, 
$\vec U^h \in L^2(0,T; \uspace^h)$, $P^h \in \widehat \pspace^h_T$
and $\Psing^h \in L^2(0,T; \bR)$ as follows. 
For all $t \in (0,T]$, find 
\arxivyesno{%
$(\vec U^h(\cdot,t), P^h(\cdot,t), \Psing^h(t), \vec{\mathcal{V}}^h(\cdot,t),$
$\vec\kappa^h(\cdot,t))\in \uspace^h\times\widehat\pspace^h(t)\times\bR\times
\Vht\times\Vht$}
{%
$(\vec U^h(\cdot,t), P^h(\cdot,t), \Psing^h(t), \vec{\mathcal{V}}^h(\cdot,t),
\vec\kappa^h(\cdot,t))\in \uspace^h\times\widehat\pspace^h(t)\times\bR\times
\Vht\times\Vht$}
such that
\begin{subequations} \label{eq:sdGD2}
\begin{align}
& 2\left(\mu^h\,\mat D(\vec U^h), \mat D(\vec\xi) \right)
- \left(P^h, \nabla\,.\,\vec\xi\right)
- \Psing^h \left\langle\vec\omega^h, \vec\xi \right\rangle_{\Gamma^h(t)}^h
\nonumber \\ & \hspace{3.5cm}
= \left(\vec f^h, \vec\xi\right)
+ \gamma_0\left\langle \vec\kappa^h, \vec\xi\right\rangle_{\Gamma^h(t)}^h
\quad\forall\ \vec\xi \in \uspace^h \,, \label{eq:sdGD2a}\\
& \left(\nabla\,.\,\vec U^h, \varphi\right)  = 0 
\quad\forall\ \varphi \in \widehat\pspace^h(t)
\qquad\text{and}\qquad
\left\langle\vec U^h, \vec\omega^h \right\rangle_{\Gamma^h(t)}^h = 0
\,, \label{eq:sdGD2b} \\
&  \left\langle \vec{\mathcal{V}}^h, \vec\chi \right\rangle_{\Gamma^h(t)}^h
= \left\langle \vec U^h, \vec\chi \right\rangle_{\Gamma^h(t)}^h 
\quad\forall\ \vec\chi \in \Vht\,, \label{eq:sdGD2c} \\
& \left\langle \vec\kappa^h, \vec\eta \right\rangle_{\Gamma^h(t)}^h
+ \left\langle \nabs\,\vec\id, \nabs\,\vec\eta \right\rangle_{\Gamma^h(t)}
 = 0 \quad\forall\ \vec\eta \in \Vht\,. \label{eq:sdGD2d}
\end{align}
\end{subequations}
We note that in terms of pressure space enrichment, the above procedure
may be viewed as a virtual element method, see e.g.\ \cite{BeiraoBCMMR13}.

\subsection{Two-phase Navier--Stokes flow} \label{subsec:NS}
In \cite{fluidfbp} the present authors extended the approximation 
\eqref{eq:HG} to two-phase Navier--Stokes flow, which is given by the model 
\eqref{eq:tf2} with the first equation in \eqref{eq:tf2a} replaced by
\[
\rho\,(\partial_t\,\vec u + (\vec u \,.\,\nabla)\,\vec u)
- 2\,\mu\,\nabla\,.\,\mat D(\vec u) + \nabla\,p = \vec f
\quad\text{in } \Omega_\pm(t)\,, 
\]
where 
$\rho(\cdot,t) = \rho_+\,\charfcn{\Omega_+(t)}+\rho_-\,\charfcn{\Omega_-(t)}$,
with $\rho_\pm \in \bRgeq$ denoting the two fluid densities, and with the
additional initial condition 
$\rho(\cdot,0)\,\vec u(\cdot,0) = \rho(\cdot,0)\,\vec u_0$ in $\Omega$.
The treatment of
the interface evolution, and its coupling to the quantities in the bulk,
remains unchanged, and for the approximation of the fluid flow in the bulk
standard techniques for the finite element approximation of one-phase
Navier--Stokes flow can be employed, see e.g.\ \cite{Temam01}. 
In the following, we recall the fully discrete approximation from 
\cite{fluidfbp}, which is based on the weak formulation of 
two-phase Navier--Stokes flow defined by \eqref{eq:weak} with the additional
terms
\begin{equation} \label{eq:addNS}
\tfrac12\, \ddt \left(\rho\,\vec u, \vec\xi\right) + 
\tfrac12\left(\rho\,\partial_t\,\vec u, \vec\xi\right)
+ \tfrac12\left(\rho, [(\vec u\,.\,\nabla)\,\vec u]\,.\,\vec\xi
- [(\vec u\,.\,\nabla)\,\vec\xi]\,.\,\vec u\right)
\end{equation}
on the left hand side of \eqref{eq:weaka}, recall \citet[(3.9)]{fluidfbp}. 

Let $\rho^m \in \mathcal{S}^m_0$ be defined analogously to \eqref{eq:rhoma},
for $m \geq 0$, and set $\rho^{-1} = \rho^0$. 
In addition we define the standard projection
operator $I^m_0:L^1(\Omega)\to \mathcal{S}^m_0$, such that
$(I^m_0 \eta)_{\mid_{o}} = \frac1{\mathcal{L}^d(o)}\,\int_{o}
\eta \dL{d}$ for all $o \in \mathcal{T}^m$.
In this section, we consider the partitioning 
$t_m = m\,\ttau$, $m=0,\ldots,M$, of $[0,T]$ into uniform time steps 
$\ttau = \frac TM$. Uniform time steps are required in order to be able
to introduce a consistent fully discrete approximation
of the time derivative terms terms in \eqref{eq:addNS}.
Let the closed polyhedral hypersurface $\Gamma^0$ be an approximation of
$\Gamma(0)$, and let $\vec U^0 \in \uspace^0$ be an approximation to 
$\vec u_0$. 
Then, for $m=0,\ldots,M-1$, find 
$(\vec U^{m+1}, P^{m+1},\vec X^{m+1},$ $\kappa^{m+1})
\in \uspace^m \times \widehat\pspace^m \times \Vhm \times \Whm$ such that
\begin{subequations} \label{eq:NSHG}
\begin{align}
& \tfrac12 \left( \frac{\rho^m\,\vec U^{m+1} - (I^m_0\,\rho^{m-1})
\,\vec I^m_2\,\vec U^m}{\ttau}
+(I^m_0\,\rho^{m-1}) \,\frac{\vec U^{m+1}- \vec I^m_2\,\vec U^m}{\ttau}, 
\vec\xi \right)
 \nonumber \\ & \
+ \tfrac12\left(\rho^m, 
 [(\vec I^m_2\,\vec U^m\,.\,\nabla)\,\vec U^{m+1}]\,.\,\vec\xi
- [(\vec I^m_2\,\vec U^m\,.\,\nabla)\,\vec\xi]\,.\,\vec U^{m+1} \right)
\nonumber \\ & \
+ 2\left(\mu^m\,\mat D(\vec U^{m+1}), \mat D(\vec\xi) \right)
- \left(P^{m+1}, \nabla\,.\,\vec\xi\right)
= \left(\vec f^{m+1}, \vec\xi\right)
+ \gamma_0\left\langle \kappa^{m+1}\,\vec\nu^m,
   \vec\xi\right\rangle_{\Gamma^m} \nonumber \\
& \hspace{7cm}\quad\forall\ \vec\xi \in \uspace^m \,, \label{eq:NSHGa}\\
& \left(\nabla\,.\,\vec U^{m+1}, \varphi\right)  = 0 
\quad\forall\ \varphi \in \widehat\pspace^m\,,
\label{eq:NSHGb} \\
&  \left\langle \frac{\vec X^{m+1} - \vec\id}{\ttau}\,.\,\vec\nu^m ,
\chi \right\rangle_{\Gamma^m}^h
= \left\langle \vec U^{m+1}\,.\,\vec\nu^m, 
\chi \right\rangle_{\Gamma^m} 
 \quad\forall\ \chi \in \Whm\,,
\label{eq:NSHGc} \\
& \left\langle \kappa^{m+1}\,\vec\nu^m, \vec\eta \right\rangle_{\Gamma^m}^h
+ \left\langle \nabs\,\vec X^{m+1}, \nabs\,\vec\eta \right\rangle_{\Gamma^m}
 = 0  \quad\forall\ \vec\eta \in \Vhm\,,
\label{eq:NSHGd}
\end{align}
\end{subequations}
and set $\Gamma^{m+1} = \vec X^{m+1}(\Gamma^m)$.
Clearly, in the case $\rho_- = \rho_+ = 0$, the approximation \eqref{eq:NSHG} 
collapses to the scheme \eqref{eq:HG}, with uniform time steps,
for two-phase Stokes flow.

\begin{thm} \label{thm:NSEU}
\rule{0pt}{0pt}
\begin{enumerate}
\item \label{item:NSEU}
Let $(\uspace^m,\widehat \pspace^m)$ satisfy the LBB condition
{\rm (\ref{eq:LBB})}, let $\Gamma^m$ satisfy {\rm Assumption~\ref{ass:A}}
and let $\vec U^m \in [C(\overline\Omega)]^d$.
Then there exists a unique solution
$(\vec U^{m+1}, P^{m+1}, \vec X^{m+1},$ $ \kappa^{m+1}) 
\in \uspace^m\times \widehat\pspace^m \times \Vhm \times \Whm$ to 
\eqref{eq:NSHG}.
\item \label{item:NSstab}
Let $d=2$ or $d=3$. Then a solution to \eqref{eq:NSHG} satisfies
\begin{align}
& \tfrac12\left( \rho^m, |\vec U^{m+1}|^2 \right) + \gamma_0\,|\Gamma^{m+1}|
+ \tfrac12\left(I^m_0\rho^{m-1},|\vec U^{m+1} - \vec I^m_2\,\vec U^m|^2 
\right) \nonumber \\ & \qquad\quad
+ 2\,\ttau\left(\mu^m\,\mat D(\vec U^{m+1}), \mat D(\vec U^{m+1}) \right)
\nonumber \\ & \quad
\leq \tfrac12\left( I^m_0\,\rho^{m-1}, |\vec I^m_2\,\vec U^m|^2 \right) 
+ \gamma_0\,|\Gamma^m| + \ttau\left( \vec f^{m+1}, \vec U^{m+1} \right).
\label{eq:NSstab}
\end{align}
\end{enumerate}
\end{thm}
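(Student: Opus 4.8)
The plan is to follow the two-phase Stokes arguments in the proofs of Theorem~\ref{thm:2phaseEU} and Theorem~\ref{thm:2phasestab}, since the only genuinely new ingredients here are the inertial and convective terms from \eqref{eq:addNS}. For part \ref{item:NSEU}, as \eqref{eq:NSHG} is a square linear system, existence will again follow from uniqueness. First I would pass to the homogeneous system obtained by differencing two solutions; note that the convective term keeps its fixed coefficient $\vec I^m_2\,\vec U^m$ but is linear in the unknown velocity, so it survives. Testing the momentum equation \eqref{eq:NSHGa} with $\vec\xi = \ttau\,\vec U$, equation \eqref{eq:NSHGb} with $\varphi = P$, \eqref{eq:NSHGc} with $\chi = \gamma_0\,\kappa$ and \eqref{eq:NSHGd} with $\vec\eta = \gamma_0\,\vec X$, the crucial observation is that the convective term is skew-symmetric and hence vanishes on the diagonal $\vec\xi = \ttau\,\vec U$, while the inertial term contributes $\tfrac12\,((\rho^m + I^m_0\,\rho^{m-1})\,\vec U, \vec U)$. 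Eliminating the pressure term via \eqref{eq:NSHGb} and transferring the surface-tension coupling through \eqref{eq:NSHGc}, \eqref{eq:NSHGd} exactly as in the Stokes case leads to $\tfrac12\,((\rho^m + I^m_0\,\rho^{m-1})\,\vec U, \vec U) + 2\,\ttau\,(\mu^m\,\mat D(\vec U), \mat D(\vec U)) + \gamma_0\,|\nabs\,\vec X|_{\Gamma^m}^2 = 0$. Since $\rho_\pm \in \bRgeq$ and $\mu_\pm, \gamma_0 \in \bRplus$, all three terms are nonnegative, so Korn's inequality gives $\vec U = \vec 0$; the argument of Lemma~\ref{lem:exXk} together with Assumption~\ref{ass:A} then yields $\vec X = \vec 0$ and $\kappa = 0$, and finally the LBB condition \eqref{eq:LBB} forces $P = 0$.

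For part \ref{item:NSstab}, I would test \eqref{eq:NSHGa} with $\vec\xi = \ttau\,\vec U^{m+1}$, \eqref{eq:NSHGb} with $\varphi = P^{m+1}$, \eqref{eq:NSHGc} with $\chi = \gamma_0\,\kappa^{m+1}$ and \eqref{eq:NSHGd} with $\vec\eta = \gamma_0\,(\vec X^{m+1} - \vec\id_{\mid_{\Gamma^m}})$. The pressure term drops by \eqref{eq:NSHGb}, and the convective term once more vanishes by skew-symmetry. The heart of the computation is rewriting the inertial term by means of the polarization identity (with $\hat\rho = I^m_0\,\rho^{m-1}$) $\tfrac12\,(\hat\rho, |\vec U^{m+1}|^2) - (\hat\rho\,\vec I^m_2\,\vec U^m, \vec U^{m+1}) = \tfrac12\,(\hat\rho, |\vec U^{m+1} - \vec I^m_2\,\vec U^m|^2) - \tfrac12\,(\hat\rho, |\vec I^m_2\,\vec U^m|^2)$, which produces precisely the discrete kinetic-energy terms appearing in \eqref{eq:NSstab}, together with the term $\tfrac12\,(\rho^m, |\vec U^{m+1}|^2)$.

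The surface-tension contribution $\ttau\,\gamma_0\,\langle \kappa^{m+1}\,\vec\nu^m, \vec U^{m+1}\rangle_{\Gamma^m}$ on the right-hand side is transferred, as in Theorem~\ref{thm:2phasestab}, first via \eqref{eq:NSHGc} into $\gamma_0\,\langle \kappa^{m+1}\,\vec\nu^m, \vec X^{m+1} - \vec\id\rangle^h_{\Gamma^m}$ and then via \eqref{eq:NSHGd} into $-\gamma_0\,\langle \nabs\,\vec X^{m+1}, \nabs(\vec X^{m+1} - \vec\id)\rangle_{\Gamma^m}$, which Lemma~\ref{lem:stab2d3d} bounds below by $\gamma_0\,(|\Gamma^{m+1}| - |\Gamma^m|)$; this is exactly where the restriction $d \in \{2,3\}$ is used. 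Collecting the terms then yields \eqref{eq:NSstab}. I expect the main obstacle to be purely the bookkeeping for the new time-derivative terms: one must check that the specific discretization in \eqref{eq:NSHGa} — the particular combination of $\rho^m$ and $I^m_0\,\rho^{m-1}$ together with the skew-symmetrized convective form — is exactly the one for which both identities (vanishing of the convective diagonal and the clean polarization of the inertial term) hold simultaneously, so that the scheme dissipates a genuine discrete energy; everything else is a direct transcription of the Stokes proofs.
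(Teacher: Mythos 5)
Your proposal is correct and follows essentially the same route as the paper: part (i) is proved exactly as in Theorem~\ref{thm:2phaseEU} (your explicit treatment of the extra inertial and convective terms in the homogeneous system is the intended adaptation), and part (ii) uses the very same test functions $\vec\xi = \vec U^{m+1}$ (up to the harmless factor $\ttau$), $\varphi = P^{m+1}$, $\chi = \gamma_0\,\kappa^{m+1}$, $\vec\eta = \gamma_0\,(\vec X^{m+1}-\vec\id_{\mid_{\Gamma^m}})$, the skew-symmetry of the convective form, the polarization identity for the inertial terms, and finally Lemma~\ref{lem:stab2d3d}. No gaps.
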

\begin{proof}
\ref{item:NSEU}
The result can be shown as in the proof of Theorem~\ref{thm:2phaseEU}. \\
\ref{item:NSstab}
We choose $\vec\xi = \vec U^{m+1}$ in (\ref{eq:NSHGa}), 
$\varphi = P^{m+1}$ in (\ref{eq:NSHGb}), 
$\chi = \gamma_0\,\kappa^{m+1}$ in (\ref{eq:NSHGc}) and
$\vec\eta=\gamma_0\,(\vec X^{m+1}-\vec\id_{\mid_{\Gamma^m}})$ in 
(\ref{eq:NSHGd}) to obtain
\begin{align*}
& \tfrac12\left(\rho^m\,\vec U^{m+1}, \vec U^{m+1}\right)
+ \tfrac12\left((I^m_0\,\rho^{m-1})\,(\vec U^{m+1} - \vec I^m_2\,\vec U^m), 
\vec U^{m+1} - \vec I^m_2\,\vec U^m \right) \nonumber \\ & \qquad
+ 2\,\ttau\left(\mu^m\,\mat D(\vec U^{m+1}), \mat D(\vec U^{m+1}) \right)
+ \gamma_0\left\langle \nabs\,\vec X^{m+1}, \nabs\,(\vec X^{m+1} - \vec\id) 
\right\rangle_{\Gamma^m} \nonumber \\ & \hspace{1cm}
= \tfrac12\left((I^m_0\,\rho^{m-1})\,\vec I^m_2\,\vec U^{m}, 
 I^m_2\,\vec U^{m}\right)
+ \ttau\left( \vec f^{m+1}, \vec U^{m+1} \right) .
\end{align*}
Hence (\ref{eq:NSstab}) follows immediately, on recalling
Lemma~\ref{lem:stab2d3d}.
\end{proof}

\begin{rem} \label{rem:NSHG}
\rule{0pt}{0pt}
\begin{enumerate}
\item
If $d=2$ or $d=3$ then, on assuming that
\[
\left(I^m_0\rho^{m-1},|\vec I^m_2\,\vec U^m|^2\right) \leq
\left(\rho^{m-1},|\vec U^m|^2\right)
\quad\text{for $m=1,\ldots, M-1$,}
\]
we can prove an unconditional stability bound for the scheme \eqref{eq:NSHG},
see {\rm \citet[Theorem~4.2]{fluidfbp}}. 
The condition is always satisfied if no bulk 
mesh coarsening in time is performed.
\item
If
\[
\charfcn{\Omega_-^m} \in \pspace^m \quad\text{for $m=0,\ldots, M-1$,}
\]
then a semidiscrete continuous-in-time version of \eqref{eq:NSHG} conserves the
volume of the two phase exactly, which follows from the direct discrete
analogue of {\rm Remark~\ref{rem:weak}\ref{item:weakii}}, as
discussed previously in {\rm \S\ref{subsubsec:XFEM}}.
\item
The scheme \eqref{eq:NSHG} leads to well-behaved meshes, which follows as 
\arxivyesno{}{\linebreak}%
usual by considering a semidiscrete version, see {\rm \S\ref{subsec:equi}}.
In particular, we obtain equidistribution in two space dimensions and 
conformal polyhedral hypersurfaces in three space dimensions.
\item \label{item:NSHGbc}
It is a simple matter to extend the scheme \eqref{eq:NSHG}, and hence
\eqref{eq:HG}, to more general boundary conditions than $\vec u = \vec 0$ 
on $\partial\Omega$ for the fluid flow. Apart from this no-slip condition, 
also free-slip and stress-free boundary conditions,
as well as their inhomogeneous analogues, may be considered. 
See {\rm \cite{spurious,fluidfbp,nsns}} for details.
\item 
The discrete linear systems arising from \eqref{eq:NSHG} can be solved as
described in {\rm Remark~\ref{rem:Stokessolve}}.
\end{enumerate}
\end{rem}

In Figure~\ref{fig:fluidfbp} we show some numerical results for a 
generalization
of the scheme \eqref{eq:NSHG} to include, for example, free-slip boundary 
conditions on parts of the boundary $\partial\Omega$ and gravitational forces
$\vec f = \rho\,\vec f_1$.
In Figure~\ref{fig:fluidfbp} we show the interface of a rising bubble
together with a visualization of the fluid flow for three different simulations
from \cite{fluidfbp}. The two 2D simulations have density values 
$10\,\rho_- = \rho_+ = 10^3$ and 
$10^3\,\rho_- = \rho_+ = 10^3$, respectively, while the 3D simulation
has $10\,\rho_- = \rho_+ = 10^3$.
As the density of the inner fluid is chosen smaller than the density of the 
outer fluid in each case, the bubble rises in the presence of gravity. 
\begin{figure}
\center
\arxivyesno{
\newcommand\lwidth{0.25\textwidth} 
}{
\newcommand\lwidth{0.24\textwidth} 
}
\includegraphics[angle=0,width=\lwidth]{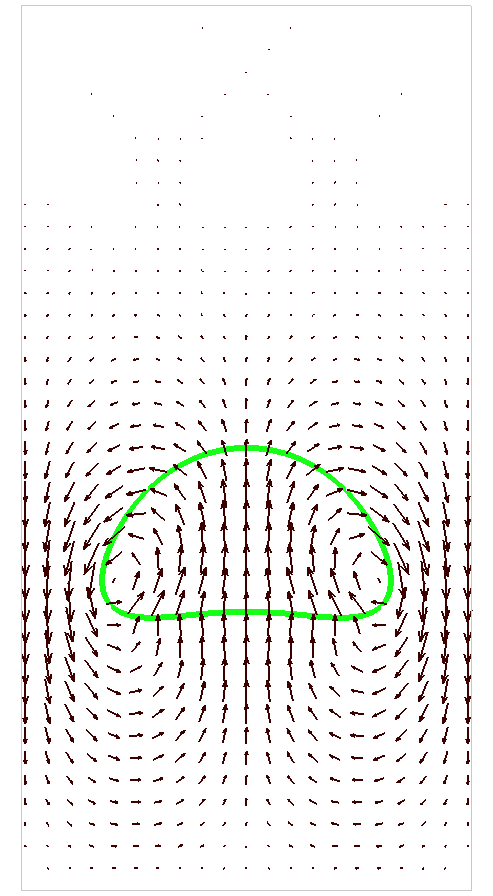} \qquad
\includegraphics[angle=0,width=\lwidth]{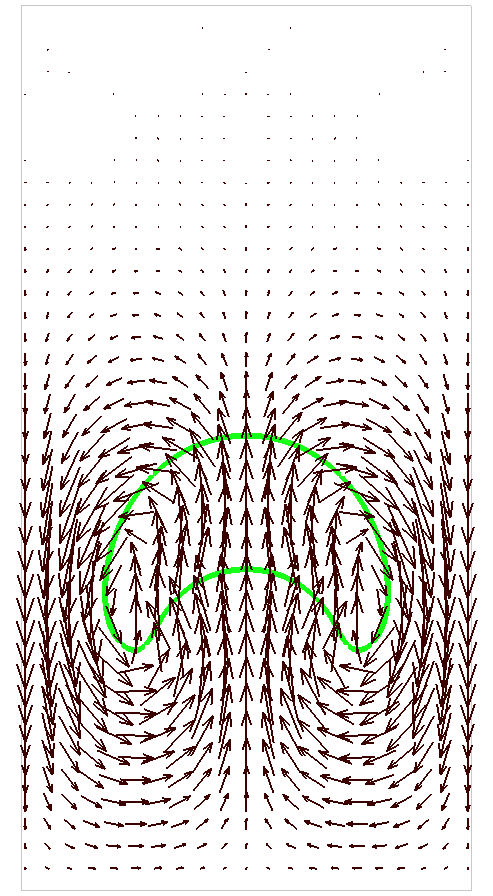} \qquad
\includegraphics[angle=0,width=\lwidth]{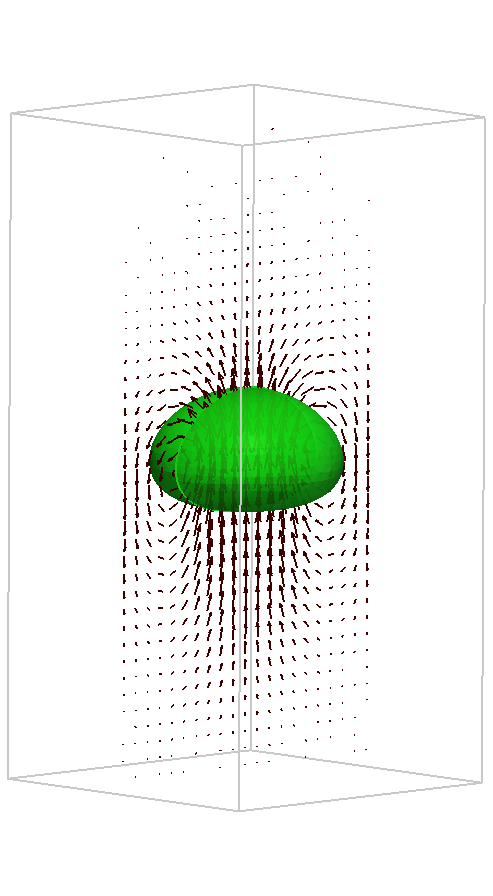}
\caption{Visualization of the numerical results for
the rising droplet experiments shown in 
Figures 3, 7 and 11 in \cite{fluidfbp}. Each plot shows the interface 
$\Gamma^m$ and the velocity $\vec U^m$ at time $t=1.5$. 
For the 3D experiment, the fluid velocity is only visualized within a 2D cut 
through $\Omega$.}
\label{fig:fluidfbp}
\end{figure}%

\subsection{Alternative numerical approaches}
\label{subsec:alternativeNS}

Numerical methods based on interface tracking methods using an indicator
function to describe the
interface are also popular methods to numerically solve two-phase flow
problems. The volume of fluid (VOF)
method uses a characteristic function of one of the phases to evolve the
interface and has been used by
\cite{HirtN81} and \cite{RenardyR02}. Another interface tracking method is
the level set method, which uses a level set function to track the interface. 
We refer to \cite{SussmanSO94} and to \cite{GrossR11} and the references 
therein for details.
Phase field methods, which are also called diffuse interface methods in this
context, have been studied numerically by
\cite{KimKL04,KaySW08,GrunK14,GarckeHK16}.
Other parametric methods, which use a polyhedral mesh to directly represent the
interface, are discussed in
\cite{UnverdiT92,Bansch01,Tryggvason_etal01,GanesanMT07,%
AgneseN16,AgneseN19}.

It is possible to generalize the approximation \eqref{eq:NSHG} to the case when 
surfactants are present. Then the surface tension $\gamma_0$ depends on the
local concentration of surface active agents on the moving interface. The cases
of insoluble and soluble surfactants have been considered by the authors in
\cite{tpfs} and \cite{solsurf}, respectively.
Other approaches to two phase flow with
surfactants are discussed in \cite{JamesL04,GrossR11,GanesanT12,AlandHKN17}
and the references therein.

\section{Willmore flow} \label{sec:willmore_flow}

\subsection{Derivation of the flow}

Willmore flow is the $L^2$--gradient flow of the Willmore energy 
\begin{equation*}
  E(\Gamma) = \tfrac12 \int_\Gamma \varkappa^2 \dH{d-1}\,,
\end{equation*}
for a sufficiently smooth hypersurface $\Gamma$ in $\bR^d$, $d\geq2$.
We remark that in the case $d=2$ this evolution law is often called 
elastic flow. In order to derive Willmore flow, we need the first variation of 
$E(\Gamma)$, which is given in the following lemma.
Here we make use of the notations and conventions introduced in
\S\ref{subsec:evolve}. 

\begin{lem} \label{lem:Willmorevar}
Let 
$\GT$ be a closed  $C^4$--evolving orientable hypersurface.
Then it holds that
\begin{equation*}
\ddt\,E(\Gamma(t)) = \left\langle 
\Delta_s\,\varkappa + \varkappa\,|\nabs\,\vec\nu|^2-\tfrac12\,
\varkappa^3, \mathcal{V} \right\rangle_{\Gamma(t)} .
\end{equation*}
\end{lem}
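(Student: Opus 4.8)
The plan is to differentiate $E(\Gamma(t)) = \tfrac12\,\langle\varkappa,\varkappa\rangle_{\Gamma(t)}$ directly with the transport theorem and then reduce the result to a single integral paired against the normal velocity $\mathcal{V}$, by invoking the known evolution equation for the mean curvature together with a boundary-free integration by parts.

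First I would apply Theorem~\ref{thm:trans}, in its normal-time-derivative form, to $f = \tfrac12\,\varkappa^2$. Since $\GT$ is closed we have $\partial\Gamma(t)=\emptyset$, so the conormal boundary term drops out and
\[
\ddt\,E(\Gamma(t)) = \int_{\Gamma(t)} \left( \matpartn\,(\tfrac12\,\varkappa^2) - \tfrac12\,\varkappa^2\,\mathcal{V}\,\varkappa \right) \dH{d-1}.
\]
The assumption that $\GT$ is $C^4$ ensures $\varkappa \in C^2(\Gamma(t))$, so that $\matpartn\,\varkappa$ and the operator $\Delta_s\,\varkappa$ appearing below are well-defined and the integrand is continuous.

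Next I would use that $\matpartn$ is a derivation: it is the directional derivative in the space-time direction $(\mathcal{V}\,\vec\nu,1)$, recall Remark~\ref{rem:timeder}\ref{item:rem2.19iii}, hence $\matpartn\,(\tfrac12\,\varkappa^2) = \varkappa\,\matpartn\,\varkappa$. The crucial input is then Lemma~\ref{lem:derkappa}\ref{item:lem10.3ii}, namely $\matpartn\,\varkappa = \Delta_s\,\mathcal{V} + \mathcal{V}\,|\nabs\,\vec\nu|^2$, which yields
\[
\ddt\,E(\Gamma(t)) = \int_{\Gamma(t)} \left( \varkappa\,\Delta_s\,\mathcal{V} + \varkappa\,\mathcal{V}\,|\nabs\,\vec\nu|^2 - \tfrac12\,\varkappa^3\,\mathcal{V} \right) \dH{d-1}.
\]
Finally I would transfer the Laplace--Beltrami operator off $\mathcal{V}$ and onto $\varkappa$. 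Applying the integration-by-parts identity Remark~\ref{rem:ibp}\ref{item:ibp} and discarding the boundary term (again because $\Gamma(t)$ is closed) gives the symmetry
\[
\int_{\Gamma(t)} \varkappa\,\Delta_s\,\mathcal{V}\dH{d-1} = -\int_{\Gamma(t)} \nabs\,\varkappa\cdot\nabs\,\mathcal{V}\dH{d-1} = \int_{\Gamma(t)} \mathcal{V}\,\Delta_s\,\varkappa\dH{d-1}.
\]
Substituting this and factoring out $\mathcal{V}$ produces exactly $\langle \Delta_s\,\varkappa + \varkappa\,|\nabs\,\vec\nu|^2 - \tfrac12\,\varkappa^3, \mathcal{V}\rangle_{\Gamma(t)}$, as claimed.

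I expect the only genuinely delicate points to be bookkeeping rather than conceptual: checking that the $C^4$ regularity is precisely what makes $\Delta_s\,\varkappa$ exist and legitimises the integration by parts, and confirming at each stage that closedness of $\Gamma(t)$ annihilates every boundary contribution. The substantive analytic work is entirely imported through Lemma~\ref{lem:derkappa}\ref{item:lem10.3ii}, whose computation of $\matpartn\,\varkappa$ already absorbs the hard geometric identities.
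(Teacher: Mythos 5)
Your proposal is correct and follows essentially the same route as the paper's proof: apply the transport theorem (Theorem~\ref{thm:trans}) in its normal-time-derivative form to $\tfrac12\,\varkappa^2$ with no boundary terms, insert $\matpartn\,\varkappa = \Delta_s\,\mathcal{V} + \mathcal{V}\,|\nabs\,\vec\nu|^2$ from Lemma~\ref{lem:derkappa}, and then integrate by parts via Remark~\ref{rem:ibp} to move $\Delta_s$ from $\mathcal{V}$ onto $\varkappa$. The paper merely compresses these steps into a single displayed chain of equalities, so your argument is a faithful, slightly more explicit version of the same proof.
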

\begin{proof} 
Using Theorem~\ref{thm:trans}, Lemma~\ref{lem:derkappa}\ref{item:lem10.3ii}
and Remark~\ref{rem:ibp}\ref{item:ibp}, we compute
\begin{align}
\ddt\,E(\Gamma(t)) &= 
\left\langle \varkappa, \matpartn\,\varkappa
 -\tfrac12\,\varkappa^2\,\mathcal{V} \right\rangle_{\Gamma(t)} 
= \left\langle \varkappa , \Delta_s\,\mathcal{V} 
 + \mathcal{V}\,|\nabs\,\vec\nu|^2-\tfrac12\,\varkappa^2\,\mathcal{V} 
\right\rangle_{\Gamma(t)} \nonumber \\ &
= \left\langle \Delta_s\,\varkappa 
+ \varkappa\,|\nabs\,\vec\nu|^2-\tfrac12\,\varkappa^3
, \mathcal{V} \right\rangle_{\Gamma(t)} ,
\label{eq:WEvar}
\end{align}
where we have used the fact that $\Gamma(t)$ has no boundary.
\end{proof}

Hence we obtain that an evolving hypersurface $(\Gamma(t))_{t\in[0,T]}$, with
\begin{equation}
  \mathcal{V} = -\Delta_s\,\varkappa - \varkappa\,|\nabs\,\vec\nu|^2 +
  \tfrac12\,\varkappa^3
\qquad\text{on } \Gamma(t)\,,
\label{eq:WF}
\end{equation}
most efficiently decreases the Willmore energy, and this evolution law is
called Willmore flow. Therefore, (\ref{eq:WF}) is the $L^2$--gradient 
flow of the Willmore energy.  

\subsection{A finite element approximation of Willmore flow}
\label{subsec:1.9.2}
We begin with finite element approximations of Willmore flow from
\cite{triplej,willmore} or in the spirit of those papers. 
They are based on the following formulation of Willmore flow
\begin{equation} \label{eq:SWF}
\vec{\mathcal{V}}\,.\,\vec\nu = 
-\Delta_s\,\varkappa - \varkappa\,|\nabs\,\vec\nu|^2 + \tfrac12\, \varkappa^3, 
\qquad
\varkappa\,\vec\nu = \Delta_s\,\vec\id
\qquad\text{on } \Gamma(t)\,.
\end{equation}
Comparing \eqref{eq:strongSD} with \eqref{eq:SWF}, we note that
once a suitable approximation of $|\nabs\,\vec\nu|^2$ is given, then 
it is a simple matter to extend the techniques in Section~\ref{sec:surfdiff} 
in order to derive finite element approximations for Willmore flow.
For $d=2$, $|\nabs\,\vec\nu|^2$ collapses to $\varkappa^2$, recall Lemma 
\ref{lem:nabsnu}\ref{item:nabsnu2}, and so we can consider the scheme in 
\citet[\S2.3]{triplej}. In general, we
rely on one of the approximations in \eqref{eq:matWh}
to obtain a discrete approximation of the Weingarten map $\nabs\,\vec\nu$.
Hence, we introduce the following finite element
approximations for this formulation of Willmore flow. 
Let the closed polyhedral hypersurface $\Gamma^0$ be an approximation to
$\Gamma(0)$, and let $\kappa^0_{\Gamma^0} \in V(\Gamma^0)$ 
be an approximation to its mean curvature.
We also recall the time interval partitioning \eqref{eq:ttaum}. 
Then, for $m = 0,\ldots,M-1$, first find
$\mat W^{m+1}\in\matVhm$, or $\mat W^{m+1}\in\matVchm$, as an approximation 
of the Weingarten map on $\Gamma^m$,
and then find $(\vec X^{m+1},\kappa^{m+1}) \in \Vhm\times \Whm$ such that 
\begin{subequations}
\label{eq:Will}
\begin{align}
& \left\langle \frac{\vec X^{m+1}-\vec\id}{\ttau_m}, \chi\,\vec\nu^m\right\rangle_{\Gamma^m}^h
- \left\langle \nabs\, \kappa^{m+1}, \nabs\, \chi \right\rangle_{\Gamma^m} 
- \tfrac12 \left\langle(\kappa^m_{\Gamma^m})^2 \,\kappa^{m+1}, 
\chi \right\rangle_{\Gamma^m}^h
\nonumber \\ & \hspace*{3cm}
  = - \left\langle \kappa^m_{\Gamma^m} \,|\mat W^{m+1}|^2,\chi\right\rangle_{\Gamma^m}^h
\quad\forall\ \chi \in \Whm\,, \label{eq:Willa}  \\
& \left\langle \kappa^{m+1}\,\vec\nu^m,\vec\eta\right\rangle^h_{\Gamma^m}
+ \left\langle \nabs\,\vec X^{m+1}, \nabs\,\vec\eta 
\right\rangle_{\Gamma^m} = 0
\quad\forall\ \vec\eta \in \Vhm \label{eq:Willb}
\end{align}
\end{subequations}
and set $\Gamma^{m+1} = \vec X^{m+1}(\Gamma^m)$ and
$\kappa^{m+1}_{\Gamma^{m+1}} = \kappa^{m+1} \circ (\vec X^{m+1})^{-1} \in
\Whmp$.
For the definition of $\mat W^{m+1}$ we may, for example, choose one of the
formulations \eqref{eq:matWh}, based on $\Gamma^m$ and possibly
$\kappa^m_{\Gamma^m}$ or 
$\vec\kappa^m = \vec\pi_{\Gamma^m}[\kappa^m_{\Gamma^m}\,\vec\omega^m]$.

\begin{thm}\label{thm:uniqueWF}
Let $\Gamma^m$ satisfy {\rm Assumption \ref{ass:A}\ref{item:assA1}}, 
let $\kappa^m_{\Gamma^m} \in \Whm$ and 
$\mat W^{m+1}\in\matVhm$, or $\mat W^{m+1}\in\matVchm$, be given.
Then there exists a unique solution
$(\vec X^{m+1},\kappa^{m+1}) \in \Vhm\times \Whm$ to \eqref{eq:Will}. 
\end{thm}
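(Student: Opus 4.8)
The plan is to imitate the uniqueness argument for surface diffusion in the proof of Theorem~\ref{thm:uniqueSD}, since the system \eqref{eq:Will} differs from \eqref{eq:sdfea} only by the prescribed right-hand side $-\langle\kappa^m_{\Gamma^m}\,|\mat W^{m+1}|^2,\chi\rangle^h_{\Gamma^m}$ and by the additional zeroth order term $-\tfrac12\langle(\kappa^m_{\Gamma^m})^2\,\kappa^{m+1},\chi\rangle^h_{\Gamma^m}$ in \eqref{eq:Willa}. As $\kappa^m_{\Gamma^m}\in\Whm$ and $\mat W^{m+1}$ are given data, \eqref{eq:Will} is a linear system for $(\vec X^{m+1},\kappa^{m+1})\in\Vhm\times\Whm$ whose number of unknowns, $(d+1)\,K$, equals the number of equations. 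Hence existence will follow from uniqueness, and I would reduce the claim to showing that the associated homogeneous system has only the trivial solution.

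Thus I would let $(\vec X,\kappa)\in\Vhm\times\Whm$ satisfy
\begin{subequations}
\begin{align}
\left\langle \vec X, \chi\,\vec\nu^m\right\rangle_{\Gamma^m}^h
- \ttau_m \left\langle \nabs\,\kappa, \nabs\,\chi \right\rangle_{\Gamma^m}
- \tfrac12\,\ttau_m \left\langle(\kappa^m_{\Gamma^m})^2\,\kappa, \chi \right\rangle_{\Gamma^m}^h & = 0 \quad\forall\ \chi \in \Whm\,, \\
\left\langle \kappa\,\vec\nu^m,\vec\eta\right\rangle^h_{\Gamma^m}
+ \left\langle \nabs\,\vec X, \nabs\,\vec\eta \right\rangle_{\Gamma^m} & = 0 \quad\forall\ \vec\eta \in \Vhm\,,
\end{align}
\end{subequations}
where the first equation has been multiplied by $\ttau_m$. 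Testing the first equation with $\chi=\kappa$, the second with $\vec\eta=\vec X$, and subtracting, the symmetric cross terms $\langle\vec X,\kappa\,\vec\nu^m\rangle^h_{\Gamma^m}$ cancel and one is left with
\begin{equation*}
\left|\nabs\,\vec X\right|_{\Gamma^m}^2 + \ttau_m\left|\nabs\,\kappa\right|_{\Gamma^m}^2 + \tfrac12\,\ttau_m\left(\left|\kappa^m_{\Gamma^m}\,\kappa\right|^h_{\Gamma^m}\right)^2 = 0\,,
\end{equation*}
on noting that $\langle(\kappa^m_{\Gamma^m})^2\,\kappa,\kappa\rangle^h_{\Gamma^m}=(|\kappa^m_{\Gamma^m}\,\kappa|^h_{\Gamma^m})^2$. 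The decisive point is that the extra term carries the same (nonnegative) sign as the Dirichlet terms, so all three summands must vanish; in particular $\nabs\,\vec X=\vec0$ and $\nabs\,\kappa=0$, whence $\vec X=\vec X^c$ and $\kappa=\kappa^c$ are constant on $\Gamma^m$.

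From here I would follow the tail of the proof of Theorem~\ref{thm:uniqueSD}. Inserting $\kappa=\kappa^c$ into the homogeneous second equation and using \eqref{eq:omegahnuh} gives $\kappa^c\langle\vec\omega^m,\vec\eta\rangle^h_{\Gamma^m}=0$ for all $\vec\eta\in\Vhm$; choosing $\vec\eta=\vec\omega^m$ and recalling \eqref{eq:ip0norm} forces $\vec\omega^m=\vec0$ unless $\kappa^c=0$, and $\vec\omega^m=\vec0$ would contradict Assumption~\ref{ass:A}\ref{item:assA1}, so $\kappa^c=0$. Finally, with $\kappa=0$ the first equation reduces, via \eqref{eq:intnuh}, to $\vec X^c\,.\,\int_{\Gamma^m}\chi\,\vec\nu^m\dH{d-1}=0$ for all $\chi\in\Whm$, and Assumption~\ref{ass:A}\ref{item:assA1} (cf.\ Remark~\ref{rem:assA}) then yields $\vec X^c=\vec0$. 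This proves uniqueness, and hence existence, of a solution $(\vec X^{m+1},\kappa^{m+1})\in\Vhm\times\Whm$ to \eqref{eq:Will}.

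I expect no serious obstacle: the argument is structurally identical to the surface diffusion case, and the only genuinely new ingredient is the verification that the cubic curvature term $-\tfrac12(\kappa^m_{\Gamma^m})^2\,\kappa^{m+1}$ — treated explicitly apart from its linear factor $\kappa^{m+1}$ — contributes with a favorable sign to the discrete energy balance. Were that sign reversed, coercivity, and thus unconditional unique solvability, could fail without a smallness condition on $\ttau_m$; here it does not, which is why only Assumption~\ref{ass:A}\ref{item:assA1} is needed.
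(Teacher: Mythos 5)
Your proposal is correct and follows essentially the same route as the paper, whose proof simply states that the result follows as in the proof of Theorem~\ref{thm:uniqueSD}: test the homogeneous system with $\chi=\kappa$ and $\vec\eta=\vec X$, note that the additional term $\tfrac12\,\ttau_m\langle(\kappa^m_{\Gamma^m})^2\,\kappa,\kappa\rangle^h_{\Gamma^m}$ is nonnegative under mass lumping, conclude that $\vec X$ and $\kappa$ are constant, and then use \eqref{eq:omegahnuh}, \eqref{eq:intnuh} and Assumption~\ref{ass:A}\ref{item:assA1} to force both constants to vanish. Your explicit verification of the favorable sign of the cubic curvature term is exactly the detail the paper leaves implicit.
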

\begin{proof}
The desired result follows similarly to the proof of Theorem~\ref{thm:uniqueSD}.
\end{proof}

\begin{rem} \label{rem:BGN08WF}
\rule{0pt}{0pt}
\begin{enumerate}
\item
Similarly to {\rm \S\ref{subsec:vol}}, a semidiscrete variant of the scheme 
\eqref{eq:Will} can also be considered, and it will satisfy 
{\rm Theorem~\ref{thm:SDSD}\ref{item:SDSDTM}}.
\item Similarly to {\rm \S\ref{subsec:TM}}, one can consider 
a variant of the scheme \eqref{eq:Will} with reduced or induced 
tangential motion.
\item The discrete linear systems arising at each time level of
\eqref{eq:Will} are very similar to the ones induced by \eqref{eq:sdfea}.
They can be solved, for example, with the help of a sparse factorization
package such as UMFPACK, see {\rm \cite{Davis04}}.
\end{enumerate}
\end{rem}

\subsection{A stable approximation of Willmore flow}\label{subsec:stableappro}
Unfortunately, it does not seem possible to prove a stability result for 
the fully discrete approximation (\ref{eq:Will}) of (\ref{eq:SWF}). 
However, the important paper \cite{Dziuk08} introduced a stable semidiscrete
finite element approximation of Willmore flow. The discretization is
based on an alternative formulation of the first variation of the
Willmore energy, which leads to a weak formulation of Willmore
flow. 
In order to derive the weak formulation, we prove the following result, which is
inspired by \citet[Lemma 3]{Dziuk08}. 

\begin{lem} \label{lem:Willmorevar2}
Let $\GT$ be a closed $C^3$--evolving orientable hypersurface.
Let $\vec{\mathcal{V}}$ be the velocity field induced by a  
global parameterization of $\GT$.
Then it holds that
\[
\ddt\, E(\Gamma(t)) =
- \left\langle \tfrac12\,|\vec\varkappa|^2 + \nabs\,.\,\vec\varkappa,
\nabs\,.\,\vec{\mathcal{V}}\right\rangle_{\Gamma(t)} 
+ \left\langle \nabs\,\vec\varkappa, 2\,\mat D_s(\vec{\mathcal{V}}
)-\nabs\,\vec{\mathcal{V}}\right\rangle_{\Gamma(t)} ,
\]
where $\mat D_s(\vec{\mathcal{V}})$ is the rate of deformation tensor,
recall {\rm Definition~\ref{def:globalx}\ref{item:DsV}}. 
\end{lem}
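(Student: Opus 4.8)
The plan is to start from the Willmore energy written in the form $E(\Gamma(t)) = \tfrac12 \langle \vec\varkappa, \vec\varkappa \rangle_{\Gamma(t)}$, using that $\vec\varkappa = \varkappa\,\vec\nu$ and $|\vec\varkappa|^2 = \varkappa^2$ since $\vec\nu$ is a unit normal. The key advantage of this representation is that $\vec\varkappa = \Delta_s\,\vec\id$ by Lemma~\ref{lem:varkappa}\ref{item:vecvarkappa}, so the energy is expressed entirely through first-order quantities of $\vec\id$ under the integral, making it amenable to the transport theorem and the commutator rules from Lemma~\ref{lem:commdiff}.

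First I would apply the transport theorem, Theorem~\ref{thm:trans}, in its first form to $f = \tfrac12\,|\vec\varkappa|^2$, obtaining
\[
\ddt\,E(\Gamma(t)) = \left\langle \matpartx\left(\tfrac12\,|\vec\varkappa|^2\right)
+ \tfrac12\,|\vec\varkappa|^2\,\nabs\,.\,\vec{\mathcal{V}}, 1\right\rangle_{\Gamma(t)}
= \left\langle \vec\varkappa\,.\,\matpartx\,\vec\varkappa, 1\right\rangle_{\Gamma(t)}
+ \left\langle \tfrac12\,|\vec\varkappa|^2, \nabs\,.\,\vec{\mathcal{V}}\right\rangle_{\Gamma(t)}.
\]
The term $\langle \tfrac12\,|\vec\varkappa|^2, \nabs\,.\,\vec{\mathcal{V}}\rangle$ already matches part of the first bracket in the claim (with the opposite sign, so I must track signs carefully). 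The main work is to rewrite $\langle \vec\varkappa\,.\,\matpartx\,\vec\varkappa, 1\rangle$. Here I would use $\vec\varkappa = \Delta_s\,\vec\id = \nabs\,.\,(\nabs\,\vec\id)$ and integrate by parts via Remark~\ref{rem:ibp}\ref{item:ibpvec} (no boundary terms, since $\Gamma(t)$ is closed), converting $\langle \vec\varkappa\,.\,\matpartx\,\vec\varkappa,1\rangle$ into $-\langle \nabs\,\vec\varkappa : \nabs\,(\matpartx\,\vec\id)\rangle = -\langle \nabs\,\vec\varkappa : \nabs\,\vec{\mathcal{V}}\rangle$ plus correction terms. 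The correction comes from the fact that $\matpartx$ and $\nabs$ do not commute: applying Lemma~\ref{lem:commdiff}\ref{item:commdiffiii} (or \ref{item:commdiffii}) to interchange $\matpartx$ with the surface divergence defining $\vec\varkappa$ produces exactly a factor $[\nabs\,\vec{\mathcal{V}} - 2\,\mat D_s(\vec{\mathcal{V}})]$ contracted against $\nabs\,\vec\varkappa$, which is the source of the $\langle \nabs\,\vec\varkappa, 2\,\mat D_s(\vec{\mathcal{V}}) - \nabs\,\vec{\mathcal{V}}\rangle$ term in the claim.

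The hard part will be organizing the commutator bookkeeping cleanly. Specifically, I expect the delicate step to be writing $\matpartx\,\vec\varkappa = \matpartx\,(\nabs\,.\,\nabs\,\vec\id)$ and pushing $\matpartx$ inside: each interchange of $\matpartx$ past $\nabs\,.$ and past $\nabs$ generates a commutator term governed by Lemma~\ref{lem:commdiff}, and these must be combined with the integration-by-parts remainder so that the two genuinely distinct structures — one contracting $\tfrac12|\vec\varkappa|^2 + \nabs\,.\,\vec\varkappa$ against $\nabs\,.\,\vec{\mathcal{V}}$, the other contracting $\nabs\,\vec\varkappa$ against $2\mat D_s(\vec{\mathcal{V}}) - \nabs\,\vec{\mathcal{V}}$ — emerge with the correct coefficients. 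I would also need to use $\matpartx\,\vec\id = \vec{\mathcal{V}}$ (Remark~\ref{rem:timeder}\ref{item:rem2.19i}) and the symmetry facts about $\mat D_s$ from Remark~\ref{rem:2.6}\ref{item:Dsf} and Remark~\ref{rem:ibp}\ref{item:ibpvec} to symmetrize the $\nabs\,\vec{\mathcal{V}}$ contributions where $\mat D_s$ appears. Throughout, the absence of boundary terms (closedness of $\Gamma(t)$) keeps every integration by parts clean, so no boundary analysis is needed; the entire difficulty is algebraic tracking of the commutator and integration-by-parts remainders until they collapse into the stated two-term form.
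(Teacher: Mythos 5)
Your opening move coincides with the paper's: Theorem~\ref{thm:trans} gives
$\ddt\,E(\Gamma(t)) = \left\langle \vec\varkappa, \matpartx\,\vec\varkappa\right\rangle_{\Gamma(t)} + \tfrac12\left\langle |\vec\varkappa|^2, \nabs\,.\,\vec{\mathcal{V}}\right\rangle_{\Gamma(t)}$.
The genuine gap lies in your treatment of $\left\langle \vec\varkappa, \matpartx\,\vec\varkappa\right\rangle_{\Gamma(t)}$, i.e.\ exactly in the part you defer as ``bookkeeping''. Your central claim --- that interchanging $\matpartx$ with the surface divergence defining $\vec\varkappa$ via Lemma~\ref{lem:commdiff}\ref{item:commdiffiii} ``produces exactly a factor $[\nabs\,\vec{\mathcal{V}} - 2\,\mat D_s(\vec{\mathcal{V}})]$ contracted against $\nabs\,\vec\varkappa$'' --- is false. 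Writing $\mat B = \nabs\,\vec{\mathcal{V}} - 2\,\mat D_s(\vec{\mathcal{V}})$, pushing $\matpartx$ inside $\Delta_s\,\vec\id = \nabs\,.\,(\nabs\,\vec\id)$ gives, for $i=1,\ldots,d$,
\[
\matpartx \left(\Delta_s\,\vec\id\,.\,\vec\ek_i\right)
= \Delta_s\,(\vec{\mathcal{V}}\,.\,\vec\ek_i)
+ \nabs\,.\left(\mat B\,\nabs\,(\vec\id\,.\,\vec\ek_i)\right)
+ \mat B : \nabs\left(\nabs\,(\vec\id\,.\,\vec\ek_i)\right) ,
\]
so the corrections contract $\mat B$ against the surface Hessian of the components of $\vec\id$, not against $\nabs\,\vec\varkappa$: since $\nabs\,(\vec\id\,.\,\vec\ek_i) = \mat P_\Gamma\,\vec\ek_i$, that Hessian equals $-\vec\nu\otimes\nabs\,(\vec\nu\,.\,\vec\ek_i) - (\vec\nu\,.\,\vec\ek_i)\,\nabs\,\vec\nu$, i.e.\ Weingarten-map terms, while the middle term is the divergence of a field that is not tangential, so integrating it by parts needs Theorem~\ref{thm:div} including its curvature contribution. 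This route can in fact be completed --- multiplying by $\vec\varkappa\,.\,\vec\ek_i$, summing over $i$ and using $\mat P_\Gamma\,\vec\nu = \vec 0$, the symmetry of $\nabs\,\vec\nu$ and $\nabs\,\vec\varkappa = \vec\nu\otimes\nabs\,\varkappa + \varkappa\,\nabs\,\vec\nu$, each of the two correction terms collapses to $-\left\langle \varkappa\,\nabs\,\vec\nu, \mat B\right\rangle_{\Gamma(t)}$ --- but the outcome then reads $-\left\langle\nabs\,\vec\varkappa,\nabs\,\vec{\mathcal{V}}\right\rangle_{\Gamma(t)} - 2\left\langle \varkappa\,\nabs\,\vec\nu, \mat B\right\rangle_{\Gamma(t)}$, and to recover the stated grouping, in particular the term $\left\langle \nabs\,.\,\vec\varkappa, \nabs\,.\,\vec{\mathcal{V}}\right\rangle_{\Gamma(t)}$, you must invoke the pointwise identity $\nabs\,.\,\vec\varkappa = -|\vec\varkappa|^2$ at the very end. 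None of these identities, the curvature terms from the divergence theorem, nor this final regrouping appear in your sketch, and the structure you predict for the commutator corrections is not what the commutator rules deliver.

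The idea you are missing --- the one that makes the paper's proof short --- is to never push $\matpartx$ inside $\Delta_s\,\vec\id$ at all; equivalently, to integrate by parts \emph{before} commuting, so the commutator falls on $\vec\varkappa$ rather than on second derivatives of $\vec\id$. Concretely, since $\Gamma(t)$ is closed, $\left\langle\vec\varkappa,\vec\eta\right\rangle_{\Gamma(t)} + \left\langle\nabs\,\vec\id,\nabs\,\vec\eta\right\rangle_{\Gamma(t)} = 0$ for all $\vec\eta\in\Vt$. The paper differentiates this identity in time with test functions transported by the parameterization, so that $\matpartx\,\vec\eta = \vec 0$. The key observation is Lemma~\ref{lem:nabsid}\ref{item:nabsidnabsf}: the stiffness term equals the scalar integral $\left\langle\nabs\,.\,\vec\eta,1\right\rangle_{\Gamma(t)}$, whose time derivative, by Theorem~\ref{thm:trans} and Lemma~\ref{lem:commdiff}\ref{item:commdiffiii} applied to $\vec\eta$, is $\left\langle\nabs\,.\,\vec\eta,\nabs\,.\,\vec{\mathcal{V}}\right\rangle_{\Gamma(t)} + \left\langle\nabs\,\vec\eta,\nabs\,\vec{\mathcal{V}}-2\,\mat D_s(\vec{\mathcal{V}})\right\rangle_{\Gamma(t)}$; no Hessians and no second derivatives of $\vec{\mathcal{V}}$ ever arise. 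Choosing $\vec\eta = \vec\varkappa$ yields $\left\langle\matpartx\,\vec\varkappa,\vec\varkappa\right\rangle_{\Gamma(t)} = -\left\langle|\vec\varkappa|^2+\nabs\,.\,\vec\varkappa,\nabs\,.\,\vec{\mathcal{V}}\right\rangle_{\Gamma(t)} - \left\langle\nabs\,\vec\varkappa,\nabs\,\vec{\mathcal{V}}-2\,\mat D_s(\vec{\mathcal{V}})\right\rangle_{\Gamma(t)}$, and combining with the transport identity above finishes the proof, producing both terms of the claim --- including the $\nabs\,.\,\vec\varkappa$ contribution --- directly in the weak form on which the semidiscrete scheme \eqref{eq:Dzwill1} is later built.
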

\begin{proof} 
It follows from Definition~\ref{def:2.8}\ref{item:def2.8iii} and
Theorem~\ref{thm:trans} that
\begin{equation} \label{eq:willdiff1}
\ddt\,E(\Gamma(t)) = \tfrac12\,\ddt \left\langle
|\vec\varkappa|^2,1 \right\rangle_{\Gamma(t)}
= \left\langle \vec\varkappa,
\matpartx\,\vec\varkappa\right\rangle_{\Gamma(t)}
+\tfrac12\left\langle|\vec\varkappa|^2,
\nabs\,.\,\vec{\mathcal{V}}\right\rangle_{\Gamma(t)} .
\end{equation}
We now slightly modify the arguments in the proof of \citet[Lemma~3]{Dziuk08}, 
in order to compute
the term $\left\langle\vec\varkappa, \matpartx\,\vec\varkappa
\right\rangle_{\Gamma(t)}$.

Let $\vec x : \Upsilon \times [0,T] \to \bR^d$ be a global parameterization of
$\GT$ as described in Definition~\ref{def:globalx}\ref{item:globalx}. 
For a given $\vec\eta_0\in [C^1(\Upsilon)]^d$, we define
$\vec\eta : \GT \to \bR^d$ via
\begin{equation}
\vec\eta  (\vec x(\vec z, t), t) 
= \vec\eta_0(\vec z) \quad\mbox{ for all } \quad 
\forall\ (\vec z,t) \in \Upsilon \times [0,T]\,,
\label{eq:etaflow}
\end{equation}
i.e.\ the values of $\vec\eta$ are transported with the map
$\vec x$. Hence it follows from Definition~\ref{def:2.18}\ref{item:def2.18i} 
that
\begin{equation} \label{eq:dteta0}
\matpartx\,\vec\eta = \vec 0 \qquad\text{on } \GT\,.
\end{equation}
On recalling Remark~\ref{rem:ibp}\ref{item:weakvarkappa},
Definition \ref{def:2.8}\ref{item:def2.8iii} and a density argument, we have that
\begin{equation}\label{eq:epscurv}
\left\langle \vec\varkappa, \vec\eta\right\rangle_{\Gamma(t)} +
\left\langle\nabs\,\vec\id, \nabs\,\vec\eta\right\rangle_{\Gamma(t)}
= 0  \quad\forall\ \vec\eta \in \Vt\,,
\end{equation}
since we assume $\Gamma(t)$ to be closed.
Differentiating \eqref{eq:epscurv} with respect to $t$, 
we obtain from Theorem~\ref{thm:trans}, on noting (\ref{eq:dteta0}), that
\begin{equation}
  \left\langle \matpartx\,\vec\varkappa , \vec\eta\right\rangle_{\Gamma(t)}
  + \left\langle\vec\varkappa \,.\,\vec\eta ,
  \nabs\,.\,\vec{\mathcal{V}}\right\rangle_{\Gamma(t)} + 
  \ddt \left\langle\nabs\,\vec\id,
  \nabs\,\vec\eta\right\rangle_{\Gamma(t)}
  = 0\,.
\label{eq:epscurva}  
\end{equation}
Using Lemma~\ref{lem:nabsid}\ref{item:nabsidnabsf}, 
Theorem~\ref{thm:trans},
Lemma~\ref{lem:commdiff}\ref{item:commdiffiii} and (\ref{eq:dteta0}), 
we now compute
\begin{align}\label{eq:dtdivdisc}
& \ddt \left\langle\nabs\,\vec\id, \nabs\,\vec\eta\right\rangle_{\Gamma(t)}
= \ddt \left\langle\nabs\,.\,\vec\eta,1 \right\rangle_{\Gamma(t)} 
\nonumber\\ & \qquad
= \left\langle \nabs\,.\,\vec\eta, 
\nabs\,.\,\vec{\mathcal{V}}\right\rangle_{\Gamma(t)}
+ \left\langle \nabs\,\vec\eta,
 \nabs\,\vec{\mathcal{V}}- 2\,\mat D_s(\vec{\mathcal{V}}) 
\right\rangle_{\Gamma(t)}.
\end{align}
On combining (\ref{eq:epscurva}) and (\ref{eq:dtdivdisc}), and then 
choosing $\vec\eta = \vec\varkappa$, we obtain
\[
\left\langle
\matpartx\,\vec\varkappa,\vec\varkappa
\right\rangle_{\Gamma(t)} =
-\left\langle |\vec\varkappa|^2 + \nabs\,.\,\vec\varkappa , 
\nabs\,.\,\vec{\mathcal{V}} \right\rangle_{\Gamma(t)} 
- \left\langle \nabs\,\vec\varkappa, \nabs\,\vec{\mathcal{V}} - 
2\,\mat D_s(\vec{\mathcal{V}}) \right\rangle_{\Gamma(t)}.
\]
Together with \eqref{eq:willdiff1} this yields the desired result.
\end{proof}

A weak formulation of Willmore flow based on Lemma~\ref{lem:Willmorevar2} 
is then the following.
Given a closed hypersurface $\Gamma(0)$, we seek an evolving hypersurface 
$(\Gamma(t))_{t\in[0,T]}$, 
with a global parameterization and induced velocity field
$\vec{\mathcal{V}}$, and $\vec\varkappa \in [L^2(\GT)]^d$ as follows. 
For almost all $t \in (0,T)$, find 
$(\vec {\mathcal V}(\cdot,t),\vec\varkappa(\cdot,t))
\in [L^2(\Gamma(t))]^d \times \Vt$ such that
\begin{subequations}
\label{eq:Dziukwill}
\begin{align}
&\left\langle \vec{\mathcal{V}}, \vec\chi \right\rangle_{\Gamma(t)} =
\left\langle \nabs\,\vec\varkappa , \nabs\,\vec\chi 
- 2 \,\mat D_s(\vec{\chi}) \right\rangle_{\Gamma(t)}
+\left\langle \nabs\,.\, \vec\varkappa, \nabs\,.\, \vec\chi 
\right\rangle_{\Gamma(t)} 
\nonumber \\
& \hspace{3cm}
+ \tfrac12\left\langle |\vec\varkappa|^2, 
\nabs\,.\,\vec\chi \right\rangle_{\Gamma(t)}
\quad\forall\ \vec\chi \in \Vt \,, \label{eq:Dziukwilla} \\
&\left\langle\vec\varkappa,\vec\eta\right\rangle_{\Gamma(t)} +
    \left\langle\nabs\,\vec\id, \nabs\,\vec\eta\right\rangle_{\Gamma(t)} =
    0\quad\forall\ \vec\eta\in \Vt\,,
\label{eq:Dziukwillb}    
\end{align}
\end{subequations}
where we have noted 
a density argument.
 
\begin{rem}[Comparison to \cite{Dziuk08}]
We note that our notation is such that 
$\nabs\,\vec\chi = (\nabla_{\!\Gamma}\,\vec\chi)^\transT$, with 
$\nabla_{\!\Gamma}\,\vec\chi$
defined as in {\rm \citet[(2.4)]{Dziuk08}}. 
In addition, our $\mat D_s(\vec\chi) = 
\tfrac12\,\mat P_\Gamma\,\mat D_\Gamma(\vec\chi)\,\mat P_\Gamma$,
where we recall {\rm Definition \ref{def:2.5}\ref{item:def2.5vi}}
and $\mat D_\Gamma(\vec\chi) = \nabla_{\!\Gamma}\,\vec\chi +
(\nabla_{\!\Gamma}\,\vec\chi)^\transT$ as defined in 
{\rm \citet[(3.14)]{Dziuk08}}.  
Hence, it is easily deduced from {\rm Remark \ref{rem:2.6}\ref{item:Pnabs}}
and {\rm Lemma \ref{lem:nabsid}\ref{item:nabsid}} that
$2\,\nabs\,\vec f : \mat D_s(\vec\chi) = \nabla_{\!\Gamma}\, \vec f : 
\mat D_\Gamma(\vec\chi) \, \nabla_{\!\Gamma}\,\vec\id$ for all
$\vec f,\,\vec\chi \in [H^1(\Gamma(t)]^d$,
which implies that \eqref{eq:Dziukwill} agrees with
{\rm \citet[Problem~2]{Dziuk08}}.
\end{rem}

We now relate the weak formulation (\ref{eq:Dziukwill}) to the
strong formulation (\ref{eq:SWF}).

\begin{lem}\label{lem:WFweakstrong}
A sufficiently smooth solution  of {\rm (\ref{eq:Dziukwill})} is a solution of the strong 
  formulation {\rm (\ref{eq:SWF})} of 
  Willmore flow.
\end{lem}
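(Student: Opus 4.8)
The plan is to pass from the weak form \eqref{eq:Dziukwill} to the strong form \eqref{eq:SWF} by integrating by parts in space, exploiting that $\Gamma(t)$ is closed so that all boundary contributions vanish, and then reading off the two identities of \eqref{eq:SWF} pointwise via the fundamental lemma of the calculus of variations.

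First I would treat the curvature identity. Applying the integration by parts rule Remark~\ref{rem:ibp}\ref{item:ibpvec} (with empty boundary) to \eqref{eq:Dziukwillb} gives $\left\langle\vec\varkappa - \Delta_s\,\vec\id,\vec\eta\right\rangle_{\Gamma(t)} = 0$ for all $\vec\eta\in\Vt$, whence $\vec\varkappa = \Delta_s\,\vec\id$. Lemma~\ref{lem:varkappa}\ref{item:vecvarkappa} then yields $\vec\varkappa = \varkappa\,\vec\nu = \Delta_s\,\vec\id$, which is precisely the second identity in \eqref{eq:SWF} and, crucially, supplies the pointwise relation $\vec\varkappa = \varkappa\,\vec\nu$ that drives the rest of the argument.

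Next I would integrate by parts every term on the right-hand side of \eqref{eq:Dziukwilla}, moving all derivatives off $\vec\chi$. The two divergence terms and the $\tfrac12\,|\vec\varkappa|^2$ term are handled by Lemma~\ref{lem:productrules}\ref{item:pri} together with the divergence theorem Theorem~\ref{thm:div}; the term $\left\langle\nabs\,\vec\varkappa,\nabs\,\vec\chi\right\rangle_{\Gamma(t)}$ is handled by Remark~\ref{rem:ibp}\ref{item:ibpvec}. The delicate term is $-2\left\langle\nabs\,\vec\varkappa,\mat D_s(\vec\chi)\right\rangle_{\Gamma(t)}$: since $\mat D_s(\vec\chi)$ is symmetric and of the form $\mat P_\Gamma\,(\cdot)\,\mat P_\Gamma$, the symmetry observation recorded in Remark~\ref{rem:ibp}\ref{item:ibpvec} turns $\nabs\,\vec\varkappa:\mat D_s(\vec\chi)$ into $\mat D_s(\vec\varkappa):\mat D_s(\vec\chi)$, after which that same remark integrates it by parts. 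Collecting the contributions, the arbitrariness of $\vec\chi\in\Vt$ forces the pointwise identity
\[
\vec{\mathcal{V}} = -\Delta_s\,\vec\varkappa + 2\,\nabs\,.\,\mat D_s(\vec\varkappa) - \nabs\,(\nabs\,.\,\vec\varkappa) - \varkappa\,(\nabs\,.\,\vec\varkappa)\,\vec\nu - \tfrac12\,\nabs\,|\vec\varkappa|^2 - \tfrac12\,\varkappa\,|\vec\varkappa|^2\,\vec\nu .
\]

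Finally I would dot this identity with $\vec\nu$ and collapse it using $\vec\varkappa = \varkappa\,\vec\nu$. Here $\nabs\,|\vec\varkappa|^2 = \nabs\,\varkappa^2$ and $\nabs\,(\nabs\,.\,\vec\varkappa)$ are tangential and drop out; the scalar identity $\nabs\,.\,\vec\varkappa = -\varkappa^2$ (from $\nabs\,.\,\vec\nu = -\varkappa$, Lemma~\ref{lem:varkappa}\ref{item:varkappa}, and Lemma~\ref{lem:productrules}\ref{item:pri}) converts the two normal terms into $\varkappa^3 - \tfrac12\,\varkappa^3 = \tfrac12\,\varkappa^3$; and Lemma~\ref{lem:divf}\ref{item:divfiii}, applied to the purely normal field $\vec\varkappa$, gives $\mat D_s(\vec\varkappa) = \varkappa\,\nabs\,\vec\nu$. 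The genuine work is in the two Laplacian terms: expanding $\Delta_s(\varkappa\,\vec\nu)$ and $\nabs\,.\,(\varkappa\,\nabs\,\vec\nu)$ componentwise and contracting with $\vec\nu$, one uses Lemma~\ref{lem:nabsnu}\ref{item:nabsnunu},\ref{item:nabsnuT} to annihilate the cross terms $((\nabs\,\vec\nu)\,\nabs\,\varkappa)\,.\,\vec\nu = 0$, and Lemma~\ref{lem:Deltasnu} to evaluate $(\Delta_s\,\vec\nu)\,.\,\vec\nu = -|\nabs\,\vec\nu|^2$. These contribute $-\Delta_s\,\varkappa + \varkappa\,|\nabs\,\vec\nu|^2$ from the first Laplacian and $-2\,\varkappa\,|\nabs\,\vec\nu|^2$ from the second; adding the cubic part yields exactly $\vec{\mathcal{V}}\,.\,\vec\nu = -\Delta_s\,\varkappa - \varkappa\,|\nabs\,\vec\nu|^2 + \tfrac12\,\varkappa^3$, the first identity of \eqref{eq:SWF}. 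I expect this last contraction — in particular the sign bookkeeping in $(\Delta_s\,\vec\nu)\,.\,\vec\nu$ through Lemma~\ref{lem:Deltasnu} — to be the main obstacle, the integration by parts of the $\mat D_s$ term being the only other structurally subtle step.
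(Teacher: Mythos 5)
Your proof is correct: I checked the integration by parts, the tensor identity $\nabs\,\vec\varkappa:\mat D_s(\vec\chi)=\mat D_s(\vec\varkappa):\mat D_s(\vec\chi)$, the identity $\mat D_s(\vec\varkappa)=\varkappa\,\nabs\,\vec\nu$, and the final contraction with $\vec\nu$, and the sign bookkeeping all works out. It differs from the paper's proof in the order of operations, and the difference is worth recording. The paper substitutes $\vec\varkappa=\varkappa\,\vec\nu$ into the integrand of \eqref{eq:Dziukwilla} \emph{before} integrating by parts: writing $\nabs\,\vec\varkappa=\vec\nu\otimes\nabs\,\varkappa+\varkappa\,\nabs\,\vec\nu$ and using that $\nabs\,\vec\nu$ is symmetric and tangential (Lemma~\ref{lem:nabsnu}), the delicate term collapses pointwise,
\[
\nabs\,\vec\varkappa:\left(\nabs\,\vec\chi-2\,\mat D_s(\vec\chi)\right)
=(\vec\nu\otimes\nabs\,\varkappa):\nabs\,\vec\chi
-\varkappa\,\nabs\,\vec\nu:\nabs\,\vec\chi\,,
\]
so the tensor divergence $\nabs\,.\,\mat D_s(\vec\varkappa)$ never has to be formed; one integration by parts then yields $\left\langle\varkappa\,\Delta_s\,\vec\nu-(\Delta_s\,\varkappa)\,\vec\nu,\vec\chi\right\rangle_{\Gamma(t)}$, the cubic terms are handled exactly as in your third step, and Lemma~\ref{lem:Deltasnu} finishes the argument. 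You instead integrate by parts first, keeping $\vec\varkappa$ abstract, which is why you need the extra machinery (the symmetry observation and tensor integration by parts of Remark~\ref{rem:ibp}\ref{item:ibpvec}, plus Lemma~\ref{lem:divf}\ref{item:divfiii}), and you substitute $\vec\varkappa=\varkappa\,\vec\nu$ and contract with $\vec\nu$ only at the end. What your route buys is the general strong Euler--Lagrange operator of the functional, before the constraint $\vec\varkappa=\varkappa\,\vec\nu$ is imposed; what the paper's route buys is a shorter computation and a slightly stronger conclusion, namely the full vector identity $\vec{\mathcal{V}}=\left(-\Delta_s\,\varkappa-\varkappa\,|\nabs\,\vec\nu|^2+\tfrac12\,\varkappa^3\right)\vec\nu$, i.e.\ that the velocity of a smooth solution of \eqref{eq:Dziukwill} is purely normal (which is the observation behind Remark~\ref{rem:GDWill}\ref{item:GDWillii} on mesh degeneration). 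Your contraction with $\vec\nu$ discards this tangential information, but that is all \eqref{eq:SWF} asks for, so the lemma is fully proved either way.
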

\begin{proof} First of all, (\ref{eq:Dziukwillb}) and 
Remark \ref{rem:ibp}\ref{item:weakvarkappa} imply that 
$\vec\varkappa = \varkappa\,\vec\nu$ 
and the second equation in (\ref{eq:SWF}) holds.

Next, we have, on noting Definition~\ref{def:2.5}\ref{item:def2.5vi},
Remark~\ref{rem:2.6}\ref{item:Pnabs}, 
Lemma \ref{lem:productrules}\ref{item:priii} 
and Lemma \ref{lem:nabsnu}\ref{item:nabsnuT}, that 
\[
\nabs\,\vec\varkappa :  
\left(\nabs\,\vec\chi-2\,\mat D_s(\vec\chi)\right) 
= (\vec\nu \otimes \nabs\,\varkappa) : \nabs\,\vec\chi
-\varkappa\,\nabs\,\vec\nu : \nabs\,\vec\chi\,.
\]
Hence, it follows from Remark~\ref{rem:ibp}\ref{item:ibp},\ref{item:ibpvec}
and Lemma~\ref{lem:productrules}\ref{item:prii},\ref{item:priii} that 
the first term on the right hand side of (\ref{eq:Dziukwilla}) can be 
rewritten as
\begin{equation} \label{eq:GDWF1}
\left \langle \nabs\,\vec\varkappa ,
\nabs\,\vec\chi - 2\,\mat D_s(\vec\chi) \right \rangle_{\Gamma(t)}
=\left\langle \varkappa\,\Delta_s\,\vec\nu - (\Delta_s\,\varkappa)\,\vec\nu,
\vec\chi\right\rangle_{\Gamma(t)}.
\end{equation}
In addition, Definition~\ref{def:2.5}\ref{item:def2.5ii} and 
Lemma~\ref{lem:varkappa}\ref{item:varkappa} yield that 
\arxivyesno{$\nabs\,.\,\vec\varkappa =
\nabs\,.\,(\varkappa\,\vec\nu) = \varkappa\,\nabs\,.\,\vec\nu =-\varkappa^2$.}  
{$\nabs\,.\,\vec\varkappa =$\linebreak 
$\nabs\,.\,(\varkappa\,\vec\nu) = \varkappa\,\nabs\,.\,\vec\nu =-\varkappa^2$.}
Hence, we obtain from the second and third terms on the right hand side of 
(\ref{eq:Dziukwilla}),
on noting Lemma~\ref{lem:productrules}\ref{item:pri} and Theorem~\ref{thm:div},
that
\begin{align}
\left\langle\nabs\,.\,\vec\varkappa +\tfrac12\,|\vec\varkappa|^2 ,
\nabs\,.\,\vec\chi\right\rangle_{\Gamma(t)} &
= - \tfrac12 \left\langle \varkappa^2,
\nabs\,.\,\vec\chi\right\rangle_{\Gamma(t)} \nonumber \\ &
=\left\langle\varkappa\,\nabs\,\varkappa,\vec\chi\right\rangle_{\Gamma(t)}
+ \tfrac12\left\langle\varkappa^3, \vec\chi\,.\,\vec\nu
\right\rangle_{\Gamma(t)}.
\label{eq:GDWF2}
\end{align}
Combining (\ref{eq:Dziukwilla}), (\ref{eq:GDWF1}) and (\ref{eq:GDWF2}) yields 
that
\begin{equation} \label{eq:GDWF3}
\vec{\mathcal{V}} = \left(-\Delta_s\,\varkappa + \tfrac12\,\varkappa^3 \right)\vec\nu 
+ \varkappa \, ( \Delta_s\,\vec\nu + \nabs\,\varkappa) = 
\left( - \Delta_s\,\varkappa  - \varkappa\,|\nabs\,\vec\nu|^2 
+ \tfrac12 \,\varkappa^3 \right)\vec\nu\,,
\end{equation}
where we have recalled Lemma \ref{lem:Deltasnu}. 
Therefore, we obtain the desired first result in (\ref{eq:SWF}).
\end{proof}

We now introduce a natural semidiscrete variant of (\ref{eq:Dziukwill}).
Given the closed polyhedral hypersurface $\Gamma^h(0)$,
find an evolving polyhedral hypersurface $\GhT$ 
with induced velocity $\vec{\mathcal{V}}^h \in \VhGhT$,
and $\vec\kappa^h \in \VhGhT$, i.e.\ 
$(\vec{\mathcal{V}}^h(\cdot,t),\vec\kappa^h(\cdot,t))$ $\in\Vht\times\Vht$ 
for all $t \in [0,T]$, such that, for all $t \in (0,T]$,
\begin{subequations}
\label{eq:Dzwill1}
\begin{align}
&\left\langle \vec{\mathcal{V}}^h,\vec\chi\right\rangle^h_{\Gamma^h(t)} =
\left\langle\nabs\,\vec\kappa^h,\nabs\,\vec\chi
-2\,\mat D_s(\vec\chi)
\right\rangle_{\Gamma^h(t)} +
\left\langle\nabs\,.\,\vec\kappa^h,\nabs\,.\,\vec\chi
\right\rangle_{\Gamma^h(t)}
\nonumber \\ & \hspace{3cm}
+ \tfrac12\left\langle|\vec\kappa^h|^2, \nabs\,.\,\vec\chi
\right\rangle_{\Gamma^h(t)}^h 
\quad\forall\ \vec\chi\in\Vht\,,
\label{eq:Dzwill1a}\\
& \left\langle\vec\kappa^h,\vec\eta\right\rangle^h_{\Gamma^h(t)} +
\left\langle\nabs\,\vec\id, \nabs\,\vec\eta\right\rangle_{\Gamma^h(t)} = 0
\quad\forall\ \vec\eta\in \Vht\,, \label{eq:Dzwill2b}
\end{align}
\end{subequations}
where we recall the definition of 
$\mat D_s$ on $\Gamma^h(t)$ from Lemma~\ref{lem:commdiffh}. 

We now prove the following stability theorem.
\begin{thm} \label{thm:Dzwill} 
Let $(\GhT,\vec\kappa^h)$ be a solution of \eqref{eq:Dzwill1},
and let $\vec\kappa^h \in \VhTGhT$.
Then it holds that  
\[
\ddt\, \tfrac12 \left(\left| \vec\kappa^h \right|^h_{\Gamma^h(t)}\right)^2
=- \left( \left| \vec{\mathcal{V}}^h \right|^h_{\Gamma^h(t)}\right)^2 \leq 0\,.
\]
\end{thm}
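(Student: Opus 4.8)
The plan is to establish this discrete energy law by reproducing, at the fully discrete level, the computation behind Lemma~\ref{lem:Willmorevar2}, with the transport theorem Theorem~\ref{thm:trans} replaced by its discrete analogue Theorem~\ref{thm:disctrans} and the commutator identity Lemma~\ref{lem:commdiff} replaced by Lemma~\ref{lem:commdiffh}. The discrete Willmore energy is $\tfrac12\left(\left|\vec\kappa^h\right|^h_{\Gamma^h(t)}\right)^2 = \tfrac12\left\langle \vec\kappa^h, \vec\kappa^h \right\rangle^h_{\Gamma^h(t)}$, so the first step is to apply the mass-lumped transport theorem Theorem~\ref{thm:disctrans}\ref{item:disctransh} with $\eta=\zeta=\vec\kappa^h$, which gives
\[
\ddt\,\tfrac12\left(\left|\vec\kappa^h\right|^h_{\Gamma^h(t)}\right)^2
= \left\langle \matpartxh\,\vec\kappa^h, \vec\kappa^h \right\rangle^h_{\Gamma^h(t)}
+ \tfrac12\left\langle |\vec\kappa^h|^2, \nabs\,.\,\vec{\mathcal{V}}^h \right\rangle^h_{\Gamma^h(t)}\,.
\]
Everything then hinges on evaluating the first term on the right.

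To compute $\left\langle \matpartxh\,\vec\kappa^h, \vec\kappa^h \right\rangle^h_{\Gamma^h(t)}$, I would differentiate the curvature identity \eqref{eq:Dzwill2b} in time. Fixing a time $t_0$, I choose a test function $\vec\eta \in \VhTGhT$ that is transported with constant nodal values, i.e.\ $\matpartxh\,\vec\eta = \vec0$ on $\GhT$ (possible by Remark~\ref{rem:GhT}\ref{item:phihk}), and that agrees with $\vec\kappa^h(\cdot,t_0)$ at $t_0$. For the mass-lumped term of \eqref{eq:Dzwill2b} I use Theorem~\ref{thm:disctrans}\ref{item:disctransh} (the contribution $\langle \vec\kappa^h, \matpartxh\,\vec\eta\rangle^h$ vanishes); for the stiffness term I first rewrite $\left\langle \nabs\,\vec\id, \nabs\,\vec\eta\right\rangle_{\Gamma^h(t)} = \left\langle \nabs\,.\,\vec\eta, 1\right\rangle_{\Gamma^h(t)}$ via Lemma~\ref{lem:nabsid}\ref{item:nabsidnabsf}, apply Theorem~\ref{thm:disctrans}\ref{item:disctrans}, and invoke Lemma~\ref{lem:commdiffh}\ref{item:commdiffhiii} with $\matpartxh\,\vec\eta = \vec0$ to get $\matpartxh\,(\nabs\,.\,\vec\eta) = [\nabs\,\vec{\mathcal{V}}^h - 2\,\mat D_s(\vec{\mathcal{V}}^h)] : \nabs\,\vec\eta$. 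As all resulting terms involve only $\vec\eta$ and $\nabs\,\vec\eta$ at $t_0$, I replace $\vec\eta$ by $\vec\kappa^h$, obtaining
\[
\left\langle \matpartxh\,\vec\kappa^h, \vec\kappa^h \right\rangle^h_{\Gamma^h(t)}
= - \left\langle |\vec\kappa^h|^2, \nabs\,.\,\vec{\mathcal{V}}^h \right\rangle^h_{\Gamma^h(t)}
- \left\langle \nabs\,\vec\kappa^h, \nabs\,\vec{\mathcal{V}}^h - 2\,\mat D_s(\vec{\mathcal{V}}^h) \right\rangle_{\Gamma^h(t)}
- \left\langle \nabs\,.\,\vec\kappa^h, \nabs\,.\,\vec{\mathcal{V}}^h \right\rangle_{\Gamma^h(t)}\,.
\]

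Inserting this into the energy-rate expression, the two $\left\langle |\vec\kappa^h|^2, \nabs\,.\,\vec{\mathcal{V}}^h \right\rangle^h_{\Gamma^h(t)}$ contributions combine to $-\tfrac12\left\langle |\vec\kappa^h|^2, \nabs\,.\,\vec{\mathcal{V}}^h \right\rangle^h_{\Gamma^h(t)}$, and what remains is exactly the negative of the right-hand side of \eqref{eq:Dzwill1a} with $\vec\chi = \vec{\mathcal{V}}^h$. Testing \eqref{eq:Dzwill1a} with $\vec\chi = \vec{\mathcal{V}}^h(\cdot,t)\in\Vht$ identifies that right-hand side with $\left\langle \vec{\mathcal{V}}^h, \vec{\mathcal{V}}^h\right\rangle^h_{\Gamma^h(t)} = \left(\left|\vec{\mathcal{V}}^h\right|^h_{\Gamma^h(t)}\right)^2$, whence the claimed identity and its nonpositivity follow at once.

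The main obstacle is the legitimacy of substituting $\vec\eta = \vec\kappa^h$ after differentiating \eqref{eq:Dzwill2b}: since $\vec\kappa^h$ evolves in time it does not satisfy $\matpartxh\,\vec\kappa^h = \vec0$, so one cannot take $\vec\eta = \vec\kappa^h$ directly in the time-differentiated identity. The transported test function matching $\vec\kappa^h$ only at the single instant $t_0$ is therefore essential, and one must verify that every term produced by the differentiation depends solely on $\vec\eta$ and $\nabs\,\vec\eta$ at $t_0$ so that the substitution is valid. The accompanying bookkeeping — keeping straight which transport theorem (mass-lumped Theorem~\ref{thm:disctrans}\ref{item:disctransh} versus exact Theorem~\ref{thm:disctrans}\ref{item:disctrans}) governs each term, and correctly handling the commutator contribution — is where errors are most likely to arise; the remaining manipulations are routine.
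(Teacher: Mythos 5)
Your proposal is correct and follows essentially the same route as the paper's proof: differentiating \eqref{eq:Dzwill2b} in time with transported test functions satisfying $\matpartxh\,\vec\eta = \vec 0$, invoking Theorem~\ref{thm:disctrans} and Lemma~\ref{lem:commdiffh}\ref{item:commdiffhiii}, then substituting $\vec\eta = \vec\kappa^h$ and testing \eqref{eq:Dzwill1a} with $\vec\chi = \vec{\mathcal{V}}^h$. The subtlety you flag — that $\vec\kappa^h$ is only admissible in the differentiated identity because every term depends solely on its instantaneous value — is exactly the point the paper handles (somewhat tersely) with the same construction, so your bookkeeping matches the published argument.
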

\begin{proof} 
We argue as in the proof of Lemma~\ref{lem:Willmorevar2}. 
Similarly to (\ref{eq:dteta0}), we extend test functions
$\vec\eta \in \Vht$ in (\ref{eq:Dzwill2b}) 
to $\vec\eta \in \VhTGhT$
such that $\matpartxh\,\vec\eta = \vec 0$ on $\GhT$. 
Then taking the time
derivative of (\ref{eq:Dzwill2b}), on noting Theorem \ref{thm:disctrans}
and Lemma \ref{lem:commdiffh}\ref{item:commdiffhiii}, yields
\begin{align}
&    \left\langle\matpartxh\,
\vec\kappa^h,\vec\eta\right\rangle^h_{\Gamma^h(t)} +
\left\langle \vec\kappa^h\,.\,\vec\eta, 
\nabs\,.\,\vec{\mathcal{V}}^h\right\rangle^h_{\Gamma^h(t)} +
\left\langle
\nabs\,.\,\vec{\mathcal{V}}^h,\nabs\,.\,\vec\eta\right\rangle_{\Gamma^h(t)}
\nonumber \\ &\qquad 
+ \left\langle\nabs\,\vec{\mathcal{V}}^h,\nabs\,\vec\eta
\right\rangle_{\Gamma^h(t)}
- 2\left\langle\mat D_s(\vec{\mathcal{V}}^h)
,\nabs\,\vec\eta\right\rangle_{\Gamma^h(t)} = 0\,.
\label{eq:Dzwill2c}
\end{align}
We now choose $\vec\eta = \vec\kappa^h$ in (\ref{eq:Dzwill2c}) and
$\vec\chi = \vec{\mathcal V}^h$ in (\ref{eq:Dzwill1a}), 
to obtain that
\[
\left\langle \matpartxh\,\vec\kappa^h,\vec\kappa^h\right\rangle^h_{\Gamma^h(t)}
+ \tfrac12 \left\langle|\vec\kappa^h|^2,
\nabs\,.\,\vec{\mathcal{V}}^h\right\rangle^h_{\Gamma^h(t)}
+\left\langle\vec{\mathcal{V}}^h,\vec{\mathcal{V}}^h
\right\rangle^h_{\Gamma^h(t)} = 0\,.
\]
The desired result then follows on noting
Theorem~\ref{thm:disctrans}\ref{item:disctransh} and (\ref{eq:ip0norm}).
\end{proof}

\begin{rem} \label{rem:GDWill}
\rule{0pt}{0pt}
\begin{enumerate}
\item
For the case $d=2$ an error analysis of \eqref{eq:Dzwill1} can be found in
{\rm \cite{DeckelnickD09}}.
\item \label{item:GDWillii} 
We note the version of \eqref{eq:Dzwill1} proposed in
{\rm \citet[Problem~3]{Dziuk08}} is without mass lumping.
Nevertheless, either version of \eqref{eq:Dzwill1}, in contrast
to \eqref{eq:Will}, does not have good
mesh properties. This is due to the fact that the mesh movements are
almost exclusively in the normal direction, which in general leads to
bad meshes. This is because \eqref{eq:Dzwill1a} approximates   
\eqref{eq:Dziukwilla}, which is a weak formulation of 
\[
\vec{\mathcal{V}} = \left[ 
-\Delta_s\,\varkappa - \varkappa\,|\nabs\,\vec\nu|^2 +
  \tfrac12\,\varkappa^3
\right] \vec\nu  \qquad\mbox{on } \Gamma(t)\,,
\]
where we have recalled \eqref{eq:GDWF3}.
\end{enumerate}
\end{rem}

We now wish to derive a weak formulation, which leads to
semidiscretizations that are both stable and have good mesh properties. 
A main ingredient is to ensure that the equation
\begin{equation}\label{eq:weakcurv12}
  \left\langle\varkappa\,\vec\nu,\vec\eta\right\rangle_{\Gamma(t)} +
  \left\langle\nabs\,\vec\id, \nabs\,\vec\eta\right\rangle_{\Gamma(t)} = 0
\quad\forall\ \vec\eta\in \Vt
\end{equation}
holds. This then leads to good meshes on the discrete level, 
recall \S\ref{subsec:equi}.
We hence want to compute the time derivative of 
$\tfrac12\,\langle \varkappa,\varkappa\rangle_{\Gamma(t)}$ by
taking the constraint (\ref{eq:weakcurv12}) into
account. To this end, we use the calculus of PDE constrained optimization,
see e.g.\ \cite{HinzePUU09,Troltzsch10}, and define the Lagrangian
\begin{equation}
L(\Gamma(t), \varkappa^\star, \vec y) =
\tfrac12\left\langle \varkappa^\star, \varkappa^\star\right\rangle_{\Gamma(t)} 
- \left\langle \varkappa^\star\,\vec\nu, \vec y \right\rangle_{\Gamma(t)} 
- \left\langle \nabs\,\vec\id,
\nabs\,\vec y \right\rangle_{\Gamma(t)} \,, \label{eq:Lag2}
\end{equation}
where $\vec y \in [H^1(\Gamma(t)]^d$ is the Lagrange
multiplier associated with the constraint \eqref{eq:weakcurv12}
with $\varkappa$ replaced by $\varkappa^\star$. 
We note at this stage that $\varkappa^\star(t)$ is an independent variable,
and not the mean curvature, $\varkappa(t)$, of $\Gamma(t)$.
We now need to take variations of $L(\Gamma(t),\varkappa^\star, \vec y)$
with respect to $\Gamma(t)$, $\varkappa^\star$ and $\vec y$. 

To this end, for any $\vec\chi \in [H^1(\Gamma(t)]^d$ 
and for any $\epsilon \in \bR$ let
\begin{equation} \label{eq:Gammadelta}
\Gamma_\epsilon(t) = \vec\Phi(\Gamma(t),\epsilon)\,, \qquad  \text{where}
\quad 
\vec\Phi(\cdot,\epsilon) = \vec\id_{\mid_{\Gamma(t)}} + \epsilon\,\vec\chi\,.
\end{equation}
For $\epsilon_0>0$, we now consider a smooth function $f$ defined on 
$\bigcup_{\epsilon\in[-\epsilon_0,\epsilon_0]} 
(\Gamma_\epsilon(t)\times\{\epsilon\})$. Then, similarly to
Definition~\ref{def:2.18}\ref{item:def2.18i}, we define
\begin{equation} \label{eq:ederiv0}
(\matpartxpar{\epsilon} \,f) (\vec\Phi(\vec z,\epsilon), \epsilon) = 
\ddeps\,f(\vec\Phi(\vec z,\epsilon),\epsilon)
\quad\forall\ (\vec z,\epsilon) \in \Gamma(t) \times [-\epsilon_0,\epsilon_0]
\,,
\end{equation}
and note that
\begin{equation} \label{eq:ederiv}
\matpartxpar{\epsilon} \,f 
=  \left(\ddeps\,f(\vec\Phi(\cdot,\epsilon),\epsilon)
\right)_{\mid_{\epsilon=0}} 
= \lim_{\epsilon \to 0} \frac{f(\vec\Phi(\cdot,\epsilon),\epsilon) - f}
{\epsilon} \quad\text{on } \Gamma(t)\,.
\end{equation}
In what follows, and similarly to \S\ref{subsec:evolve}, 
we often identify $\Gamma_\epsilon(t) \times \{\epsilon\}$ with 
$\Gamma_\epsilon(t)$,
and hence $(\Gamma_\epsilon(t) \times \{\epsilon\})_{\mid_{\epsilon=0}}$ 
with $\Gamma(t)$.
With the help of a direct analogue of Theorem~\ref{thm:trans}, we then obtain
that the first variation of $\left\langle f, 1 \right\rangle_{\Gamma(t)}$
is given by
\begin{equation} \label{eq:transeq}
\left[\deldel{\Gamma} \left\langle f, 1 \right\rangle_{\Gamma(t)}
\right](\vec\chi) 
= \left( \ddeps\left\langle f, 1 \right\rangle_{\Gamma_\epsilon(t)}
\right)_{\mid_{\epsilon=0}}
=\left\langle \matpartxpar{\epsilon}\,f+f\,\nabs\,.\,\vec\chi, 1 
\right\rangle_{\Gamma(t)}.
\end{equation}
In addition, if $\vec\nu_\epsilon$ is the unit normal on $\Gamma_\epsilon(t)$,
corresponding to $\vec\nu$ on $\Gamma(t)$, then a direct analogue of
Lemma~\ref{lem:dtnu}\ref{item:dtnu} yields that
\begin{equation} \label{eq:denu}
\left[\deldel{\Gamma}\,\vec\nu \right](\vec\chi) 
= \left(\matpartxpar{\epsilon}\,\vec\nu_\epsilon\right)_{\mid_{\epsilon=0}} 
= -(\nabs\,\vec\chi)^\transT\,\vec\nu \qquad\mbox{on } \Gamma(t) \,.
\end{equation}
Similarly to (\ref{eq:etaflow}), for any $\eta \in L^\infty(\Gamma(t))$, 
one can define $\eta_{(\epsilon)}$ on 
$\bigcup_{\epsilon\in[-\epsilon_0,\epsilon_0]}$ $ 
(\Gamma_\epsilon(t)\times\{\epsilon\})$ via
\begin{equation}\label{eq:1919}
\eta_{(\epsilon)}(\vec\Phi(\vec z,\epsilon),\epsilon)= \eta(\vec z) 
\quad\forall\ (\vec z,\epsilon) \in \Gamma(t) \times [-\epsilon_0,\epsilon_0]
\,,
\end{equation}
and analogously for $\vec\eta \in [L^\infty(\Gamma(t))]^d$.
Similarly to (\ref{eq:dteta0}), it follows from this extension and
\eqref{eq:ederiv} that 
\begin{equation*} 
\left[\deldel{\Gamma}\,\vec\eta \right](\vec\chi) 
= \left(\matpartxpar{\epsilon}\,\vec\eta_{(\epsilon)}
\right)_{\mid_{\epsilon=0}} = \vec 0 
\quad\text{on } \Gamma(t) \,.
\end{equation*}
In particular, we have the following analogue of (\ref{eq:dtdivdisc}), 
\begin{align}\label{eq:dedivdisc}
& \left[\deldel{\Gamma} \left\langle\nabs\,\vec\id,
  \nabs\,\vec\eta\right\rangle_{\Gamma(t)} \right](\vec\chi) 
= \left( \ddeps \left\langle\nabs\,\vec\id,
  \nabs\,\vec\eta_{(\epsilon)}\right\rangle_{\Gamma_\epsilon(t)}
\right)_{\mid_{\epsilon=0}} \nonumber \\ & \qquad\qquad
= \left\langle \nabs\,.\,\vec\eta, \nabs\,.\,\vec\chi\right\rangle_{\Gamma(t)}
+ \left\langle \nabs\,\vec\eta, \nabs\,\vec\chi- 2\,\mat D_s(\vec\chi) 
\right\rangle_{\Gamma(t)}.
\end{align}

We now consider the variations of $L(\Gamma(t),\varkappa^\star, \vec y)$
with respect to $\Gamma(t)$, $\varkappa^\star$ and $\vec y$. 
In particular, for all $\vec\chi \in \Vt$, 
$\xi \in L^2(\Gamma(t))$ and $\vec\eta \in \Vt$, we let
\begin{subequations} \label{eq:deltaL}
\begin{align}
\left[\deldel{\Gamma}\,L\right](\vec\chi) 
& = \left( \ddeps\, 
L(\Gamma_\epsilon(t), \varkappa^\star_{(\epsilon)}, \vec y_{(\epsilon)})
\right)_{\mid_{\epsilon=0}} , \label{eq:deltaLa} \\
\left[\deldel{\varkappa^\star}\,L\right](\xi) & =
\left( \ddeps\, L(\Gamma(t), \varkappa^\star + \epsilon\,\xi, \vec y) 
\right)_{\mid_{\epsilon=0}} , \label{eq:deltaLb} \\
\left[\deldel{\vec y}\,L\right](\vec\eta) & =
\left( \ddeps\, L(\Gamma(t), \varkappa^\star , \vec y + \epsilon\,\vec\eta) 
\right)_{\mid_{\epsilon=0}} , \label{eq:deltaLc}
\end{align}
\end{subequations}
where $\varkappa^\star_{(\epsilon)} \in L^2(\Gamma_\epsilon(t))$ and 
$\vec y_{(\epsilon)} \in [H^1(\Gamma_\epsilon(t))]^d$
are defined by transporting the values of 
$\varkappa^\star \in L^2(\Gamma(t))$ and 
$\vec y \in \Vt$ as defined in (\ref{eq:1919}), recall
\eqref{eq:Gammadelta} for the $\vec\chi$ at hand.
Setting the variation
$\left[\deldel{\vec y}\,L\right](\vec\eta)=0$, yields
(\ref{eq:weakcurv12}) with $\varkappa$ replaced by $\varkappa^\star$. Hence, 
comparing this with the original (\ref{eq:weakcurv12}), we obtain that 
$\varkappa^\star=\varkappa$.
Setting $\left[\deldel{\varkappa^\star}\,L\right](\xi)=0$, 
yields, on noting $\varkappa^\star=\varkappa$, that  
\begin{equation} \label{eq:kappaynu} 
\varkappa = \vec y\,.\,\vec\nu 
\qquad\mbox{on } \Gamma(t) \,.
\end{equation}
Finally, setting the variation $\left[\deldel{\Gamma}\,L\right](\vec\chi) =
-\left\langle \vec {\mathcal V}\,.\,\vec\nu, \vec\chi\,.\,\vec\nu 
\right\rangle_{\Gamma(t)}$
and noting (\ref{eq:transeq}), (\ref{eq:denu}), (\ref{eq:dedivdisc})
and that $\varkappa^\star= \varkappa$, we obtain
\begin{align}
\left\langle \vec{\mathcal{V}}\,.\,\vec\nu, \vec\chi\,.\,\vec\nu 
\right\rangle_{\Gamma(t)} &= 
\left\langle \nabs\,\vec y, \nabs\,\vec\chi 
-2\,\mat{D}_s(\vec\chi) \right\rangle_{\Gamma(t)} 
+ \left\langle \nabs\,.\,\vec y, \nabs\,.\,\vec\chi \right\rangle_{\Gamma(t)}
\nonumber \\ &\qquad 
- \left\langle \varkappa \left(\tfrac12\,\varkappa - 
\vec y\,.\,\vec\nu\right), \nabs\,.\,\vec\chi \right\rangle_{\Gamma(t)} 
-  \left\langle \varkappa \,\vec y,  (\nabs\,\vec\chi)^\transT \,\vec\nu 
\right\rangle_{\Gamma(t)}
\nonumber \\ & \hspace{1.3in}
\quad\forall\ \vec\chi \in \Vt \,.
\label{eq:weak2a}
\end{align}

Therefore we have the following weak formulation. 
Given a closed hypersurface $\Gamma(0)$, we seek an evolving hypersurface 
$(\Gamma(t))_{t\in[0,T]}$, with a global parameterization and induced velocity 
field $\vec{\mathcal{V}}$, $\varkappa \in L^2(\GT)$
and $\vec y \in [L^2(\GT)]^d$ as follows. 
For almost all $t \in (0,T)$, find
$(\vec {\mathcal V}(\cdot,t),\varkappa(\cdot,t), \vec y(\cdot,t))
\in [L^2(\Gamma(t))]^d
\times L^2(\Gamma(t)) \times \Vt$ such that
(\ref{eq:weak2a}), (\ref{eq:kappaynu}) and (\ref{eq:weakcurv12}) hold.

\begin{rem} \label{rem:WWF}
\rule{0pt}{0pt}
\begin{enumerate}
\item \label{item:WWFi}
Using the techniques in {\rm \citet[Appendix A]{pwfopen}} one can show,
similarly to {\rm Lemma~\ref{lem:WFweakstrong}}, that a sufficiently
smooth solution of this weak formulation is a solution of the strong formulation
\eqref{eq:SWF}.   
\item \label{item:WWFii} 
Clearly, using \eqref{eq:kappaynu} one can eliminate $\varkappa$
from \eqref{eq:weak2a} and \eqref{eq:weakcurv12} in this weak formulation. 
\end{enumerate}
\end{rem}

We now consider a discrete analogue of (\ref{eq:weak2a}), (\ref{eq:kappaynu}) 
and (\ref{eq:weakcurv12}), by first introducing the discrete
analogue of (\ref{eq:Lag2})
\begin{equation}
L^h(\Gamma^h(t), \kappa^h, \vec Y^h) =
\tfrac12 \left\langle \kappa^h, \kappa^h \right\rangle^h_{\Gamma^h(t)} 
- \left\langle \kappa^h\,\vec\nu^h, \vec Y^h \right\rangle^h_{\Gamma^h(t)} 
- \left\langle \nabs\,\vec\id,
\nabs\,\vec Y^h \right\rangle_{\Gamma^h(t)} \,, \label{eq:Lag2h}
\end{equation}
where $\vec Y^h(\cdot,t) \in \Vht$ is the Lagrange
multiplier associated with the constraint $\kappa^h(\cdot,t) \in \Wht$ 
satisfying
\begin{equation}\label{eq:weakcurv12h}
  \left\langle\kappa^h\,\vec\nu^h,\vec\eta\right\rangle^h_{\Gamma^h(t)} +
  \left\langle\nabs\,\vec\id, \nabs\,\vec\eta\right\rangle_{\Gamma^h(t)} = 0
\quad\forall\ \vec\eta\in \Vht\,.
\end{equation}

Setting $\left[\deldel{\vec Y^h}\,L^h\right](\vec\eta)=0$ yields
(\ref{eq:weakcurv12h}).
Setting $\left[\deldel{\kappa^h}\,L^h\right](\xi)=0$ yields that
\begin{equation} \label{eq:kappaynuh} 
\left\langle \kappa^h - \vec Y^h\,.\,\vec\nu^h, \xi 
\right\rangle^h_{\Gamma^h(t)} = 0 \quad\forall\ \xi \in \Wht\,.
\end{equation}
Finally, we need to take the variation of $L^h(\Gamma^h(t),\kappa^h, \vec Y^h)$
with respect to $\Gamma^h(t)$. 
To this end, we have the following discrete analogue of \eqref{eq:Gammadelta}.
For any $\vec\chi \in \Vht$ and for any $\epsilon \in \bR$, let
\begin{equation*} 
\Gamma^h_\epsilon(t) = \vec\Phi^h(\Gamma^h(t),\epsilon)\,, \qquad  \text{where}
\quad 
\vec\Phi^h(\cdot,\epsilon) = \vec\id_{\mid_{\Gamma^h(t)}} + 
\epsilon\,\vec\chi\,.
\end{equation*}
We also define $\matpartxhpar{\epsilon}$ to be the discrete analogue of
\eqref{eq:ederiv0}. 
Similarly to Theorem~\ref{thm:disctrans}\ref{item:disctransh}, we then have 
that
\begin{align} \label{eq:transeqh}
& \left[\deldel{\Gamma^h} \left\langle \kappa^h, 
\vec Y^h\,.\,\vec\nu^h \right\rangle_{\Gamma^h(t)}^h
\right](\vec\chi) =
\left( \ddeps \left\langle \kappa^h_{(\epsilon)}, 
\vec Y^h_{(\epsilon)}\,.\,\vec\nu^h_\epsilon 
\right\rangle_{\Gamma^h_\epsilon(t)}^h
\right)_{\mid_{\epsilon=0}} \nonumber \\ & \qquad
 = \left(\left\langle \kappa^h_{(\epsilon)}, \vec Y^h_{(\epsilon)}\,.\, 
\matpartxhpar{\epsilon}\,\vec\nu^h_\epsilon
\right\rangle_{\Gamma^h_{\epsilon}(t)}^h
\right)_{\mid_{\epsilon=0}}
+ \left\langle \kappa^h \,\vec Y^h\,.\,\vec\nu^h, \nabs\,.\,\vec\chi 
\right\rangle_{\Gamma^h(t)}^{h} \nonumber \\ & \qquad
 =\left\langle \kappa^h , \vec Y^h\,.
\left[\deldel{\Gamma^h}\,\vec\nu^h\right](\vec\chi)
\right\rangle_{\Gamma^h(t)}^h
+ \left\langle \kappa^h \,\vec Y^h\,.\,\vec\nu^h, \nabs\,.\,\vec\chi 
\right\rangle_{\Gamma^h(t)}^{h} ,
\end{align}
where $\kappa^h_{(\epsilon)}$ and $\vec Y^h_{(\epsilon)}$ are defined
similarly to (\ref{eq:1919}), with the help of the map $\vec\Phi^h$, and
$\vec\nu^h_\epsilon$ is the unit normal on $\Gamma^h_\epsilon(t)$. 
Hence, setting the variation 
\arxivyesno{$\left[\deldel{\Gamma^h}\, L^h\right](\vec\chi) = $ \linebreak $
-\left\langle \vec {\mathcal V}^h\,.\,\vec\omega^h, \vec\chi\,.\,\vec\omega^h 
\right\rangle^h_{\Gamma(t)}$,}
{$\left[\deldel{\Gamma^h}\, L^h\right]\!\!\,(\vec\chi)$ $ =
-\left\langle \vec {\mathcal V}^h\,.\,\vec\omega^h, \vec\chi\,.\,\vec\omega^h 
\right\rangle^h_{\Gamma(t)}$,}
on noting (\ref{eq:transeqh}),
the discrete analogue of (\ref{eq:denu}) for $\vec\nu^h$ on $\Gamma^h(t)$,
compare Lemma~\ref{lem:dtnuh}, 
and (\ref{eq:dedivdisc}) for $\Gamma^h(t)$, we obtain
\begin{align}
&\left\langle \vec{\mathcal{V}}^h\,.\,\vec\omega^h, \vec\chi\,.\,\vec\omega^h 
\right\rangle^h_{\Gamma^h(t)} = 
\left\langle \nabs\,\vec Y^h, \nabs\,\vec\chi 
-2\,\mat{D}_s(\vec\chi) \right\rangle_{\Gamma^h(t)}
+ \left\langle \nabs\,.\,\vec Y^h, \nabs\,.\,\vec\chi
  \right\rangle_{\Gamma^h(t)}\nonumber \\
& \qquad\qquad
- \left\langle \kappa^h \,(\tfrac12\,
\kappa^h - \vec Y^h\,.\,\vec\nu^h), \nabs\,.\,\vec\chi
\right\rangle^h_{\Gamma^h(t)} 
-  \left\langle \kappa^h \,\vec Y^h,  (\nabs\,\vec\chi)^\transT \,\vec\nu^h 
\right\rangle^h_{\Gamma^h(t)}
\nonumber \\ & \hspace{2.5in}\quad\forall\ \vec\chi \in \Vht\,.
\label{eq:weak2ah}
\end{align}

Therefore we have the following semidiscrete finite element approximation of
(\ref{eq:weak2a}), (\ref{eq:kappaynu}) and (\ref{eq:weakcurv12}). 
Given the closed polyhedral hypersurface $\Gamma^h(0)$,
find an evolving polyhedral hypersurface $\GhT$ 
with induced velocity $\vec{\mathcal{V}}^h \in \VhGhT$,
and $\kappa^h \in \WhGhT$, $\vec Y^h \in \VhGhT$,
i.e.\ $(\vec{\mathcal{V}}^h(\cdot, t),\kappa^h(\cdot,t),\vec Y^h(\cdot,t)) 
\in \Vht \times \Wht \times \Vht$ for all $t \in [0,T]$,
such that, for all $t \in (0,T]$,
(\ref{eq:weak2ah}), (\ref{eq:kappaynuh}) and (\ref{eq:weakcurv12h}) hold.

\begin{rem} \label{rem:WWFh}
\rule{0pt}{0pt}
\begin{enumerate}
\item
Similarly to {\rm Remark~\ref{rem:WWF}\ref{item:WWFii}}, 
using \eqref{eq:kappaynuh}, one can eliminate $\kappa^h$ from 
\eqref{eq:weak2ah} and \eqref{eq:weakcurv12h} in this semidiscrete 
finite element approximation.
\item
The scheme \eqref{eq:weak2ah}, \eqref{eq:kappaynuh} and \eqref{eq:weakcurv12h}
satisfies {\rm Theorem~\ref{thm:SDSD}\ref{item:SDSDTM}}.
\end{enumerate}
\end{rem}

We have the following stability result.

\begin{thm} \label{thm:WWFh}
Let $(\GhT, \kappa^h, \vec Y^h)$ be a solution of
\eqref{eq:weak2ah}, \eqref{eq:kappaynuh} and \arxivyesno{}{\linebreak}%
\eqref{eq:weakcurv12h},
and let $\kappa^h \in \WhTGhT$. Then it holds that  
\[
\ddt\, \tfrac12 \left(\left| \kappa^h
\right|^h_{\Gamma^h(t)}\right)^2 
=- \left( \left| \vec{\mathcal{V}}^h\,.\,\vec\omega^h \right|^h_{\Gamma^h(t)}\right)^2 
\le 0\,.
\]
\end{thm}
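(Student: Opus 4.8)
The plan is to exploit the fact that the scheme \eqref{eq:weak2ah}, \eqref{eq:kappaynuh}, \eqref{eq:weakcurv12h} consists precisely of the three stationarity conditions of the discrete Lagrangian $L^h$ in \eqref{eq:Lag2h}, and to differentiate $L^h$ along the flow by a chain rule. First I would observe that along any solution the constraint \eqref{eq:weakcurv12h}, tested with $\vec\eta = \vec Y^h(\cdot,t) \in \Vht$, forces the two coupling terms of \eqref{eq:Lag2h} to cancel, so that
\[
L^h(\Gamma^h(t), \kappa^h, \vec Y^h) = \tfrac12\left\langle \kappa^h, \kappa^h\right\rangle^h_{\Gamma^h(t)} = \tfrac12\left(\left|\kappa^h\right|^h_{\Gamma^h(t)}\right)^2
\]
for all $t$. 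Hence it suffices to compute $\ddt\, L^h$ along the solution.

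Next I would decompose the total time derivative of $L^h(\Gamma^h(t),\kappa^h(t),\vec Y^h(t))$ by the chain rule into the three partial variations used in the derivation of the scheme, each evaluated at the rate at which the corresponding argument evolves:
\[
\ddt\, L^h = \left[\deldel{\Gamma^h} L^h\right](\vec{\mathcal{V}}^h)
+ \left[\deldel{\kappa^h} L^h\right](\matpartxh\,\kappa^h)
+ \left[\deldel{\vec Y^h} L^h\right](\matpartxh\,\vec Y^h) .
\]
Here the shape-derivative term transports $\kappa^h$ and $\vec Y^h$ materially, in the sense of \eqref{eq:1919} with $\vec\Phi^h$, so that the genuine changes of these two fields are captured by the remaining two terms. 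Since $\kappa^h \in \WhTGhT$, its discrete material derivative $\matpartxh\,\kappa^h$ lies in $\Wht$ and is an admissible test function in \eqref{eq:kappaynuh}, whence $\left[\deldel{\kappa^h} L^h\right](\matpartxh\,\kappa^h) = 0$; similarly $\matpartxh\,\vec Y^h \in \Vht$ is admissible in \eqref{eq:weakcurv12h}, giving $\left[\deldel{\vec Y^h} L^h\right](\matpartxh\,\vec Y^h) = 0$. The surviving shape term is, by the very definition adopted when deriving \eqref{eq:weak2ah},
\[
\left[\deldel{\Gamma^h} L^h\right](\vec{\mathcal{V}}^h)
= - \left\langle \vec{\mathcal{V}}^h\,.\,\vec\omega^h, \vec{\mathcal{V}}^h\,.\,\vec\omega^h\right\rangle^h_{\Gamma^h(t)}
= -\left(\left|\vec{\mathcal{V}}^h\,.\,\vec\omega^h\right|^h_{\Gamma^h(t)}\right)^2,
\]
where I would invoke the discrete transport identity \eqref{eq:transeqh}, the discrete analogue of \eqref{eq:denu} (compare Lemma~\ref{lem:dtnuh}) and the discrete analogue of \eqref{eq:dedivdisc}, exactly as in the passage leading to \eqref{eq:weak2ah}. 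Combining the three pieces and recalling $L^h = \tfrac12(|\kappa^h|^h_{\Gamma^h(t)})^2$ then yields the claim.

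The main obstacle will be the rigorous justification of the chain-rule decomposition in the mass-lumped setting: one must verify that the discrete shape-derivative operator $\deldel{\Gamma^h}$ behaves additively when the geometry and the materially transported fields vary simultaneously, and that the lumped inner products $\langle\cdot,\cdot\rangle^h_{\Gamma^h(t)}$ differentiate according to Theorem~\ref{thm:disctrans}\ref{item:disctransh} rather than the smooth transport theorem. This amounts to re-deriving \eqref{eq:transeqh} carefully on each evolving simplex and checking that the normal-derivative contribution is handled by Lemma~\ref{lem:dtnuh}. A secondary technical point is the admissibility of $\matpartxh\,\vec Y^h$: the hypothesis supplies $\kappa^h \in \WhTGhT$, so one additionally needs $\vec Y^h \in \VhTGhT$ to ensure $\matpartxh\,\vec Y^h \in \Vht$; if this is not granted a priori it must be extracted from the equations. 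As a cross-check, I would note that a more computational proof is available that avoids the Lagrangian bookkeeping: differentiate \eqref{eq:kappaynuh} and \eqref{eq:weakcurv12h} directly in time, after extending test functions so that their discrete material derivatives vanish (as in the proof of Theorem~\ref{thm:Dzwill}), using Theorem~\ref{thm:disctrans}\ref{item:disctransh}, Lemma~\ref{lem:commdiffh}\ref{item:commdiffhiii} and Lemma~\ref{lem:dtnuh}; then choosing $\vec\chi = \vec{\mathcal{V}}^h$ in \eqref{eq:weak2ah} together with $\matpartxh\,\kappa^h$ and $\vec Y^h$ in the differentiated constraints makes all curvature- and $\vec Y^h$-dependent terms cancel in pairs, leaving $\ddt\,\tfrac12(|\kappa^h|^h_{\Gamma^h(t)})^2 + (|\vec{\mathcal{V}}^h\,.\,\vec\omega^h|^h_{\Gamma^h(t)})^2 = 0$.
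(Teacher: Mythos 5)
Your fallback computation at the end is, in substance, the paper's own proof: extend the test functions in \eqref{eq:weakcurv12h} so that $\matpartxh\,\vec\eta = \vec 0$, differentiate in time using Theorem~\ref{thm:disctrans}\ref{item:disctransh} and Lemma~\ref{lem:commdiffh}\ref{item:commdiffhiii}, take $\vec\eta = \vec Y^h$ in the resulting identity and $\vec\chi = \vec{\mathcal{V}}^h$ in \eqref{eq:weak2ah}, and cancel via Lemma~\ref{lem:dtnuh}, \eqref{eq:kappaynuh} and \eqref{eq:omegahnuh}. One correction, however: the paper differentiates \emph{only} \eqref{eq:weakcurv12h}; the relation \eqref{eq:kappaynuh} is used undifferentiated, at each fixed time, with the test function $\xi = \matpartxh\,\kappa^h$ (admissible since $\kappa^h\in\WhTGhT$) in order to convert $\left\langle \matpartxh\,\kappa^h, \vec\nu^h\,.\,\vec Y^h\right\rangle^h_{\Gamma^h(t)}$ into $\left\langle \matpartxh\,\kappa^h, \kappa^h\right\rangle^h_{\Gamma^h(t)}$. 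Differentiating \eqref{eq:kappaynuh} in time, as your wording suggests, is both unnecessary and problematic: it produces $\matpartxh\,(\vec Y^h\,.\,\vec\nu^h)$, hence $\matpartxh\,\vec Y^h$, and in addition the transport term $\left\langle (\kappa^h - \vec Y^h\,.\,\vec\nu^h)\,\xi, \nabs\,.\,\vec{\mathcal{V}}^h\right\rangle^h_{\Gamma^h(t)}$, which does not vanish, because $\nabs\,.\,\vec{\mathcal{V}}^h$ is discontinuous across element boundaries and so $\xi\,\nabs\,.\,\vec{\mathcal{V}}^h$ is not an admissible test function in \eqref{eq:kappaynuh}.

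The same issue is the genuine gap in your primary, Lagrangian route. The chain-rule decomposition of $\ddt\,L^h$ requires $\matpartxh\,\vec Y^h$ to exist and lie in $\Vht$, i.e.\ effectively $\vec Y^h \in \VhTGhT$; but the theorem assumes only $\kappa^h \in \WhTGhT$, and the Lagrange multiplier carries no a priori time regularity. Your suggestion that this could be ``extracted from the equations'' is not straightforward: it would need uniqueness and nondegeneracy of the multiplier (an Assumption~\ref{ass:A}-type condition together with an implicit-function-theorem argument), none of which is available under the stated hypotheses. The paper's proof is arranged precisely to sidestep this: the only equation that is differentiated in time is \eqref{eq:weakcurv12h}, in which $\vec Y^h$ never appears; $\vec Y^h$ enters solely as a fixed spatial test function in $\Vht$ at each time $t$. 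Conceptually your argument is attractive --- the reduction $L^h = \tfrac12\bigl(\left|\kappa^h\right|^h_{\Gamma^h(t)}\bigr)^2$ on the constraint set is correct, and stationarity in $\kappa^h$ and $\vec Y^h$ does kill two of the three chain-rule terms, leaving the prescribed shape-derivative value $-\bigl(\left|\vec{\mathcal{V}}^h\,.\,\vec\omega^h\right|^h_{\Gamma^h(t)}\bigr)^2$, which explains structurally why the scheme is stable --- but as a proof under the stated hypotheses it requires either the extra assumption $\vec Y^h \in \VhTGhT$ or the rearrangement that the paper (and your own fallback, suitably corrected) uses.
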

\begin{proof} 
Similarly to (\ref{eq:Dzwill2c}), we extend test functions
$\vec\eta \in \Vht$ in (\ref{eq:weakcurv12h}) 
to $\vec\eta \in \VhTGhT$
such that $\matpartxh\,\vec\eta = \vec 0$ on $\GhT$. 
Then taking the time
derivative of (\ref{eq:weakcurv12h}), on noting Theorem~\ref{thm:disctrans}
and Lemma \ref{lem:commdiffh}\ref{item:commdiffhiii},
yields
\begin{align}
& \left\langle\matpartxh\,
\kappa^h,\vec\nu^h \,.\,\vec\eta\right\rangle^h_{\Gamma^h(t)} +
\left\langle
\kappa^h\, \matpartxh\,\vec\nu^h, \vec\eta\right\rangle^h_{\Gamma^h(t)} +    
\left\langle\kappa^h\,\vec\nu^h.\,\vec\eta, 
\nabs\,.\,\vec{\mathcal{V}}^h\right\rangle^h_{\Gamma^h(t)} 
\nonumber \\ &\quad 
+ \left\langle
\nabs\,.\,\vec{\mathcal{V}}^h,\nabs\,.\,\vec\eta\right\rangle_{\Gamma^h(t)}
+\left\langle\nabs\,\vec{\mathcal{V}}^h,\nabs\,\vec\eta\right\rangle_{\Gamma^h(t)}
- 2\left\langle\mat D_s(\vec{\mathcal{V}}^h)
,\nabs\,\vec\eta\right\rangle_{\Gamma^h(t)} = 0\,.
\label{eq:BGNwill2c}
\end{align}
We now take $\vec\eta = \vec Y^h$ in (\ref{eq:BGNwill2c}) and
$\vec\chi = \vec{\mathcal V}^h$ in
(\ref{eq:weak2ah}), to obtain, 
on noting (\ref{eq:kappaynuh}), (\ref{eq:omegahnuh}) 
and Lemma~\ref{lem:dtnuh}, that
\[
\left\langle \matpartxh\,\kappa^h,\kappa^h\right\rangle^h_{\Gamma^h(t)}
+ \tfrac12 \left\langle|\kappa^h|^2, 
\nabs\,.\,\vec{\mathcal{V}}^h\right\rangle^h_{\Gamma^h(t)}
+\left\langle
\vec{\mathcal{V}}^h\,.\,\vec\omega^h,\vec{\mathcal{V}}^h\,.\,
\vec\omega^h \right\rangle^h_{\Gamma^h(t)} 
= 0\,.
\]
The desired result then follows on noting
Theorem~\ref{thm:disctrans}\ref{item:disctransh} and (\ref{eq:ip0norm}).
\end{proof}

We now state a fully discrete version of \eqref{eq:weak2ah},
\eqref{eq:kappaynuh} and \eqref{eq:weakcurv12h}, on recalling (\ref{eq:omegahnuh}).

Let the closed polyhedral hypersurface $\Gamma^0$ be an approximation to
$\Gamma(0)$, and let $\kappa^0_{\Gamma^0} \in V(\Gamma^0)$ and 
$\vec Y^0_{\Gamma^0} \in \underline{V}(\Gamma^0)$ be approximations to its 
mean curvature and mean curvature vector, respectively.
We also recall the time interval partitioning \eqref{eq:ttaum}. 
Then, for $m = 0,\ldots,M-1$, find 
$(\vec X^{m+1},\kappa^{m+1},\vec Y^{m+1}) \in \Vhm\times \Whm \times \Vhm$ 
such that 
\begin{subequations} \label{eq:FDBGNWill}
\begin{align}
& \left\langle \frac{\vec X^{m+1}-\vec\id}{\ttau_m}\,.\,\vec\omega^m, 
\vec\chi\,.\,\vec\omega^m\right\rangle_{\Gamma^m}^h
- \left\langle \nabs\,\vec Y^{m+1}, \nabs\,\vec\chi \right\rangle_{\Gamma^m} 
\nonumber \\ &\
= - \left\langle \kappa^m_{\Gamma^m} \left(\tfrac12\,
\kappa^m_{\Gamma^m}- \vec Y^m_{\Gamma^m}\,.\,\vec\nu^m\right), 
\nabs\,.\,\vec\chi \right\rangle^h_{\Gamma^m} 
-  \left\langle \kappa^m_{\Gamma^m} \,\vec Y^m_{\Gamma^m},  
(\nabs\,\vec\chi)^\transT \,\vec\nu^m 
\right\rangle^h_{\Gamma^m}
\nonumber \\ & \quad 
+ \left\langle \nabs\,.\,\vec Y^m_{\Gamma^m}, \nabs\,.\,\vec\chi
  \right\rangle_{\Gamma^m}  
-2 \left\langle \nabs\,\vec Y^m_{\Gamma^m}, 
\mat{D}_s(\vec\chi) \right\rangle_{\Gamma^m} 
\quad\forall\ \vec\chi \in \Vhm
\,, \label{eq:FDBGNWilla}  \\
& \kappa^{m+1} = \pi_{\Gamma^m}\left[ \vec Y^{m+1}\,.\,\vec\omega^m \right]
,
\label{eq:FDBGNWillb}  \\
& \left\langle \kappa^{m+1}\,\vec\nu^m,\vec\eta\right\rangle^h_{\Gamma^m}
+ \left\langle \nabs\,\vec X^{m+1}, \nabs\,\vec\eta 
\right\rangle_{\Gamma^m} = 0
\quad\forall\ \vec\eta \in \Vhm \label{eq:FDBGNWillc}
\end{align}
\end{subequations}
and set $\Gamma^{m+1} = \vec X^{m+1}(\Gamma^m)$, 
$\kappa^{m+1}_{\Gamma^{m+1}}= \kappa^{m+1}\circ (\vec X^{m+1})^{-1}\in\Whmp$
and \arxivyesno{}{\linebreak}%
$\vec Y^{m+1}_{\Gamma^{m+1}}= \vec Y^{m+1} \circ (\vec X^{m+1})^{-1}
\in\Vhmp$. 
We have the following result.

\begin{thm}\label{thm:FDBGNWill}
Let $\Gamma^m$ satisfy {\rm Assumption \ref{ass:A}\ref{item:assA1}}
and let $\kappa^m_{\Gamma^m} \in \Whm$ and $\vec Y^m_{\Gamma^m} \in \Vhm$.
Then there exists a unique solution
$(\vec X^{m+1},\kappa^{m+1}, \vec Y^{m+1}) \in \Vhm\times \Whm \times \Vhm$
to {\rm (\ref{eq:FDBGNWill})}. 
\end{thm}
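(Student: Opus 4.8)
The plan is to follow the template established for the earlier existence and uniqueness results, in particular the proof of Lemma~\ref{lem:exXk} and Theorem~\ref{thm:uniqueMC}. Since the right-hand side of \eqref{eq:FDBGNWilla} is assembled entirely from the data $\kappa^m_{\Gamma^m}$ and $\vec Y^m_{\Gamma^m}$ at the previous time level, the system \eqref{eq:FDBGNWill} is linear in the unknowns $(\vec X^{m+1},\kappa^{m+1},\vec Y^{m+1})$. Counting degrees of freedom — $dK$ for each of $\vec X^{m+1}$ and $\vec Y^{m+1}$ and $K$ for $\kappa^{m+1}$, matching the $dK$ test functions in \eqref{eq:FDBGNWilla}, the $K$ nodal conditions \eqref{eq:FDBGNWillb} and the $dK$ test functions in \eqref{eq:FDBGNWillc} — shows that the system is square. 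Hence it suffices to establish uniqueness, i.e.\ that the associated homogeneous system admits only the trivial solution.

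First I would consider the homogeneous system, writing $(\vec X,\kappa,\vec Y)\in\Vhm\times\Whm\times\Vhm$ for a solution with the right-hand side of \eqref{eq:FDBGNWilla} set to $\vec 0$. The central step is to test \eqref{eq:FDBGNWilla} with $\vec\chi=\vec X$ and \eqref{eq:FDBGNWillc} with $\vec\eta=\vec Y$, and to add the two resulting identities. Because the Hilbert--Schmidt inner product is symmetric, the terms $\left\langle\nabs\,\vec Y,\nabs\,\vec X\right\rangle_{\Gamma^m}$ and $\left\langle\nabs\,\vec X,\nabs\,\vec Y\right\rangle_{\Gamma^m}$ cancel, leaving
\[
\frac1{\ttau_m}\left(\left|\vec X\,.\,\vec\omega^m\right|_{\Gamma^m}^h\right)^2
+ \left\langle \kappa\,\vec\nu^m, \vec Y\right\rangle_{\Gamma^m}^h = 0 \,.
\]

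The key observation, and the part requiring care, is that the second term collapses to $(\left|\kappa\right|_{\Gamma^m}^h)^2$. First, \eqref{eq:omegahnuh} lets me replace $\vec\nu^m$ by $\vec\omega^m$ in the lumped product, so that $\left\langle\kappa\,\vec\nu^m,\vec Y\right\rangle_{\Gamma^m}^h = \left\langle\kappa\,\vec\omega^m,\vec Y\right\rangle_{\Gamma^m}^h$. Since the mass-lumped inner product only evaluates its arguments at the vertices $\vec q^m_k$, this equals $\left\langle\kappa, \vec Y\,.\,\vec\omega^m\right\rangle_{\Gamma^m}^h$; and because \eqref{eq:FDBGNWillb} forces $\kappa(\vec q^m_k) = (\vec Y\,.\,\vec\omega^m)(\vec q^m_k)$ at every vertex, this lumped product is exactly $(\left|\kappa\right|_{\Gamma^m}^h)^2$. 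Both terms in the displayed identity are therefore nonnegative, which forces $\left|\vec X\,.\,\vec\omega^m\right|_{\Gamma^m}^h = 0$ and, on recalling \eqref{eq:ip0norm}, $\kappa=0$.

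It then remains to propagate these vanishing quantities through the system, exactly as in Lemma~\ref{lem:exXk}. With $\kappa=0$, equation \eqref{eq:FDBGNWillc} reduces to $\left\langle\nabs\,\vec X,\nabs\,\vec\eta\right\rangle_{\Gamma^m}=0$ for all $\vec\eta\in\Vhm$; choosing $\vec\eta=\vec X$ shows $\vec X$ is constant, $\vec X=\vec X^c$ on $\Gamma^m$, and combining this with $\vec X\,.\,\vec\omega^m(\vec q^m_k)=0$ at every vertex, Assumption~\ref{ass:A}\ref{item:assA1} (the vertex normals span $\bR^d$) gives $\vec X^c=\vec 0$. Analogously, with $\vec X=\vec 0$ equation \eqref{eq:FDBGNWilla} reduces to $\left\langle\nabs\,\vec Y,\nabs\,\vec\chi\right\rangle_{\Gamma^m}=0$ for all $\vec\chi\in\Vhm$, so $\vec Y$ is constant; and \eqref{eq:FDBGNWillb} with $\kappa=0$ yields $\vec Y\,.\,\vec\omega^m(\vec q^m_k)=0$ at every vertex, whence Assumption~\ref{ass:A}\ref{item:assA1} again gives $\vec Y=\vec 0$. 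This proves uniqueness, and hence existence, of a solution to \eqref{eq:FDBGNWill}. I expect the only genuine subtlety to be the bookkeeping with the mass-lumped products needed to collapse $\left\langle\kappa\,\vec\nu^m,\vec Y\right\rangle_{\Gamma^m}^h$ to $(\left|\kappa\right|_{\Gamma^m}^h)^2$; everything else mirrors the earlier uniqueness arguments.
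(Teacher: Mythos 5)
Your proof is correct, and it uses the paper's overall template (the system is linear and square, so existence reduces to uniqueness of the homogeneous problem, which is settled by an energy-type test combined with \eqref{eq:omegahnuh}, the nodal relation \eqref{eq:FDBGNWillb}, and Assumption~\ref{ass:A}\ref{item:assA1}), but with the test functions paired the opposite way, which reverses the order of the deductions. The paper, mimicking Theorem~\ref{thm:uniqueSD}, makes the crossed choice $\vec\chi = \vec Y$ in \eqref{eq:FDBGNWilla} and $\vec\eta = \vec X$ in \eqref{eq:FDBGNWillc}: after exactly the bookkeeping you describe (using \eqref{eq:omegahnuh} and \eqref{eq:FDBGNWillb} to rewrite $\left\langle \kappa\,\vec\nu^m, \vec X\right\rangle^h_{\Gamma^m}$ as $\left\langle \vec Y\,.\,\vec\omega^m, \vec X\,.\,\vec\omega^m\right\rangle^h_{\Gamma^m}$), the lumped coupling terms cancel and one is left with $\left|\nabs\,\vec X\right|_{\Gamma^m}^2 + \ttau_m\left|\nabs\,\vec Y\right|_{\Gamma^m}^2 = 0$; so constancy of $\vec X$ and $\vec Y$ comes first, and the constants are then killed by a second round of testing together with Assumption~\ref{ass:A}\ref{item:assA1}, which finally gives $\kappa=0$. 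Your diagonal choice $\vec\chi = \vec X$, $\vec\eta = \vec Y$ instead cancels the symmetric gradient cross terms, so the surviving nonnegative quantities are the lumped ones, $\frac1{\ttau_m}\left(\left|\vec X\,.\,\vec\omega^m\right|^h_{\Gamma^m}\right)^2 + \left(\left|\kappa\right|^h_{\Gamma^m}\right)^2 = 0$; hence you obtain $\kappa=0$ and the vanishing of the nodal normal components of $\vec X$ first, and constancy of $\vec X$ and $\vec Y$ only in the second round. The two arguments are mirror images of essentially equal length and difficulty: yours has the minor advantage that $\kappa=0$ drops out immediately, while the paper's is the one that transfers verbatim from the earlier uniqueness proofs (Lemma~\ref{lem:exXk}, Theorems~\ref{thm:uniqueMC} and \ref{thm:uniqueSD}); both hinge on the same identification of the lumped product via \eqref{eq:omegahnuh} and \eqref{eq:FDBGNWillb}, and both finish with Assumption~\ref{ass:A}\ref{item:assA1}.
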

\begin{proof}
The proof is very similar to the proof of Theorem \ref{thm:uniqueSD}.
We consider a solution $(\vec X,\kappa,\vec Y)\in\Vhm\times \Whm \times \Vhm$ 
of the homogeneous system
\begin{subequations} \label{eq:homoWill}
\begin{align}
\left\langle \vec X\,.\,\omega^m, \vec\chi\,.\,\vec\omega^m \right
\rangle^h_{\Gamma^m}
- \ttau_m\left\langle \nabs\,\vec Y, \nabs\,\vec\chi \right\rangle_{\Gamma^m} 
& = 0 \quad\forall\ \vec\chi \in \Vhm\,, \label{eq:homoWilla} \\
\kappa - \pi_{\Gamma^m}\left[ \vec Y\,.\,\vec\omega^m \right] & = 0\,,
\label{eq:homoWillb}  \\
\left\langle \kappa\,\vec\nu^m, \vec\eta\right\rangle^h_{\Gamma^m}
+ \left\langle \nabs\,\vec X, \nabs\,\vec\eta \right\rangle_{\Gamma^m} & = 0
\quad\forall\ \vec\eta \in \Vhm\,. \label{eq:homoWillc}
\end{align}
\end{subequations}
Choosing $\vec\chi = \vec Y\in \Vhm$ in (\ref{eq:homoWilla}) and
$\vec\eta = \vec X\in\Vhm$ in (\ref{eq:homoWillc}) yields,
on noting \eqref{eq:omegahnuh} and \eqref{eq:homoWillb}, that
\begin{equation*} 
\left|\nabs\,\vec X\right|_{\Gamma^m}^2 
+\ttau_m\left|\nabs\,\vec Y\right|_{\Gamma^m}^2 =0\,.
\end{equation*}
We deduce from this that 
$\vec X = \vec X^c$ and $\vec Y = \vec Y^c$ on $\Gamma^m$, for 
$\vec X^c,\,\vec Y^c\in \bR^d$. Hence, on substituting \eqref{eq:homoWillb}
into \eqref{eq:homoWillc}, it follows from \eqref{eq:homoWill} and 
\eqref{eq:omegahnuh} that
\[
\left(\left|\vec X^c\,.\,\vec\omega^m\right|_{\Gamma^m}^h\right)^2
+ \left(\left|\vec Y^c\,.\,\vec\omega^m\right|_{\Gamma^m}^h\right)^2 = 0\,,
\]
i.e.\ $\vec X^c \,.\,\vec\omega^m = \vec Y^c \,.\,\vec\omega^m = 0$. 
Therefore Assumption~\ref{ass:A}\ref{item:assA1} yields that 
$\vec X^c=\vec Y^c=\vec 0$, and hence $\kappa=0$.
\end{proof} 

\begin{rem}[Stability] \label{rem:WFstab}
Unfortunately, it does not seem possible to prove a stability bound for
the fully discrete scheme \eqref{eq:FDBGNWill} or its generalizations.
A similar comment applies to a fully discrete approximation of the
Dziuk scheme \eqref{eq:Dzwill1}.
\end{rem}

\begin{rem}[Discrete linear systems] \label{rem:WFsolve}
On recalling the notation from {\rm \S\ref{subsec:3.3}}, 
we can formulate the
linear systems of equations to  be solved at each time level for
\eqref{eq:FDBGNWill} as follows.
Find $(\vec Y^{m+1},\kappa^{m+1},\delta\vec X^{m+1})
\in (\bR^d)^K\times\bR^K\times (\bR^d)^K$ such that
\begin{equation}
\begin{pmatrix}
 \mat A_{\Gamma^m} & 0 & -\frac1{\ttau_m}\,\mat{\mathcal M}_{\Gamma^m} \\
-\vec N_{\Gamma^m}^\transT & M_{\Gamma^m} & 0 \\
0 & \vec N_{\Gamma^m} & \mat A_{\Gamma^m}
\end{pmatrix}
\begin{pmatrix} \vec Y^{m+1} \\ \kappa^{m+1} \\  \delta\vec X^{m+1} 
\end{pmatrix}
=
\begin{pmatrix} \vec g_{\Gamma^m} 
\\ 0 \\ -\mat A_{\Gamma^m}\,\vec X^m \end{pmatrix} \,,
\label{eq:FDBGNlin}
\end{equation}
where we use a similar abuse of notation as in \eqref{eq:MCvec}. The
definitions of the matrices and vectors in \eqref{eq:FDBGNlin} 
are either given in \eqref{eq:mat0}, or they follow directly from 
\eqref{eq:FDBGNWill}.
In practice, the linear system  \eqref{eq:FDBGNlin} can be efficiently
solved with a sparse direct solution method like UMFPACK, see 
{\rm \cite{Davis04}}.
\end{rem}

\subsection{Willmore flow with spontaneous curvature and area difference 
elasticity effects} \label{subsec:WFspont}
Curvature energies also play an important role for vesicles and biomembranes.
As in many membrane elastic energies, the curvature of the membrane enters
the elastic energy density.  
However, for biomembranes, additional effects play a role, which we would like
to discuss now. In the original curvature energies for biomembranes a possible
asymmetry of the membrane in the normal direction was taken into account,
which can result, for example, from a different chemical environment on the two
sides of the membrane. This
top-down asymmetry makes it necessary to generalize the Willmore energy, and one
example of such a model is the spontaneous curvature model introduced by
 \citet{Canham70} and
\citet{Helfrich73}. 
The simplest such energy, in a nondimensional form, is
\begin{equation*}
E_\spont(\Gamma) = \tfrac12 \int_{\Gamma}
(\varkappa - \spont)^2 \dH{d-1}
\,, 
\end{equation*}
where $\spont\in\bR$ is the given so-called spontaneous curvature.
Biomembranes consist of two layers of lipids, and the
 number of lipid molecules is conserved. In addition, 
there are osmotic pressure effects,
arising from the chemistry around the lipid. These two effects
lead to constraints on the possible membrane configurations. Early
models for bilayer membranes modelled this by taking hard constraints on the 
total
surface area and the enclosed volume of the membrane into account. The
fact that it is difficult to exchange molecules between the two layers
imply that the total number of lipids in each layer is conserved,
and hence a surface
area difference between the two layers will appear. The area difference is
to first order given by the total integrated mean curvature. This follows
from the first variation formula for surface area, recall (\ref{eq:firstvar}). 
Different models incorporate this area difference either by a hard constraint
or penalize deviations from an optimal
area difference. In the latter case, we obtain the energy
\begin{equation}\label{eq:ADESCa}
E_{\spont,\beta}(\Gamma) = E_{\spont}(\Gamma) +
 \tfrac12\,\beta \left( \left\langle \varkappa,1 \right\rangle_{\Gamma}
-M_0 \right)^2
\end{equation}
with given constants $\beta\in\bRgeq$, $M_0\in \bR$. 
A model based on the energy (\ref{eq:ADESCa}) is  called an area difference 
elasticity (ADE) model, see \cite{Seifert97}. 
We now extend Lemma~\ref{lem:Willmorevar} to $E_{\spont,\beta}(\Gamma(t))$,
where for notational convenience we define
\begin{equation} \label{eq:At}
A(t) = \left\langle \varkappa,1 \right\rangle_{\Gamma(t)}  -M_0\,.
\end{equation}

\begin{lem} \label{lem:Willmorevargen}
Let $\GT$ be a closed $C^4$--evolving orientable hypersurface.  
Then it holds that
\begin{align*}
& \ddt\,E_{\spont,\beta}(\Gamma(t)) \nonumber \\ & \quad
= \left\langle\Delta_s\,\varkappa + 
(\varkappa-\spont)\,|\nabs\,\vec\nu|^2-\tfrac12\,
(\varkappa-\spont)^2\,\varkappa 
+ \beta\,A(t) \left(|\nabs\,\vec\nu|^2-\varkappa^2\right), \mathcal{V} 
\right\rangle_{\Gamma(t)} .
\end{align*}
\end{lem}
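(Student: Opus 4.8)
The plan is to mirror the proof of Lemma~\ref{lem:Willmorevar}, splitting the energy as $E_{\spont,\beta}(\Gamma(t)) = E_\spont(\Gamma(t)) + \tfrac12\,\beta\,(A(t))^2$ and differentiating each piece separately, so that $\beta = \spont = 0$ recovers the earlier lemma as a consistency check.

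First I would handle $E_\spont(\Gamma(t)) = \tfrac12\,\langle (\varkappa-\spont)^2, 1\rangle_{\Gamma(t)}$. Applying the transport theorem (Theorem~\ref{thm:trans}) in its normal-derivative form with $f = \tfrac12(\varkappa-\spont)^2$, and using that $\Gamma(t)$ is closed so that the boundary term vanishes, gives
\[
\ddt E_\spont = \left\langle (\varkappa-\spont)\,\matpartn\varkappa
- \tfrac12\,(\varkappa-\spont)^2\,\varkappa\,\mathcal{V}, 1\right\rangle_{\Gamma(t)},
\]
where I used $\matpartn[\tfrac12(\varkappa-\spont)^2] = (\varkappa-\spont)\,\matpartn\varkappa$ since $\spont$ is constant. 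Substituting $\matpartn\varkappa = \Delta_s\,\mathcal{V} + \mathcal{V}\,|\nabs\vec\nu|^2$ from Lemma~\ref{lem:derkappa}\ref{item:lem10.3ii}, and then integrating by parts twice via Remark~\ref{rem:ibp}\ref{item:ibp} (again with no boundary contribution) to move the Laplace--Beltrami operator off $\mathcal{V}$ onto $\varkappa-\spont$, together with $\Delta_s\,\spont = 0$, yields exactly the first three terms of the claimed formula.

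Then I would treat $\tfrac12\,\beta\,(A(t))^2$. Since $A(t)$ is a scalar depending only on $t$, one has $\ddt[\tfrac12\,\beta\,A^2] = \beta\,A\,\ddt\langle\varkappa,1\rangle_{\Gamma(t)}$. Applying the transport theorem once more with $f = \varkappa$ gives $\ddt\langle\varkappa,1\rangle = \langle\matpartn\varkappa - \varkappa^2\,\mathcal{V},1\rangle_{\Gamma(t)}$; inserting the same expression for $\matpartn\varkappa$ and observing that $\int_{\Gamma(t)}\Delta_s\,\mathcal{V}\dH{d-1} = 0$ (integration by parts against the constant function $1$ on the closed surface) leaves $\ddt\langle\varkappa,1\rangle = \langle|\nabs\vec\nu|^2 - \varkappa^2, \mathcal{V}\rangle_{\Gamma(t)}$. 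Pulling the spatially constant factor $A(t)$ back inside the inner product produces the final term $\langle\beta\,A(t)(|\nabs\vec\nu|^2 - \varkappa^2),\mathcal{V}\rangle_{\Gamma(t)}$, and summing the two contributions gives the stated identity.

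The computation is entirely routine given the results already established; there is no genuine obstacle. The only points requiring mild care are the bookkeeping of the constant $\spont$ through the product rule and the double integration by parts in the first part, and the recognition that $A(t)$, being independent of the spatial variable, may be moved freely in and out of the $L^2$ inner product in the second part.
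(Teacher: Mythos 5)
Your proposal is correct and follows essentially the same route as the paper's own proof: the same splitting of $E_{\spont,\beta}$ into $E_\spont$ plus the ADE term, the same use of the transport theorem (Theorem~\ref{thm:trans}) and of Lemma~\ref{lem:derkappa}\ref{item:lem10.3ii} for $\matpartn\,\varkappa$, and the same integration by parts via Remark~\ref{rem:ibp}\ref{item:ibp} to move $\Delta_s$ onto $\varkappa-\spont$, with $\int_{\Gamma(t)}\Delta_s\,\mathcal{V}\dH{d-1}=0$ handling the ADE contribution. No gaps.
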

\begin{proof} 
Generalising (\ref{eq:WEvar}) to $E_\spont(\Gamma(t)$ yields that
\begin{align}
\ddt\,E_{\spont}(\Gamma(t)) & 
= \left\langle \varkappa - \spont,\matpartn\,\varkappa
-\tfrac12\,(\varkappa-\spont)\,\varkappa\,\mathcal{V} \right\rangle_{\Gamma(t)} 
\nonumber \\ &
= \left\langle \varkappa -\spont , \Delta_s\,\mathcal{V} 
 + \mathcal{V}\,|\nabs\,\vec\nu|^2
-\tfrac12\,(\varkappa-\spont)\,\varkappa\,\mathcal{V} \right\rangle_{\Gamma(t)}
\nonumber \\ &
= \left\langle \Delta_s\,\varkappa + (\varkappa-\spont) \,|\nabs\,\vec\nu|^2
-\tfrac12\,(\varkappa-\spont)^2\,\varkappa , \mathcal{V}
\right\rangle_{\Gamma(t)}.
\label{eq:WEvargen}
\end{align}
Next, we compute, on recalling
Theorem \ref{thm:trans} and Lemma \ref{lem:derkappa}\ref{item:lem10.3ii}, that
\begin{align*}
\ddt \left[ \tfrac12
\left(\left\langle \varkappa, 1\right\rangle_{\Gamma(t)} -M_0\right)^2 \right]
&= 
A(t)\,\ddt \left\langle \varkappa, 1\right\rangle_{\Gamma(t)}
= A(t) \left\langle \matpartn\,\varkappa-\varkappa^2\,\mathcal{V}
, 1 \right\rangle_{\Gamma(t)} \\
& = A(t) \left\langle 
|\nabs\,\vec\nu|^2-\varkappa^2, \mathcal{V} \right\rangle_{\Gamma(t)} .
\end{align*}
Combining the above with (\ref{eq:WEvargen}) yields the claim,
on noting (\ref{eq:ADESCa}). 
\end{proof}

Hence the $L^2$--gradient flow
of $E_{\spont,\beta}(\Gamma(t))$, (\ref{eq:ADESCa}), is given as
\begin{equation} \label{eq:WFgen}
{\mathcal V} = -\Delta_s\,\varkappa
-(\varkappa-\spont) \,|\nabs\,\vec\nu|^2
+\tfrac12\,(\varkappa-\spont)^2\,\varkappa
- \beta\,A(t)\left(|\nabs\,\vec\nu|^2-\varkappa^2\right)
\quad\text{on }\Gamma(t)\,.
\end{equation}

Taking our discussion above into account, the volume preserving 
and surface area preserving flows
are also of interest. In the case $\beta=0$, the 
volume and surface area preserving flow is called Helfrich
flow. 
We consider the two side constraints
\begin{equation} \label{eq:HFcons}
\left\langle \varkappa, {\mathcal V} \right\rangle_{\Gamma(t)} =0
\qquad\mbox{and} \qquad 
\left\langle 1, {\mathcal V} \right\rangle_{\Gamma(t)} =0 
\end{equation}
for surface area and volume preservation, where
we have recalled Theorem \ref{thm:trans} and Theorem~\ref{thm:transvol}, 
respectively.
In particular, we introduce Lagrange multipliers 
$(\lambda_A(t),\lambda_V(t))^\transT \in \bR^2$ and then,
on writing (\ref{eq:WFgen}) as ${\mathcal V} = f$, we adapt 
(\ref{eq:WFgen}) to
\begin{equation} \label{WF:gencons}
{\mathcal V} = f + \lambda_A\,\varkappa + \lambda_V\qquad\mbox{on }\Gamma(t)\,.
\end{equation}
We see that the constraints (\ref{eq:HFcons}) are satisfied if 
$(\lambda_A(t),\lambda_V(t))^\transT \in \bR^2$ solve the symmetric system
\begin{equation} \label{eq:matcons}
\begin{pmatrix} 
\left\langle \varkappa, \varkappa \right\rangle_{\Gamma(t)} 
&  \left\langle \varkappa, 1 \right\rangle_{\Gamma(t)} \\ 
\left\langle \varkappa, 1 \right\rangle_{\Gamma(t)} 
& \left\langle 1, 1 \right\rangle_{\Gamma(t)}
\end{pmatrix}\begin{pmatrix} \lambda_A \\ \lambda_V \end{pmatrix}
= - \begin{pmatrix} \left\langle f,\varkappa \right\rangle_{\Gamma(t)} 
\\ \left\langle f, 1 \right\rangle_{\Gamma(t)} \end{pmatrix}.
\end{equation}
We observe that the matrix in (\ref{eq:matcons}) is symmetric positive 
semidefinite,
and it is singular if and only if $\varkappa$ is constant, i.e.\ $\Gamma(t)$
is sphere. In the case of just one constraint, the Lagrange
multiplier corresponding to the other constraint
is set to zero and the corresponding equation is removed from
(\ref{eq:matcons}). Equivalently to (\ref{eq:matcons}), in the two constraint case and
on assuming that $\varkappa$ is not constant,
$\lambda_A = -\left\langle f,\varkappa - \Gmint{\Gamma(t)}\varkappa\right\rangle_{\Gamma(t)}/
|\varkappa-\Gmint{\Gamma(t)}\varkappa|_{\Gamma(t)}^2$
and $\lambda_V$ is obtained from the second equation in (\ref{eq:matcons}).    
Here, we have recalled the definition of $\Gmint{\Gamma(t)} \varkappa$ from (\ref{eq:mint}).

The corresponding changes to the finite element approximation (\ref{eq:Will})
is to replace (\ref{eq:Willa}) by 
\begin{align}
& \left\langle \frac{\vec X^{m+1}-\vec\id}{\ttau_m}, \chi\,\vec\nu^m\right\rangle_{\Gamma^m}^h
- \left\langle \nabs\, \kappa^{m+1}, \nabs\, \chi \right\rangle_{\Gamma^m} 
- \tfrac12 \left\langle(\kappa^m_{\Gamma^m}-\spont)^2 \,\kappa^{m+1}, 
\chi \right\rangle_{\Gamma^m}^h
\nonumber \\ & \qquad
-[\lambda_A^m]_+ \,\left\langle 
\kappa^{m+1}, \chi \right\rangle_{\Gamma^m}^h
\nonumber \\ & \
= \left\langle g^m, \chi \right\rangle_{\Gamma^m}^h +
[\lambda_A^m]_- \,\left\langle 
\kappa^m_{\Gamma^m}, \chi \right\rangle_{\Gamma^m}^h
+  \lambda_V^m \,\left\langle 1, \chi \right\rangle_{\Gamma^m}^h 
\quad\forall\ \chi \in \Whm\,, \label{eq:Willcons}
\end{align}
where $g^m \in \Whm$ is such that 
\begin{align*}  
\left\langle g^m, \chi \right\rangle_{\Gamma^m}^h  
&= -\left\langle \left(\kappa^m_{\Gamma^m} -\spont\right) |\mat W^{m+1}|^2,
\chi\right\rangle_{\Gamma^m}^h
- \beta \,A^m \left\langle |\mat W^{m+1}|^2-
\left(\kappa^m_{\Gamma^m}\right)^2 ,\chi\right\rangle_{\Gamma^m}^h
\nonumber \\
& \hspace{5cm}
\quad\forall\ \chi \in \Whm\,, 
\\
A^m &= \left\langle \kappa^m_{\Gamma^m} , 1
\right\rangle_{\Gamma^m}^h - M_0\,. 
\end{align*} 
In addition, $[\lambda^m_A]_{\pm}= \pm \max\{\pm \lambda^m_A,0\}$ and
the Lagrange multipliers $(\lambda_A^m,\lambda_V^m)^\transT \in \bR^2$
satisfy the symmetric system
\begin{align}
&\begin{pmatrix} 
\left\langle \kappa^m_{\Gamma^m}, \kappa^m_{\Gamma^m}
\right\rangle_{\Gamma^m}^h 
&  \left\langle \kappa^m_{\Gamma^m}, 1 \right\rangle_{\Gamma^m} \\ 
\left\langle \kappa^m_{\Gamma^m}, 1 \right\rangle_{\Gamma^m} 
& \left\langle 1, 1 \right\rangle_{\Gamma^m}
\end{pmatrix}\begin{pmatrix} \lambda_A^m \\ \lambda_V^m \end{pmatrix}
\nonumber \\ 
& \hspace{1cm}
= - \begin{pmatrix} \left\langle g^m +
\tfrac12 \left(\kappa^m_{\Gamma^m}-\spont\right)^2 \kappa^m_{\Gamma^m},\kappa^m_{\Gamma^m} \right\rangle_{\Gamma^m}^h
+ \left|\nabs\,\kappa^m_{\Gamma^m}\right|_{\Gamma^m}^2
\\ \left\langle g^m +
\tfrac12 \left(\kappa^m_{\Gamma^m}-\spont\right)^2 \kappa^m_{\Gamma^m},  1 
\right\rangle_{\Gamma^m}^h \end{pmatrix}.
\label{eq:matconsdis}
\end{align}
Once again, 
we observe that the matrix in (\ref{eq:matconsdis}) is symmetric positive 
semi\-definite,
and it is singular if and only if $\kappa^m_{\Gamma^m}$ is constant. 
Similarly, in the case of just one constraint, the Lagrange 
multiplier corresponding to the other constraint
is set to zero and the corresponding equation removed from
(\ref{eq:matconsdis}). 
Moreover, this modified scheme satisfies 
the suitably modified versions of Theorem~\ref{thm:uniqueWF}
and Remark~\ref{rem:BGN08WF}; see \cite{willmore} for details.     

We now discuss generalizations of the stable approximations of Willmore
flow in \S\ref{subsec:stableappro}
to the case of Willmore flow with spontaneous curvature and area
difference effects.
In \cite{pwfade}, the present authors extended the approach of \cite{Dziuk08}, 
see (\ref{eq:Dzwill1}), to the case of nonzero $\spont$ and $\beta$.

First of all, we consider the variation of $E_{\spont,\beta}(\Gamma(t))$
subject to the side constraint (\ref{eq:epscurv}).
Similarly to (\ref{eq:Lag2}), we introduce the Lagrangian
\begin{align*}
\widehat L_{\spont,\beta}(\Gamma(t), \vec\varkappa^\star, \vec y) &=
\tfrac12 \left\langle \left|\vec\varkappa^\star - \spont\,\vec\nu\right|^2,1 
\right\rangle_{\Gamma(t)} 
+ \tfrac12\,\beta\left( \left\langle \vec\varkappa^\star , \vec\nu 
\right\rangle_{\Gamma(t)} - M_0 \right)^2 \nonumber \\ &
\qquad - \left\langle \vec\varkappa^\star, \vec y \right\rangle_{\Gamma(t)} - 
\left\langle \nabs\,\vec\id, \nabs\,\vec y \right\rangle_{\Gamma(t)} , 
\end{align*}
where $\vec y(\cdot,t)\in \Vt$ is the Lagrange multiplier for (\ref{eq:epscurv})
with $\vec\varkappa(\cdot,t) \in [L^2(\Gamma(t))]^d$ replaced by
$\vec\varkappa^\star(\cdot,t) \in [L^2(\Gamma(t))]^d$.
As in (\ref{eq:deltaL}), we consider the variations,
for all $\vec\chi \in \Vt$, $\vec\xi \in [L^2(\Gamma(t))]^d$ 
and $\vec\eta \in \Vt$, 
\begin{align*}
\left[\deldel{\Gamma}\,\widehat L_{\spont,\beta}\right](\vec\chi) 
& = 
\left( \ddeps\, \widehat L_{\spont,\beta}(\Gamma_\epsilon(t), 
\varkappa^\star_{(\epsilon)}, \vec y_{(\epsilon)})
\right)_{\mid_{\epsilon=0}} , 
\\
\left[\deldel{\vec\varkappa^\star}\,\widehat L_{\spont,\beta}\right](\vec\xi) 
& = \left( \ddeps\,
\widehat 
L_{\spont,\beta}(\Gamma(t), \vec\varkappa^\star + \epsilon\,\vec\xi, \vec y) 
\right)_{\mid_{\epsilon=0}} , 
\\
\left[\deldel{\vec y}\,\widehat L_{\spont,\beta}\right](\vec\eta) & =
\left( \ddeps\,
\widehat
L_{\spont,\beta}(\Gamma(t), \vec\varkappa^\star , \vec y + \epsilon\,\vec\eta) 
\right)_{\mid_{\epsilon=0}} , 
\end{align*}
where $\vec\varkappa_{(\epsilon)}^\star(\cdot,t),\,\vec y_{(\epsilon)}(\cdot,t)
\in [H^1(\Gamma_\epsilon(t))]^d$ are 
defined by transporting the values of $\vec\varkappa^\star$ and $\vec y$ 
as in (\ref{eq:1919}). 
Setting $\left[\deldel{\vec y}\,\widehat L_{\spont,\beta}\right](\vec\eta)=0$ 
and comparing with (\ref{eq:epscurv}) leads to 
$\vec\varkappa^\star= \vec\varkappa$.
Moreover, on setting 
$\left[\deldel{\vec\varkappa^\star}\,\widehat L_{\spont,\beta}\right](\vec\xi)
=0$ 
and $\langle \vec{\mathcal V}, \vec\chi \rangle_{\Gamma(t)}
= - \left[\deldel{\Gamma}\,\widehat L_{\spont,\beta}\right](\vec\chi)$, we 
obtain,
on noting (\ref{eq:transeq}), (\ref{eq:denu}) and (\ref{eq:dedivdisc}), the
following weak formulation of $\vec{\mathcal{V}} = f\,\vec\nu$
and Lemma~\ref{lem:varkappa}\ref{item:vecvarkappa}, 
where $f$ is the right hand side of (\ref{eq:WFgen}); 
recall Remark~\ref{rem:GDWill}\ref{item:GDWillii}  
in the absence of spontaneous curvature and ADE effects.
Given a closed hypersurface $\Gamma(0)$, we seek an evolving hypersurface 
$(\Gamma(t))_{t\in[0,T]}$, 
with a global parameterization and induced velocity field
$\vec{\mathcal{V}}$, $\vec\varkappa \in [L^2(\GT)]^d$ 
and $\vec y \in [L^2(\GT)]^d$ as follows. 
For almost all $t \in (0,T)$, find
$(\vec {\mathcal V}(\cdot,t),\vec\varkappa(\cdot,t), \vec y(\cdot,t))
\in [L^2(\Gamma(t))]^d \times [L^2(\Gamma(t))]^d 
\times \Vt$ such that
\begin{subequations} \label{eq:Dzasym}
\begin{align}
& \left\langle \vec{\mathcal{V}}, \vec\chi \right\rangle_{\Gamma(t)} =
\left\langle \nabs\,\vec y, \nabs\,\vec\chi 
-2\,\mat D_s(\vec\chi)\right\rangle_{\Gamma(t)}+
\left\langle \nabs\,.\,\vec y, \nabs\,.\,\vec\chi \right\rangle_{\Gamma(t)}
\nonumber \\ & \hspace{1in} 
-\left\langle  \tfrac12\,|\vec\varkappa - \spont\,\vec\nu|^2
- \vec\varkappa\,.\left( \vec y- \beta\, A(t)\,\vec\nu\right), 
\nabs\,.\,\vec\chi \right\rangle_{\Gamma(t)}
\nonumber \\ & \hspace{1in}
+ \left(\beta\,A(t) - \spont\right) \left\langle \vec\varkappa, 
(\nabs\,\vec\chi)^\transT \vec\nu \right\rangle_{\Gamma(t)}
\quad\forall\ \vec\chi\in \Vt\,, \label{eq:Dzasyma} \\
& \left\langle \vec\varkappa + 
\left(\beta\,A(t)-\spont\right)\vec\nu - \vec y, \vec\xi 
\right\rangle_{\Gamma(t)}
= 0 \qquad
\forall\ \vec\xi \in [L^2(\Gamma(t))]^d \,, \label{eq:Dzasymb}  \\
& \left\langle \vec\varkappa , \vec\eta \right\rangle_{\Gamma(t)}+ \left\langle
\nabs\, \vec\id,\nabs\,\vec\eta\right\rangle_{\Gamma(t)} = 0 
\quad\forall\ \vec\eta \in \Vt\label{eq:Dzasymc} 
\end{align} 
with
\begin{equation} \label{eq:Dzasymd}
A(t)= \left\langle \vec\varkappa , \vec\nu
\right\rangle_{\Gamma(t)} - M_0 \,.
\end{equation}
\end{subequations}
We note that if $\spont = \beta = 0$ then $\vec y = \vec\varkappa$, and so
the system (\ref{eq:Dzasym}) reduces to (\ref{eq:Dziukwill}).

We now introduce a semidiscrete variant of (\ref{eq:Dzasym}) with
the help of the first variation of the discrete energy
\begin{equation}
E^h_{\spont,\beta}(\Gamma^h(t)) 
= \tfrac12 \left\langle \left|\vec\kappa^h - \spont\,\vec\nu^h\right|^2, 1
\right\rangle_{\Gamma^h(t)}^h
+ \tfrac12\,\beta \left( \left\langle \vec\kappa^h,\vec\nu^h
\right\rangle_{\Gamma^h(t)} - M_0 \right)^2 \label{eq:discent}
\end{equation}
subject to the side constraint (\ref{eq:Dzwill2b}).
Hence, we define the Lagrangian
\[
\widehat L^h_{\spont,\beta}(\Gamma^h(t), \vec\kappa^h, \vec Y^h) =
E^h_{\spont,\beta}(\Gamma^h(t)) 
- \left\langle \vec\kappa^h, \vec Y^h \right\rangle_{\Gamma^h(t)}^h 
- \left\langle \nabs\,\vec\id, \nabs\,\vec Y^h \right\rangle_{\Gamma^h(t)} ,
\]
where 
$\vec Y^h(\cdot,t)\in \Vht$ is the Lagrange multiplier for (\ref{eq:Dzwill2b}).
Similarly to (\ref{eq:Lag2h}), we set 
$\left[\deldel{\Gamma^h}\,\widehat L^h_{\spont,\beta}\right](\vec\chi) = 
- \langle \vec{\mathcal{V}^h}, \vec\chi \rangle_{\Gamma^h(t)}$
for $\vec\chi\in\Vht$, \arxivyesno{}{\linebreak}%
$\left[\deldel{\vec\kappa^h}\,\widehat L^h_{\spont,\beta}\right](\vec\xi) = 0$ 
for $\vec\xi\in\Vht$
and $\left[\deldel{\vec Y^h}\,\widehat L^h_{\spont,\beta}\right](\vec\eta) = 0$ 
for $\vec\eta\in\Vht$. 
Therefore, on noting the analogue of (\ref{eq:transeqh}),
the discrete analogue of (\ref{eq:denu}) for $\vec\nu^h$ on $\Gamma^h(t)$
and (\ref{eq:dedivdisc}) for $\Gamma^h(t)$,  
we have the following semidiscrete finite element approximation of
(\ref{eq:Dzasym}). 
Given the closed polyhedral hypersurface $\Gamma^h(0)$,
find an evolving polyhedral hypersurface $\GhT$ 
with induced velocity $\vec{\mathcal{V}}^h \in \VhGhT$,
$\vec\kappa^h \in \VhGhT$ and $\vec Y^h \in \VhGhT$ as follows.
For all $t \in (0,T]$, find
$(\vec{\mathcal{V}}^h(\cdot, t),\vec\kappa^h(\cdot,t),\vec Y^h(\cdot,t)) 
\in \Vht \times \Vht \times \Vht$ such that
\begin{subequations} \label{eq:Dzasymdisc}
\begin{align}
& \left\langle \vec{\mathcal{V}}^h, \vec\chi \right\rangle_{\Gamma^h(t)}^h
= \left\langle \nabs\,\vec Y^h, \nabs\,\vec\chi 
-2\,\mat D_s(\vec\chi) \right\rangle_{\Gamma^h(t)} +
\left\langle \nabs\,.\,\vec Y^h, \nabs\,.\,\vec\chi
 \right\rangle_{\Gamma^h(t)}
\nonumber \\ & \hspace{0.9in}
- \left\langle \tfrac12\left|\vec\kappa^h - \spont\,\vec\nu^h\right|^2
- \vec\kappa^h\,.\left(\vec Y^h - \beta\,A^h(t)\,\vec\nu^h\right), 
\nabs\,.\,\vec\chi
\right\rangle_{\Gamma^h(t)}^h
\nonumber \\ & \hspace{0.9in} 
+\left(\beta\,A^h(t)-\spont\right) 
\left\langle \vec\kappa^h, (\nabs\,\vec\chi)^\transT \vec\nu^h
\right\rangle_{\Gamma^h(t)}^h
\quad\forall\ \vec\chi\in \Vht\,, \label{eq:Dzasymdisca} \\
& \left\langle \vec\kappa^h + 
\left(\beta\,A^h(t)- \spont\right)\vec\nu^h - \vec Y^h, \vec\xi 
\right\rangle_{\Gamma^h(t)}^h
= 0 \quad\forall\ \vec\xi \in \Vht \,, \label{eq:Dzasymdiscb}  \\
& \left\langle \vec\kappa^h, \vec\eta \right\rangle_{\Gamma^h(t)}^h
+ \left\langle \nabs\, \vec\id,\nabs\,\vec\eta\right\rangle_{\Gamma^h(t)} = 0 
\quad\forall\ \vec\eta \in \Vht\,,\label{eq:Dzasymdiscc} 
\end{align} 
where
\begin{equation} \label{eq:Dzasymdiscd}
A^h(t)= \left\langle \vec\kappa^h , \vec\nu^h 
\right\rangle_{\Gamma^h(t)}^h - M_0\,.
\end{equation}
\end{subequations}
We note that (\ref{eq:Dzasymdiscb}) and (\ref{eq:omegahnuh}) imply that 
$\vec\kappa^h + \left(\beta\,A^h(t) - \spont\right)\vec\omega^h= \vec Y^h$ 
on $\Gamma^h(t)$.
Of course, $\vec\kappa^h$ can be eliminated from (\ref{eq:Dzasymdisc}).

We now prove the analogue of Theorem \ref{thm:Dzwill} for the semidiscrete 
scheme (\ref{eq:Dzasymdisc}).

\begin{thm} \label{thm:Dzasymdisc}
Let $(\GhT, \vec\kappa^h, \vec Y^h)$ be a solution of \eqref{eq:Dzasymdisc},
and let \arxivyesno{}{\linebreak}%
$\vec\kappa^h\in\VhTGhT$. Then it follows that  
\begin{equation}
\ddt \, E_{\spont,\beta}^h(\Gamma^h(t)) 
= - \left( \left|\vec{\mathcal{V}}^h\right|^h_{\Gamma^h(t)} \right)^2
\le 0\,.
\label{eq:sd3stab}
\end{equation}
\end{thm}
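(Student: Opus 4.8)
The plan is to generalize the proof of Theorem~\ref{thm:Dzwill}, which is exactly the present statement in the special case $\spont = \beta = 0$ (where \eqref{eq:Dzasymdisc} collapses to \eqref{eq:Dzwill1}). Since the constraint \eqref{eq:Dzasymdiscc} coincides verbatim with \eqref{eq:Dzwill2b}, I would first extend the test functions $\vec\eta \in \Vht$ appearing in \eqref{eq:Dzasymdiscc} to functions $\vec\eta \in \VhTGhT$ with $\matpartxh\,\vec\eta = \vec 0$ on $\GhT$, and then differentiate \eqref{eq:Dzasymdiscc} in time. Using Theorem~\ref{thm:disctrans} together with Lemma~\ref{lem:commdiffh}\ref{item:commdiffhiii} (to handle $\ddt\,\langle \nabs\,\vec\id, \nabs\,\vec\eta\rangle_{\Gamma^h(t)} = \ddt\,\langle \nabs\,.\,\vec\eta, 1\rangle_{\Gamma^h(t)}$) reproduces exactly \eqref{eq:Dzwill2c}, now read as an identity for $\langle \matpartxh\,\vec\kappa^h, \vec\eta\rangle^h_{\Gamma^h(t)}$.

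Next I would test this differentiated identity with $\vec\eta = \vec Y^h$ and, simultaneously, take $\vec\chi = \vec{\mathcal{V}}^h$ in the velocity equation \eqref{eq:Dzasymdisca}. The first-order pairings $\langle \nabs\,\vec Y^h, \nabs\,\vec{\mathcal{V}}^h - 2\,\mat D_s(\vec{\mathcal{V}}^h)\rangle_{\Gamma^h(t)} + \langle \nabs\,.\,\vec Y^h, \nabs\,.\,\vec{\mathcal{V}}^h\rangle_{\Gamma^h(t)}$ occur identically in both (using only symmetry of the Frobenius product), so eliminating these common first-order terms between the two equations removes every gradient contribution and leaves $\langle \vec{\mathcal{V}}^h, \vec{\mathcal{V}}^h\rangle^h_{\Gamma^h(t)}$ expressed through $\langle \matpartxh\,\vec\kappa^h, \vec Y^h\rangle^h_{\Gamma^h(t)}$ and lower-order terms carrying $\spont$, $\beta\,A^h(t)$ and the transport factor $\nabs\,.\,\vec{\mathcal{V}}^h$. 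Into $\langle \matpartxh\,\vec\kappa^h, \vec Y^h\rangle^h_{\Gamma^h(t)}$ I would then substitute $\vec Y^h = \vec\kappa^h + (\beta\,A^h(t) - \spont)\,\vec\omega^h$ (the consequence of \eqref{eq:Dzasymdiscb} noted after \eqref{eq:Dzasymdisc}), and replace $\vec\omega^h$ by $\vec\nu^h$ under the mass-lumped pairing via \eqref{eq:omegahnuh}.

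In parallel I would compute $\ddt\,E^h_{\spont,\beta}(\Gamma^h(t))$ directly from \eqref{eq:discent}, applying the vector form of Theorem~\ref{thm:disctrans}\ref{item:disctransh} to $\tfrac12\langle \vec\kappa^h - \spont\,\vec\nu^h, \vec\kappa^h - \spont\,\vec\nu^h\rangle^h_{\Gamma^h(t)}$ and differentiating $\tfrac12\,\beta\,(A^h(t))^2 = \beta\,A^h(t)\,\ddt\,A^h(t)$ with $A^h(t)$ from \eqref{eq:Dzasymdiscd}. Matching this expression term by term against the relation obtained above, and recalling \eqref{eq:ip0norm}, should deliver \eqref{eq:sd3stab}.

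The linchpin --- and the step I expect to be the main obstacle --- is reconciling the two computations in the presence of $\spont$ and $\beta$. The derivative of the energy produces a cross term proportional to $(\beta\,A^h(t) - \spont)\,\langle \matpartxh\,\vec\nu^h, \vec\kappa^h\rangle^h_{\Gamma^h(t)}$ (where $\langle \matpartxh\,\vec\nu^h, \vec\nu^h\rangle^h_{\Gamma^h(t)} = 0$ must be used to discard the pure-normal piece), while the velocity equation contributes a term proportional to $(\beta\,A^h(t) - \spont)\,\langle \vec\kappa^h, (\nabs\,\vec{\mathcal{V}}^h)^\transT\,\vec\nu^h\rangle^h_{\Gamma^h(t)}$; these coincide precisely because of Lemma~\ref{lem:dtnuh}, namely $\matpartxh\,\vec\nu^h = -(\nabs\,\vec{\mathcal{V}}^h)^\transT\,\vec\nu^h$. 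Beyond spotting this cancellation, the work is purely organizational: tracking every transport factor $\nabs\,.\,\vec{\mathcal{V}}^h$ generated by the discrete transport theorem, and keeping the mass-lumped inner product (so that \eqref{eq:omegahnuh} and \eqref{eq:Dzasymdiscb} may be invoked, with $A^h(t)$ understood consistently as in \eqref{eq:Dzasymdiscd}) aligned with the unlumped pairing arising from the gradient terms; the remainder parallels the proof of Theorem~\ref{thm:Dzwill} line for line.
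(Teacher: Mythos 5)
Your proposal is correct and follows essentially the same route as the paper's proof: differentiate the constraint \eqref{eq:Dzasymdiscc} in time to recover \eqref{eq:Dzwill2c}, test with $\vec\eta = \vec Y^h$ and $\vec\chi = \vec{\mathcal{V}}^h$ to cancel the gradient terms, substitute $\vec Y^h = \vec\kappa^h + (\beta\,A^h(t)-\spont)\,\vec\nu^h$ via \eqref{eq:Dzasymdiscb} and \eqref{eq:omegahnuh}, and match against the direct differentiation of \eqref{eq:discent}, with Lemma~\ref{lem:dtnuh} providing exactly the cancellation of the $(\beta\,A^h(t)-\spont)$ cross terms that you single out as the linchpin. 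The auxiliary identity $\matpartxh\,\vec\nu^h\,.\,\vec\nu^h=0$ that you invoke to discard the pure-normal piece is also implicitly used in the paper, so nothing is missing.
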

\begin{proof} 
As in the proof of Theorem~\ref{thm:Dzwill}, on
taking the derivative with respect to $t$ of
(\ref{eq:Dzasymdiscc}) 
we obtain (\ref{eq:Dzwill2c}).
Choosing $\vec\eta = \vec Y^h$ in (\ref{eq:Dzwill2c}) leads to
\begin{align} \label{eq:timderkappa}
& \left\langle \matpartxh\,\vec\kappa^h, \vec Y^h\right\rangle_{\Gamma^h(t)}^h
+ \left\langle \vec\kappa^h\,.\,\vec Y^h, 
\nabs\,.\,\vec{\mathcal{V}}^h \right\rangle_{\Gamma^h(t)}^h
+ \left\langle \nabs\,.\, \vec{\mathcal{V}}^h,
  \nabs\,.\,\vec Y^h\right\rangle_{\Gamma^h(t)} \nonumber \\ & \qquad
+ \left\langle \nabs\,\vec{\mathcal{V}}^h, \nabs\,\vec Y^h \right\rangle_{\Gamma^h(t)} 
- 2 \left\langle \mat D_s(\vec{\mathcal{V}}^h)\,, 
\nabs\,\vec Y^h \right\rangle_{\Gamma^h(t)} 
= 0\,.
\end{align}
Combining (\ref{eq:Dzasymdisca}) with $\vec\chi = \vec{\mathcal{V}}^h$
and (\ref{eq:timderkappa}), 
we obtain, on noting Lemma \ref{lem:dtnuh} and (\ref{eq:Dzasymdiscb}), that 
\begin{align} \label{eq:s32h}
& \left\langle \vec{\mathcal{V}}^h, \vec{\mathcal{V}}^h 
\right\rangle_{\Gamma^h(t)}^h
+ \left\langle \tfrac12 \left|\vec\kappa^h - \spont\,\vec\nu^h\right|^2
+ \beta\,A^h(t)\,\vec\kappa^h\,.\,\vec\nu^h, 
\nabs\,.\,\vec{\mathcal{V}}^h\right\rangle_{\Gamma^h(t)}^h
\nonumber \\ & \qquad\qquad 
+ \left(\beta\,A^h(t) - \spont\right) 
\left\langle \vec\kappa^h,\matpartxh\,\vec\nu^h
\right\rangle_{\Gamma^h(t)}^h \nonumber \\
& \ =
- \left\langle \matpartxh\,\vec\kappa^h,\vec Y^h \right\rangle_{\Gamma^h(t)}^h 
= 
- \left\langle \matpartxh\,\vec\kappa^h, 
\vec\kappa^h + \left(\beta\,A^h(t) - \spont\right)\vec\nu^h
\right\rangle_{\Gamma^h(t)}^h 
\,.
\end{align}
The desired result (\ref{eq:sd3stab}) then follows from 
(\ref{eq:s32h}), (\ref{eq:discent}), (\ref{eq:ip0norm}), 
Theorem~\ref{thm:disctrans} 
and (\ref{eq:Dzasymdiscd}), where we have observed that
\begin{align*} 
&\tfrac12\,\ddt \left(\left\langle\vec\kappa^h,\vec\nu^h
  \right\rangle_{\Gamma^h(t)}^h - M_0\right)^2  \nonumber \\ 
  &\quad = A^h(t) \left[  
\left\langle \matpartxh\,\vec\kappa^h,\vec\nu^h \right\rangle_{\Gamma^h(t)}^h
+ \left\langle \vec\kappa^h, \matpartxh\,\vec\nu^h \right\rangle_{\Gamma^h(t)}^h
+ \left\langle \vec\kappa^h\,.\,\vec\nu^h, 
  \nabs\,.\,\vec{\mathcal{V}}^h\right\rangle_{\Gamma^h(t)}^h
 \right].
\end{align*}
\end{proof}

We now state a fully discrete variant of the semidiscrete
scheme (\ref{eq:Dzasymdisc}). 
Let the closed polyhedral hypersurface $\Gamma^0$ be an approximation to 
$\Gamma(0)$, and let $\vec\kappa^0_{\Gamma^0}, \vec Y^0_{\Gamma^0} \in \Vhz$ 
and $A^0\in\bR$ be given.
Then, for $m=0,\ldots, M-1$, find
$(\vec X^{m+1},\vec\kappa^{m+1},\vec Y^{m+1}) \in\Vhm\times\Vhm\times\Vhm$
such that
\begin{subequations} \label{eq:GDade}
\begin{align}
& \left\langle\frac{\vec X^{m+1}-\vec\id}{\ttau_m},
    \vec\chi\right\rangle_{\Gamma^m}^h
= \left\langle \nabs\,\vec Y^{m+1} , \nabs\,\vec\chi
\right\rangle_{\Gamma^m}
+ \left\langle \nabs\,.\,\vec Y^{m}_{\Gamma^m} ,
\nabs\,.\,\vec\chi \right\rangle_{\Gamma^m}
\nonumber \\ & \hspace{1cm}
 - 2 \left\langle \nabs\,\vec Y^m_{\Gamma^m} ,
 \mat D_s(\vec\chi) \right\rangle_{\Gamma^m}
+ \left(\beta\,A^m - \spont\right) \left\langle \,\vec\kappa^m_{\Gamma^m} ,
  (\nabs\,\vec\chi)^\transT \,\vec\nu^m \right\rangle_{\Gamma^m}^h
\nonumber \\ & \hspace{1cm}
- \left\langle \tfrac12\left|\vec\kappa^m_{\Gamma^m} -
\spont\,\vec\nu^m\right|^2
 -\vec\kappa^m_{\Gamma^m}\,.\left(\vec Y^m_{\Gamma^m} -
\beta\,A^m\,\vec\nu^m\right), \nabs\,.\,\vec\chi \right\rangle_{\Gamma^m}^h
\nonumber \\ & \hspace{6.5cm}
\quad\forall\ \vec\chi \in \Vhm\,, \label{eq:GDadea}
\\
&\vec\kappa^{m+1} = \vec Y^{m+1} - \left(\beta\, A^m - \spont\right)
\vec\omega^m \,, \label{eq:GDadeb} \\
& \left\langle \vec\kappa^{m+1}, \vec\eta \right\rangle_{\Gamma^m}^h
+ \left\langle \nabs\,\vec X^{m+1}, \nabs\,\vec\eta
\right\rangle_{\Gamma^m} = 0
 \quad\forall\ \vec\eta \in \Vhm \label{eq:GDadec} 
\end{align}
\end{subequations}
and set $\Gamma^{m+1} = \vec X^{m+1}(\Gamma^m)$, 
$A^{m+1} = \left\langle \vec\kappa^{m+1}, \vec\nu^{m}
\right\rangle_{\Gamma^{m}} - M_0$,
$\vec\kappa^{m+1}_{\Gamma^{m+1}}= \vec\kappa^{m+1}\circ (\vec X^{m+1})^{-1}
\in\Vhmp$
and 
$\vec Y^{m+1}_{\Gamma^{m+1}}= \vec Y^{m+1} \circ (\vec X^{m+1})^{-1}
\in\Vhmp$. Of course, $\vec\kappa^{m+1}$ 
can be eliminated from (\ref{eq:GDade}).

\begin{thm}\label{thm:GDade}
Let $\vec\kappa^m_{\Gamma^m},\,\vec Y^m_{\Gamma^m} \in \Vhm$ and $A^m \in \bR$.
Then there exists a unique solution
$(\vec X^{m+1},\vec\kappa^{m+1}, \vec Y^{m+1}) \in \Vhm\times \Vhm \times \Vhm$
to {\rm (\ref{eq:GDade})}. 
\end{thm}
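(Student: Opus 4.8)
The plan is to follow the template established for the closely related uniqueness results, in particular the proofs of Theorem~\ref{thm:uniqueSD} and Theorem~\ref{thm:FDBGNWill}. Since the quantities $\vec\kappa^m_{\Gamma^m}$, $\vec Y^m_{\Gamma^m}$, $A^m$, $\spont$, $\beta$, together with $\vec\nu^m$ and $\vec\omega^m$, are all fixed data, the system (\ref{eq:GDade}) is linear in the unknowns $(\vec X^{m+1},\vec\kappa^{m+1},\vec Y^{m+1}) \in \Vhm\times\Vhm\times\Vhm$. As the number of unknowns equals the number of equations, it suffices to establish uniqueness, which then yields existence.

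First I would pass to the homogeneous system, obtained by discarding every term that does not carry one of the unknowns (so $\vec\id$ and the data term $(\beta\,A^m-\spont)\,\vec\omega^m$ are dropped): find $(\vec X,\vec\kappa,\vec Y)\in\Vhm\times\Vhm\times\Vhm$ with
\begin{align*}
\left\langle \tfrac1{\ttau_m}\,\vec X, \vec\chi \right\rangle^h_{\Gamma^m}
&= \left\langle \nabs\,\vec Y, \nabs\,\vec\chi \right\rangle_{\Gamma^m}
\quad\forall\ \vec\chi \in \Vhm\,, \\
\vec\kappa &= \vec Y\,, \\
\left\langle \vec\kappa, \vec\eta \right\rangle^h_{\Gamma^m}
+ \left\langle \nabs\,\vec X, \nabs\,\vec\eta \right\rangle_{\Gamma^m}
&= 0 \quad\forall\ \vec\eta \in \Vhm\,.
\end{align*}
Next I would extract the energy identity. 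Choosing $\vec\chi = \vec Y$ in the first equation and $\vec\eta = \vec X$ in the third, and using $\vec\kappa = \vec Y$, gives $\tfrac1{\ttau_m}\left\langle \vec X,\vec Y\right\rangle^h_{\Gamma^m} = |\nabs\,\vec Y|^2_{\Gamma^m}$ and $\left\langle \vec Y,\vec X\right\rangle^h_{\Gamma^m} = -|\nabs\,\vec X|^2_{\Gamma^m}$. Eliminating the cross term yields
\begin{equation*}
\ttau_m\,|\nabs\,\vec Y|^2_{\Gamma^m} + |\nabs\,\vec X|^2_{\Gamma^m} = 0\,,
\end{equation*}
so that $\nabs\,\vec X = \nabs\,\vec Y = \vec 0$ on $\Gamma^m$, i.e.\ $\vec X = \vec X^c$ and $\vec Y = \vec Y^c = \vec\kappa$ are constant vectors in $\bR^d$.

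Finally I would close the argument using only the positive definiteness of the mass-lumped inner product. Inserting $\vec X = \vec X^c$ and $\vec\kappa = \vec Y^c$ into the third equation leaves $\left\langle \vec Y^c,\vec\eta\right\rangle^h_{\Gamma^m} = 0$ for all $\vec\eta\in\Vhm$; testing with $\vec\eta = \vec Y^c$ and recalling (\ref{eq:ip0norm}) gives $\vec Y^c = \vec 0$, hence $\vec Y = \vec\kappa = \vec 0$. Then the first equation reduces to $\left\langle \vec X^c,\vec\chi\right\rangle^h_{\Gamma^m} = 0$ for all $\vec\chi$, and testing with $\vec\chi = \vec X^c$ gives $\vec X^c = \vec 0$. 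This proves uniqueness and hence the theorem.

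I expect the only genuine point to watch is this last step. In the structurally similar Theorem~\ref{thm:FDBGNWill} the velocity and curvature are coupled to the geometry only through the scalar contractions $\vec X\,.\,\vec\omega^m$ and $\vec Y\,.\,\vec\omega^m$, so that $\vec X^c$ and $\vec Y^c$ are a priori controlled only in the $\vec\omega^m$-direction and Assumption~\ref{ass:A}\ref{item:assA1} is needed to conclude. Here, by contrast, the test functions $\vec\chi,\vec\eta$ in (\ref{eq:GDadea}) and (\ref{eq:GDadec}) are full vector fields, so the mass-lumped norm on $\Vhm$ acts directly and no spanning hypothesis on the vertex normals is required --- which is precisely why Theorem~\ref{thm:GDade} is stated without Assumption~\ref{ass:A}.
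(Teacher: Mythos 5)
Your proposal is correct and follows exactly the route the paper intends: its proof of Theorem~\ref{thm:GDade} consists of the single remark that one adapts the proof of Theorem~\ref{thm:FDBGNWill}, and your argument is precisely that adaptation (linearity, uniqueness implies existence, homogeneous system, energy identity, constants forced to vanish). Your closing observation is also the right one --- because \eqref{eq:GDadea} and \eqref{eq:GDadec} test with full vector fields rather than the contractions $\vec X\,.\,\vec\omega^m$ and $\kappa\,\vec\nu^m$, the mass-lumped norm kills the constants directly, which is exactly why the statement of Theorem~\ref{thm:GDade}, unlike Theorem~\ref{thm:FDBGNWill}, carries no Assumption~\ref{ass:A}.
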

\begin{proof}
It is a simple matter to adapt the proof of Theorem \ref{thm:FDBGNWill}.
\end{proof}

\begin{rem}[Discrete linear systems] \label{rem:GDsolve}
Similarly to {\rm Remark~\ref{rem:WFsolve}}, the 
linear systems of equations to be solved at each time level for
\eqref{eq:GDade} can be formulated as follows.
Find $(\vec Y^{m+1},\vec\kappa^{m+1},\delta\vec X^{m+1})
\in (\bR^d)^K\times(\bR^d)^K\times (\bR^d)^K$ such that
\begin{equation*}
\begin{pmatrix}
 \mat A_{\Gamma^m} & 0 & -\frac1{\ttau_m}\,\mat M_{\Gamma^m} \\
-\mat M_{\Gamma^m} & \mat M_{\Gamma^m} & 0 \\
0 & \mat M_{\Gamma^m} & \mat A_{\Gamma^m}
\end{pmatrix}
\begin{pmatrix} \vec Y^{m+1} \\ \vec\kappa^{m+1} \\  \delta\vec X^{m+1} 
\end{pmatrix}
=
\begin{pmatrix} \vec {\mathfrak g}_{\Gamma^m} \\ 
- \left(\beta\, A^m - \spont\right)\,\mat M_{\Gamma^m}\,\vec\omega^m
 \\ -\mat A_{\Gamma^m}\,\vec X^m \end{pmatrix} \,,
\end{equation*}
and this system can be efficiently
solved with a sparse direct solution method like UMFPACK, see 
{\rm \cite{Davis04}}.
\end{rem}

Similarly to Remark \ref{rem:GDWill}\ref{item:GDWillii}, 
the semidiscrete scheme (\ref{eq:Dzasymdisc}) and its fully discrete
version (\ref{eq:GDade}) do not have good mesh properties.
In order to obtain a scheme with good mesh properties, we extend
the semidiscrete approximation
(\ref{eq:weak2ah}), (\ref{eq:kappaynuh}) and (\ref{eq:weakcurv12h}) to take into
account spontaneous curvature and ADE effects.
First, we extend the weak formulation
(\ref{eq:weak2a}), (\ref{eq:kappaynu}) and (\ref{eq:weakcurv12})
to include spontaneous curvature and ADE effects. 
In order to do so, we extend the Lagrangian (\ref{eq:Lag2}) to
\begin{align}
L_{\spont,\beta}(\Gamma(t), \varkappa^\star, \vec y) &=
\tfrac12 \left\langle \varkappa^\star - \spont, 
\varkappa^\star - \spont\right\rangle_{\Gamma(t)} 
+ \tfrac12\,\beta \left( \left\langle \varkappa^\star , 1 
\right\rangle_{\Gamma(t)} - M_0 \right)^2 \nonumber \\ & \hspace{2cm}
- \left\langle \varkappa^\star \,\vec\nu, \vec y \right\rangle_{\Gamma(t)} - 
\left\langle \nabs\,\vec\id, \nabs\,\vec y \right\rangle_{\Gamma(t)} . 
\label{eq:Lag33}
\end{align}
Once again, setting the variation 
$\left[\deldel{\vec y}\,L_{\spont,\beta}\right](\vec\eta)=0$, yields
(\ref{eq:weakcurv12}) with $\varkappa$ replaced by $\varkappa^\star$,
and so $\varkappa^\star=\varkappa$.
Setting $\left[\deldel{\varkappa^\star}\,L_{\spont,\beta}\right](\xi)=0$, 
yields, on noting $\varkappa^\star=\varkappa$ and \eqref{eq:At}, that 
\begin{equation} \label{eq:def3A}
\varkappa - \spont = \vec y\,.\,\vec\nu - \beta\,A(t)
\qquad\mbox{on } \Gamma(t) \,.
\end{equation}
Finally, on setting the variation $\left[\deldel{\Gamma}\,L_{\spont,\beta}\right](\vec\chi) =
-\left\langle \vec {\mathcal V}\,.\,\vec\nu, \vec\chi\,.\,\vec\nu 
\right\rangle_{\Gamma(t)}$
and noting (\ref{eq:transeq}), (\ref{eq:denu}), (\ref{eq:dedivdisc})
and that $\varkappa^\star= \varkappa$, we obtain
\begin{align}
\left\langle \vec{\mathcal{V}}\,.\,\vec\nu, \vec\chi\,.\,\vec\nu 
\right\rangle_{\Gamma(t)} &= 
\left\langle \nabs\,\vec y, \nabs\,\vec\chi 
-2\,\mat{D}_s(\vec\chi) \right\rangle_{\Gamma(t)} 
+ \left\langle \nabs\,.\,\vec y, \nabs\,.\,\vec\chi
  \right\rangle_{\Gamma(t)}
\nonumber \\  
& \qquad  
- \left\langle \tfrac12\,( 
\varkappa -\spont)^2 - \varkappa\left(\vec y\,.\,\vec\nu-\beta\,A(t)\right), 
\nabs\,.\,\vec\chi \right\rangle_{\Gamma(t)} \nonumber \\
& \qquad -  \left\langle \varkappa \,\vec y, 
(\nabs\,\vec\chi)^\transT \,\vec\nu \right\rangle_{\Gamma(t)}
\quad\forall\ \vec\chi \in \Vt \,.
\label{eq:LM3a}
\end{align}

Therefore we have the following weak formulation. 
Given a closed hypersurface $\Gamma(0)$, 
we seek an evolving hypersurface $(\Gamma(t))_{t\in[0,T]}$, 
with a global parameterization and induced velocity field
$\vec{\mathcal{V}}$, $\varkappa \in L^2(\GT)$
and $\vec y \in [L^2(\GT)]^d$ as follows. 
For almost all $t \in (0,T)$, find
$(\vec {\mathcal V}(\cdot,t),\varkappa(\cdot,t), \vec y(\cdot,t))
\in [L^2(\Gamma(t))]^d\times L^2(\Gamma(t)) \times \Vt$ 
such that (\ref{eq:LM3a}), (\ref{eq:def3A}) and (\ref{eq:weakcurv12}) hold.

Once again, a remark equivalent to Remark \ref{rem:WWF} for this
weak formulation holds. In addition, 
one can extend   
the semidiscrete finite element approximation 
(\ref{eq:weak2ah}), (\ref{eq:kappaynuh}) and
(\ref{eq:weakcurv12h}) to approximate the weak formulation
(\ref{eq:LM3a}), (\ref{eq:def3A}) and (\ref{eq:weakcurv12}).
Moreover, one can extend Remark \ref{rem:WWFh} and Theorem \ref{thm:WWFh}
to this approximation. Furthermore, one can also incorporate    
volume and surface area constraints and still prove stability
of the approximation.

In \cite{pwfade}, we considered an extension of the semidiscrete
approximation (\ref{eq:weak2ah}), (\ref{eq:kappaynuh}) and 
(\ref{eq:weakcurv12h}),
and its extension to incorporate spontaneous curvature and ADE effects, 
which possibly reduces the tangential motion. This is based on the Lagrangian
\begin{equation}
\widehat
L^h_{\spont,\beta,\theta}(\Gamma^h(t), \vec\kappa^h, \vec Y^h) =
E^h_{\spont,\beta}(\Gamma^h(t))
- \left\langle \mat Q^h_\theta \,\vec\kappa^h, \vec Y^h \right\rangle^h_{\Gamma^h(t)} 
- \left\langle \nabs\,\vec\id,
\nabs\,\vec Y^h \right\rangle_{\Gamma^h(t)} , \label{eq:Lag2th}
\end{equation}
where $\vec Y^h(\cdot,t) \in \Vht$ is the Lagrange
multiplier associated with the constraint $\vec\kappa^h(\cdot,t) \in \Vht$ 
satisfying
\begin{equation}\label{eq:weakcurv12th}
\left\langle\mat Q^h_\theta\,\vec\kappa^h,\vec\eta\right\rangle^h_{\Gamma^h(t)}
+ \left\langle\nabs\,\vec\id, \nabs\,\vec\eta\right\rangle_{\Gamma^h(t)} = 0
\quad\forall\ \vec\eta\in \Vht\,.
\end{equation}
Here, $\mat Q^h_\theta$, for a given $\theta \in [0,1]$, 
is the semidiscrete version of (\ref{eq:Qm}), where $\vec\omega^m$
is replaced by $\vec\omega^h$, 
and where we assume that Assumption~\ref{ass:A}\ref{item:assA2} holds. 

Setting $\left[\deldel{\vec Y^h}\,\widehat 
L^h_{\spont,\beta,\theta}\right](\vec\eta)=0$,
for all $\vec\eta \in \Vht$, yields (\ref{eq:weakcurv12th}).
Setting 
\arxivyesno{$\left[\deldel{\kappa^h}\,\widehat 
L^h_{\spont,\beta,\theta}\right](\vec\xi)$ $=0$,}
{$\left[\deldel{\kappa^h}\,\widehat 
L^h_{\spont,\beta,\theta}\right](\vec\xi)=0$,}
for all $\vec\xi \in \Vht$, yields that  
$\vec\kappa^h + (\beta\,A^h(t)-\spont)\,\vec\omega^h
= \vec\pi_{\Gamma^h} \left[\mat Q^h_\theta\,\vec Y^h\right]$,
where $A^h(t)$ is given by (\ref{eq:Dzasymdiscd}).
Finally, we set 
$\left[\deldel{\Gamma^h}\,\widehat
L^h_{\spont,\beta,\theta}\right](\vec\chi) =
-\langle \mat Q^h_\theta\,\vec {\mathcal V}^h, \vec\chi \rangle^h_{\Gamma(t)}$,
for all $\vec\chi \in \Vht$.
If $\theta=1$, the resulting scheme collapses to the semidiscrete Dziuk scheme 
(\ref{eq:Dzasymdisc}). 
If $\theta =0$, this scheme collapses to a variant of 
(\ref{eq:weak2ah}), (\ref{eq:kappaynuh}) and (\ref{eq:weakcurv12h}),
which takes spontaneous curvature and ADE effects into account,  
and this scheme still satisfies 
{\rm Theorem~\ref{thm:SDSD}\ref{item:SDSDTM}}.
For any $\theta \in (0,1)$, the scheme interpolates between these two extremes.
Moreover, this scheme, for any given $\theta \in [0,1]$,
satisfies a stability bound, a generalization of Theorem \ref{thm:Dzasymdisc};
see \citet[Theorem 3.3]{pwfade} for details.        
In addition, this stability bound holds for a variant 
involving volume and surface area constraints.

\begin{rem}[Surfaces with boundary] \label{rem:WFboundaries}
For an evolving surface with a 
boun\-dary, the result of 
{\rm Lemma~\ref{lem:Willmorevargen}} can be generalized as follows, where we
recall the shorthand notation $\mathcal{V} = f$ for \eqref{eq:WFgen}.
On noting {\rm Theorem~\ref{thm:trans}},
{\rm Lemma~\ref{lem:derkappa}\ref{item:lem10.3ii}} and   
{\rm Theorem~\ref{thm:div}}, it holds that 
\begin{align}
& \ddt\,E_{\spont,\beta}(\Gamma(t)) 
= - \left\langle f, \mathcal{V} \right\rangle_{\Gamma(t)}
+ \int_{\partial\Gamma(t)} \left( \tfrac12\,(\varkappa-\spont)^2 +
\beta\,A(t)\,\varkappa\right) \vec{\mathcal{V}}\,.\,\conormal\dH{d-2}
\nonumber \\ & \qquad
- \int_{\partial\Gamma(t)} \mathcal{V}\,\conormal\,.\,\nabs\,\varkappa\dH{d-2} 
+ \int_{\partial\Gamma(t)} \left(\varkappa-\spont+
\beta\,A(t)\right) \conormal\,.\,\nabs\,\mathcal{V}\dH{d-2}\,,
\label{eq:WFbc}
\end{align}
where $\vec{\mathcal{V}}$ is the velocity field induced by a global
parameterization of the evolving hypersurface, and 
$\conormal(t)$ denotes the outer unit conormal on $\partial\Gamma(t)$.
Hence, in order to ensure that \eqref{eq:WFgen} is still the $L^2$--gradient
flow of $E_{\spont,\beta}(\Gamma(t))$, conditions need to be prescribed at the
boundary so that the boundary terms in \eqref{eq:WFbc} vanish.
In particular, the following boundary conditions may be considered.
In the simplest situation the boundary is kept fixed, i.e.\ 
$\partial\Gamma(t) = \partial\Gamma(0)$ for all $t \in (0,T]$, 
which is the same as \eqref{eq:fixedbc}. 
This leads to the first two boundary terms vanishing in \eqref{eq:WFbc}.
The third boundary term vanishes if either 
$\varkappa-\spont + \beta\,A(t)=0$ or 
$\conormal\,.\,\nabs\,\mathcal{V}=0$ on $\partial\Gamma(t)$.
Setting $\partial\Gamma(t) = \partial\Gamma(0)$ and 
$\varkappa-\spont + \beta\,A(t)=0$ on $\partial\Gamma(t)$
for all $t \in (0,T]$ are called Navier boundary conditions.
Whereas, for a given $\vec\zeta\in C(\partial\Gamma(0),\bS^{d-1})$, 
setting $\partial\Gamma(t) = \partial\Gamma(0)$ and 
$\conormal(t) = \vec\zeta$ on $\partial\Gamma(t)$
for all $t \in (0,T]$ are called clamped boundary conditions.
It is a simple matter to show that clamped boundary conditions lead to
$\conormal\,.\,\nabs\,\mathcal{V}=0$ on $\partial\Gamma(t)$, 
and hence to all three boundary terms vanishing in \eqref{eq:WFbc}.
 
A third type of possible boundary conditions are called free boundary
conditions. Here the boundary can move freely and the three boundary terms
in \eqref{eq:WFbc} vanish on imposing
\[
\tfrac12\left(\varkappa-\spont\right)^2 +
\beta\,A(t)\,\varkappa=0\,, \quad
\conormal\,.\,\nabs\,\varkappa =0\,, \quad 
\varkappa-\spont + \beta\,A(t)=0 \quad\mbox{on }\partial\Gamma(t)\,.
\]
We refer to {\rm \cite{pwf}} for more details in the case $d=2$, 
and to {\rm \cite{pwfopen}} for more details in the case $d=3$.
The discussion in the latter article easily generalizes to $d>3$.
Moreover, in {\rm \cite{pwfopen}} the present authors 
also included a boundary energy, 
$\varsigma\,\mathcal{H}^{d-2}(\partial\Gamma(t))$ for $\varsigma\in\bRgeq$,  
which is called line energy in the case $d=3$,
and Gaussian curvature effects.
Here the inclusion of the Gaussian curvature effects 
is achieved via the Gauss--Bonnet theorem, 
recall {\rm Theorem~\ref{thm:GB}}.

In the case of Navier boundary conditions, one can easily adapt the 
finite element approximation \eqref{eq:Willcons}, \eqref{eq:Willb}
by replacing $\Vhm$ in \eqref{eq:Willb} by $\VhmD$, recall \eqref{eq:VhmD},
by replacing $\Whm$ in \eqref{eq:Willcons} by $\WhmD$, where
$\VhmD = [\WhmD]^d$, and
by seeking 
\arxivyesno{$(\vec X^{m+1},$ $\kappa^{m+1}) \in \Vhm \times \Whm$}
{$(\vec X^{m+1},\kappa^{m+1}) \in \Vhm \times \Whm$}
such that $\vec X^{m+1} - \vec\id_{\mid_{\Gamma^m}} \in \VhmD$ 
and $\kappa^{m+1} - (\spont -\beta\,A^m) \in \WhmD$.
For clamped and free boundary conditions this finite element approximation 
cannot be adapted.
However, the approaches based on 
\eqref{eq:Lag33} and \eqref{eq:Lag2th} can be adapted to all three
types of boundary conditions. 
For the latter approach, we once again refer to {\rm \cite{pwf,pwfopen}} 
for more details, including stability results for semidiscrete discretizations.
Furthermore, in {\rm \cite{pwfc0c1}} we extended this approach to approximate
gradient flows for two-phase biomembranes, where instead of boundary 
conditions certain matching conditions need to hold across interfaces between
different phases on the hypersurface. 
Finally, a related problem is the evolution of curve networks under elastic
flow, where two or more curves meet at junction points. This problem has been
considered by the authors in {\rm \cite{pwftj}}.
\end{rem}

\subsection{Alternative numerical approaches}
\label{subsec:alternativeWF}
We note that \cite{Rusu05} first introduced a mixed variational form
for Willmore flow, based on position and mean curvature vectors, 
which allowed for the approximation by continuous piecewise
linear finite elements. This approach was extended to surfaces with 
boundaries and applied to surface restoration problems in \cite{ClarenzDDRR04}.  
We refer also to \cite{DeckelnickD06,DeckelnickS10,DeckelnickKS15}, 
where, on assuming a sufficiently
smooth solution, an error analysis is presented
for semidiscrete finite element approximations 
for a graph formulation of Willmore flow.  

Other parametric numerical methods are discussed in 
\cite{DziukKS02,MayerS02,BobenkoS05,BonitoNP10,ElliottS10,ElliottS13,%
BartelsDNR12,BartezzaghiDQ19}.
In addition, a numerical method based on a level set approach has been
studied in \cite{DroskeR04}, while phase field type approaches are discussed 
in \cite{DuLRW05,EsedogluRR14,BretinMO15}.

\section{Biomembranes} \label{sec:bio}

In this section we will discuss how the first variation
of curvature energies discussed in Section~\ref{sec:willmore_flow} 
appear in evolution  laws involving also bulk
quantities. This is relevant for the evolution of vesicles and biomembranes,
where the curvature energy interacts with a fluid surrounding the membrane.
Biomembranes are lipid bilayers and they form bag-like structures containing
fluid and they are surrounded by a possibly different fluid. Membranes appear
in a multitude of biological systems and a proper understanding of the form
and the evolution of biomembranes is of major interest in the life sciences.
As an example, we mention that the biconcave shapes of red blood cells
appear as stationary states of the curvature energies discussed in 
Section~\ref{sec:willmore_flow}.

\subsection{A model for the dynamics of fluidic biomembranes}
\label{subsec:1.10.1}

We will introduce a model, which is a variant of a model introduced by
\cite{ArroyoS09}, that couples the first variation of a curvature energy
to the (Navier--)Stokes equations in the bulk and on the surface.
To be more precise a bulk (Navier--)Stokes system is coupled to
a tangential (Navier--)Stokes system
on the membrane. The tangential (Navier--)Stokes system also includes
a surface incompressibility condition, which will lead to good mesh 
properties on the discrete level even
in situations where the fluid velocity leads to a high deformation of the
surface.  

A generalized elastic energy for a biomembrane is given by
\begin{equation}\label{eq:CEHEn}
\int_\Gamma \tfrac12\, \alpha(\varkappa-\spont)^2 + \alpha^G\, \Gauss\dH{d-1}
+  \tfrac12\,\alpha\,\beta 
\left( \left\langle \varkappa,1 \right\rangle_{\Gamma} -M_0 \right)^2
\end{equation}
for a compact hypersurface $\Gamma\subset\bR^d$ without boundary,
$d\geq2$. 
Here $\alpha\in\bRplus$ and $\alpha^G\in\bR$ are the bending and Gaussian 
bending rigidities. As before, $\spont$ is the spontaneous
curvature, which arises for example from local inhomogeneities within
the membrane and the parameters $\beta\in\bRgeq$, $M_0\in \bR$ relate
the area difference elasticity (ADE) model, recall \S\ref{subsec:WFspont}.
Moreover, $\Gauss$ is the Gaussian curvature. It is discussed
in \cite{Nitsche93} that the most general form of a curvature energy
of the form $\int_\Gamma q(\varkappa_1,\ldots,\varkappa_{d-1}) \dH{d-1}$,
with $q$ being at most quadratic in the principal curvatures 
and invariant under permutations of its arguments, 
has the form $q(\varkappa_1,\ldots,\varkappa_{d-1}) = \frac12\,
\alpha\,\varkappa^2+\alpha^G\,\Gauss + \alpha_1\,\varkappa + \alpha_2$,
which leads to the first term in \eqref{eq:CEHEn} by choosing
$\alpha_1=-\alpha\,\spont$ and $\alpha_2=\frac12\,\alpha\,\spont^2$. 
If $\alpha^G$ is constant we obtain
from the Gauss--Bonnet theorem, recall Theorem~\ref{thm:GB}, that, 
as the surface has no boundary, the contribution 
$\int_\Gamma \alpha^G\, \Gauss\dH{d-1}$ is
constant in a fixed topological class, and hence will always disappear
in a first variation. For now, we will hence set $\alpha^G=0$ and only
if one considers inhomogeneous membranes, or open membranes with boundary, 
one has to consider this term, see 
\cite{ElliottS10,pwfopen,nsns2phase,pwfc0c1}.
Hence, overall, we consider from now on
$\alpha\,E_{\spont,\beta}(\Gamma(t))$ as the elastic energy for an 
evolving biomembrane, where $E_{\spont,\beta}(\Gamma(t))$ is as defined in
\eqref{eq:ADESCa}.

We adopt the notation in \S\ref{subsec:evolve}, and assume that 
two fluids occupy regions $\Omega_-(t)$ and
$\Omega_+(t) = \Omega\setminus \overline{\Omega_-(t)}$
with $\Gamma(t)=\partial\Omega_-(t)$,
and with $\vec\nu$ denoting the outer unit normal to $\Omega_-(t)$
on $\Gamma(t)$, recall Figure~\ref{fig:sketch} in \S\ref{subsec:evolve}.
Here $\Omega \subset \bR^d$ is a fixed domain, with $d\geq2$.
In these two fluid regions we require, as in Section~\ref{sec:tpf},
the incompressible Stokes equations with a no-slip boundary condition on 
$\partial\Omega$, i.e.\
\begin{equation}\label{eq:NSNS1}
 - \nabla\,.\,\mat\sigma = \vec 0\,,\quad\nabla\,.\,\vec u = 0
  \qquad\text{in } \Omega_\pm(t),\qquad
  \vec u = \vec 0 \quad\mbox{on }\partial\Omega
\end{equation}
with the stress tensor $\mat\sigma$ as defined in \eqref{eq:sigma}. 
Once again, we refer to Remark~\ref{rem:NSHG}\ref{item:NSHGbc} for more general
boundary conditions.

We remark that for 
biomembranes scales are typically such that the Stokes approximation of
the Navier--Stokes equations is a very good approximation, see
\cite{nsnspre}. Therefore, we will consider the Stokes system in the
bulk in what follows and refer to \cite{nsns,nsnsade} and
Section~\ref{sec:tpf} for a discussion on how to generalize the
following considerations to Navier--Stokes flow. 

We now follow and generalize an approach of \cite{ArroyoS09}, who
used the theory of interfacial fluid dynamics, which goes back to
\cite{Scriven60}, to formulate evolution laws that take the fluidic
behaviour of vesicles and biomembranes into account. In fact, these
lipid bilayer membranes can be described as a two-dimensional surface,
where the lipid molecules have a fluid-like behaviour in the tangential
direction. However, elastic forces stemming from the elastic bending
energy act in normal direction.
For more details on the modelling and the analysis of fluidic interfaces
we refer to \cite{SlatterySO07,BotheP10}. For our purposes it suffices to 
mention that in the absence of mass transfer to/from the interface from/to the
bulk, and on assuming a no-slip condition between the outer and inner fluid 
at the interface, it is natural to assume that the bulk velocity is 
continuous
across the interface and that the interface is moved with the bulk velocity.
We refer to \citet[p.\ 1830]{bssf} for more details.

Overall the following conditions need to hold on the free surface $\Gamma(t)$:
\begin{subequations} \label{eq:bio}
\begin{alignat}{2}
\left[\vec u\right]_-^+ & = \vec 0 \qquad &&\mbox{on } \Gamma(t)\,, 
\label{eq:bio1} \\ 
\rho_\Gamma \, \matpartu\,\vec u - \nabs\,.\,\mat\sigma_\Gamma & =
\left[\mat\sigma\,\vec\nu\right]_-^+ + \alpha\, \vec f_\Gamma
\qquad &&\mbox{on } \Gamma(t)\,, \label{eq:bio2} \\ 
\nabs\,.\,\vec u & = 0
\qquad &&\mbox{on } \Gamma(t)\,, \label{eq:bio3} \\ 
\mathcal{V} &= \vec u\,.\,\vec\nu \qquad &&\mbox{on } \Gamma(t)\,, 
\label{eq:bio4} 
\end{alignat}
\end{subequations}
where $\rho_\Gamma \in \bRgeq$
denotes the surface material density,
$\alpha \in \bRplus$ is the bending rigidity and 
$\vec f_\Gamma = f_\Gamma\,\vec\nu$, with
$f_\Gamma$ denoting minus the first variation of the bending
energy $E_{\spont,\beta}$, see \eqref{eq:ADESCa}, i.e.\
\begin{equation}
f_\Gamma = -\Delta_s\,\varkappa - (\varkappa-\spont)|\nabs\,\vec\nu|^2
+ \tfrac12 (\varkappa-\spont)^2\,\varkappa 
- \beta\,A(t)\left(|\nabs\,\vec\nu|^2-\varkappa^2\right)\,,
\label{eq:fWF}
\end{equation}
where $\varkappa$ satisfies Lemma~\ref{lem:varkappa}\ref{item:vecvarkappa}
and $A(t) = \langle\varkappa,1\rangle_{\Gamma(t)} - M_0$, recall 
Lemma \ref{lem:Willmorevargen}. 
Moreover,
\begin{equation}\label{eq:materialder}
\matpartu\,f = \partial_t\,f + \vec u\,.\,\nabla\,f
\end{equation}
is the material time derivative with respect to
the fluid velocity $\vec u$. This definition is related to
Remark~\ref{rem:timeder}\ref{item:rem2.19ii}, where the parameterizations 
$\vec x(\cdot,t)$ here are defined such that
\begin{equation} \label{eq:vecuvecx}
\vec u(\vec x(\vec q,t),t) = (\partial_t\,\vec x)(\vec q,t) \qquad
\forall\, (\vec q,t)\in \Upsilon \times [0,T]\,,
\end{equation}
as opposed to Definition \ref{def:globalx}\ref{item:vecV}, 
so that $\vec x(\vec q,t)$ really is the trajectory of a material
point. Equation~\eqref{eq:bio3} is conservation of mass of the interface, 
similarly to standard incompressible (Navier--)Stokes in the bulk, recall 
\eqref{eq:NSNS1}. In particular, it implies that the membrane is
locally incompressible.
This can be seen by considering 
a surface patch $\sigma(t) \subset \Gamma(t)$ that is transported with the 
fluid velocity $\vec u$, i.e.\ $\sigma(t) = \vec x(\sigma_\Upsilon,t)$
with $\sigma_\Upsilon \subset \Upsilon$ and 
$\vec x$ satisfying \eqref{eq:vecuvecx}. Then it follows
with the help of the transport theorem, Theorem~\ref{thm:trans}, that
\[
\ddt\,\mathcal{H}^{d-1}(\sigma(t)) = 
\int_{\sigma(t)} \nabs\,.\,\vec{\mathcal{V}} \dH{d-1} =
\int_{\sigma(t)} \nabs\,.\,\vec u \dH{d-1} = 0\,,
\]
see also \cite{ArroyoS09}.
The balance of momentum on the surface implies \eqref{eq:bio2}, see
\cite{ArroyoS09,KohneL18,Lengeler18}. Here the surface stress tensor
is defined by
\begin{equation}
\mat\sigma_\Gamma=2\,\mu_\Gamma\,\mat D_s(\vec u)-p_\Gamma\,
\mat P_\Gamma \qquad\text{on } \Gamma(t)\,,
\label{eq:surfstress}
\end{equation}
where $p_\Gamma$ is the surface pressure, $\mu_\Gamma \in \bRplus$ 
is the surface shear viscosity, $\mat P_\Gamma$ is the projection onto the 
tangent space and $\mat D_s(\vec u)$ is the surface rate of deformation tensor, 
recall {\rm Definition~\ref{def:2.5}\ref{item:def2.5vi}}.
The tensor $\mat D_s(\vec u)$ is the relevant tensor for
measuring the rate of change of lengths and angles through the metric
tensor, cf.\ Lemma~\ref{lem:dtgij}.

Finally, the term $\left[\mat\sigma\,\vec\nu\right]^+_-$, 
recall \eqref{eq:fjump} and its generalization to vector-valued quantities
in Remark~\ref{rem:dnufjump},
is the force exerted by the bulk on the surface. The total bending energy
considered for now is given by $\alpha\, E_{\spont,\beta}(\Gamma)$
with $E_{\spont,\beta}$, the dimensionless energy defined in (\ref{eq:ADESCa}), 
and $\alpha$, having the dimension of energy, is the bending rigidity. As in
\S\ref{subsec:stokes}, \eqref{eq:bio4} states that the
interface evolves with the normal component of the fluid velocity.

The system \eqref{eq:NSNS1}, \eqref{eq:bio} is closed with an initial 
condition for $\Gamma(0)$ and
\begin{equation*}
\rho_\Gamma\,\vec u(\cdot,0) = \rho_\Gamma\,\vec u_0 \qquad\text{on } 
\Gamma(0)\,,
\end{equation*} 
where for simplicity we assume that $\vec u_0 : \Omega \to \bR^d$ is 
a given initial velocity.
 
\begin{lem} \label{lem:NSNSweak}
Let $(\GT,\vec u, p)$ be a sufficiently smooth solution of 
\eqref{eq:NSNS1}, \eqref{eq:bio} and \eqref{eq:fWF}.
\begin{enumerate}
\item \label{item:nsnsi}
It holds that
\[\ddt\, |\Gamma(t)| = 0\,.\]
\item \label{item:nsnsii}
It holds that
\[\ddt\, \mathcal{L}^d(\Omega_-(t)) = 0\,.\]
\item \label{item:nsnsiii}
It holds that
\begin{align*}
&\ddt \left [ \alpha\,E_{\spont,\beta}(\Gamma(t)) +
\tfrac12\,\rho_\Gamma\left|\vec u\right|^2_{\Gamma(t)}\right ]
\\ & \hspace{2cm}
= -2\,\mu_\Gamma 
\left\langle\mat D_s(\vec u),\mat D_s(\vec u)\right\rangle_{\Gamma(t)}
-2\left\langle \mu \,\mat D(\vec u), \mat D(\vec u)\right\rangle\leq 0\,.
\end{align*}
\end{enumerate}
\end{lem}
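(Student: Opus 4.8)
The plan is to handle the three parts separately, the first two being short applications of the transport theorems and the third being the substantive computation. For (i), I would apply the surface transport theorem, Theorem~\ref{thm:trans}, with $f=1$ to the closed surface $\Gamma(t)$, whose induced full velocity field is the fluid velocity $\vec u$ by virtue of \eqref{eq:vecuvecx}. Since $\matpartx\,1 = 0$ and $\Gamma(t)$ has no boundary, this gives $\ddt\,|\Gamma(t)| = \langle 1, \nabs\,.\,\vec u\rangle_{\Gamma(t)}$, which vanishes by the surface incompressibility condition \eqref{eq:bio3}. For (ii), I would use the volume transport theorem, Theorem~\ref{thm:transvol}, with $f=1$, so that $\ddt\,\mathcal{L}^d(\Omega_-(t)) = \langle 1, \mathcal{V}\rangle_{\Gamma(t)} = \langle 1, \vec u\,.\,\vec\nu\rangle_{\Gamma(t)}$ by the kinematic condition \eqref{eq:bio4}; the divergence theorem in $\bR^d$ together with $\nabla\,.\,\vec u = 0$ in $\Omega_-(t)$ from \eqref{eq:NSNS1} then gives zero.

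For (iii), I would split the energy into its elastic and kinetic parts. For the elastic part, Lemma~\ref{lem:Willmorevargen} yields $\alpha\,\ddt\,E_{\spont,\beta}(\Gamma(t)) = -\alpha\,\langle f_\Gamma, \mathcal{V}\rangle_{\Gamma(t)}$, since $f_\Gamma$ in \eqref{eq:fWF} is exactly minus the integrand appearing in that lemma. For the kinetic part, I would apply Theorem~\ref{thm:trans} to $\tfrac12\,|\vec u|^2$; noting that the parameterization following $\vec u$ makes $\matpartx$ coincide with $\matpartu$ from \eqref{eq:materialder}, and using $\nabs\,.\,\vec u = 0$ to kill the $\nabs\,.\,\vec{\mathcal{V}}$ contribution, this reduces to $\ddt\,\tfrac12\,\rho_\Gamma\,|\vec u|^2_{\Gamma(t)} = \rho_\Gamma\,\langle \matpartu\,\vec u, \vec u\rangle_{\Gamma(t)}$. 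The surface momentum balance \eqref{eq:bio2} then lets me substitute $\rho_\Gamma\,\matpartu\,\vec u = \nabs\,.\,\mat\sigma_\Gamma + [\mat\sigma\,\vec\nu]_-^+ + \alpha\,\vec f_\Gamma$. Testing against $\vec u$, the bending term $\alpha\,\langle\vec f_\Gamma,\vec u\rangle_{\Gamma(t)} = \alpha\,\langle f_\Gamma, \vec u\,.\,\vec\nu\rangle_{\Gamma(t)} = \alpha\,\langle f_\Gamma,\mathcal{V}\rangle_{\Gamma(t)}$ (since $\vec f_\Gamma = f_\Gamma\,\vec\nu$ and $\mathcal{V} = \vec u\,.\,\vec\nu$) exactly cancels the elastic contribution, leaving $\langle\nabs\,.\,\mat\sigma_\Gamma,\vec u\rangle_{\Gamma(t)} + \langle[\mat\sigma\,\vec\nu]_-^+,\vec u\rangle_{\Gamma(t)}$.

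The remaining work is to integrate these two terms by parts. For the surface term, writing $\mat\sigma_\Gamma = 2\,\mu_\Gamma\,\mat D_s(\vec u) - p_\Gamma\,\mat P_\Gamma$, the viscous piece gives $2\,\mu_\Gamma\,\langle\nabs\,.\,\mat D_s(\vec u),\vec u\rangle_{\Gamma(t)} = -2\,\mu_\Gamma\,\langle\mat D_s(\vec u),\mat D_s(\vec u)\rangle_{\Gamma(t)}$ by Remark~\ref{rem:ibp}\ref{item:ibpvec}, the boundary term being absent. The surface pressure piece I would handle with Lemma~\ref{lem:productrules}\ref{item:priv} and Theorem~\ref{thm:div} applied to $p_\Gamma\,\mat P_\Gamma\,\vec u$: the curvature contribution drops since $\mat P_\Gamma\,\vec\nu = \vec 0$, and the resulting surface integral reduces to $\langle p_\Gamma, \mat P_\Gamma : \nabs\,\vec u\rangle_{\Gamma(t)}$, which vanishes because $\mat P_\Gamma : \nabs\,\vec u = \nabs\,\vec\id : \nabs\,\vec u = \nabs\,.\,\vec u = 0$ by Lemma~\ref{lem:nabsid}\ref{item:nabsidnabsf} and \eqref{eq:bio3}. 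For the bulk jump term, I would invoke the generalized divergence formula \eqref{eq:dnufjumpgen} with the admissible test function $\vec\xi = \vec u$, which lies in $[H^1_0(\Omega)]^d$ and is continuous across $\Gamma(t)$ by \eqref{eq:bio1}; since $\nabla\,.\,\mat\sigma = \vec 0$ in the bulk by \eqref{eq:NSNS1} and $\nabla\,.\,\vec u = 0$, this reads $\langle[\mat\sigma\,\vec\nu]_-^+,\vec u\rangle_{\Gamma(t)} = -2\,(\mu\,\mat D(\vec u),\mat D(\vec u))$. Assembling the pieces yields the claimed identity, whose right-hand side is manifestly nonpositive.

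I expect the main obstacle to lie in the careful bookkeeping within (iii): ensuring the bending force cancels cleanly against the elastic energy variation through the identification $\vec f_\Gamma = f_\Gamma\,\vec\nu$ and $\mathcal{V} = \vec u\,.\,\vec\nu$, and then verifying that \emph{both} the surface pressure term (via the surface incompressibility \eqref{eq:bio3}) and the bulk pressure term (via the bulk incompressibility in \eqref{eq:NSNS1}) vanish upon integration by parts, so that only the two dissipation terms survive.
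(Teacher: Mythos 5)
Your proposal is correct and follows essentially the same route as the paper's proof: parts (i) and (ii) via the transport theorems combined with the surface and bulk incompressibility conditions, and part (iii) by cancelling the elastic variation from Lemma~\ref{lem:Willmorevargen} against the bending force, then testing the surface momentum balance \eqref{eq:bio2} with $\vec u$ and integrating by parts the surface viscous stress (Remark~\ref{rem:ibp}\ref{item:ibpvec}), the surface pressure (Theorem~\ref{thm:div} with Lemma~\ref{lem:productrules}\ref{item:priv}) and the bulk stress jump (\eqref{eq:dnufjumpgen}). The only differences are organizational: you work with the material parameterization $\vec{\mathcal{V}}=\vec u$ throughout, which shortcuts the paper's arbitrary-parameterization computations for $\ddt\,|\Gamma(t)|$ and for $\left\langle \matpartu\,\vec u,\vec u\right\rangle_{\Gamma(t)} = \tfrac12\,\ddt \left\langle |\vec u|^2,1\right\rangle_{\Gamma(t)}$, and you test the pointwise momentum balance directly with $\vec u$ rather than first deriving the general weak formulation \eqref{eq:weakGam}.
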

\begin{proof} 
Let $\vec x$ be an arbitrary global parameterization of $\GT$, with induced
velocity field $\vec{\mathcal{V}}$.

\ref{item:nsnsi}
Combining the transport theorem, Theorem~\ref{thm:trans}, 
and the divergence theorem, Theorem~\ref{thm:div},
with the tangential fields $\vec f = \vec{\mathcal{V}}_\subT$
and $\vec f = \mat P_\Gamma\,\vec u$, gives
\begin{align}
\ddt\left\langle 1,1\right\rangle_{\Gamma(t)} & = 
\left\langle 1, \nabs\,.\,\vec{\mathcal{V}} \right\rangle_{\Gamma(t)} =
\left\langle 1, \nabs\,.\left(\vec{\mathcal{V}} - \vec{\mathcal{V}}_\subT
\right) \right\rangle_{\Gamma(t)} =
\left\langle 1, \nabs\,.\left(\mathcal{V}\,\vec\nu\right) 
\right\rangle_{\Gamma(t)} \nonumber \\ &
= \left\langle 1, \nabs\,.\left((\vec u\,.\,\vec\nu)\,\vec\nu\right) 
\right\rangle_{\Gamma(t)} 
= \left\langle 1, \nabs\,.\left(\vec u - \mat P_\Gamma\,\vec u\right) 
\right\rangle_{\Gamma(t)} \nonumber \\ &
= \left\langle 1, \nabs\,.\,\vec u \right\rangle_{\Gamma(t)}  = 0\,,
\label{eq:tancon}
\end{align}
which is the claim.

\ref{item:nsnsii}
As in Remark~\ref{rem:weak}\ref{item:weakii}, the result follows from
the transport theorem, Theorem~\ref{thm:transvol}, and the facts that
$\vec{\mathcal{V}}\,.\,\vec\nu = \vec u\,.\, \vec\nu$ on
$\Gamma(t)$ and $\nabla\,.\,\vec u = 0$ in $\Omega_-(t)$.

\ref{item:nsnsiii}
From Lemma~\ref{lem:Willmorevargen}, (\ref{eq:fWF}) and \eqref{eq:bio4}
we obtain
\begin{align}
& \ddt\,E_{\spont,\beta}(\Gamma(t)) \nonumber \\ & \quad
=
\left\langle\Delta_s\,\varkappa + 
(\varkappa-\spont)\,|\nabs\,\vec\nu|^2-\tfrac12\,
(\varkappa-\spont)^2\,\varkappa 
+ \beta\,A(t) \left(|\nabs\,\vec\nu|^2-\varkappa^2\right), \mathcal{V} 
\right\rangle_{\Gamma(t)} \nonumber\\ & \quad
= - \left\langle
f_\Gamma,\mathcal{V}\right\rangle_{\Gamma(t)}
=- \left\langle f_\Gamma\,\vec\nu, \vec
u\right\rangle_{\Gamma(t)}
= -\left\langle \vec f_\Gamma, \vec
u\right\rangle_{\Gamma(t)}. 
\label{eq:stabtan1}
\end{align}
It remains to analyse the right hand side in \eqref{eq:stabtan1}. To this end,
we first of all note that it follows from
Lemma~\ref{lem:nabsid}\ref{item:nabsid},\ref{item:nabsidnabsf},
Definition~\ref{def:2.5}\ref{item:def2.5tensor}, 
Lemma \ref{lem:productrules}\ref{item:priii},
the divergence theorem on hypersurfaces, i.e.\ Theorem~\ref{thm:div}
together with a density argument, and Remark~\ref{rem:2.6}\ref{item:PGamma}
that
\begin{align}
\left\langle p_\Gamma, \nabs\,.\,\vec\xi \right\rangle_{\Gamma(t)} &
= \left\langle p_\Gamma\,\mat P_\Gamma, \nabs\,\vec\xi\right\rangle_{\Gamma(t)}
\nonumber \\ & 
= \left\langle \nabs\,.\left(
p_\Gamma\,\mat P_\Gamma\,\vec\xi\right), 1 \right\rangle_{\Gamma(t)}
- \left\langle \nabs\,.\left(
p_\Gamma\,\mat P_\Gamma\right), \vec\xi \right\rangle_{\Gamma(t)}
\nonumber \\ & 
= - \left\langle \nabs\,.\left(
p_\Gamma\,\mat P_\Gamma\right), \vec\xi \right\rangle_{\Gamma(t)}
\quad\forall\ \vec\xi \in \Vt\,.
\label{eq:pgammaweak}
\end{align}
Hence it follows from \eqref{eq:pgammaweak}, 
Remark~\ref{rem:ibp}\ref{item:ibpvec}, together with a density argument,
\eqref{eq:surfstress}, \eqref{eq:bio2}, \eqref{eq:dnufjumpgen} and 
\eqref{eq:NSNS1} that
\begin{align}
& \rho_\Gamma
\left\langle \matpartu\, \vec u, \vec\xi 
\right\rangle_{\Gamma(t)}
+ 2\,\mu_\Gamma \left\langle \mat D_s (\vec u) , \mat D_s ( \vec\xi )
\right\rangle_{\Gamma(t)}
- \left\langle p_\Gamma, \nabs\,.\,\vec\xi \right\rangle_{\Gamma(t)}
\nonumber \\ & \quad
= \left\langle \rho_\Gamma\,\matpartu\,\vec u - \nabs\,.\,\mat\sigma_\Gamma,
\vec\xi \right \rangle_{\Gamma(t)} 
= \left \langle \left[\mat\sigma\,\vec\nu\right]_-^+, 
\vec\xi \right \rangle_{\Gamma(t)}
+ \alpha \left\langle \vec f_\Gamma, \vec\xi \right\rangle_{\Gamma(t)}
\nonumber \\ & \quad
= -2 \left( \mu\, \mat D(\vec u), \mat D(\vec\xi) \right) + \left(p,\nabla\,.\,\vec\xi \right) 
+ \alpha \left\langle \vec f_\Gamma, \vec\xi \right\rangle_{\Gamma(t)}
\quad\forall\ \vec\xi \in {\mathbb V}_{\Gamma(t)}\,,
\label{eq:weakGam}
\end{align}
where
\begin{equation} \label{eq:VGamma}
{\mathbb V}_{\Gamma(t)} = \left\{\vec\xi \in [H^1_0(\Omega)]^d :
\mat P_\Gamma\,\vec\xi_{\mid_{\Gamma(t)}}\in \Vt\right\} .
\end{equation}
Here we recall from Lemma~\ref{lem:divf}\ref{item:divfi},\ref{item:divfiii},
together with a density argument, and a trace theorem for $\Omega_-(t)$ that
all the terms in \eqref{eq:weakGam} 
are well-defined for $\vec\xi \in {\mathbb V}_{\Gamma(t)}$.
Combining (\ref{eq:stabtan1}) and (\ref{eq:weakGam}) with $\vec\xi= \vec u$,
on noting (\ref{eq:NSNS1}) and (\ref{eq:bio3}), yields that
\begin{align}
& \alpha\,\ddt\,E_{\spont,\beta}(\Gamma(t)) 
+\rho_\Gamma
\left\langle \matpartu\, \vec u, \vec u 
\right\rangle_{\Gamma(t)}
\nonumber \\ & \qquad
= - 2\,\mu_\Gamma \left\langle \mat D_s (\vec u) , \mat D_s ( \vec u )
\right\rangle_{\Gamma(t)} 
-2 \left( \mu\, \mat D(\vec u), \mat D(\vec u) \right) . 
\label{eq:stabtan2}
\end{align}
Next, similarly to (\ref{eq:tancon}), it follows 
from (\ref{eq:materialder}), Remark \ref{rem:timeder}\ref{item:rem2.19ii}, 
\eqref{eq:bio4}, Remark \ref{rem:altforms}\ref{item:eq:grad}, 
Theorem \ref{thm:div}, 
Lemma~\ref{lem:productrules}\ref{item:pri}, (\ref{eq:bio3}) and 
Theorem \ref{thm:trans} that
\begin{align*}
\left\langle \matpartu\, \vec u, \vec u 
\right\rangle_{\Gamma(t)} & = \left\langle \matpartx\, \vec u, \vec u 
\right\rangle_{\Gamma(t)} + 
\left\langle \left[\left(\vec u - \vec{\mathcal{V}}\right).\,\nabla\right] 
\vec u,\vec u \right\rangle_{\Gamma(t)} \nonumber \\
&=  \tfrac12 \left\langle \matpartx\left|\vec u\right|^2, 1 
\right\rangle_{\Gamma(t)} + 
\left\langle \left[\mat P_\Gamma\left(\vec u - \vec{\mathcal{V}}\right).\,\nabs
\right] \vec u,\vec u \right\rangle_{\Gamma(t)} \nonumber \\ 
&=  \tfrac12 \left\langle \matpartx\left|\vec u\right|^2, 1 
\right\rangle_{\Gamma(t)}
- \tfrac12 \left\langle \nabs\,.\left[\mat P_\Gamma 
\left(\vec u - \vec{\mathcal{V}}\right)\right],\left|\vec u\right|^2 
\right\rangle_{\Gamma(t)} \nonumber \\ 
&=  \tfrac12 \left\langle \matpartx\left|\vec u\right|^2, 1 
\right\rangle_{\Gamma(t)} 
- \tfrac12 \left\langle \nabs\,.\left(\vec u - \vec{\mathcal{V}}\right),
\left|\vec u\right|^2 \right\rangle_{\Gamma(t)} 
= \tfrac12\,\ddt \left\langle \left|\vec u\right|^2, 1 
\right\rangle_{\Gamma(t)} \! .  
\end{align*}
Combining this with (\ref{eq:stabtan2}) yields the desired result.
\end{proof}

We note, in particular, that the conservation properties 
Lemma~\ref{lem:NSNSweak}\ref{item:nsnsi}, \ref{item:nsnsii} 
follow from the continuity equations $\nabs\,.\,\vec u=0$ on $\Gamma(t)$ 
and $\nabla\,.\,\vec u = 0$ in $\Omega_\pm(t)$, respectively.
Hence, in contrast to the situation in e.g.\ \S\ref{subsec:WFspont},
no additional 
side constraints for surface area and enclosed volume are required.

\subsection{A weak formulation for the dynamics of biomembranes}
\label{subsec:1.10.2}
The most natural weak formulation for the model introduced in
\S\ref{subsec:1.10.1} uses the tangential velocity of the
fluid for the evolution of $\Gamma(t)$, and hence \eqref{eq:bio4} is
replaced by $\vec{\mathcal{V}} = \vec u$ on $\Gamma(t)$.
This mimics the procedure in \S\ref{subsubsec:DziukStokes} for two-phase Stokes
flow. However, contrary to the situation there, the presence of
the continuity equation $\nabs\,.\,\vec u=0$ on $\Gamma(t)$ will lead to good
mesh properties for discretizations based on this natural weak formulation.

Now 
(\ref{eq:weakGam})
leads to the following weak formulation of the system
\arxivyesno{}{\linebreak}%
\eqref{eq:NSNS1}, \eqref{eq:bio} and (\ref{eq:fWF}). 
Given a closed hypersurface $\Gamma(0)$
and, if $\rho_\Gamma > 0$, $\vec u_0 \in [H^1_0(\Omega)]^d$, 
we seek an evolving hypersurface $(\Gamma(t))_{t\in[0,T]}$, 
with a global parameterization and induced velocity field
$\vec{\mathcal{V}}$, $\vec\varkappa \in [L^2(\GT)]^d$,
$p_\Gamma \in L^2(\GT)$, $\vec f_\Gamma \in [L^2(\GT)]^d$, as well as
$\vec u : \Omega \times [0,T] \to \bR^d$, 
with $\vec u_{\mid_{\GT}} \in [H^1(\GT)]^d$
and $\rho_\Gamma\,\vec u(\cdot,0)_{\mid_{\Gamma(0)}} = 
\rho_\Gamma\,(\vec u_0)_{\mid_{\Gamma(0)}}$, and
$p:\Omega\times[0,T] \to \bR$ as follows.
For almost all $t \in (0,T)$, find 
$(\vec {\mathcal V}(\cdot,t),\vec\varkappa(\cdot,t),p_\Gamma(\cdot,t),
\vec f_\Gamma(\cdot,t),\vec u(\cdot,t),p(\cdot,t))
\in [L^2(\Gamma(t))]^d \times \Vt \times L^2(\Gamma(t))\times
[L^2(\Gamma(t))]^d \times 
{\mathbb V}_{\Gamma(t)}\times L^2(\Omega)$ such that
\begin{subequations} \label{eq:weakGD}
\begin{align}
&2\left(\mu\,\mat D(\vec u), \mat D(\vec\xi)\right)
- \left(p, \nabla\,.\,\vec\xi\right)
+ \rho_\Gamma \left\langle \matpartu\,\vec u , \vec\xi
 \right\rangle_{\Gamma(t)}
+ 2\,\mu_\Gamma \left\langle \mat D_s (\vec u) , \mat D_s ( \vec\xi )
\right\rangle_{\Gamma(t)}
\nonumber \hspace*{1cm}\\ & \qquad
- \left\langle p_\Gamma, \nabs\,.\,\vec\xi \right\rangle_{\Gamma(t)}
 = \alpha \left\langle \vec f_\Gamma, \vec\xi \right\rangle_{\Gamma(t)}
\quad\forall\ \vec\xi \in 
{\mathbb V}_{\Gamma(t)}
\,, \label{eq:weakGDa} \\
&(\nabla\,.\,\vec u, \varphi) = 0  \quad\forall\ \varphi \in L^2(\Omega)
\,, \label{eq:weakGDb} \\
& \left\langle \nabs\,.\,\vec u, \eta \right\rangle_{\Gamma(t)}
 = 0  \quad\forall\ \eta \in L^2(\Gamma(t))\,, \label{eq:weakGDc} \\
&  \left\langle \vec{\mathcal{V}}, \vec\chi \right\rangle_{\Gamma(t)} 
= \left\langle \vec u, \vec\chi \right\rangle_{\Gamma(t)} 
 \quad\forall\ \vec\chi \in [L^2(\Gamma(t))]^d\,,
\label{eq:weakGDd} \\
  & \left\langle \vec\varkappa, \vec\eta \right\rangle_{\Gamma(t)}
+ \left\langle \nabs\,\vec\id, \nabs\,\vec\eta
\right\rangle_{\Gamma(t)}
 = 0  \quad\forall\ \vec\eta \in \Vt\,, \label{eq:weakGDe} 
\end{align}
together with an equation for $\vec f_\Gamma$. In the
simplest case, when $\spont = \beta = 0$, we recall 
Lemma~\ref{lem:Willmorevar2} and (\ref{eq:Dziukwilla}), and set
\begin{align}
\left\langle \vec f_\Gamma, \vec\chi \right\rangle_{\Gamma(t)} &  =
\left\langle \nabs\,\vec\varkappa , \nabs\,\vec\chi - 2\,\mat D_s(\vec{\chi}) 
\right\rangle_{\Gamma(t)}
+\left\langle \nabs\,.\, \vec\varkappa, \nabs\,.\, \vec\chi
\right\rangle_{\Gamma(t)}
\nonumber \\ & \qquad
+ \tfrac12\left\langle |\vec\varkappa|^2, 
\nabs\,.\,\vec\chi \right\rangle_{\Gamma(t)}
\quad\forall\ \vec\chi \in \Vt \,. \label{eq:weakGDf}
\end{align}
\end{subequations}
In the general case, for nonzero $\spont$ or $\beta$,
we recall (\ref{eq:Dzasym}). 
Hence we require, instead of (\ref{eq:weakGDf}), that in
addition a $\vec y\in[L^2(\GT)]^d$ exists such that for almost all $t \in
(0,T)$ it holds that $\vec y(\cdot,t) \in \Vt$ and
\begin{subequations}
\begin{align}
& \left\langle \vec f_\Gamma, \vec\chi \right\rangle_{\Gamma(t)} =
\left\langle \nabs\,\vec y, \nabs\,\vec\chi -2\, \mat D_s(\vec\chi)
\right\rangle_{\Gamma(t)}+
\left\langle \nabs\,.\,\vec y, \nabs\,.\,\vec\chi \right\rangle_{\Gamma(t)}
\nonumber \\ & \quad\qquad
-\left\langle  \tfrac12\,|\vec\varkappa - \spont\,\vec\nu|^2
- \vec\varkappa\,.\left( \vec y- \beta\,A(t)\,\vec\nu\right), 
\nabs\,.\,\vec\chi \right\rangle_{\Gamma(t)}
\nonumber \\ & \quad\qquad
+ \left(\beta\,A(t) - \spont\right) \left\langle \vec\varkappa, 
(\nabs\,\vec\chi)^\transT\,\vec\nu \right\rangle_{\Gamma(t)}
 \quad\forall\ \vec\chi\in \Vt\,,  \label{eq:weakGDg} \\
& \left\langle \vec\varkappa + \left(\beta\,A(t) - \spont\right)
\vec\nu - \vec y, \vec\xi \right\rangle_{\Gamma(t)} = 0 
\quad\forall\ \vec\xi \in \Vt\,,  \label{eq:weakGDh}
\end{align}
\end{subequations}
with $A(t)$ defined by (\ref{eq:Dzasymd}).   

We now argue that if no connected component of
$\Gamma(t)$ is spherical, then the surface pressure is unique and the
bulk pressure is unique up to an additive constant. 
Recall that Figure~\ref{fig:sketch} shows the special case of $\Gamma(t)$
having just a single connected component.
We assume that two solutions to \eqref{eq:weakGD} are found that only differ in
the pressure. Say one solution features the pressure pair $(p_1,p_{\Gamma,1})$
and the other $(p_2,p_{\Gamma,2})$. Let $\bar p = p_1-p_2$ and 
$\bar p_\Gamma =p_{\Gamma,1} -p_{\Gamma,2}$. 
Then we obtain 
\[
\left(\bar p, \nabla\,.\,\vec\xi\right) + 
\left\langle \bar p_\Gamma, \nabs\,.\,\vec\xi \right\rangle_{\Gamma(t)}
 = 0
\quad\forall\ \vec\xi \in [H^1_0(\Omega)]^d\,,
\]
which first of all implies that $\bar p$ is equal to some constants 
$\bar p_\pm$ in each connected component of $\Omega_\pm(t)$.
Applying the divergence theorem in the bulk regions $\Omega_{\pm}(t)$,
Lemma~\ref{lem:productrules}\ref{item:pri}, 
and the divergence theorem on $\Gamma(t)$, Theorem~\ref{thm:div}, 
yields that
\[
\nabs\,\bar p_\Gamma + \varkappa\,\bar p_\Gamma\,\vec\nu
 = - \left[\bar p\,\vec\nu\right]^+_-
\qquad\text{on}\ \Gamma(t)\,.
\]
Since $\nabs\,\bar p_\Gamma$ is tangential, we obtain that
$\nabs\,\bar p_\Gamma=\vec 0$, and hence $\bar p_\Gamma$ is constant
on connected components of $\Gamma(t)$.
In addition, we obtain that $\varkappa\,\bar p_\Gamma = -[\bar p]_-^+$ on 
$\Gamma(t)$.
If $\varkappa$ is not constant on a connected component of 
$\Gamma(t)$, which is the case if it is not a sphere, 
then we have $\bar p_\Gamma = [\bar p]_-^+ = 0$ on this component, and so
$\bar p_+ = \bar p_-$ for the associated connected components of 
$\Omega_\pm(t)$.
Repeating this argument for all the connected components of $\Gamma(t)$
yields, if no connected component of $\Gamma(t)$ is a sphere, that
$p_\Gamma$ is unique on $\Gamma(t)$ and that $p$ is unique in $\Omega$
up to an additive constant. 
If $\varkappa$ is constant on a connected component of $\Gamma(t)$, however, 
i.e.\ if this component of $\Gamma(t)$ is a sphere, then on this component
$p_\Gamma$ is only unique up to an additive constant.

Taking the above into account we formulate the following LBB-type
condition. If $\Gamma(t)$ and $\partial\Omega$ are sufficiently smooth,
and provided that $\Gamma(t)$ does not contain a sphere, then there exists
a constant $C\in\bRplus$ such that
\begin{equation} \label{eq:LBBG}
\inf_{(\varphi,\eta) \in \widehat\pspace \times L^2(\Gamma(t))}
\sup_{\vec\xi \in {\mathbb V}_{\Gamma(t)}}
\frac{\left(\varphi, \nabla\,.\,\vec\xi\right)
+ \left\langle \eta, \nabs\,.\,\vec\xi \right\rangle_{\Gamma(t)}}
{\left(\left|\varphi\right|_{\Omega} 
+ \left| \eta \right|_{\Gamma(t)}\right)\left(\left\|\vec\xi\right\|_{1,\Omega}
+ \left\|\mat P_\Gamma\,\vec\xi_{\mid_{\Gamma(t)}} \right\|_{1,\Gamma(t)}
\right)} \geq C > 0\,.
\end{equation}
Here we have recalled \eqref{eq:VGamma} as well as the definitions of the
space $\widehat\pspace$ and the $H^1$--norm $\|\cdot\|_{1,\Omega}$
from \eqref{eq:LBB}. In addition, we let
$\|\cdot \|_{1,\Gamma(t)}^2 = |\cdot|_{\Gamma(t)}^2 + 
|\nabs\,\cdot|_{\Gamma(t)}^2$
define the $H^1$--norm on $\Gamma(t)$.
The LBB-type condition \eqref{eq:LBBG} can be deduced from the pressure 
reconstruction result in \cite{Lengeler15preprint}.

\subsection{Semidiscrete finite element approximation}
We now introduce a finite element version of the weak formulation of
(\ref{eq:weakGD}). 
We have discussed in previous sections the importance of the choice of the
discrete tangential velocity of $\Gamma^h(t)$. In general, the mesh
quality will deteriorate when using a velocity in the direction of the
discrete normal. Similarly, for many two-phase fluid flow problems, using
the tangential velocity induced by the surrounding fluid flow also leads to bad
meshes.
However, here it turns out that due to the approximation of the local
area conservation property, $\nabs\,.\,\vec u = 0$ on $\Gamma(t)$, the mesh
quality typically will remain very good during the evolution even if we choose
$\vec{\mathcal{V}} = \vec u$.

We recall the definitions of the semidiscrete finite element spaces in
\S\ref{subsubsec:sdStokes}, and in particular the space
\eqref{eq:pspacehT}. In addition, we define the space
\begin{align*}
\uspace^h_{\Gamma^h} & = \left\{
\vec\phi\in H^1(0,T; \uspace^h) : \right. \\ & \qquad \left.
\exists\, \vec\chi \in \VhTGhT \ \text{ with} \
\vec\chi(\cdot,t) =  \vec\pi_{\Gamma^h}
\left[\vec\phi_{\mid_{\Gamma^h(t)}}\right]\ 
\forall\ t\in[0,T]\right\}.
\end{align*}
Overall, we then obtain the following semidiscrete
continuous-in-time finite element approximation of (\ref{eq:weakGD}).
First, we will state it for the simpler situation when $\spont = \beta = 0$. 
To this end, we recall the scheme (\ref{eq:Dzwill1}).

Given the closed polyhedral hypersurface $\Gamma^h(0)$, find 
an evolving polyhedral hypersurface $\GhT$ with induced 
velocity $\vec{\mathcal{V}}^h \in \VhGhT$, 
$\vec\kappa^h\in \VhGhT$, 
$\vec U^h \in \uspace^h_{\Gamma^h}$, $P^h\in \widehat \pspace^h_T$, 
$P^h_\Gamma\in \WhGhT$ and $\vec F^h_\Gamma\in \VhGhT$
as follows.
For all $t\in (0,T]$, find 
\arxivyesno{$(\vec U^h(\cdot,t), P^h(\cdot,t), P^h_{\Gamma}(\cdot,t), 
\vec{\mathcal{V}}^h(\cdot,t),$ $\vec\kappa^h(\cdot,t), \vec F^h_\Gamma(\cdot,t))
\in \uspace^h \times \widehat\pspace^h(t) \times \Wht \times \Vht \times \Vht 
\times \Vht$}
{$(\vec U^h(\cdot,t), P^h(\cdot,t), P^h_{\Gamma}(\cdot,t), 
\vec{\mathcal{V}}^h(\cdot,t),\vec\kappa^h(\cdot,t), \vec F^h_\Gamma(\cdot,t))
\in \uspace^h \times \widehat\pspace^h(t) \times \Wht \times \Vht \times \Vht 
\times \Vht$}
such that
\begin{subequations} \label{eq:biosd}
\begin{align}
&2\left(\mu^h\,\mat D(\vec U^h), \mat D(\vec\xi) \right)
- \left(P^h, \nabla\,.\,\vec\xi\right)
+ \rho_\Gamma \left\langle \matpartxh\,\vec\pi_{\Gamma^h}\,\vec U^h, \vec\xi 
\right\rangle_{\Gamma^h(t)}^h \nonumber \\ & \qquad\qquad
+ 2\,\mu_\Gamma \left\langle \mat D_s (\vec\pi_{\Gamma^h}\,\vec U^h) , 
\mat D_s (\vec\pi_{\Gamma^h}\, \vec\xi ) \right\rangle_{\Gamma^h(t)}
- \left\langle P_\Gamma^h , 
\nabs\,.\,(\vec\pi_{\Gamma^h}\,\vec\xi) \right\rangle_{\Gamma^h(t)} \nonumber\\& \qquad
= \alpha \left\langle \vec F_\Gamma^h , \vec\xi \right\rangle_{\Gamma^h(t)}^h 
\quad\forall\ \vec\xi \in \uspace^h \,, \label{eq:biosda}\\
& \left(\nabla\,.\,\vec U^h, \varphi\right) = 0 
\quad\forall\ \varphi \in \widehat\pspace^h(t)\,,
\label{eq:biosdb} \\
& \left\langle \nabs\,.\,(\vec\pi_{\Gamma^h}\,\vec U^h), \eta 
\right\rangle_{\Gamma^h(t)}  = 0 
\quad\forall\ \eta \in \Wht\,,
\label{eq:biosdc} \\
& \left\langle \vec{\mathcal{V}}^h ,
\vec\chi \right\rangle_{\Gamma^h(t)}^h
= \left\langle \vec U^h, \vec\chi \right\rangle_{\Gamma^h(t)}^h
 \quad\forall\ \vec\chi \in \Vht\,,
\label{eq:biosdd}\\
& \left\langle \vec\kappa^h , \vec\eta \right\rangle_{\Gamma^h(t)}^{h}
+ \left\langle \nabs\,\vec\id, \nabs\,\vec\eta \right\rangle_{\Gamma^h(t)}
 = 0  \quad\forall\ \vec\eta \in \Vht\,,\label{eq:biosde} \\
& \left\langle \vec F_\Gamma^h, \vec\chi \right\rangle_{\Gamma^h(t)}^h =
\left\langle \nabs\,\vec{\kappa}^h , \nabs\,\vec\chi - 2\,\mat D_s(\vec\chi)
\right\rangle_{\Gamma^h(t)}
+ \left\langle \nabs\,.\,\vec{\kappa}^h , \nabs\,.\,\vec\chi
  \right\rangle_{\Gamma^h(t)} \nonumber \\ & \qquad\qquad\qquad
+\tfrac12 \left\langle |\vec{\kappa}^h|^2, 
 \nabs\,.\,\vec\chi \right\rangle_{\Gamma^h(t)}^{h}
\quad\forall\ \vec\chi \in \Vht\,. \label{eq:biosdf}
\end{align}
\end{subequations}
For this semidiscrete approximation we have the following stability and 
conservation results.

\begin{thm} \label{thm:stabGD} 
Let $(\GhT, \vec U^h, P^h, P^h_\Gamma, \vec\kappa^h, 
\vec F_\Gamma^h)$ be a solution to \eqref{eq:biosd},
and let \arxivyesno{\linebreak}{}%
$\vec\kappa^h \in \VhTGhT$. Then it holds that
\begin{align} \label{eq:bioen1}
& \tfrac12\,\ddt \left(\rho_\Gamma \left(\left|\vec U^h\right|_{\Gamma^h(t)}^h
\right)^2
+ \alpha \left(\left|\vec\kappa^h\right|_{\Gamma^h(t)}^{h}\right)^2
 \right) +
2\left( \mu^h\,\mat D(\vec U^h), \mat D(\vec U^h) \right)
\nonumber \\ & \hspace{1cm}
+ 2\,\mu_\Gamma \left\langle \mat D_s (\vec\pi_{\Gamma^h}\,\vec U^h) , 
\mat D_s (\vec\pi_{\Gamma^h}\, \vec U^h ) \right\rangle_{\Gamma^h(t)} = 0\,.
\end{align}
Moreover, it holds that
\begin{equation} \label{eq:bioen2}
\ddt\left\langle \phi^{\Gamma^h(t)}_k, 1 \right\rangle_{\Gamma^h(t)} = 0\,,
\qquad k = 1,\ldots,K\,,
\end{equation}
and hence that
\begin{equation} \label{eq:bioen3}
\ddt\left|\Gamma^h(t)\right| = 0 \,.
\end{equation}
\end{thm}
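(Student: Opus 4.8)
The plan is to establish the three identities by testing the six equations of \eqref{eq:biosd} with carefully chosen functions, mirroring the strategy used for pure Willmore flow in Theorem~\ref{thm:Dzwill} and for two-phase Stokes flow in Theorem~\ref{thm:sdStokes}. For the energy law \eqref{eq:bioen1} I would choose $\vec\xi = \vec U^h$ in \eqref{eq:biosda}, $\varphi = P^h$ in \eqref{eq:biosdb} and $\eta = P_\Gamma^h$ in \eqref{eq:biosdc}; since $P^h \in \widehat\pspace^h(t)$ and $P_\Gamma^h \in \Wht$, the bulk and surface pressure contributions drop out immediately, leaving the two viscous dissipation terms, the inertial term $\rho_\Gamma\langle\matpartxh\vec\pi_{\Gamma^h}\vec U^h,\vec U^h\rangle^h_{\Gamma^h(t)}$, and the bending force $\alpha\langle\vec F_\Gamma^h,\vec U^h\rangle^h_{\Gamma^h(t)}$. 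A preliminary observation I would record first is that \eqref{eq:biosdd} forces $\vec{\mathcal{V}}^h = \vec\pi_{\Gamma^h}\vec U^h$ as elements of $\Vht$, because the mass-lumped inner product is determined by nodal values; this identity ties the mesh motion to the fluid velocity and is used repeatedly below.

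The bending force term is then handled exactly as in Theorem~\ref{thm:Dzwill}. Testing \eqref{eq:biosdd} with $\vec\chi = \vec F_\Gamma^h \in \Vht$ replaces $\langle\vec F_\Gamma^h,\vec U^h\rangle^h_{\Gamma^h(t)}$ by $\langle\vec F_\Gamma^h,\vec{\mathcal{V}}^h\rangle^h_{\Gamma^h(t)}$. I would extend the test functions in \eqref{eq:biosde} to functions with $\matpartxh\vec\eta = \vec 0$, differentiate \eqref{eq:biosde} in time (invoking Theorem~\ref{thm:disctrans} and Lemma~\ref{lem:commdiffh}\ref{item:commdiffhiii}), choose $\vec\eta = \vec\kappa^h$ in the resulting identity and $\vec\chi = \vec{\mathcal{V}}^h$ in \eqref{eq:biosdf}, and subtract. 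The $\nabs\,.\,\vec{\mathcal{V}}^h$ and $\mat D_s(\vec{\mathcal{V}}^h)$ contributions cancel internally, yielding $\alpha\langle\vec F_\Gamma^h,\vec{\mathcal{V}}^h\rangle^h_{\Gamma^h(t)} = -\tfrac{\alpha}{2}\,\ddt(|\vec\kappa^h|^h_{\Gamma^h(t)})^2$, with no use of the incompressibility constraint.

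The step I expect to be the main obstacle is the inertial term. Applying the mass-lumped transport theorem Theorem~\ref{thm:disctrans}\ref{item:disctransh} to $(|\vec U^h|^h_{\Gamma^h(t)})^2 = \langle\vec\pi_{\Gamma^h}\vec U^h,\vec\pi_{\Gamma^h}\vec U^h\rangle^h_{\Gamma^h(t)}$ produces $\rho_\Gamma\langle\matpartxh\vec\pi_{\Gamma^h}\vec U^h,\vec U^h\rangle^h_{\Gamma^h(t)} = \tfrac12\rho_\Gamma\,\ddt(|\vec U^h|^h_{\Gamma^h(t)})^2 - \tfrac12\rho_\Gamma\langle|\vec\pi_{\Gamma^h}\vec U^h|^2,\nabs\,.\,\vec{\mathcal{V}}^h\rangle^h_{\Gamma^h(t)}$, and the correction term does not obviously vanish. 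To dispose of it I would use that $\nabs\,.\,\vec{\mathcal{V}}^h = \nabs\,.\,(\vec\pi_{\Gamma^h}\vec U^h)$ is piecewise constant, so that the mass-lumped pairing against it coincides with the exact one, namely $\langle|\vec\pi_{\Gamma^h}\vec U^h|^2,\nabs\,.\,\vec{\mathcal{V}}^h\rangle^h_{\Gamma^h(t)} = \langle\pi_{\Gamma^h}[|\vec\pi_{\Gamma^h}\vec U^h|^2],\nabs\,.\,\vec{\mathcal{V}}^h\rangle_{\Gamma^h(t)}$, the vector analogue of Remark~\ref{rem:ip0}. Since $\pi_{\Gamma^h}[|\vec\pi_{\Gamma^h}\vec U^h|^2] \in \Wht$, the surface incompressibility \eqref{eq:biosdc} with this choice of $\eta$ makes the correction vanish. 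Collecting the three contributions gives \eqref{eq:bioen1}.

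For \eqref{eq:bioen2} I would apply the exact transport theorem Theorem~\ref{thm:disctrans}\ref{item:disctrans} with $\eta = \phi^{\Gamma^h(t)}_k$ and $\zeta = 1$; since $\matpartxh\phi^{\Gamma^h(t)}_k = 0$ by Remark~\ref{rem:GhT}\ref{item:phihk} and $\matpartxh 1 = 0$, only $\langle\phi^{\Gamma^h(t)}_k,\nabs\,.\,\vec{\mathcal{V}}^h\rangle_{\Gamma^h(t)}$ survives, and this vanishes by \eqref{eq:biosdc} with $\eta = \phi^{\Gamma^h(t)}_k \in \Wht$ (using again $\vec{\mathcal{V}}^h = \vec\pi_{\Gamma^h}\vec U^h$). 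Finally \eqref{eq:bioen3} follows by summing \eqref{eq:bioen2} over $k = 1,\ldots,K$ and using the partition-of-unity property $\sum_k\phi^{\Gamma^h(t)}_k = 1$, so that $\ddt|\Gamma^h(t)| = \ddt\langle 1,1\rangle_{\Gamma^h(t)} = \sum_k\ddt\langle\phi^{\Gamma^h(t)}_k,1\rangle_{\Gamma^h(t)} = 0$.
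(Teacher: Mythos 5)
Your proposal is correct and follows essentially the same route as the paper's own proof: the same test functions in \eqref{eq:biosd}, the identification $\vec{\mathcal{V}}^h = \vec\pi_{\Gamma^h}\bigl[\vec U^h_{\mid_{\Gamma^h(t)}}\bigr]$ from \eqref{eq:biosdd}, the Dziuk-type argument of Theorem~\ref{thm:Dzwill} for the bending term, and the piecewise-constant/interpolation observation combined with \eqref{eq:biosdc} (with $\eta = \pi_{\Gamma^h}[|\vec U^h|^2]$, respectively $\eta = \phi^{\Gamma^h(t)}_k$) to eliminate the transport correction terms. The only cosmetic deviation is your use of the exact transport theorem, Theorem~\ref{thm:disctrans}\ref{item:disctrans}, rather than the mass-lumped version for \eqref{eq:bioen2}, which is equivalent here since $\phi^{\Gamma^h(t)}_k$ is piecewise linear.
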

\begin{proof} 
Choosing $\vec\xi = \vec U^h(\cdot,t)$ in (\ref{eq:biosda}),
$\varphi=P^h(\cdot,t)$ in (\ref{eq:biosdb}) and $\eta = P^h_\Gamma(\cdot,t)$ in
(\ref{eq:biosdc}), we obtain
\begin{align} \label{eq:bioa1}
& 2\left(\mu^h\,\mat D(\vec U^h), \mat D(\vec U^h) \right)
+ \rho_\Gamma \left\langle \matpartxh\,\vec\pi_{\Gamma^h}\,\vec U^h, \vec U^h
\right\rangle_{\Gamma^h(t)}^h
\nonumber \\ & \hspace{1cm}
+ 2\,\mu_\Gamma \left\langle \mat D_s (\vec\pi_{\Gamma^h}\,\vec U^h) , 
\mat D_s (\vec\pi_{\Gamma^h}\, \vec U^h ) \right\rangle_{\Gamma^h(t)}
= \alpha \left\langle\vec F_\Gamma^h,\vec U^h \right\rangle_{\Gamma^h(t)}^h.
\end{align}
In addition, arguing as in the proof of Theorem~\ref{thm:Dzwill}
on (\ref{eq:biosde}) and \arxivyesno{}{\linebreak}%
(\ref{eq:biosdf}), recall (\ref{eq:Dzwill1}), yields
\begin{equation} \label{eq:bioa2}
\tfrac12\,\ddt \left(\left| \vec\kappa^h \right|^h_{\Gamma^h(t)} \right)^2
= -\left\langle \vec F^h_\Gamma,
\vec{\mathcal{V}}^h\right\rangle^h_{\Gamma^h(t)} = -\left\langle
\vec F^h_\Gamma, \vec U^h\right\rangle^h_{\Gamma^h(t)},
\end{equation}
where the last equality 
follows from choosing $\vec\chi = \vec F^h_\Gamma(\cdot,t)$ 
in (\ref{eq:biosdd}). 
Moreover, it follows from (\ref{eq:biosdd}) that 
\begin{equation} \label{eq:VhUh}
\vec{\mathcal{V}}^h(\cdot,t) 
= \vec\pi_{\Gamma^h}\left[ \vec U^h_{\mid_{\Gamma^h(t)}}\right].
\end{equation}
Noting this, Theorem \ref{thm:disctrans}\ref{item:disctransh}
and (\ref{eq:biosdc}) with $\eta = \pi_{\Gamma^h}\left[\left|\vec U^h_{\mid_{\Gamma^h(t)}}
\right|^2\right]$ yields
\begin{align} \label{eq:bioa3}
\tfrac12\,\ddt
\left\langle \vec U^h, \vec U^h \right\rangle_{\Gamma^h(t)}^h 
&= \tfrac12 \left\langle \matpartxh\,
\vec\pi_{\Gamma^h}\left[\left|\vec U^h\right|^2\right],1
\right\rangle_{\Gamma^h(t)}^h
+ \tfrac12 \left\langle \nabs\,.\,\vec{\mathcal{V}}^h, 
\left|\vec U^h\right|^2 \right\rangle_{\Gamma^h(t)}^h
\nonumber \\ & 
 = \left\langle \matpartxh\,\vec\pi_{\Gamma^h}\,\vec U^h, \vec U^h
\right\rangle_{\Gamma^h(t)}^h
+ \tfrac12 \left\langle \nabs\,.\,(\vec\pi_{\Gamma^h}\, \vec U^h), 
\left|\vec U^h\right|^2 \right\rangle_{\Gamma^h(t)}^h
\nonumber \\ & 
= \left\langle \matpartxh\,\vec\pi_{\Gamma^h}\,\vec U^h, \vec U^h
\right\rangle_{\Gamma^h(t)}^h .
\end{align}
Combining (\ref{eq:bioa1}), \eqref{eq:bioa2} and (\ref{eq:bioa3}) yields
(\ref{eq:bioen1}). 

Similarly to (\ref{eq:bioen1}), the identity \eqref{eq:bioen2} follows 
directly from Theorem~\ref{thm:disctrans}\ref{item:disctransh}, 
Remark \ref{rem:GhT}\ref{item:phihk}, \eqref{eq:VhUh} 
and choosing $\eta=\phi^{\Gamma^h(t)}_k$ in \arxivyesno{}{\linebreak}%
(\ref{eq:biosdc}). 
Finally, \eqref{eq:bioen3} follows by adding \eqref{eq:bioen2} 
for $k=1,\ldots,K$.
\end{proof}

\begin{rem}[Spontaneous curvature and ADE effects]
In the case that $\spont$ or $\beta$ are nonzero, we replace
\eqref{eq:biosdf} in the semidiscrete approximation \eqref{eq:biosd} with
\begin{subequations} \label{eq:biosd2}
\begin{align}
&\left\langle \vec\kappa^h + 
\left(\beta\,A^h(t)- \spont\right)\vec\nu^h - \vec Y^h, \vec\xi 
\right\rangle_{\Gamma^h(t)}^h
= 0 \quad\forall\ \vec\xi \in \Vht\,, \label{eq:biosdg} \\
& \left\langle \vec F_\Gamma^h, \vec\chi \right\rangle_{\Gamma^h(t)}^h
= \left\langle \nabs\,\vec Y^h, \nabs\,\vec\chi - 2\,\mat D_s(\vec\chi)
\right\rangle_{\Gamma^h(t)}
+ \left\langle\nabs\,.\,\vec Y^h,\nabs\,.\,\vec\chi \right\rangle_{\Gamma^h(t)}
\nonumber \\ &\qquad
- \left\langle 
\tfrac12\left|\vec\kappa^h - \spont\,\vec\nu^h\right|^2 -
\vec\kappa^h\,.\left(\vec Y^h - \beta\,A^h(t)\,\vec\nu^h\right),
\nabs\,.\,\vec\chi\right\rangle_{\Gamma^h(t)}^h
\nonumber \\ &\qquad
+ \left(\beta\,A^h(t) - \spont\right) 
\left\langle \vec\kappa^h, 
(\nabs\,\vec\chi)^\transT\, \vec\nu^h \right\rangle_{\Gamma^h(t)}^h
\quad\forall\ \vec\chi\in \Vht\,, \label{eq:biosdh}\\ 
&\qquad A^h(t) = \left\langle \vec\kappa^h , \vec\nu^h
\right\rangle_{\Gamma^h(t)}^h - M_0 \,, 
\label{eq:biosdi}
\end{align}
\end{subequations}
and seek in addition a $\vec Y^h\in\VhGhT$. 
Here we recall \eqref{eq:Dzasymdisc}.
Then a stability result as in {\rm Theorem~\ref{thm:stabGD}}, with 
$\tfrac12\left(\left|\vec\kappa^h\right|_{\Gamma^h(t)}^{h}\right)^2$
replaced by $E_{\spont,\beta}^h(\Gamma^h(t))$ can be shown for the 
system {\rm (\ref{eq:biosda}--e)}, \eqref{eq:biosd2}, 
on combining the proofs of {\rm Theorem~\ref{thm:stabGD}} and
{\rm Theorem~\ref{thm:Dzasymdisc}}; see also 
{\rm \citet[Theorem~4.2]{nsnsade}}.
\end{rem}

\begin{rem} \label{rem:biosd}
\rule{0pt}{0pt}
\begin{enumerate}
\item \label{item:biosdi}
The virtual element approach introduced in
{\rm \S\ref{subsubsec:DziukStokes}} can be used for the 
semidiscrete approximation \eqref{eq:biosd} and its generalization 
{\rm (\ref{eq:biosda}--e)}, \eqref{eq:biosd2}.
In this case we obtain, in addition to $\ddt\left|\Gamma^h(t)\right| = 0$, also
\begin{equation*}
\ddt \mathcal{L}^d(\Omega^h_-(t)) = 0\,,
\end{equation*}
which means that we obtain an approximation which conserves both the surface 
area and the enclosed volume, and also fulfills an energy identity.
\item The identity \eqref{eq:bioen2} ensures that the measure of the support of
each basis function on $\Gamma^h(t)$ is conserved. In the case of 
two space dimensions, and for the number of elements/vertices $J=K$ being odd, 
this is equivalent to each element maintaining its length. 
In particular, if $\Gamma^h(0) $ is equidistributed,
then $\Gamma^h(t)$ will remain equidistributed throughout.
For a slight modification of \eqref{eq:biosd} one can prove that
the measure of each element on $\Gamma^h(t)$ is conserved, i.e.\
that $\ddt\,\mathcal{H}^{d-1}\left(\sigma_j(t)\right) = 0$, 
$j=1,\ldots,J$, and hence \eqref{eq:bioen3}, for arbitrary $J\geq2$ and 
$d\geq2$. To achieve this, one needs to replace $\Wht$ in \eqref{eq:biosd} 
by $\Wcht$, recall {\rm Definition~\ref{def:Vh}\ref{item:Vch}}, i.e.\
piecewise constants are used for the trial space for $P^h_\Gamma(\cdot,t)$ 
and for the test space in \eqref{eq:biosdc}.
However, this constraint for $d\geq3$ can be too severe,
see {\rm \citet[Remark 4.1]{nsns}} for more details in the case $d=3$. 
\item 
The approximation \eqref{eq:biosd} and its generalization 
{\rm (\ref{eq:biosda}--e)}, \arxivyesno{}{\linebreak}%
\eqref{eq:biosd2}
are based on continuous piecewise linear approximations of the surface velocity
and surface pressure, and so are unlikely to satisfy a discrete version
of the LBB condition \eqref{eq:LBBG} 
with a constant $C$ independent of the mesh parameters. 
In fact, in practice it can lead to oscillatory surface pressure 
approximations, see {\rm \citet[Fig.\ 5]{nsns}}.
This can be avoided by using a continuous piecewise quadratic interpolation 
of the bulk velocity on the surface. This leads to better behaved 
approximations of the surface pressure. 
Although one can still prove \eqref{eq:bioen1} and its generalization
for the modified versions of \eqref{eq:biosd} and {\rm (\ref{eq:biosda}--e)},
\eqref{eq:biosd2} if $\rho_\Gamma=0$,      
one can no longer show \eqref{eq:bioen2} and \eqref{eq:bioen3}. 
This can lead to poor surface area conservation for $d=3$ in practice, see 
{\rm \citet[Remark 4.1]{nsns}} for more details.
\end{enumerate}
\end{rem}

Finally, we state a fully discrete equivalent of \eqref{eq:biosd}. 
Let the closed polyhedral hypersurface $\Gamma^0$ be an approximation of
$\Gamma(0)$, and let $\vec\kappa^0_{\Gamma^0} \in \Vhz$ be 
an approximation to its mean curvature vector. If $\rho_\Gamma>0$, let
$\vec U^0_{\Gamma^0} \in \Vhz$ be an approximation to 
$(\vec u_0)_{\mid_{\Gamma^0}}$. 
We also recall the time interval partitioning \eqref{eq:ttaum}. 
Then, for $m=0,\ldots, M-1$, find $\vec U^{m+1} \in \uspace^m$,
$P^{m+1} \in \widehat\pspace^m$, 
$P_\Gamma^{m+1} \in \Whm$,
$\vec X^{m+1}\in\Vhm$,
$\vec\kappa^{m+1}\in\Vhm$ and
$\vec F_\Gamma^{m+1} \in \Vhm$ such that
\begin{subequations} \label{eq:GD}
\begin{align}
&2\left(\mu^m\,\mat D(\vec U^{m+1}), \mat D(\vec\xi) \right)
- \left(P^{m+1}, \nabla\,.\,\vec\xi\right)
+ \rho_\Gamma \left\langle \frac{\vec U^{m+1} - \vec U^m_{\Gamma^m}}{\ttau_m}, 
\vec\xi \right\rangle_{\Gamma^m}^h \nonumber \\ & \qquad
+ 2\,\mu_\Gamma \left\langle \mat D_s (\vec\pi_{\Gamma^m}\,\vec U^{m+1}) , 
\mat D_s ( \vec\pi_{\Gamma^m}\,\vec\xi ) \right\rangle_{\Gamma^m}
- \left\langle P_\Gamma^{m+1} , 
\nabs\,.\,(\vec\pi_{\Gamma^m}\,\vec\xi) \right\rangle_{\Gamma^m}
\nonumber \\ & \qquad\qquad
= \alpha \left\langle \vec F_\Gamma^{m+1}, \vec\xi \right\rangle_{\Gamma^m}^h 
\quad\forall\ \vec\xi \in \uspace^m \,, \label{eq:GDa}\\
& \left(\nabla\,.\,\vec U^{m+1}, \varphi\right) = 0 
\quad\forall\ \varphi \in \widehat\pspace^m\,,
\label{eq:GDb} \\
& \left\langle \nabs\,.\,(\vec\pi_{\Gamma^m}\,\vec U^{m+1}), \eta 
\right\rangle_{\Gamma^m}  = 0 
\quad\forall\ \eta \in \Whm\,,
\label{eq:GDc} \\
& \left\langle \frac{\vec X^{m+1} - \vec\id}{\ttau_m} ,
\vec\chi \right\rangle_{\Gamma^m}^h
= \left\langle \vec U^{m+1}, \vec\chi \right\rangle_{\Gamma^m}^h
 \quad\forall\ \vec\chi \in \Vhm\,,
\label{eq:GDd} \\
& \left\langle \vec\kappa^{m+1} , \vec\eta \right\rangle_{\Gamma^m}^{h}
+ \left\langle \nabs\,\vec X^{m+1}, \nabs\,\vec\eta \right\rangle_{\Gamma^m}
 = 0  \quad\forall\ \vec\eta \in \Vhm\,,\label{eq:GDe} \\
& \left\langle \vec F_\Gamma^{m+1}, \vec\chi \right\rangle_{\Gamma^m}^h =
\left\langle \nabs\,\vec\kappa^{m+1} , \nabs\,\vec\chi \right\rangle_{\Gamma^m}
+ \left\langle \nabs\,.\,\vec\kappa^{m}_{\Gamma^m}, \nabs\,.\,\vec\chi 
\right\rangle_{\Gamma^m} \nonumber \\ & \ \
 +\tfrac12 \left\langle \left|\kappa^m_{\Gamma^m}\right|^2,
 \nabs\,.\,\vec\chi \right\rangle_{\Gamma^m}^{h}
-2 \left\langle \nabs\,\vec\kappa^{m}_{\Gamma^m}, 
\mat D_s(\vec\chi) \right\rangle_{\Gamma^m}
\quad\forall\ \vec\chi \in \Vhm\,, \label{eq:GDf}
\end{align}
\end{subequations}
and set $\Gamma^{m+1} = \vec X^{m+1}(\Gamma^m)$,
$\vec\kappa^{m+1}_{\Gamma^{m+1}} = \vec\kappa^{m+1}\circ (\vec
X^{m+1})^{-1}\in \Vhmp$ and 
\arxivyesno{$\vec U^{m+1}_{\Gamma^{m+1}} = $ \linebreak $
\vec\pi_{\Gamma^m}\left[\vec U^{m+1}_{\mid_{\Gamma^m}}\right] 
\circ (\vec X^{m+1})^{-1} \in \Vhmp$.}
{$\vec U^{m+1}_{\Gamma^{m+1}} =
\vec\pi_{\Gamma^m}\left[\vec U^{m+1}_{\mid_{\Gamma^m}}\right] 
\circ (\vec X^{m+1})^{-1} \in \Vhmp$.}

The cases in which one or both of $\spont$ and $\beta$ are different from
zero can be handled in a similar way. Moreover, on using the techniques
from \S\ref{subsec:NS}, the approximations introduced above can also be
generalized to the case of Navier--Stokes flow in the bulk. 
In addition, a fully discrete equivalent of the approach mentioned in
Remark~\ref{rem:biosd}\ref{item:biosdi} ensures good volume conservation
in practice.
We refer to \cite{nsns,nsnsade} for more details.
Moreover, on assuming a discrete version of the LBB condition \eqref{eq:LBBG},
one can prove that there exists a unique solution to \eqref{eq:GD}
and to the corresponding fully discrete approximation of (\ref{eq:biosda}--e),
\eqref{eq:biosd2}, as well as to their extensions discussed above.
The necessary techniques can be found in the proofs of 
\citet[Theorem~5.1]{nsns} and \citet[Theorem~5.1]{nsnsade}.
Finally, we observe that 
the linear systems resulting from \eqref{eq:GD} and its extensions
can be solved with the help of a Schur complement
approach, on combining the techniques presented in Remark~\ref{rem:Stokessolve}
and Remark~\ref{rem:GDsolve}. We refer to \citet[\S6]{nsns} 
and \citet[\S6]{nsnsade} for more details. 

In Figure~\ref{fig:nsns} we show a numerical simulation for an extension of 
the scheme \eqref{eq:GD} to Navier--Stokes flow in the bulk. 
The domain
$\Omega$ is chosen to have a constriction, and the chosen boundary conditions
model a Poiseuille-type flow.
\begin{figure}
\center
\newcommand\lwidth{0.24\textwidth} 
\includegraphics[angle=0,width=\lwidth]{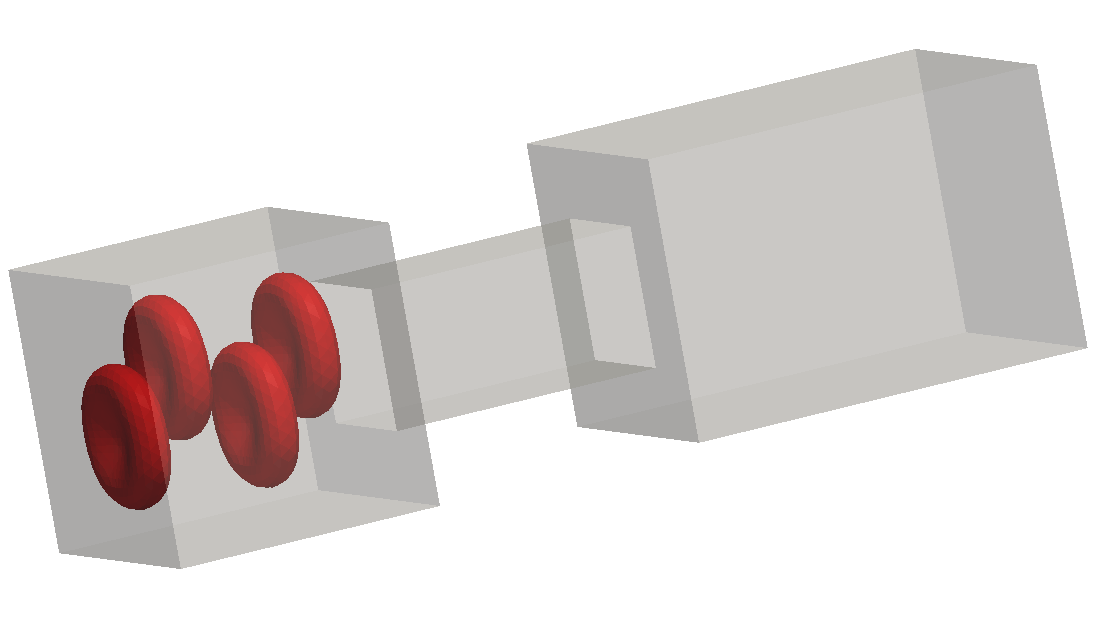}
\includegraphics[angle=0,width=\lwidth]{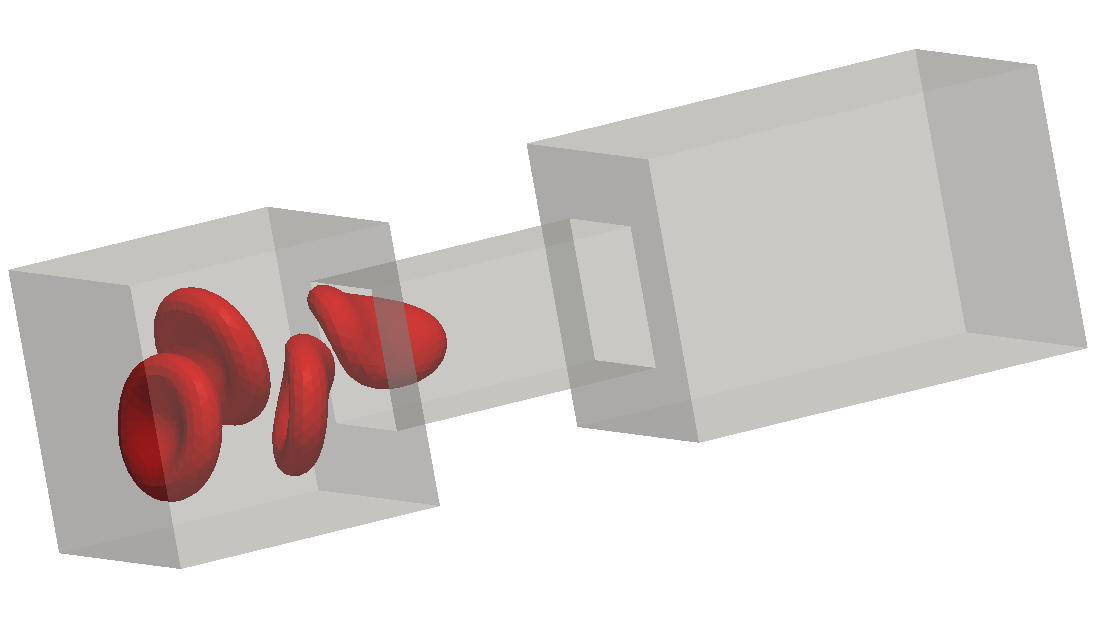}
\includegraphics[angle=0,width=\lwidth]{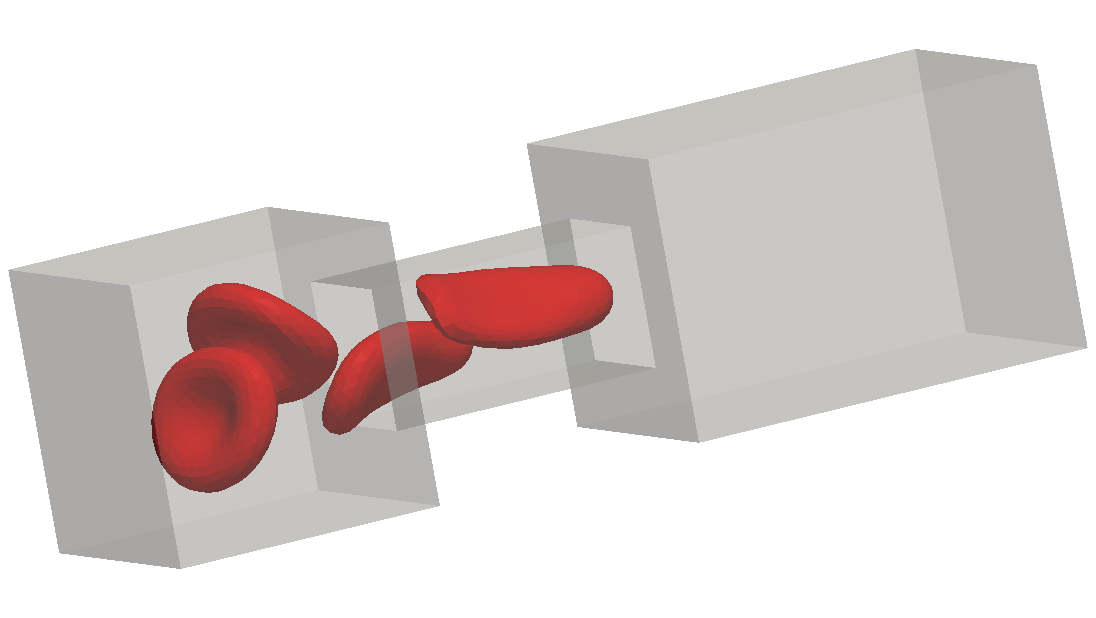}
\includegraphics[angle=0,width=\lwidth]{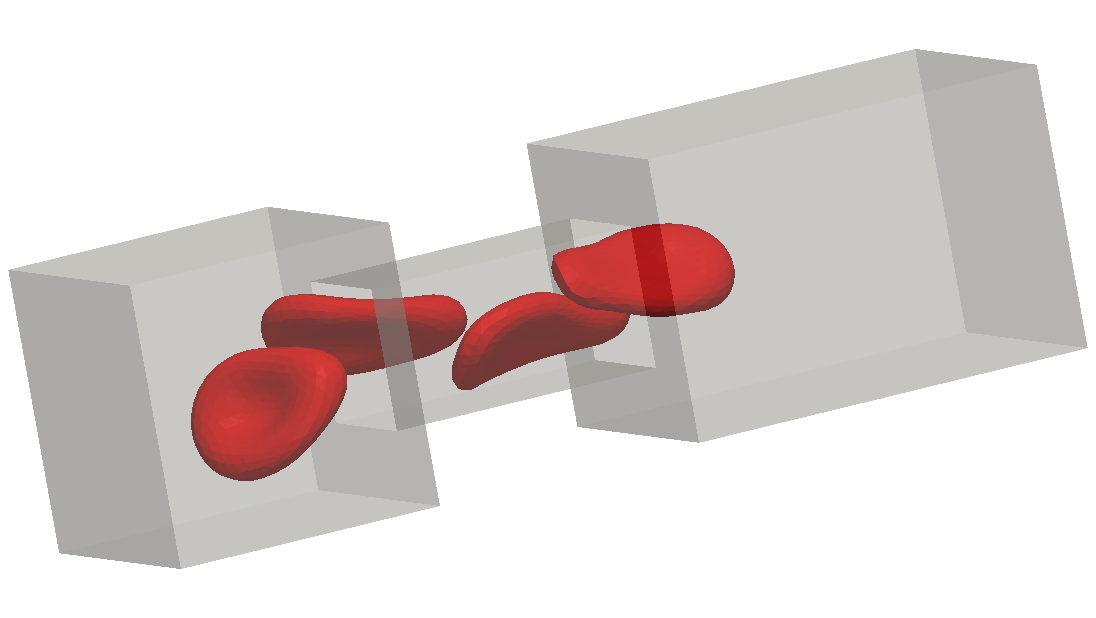}
\includegraphics[angle=0,width=\lwidth]{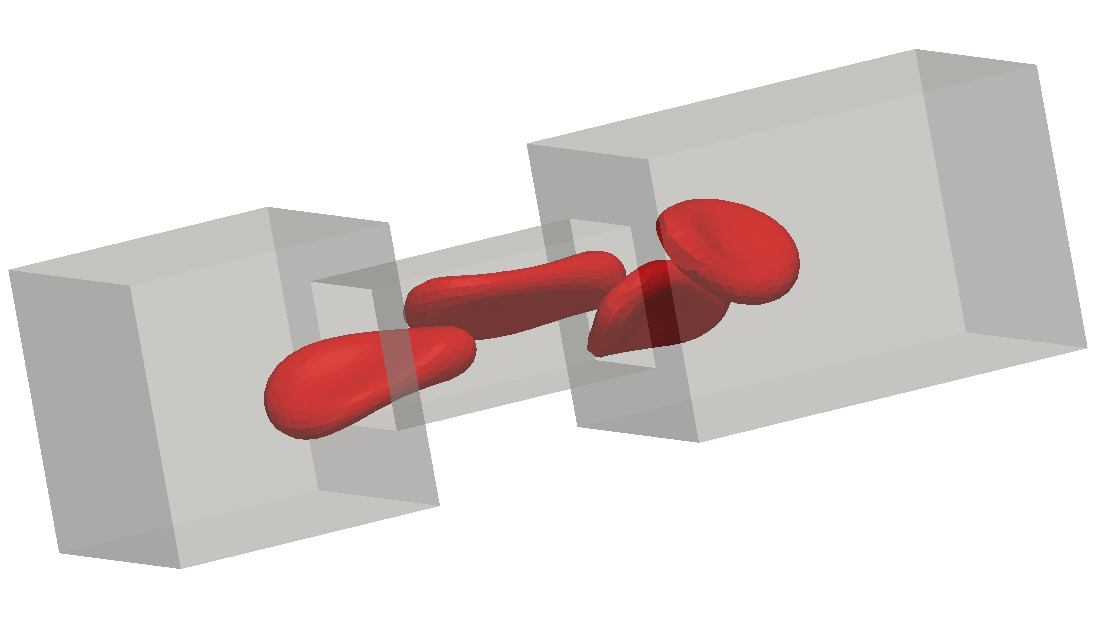}
\includegraphics[angle=0,width=\lwidth]{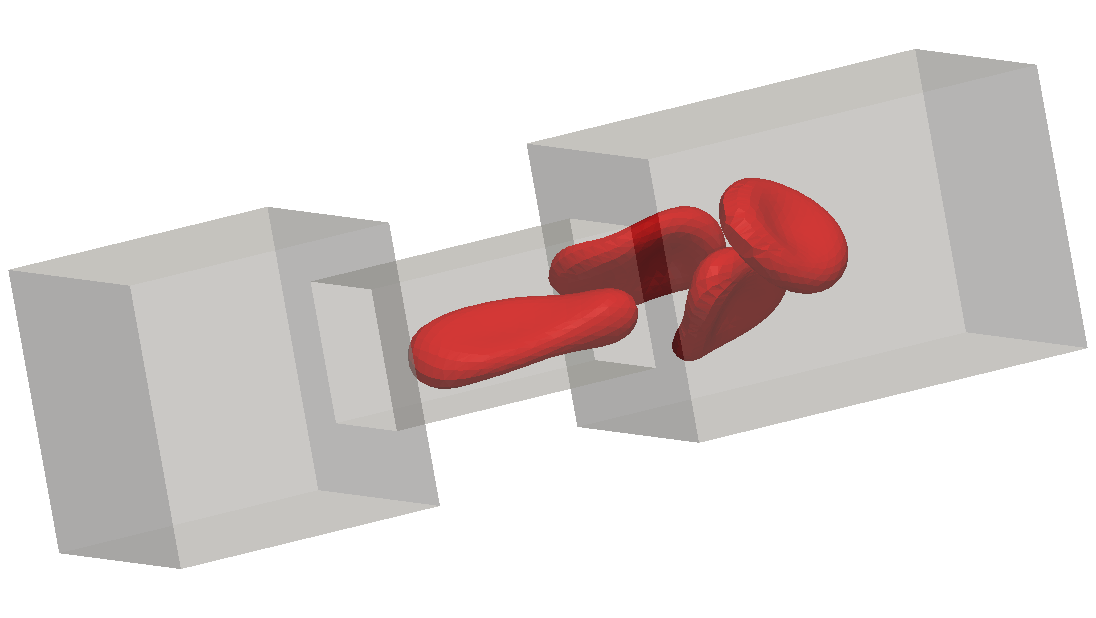}
\includegraphics[angle=0,width=\lwidth]{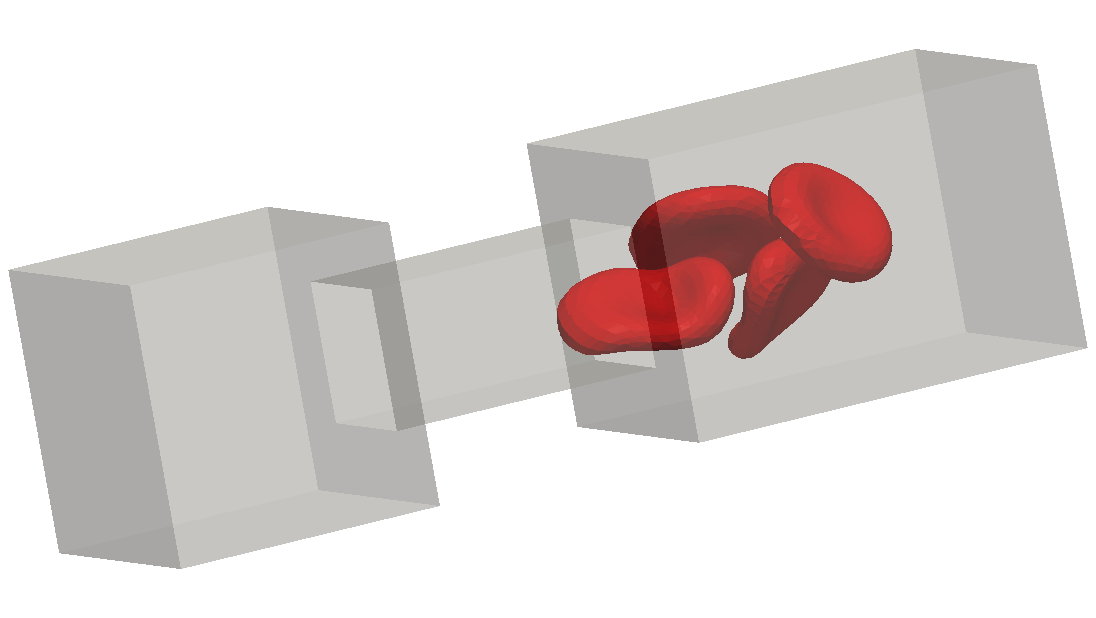}
\includegraphics[angle=0,width=\lwidth]{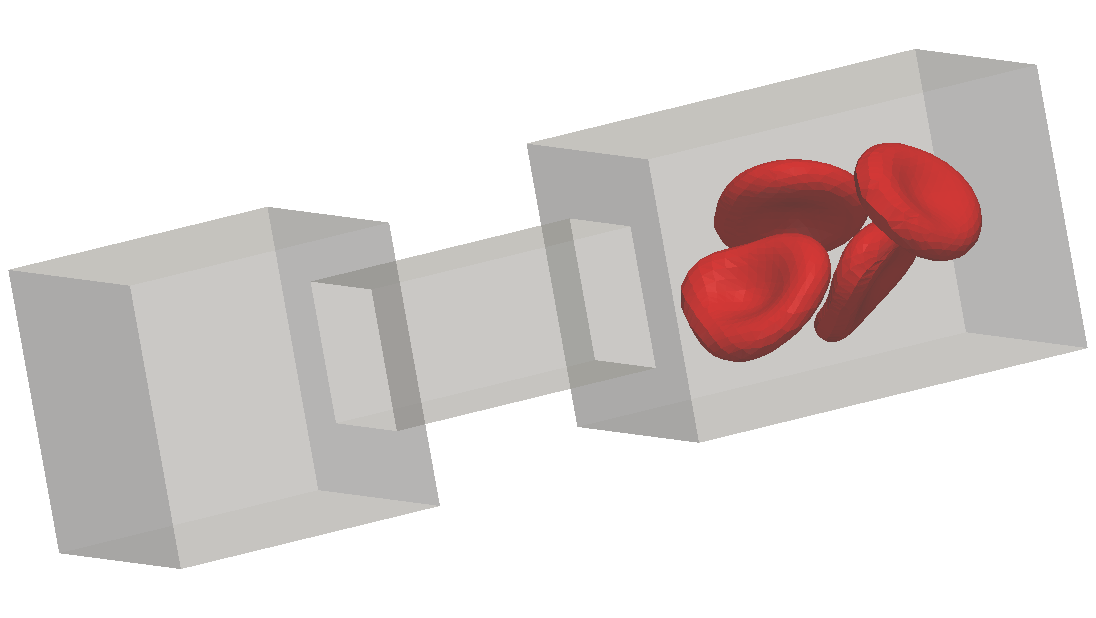}
\caption{Visualization of the numerical experiment shown in 
\citet[Fig.\ 12]{nsns}. The snapshots are taken at times $t=0,\ 0.5,\ 1,\
1.5,\ 2,\ 2.5,\ 3,\ 
4$.
}
\label{fig:nsns}
\end{figure}%

\subsection{Two-phase biomembranes} \label{subsec:nsns2phase}
It is also possible to consider two-phase biomembranes. In this
case we can introduce an order parameter $\mathfrak{c}$,
which takes the values $\pm 1$ in the two different
phases, and this parameter is related to the composition of a chemical
species within the membrane. On the surface we then use a phase field model
to approximate the interfacial energy by the Ginzburg--Landau functional
\begin{equation*}
\varsigma\,\int_\Gamma \tfrac12\,\epsilon\,|\nabs\,\mathfrak{c}|^2
+\epsilon^{-1}\,\Psi(\mathfrak{c}) \dH{d-1}\,.
\end{equation*}
Here $\Psi$ is a double well potential,
$\varsigma>0$ is related to the tension of the interface between 
the two phases on the membrane, often called line tension in the special case
$d=3$, and $\epsilon>0$ is related to the interfacial thickness 
of a diffuse layer between the two phases.

In the different phases $\alpha$, $\spont$ and $\alpha^G$, 
cf.\ (\ref{eq:CEHEn}), 
will take different values, and we will interpolate these values
obtaining functions $\alpha(\phasec)>0$, $\spont(\phasec)$ and
$\alpha^G(\phasec)$. The total energy, in the absence of ADE effects,
will hence have the form
\begin{equation} \label{eq:E}
E(\Gamma,\phasec) = \int_{\Gamma} b(\varkappa,\phasec) + 
\alpha^G (\phasec)\, \Gauss + \varsigma\,b_{GL}(\phasec) \dH{d-1}\,, 
\end{equation}
where
\begin{equation*} 
b(\varkappa,\phasec) = \tfrac12\,\alpha(\phasec) \,(\varkappa -
\spont(\phasec))^2
\quad\text{and}\quad
b_{GL}(\phasec) = \tfrac12\,\epsilon\,|\nabs\,\phasec|^2 + 
\epsilon^{-1}\,\Psi(\phasec)\,.
\end{equation*}
In \cite{nsns2phase} the present authors generalized the model for the dynamics
of fluidic biomembranes to the two-phase case in the absence of ADE effects. 
Let us state here some basic
ingredients. One now has to introduce an appropriate evolution law for
the species
concentration, $\phasec$, on the membrane. To this end, we considered
the following Cahn--Hilliard dynamics on $\Gamma(t)$
\[ 
\vartheta\, \matpartu\,\phasec = \Delta_s\,\chempot \,,\quad
\chempot = - \varsigma\,\epsilon\,\Delta_s\,\phasec +
\varsigma\,\epsilon^{-1}\,\Psi'(\phasec)
+ (\partial_\phasec\,b)(\varkappa,\phasec) 
+ (\alpha^G)'(\phasec)\,\Gauss\,, 
\]
where $\matpartu$ is the material time derivative as defined in 
\eqref{eq:materialder},
$\chempot$ denotes the chemical potential and $\vartheta \in \bRplus$ is a 
kinetic coefficient. We note here that $\chempot = \deldel{\phasec}\, 
E(\Gamma,\phasec)$ is
the first variation of the total energy with respect to $\phasec$.  In
addition, the force $\vec f_\Gamma$ in \eqref{eq:bio2} 
needs to be modified to take into account the concentration $\phasec$. 
The generalized force is given by minus the
first variation of the energy (\ref{eq:E}) with respect to $\Gamma$, i.e.\
\begin{align*} 
\vec f_\Gamma & = - \deldel{\Gamma}\,E(\Gamma,\phasec) \nonumber \\
& = \left[-\Delta_s\,[\alpha(\phasec)\,(\varkappa - \spont(\phasec))]
-\alpha(\phasec)\,(\varkappa-\spont(\phasec)) \,|\nabs\,\vec\nu|^2
+b(\varkappa,\phasec)\,\varkappa\right.\nonumber\\
&\qquad\left.- \nabs\,.\,([\varkappa\,\mat\Id + \nabs\,\vec\nu]\,\nabs\,\alpha^G(\phasec)) 
\right]\vec\nu
+ \left[(\partial_\phasec\,b)(\varkappa,\phasec) +
(\alpha^G)'(\phasec)\,\Gauss\right] \nabs\,\phasec \nonumber\\
&\qquad  + \varsigma\left[ b_{GL}(\phasec)\,
\varkappa\,\vec\nu 
+ \nabs\, b_{GL}(\phasec)
- \epsilon\,\nabs\,.\,( (\nabs\,\phasec)\otimes(\nabs\,\phasec)) \right],
\end{align*}
where we note that
\begin{equation*} 
(\partial_\phasec\,b)(\varkappa,\phasec)
= \tfrac12\,\alpha'(\phasec) \,(\varkappa - \spont(\phasec))^2 
- \alpha(\phasec)\,(\varkappa - \spont(\phasec))\,\spont'(\phasec)\,.
\end{equation*}
We observe that in contrast to situations where the energy density 
does not depend on a
species concentration, we now have tangential contributions to
$\vec f_\Gamma$, which gives rise to a Marangoni-type effect.
These equations now have to be coupled to (\ref{eq:NSNS1}) and \eqref{eq:bio}.
A stable semidiscrete formulation
of the resulting total system has been derived in \cite{nsns2phase}, where also
several numerical computations showing the influence of the occurrence of the 
two different phases on the evolution of the membrane are shown.

\subsection{Alternative numerical approaches}
\label{subsec:alternativebio}

Let us now mention other contributions that take
local incompressibility and/or fluid effects into account in the evolution of 
vesicles and membranes.
In \cite{BonitoNP11} a fluid-membrane system, in which forces resulting from 
the Willmore energy act on an interior flow, has been studied.
Local incompressibility conditions on the membrane have been addressed by
\cite{SalacM11,LaadhariSM14,GeraS18} within a level set context,
by \cite{JametM07,AlandELV14} with the help of a phase field approach
and by \cite{HuKL14,Heintz15} using an immersed boundary method.
Moreover, \cite{RahimiA12,RodriguesAMB15} presented numerical results
using a surface Stokes system
without taking the bulk fluid flow into account. There the volume
conservation is enforced by a global Lagrange multiplier.
\cite{ArroyoSH10preprint} simultaneously take surface and bulk
viscosity effects in the fluidic membrane evolution into account.
Their numerical results are in an axisymmetric setting.

\arxivyesno{
\section*{Acknowledgements}
The authors gratefully acknowledge the support 
of the Regensburger Universit\"atsstiftung Hans Vielberth.
}{
\begin{acks}
The authors gratefully acknowledge the support 
of the Regensburger Universit\"atsstiftung Hans Vielberth.
\end{acks}
\vspace*{-3pt}
}

\arxivyesno{
\renewcommand\refname{References\footnote{%
The items in this bibliography are sorted per the following interpretation of
the Harvard reference style. All items are sorted alphabetically by the first
author's name. For each first author, first appear all the items 
with one or two authors, sorted first alphabetically by the authors' names 
and then chronologically,
followed by all the items with more than two authors, which are arranged in
chronological order only.}}
}{}


\begin{thebibliography}{197}
\providecommand{\natexlab}[1]{#1}
\providecommand{\url}[1]{\texttt{#1}}

\bibitem[Agnese and N\"urnberg(2016)]{AgneseN16}
M.~Agnese and R.~N\"urnberg.
\newblock Fitted finite element discretization of two--phase {S}tokes flow.
\newblock \emph{Internat. J. Numer. Methods Fluids}, 82\penalty0 (11):\penalty0
  709--729, 2016.

\bibitem[Agnese and N\"urnberg(2019)]{AgneseN19}
M.~Agnese and R.~N\"urnberg.
\newblock Fitted front tracking methods for two-phase incompressible
  {N}avier--{S}tokes flow: {E}ulerian and {ALE} finite element discretizations.
\newblock \emph{Int. J. Numer. Anal. Model.}, 2019.
\newblock (to appear).

\bibitem[Agricola and Friedrich(2002)]{AgricolaF02}
I.~Agricola and T.~Friedrich.
\newblock \emph{Global Analysis: Differential Forms in Analysis, Geometry, and
  Physics}, volume~52 of \emph{Graduate Studies in Mathematics}.
\newblock Amer. Math. Soc., Providence, RI, 2002.

\bibitem[Aland et~al.(2014)Aland, Egerer, Lowengrub, and Voigt]{AlandELV14}
S.~Aland, S.~Egerer, J.~Lowengrub, and A.~Voigt.
\newblock Diffuse interface models of locally inextensible vesicles in a
  viscous fluid.
\newblock \emph{J. Comput. Phys.}, 277:\penalty0 32--47, 2014.

\bibitem[Aland et~al.(2017)Aland, Hahn, Kahle, and N{\"u}rnberg]{AlandHKN17}
S.~Aland, A.~Hahn, C.~Kahle, and R.~N{\"u}rnberg.
\newblock Comparative simulations of {T}aylor flow with surfactants based on
  sharp- and diffuse-interface methods.
\newblock In D.~Bothe and A.~Reusken, editors, \emph{Transport Processes at
  Fluidic Interfaces}, pages 639--679. Birkh\"auser, Berlin, 2017.

\bibitem[Almgren(1993)]{Almgren93}
R.~F. Almgren.
\newblock Variational algorithms and pattern formation in dendritic
  solidification.
\newblock \emph{J. Comput. Phys.}, 106\penalty0 (2):\penalty0 337--354, 1993.

\bibitem[Alvarez et~al.(1993)Alvarez, Guichard, Lions, and Morel]{AlvarezGLM93}
L.~Alvarez, F.~Guichard, P.-L. Lions, and J.-M. Morel.
\newblock Axioms and fundamental equations of image processing.
\newblock \emph{Arch. Rational Mech. Anal.}, 123\penalty0 (3):\penalty0
  199--257, 1993.

\bibitem[Amann and Escher(2009)]{AmannE09}
H.~Amann and J.~Escher.
\newblock \emph{Analysis {III}}.
\newblock Birkh\"{a}user, Basel, 2009.

\bibitem[Arroyo and DeSimone(2009)]{ArroyoS09}
M.~Arroyo and A.~DeSimone.
\newblock Relaxation dynamics of fluid membranes.
\newblock \emph{Phys. Rev. E}, 79\penalty0 (3):\penalty0 031915, 2009.

\bibitem[Arroyo et~al.(2010)Arroyo, DeSimone, and Heltai]{ArroyoSH10preprint}
M.~Arroyo, A.~DeSimone, and L.~Heltai.
\newblock The role of membrane viscosity in the dynamics of fluid membranes.
\newblock arXiv:1007.4934, 2010.

\bibitem[B{\"a}nsch(2001)]{Bansch01}
E.~B{\"a}nsch.
\newblock Finite element discretization of the {N}avier--{S}tokes equations
  with a free capillary surface.
\newblock \emph{Numer. Math.}, 88\penalty0 (2):\penalty0 203--235, 2001.

\bibitem[B{\"a}nsch and Schmidt(2000)]{BanschS00}
E.~B{\"a}nsch and A.~Schmidt.
\newblock Simulation of dendritic crystal growth with thermal convection.
\newblock \emph{Interfaces Free Bound.}, 2\penalty0 (1):\penalty0 95--115,
  2000.

\bibitem[B{\"a}nsch and Schmidt(2019)]{BanschS19handbook}
E.~B{\"a}nsch and A.~Schmidt.
\newblock Free boundary problems in fluids and materials.
\newblock In \emph{Handbook of Numerical Analysis, {V}ol. {XXI} \& {XXII}}.
  Elsevier, 2019.
\newblock (to appear).

\bibitem[B{\"a}nsch et~al.(2004)B{\"a}nsch, Morin, and Nochetto]{BanschMN04}
E.~B{\"a}nsch, P.~Morin, and R.~H. Nochetto.
\newblock Surface diffusion of graphs: variational formulation, error analysis,
  and simulation.
\newblock \emph{SIAM J. Numer. Anal.}, 42\penalty0 (2):\penalty0 773--799,
  2004.

\bibitem[B{\"a}nsch et~al.(2005)B{\"a}nsch, Morin, and Nochetto]{BanschMN05}
E.~B{\"a}nsch, P.~Morin, and R.~H. Nochetto.
\newblock A finite element method for surface diffusion: the parametric case.
\newblock \emph{J. Comput. Phys.}, 203\penalty0 (1):\penalty0 321--343, 2005.

\bibitem[Barrett and Elliott(1982)]{BarrettE82}
J.~W. Barrett and C.~M. Elliott.
\newblock A finite element method on a fixed mesh for the {S}tefan problem with
  convection in a saturated porous medium.
\newblock In K.~W. Morton and M.~J. Baines, editors, \emph{Numerical Methods
  for Fluid Dynamics}, pages 389--409. Academic Press (London), 1982.

\bibitem[Barrett et~al.(2007{\natexlab{a}})Barrett, Garcke, and
  N\"urnberg]{triplej}
J.~W. Barrett, H.~Garcke, and R.~N\"urnberg.
\newblock A parametric finite element method for fourth order geometric
  evolution equations.
\newblock \emph{J. Comput. Phys.}, 222\penalty0 (1):\penalty0 441--462,
  2007{\natexlab{a}}.

\bibitem[Barrett et~al.(2007{\natexlab{b}})Barrett, Garcke, and
  N\"urnberg]{triplejMC}
J.~W. Barrett, H.~Garcke, and R.~N\"urnberg.
\newblock On the variational approximation of combined second and fourth order
  geometric evolution equations.
\newblock \emph{SIAM J. Sci. Comput.}, 29\penalty0 (3):\penalty0 1006--1041,
  2007{\natexlab{b}}.

\bibitem[Barrett et~al.(2008{\natexlab{a}})Barrett, Garcke, and
  N\"urnberg]{ani3d}
J.~W. Barrett, H.~Garcke, and R.~N\"urnberg.
\newblock A variational formulation of anisotropic geometric evolution
  equations in higher dimensions.
\newblock \emph{Numer. Math.}, 109\penalty0 (1):\penalty0 1--44,
  2008{\natexlab{a}}.

\bibitem[Barrett et~al.(2008{\natexlab{b}})Barrett, Garcke, and
  N\"urnberg]{gflows3d}
J.~W. Barrett, H.~Garcke, and R.~N\"urnberg.
\newblock On the parametric finite element approximation of evolving
  hypersurfaces in {${\mathbb R}^3$}.
\newblock \emph{J. Comput. Phys.}, 227\penalty0 (9):\penalty0 4281--4307,
  2008{\natexlab{b}}.

\bibitem[Barrett et~al.(2008{\natexlab{c}})Barrett, Garcke, and
  N\"urnberg]{triplejANI}
J.~W. Barrett, H.~Garcke, and R.~N\"urnberg.
\newblock Numerical approximation of anisotropic geometric evolution equations
  in the plane.
\newblock \emph{IMA J. Numer. Anal.}, 28\penalty0 (2):\penalty0 292--330,
  2008{\natexlab{c}}.

\bibitem[Barrett et~al.(2008{\natexlab{d}})Barrett, Garcke, and
  N\"urnberg]{willmore}
J.~W. Barrett, H.~Garcke, and R.~N\"urnberg.
\newblock Parametric approximation of {W}illmore flow and related geometric
  evolution equations.
\newblock \emph{SIAM J. Sci. Comput.}, 31\penalty0 (1):\penalty0 225--253,
  2008{\natexlab{d}}.

\bibitem[Barrett et~al.(2010{\natexlab{a}})Barrett, Garcke, and
  N\"urnberg]{clust3d}
J.~W. Barrett, H.~Garcke, and R.~N\"urnberg.
\newblock Parametric approximation of surface clusters driven by isotropic and
  anisotropic surface energies.
\newblock \emph{Interfaces Free Bound.}, 12\penalty0 (2):\penalty0 187--234,
  2010{\natexlab{a}}.

\bibitem[Barrett et~al.(2010{\natexlab{b}})Barrett, Garcke, and
  N\"urnberg]{curves3d}
J.~W. Barrett, H.~Garcke, and R.~N\"urnberg.
\newblock Numerical approximation of gradient flows for closed curves in
  {${\mathbb R}^d$}.
\newblock \emph{IMA J. Numer. Anal.}, 30\penalty0 (1):\penalty0 4--60,
  2010{\natexlab{b}}.

\bibitem[Barrett et~al.(2010{\natexlab{c}})Barrett, Garcke, and
  N\"urnberg]{dendritic}
J.~W. Barrett, H.~Garcke, and R.~N\"urnberg.
\newblock On stable parametric finite element methods for the {S}tefan problem
  and the {M}ullins--{S}ekerka problem with applications to dendritic growth.
\newblock \emph{J. Comput. Phys.}, 229\penalty0 (18):\penalty0 6270--6299,
  2010{\natexlab{c}}.

\bibitem[Barrett et~al.(2010{\natexlab{d}})Barrett, Garcke, and
  N\"urnberg]{ejam3d}
J.~W. Barrett, H.~Garcke, and R.~N\"urnberg.
\newblock Finite element approximation of coupled surface and grain boundary
  motion with applications to thermal grooving and sintering.
\newblock \emph{European J. Appl. Math.}, 21\penalty0 (6):\penalty0 519--556,
  2010{\natexlab{d}}.

\bibitem[Barrett et~al.(2011)Barrett, Garcke, and N\"urnberg]{fdfi}
J.~W. Barrett, H.~Garcke, and R.~N\"urnberg.
\newblock The approximation of planar curve evolutions by stable fully implicit
  finite element schemes that equidistribute.
\newblock \emph{Numer. Methods Partial Differential Equations}, 27\penalty0
  (1):\penalty0 1--30, 2011.

\bibitem[Barrett et~al.(2012{\natexlab{a}})Barrett, Garcke, and
  N\"urnberg]{jcg}
J.~W. Barrett, H.~Garcke, and R.~N\"urnberg.
\newblock Numerical computations of faceted pattern formation in snow crystal
  growth.
\newblock \emph{Phys. Rev. E}, 86\penalty0 (1):\penalty0 011604,
  2012{\natexlab{a}}.

\bibitem[Barrett et~al.(2012{\natexlab{b}})Barrett, Garcke, and
  N\"urnberg]{pwf}
J.~W. Barrett, H.~Garcke, and R.~N\"urnberg.
\newblock Parametric approximation of isotropic and anisotropic elastic flow
  for closed and open curves.
\newblock \emph{Numer. Math.}, 120\penalty0 (3):\penalty0 489--542,
  2012{\natexlab{b}}.

\bibitem[Barrett et~al.(2012{\natexlab{c}})Barrett, Garcke, and
  N\"urnberg]{pwftj}
J.~W. Barrett, H.~Garcke, and R.~N\"urnberg.
\newblock Elastic flow with junctions: Variational approximation and
  applications to nonlinear splines.
\newblock \emph{Math. Models Methods Appl. Sci.}, 22\penalty0 (11):\penalty0
  1250037, 2012{\natexlab{c}}.

\bibitem[Barrett et~al.(2013{\natexlab{a}})Barrett, Garcke, and
  N\"urnberg]{crystal}
J.~W. Barrett, H.~Garcke, and R.~N\"urnberg.
\newblock Finite element approximation of one-sided {S}tefan problems with
  anisotropic, approximately crystalline, {G}ibbs--{T}homson law.
\newblock \emph{Adv. Differential Equations}, 18\penalty0 (3-4):\penalty0
  383--432, 2013{\natexlab{a}}.

\bibitem[Barrett et~al.(2013{\natexlab{b}})Barrett, Garcke, and
  N\"urnberg]{eck}
J.~W. Barrett, H.~Garcke, and R.~N\"urnberg.
\newblock On the stable discretization of strongly anisotropic phase field
  models with applications to crystal growth.
\newblock \emph{ZAMM Z. Angew. Math. Mech.}, 93\penalty0 (10-11):\penalty0
  719--732, 2013{\natexlab{b}}.

\bibitem[Barrett et~al.(2013{\natexlab{c}})Barrett, Garcke, and
  N\"urnberg]{spurious}
J.~W. Barrett, H.~Garcke, and R.~N\"urnberg.
\newblock Eliminating spurious velocities with a stable approximation of
  viscous incompressible two-phase {S}tokes flow.
\newblock \emph{Comput. Methods Appl. Mech. Engrg.}, 267:\penalty0 511--530,
  2013{\natexlab{c}}.

\bibitem[Barrett et~al.(2014{\natexlab{a}})Barrett, Garcke, and
  N\"urnberg]{pfsi}
J.~W. Barrett, H.~Garcke, and R.~N\"urnberg.
\newblock Phase field models versus parametric front tracking methods: {A}re
  they accurate and computationally efficient?
\newblock \emph{Commun. Comput. Phys.}, 15\penalty0 (2):\penalty0 506--555,
  2014{\natexlab{a}}.

\bibitem[Barrett et~al.(2014{\natexlab{b}})Barrett, Garcke, and
  N\"urnberg]{vch}
J.~W. Barrett, H.~Garcke, and R.~N\"urnberg.
\newblock Stable phase field approximations of anisotropic solidification.
\newblock \emph{IMA J. Numer. Anal.}, 34\penalty0 (4):\penalty0 1289--1327,
  2014{\natexlab{b}}.

\bibitem[Barrett et~al.(2015{\natexlab{a}})Barrett, Garcke, and
  N\"urnberg]{bssf}
J.~W. Barrett, H.~Garcke, and R.~N\"urnberg.
\newblock Stable numerical approximation of two-phase flow with a
  {B}oussinesq--{S}criven surface fluid.
\newblock \emph{Commun. Math. Sci.}, 13\penalty0 (7):\penalty0 1829--1874,
  2015{\natexlab{a}}.

\bibitem[Barrett et~al.(2015{\natexlab{b}})Barrett, Garcke, and
  N\"urnberg]{fluidfbp}
J.~W. Barrett, H.~Garcke, and R.~N\"urnberg.
\newblock A stable parametric finite element discretization of two-phase
  {N}avier--{S}tokes flow.
\newblock \emph{J. Sci. Comp.}, 63\penalty0 (1):\penalty0 78--117,
  2015{\natexlab{b}}.

\bibitem[Barrett et~al.(2015{\natexlab{c}})Barrett, Garcke, and
  N\"urnberg]{nsnspre}
J.~W. Barrett, H.~Garcke, and R.~N\"urnberg.
\newblock Numerical computations of the dynamics of fluidic membranes and
  vesicles.
\newblock \emph{Phys. Rev. E}, 92\penalty0 (5):\penalty0 052704,
  2015{\natexlab{c}}.

\bibitem[Barrett et~al.(2015{\natexlab{d}})Barrett, Garcke, and
  N\"urnberg]{solsurf}
J.~W. Barrett, H.~Garcke, and R.~N\"urnberg.
\newblock Stable finite element approximations of two-phase flow with soluble
  surfactant.
\newblock \emph{J. Comput. Phys.}, 297:\penalty0 530--564, 2015{\natexlab{d}}.

\bibitem[Barrett et~al.(2015{\natexlab{e}})Barrett, Garcke, and
  N\"urnberg]{tpfs}
J.~W. Barrett, H.~Garcke, and R.~N\"urnberg.
\newblock On the stable numerical approximation of two-phase flow with
  insoluble surfactant.
\newblock \emph{M2AN Math. Model. Numer. Anal.}, 49\penalty0 (2):\penalty0
  421--458, 2015{\natexlab{e}}.

\bibitem[Barrett et~al.(2016{\natexlab{a}})Barrett, Garcke, and
  N\"urnberg]{nsns}
J.~W. Barrett, H.~Garcke, and R.~N\"urnberg.
\newblock A stable numerical method for the dynamics of fluidic biomembranes.
\newblock \emph{Numer. Math.}, 134\penalty0 (4):\penalty0 783--822,
  2016{\natexlab{a}}.

\bibitem[Barrett et~al.(2016{\natexlab{b}})Barrett, Garcke, and
  N\"urnberg]{pwfade}
J.~W. Barrett, H.~Garcke, and R.~N\"urnberg.
\newblock Computational parametric {W}illmore flow with spontaneous curvature
  and area difference elasticity effects.
\newblock \emph{SIAM J. Numer. Anal.}, 54\penalty0 (3):\penalty0 1732--1762,
  2016{\natexlab{b}}.

\bibitem[Barrett et~al.(2017{\natexlab{a}})Barrett, Deckelnick, and
  Styles]{BarrettDS17}
J.~W. Barrett, K.~Deckelnick, and V.~Styles.
\newblock Numerical analysis for a system coupling curve evolution to reaction
  diffusion on the curve.
\newblock \emph{SIAM J. Numer. Anal.}, 55\penalty0 (2):\penalty0 1080--1100,
  2017{\natexlab{a}}.

\bibitem[Barrett et~al.(2017{\natexlab{b}})Barrett, Garcke, and
  N\"urnberg]{nsns2phase}
J.~W. Barrett, H.~Garcke, and R.~N\"urnberg.
\newblock Finite element approximation for the dynamics of fluidic two-phase
  biomembranes.
\newblock \emph{M2AN Math. Model. Numer. Anal.}, 51\penalty0 (6):\penalty0
  2319--2366, 2017{\natexlab{b}}.

\bibitem[Barrett et~al.(2017{\natexlab{c}})Barrett, Garcke, and
  N\"urnberg]{nsnsade}
J.~W. Barrett, H.~Garcke, and R.~N\"urnberg.
\newblock Finite element approximation for the dynamics of asymmetric fluidic
  biomembranes.
\newblock \emph{Math. Comp.}, 86\penalty0 (305):\penalty0 1037--1069,
  2017{\natexlab{c}}.

\bibitem[Barrett et~al.(2017{\natexlab{d}})Barrett, Garcke, and
  N\"urnberg]{pwfopen}
J.~W. Barrett, H.~Garcke, and R.~N\"urnberg.
\newblock Stable variational approximations of boundary value problems for
  {W}illmore flow with {G}aussian curvature.
\newblock \emph{IMA J. Numer. Anal.}, 37\penalty0 (4):\penalty0 1657--1709,
  2017{\natexlab{d}}.

\bibitem[Barrett et~al.(2018)Barrett, Garcke, and N\"urnberg]{pwfc0c1}
J.~W. Barrett, H.~Garcke, and R.~N\"urnberg.
\newblock Gradient flow dynamics of two-phase biomembranes: {S}harp interface
  variational formulation and finite element approximation.
\newblock \emph{SMAI J. Comput. Math.}, 4:\penalty0 151--195, 2018.

\bibitem[Barrett et~al.(2019{\natexlab{a}})Barrett, Garcke, and
  N\"urnberg]{aximcf}
J.~W. Barrett, H.~Garcke, and R.~N\"urnberg.
\newblock Variational discretization of axisymmetric curvature flows.
\newblock \emph{Numer. Math.}, 141\penalty0 (3):\penalty0 791--837,
  2019{\natexlab{a}}.

\bibitem[Barrett et~al.(2019{\natexlab{b}})Barrett, Garcke, and
  N\"urnberg]{axisd}
J.~W. Barrett, H.~Garcke, and R.~N\"urnberg.
\newblock Finite element methods for fourth order axisymmetric geometric
  evolution equations.
\newblock \emph{J. Comput. Phys.}, 376:\penalty0 733--766, 2019{\natexlab{b}}.

\bibitem[Bartels(2019)]{Bartels19handbook}
S.~Bartels.
\newblock Finite element simulation of nonlinear bending models for thin
  elastic rods and plates.
\newblock In \emph{Handbook of Numerical Analysis, {V}ol. {XXI} \& {XXII}}.
  Elsevier, 2019.
\newblock (to appear).

\bibitem[Bartels et~al.(2012)Bartels, Dolzmann, Nochetto, and
  Raisch]{BartelsDNR12}
S.~Bartels, G.~Dolzmann, R.~H. Nochetto, and A.~Raisch.
\newblock Finite element methods for director fields on flexible surfaces.
\newblock \emph{Interfaces Free Bound.}, 14\penalty0 (2):\penalty0 231--272,
  2012.

\bibitem[Bartezzaghi et~al.(2019)Bartezzaghi, Ded\`e, and
  Quarteroni]{BartezzaghiDQ19}
A.~Bartezzaghi, L.~Ded\`e, and A.~Quarteroni.
\newblock Biomembrane modeling with isogeometric analysis.
\newblock \emph{Comput. Methods Appl. Mech. Engrg.}, 347:\penalty0 103--119,
  2019.

\bibitem[Beir{\~a}o~da Veiga et~al.(2013)Beir{\~a}o~da Veiga, Brezzi, Cangiani,
  Manzini, Marini, and Russo]{BeiraoBCMMR13}
L.~Beir{\~a}o~da Veiga, F.~Brezzi, A.~Cangiani, G.~Manzini, L.~D. Marini, and
  A.~Russo.
\newblock Basic principles of virtual element methods.
\newblock \emph{Math. Models Methods Appl. Sci.}, 23\penalty0 (1):\penalty0
  199--214, 2013.

\bibitem[Bellettini et~al.(1999)Bellettini, Novaga, and
  Paolini]{BellettiniNP99}
G.~Bellettini, M.~Novaga, and M.~Paolini.
\newblock Facet-breaking for three-dimensional crystals evolving by mean
  curvature.
\newblock \emph{Interfaces Free Bound.}, 1\penalty0 (1):\penalty0 39--55, 1999.

\bibitem[Bene{\v{s}}(2003)]{Benes03}
M.~Bene{\v{s}}.
\newblock Diffuse-interface treatment of the anisotropic mean-curvature flow.
\newblock \emph{Appl. Math.}, 48\penalty0 (6):\penalty0 437--453, 2003.

\bibitem[Bobenko and Schr\"{o}der(2005)]{BobenkoS05}
A.~I. Bobenko and P.~Schr\"{o}der.
\newblock Discrete {W}illmore flow.
\newblock In J.~Fujii, editor, \emph{ACM SIGGRAPH 2005 Courses}, SIGGRAPH '05,
  New York, NY, 2005. ACM.

\bibitem[Boffi(1997)]{Boffi97}
D.~Boffi.
\newblock Three-dimensional finite element methods for the {S}tokes problem.
\newblock \emph{SIAM J. Numer. Anal.}, 34\penalty0 (2):\penalty0 664--670,
  1997.

\bibitem[Bonito et~al.(2010)Bonito, Nochetto, and Pauletti]{BonitoNP10}
A.~Bonito, R.~H. Nochetto, and M.~S. Pauletti.
\newblock Parametric {FEM} for geometric biomembranes.
\newblock \emph{J. Comput. Phys.}, 229\penalty0 (9):\penalty0 3171--3188, 2010.

\bibitem[Bonito et~al.(2011)Bonito, Nochetto, and Pauletti]{BonitoNP11}
A.~Bonito, R.~H. Nochetto, and M.~S. Pauletti.
\newblock Dynamics of biomembranes: effect of the bulk fluid.
\newblock \emph{Math. Model. Nat. Phenom.}, 6\penalty0 (5):\penalty0 25--43,
  2011.

\bibitem[Bonito et~al.(2019)Bonito, Demlow, and Nochetto]{BonitoDN19handbook}
A.~Bonito, A.~Demlow, and R.~H. Nochetto.
\newblock Finite element methods for the {L}aplace--{B}eltrami operator.
\newblock In \emph{Handbook of Numerical Analysis, {V}ol. {XXI} \& {XXII}}.
  Elsevier, 2019.
\newblock (to appear).

\bibitem[Bothe and Pr{\"u}ss(2010)]{BotheP10}
D.~Bothe and J.~Pr{\"u}ss.
\newblock On the two-phase {N}avier--{S}tokes equations with
  {B}oussinesq--{S}criven surface fluid.
\newblock \emph{J. Math. Fluid Mech.}, 12\penalty0 (1):\penalty0 133--150,
  2010.

\bibitem[Bretin et~al.(2015)Bretin, Masnou, and Oudet]{BretinMO15}
E.~Bretin, S.~Masnou, and E.~Oudet.
\newblock Phase-field approximations of the {W}illmore functional and flow.
\newblock \emph{Numer. Math.}, 131\penalty0 (1):\penalty0 115--171, 2015.

\bibitem[Burger(2005)]{Burger05}
M.~Burger.
\newblock Numerical simulation of anisotropic surface diffusion with
  curvature-dependent energy.
\newblock \emph{J. Comput. Phys.}, 203\penalty0 (2):\penalty0 602--625, 2005.

\bibitem[Burger et~al.(2007)Burger, Hau{\ss}er, St{\"o}cker, and
  Voigt]{BurgerHSV07}
M.~Burger, F.~Hau{\ss}er, C.~St{\"o}cker, and A.~Voigt.
\newblock A level set approach to anisotropic flows with curvature
  regularization.
\newblock \emph{J. Comput. Phys.}, 225\penalty0 (1):\penalty0 183--205, 2007.

\bibitem[Caginalp and Lin(1987)]{CaginalpL87}
G.~Caginalp and J.-T. Lin.
\newblock A numerical analysis of an anisotropic phase field model.
\newblock \emph{IMA J. Appl. Math.}, 39\penalty0 (1):\penalty0 51--66, 1987.

\bibitem[Cahn and Taylor(1994)]{CahnT94}
J.~W. Cahn and J.~E. Taylor.
\newblock Surface motion by surface diffusion.
\newblock \emph{Acta Metall. Mater.}, 42\penalty0 (4):\penalty0 1045--1063,
  1994.

\bibitem[Canham(1970)]{Canham70}
P.~B. Canham.
\newblock The minimum energy of bending as a possible explanation of the
  biconcave shape of the human red blood cell.
\newblock \emph{J. Theor. Biol.}, 26\penalty0 (1):\penalty0 61--81, 1970.

\bibitem[Ciarlet(1978)]{Ciarlet78}
P.~G. Ciarlet.
\newblock \emph{The Finite Element Method for Elliptic Problems}.
\newblock North-Holland Publishing Co., Amsterdam, 1978.
\newblock Studies in Mathematics and its Applications, Vol. 4.

\bibitem[CIBC(2016)]{Cleaver}
CIBC, 2016.
\newblock Cleaver: A MultiMaterial Tetrahedral Meshing Library and Application.
  Scientific Computing and Imaging Institute (SCI), Download from:
  \url{www.sci.utah.edu/cibc/software}.

\bibitem[Clarenz et~al.(2004)Clarenz, Diewald, Dziuk, Rumpf, and
  Rusu]{ClarenzDDRR04}
U.~Clarenz, U.~Diewald, G.~Dziuk, M.~Rumpf, and R.~Rusu.
\newblock A finite element method for surface restoration with smooth boundary
  conditions.
\newblock \emph{Comput. Aided Geom. Design}, 21\penalty0 (5):\penalty0
  427--445, 2004.

\bibitem[Clarenz et~al.(2005)Clarenz, Hau{\ss}er, Rumpf, Voigt, and
  Weikard]{ClarenzHRVW05}
U.~Clarenz, F.~Hau{\ss}er, M.~Rumpf, A.~Voigt, and U.~Weikard.
\newblock On level set formulations for anisotropic mean curvature flow and
  surface diffusion.
\newblock In \emph{Multiscale Modeling in Epitaxial Growth}, volume 149 of
  \emph{Internat. Ser. Numer. Math.}, pages 227--237. Birkh\"auser, Basel,
  2005.

\bibitem[Coleman et~al.(1996)Coleman, Falk, and Moakher]{ColemanFM96}
B.~D. Coleman, R.~S. Falk, and M.~Moakher.
\newblock Space-time finite element methods for surface diffusion with
  applications to the theory of the stability of cylinders.
\newblock \emph{SIAM J. Sci. Comput.}, 17\penalty0 (6):\penalty0 1434--1448,
  1996.

\bibitem[Davis(2004)]{Davis04}
T.~A. Davis.
\newblock Algorithm 832: {UMFPACK} {V}4.3---an unsymmetric-pattern multifrontal
  method.
\newblock \emph{ACM Trans. Math. Software}, 30\penalty0 (2):\penalty0 196--199,
  2004.

\bibitem[Davis(2011)]{Davis11}
T.~A. Davis.
\newblock Algorithm 915, {SuiteSparseQR}: Multifrontal multithreaded
  rank-revealing sparse {QR} factorization.
\newblock \emph{ACM Trans. Math. Software}, 38\penalty0 (1):\penalty0 1--22,
  2011.

\bibitem[Debierre et~al.(2003)Debierre, Karma, Celestini, and
  Gu{\'e}rin]{DebierreKCG03}
J.-M. Debierre, A.~Karma, F.~Celestini, and R.~Gu{\'e}rin.
\newblock Phase-field approach for faceted solidification.
\newblock \emph{Phys. Rev. E}, 68\penalty0 (4):\penalty0 041604, 2003.

\bibitem[Deckelnick and Dziuk(1995)]{DeckelnickD95}
K.~Deckelnick and G.~Dziuk.
\newblock On the approximation of the curve shortening flow.
\newblock In C.~Bandle, J.~Bemelmans, M.~Chipot, J.~S.~J. Paulin, and
  I.~Shafrir, editors, \emph{Calculus of Variations, Applications and
  Computations (Pont-\`a-Mousson, 1994)}, volume 326 of \emph{Pitman Res. Notes
  Math. Ser.}, pages 100--108. Longman Sci. Tech., Harlow, 1995.

\bibitem[Deckelnick and Dziuk(1999)]{DeckelnickD99}
K.~Deckelnick and G.~Dziuk.
\newblock Discrete anisotropic curvature flow of graphs.
\newblock \emph{M2AN Math. Model. Numer. Anal.}, 33\penalty0 (6):\penalty0
  1203--1222, 1999.

\bibitem[Deckelnick and Dziuk(2006)]{DeckelnickD06}
K.~Deckelnick and G.~Dziuk.
\newblock Error analysis of a finite element method for the {W}illmore flow of
  graphs.
\newblock \emph{Interfaces Free Bound.}, 8\penalty0 (1):\penalty0 21--46, 2006.

\bibitem[Deckelnick and Dziuk(2009)]{DeckelnickD09}
K.~Deckelnick and G.~Dziuk.
\newblock Error analysis for the elastic flow of parametrized curves.
\newblock \emph{Math. Comp.}, 78\penalty0 (266):\penalty0 645--671, 2009.

\bibitem[Deckelnick and Schieweck(2010)]{DeckelnickS10}
K.~Deckelnick and F.~Schieweck.
\newblock Error analysis for the approximation of axisymmetric {W}illmore flow
  by {$C^1$}-finite elements.
\newblock \emph{Interfaces Free Bound.}, 12\penalty0 (4):\penalty0 551--574,
  2010.

\bibitem[Deckelnick et~al.(2003)Deckelnick, Dziuk, and Elliott]{DeckelnickDE03}
K.~Deckelnick, G.~Dziuk, and C.~M. Elliott.
\newblock Error analysis of a semidiscrete numerical scheme for diffusion in
  axially symmetric surfaces.
\newblock \emph{SIAM J. Numer. Anal.}, 41\penalty0 (6):\penalty0 2161--2179,
  2003.

\bibitem[Deckelnick et~al.(2005{\natexlab{a}})Deckelnick, Dziuk, and
  Elliott]{DeckelnickDE05}
K.~Deckelnick, G.~Dziuk, and C.~M. Elliott.
\newblock Computation of geometric partial differential equations and mean
  curvature flow.
\newblock \emph{Acta Numer.}, 14:\penalty0 139--232, 2005{\natexlab{a}}.

\bibitem[Deckelnick et~al.(2005{\natexlab{b}})Deckelnick, Dziuk, and
  Elliott]{DeckelnickDE05b}
K.~Deckelnick, G.~Dziuk, and C.~M. Elliott.
\newblock Fully discrete finite element approximation for anisotropic surface
  diffusion of graphs.
\newblock \emph{SIAM J. Numer. Anal.}, 43\penalty0 (3):\penalty0 1112--1138,
  2005{\natexlab{b}}.

\bibitem[Deckelnick et~al.(2015)Deckelnick, Katz, and
  Schieweck]{DeckelnickKS15}
K.~Deckelnick, J.~Katz, and F.~Schieweck.
\newblock A {$C^1$}-finite element method for the {W}illmore flow of
  two-dimensional graphs.
\newblock \emph{Math. Comp.}, 84\penalty0 (296):\penalty0 2617--2643, 2015.

\bibitem[Dedner et~al.(2010)Dedner, Kl\"{o}fkorn, Nolte, and
  Ohlberger]{dunefem}
A.~Dedner, R.~Kl\"{o}fkorn, M.~Nolte, and M.~Ohlberger.
\newblock A generic interface for parallel and adaptive discretization schemes:
  abstraction principles and the {DUNE}-{FEM} module.
\newblock \emph{Computing}, 90\penalty0 (3--4):\penalty0 165--196, 2010.

\bibitem[DeTurck(1983)]{DeTurck83}
D.~M. DeTurck.
\newblock Deforming metrics in the direction of their {R}icci tensors.
\newblock \emph{J. Differential Geom.}, 18\penalty0 (1):\penalty0 157--162,
  1983.

\bibitem[do~Carmo(1976)]{doCarmo76}
M.~P. do~Carmo.
\newblock \emph{Differential Geometry of Curves and Surfaces}.
\newblock Prentice-Hall Inc., Englewood Cliffs, N. J., 1976.

\bibitem[Droske and Rumpf(2004)]{DroskeR04}
M.~Droske and M.~Rumpf.
\newblock A level set formulation for {W}illmore flow.
\newblock \emph{Interfaces Free Bound.}, 6\penalty0 (3):\penalty0 361--378,
  2004.

\bibitem[Du and Feng(2019)]{DuF19handbook}
Q.~Du and X.~Feng.
\newblock The phase field method for geometric moving interfaces and their
  numerical approximations.
\newblock In \emph{Handbook of Numerical Analysis, {V}ol. {XXI} \& {XXII}}.
  Elsevier, 2019.
\newblock (to appear).

\bibitem[Du et~al.(2005)Du, Liu, Ryham, and Wang]{DuLRW05}
Q.~Du, C.~Liu, R.~Ryham, and X.~Wang.
\newblock A phase field formulation of the {W}illmore problem.
\newblock \emph{Nonlinearity}, 18\penalty0 (3):\penalty0 1249--1267, 2005.

\bibitem[Dziuk(1991)]{Dziuk91}
G.~Dziuk.
\newblock An algorithm for evolutionary surfaces.
\newblock \emph{Numer. Math.}, 58\penalty0 (6):\penalty0 603--611, 1991.

\bibitem[Dziuk(1994)]{Dziuk94}
G.~Dziuk.
\newblock Convergence of a semi-discrete scheme for the curve shortening flow.
\newblock \emph{Math. Models Methods Appl. Sci.}, 4\penalty0 (4):\penalty0
  589--606, 1994.

\bibitem[Dziuk(1999{\natexlab{a}})]{Dziuk99}
G.~Dziuk.
\newblock Discrete anisotropic curve shortening flow.
\newblock \emph{SIAM J. Numer. Anal.}, 36\penalty0 (6):\penalty0 1808--1830,
  1999{\natexlab{a}}.

\bibitem[Dziuk(1999{\natexlab{b}})]{Dziuk99b}
G.~Dziuk.
\newblock Numerical schemes for the mean curvature flow of graphs.
\newblock In \emph{Variations of domain and free-boundary problems in solid
  mechanics ({P}aris, 1997)}, volume~66 of \emph{Solid Mech. Appl.}, pages
  63--70. Kluwer Acad. Publ., Dordrecht, 1999{\natexlab{b}}.

\bibitem[Dziuk(2008)]{Dziuk08}
G.~Dziuk.
\newblock Computational parametric {W}illmore flow.
\newblock \emph{Numer. Math.}, 111\penalty0 (1):\penalty0 55--80, 2008.

\bibitem[Dziuk and Elliott(2013)]{DziukE13}
G.~Dziuk and C.~M. Elliott.
\newblock Finite element methods for surface {PDE}s.
\newblock \emph{Acta Numer.}, 22:\penalty0 289--396, 2013.

\bibitem[Dziuk et~al.(2002)Dziuk, Kuwert, and Sch{\"a}tzle]{DziukKS02}
G.~Dziuk, E.~Kuwert, and R.~Sch{\"a}tzle.
\newblock Evolution of elastic curves in {${\mathbb R}^n$}: {E}xistence and
  computation.
\newblock \emph{SIAM J. Math. Anal.}, 33\penalty0 (5):\penalty0 1228--1245,
  2002.

\bibitem[Eck et~al.(2017)Eck, Garcke, and Knabner]{EckGK17}
C.~Eck, H.~Garcke, and P.~Knabner.
\newblock \emph{Mathematical modeling}.
\newblock Springer Undergraduate Mathematics Series. Springer, Cham, 2017.

\bibitem[Elliott and Fritz(2017)]{ElliottF17}
C.~M. Elliott and H.~Fritz.
\newblock On approximations of the curve shortening flow and of the mean
  curvature flow based on the {D}e{T}urck trick.
\newblock \emph{IMA J. Numer. Anal.}, 37\penalty0 (2):\penalty0 543--603, 2017.

\bibitem[Elliott and Garcke(1997)]{ElliottG97}
C.~M. Elliott and H.~Garcke.
\newblock Diffusional phase transitions in multicomponent systems with a
  concentration dependent mobility matrix.
\newblock \emph{Phys. D}, 109\penalty0 (3--4):\penalty0 242--256, 1997.

\bibitem[Elliott and Stinner(2010)]{ElliottS10}
C.~M. Elliott and B.~Stinner.
\newblock Modeling and computation of two phase geometric biomembranes using
  surface finite elements.
\newblock \emph{J. Comput. Phys.}, 229\penalty0 (18):\penalty0 6585--6612,
  2010.

\bibitem[Elliott and Stinner(2013)]{ElliottS13}
C.~M. Elliott and B.~Stinner.
\newblock Computation of two-phase biomembranes with phase dependent material
  parameters using surface finite elements.
\newblock \emph{Commun. Comput. Phys.}, 13\penalty0 (2):\penalty0 325--360,
  2013.

\bibitem[Elman et~al.(2005)Elman, Silvester, and Wathen]{ElmanSW05}
H.~C. Elman, D.~J. Silvester, and A.~J. Wathen.
\newblock \emph{Finite elements and fast iterative solvers: with applications
  in incompressible fluid dynamics}.
\newblock Numerical Mathematics and Scientific Computation. Oxford University
  Press, New York, 2005.

\bibitem[Escher et~al.(2001)Escher, Giga, and Ito]{EscherGI01}
J.~Escher, Y.~Giga, and K.~Ito.
\newblock On a limiting motion and self-intersections of curves moved by the
  intermediate surface diffusion flow.
\newblock \emph{Nonlinear Anal.}, 47\penalty0 (6):\penalty0 3717--3728, 2001.

\bibitem[Esedo\={g}lu et~al.(2014)Esedo\={g}lu, R\"{a}tz, and
  R\"{o}ger]{EsedogluRR14}
S.~Esedo\={g}lu, A.~R\"{a}tz, and M.~R\"{o}ger.
\newblock Colliding interfaces in old and new diffuse-interface approximations
  of {W}illmore-flow.
\newblock \emph{Commun. Math. Sci.}, 12\penalty0 (1):\penalty0 125--147, 2014.

\bibitem[Gage and Hamilton(1986)]{GageH86}
M.~Gage and R.~S. Hamilton.
\newblock The heat equation shrinking convex plane curves.
\newblock \emph{J. Differential Geom.}, 23\penalty0 (1):\penalty0 69--96, 1986.

\bibitem[Ganesan and Tobiska(2012)]{GanesanT12}
S.~Ganesan and L.~Tobiska.
\newblock Arbitrary {L}agrangian--{E}ulerian finite-element method for
  computation of two-phase flows with soluble surfactants.
\newblock \emph{J. Comput. Phys.}, 231\penalty0 (9):\penalty0 3685--3702, 2012.

\bibitem[Ganesan et~al.(2007)Ganesan, Matthies, and Tobiska]{GanesanMT07}
S.~Ganesan, G.~Matthies, and L.~Tobiska.
\newblock On spurious velocities in incompressible flow problems with
  interfaces.
\newblock \emph{Comput. Methods Appl. Mech. Engrg.}, 196\penalty0 (7):\penalty0
  1193--1202, 2007.

\bibitem[Garcke(2013)]{Garcke13}
H.~Garcke.
\newblock Curvature driven interface evolution.
\newblock \emph{Jahresber. Dtsch. Math.-Ver.}, 115\penalty0 (2):\penalty0
  63--100, 2013.

\bibitem[Garcke et~al.(1999)Garcke, Stoth, and Nestler]{GarckeSN99}
H.~Garcke, B.~Stoth, and B.~Nestler.
\newblock Anisotropy in multi-phase systems: a phase field approach.
\newblock \emph{Interfaces Free Bound.}, 1\penalty0 (2):\penalty0 175--198,
  1999.

\bibitem[Garcke et~al.(2016)Garcke, Hinze, and Kahle]{GarckeHK16}
H.~Garcke, M.~Hinze, and C.~Kahle.
\newblock A stable and linear time discretization for a thermodynamically
  consistent model for two-phase incompressible flow.
\newblock \emph{Appl. Numer. Math.}, 99:\penalty0 151--171, 2016.

\bibitem[Gera and Salac(2018)]{GeraS18}
P.~Gera and D.~Salac.
\newblock Modeling of multicomponent three-dimensional vesicles.
\newblock \emph{Comput. \& Fluids}, 172:\penalty0 362--383, 2018.

\bibitem[Geuzaine and Remacle(2009)]{GeuzaineR09}
C.~Geuzaine and J.-F. Remacle.
\newblock Gmsh: {A} 3-{D} finite element mesh generator with built-in pre- and
  post-processing facilities.
\newblock \emph{Internat. J. Numer. Methods Engrg.}, 79\penalty0 (11):\penalty0
  1309--1331, 2009.

\bibitem[Giga(2006)]{Giga06}
Y.~Giga.
\newblock \emph{Surface evolution equations}, volume~99 of \emph{Monographs in
  Mathematics}.
\newblock Birkh\"{a}user, Basel, 2006.

\bibitem[Gilbarg and Trudinger(1983)]{GilbargT83}
D.~Gilbarg and N.~S. Trudinger.
\newblock \emph{Elliptic Partial Differential Equations of Second Order},
  volume 224 of \emph{Grundlehren der Mathematischen Wissenschaften
  [Fundamental Principles of Mathematical Sciences]}.
\newblock Springer-Verlag, Berlin, second edition, 1983.

\bibitem[Girault and Raviart(1986)]{GiraultR86}
V.~Girault and P.-A. Raviart.
\newblock \emph{Finite element methods for {N}avier--{S}tokes equations},
  volume~5 of \emph{Springer Series in Computational Mathematics}.
\newblock Springer-Verlag, Berlin, 1986.
\newblock Theory and algorithms.

\bibitem[Gr{\"a}ser et~al.(2013)Gr{\"a}ser, Kornhuber, and Sack]{GraserKS13}
C.~Gr{\"a}ser, R.~Kornhuber, and U.~Sack.
\newblock Time discretizations of anisotropic {A}llen--{C}ahn equations.
\newblock \emph{IMA J. Numer. Anal.}, 33\penalty0 (4):\penalty0 1226--1244,
  2013.

\bibitem[Gravner and Griffeath(2009)]{GravnerG09}
J.~Gravner and D.~Griffeath.
\newblock Modeling snow-crystal growth: A three-dimensional mesoscopic
  approach.
\newblock \emph{Phys. Rev. E}, 79\penalty0 (1):\penalty0 011601--1--18, 2009.

\bibitem[Gro{\ss} and Reusken(2011)]{GrossR11}
S.~Gro{\ss} and A.~Reusken.
\newblock \emph{Numerical methods for two-phase incompressible flows},
  volume~40 of \emph{Springer Series in Computational Mathematics}.
\newblock Springer-Verlag, Berlin, 2011.

\bibitem[Gr{\"u}n and Klingbeil(2014)]{GrunK14}
G.~Gr{\"u}n and F.~Klingbeil.
\newblock Two-phase flow with mass density contrast: {S}table schemes for a
  thermodynamic consistent and frame-indifferent diffuse-interface model.
\newblock \emph{J. Comput. Phys.}, 257:\penalty0 708--725, 2014.

\bibitem[Hau{\ss}er and Voigt(2005)]{HausserV05}
F.~Hau{\ss}er and A.~Voigt.
\newblock A discrete scheme for regularized anisotropic surface diffusion: a
  6th order geometric evolution equation.
\newblock \emph{Interfaces Free Bound.}, 7\penalty0 (4):\penalty0 353--369,
  2005.

\bibitem[Heine(2004)]{Heine04}
C.-J. Heine.
\newblock Isoparametric finite element approximation of curvature on
  hypersurfaces, 2004.
\newblock Preprint, University Freiburg.

\bibitem[Heintz(2015)]{Heintz15}
A.~Heintz.
\newblock A numerical method for simulation dynamics of incompressible lipid
  membranes in viscous fluid.
\newblock \emph{J. Comput. Appl. Math.}, 289:\penalty0 87--100, 2015.

\bibitem[Helfrich(1973)]{Helfrich73}
W.~Helfrich.
\newblock Elastic properties of lipid bilayers: {T}heory and possible
  experiments.
\newblock \emph{Z. Naturforsch. C}, 28\penalty0 (11--12):\penalty0 693--703,
  1973.

\bibitem[Hildebrandt et~al.(2006)Hildebrandt, Polthier, and
  Wardetzky]{HildebrandtPW06}
K.~Hildebrandt, K.~Polthier, and M.~Wardetzky.
\newblock On the convergence of metric and geometric properties of polyhedral
  surfaces.
\newblock \emph{Geom. Dedicata}, 123\penalty0 (1):\penalty0 89--112, 2006.

\bibitem[Hinze et~al.(2009)Hinze, Pinnau, Ulbrich, and Ulbrich]{HinzePUU09}
M.~Hinze, R.~Pinnau, M.~Ulbrich, and S.~Ulbrich.
\newblock \emph{Optimization with {PDE} constraints}, volume~23 of
  \emph{Mathematical Modelling: Theory and Applications}.
\newblock Springer-Verlag, New York, 2009.

\bibitem[Hirt and Nichols(1981)]{HirtN81}
C.~W. Hirt and B.~D. Nichols.
\newblock Volume of fluid {(VOF)} method for the dynamics of free boundaries.
\newblock \emph{J. Comput. Phys.}, 39\penalty0 (1):\penalty0 201--225, 1981.

\bibitem[Hou et~al.(1994)Hou, Lowengrub, and Shelley]{HouLS94}
T.~Y. Hou, J.~S. Lowengrub, and M.~J. Shelley.
\newblock Removing the stiffness from interfacial flows with surface tension.
\newblock \emph{J. Comput. Phys.}, 114\penalty0 (2):\penalty0 312--338, 1994.

\bibitem[Hu et~al.(2014)Hu, Kim, and Lai]{HuKL14}
W.-F. Hu, Y.~Kim, and M.-C. Lai.
\newblock An immersed boundary method for simulating the dynamics of
  three-dimensional axisymmetric vesicles in {N}avier--{S}tokes flows.
\newblock \emph{J. Comput. Phys.}, 257:\penalty0 670--686, 2014.

\bibitem[Huisken(1984)]{Huisken84}
G.~Huisken.
\newblock Flow by mean curvature of convex surfaces into spheres.
\newblock \emph{J. Differential Geom.}, 20\penalty0 (1):\penalty0 237--266,
  1984.

\bibitem[James and Lowengrub(2004)]{JamesL04}
A.~J. James and J.~Lowengrub.
\newblock A surfactant-conserving volume-of-fluid method for interfacial flows
  with insoluble surfactant.
\newblock \emph{J. Comput. Phys.}, 201\penalty0 (2):\penalty0 685--722, 2004.

\bibitem[Jamet and Misbah(2007)]{JametM07}
D.~Jamet and C.~Misbah.
\newblock Towards a thermodynamically consistent picture of the phase-field
  model of vesicles: {L}ocal membrane incompressibility.
\newblock \emph{Phys. Rev. E}, 76\penalty0 (5):\penalty0 051907, 2007.

\bibitem[Juric and Tryggvason(1996)]{JuricT96}
D.~Juric and G.~Tryggvason.
\newblock A front-tracking method for dendritic solidification.
\newblock \emph{J. Comput. Phys.}, 123\penalty0 (1):\penalty0 127--148, 1996.

\bibitem[Karma and Rappel(1998)]{KarmaR98}
A.~Karma and W.-J. Rappel.
\newblock Quantitative phase-field modeling of dendritic growth in two and
  three dimensions.
\newblock \emph{Phys. Rev. E}, 57\penalty0 (4):\penalty0 4323--4349, Apr 1998.

\bibitem[Kay et~al.(2008)Kay, Styles, and Welford]{KaySW08}
D.~Kay, V.~Styles, and R.~Welford.
\newblock Finite element approximation of a
  {C}ahn--{H}illiard--{N}avier--{S}tokes system.
\newblock \emph{Interfaces Free Bound.}, 10\penalty0 (1):\penalty0 15--43,
  2008.

\bibitem[Kessler et~al.(1984)Kessler, Koplik, and Levine]{KesslerKL84}
D.~A. Kessler, J.~Koplik, and H.~Levine.
\newblock Numerical simulation of two-dimensional snowflake growth.
\newblock \emph{Phys. Rev. A}, 30\penalty0 (5):\penalty0 2820--2823, Nov 1984.

\bibitem[Kim et~al.(2004)Kim, Kang, and Lowengrub]{KimKL04}
J.~Kim, K.~Kang, and J.~Lowengrub.
\newblock Conservative multigrid methods for {C}ahn--{H}illiard fluids.
\newblock \emph{J. Comput. Phys.}, 193\penalty0 (2):\penalty0 511--543, 2004.

\bibitem[Kimura(1994)]{Kimura94}
M.~Kimura.
\newblock Accurate numerical scheme for the flow by curvature.
\newblock \emph{Appl. Math. Lett.}, 7\penalty0 (1):\penalty0 69--73, 1994.

\bibitem[Kobayashi(1993)]{Kobayashi93}
R.~Kobayashi.
\newblock Modeling and numerical simulations of dendritic crystal growth.
\newblock \emph{Phys. D}, 63\penalty0 (3--4):\penalty0 410--423, 1993.

\bibitem[K\"{o}hne and Lengeler(2018)]{KohneL18}
M.~K\"{o}hne and D.~Lengeler.
\newblock Local well-posedness for relaxational fluid vesicle dynamics.
\newblock \emph{J. Evol. Equ.}, 18\penalty0 (4):\penalty0 1787--1818, 2018.

\bibitem[Kov\'{a}cs et~al.(2019)Kov\'{a}cs, Li, and Lubich]{KovacsLL19}
B.~Kov\'{a}cs, B.~Li, and C.~Lubich.
\newblock A convergent evolving finite element algorithm for mean curvature
  flow of closed surfaces.
\newblock \emph{Numer. Math.}, 143\penalty0 (4):\penalty0 797--853, 2019.

\bibitem[K\"uhnel(2015)]{Kuhnel15}
W.~K\"uhnel.
\newblock \emph{Differential Geometry: {C}urves -- {S}urfaces -- {M}anifolds},
  volume~77 of \emph{Student Mathematical Library}.
\newblock Amer. Math. Soc., Providence, RI, 2015.

\bibitem[Laadhari et~al.(2014)Laadhari, Saramito, and Misbah]{LaadhariSM14}
A.~Laadhari, P.~Saramito, and C.~Misbah.
\newblock Computing the dynamics of biomembranes by combining conservative
  level set and adaptive finite element methods.
\newblock \emph{J. Comput. Phys.}, 263:\penalty0 328--352, 2014.

\bibitem[Lengeler(2015)]{Lengeler15preprint}
D.~Lengeler.
\newblock On a {St}okes-type system arising in fluid vesicle dynamics.
\newblock arXiv:1506.08991, 2015.

\bibitem[Lengeler(2018)]{Lengeler18}
D.~Lengeler.
\newblock Asymptotic stability of local {H}elfrich minimizers.
\newblock \emph{Interfaces Free Bound.}, 20\penalty0 (4):\penalty0 533--550,
  2018.

\bibitem[Libbrecht(2005)]{Libbrecht05}
K.~G. Libbrecht.
\newblock The physics of snow crystals.
\newblock \emph{Rep. Progr. Phys.}, 68\penalty0 (4):\penalty0 855--895, 2005.

\bibitem[Libbrecht(2008)]{Libbrecht08}
K.~G. Libbrecht.
\newblock Physically derived rules for simulating faceted crystal growth using
  cellular automata.
\newblock arXiv:0807.2616, 2008.

\bibitem[Mantegazza(2011)]{Mantegazza11}
C.~Mantegazza.
\newblock \emph{Lecture notes on mean curvature flow}, volume 290 of
  \emph{Progress in Mathematics}.
\newblock Birkh\"auser/Springer Basel AG, Basel, 2011.

\bibitem[Matveev(2006)]{Matveev06}
S.~V. Matveev.
\newblock \emph{Lectures on algebraic topology}.
\newblock EMS Series of Lectures in Mathematics. European Mathematical Society,
  Z\"{u}rich, 2006.

\bibitem[Mayer and Simonett(2002)]{MayerS02}
U.~F. Mayer and G.~Simonett.
\newblock A numerical scheme for axisymmetric solutions of curvature-driven
  free boundary problems, with applications to the {W}illmore flow.
\newblock \emph{Interfaces Free Bound.}, 4\penalty0 (1):\penalty0 89--109,
  2002.

\bibitem[Mikula and {\v{S}}ev{\v{c}}ovi{\v{c}}(2001)]{MikulaS01}
K.~Mikula and D.~{\v{S}}ev{\v{c}}ovi{\v{c}}.
\newblock Evolution of plane curves driven by a nonlinear function of curvature
  and anisotropy.
\newblock \emph{SIAM J. Appl. Math.}, 61\penalty0 (5):\penalty0 1473--1501,
  2001.

\bibitem[Mikula et~al.(2014)Mikula, Reme{\v{s}}{\'{\i}}kov{\'a}, Sarkoci, and
  {\v{S}}ev{\v{c}}ovi{\v{c}}]{MikulaRSS14}
K.~Mikula, M.~Reme{\v{s}}{\'{\i}}kov{\'a}, P.~Sarkoci, and
  D.~{\v{S}}ev{\v{c}}ovi{\v{c}}.
\newblock Manifold evolution with tangential redistribution of points.
\newblock \emph{SIAM J. Sci. Comput.}, 36\penalty0 (4):\penalty0 1384--1414,
  2014.

\bibitem[Mullins(1957)]{Mullins57}
W.~W. Mullins.
\newblock Theory of thermal grooving.
\newblock \emph{J. Appl. Phys.}, 28\penalty0 (3):\penalty0 333--339, 1957.

\bibitem[Nestler(2005)]{Nestler05}
B.~Nestler.
\newblock A {3D} parallel simulator for crystal growth and solidification in
  complex alloy systems.
\newblock \emph{J. Cryst. Growth}, 275\penalty0 (1-2):\penalty0 e273--e278,
  2005.

\bibitem[Nitsche(1993)]{Nitsche93}
J.~C.~C. Nitsche.
\newblock Boundary value problems for variational integrals involving surface
  curvatures.
\newblock \emph{Quart. Appl. Math.}, 51\penalty0 (2):\penalty0 363--387, 1993.

\bibitem[Pan(2008)]{Pan08}
Z.~Pan.
\newblock \emph{Simulation and Analysis of Coupled Surface and Grain Boundary
  Motion}.
\newblock PhD thesis, University of British Columbia, Vancouver, 2008.

\bibitem[Pan and Wetton(2008)]{PanW08}
Z.~Pan and B.~Wetton.
\newblock A numerical method for coupled surface and grain boundary motion.
\newblock \emph{European J. Appl. Math.}, 19\penalty0 (3):\penalty0 311--327,
  2008.

\bibitem[Pozzi(2007)]{Pozzi07}
P.~Pozzi.
\newblock Anisotropic curve shortening flow in higher codimension.
\newblock \emph{Math. Methods Appl. Sci.}, 30\penalty0 (11):\penalty0
  1243--1281, 2007.

\bibitem[Pozzi(2008)]{Pozzi08}
P.~Pozzi.
\newblock Anisotropic mean curvature flow for two dimensional surfaces in
  higher codimension: a numerical scheme.
\newblock \emph{Interfaces Free Bound.}, 10\penalty0 (4):\penalty0 539--576,
  2008.

\bibitem[Pr\"{u}ss and Simonett(2016)]{PrussS16}
J.~Pr\"{u}ss and G.~Simonett.
\newblock \emph{Moving interfaces and quasilinear parabolic evolution
  equations}, volume 105 of \emph{Monographs in Mathematics}.
\newblock Birkh\"{a}user/Springer, 2016.

\bibitem[Rahimi and Arroyo(2012)]{RahimiA12}
M.~Rahimi and M.~Arroyo.
\newblock Shape dynamics, lipid hydrodynamics, and the complex viscoelasticity
  of bilayer membranes.
\newblock \emph{Phys. Rev. E}, 86\penalty0 (1):\penalty0 011932, 2012.

\bibitem[Reiter(2005)]{Reiter05}
C.~A. Reiter.
\newblock A local cellular model for snow crystal growth.
\newblock \emph{Chaos Soliton. Fract.}, 23\penalty0 (4):\penalty0 1111--1119,
  2005.

\bibitem[Renardy and Renardy(2002)]{RenardyR02}
Y.~Renardy and M.~Renardy.
\newblock P{ROST}: a parabolic reconstruction of surface tension for the
  volume-of-fluid method.
\newblock \emph{J. Comput. Phys.}, 183\penalty0 (2):\penalty0 400--421, 2002.

\bibitem[Rineau and Yvinec(2019)]{RineauY19}
L.~Rineau and M.~Yvinec.
\newblock {3D} surface mesh generation.
\newblock In \emph{{CGAL} 4.14.1 User and Reference Manual}. {CGAL Editorial
  Board}, 2019.

\bibitem[Rodrigues et~al.(2015)Rodrigues, Ausas, Mut, and
  Buscaglia]{RodriguesAMB15}
D.~S. Rodrigues, R.~F. Ausas, F.~Mut, and G.~C. Buscaglia.
\newblock A semi-implicit finite element method for viscous lipid membranes.
\newblock \emph{J. Comput. Phys.}, 298:\penalty0 565--584, 2015.

\bibitem[Roosen and Taylor(1991)]{RoosenT91}
A.~R. Roosen and J.~E. Taylor.
\newblock Simulation of crystal growth with facetted interfaces.
\newblock \emph{Mater. Res. Soc. Symp. Proc.}, 237:\penalty0 25--36, 1991.

\bibitem[Rusu(2005)]{Rusu05}
R.~E. Rusu.
\newblock An algorithm for the elastic flow of surfaces.
\newblock \emph{Interfaces Free Bound.}, 7\penalty0 (3):\penalty0 229--239,
  2005.

\bibitem[Salac and Miksis(2011)]{SalacM11}
D.~Salac and M.~Miksis.
\newblock A level set projection model of lipid vesicles in general flows.
\newblock \emph{J. Comput. Phys.}, 230\penalty0 (22):\penalty0 8192--8215,
  2011.

\bibitem[Sapiro and Tannenbaum(1994)]{SapiroT94}
G.~Sapiro and A.~Tannenbaum.
\newblock On affine plane curve evolution.
\newblock \emph{J. Funct. Anal.}, 119\penalty0 (1):\penalty0 79--120, 1994.

\bibitem[Saye and Sethian(2019)]{SayeS19handbook}
R.~I. Saye and J.~A. Sethian.
\newblock A review of level set methods to model interfaces moving under
  complex physics: {R}ecent challenges and advances.
\newblock In \emph{Handbook of Numerical Analysis, {V}ol. {XXI} \& {XXII}}.
  Elsevier, 2019.
\newblock (to appear).

\bibitem[Schmidt(1993)]{Schmidt93}
A.~Schmidt.
\newblock \emph{{D}ie {B}erechnung dreidimensionaler {D}endriten mit {F}initen
  {E}lementen}.
\newblock PhD thesis, University Freiburg, Freiburg, 1993.

\bibitem[Schmidt(1996)]{Schmidt96}
A.~Schmidt.
\newblock Computation of three dimensional dendrites with finite elements.
\newblock \emph{J. Comput. Phys.}, 195\penalty0 (2):\penalty0 293--312, 1996.

\bibitem[Schmidt(1998)]{Schmidt98}
A.~Schmidt.
\newblock Approximation of crystalline dendrite growth in two space dimensions.
\newblock In J.~Ka{\v{c}}ur and K.~Mikula, editors, \emph{Proceedings of the
  Algoritmy'97 Conference on Scientific Computing (Zuberec)}, volume~67, pages
  57--68, Bratislava, 1998. Slovak University of Technology.

\bibitem[Schmidt and Siebert(2005)]{Alberta}
A.~Schmidt and K.~G. Siebert.
\newblock \emph{Design of Adaptive Finite Element Software: The Finite Element
  Toolbox {ALBERTA}}, volume~42 of \emph{Lecture Notes in Computational Science
  and Engineering}.
\newblock Springer-Verlag, Berlin, 2005.

\bibitem[Sch{\"o}berl(1997)]{Schoberl97}
J.~Sch{\"o}berl.
\newblock {NETGEN}: An advancing front 2{D}/3{D}-mesh generator based on
  abstract rules.
\newblock \emph{Comput. Vis. Sci.}, 1\penalty0 (1):\penalty0 41--52, 1997.

\bibitem[Scriven(1960)]{Scriven60}
L.~E. Scriven.
\newblock Dynamics of a fluid interface: {E}quation of motion for {N}ewtonian
  surface fluids.
\newblock \emph{Chem. Eng. Sci.}, 12\penalty0 (2):\penalty0 98--108, 1960.

\bibitem[Seifert(1997)]{Seifert97}
U.~Seifert.
\newblock Configurations of fluid membranes and vesicles.
\newblock \emph{Adv. Phys.}, 46\penalty0 (1):\penalty0 13--137, 1997.

\bibitem[Sethian(1985)]{Sethian85}
J.~A. Sethian.
\newblock Curvature and the evolution of fronts.
\newblock \emph{Comm. Math. Phys.}, 101\penalty0 (4):\penalty0 487--499, 1985.

\bibitem[Sethian and Strain(1992)]{SethianS92}
J.~A. Sethian and J.~Strain.
\newblock Crystal growth and dendritic solidification.
\newblock \emph{J. Comput. Phys.}, 98\penalty0 (2):\penalty0 231--253, 1992.

\bibitem[Slattery et~al.(2007)Slattery, Sagis, and Oh]{SlatterySO07}
J.~C. Slattery, L.~Sagis, and E.-S. Oh.
\newblock \emph{Interfacial transport phenomena}.
\newblock Springer, New York, second edition, 2007.

\bibitem[Strain(1989)]{Strain89}
J.~Strain.
\newblock A boundary integral approach to unstable solidification.
\newblock \emph{J. Comput. Phys.}, 85\penalty0 (2):\penalty0 342--389, 1989.

\bibitem[Sussman et~al.(1994)Sussman, Semereka, and Osher]{SussmanSO94}
M.~Sussman, P.~Semereka, and S.~Osher.
\newblock A level set approach for computing solutions to incompressible
  two-phase flow.
\newblock \emph{J. Comput. Phys.}, 114\penalty0 (1):\penalty0 146--159, 1994.

\bibitem[Taylor and Cahn(1994)]{TaylorC94}
J.~E. Taylor and J.~W. Cahn.
\newblock Linking anisotropic sharp and diffuse surface motion laws via
  gradient flows.
\newblock \emph{J. Statist. Phys.}, 77\penalty0 (1-2):\penalty0 183--197, 1994.

\bibitem[Taylor et~al.(1992)Taylor, Cahn, and Handwerker]{TaylorCH92}
J.~E. Taylor, J.~W. Cahn, and C.~A. Handwerker.
\newblock Geometric models of crystal growth.
\newblock \emph{Acta Metall. Mater.}, 40\penalty0 (7):\penalty0 1443--1474,
  1992.

\bibitem[Temam(2001)]{Temam01}
R.~Temam.
\newblock \emph{Navier--{S}tokes Equations}.
\newblock Amer. Math. Soc., Providence, RI, 2001.

\bibitem[Torabi et~al.(2009)Torabi, Lowengrub, Voigt, and Wise]{TorabiLVW09}
S.~Torabi, J.~Lowengrub, A.~Voigt, and S.~Wise.
\newblock A new phase-field model for strongly anisotropic systems.
\newblock \emph{Proc. R. Soc. Lond. Ser. A Math. Phys. Eng. Sci.}, 465\penalty0
  (2105):\penalty0 1337--1359, 2009.

\bibitem[Tr{\"o}ltzsch(2010)]{Troltzsch10}
F.~Tr{\"o}ltzsch.
\newblock \emph{Optimal Control of Partial Differential Equations: Theory,
  Methods and Applications}, volume 112 of \emph{Graduate Studies in
  Mathematics}.
\newblock Amer. Math. Soc., Providence, RI, 2010.

\bibitem[Tryggvason et~al.(2001)Tryggvason, Bunner, Esmaeeli, Juric, Al-Rawahi,
  Tauber, Han, Nas, and Jan]{Tryggvason_etal01}
G.~Tryggvason, B.~Bunner, A.~Esmaeeli, D.~Juric, N.~Al-Rawahi, W.~Tauber,
  J.~Han, S.~Nas, and Y.-J. Jan.
\newblock A front-tracking method for the computations of multiphase flow.
\newblock \emph{J. Comput. Phys.}, 169\penalty0 (2):\penalty0 708--759, 2001.

\bibitem[Turek and Mierka(2019)]{TurekO19handbook}
S.~Turek and O.~Mierka.
\newblock Numerical simulation and benchmarking of drops and bubbles.
\newblock In \emph{Handbook of Numerical Analysis, {V}ol. {XXI} \& {XXII}}.
  Elsevier, 2019.
\newblock (to appear).

\bibitem[Unverdi and Tryggvason(1992)]{UnverdiT92}
S.~O. Unverdi and G.~Tryggvason.
\newblock A front-tracking method for viscous, incompressible multi-fluid
  flows.
\newblock \emph{J. Comput. Phys.}, 100\penalty0 (1):\penalty0 25--37, 1992.

\bibitem[Walker(2013)]{Walker13}
S.~W. Walker.
\newblock Tetrahedralization of isosurfaces with guaranteed-quality by edge
  rearrangement ({TIGER}).
\newblock \emph{SIAM J. Sci. Comput.}, 35\penalty0 (1):\penalty0 294--326,
  2013.

\bibitem[Walker(2015)]{Walker15}
S.~W. Walker.
\newblock \emph{The Shapes of Things: {A} Practical Guide to Differential
  Geometry and the Shape Derivative}, volume~28 of \emph{Advances in Design and
  Control}.
\newblock Society for Industrial and Applied Mathematics (SIAM), Philadelphia,
  PA, 2015.

\bibitem[Walker(2018)]{Walker18}
S.~W. Walker.
\newblock F{ELICITY}: {A} {MATLAB}/{C}++ toolbox for developing finite element
  methods and simulation modeling.
\newblock \emph{SIAM J. Sci. Comput.}, 40\penalty0 (2):\penalty0 234--257,
  2018.

\bibitem[Wheeler et~al.(1993)Wheeler, Murray, and Schaefer]{WheelerMS93}
A.~A. Wheeler, B.~T. Murray, and R.~J. Schaefer.
\newblock Computation of dendrites using a phase field model.
\newblock \emph{Phys. D}, 66\penalty0 (1-2):\penalty0 243--262, 1993.

\bibitem[Wloka(1987)]{Wloka87}
J.~Wloka.
\newblock \emph{Partial differential equations}.
\newblock Cambridge University Press, Cambridge, 1987.

\bibitem[Yokoyama(1993)]{Yokoyama93}
E.~Yokoyama.
\newblock Formation of patterns during growth of snow crystals.
\newblock \emph{J. Cryst. Growth}, 128\penalty0 (1--4):\penalty0 251--257,
  1993.

\bibitem[Zeidler(1988)]{Zeidler88}
E.~Zeidler.
\newblock \emph{Nonlinear functional analysis and its applications. {IV}}.
\newblock Springer-Verlag, Berlin, 1988.
\newblock Applications to mathematical physics.

\end{thebibliography}
\end{document}